\newtheorem{thm}{Theorem}[section]
\newtheorem{cor}[thm]{Corollary}
\newtheorem{prop}[thm]{Proposition}
\newtheorem{lem}[thm]{Lemma}
\newtheorem{conj}[thm]{Conjecture}
\newtheorem*{thm*}{Theorem}
\newtheorem*{prop*}{Proposition}
\newtheorem*{lem*}{Lemma}
\newtheorem*{cor*}{Corollary}
\newtheorem{lem'}{Lemma}
\newtheorem{thm'}{Theorem}
\newtheorem{prop'}{Proposition}
\theoremstyle{definition}
\newtheorem{defn}[thm]{Definition}
\newtheorem{exmp}[thm]{Example}
\newtheorem*{exmp*}{Example}
\newtheorem*{exer*}{Exercise}
\theoremstyle{remark}
\newtheorem{rem}[thm]{Remark}
\newtheorem*{claim*}{Claim}
\newtheorem{step}{Step}
\newtheorem*{rem*}{Remark}
\newtheorem{rem'}{Remark}
\newtheorem{qtn}[thm]{Question}
\newtheorem*{qtn'}{Question}
\newtheorem*{soln'}{Solution}
\DeclareMathOperator{\re}{Re}
\DeclareMathOperator{\im}{Im}
\DeclareMathOperator{\sgn}{sgn}
\DeclareMathOperator{\dist}{dist}
\DeclareMathOperator{\Hol}{Hol}
\DeclareMathOperator{\op}{op}
\DeclareMathOperator{\Mat}{Mat}
\DeclareMathOperator{\Error}{Error}
\DeclareMathOperator{\GL}{GL}
\DeclareMathOperator{\SL}{SL}
\DeclareMathOperator{\PSL}{PSL}
\DeclareMathOperator{\PGL}{PGL}
\DeclareMathOperator{\PSO}{PSO}
\DeclareMathOperator{\SO}{SO}
\DeclareMathOperator{\so}{\mathfrak{so}}
\DeclareMathOperator{\Sl}{\mathfrak{sl}}
\DeclareMathOperator{\Stab}{Stab}
\DeclareMathOperator{\Lie}{Lie}
\DeclareMathOperator{\Gr}{Gr}
\DeclareMathOperator{\Ad}{Ad}
\DeclareMathOperator{\ev}{ev}
\DeclareMathOperator{\LHS}{LHS}
\DeclareMathOperator{\RHS}{RHS}
\DeclareMathOperator{\MT}{MT}
\DeclareMathOperator{\RT}{RT}
\newcommand{\true}{\textnormal{true}}
\newcommand{\false}{\textnormal{false}}
\newcommand{\fin}{\textnormal{fin}}
\newcommand{\R}{\mathbf{R}}
\newcommand{\C}{\mathbf{C}}
\newcommand{\Z}{\mathbf{Z}}
\renewcommand{\1}{\mathds{1}}
\newcommand{\g}{\mathfrak{g}}
\renewcommand{\O}{\textnormal{O}}
\renewcommand{\H}{\mathcal{H}}
\newcommand{\A}{\mathcal{A}}
\newcommand{\B}{\mathcal{B}}
\newcommand{\D}{\mathcal{D}}
\newcommand{\E}{\mathcal{E}}
\newcommand{\K}{\mathcal{K}}
\renewcommand{\P}{\mathcal{P}}
\renewcommand{\L}{\mathcal{L}}
\newcommand{\U}{\mathcal{U}}
\newcommand{\V}{\mathcal{V}}
\numberwithin{equation}{section}
\title{A converse theorem for hyperbolic surface spectra and the conformal bootstrap}
\author{Anshul Adve}
\address{Department of Mathematics, Princeton University, Princeton, NJ 08544, USA}
\email{aadve@princeton.edu}
\date{\today}
\begin{document}
	
	\begin{abstract}
		The conformal bootstrap in physics has recently been adapted to prove remarkably sharp estimates on Laplace eigenvalues and triple correlations of automorphic forms on compact hyperbolic surfaces.
		These estimates derive from an infinite family of algebraic equations satisfied by this spectral data.
		The equations encode $G$-equivariance and associativity of multiplication on $\Gamma \backslash G$, for $\Gamma$ a cocompact lattice in $G = \PSL_2(\R)$.
		The effectiveness of the conformal bootstrap suggests that the equations characterize hyperbolic surface spectra, i.e., that every solution to the equations comes from a compact hyperbolic surface.
		This paper proves this rigorously with no analytic assumptions on the solution except discreteness of the spectrum.
		The key intermediate result is an axiomatic characterization of representations of $G$ of the form $L^2(\Gamma \backslash G)$.

	\end{abstract}
	
	\maketitle
	
	\parskip 0em
	\setcounter{tocdepth}{2}
	\tableofcontents
	\parskip 0.5em
	
	
	\section{Introduction}
	
	\subsection{Description of main theorems and motivation from physics} \label{subsec:intro:motivation}
	
	We state three motivating questions, the first two in math and the third in physics.
	We will address the first two directly and the third by analogy.
	Since our rigorous results only concern the first two, we will not be completely precise in our discussion of the third.
	
	\begin{qtn} \label{qtn:spectral_multisets_smooth}
		Let $g \geq 2$ be an integer, and let
		\begin{align} \label{eqn:Laplace_spectrum}
			0 = \lambda_0 < \lambda_1 \leq \lambda_2 \leq \cdots \to \infty
		\end{align}
		be real numbers.
		When is $\{\lambda_r\}_{r \geq 0}$ the spectrum of the Laplacian on a compact hyperbolic surface of genus $g$?
	\end{qtn}
	
	Let $G = \PSL_2(\R)$ be the orientation-preserving isometry group of the hyperbolic plane $\mathbf{H}$.
	Then the following is roughly equivalent to Question~\ref{qtn:spectral_multisets_smooth}.
	
	\begin{qtn} \label{qtn:hyperbolic_rep}
		Let $\H$ be a unitary representation of $G$ with discrete spectrum.
		When is $\H$ isomorphic to $L^2(\Gamma \backslash G)$ for some cocompact lattice $\Gamma$ in $G$?
	\end{qtn}
	
	For $d \geq 3$, let $\widetilde{\SO}(2,d)$ be the identity component of the group of conformal automorphisms of the Lorentzian cylinder $\R \times S^{d-1}$.
	As an abstract Lie group, $\widetilde{\SO}(2,d)$ is the unique infinite cyclic cover of the identity component of $\SO(2,d)$.
	If $t$ is the time-coordinate on the Lorentzian cylinder, then the vector field $\partial_t$ lies in the Lie algebra of $\widetilde{\SO}(2,d)$.
	In conformal field theory (CFT), the element $\frac{1}{i}\partial_t$ of the complexified Lie algebra is called the \emph{dilatation operator} and plays the role of the Hamiltonian.
	A unitary representation of $\widetilde{\SO}(2,d)$ has \emph{positive energy} if the dilatation operator is positive semidefinite in the representation.
	
	Introductions to CFT from the point of view relevant for this paper include \cite{DSD_lectures,Rychkov_lectures,Qualls}.
	
	\begin{qtn} \label{qtn:cft_rep}
		Fix $d \geq 3$.
		Let $\H$ be a unitary, positive energy representation of $\widetilde{\SO}(2,d)$ with discrete spectrum. When is $\H$ the state space of a CFT on the Euclidean space $\R^d$?
		
		Here we use radial quantization, so ``the" state space is the Hilbert space associated to the codimension $1$ submanifold $S^{d-1} \subseteq \R^d$.
		The action of $\widetilde{\SO}(2,d)$ is obtained by using polar coordinates to identify
		$\R^d$ (minus the origin)
		with the Euclidean cylinder $\R \times S^{d-1}$, and then Wick rotating to Lorentzian signature.
	\end{qtn}
	
	There is a standard conjectural answer to Question~\ref{qtn:cft_rep} which is the basis for the \textit{conformal bootstrap}, described below.
	As a model for this conjecture, in this paper we formulate and prove analogous answers to Questions~\ref{qtn:spectral_multisets_smooth} and \ref{qtn:hyperbolic_rep}.
	These are Theorems~\ref{thm:main} and \ref{thm:eq_Gelfand_duality}, respectively, our two main theorems.
	Apart from an analytic technicality (Remark~\ref{rem:smoothness_of_products}), these theorems are essentially equivalent (Proposition~\ref{prop:mult_rep_vs_HB} makes this precise), and we consider them together as one main result.
	This result fits in the context of the analogy between CFTs and hyperbolic manifolds developed in \cite{Bonifacio--Hinterbichler,Bonifacio_22_1,Bonifacio_22_2,Kravchuk--Mazac--Pal,Bonifacio--Mazac--Pal,Gesteau--Pal--Simmons-Duffin--Xu,Adve_et_al}.
	All these papers apply ideas from the conformal bootstrap to place bounds on quantities of interest in the spectral theory of hyperbolic manifolds, such as the first Laplace eigenvalue $\lambda_1$.
	These bounds are often nearly saturated. For example, both \cite{Bonifacio_22_2} and \cite{Kravchuk--Mazac--Pal} show that for any genus $2$ hyperbolic surface, $\lambda_1 \leq 3.838898$; this is nearly saturated by the Bolza surface (the genus 2 surface with the most symmetries), which has $\lambda_1 = 3.838887...$ \cite[Section~5.3]{Strohmaier--Uski}.
	The above is the best known upper bound on $\lambda_1$ for genus $2$ hyperbolic surfaces, better even than what has subsequently been proved using the Selberg trace formula \cite{Bourque--Petri}.
	Theorem~\ref{thm:main} goes some way toward explaining why these papers are able to get such sharp bounds.
	We expand on this in Subsection~\ref{subsec:applications:lambda_1} and in particular Remark~\ref{rem:saturation}.
	
	
	To motivate the statements of Theorems~\ref{thm:eq_Gelfand_duality} and \ref{thm:main}, we begin below by discussing Question~\ref{qtn:cft_rep} and the conformal bootstrap in dimension $d \geq 3$.
	For more details, we refer to \cite{Poland--Rychkov--Vichi,Rychkov--Su} and \cite{Hartman_et_al,Bissi--Sinha--Zhou}, which respectively review the numerical and analytic sides of the conformal bootstrap.
	Mathematicians may find the axiomatic presentations in \cite{Rychkov_20}, \cite{Mazac_talk}, \cite[Section~2.2.1]{Benedetti_et_al}, and \cite[Section~2.3]{Kravchuk_Qiao_Rychkov_II} especially clarifying.
	The introduction \cite{Guillarmou--Kupiainen--Rhodes_review}, written for mathematicians, is also helpful for intuition and context, though it is specific to dimension $2$.
	
	While many 2d CFTs are exactly solvable and algebraic in nature \cite{Belavin--Polyakov--Zamolodchikov,Ribault_exactly_solvable}, CFTs in dimension $d \geq 3$ generally have a more transcendental character.
	This is due to the fact that the Lie algebra of conformal Killing fields on an open ball in $\R^d$ is infinite-dimensional for $d=2$ and finite-dimensional for $d \geq 3$ --- in dimension $2$ any holomorphic or antiholomorphic vector field is a conformal Killing field.
	Holomorphic vector fields span the Virasoro algebra (up to center).
	To illustrate the contrast between $d=2$ and $d \geq 3$, the 2d Ising model is exactly solved, and its spectrum of scaling dimensions (defined below) consists of rational numbers, whereas the spectrum of the 3d Ising model appears to be as complicated as, say, the Laplace spectrum of the $(2,3,13)$-triangle orbifold (the non-arithmetic compact hyperbolic 2-orbifold of minimal area \cite{Takeuchi}).
	Indeed, the spectra of the 3d Ising model and the $(2,3,13)$ orbifold are both expected to consist of transcendental numbers and to obey random matrix statistics at high energy.
	It is because of these considerations that we restrict to $d \geq 3$ in Question~\ref{qtn:cft_rep}.
	
	Let $\H$ be as in Question~\ref{qtn:cft_rep}. If $\H$ is the state space of a CFT on $\R^d$, then by the state-operator correspondence, there is a densely defined multiplication $\H \otimes \H \to \H$ called the \textit{operator product expansion (OPE)}.
	The domain $\D \subseteq \H \otimes \H$ of the OPE can be defined purely representation-theoretically.
	The OPE is $\so(2,d)$-equivariant, and it obeys a form of commutativity/associativity called \textit{crossing symmetry}.
	
	The irreducible, unitary, positive energy representations of $\widetilde{\SO}(2,d)$ are lowest weight representations $\pi_{\Delta,\rho}$ labeled by pairs $(\Delta,\rho)$, where $\Delta$ is a nonnegative real number called the \textit{scaling dimension}, and $\rho$ is an irreducible representation of $\SO(d)$ called the \textit{spin}.
	Consequently, the isomorphism class of $\H$ as a unitary representation of $\widetilde{\SO}(2,d)$ is encoded by the multiset $\{(\Delta_i,\rho_i)\}$ such that $\H \simeq \bigoplus_i \pi_{(\Delta_i,\rho_i)}$.
	The \textit{spectrum} of $\H$ is defined to be $\{\Delta_i\}$.
	There is a standard way to build an orthonormal basis of $\H$: first choose a decomposition of $\H$ into irreducibles, then choose a primary state (i.e., lowest weight vector) generating each irreducible, and finally complete the set of primary states to a basis of $\H$ by including all the descendants of the primary states.
	By definition, descendant states are obtained by acting on primary states with raising operators.
	Given such a basis, the OPE is encoded by a collection of structure constants called \textit{OPE coefficients}.
	Equivariance and crossing symmetry of the OPE are equivalent to a system of quadratic equations in the OPE coefficients. The coefficients in these quadratic equations
	are called \emph{conformal partial waves} or (up to normalization) \emph{conformal blocks}.
	The conformal partial waves are explicit functions of the scaling dimensions $\Delta_i$ and spins $\rho_i$.
	The quadratic equations are called \textit{crossing equations} or \textit{conformal bootstrap equations}.
	
	The conformal bootstrap is the method of studying CFTs by analyzing these equations.
	This idea dates back to the 1970s \cite{Ferrara--Grillo--Gatto,Polyakov,Mack} and developed rapidly after Belavin, Polyakov, and Zamolodchikov \cite{Belavin--Polyakov--Zamolodchikov} used it to solve infinitely many 2d CFTs, namely the minimal models (see \cite{Ribault_exactly_solvable} for an exposition). It was only realized much more recently \cite{Rattazzi--Rychkov--Tonni--Vichi,Rychkov_3D?,El-Showk_et_al_14} that the conformal bootstrap equations are useful in dimension $d \geq 3$ as well, even though they cannot be solved exactly in this case (aside from trivial solutions).
	They are useful because they are quadratic in the OPE coefficients, so they can be analyzed using linear/semidefinite programming.
	This has proven remarkably powerful. For example, for the Ising and $\O(2)$ models in 3d, the conformal bootstrap has given more precise determinations of the lowest few scaling dimensions than either physical experiment or Monte Carlo simulation \cite{Chang_et_al_25,Chester_et_al}.
	For more on the history of the bootstrap, see \cite{Rychkov_25}.
	
	We can now state the expected answer to Question~\ref{qtn:cft_rep}.
	Conjecturally, $\H$ is the state space of a CFT on $\R^d$ if and only if it admits an $\so(2,d)$-equivariant OPE $\D \to \H$ satisfying crossing symmetry.
	In practice, the representation-theoretic definition of the domain $\D$ is unwieldy, and one instead defines an OPE simply as a collection of OPE coefficients.
	In the conformal bootstrap literature, it is common to suppress $\H$ in addition to $\D$. Then the conjecture is that a collection of scaling dimensions, spins, and OPE coefficients comes from a CFT if and only if it satisfies the conformal bootstrap equations.
	
	%
	
	With the above in mind, we turn to Question~\ref{qtn:hyperbolic_rep}.
	Recall that we set $G = \PSL_2(\R)$.
	If $\H \simeq L^2(\Gamma \backslash G)$, then there is a densely defined multiplication $\H \otimes \H \to \H$, namely pointwise multiplication of functions on $\Gamma \backslash G$.
	We will axiomatize the basic algebraic properties of this multiplication (equivariance, commutativity, associativity, etc.).
	In Definition~\ref{def:mult_rep}, we define a \textit{multiplicative representation} to be a unitary representation $\H$ of $G = \PSL_2(\R)$ with discrete spectrum, equipped with a densely defined multiplication obeying the axioms.
	In this setting, the domain of multiplication is easier to define than for CFTs.
	Theorem~\ref{thm:eq_Gelfand_duality} states that every nontrivial multiplicative representation is isomorphic to $L^2(\Gamma \backslash G)$ for some cocompact lattice $\Gamma$.
	This gives an answer to Question~\ref{qtn:hyperbolic_rep}: $\H \simeq L^2(\Gamma \backslash G)$ for some $\Gamma$ if and only if $\H$ admits a multiplication making it into a nontrivial multiplicative representation.
	The proof of Theorem~\ref{thm:eq_Gelfand_duality} occupies the bulk of the paper.
	
	Theorem~\ref{thm:eq_Gelfand_duality} can be viewed as a $G$-equivariant Gelfand duality theorem, in which equivariance serves as a substitute for analytic assumptions.
	A key tool in the proof is the original Gelfand duality theorem, the classification of commutative unital C*-algebras (Theorem~\ref{thm:Gelfand_duality}).
	The proof of Theorem~\ref{thm:eq_Gelfand_duality} consequently splits into two parts: in the first, we build the C*-algebra to which we will apply Theorem~\ref{thm:Gelfand_duality}, and in the second, we show that the compact Hausdorff space given by Theorem~\ref{thm:Gelfand_duality} is of the form $\Gamma \backslash G$.
	The first part is quantitative while the second is qualitative.
	Both are nontrivial.
	The proof is outlined in Section~\ref{sec:outline_existence};
	see Subsection~\ref{subsec:reading} for suggestions for reading.
	
	The practical power of the conformal bootstrap lies in the equations.
	Motivated by this, we encode a multiplicative representation $\H$ by numerical data as follows.
	Multiplicative representations come with a canonical complex conjugation. The subspace $\H_{\R} \subseteq \H$ fixed by complex conjugation is preserved by $G$ (see Remark~\ref{rem:real_subspace}).
	The real unitary representation $\H_{\R}$ is a direct sum of principal series with Casimir eigenvalues $\{\lambda_r\}_{r \geq 0}$ and discrete series with lowest positive weights $\{k_r\}_{r \geq 1}$.
	The multisets of numbers $\{\lambda_r\}_{r \geq 0}$ and $\{k_r\}_{r \geq 1}$ encode the isomorphism class of $\H_{\R}$ as a real unitary representation of $G$.
	We choose an orthonormal basis $\{\psi_i\}_{i \in I}$ of $\H$ by a procedure similar to the one outlined above for CFTs.
	We call such a basis \textit{$(\g,K)$-adapted} (Definition~\ref{defn:(g,K)-adapted}).
	The multiplication on $\H$ is encoded by the structure constants $C_{ij}^{\ell} = \langle \psi_i \psi_j, \psi_{\ell} \rangle_{L^2(\Gamma \backslash G)}$.
	The axioms of a multiplicative representation translate to a system of linear and quadratic equations in the $C_{ij}^{\ell}$ with coefficients depending explicitly on the $\lambda_r$ and $k_r$.
	We call these the \textit{hyperbolic bootstrap equations} (Definition~\ref{defn:spectral_eqns}).
	Like the conformal bootstrap equations in dimension $d \geq 3$, the hyperbolic bootstrap equations are not exactly solvable, but they can be analyzed by linear/semidefinite programming.
	From Theorem~\ref{thm:eq_Gelfand_duality}, we deduce Theorem~\ref{thm:main}, which says that every solution $(\{\lambda_r\}_{r \geq 0}, \{k_r\}_{r \geq 1}, \{C_{ij}^{\ell}\}_{i,j,\ell \in I})$ to the hyperbolic bootstrap equations comes from $L^2(\Gamma \backslash G)$, for some cocompact lattice $\Gamma$, with some choice of $(\g,K)$-adapted basis $\{\psi_i\}_{i \in I}$.
	This is the exact analog for hyperbolic surface spectra of the conjectural answer to Question~\ref{qtn:cft_rep} discussed above.
	The implications of Theorem~\ref{thm:main} for Question~\ref{qtn:spectral_multisets_smooth} are articulated in Remark~\ref{rem:application_to_Q1}.
	
	
	Our formulation of the hyperbolic bootstrap equations is new, but it is closely related to \cite[Theorem~4.12]{Bonifacio--Mazac--Pal} for $G = \PGL_2(\C)$.
	Theorem~\ref{thm:main} resolves the analog for $G = \PSL_2(\R)$ of \cite[Open Problem~8.1]{Bonifacio--Mazac--Pal}.
	
	The idea that equivariance and associativity of multiplication on $\Gamma \backslash G$ impose constraint equations on spectral data is present in the work cited above on bounds for $\lambda_1$ and in the work \cite{Bernstein--Reznikov_10,Reznikov_unfolding,Michel--Venkatesh,Adve_et_al} on subconvexity for $L$-functions.
	In Section~\ref{sec:applications}, we explain how results in \cite{Kravchuk--Mazac--Pal} and \cite{Adve_et_al} regarding $\lambda_1$ and subconvexity can be obtained through the formalism of the hyperbolic bootstrap equations.
	
	
	
	
	We state Theorems~\ref{thm:eq_Gelfand_duality} and \ref{thm:main}, the two main theorems, in Subsections~\ref{subsec:mult_rep} and \ref{subsec:hyp_bootstrap_eqns}, respectively.
	These two subsections are independent of each other.
	Then in Subsection~\ref{subsec:equivalence}, we formalize the sense in which these theorems are equivalent.
	
	\subsection{Multiplicative representations and the first main theorem} \label{subsec:mult_rep}
	
	We begin with some notation and elementary definitions.
	As above, let $G = \PSL_2(\R)$.
	Let $K = \PSO_2(\R)$ be the standard maximal compact subgroup of $G$.
	Let $\g_{\R}$ be the Lie algebra of $G$ and $\g$ its complexification.
	
	Let $\widehat{G}$ be the unitary dual of $G$.
	We say that a subset $\Pi \subseteq \widehat{G}$ is \emph{bounded} if there is a bounded subset $B \subseteq \R$ such that each element of $\Pi$ has Casimir eigenvalue in $B$. A multiset $\Pi \subseteq \widehat{G}$ is \emph{discrete} if its intersection with any bounded subset of $\widehat{G}$ is finite.
	This condition implies that $\Pi$ is countable and has finite multiplicities.
	
	
	If $\H$ is a unitary representation of $G$, then $\H^{\infty}$ denotes the subspace of smooth vectors in $\H$, i.e., vectors $v \in \H$ such that $g \mapsto gv$ is a smooth $\H$-valued function on $G$.
	For example, if $\H = L^2(\Gamma \backslash G)$ for some cocompact lattice $\Gamma$, then $\H^{\infty} = C^{\infty}(\Gamma \backslash G)$.
	
	\begin{defn}[Discrete spectrum and countable spectrum] \label{defn:pure_point_spectrum}
		A unitary representation $\H$ of $G$ has \emph{discrete} (resp. \emph{countable}) \emph{spectrum} if there is a discrete (resp. countable) multiset $\Pi \subseteq \widehat{G}$ such that $\H \simeq \bigoplus_{\pi \in \Pi} \pi$. If so, then $\Pi$ is unique, and it is called the \emph{spectrum} of $\H$.
	\end{defn}
	
	
	We say that a real unitary representation has the above properties if its complexification does (when unspecified, representations and function spaces are over $\C$).
	
	Given an irreducible unitary representation $\pi$ of $G$, let $\pi^{\fin} \subseteq \pi$ denote the dense subspace of $K$-finite vectors. Then $\pi^{\fin} \subseteq \pi^{\infty}$.
	
	\begin{defn}[$\H^{\fin}$] \label{defn:H^fin}
		Suppose $\H$ is a unitary representation of $G$ with discrete spectrum. Let $\Pi$ be the spectrum of $\H$, and fix a decomposition $\H = \bigoplus_{\pi \in \Pi} \pi$. Then define $\H^{\fin} = \bigoplus_{\pi \in \Pi} \pi^{\fin}$, where here we use the algebraic direct sum rather than the Hilbert space direct sum. Since $\H$ has discrete spectrum, $\Pi$ has finite multiplicities, so this definition of $\H^{\fin}$ is independent of the choice of decomposition of $\H$.
	\end{defn}
	
	The subspace $\H^{\fin}$ is $\g$-invariant and $K$-invariant but almost never $G$-invariant.
	It is always dense in $\H$.
	Moreover, $\H^{\fin} \subseteq \H^{\infty}$, and $\H^{\fin}$ is dense in $\H^{\infty}$ with respect to the natural Fr\'echet topology on $\H^{\infty}$.
	
	If $\H = L^2(\Gamma \backslash G)$ for some cocompact lattice $\Gamma$, then $\H$ has discrete spectrum, and $\H^{\fin}$ is the algebraic linear span of automorphic forms on $\Gamma \backslash G$.
	Definition~\ref{def:aut_vector} and Proposition~\ref{prop:Maass_span_H^fin} generalize this characterization of $\H^{\fin}$ to arbitrary $\H$.
	
	Again if $\H = L^2(\Gamma \backslash G)$, then since $\H^{\fin} \subseteq \H^{\infty} = C^{\infty}(\Gamma \backslash G)$, pointwise multiplication gives a well-defined bilinear map $\H^{\fin} \times \H^{\fin} \to \H^{\infty}$.
	We note that the image of this map is not contained in $\H^{\fin}$ (this is not completely obvious; it follows for example from the first remark in \cite{Bernstein--Reznikov_99}).
	The fact that $\H^{\fin}$ is not closed under multiplication is a source of many analytic difficulties.
	
	\begin{defn}[Multiplicative representation] \label{def:mult_rep}
		A \emph{multiplicative representation} $\H$ is a unitary representation of $G$ with discrete spectrum, equipped with a bilinear map $\H^{\fin} \times \H^{\fin} \to \H^{\infty}$ denoted $(\alpha,\beta) \mapsto \alpha\beta$ and called \emph{multiplication}, such that the following properties hold.
		\begin{itemize} \itemsep = 0.5em
			\item \emph{Commutativity}: For all $\alpha,\beta \in \H^{\fin}$, one has $\alpha\beta = \beta\alpha$.
			
			\item \emph{Existence of a unit}: There is an element $\mathbf{1} \in \H^{\fin}$ such that $\mathbf{1}\alpha = \alpha$ for all $\alpha \in \H^{\fin}$. This element is automatically unique (see Remark~\ref{rem:on_mult_rep_def}), and we call it the \emph{unit}.
			
			\item \emph{Normalization}: The unit has norm $\|\mathbf{1}\|_{\H} = 1$.
			
			\item \emph{Ergodicity}: The subspace of $\H$ fixed by $G$ is spanned by the unit, i.e., $\H^G = \mathbf{C1}$.
			
			\item \emph{Equivariance}: For all $X \in\g$ and $\alpha,\beta \in \H^{\fin}$, one has the \emph{product rule}
			\begin{align} \label{eqn:prod_rule}
				X(\alpha\beta)
				= (X\alpha)\beta + \alpha(X\beta).
			\end{align}
			
			\item \emph{Existence of complex conjugates}: For each $\alpha \in \H^{\fin}$, there exists $\overline{\alpha} \in \H^{\fin}$ such that for all $\beta,\gamma \in \H^{\fin}$,
			\begin{align} \label{eqn:3_term_crossing}
				\langle \alpha\beta, \gamma \rangle_{\H}
				= \langle \beta, \overline{\alpha} \gamma \rangle_{\H}.
			\end{align}
			Since $\H^{\fin}$ is dense in $\H$, taking $\gamma = \mathbf{1}$ and ranging over all $\beta \in \H^{\fin}$ shows that $\overline{\alpha}$ is unique. We call $\overline{\alpha}$ the \emph{complex conjugate} of $\alpha$.
			
			\item \emph{Crossing symmetry}: For all $\alpha_1,\alpha_2,\alpha_3,\alpha_4 \in \H^{\fin}$ and all permutations $\sigma \in S_4$,
			\begin{align} \label{eqn:crossing}
				\langle \alpha_1\alpha_2, \overline{\alpha_3} \, \overline{\alpha_4} \rangle_{\H}
				= \langle \alpha_{\sigma(1)}\alpha_{\sigma(2)}, \overline{\alpha_{\sigma(3)}} \, \overline{\alpha_{\sigma(4)}} \rangle_{\H}.
			\end{align}
			Adopting CFT terminology, we call instances of \eqref{eqn:crossing} \textit{crossing equations}.
		\end{itemize}
		Given $\alpha \in \H^{\fin}$, we denote 
		\begin{align*}
			\alpha^2 = \alpha\alpha
			\qquad \text{and} \qquad
			|\alpha|^2 = \alpha\overline{\alpha}.
		\end{align*}
		A multiplicative representation $\H$ is \emph{nontrivial} if $\dim \H > 1$.
		An \emph{isomorphism of multiplicative representations} is an isomorphism of the underlying unitary representations respecting multiplication.
	\end{defn}
	
	Given a lattice $\Gamma$ in $G$, we always equip $\Gamma \backslash G$ with the Haar probability measure.
	
	\begin{exmp}
		Let $\Gamma$ be a cocompact lattice in $G$.
		Then $L^2(\Gamma \backslash G)$ is a multiplicative representation with multiplication given pointwise.
	\end{exmp}
	
	The first of our two main theorems is the following.
	We emphasize that it is for $G = \PSL_2(\R)$.
	
	\begin{thm}[Converse theorem for multiplicative representations] \label{thm:eq_Gelfand_duality}
		Let $\H$ be a nontrivial multiplicative representation. Then $\H \simeq L^2(\Gamma \backslash G)$, as multiplicative representations, for some cocompact lattice $\Gamma$ in $G$.
	\end{thm}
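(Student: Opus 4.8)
The plan is to reconstruct $\H$ as $L^{2}$ of the Gelfand spectrum of a commutative C*-algebra distilled from the multiplication, and then to recognize that spectrum as a quotient $\Gamma\backslash G$; as the introduction indicates, this splits into a quantitative half (building the algebra) and a qualitative half (identifying its spectrum). I would organize both around the multiplication operators $M_\alpha\colon\beta\mapsto\alpha\beta$. \emph{Building the C*-algebra.} A preliminary task — the analytic point flagged in Remark~\ref{rem:smoothness_of_products} — is to extend multiplication to a commutative, associative bilinear map $\H^{\infty}\times\H^{\infty}\to\H^{\infty}$ compatible with $G$, with complex conjugation, and with \eqref{eqn:3_term_crossing}; I would obtain this by bootstrapping Sobolev estimates on products out of the product rule \eqref{eqn:prod_rule} and the crossing equations \eqref{eqn:crossing}. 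Granting it, each $M_\alpha$ (for $\alpha\in\H^{\infty}$) is defined on $\H^{\infty}$ and satisfies $M_\alpha^{*}=M_{\overline\alpha}$, $M_\alpha M_\beta=M_{\alpha\beta}$, and $[X,M_\alpha]=M_{X\alpha}$ for $X\in\g$. The crucial — and genuinely hard — step is then to prove that every $M_\alpha$ is \emph{bounded} on $\H$ with operator norm effectively controlled: the crossing equations quickly give the H\"older-type identity $\norm{\alpha\beta}_{\H}^{2}=\langle|\alpha|^{2},|\beta|^{2}\rangle_{\H}$, and iterating bounds of this kind on the powers $\alpha^{n}$, played off against the $K$-type structure of the principal and discrete series of $\PSL_2(\R)$ in the manner of a conformal-bootstrap estimate, should bound $\norm{M_\alpha}_{\mathrm{op}}$. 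Let $A\subseteq B(\H)$ be the C*-algebra generated by $\{M_\alpha:\alpha\in\H^{\infty}\}$: by the commutativity axiom it is commutative, it is unital with unit $M_{\mathbf 1}=\id$, it carries a $*$-automorphism action of $G$ via $gM_\alpha g^{-1}=M_{g\alpha}$, and $\g$ acts on it by the densely defined derivations $\delta_X\colon M_\alpha\mapsto M_{X\alpha}$.

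\emph{Gelfand duality and the invariant measure.} Since $M_\alpha\mathbf 1=\alpha$, the unit is cyclic for $A$, and the vector state $\phi(a)=\langle a\mathbf 1,\mathbf 1\rangle_{\H}$ is faithful (commutativity together with cyclicity). By the Gelfand duality theorem (Theorem~\ref{thm:Gelfand_duality}), $A\cong C(X)$ for a compact Hausdorff space $X$, the state $\phi$ becomes integration against a Radon probability measure $\mu$ of full support, and the GNS construction identifies $\H$ with $L^{2}(X,\mu)$ with $A$ acting by multiplication — so the product on $\H$ becomes pointwise multiplication of functions, $\mathbf 1$ the constant $1$, and $\alpha\mapsto\overline\alpha$ complex conjugation. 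The $*$-automorphism action of $G$ gives a continuous action of $G$ on $X$ by homeomorphisms; the product rule forces $X\mathbf 1=0$ for all $X\in\g$, so $\mathbf 1$ is fixed by the connected group $G$ and hence $\mu$ is $G$-invariant; and the ergodicity axiom $\H^{G}=\C\mathbf 1$ says precisely that $(X,\mu)$ is $G$-ergodic.

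\emph{Identifying $X$ with $\Gamma\backslash G$.} On the locus $U\subseteq X$ where the $G$-action is locally free, the three vector fields coming from $\g$ frame the tangent bundle and endow $U$ with a smooth structure making the action smooth. The complement $X\setminus U$ is closed and $G$-invariant, hence $\mu$-null or all of $X$; it cannot be all of $X$, for then $\mu$ would be a finite $G$-invariant measure for an action with everywhere positive-dimensional stabilizers, which is impossible for $\PSL_2(\R)$ (whose positive-dimensional homogeneous spaces $G/K$, $G/A$, $G/N$ carry only infinite invariant measure, while $G/P\cong S^{1}$ carries none). Thus $U=X$: $X$ is a smooth $3$-manifold with a locally free smooth $G$-action. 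Then each orbit is open, hence clopen, so by ergodicity and full support of $\mu$ there is a single orbit and the action is transitive. Fixing a basepoint with (discrete) stabilizer $\Gamma_{0}$ gives $X\cong G/\Gamma_{0}$, which is compact, so $\Gamma_{0}$ is a cocompact lattice and $\mu$ is its Haar probability measure; applying an anti-automorphism of $G$ (e.g.\ $g\mapsto g^{-1}$) rewrites $G/\Gamma_{0}$ as $\Gamma\backslash G$ for a conjugate cocompact lattice $\Gamma$. Unwinding all identifications, the resulting isomorphism $\H\cong L^{2}(\Gamma\backslash G)$ intertwines the $G$-actions and respects products, units, and complex conjugation, i.e.\ is an isomorphism of multiplicative representations.

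\emph{Main obstacle.} I expect the decisive difficulty to be the boundedness of the multiplication operators: nothing given a priori prevents an element of $\H^{\fin}$ from behaving like an unbounded $L^{2}$-function, and bridging this gap is exactly where the crossing equations must be leveraged against the fine representation theory of $\PSL_2(\R)$ — this quantitative core is presumably where the bulk of the work lies. Secondary, and merely qualitative, is the manifold step: establishing the extension of multiplication to smooth vectors, checking that $U$ is open with $\H^{\infty}$ realized as $C^{\infty}(X)$, and rigorously excluding the degenerate lower-dimensional actions.
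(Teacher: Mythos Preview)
Your architecture matches the paper's—commutative C*-algebra from the $M_\alpha$, Gelfand duality, then identify the spectrum—and you correctly locate the decisive analytic difficulty in the boundedness of the $M_\alpha$. But your execution diverges in two places that hide real obstacles. First, the paper never extends multiplication to $\H^\infty\times\H^\infty$; it works with $\H^{\fin}\times\H^{\fin}\to\H^\infty$ throughout. Since $\H^{\fin}$ is $\g$-invariant but not $G$-invariant, your formula $gM_\alpha g^{-1}=M_{g\alpha}$ is unavailable, and the paper instead proves the derivative bound $\|X^n\alpha\|_{L^\infty}\lesssim_\alpha O(n\|X\|_\g)^n$ (Theorem~\ref{thm:high_deriv_L^infty_bd}) and writes $gM_\alpha g^{-1}=\sum_n\frac{1}{n!}M_{X^n\alpha}$ for $g=\exp X$ small, convergent in operator norm. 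Whether multiplication extends continuously to $\H^\infty$ is unclear: the paper's best Sobolev-type estimate is only \emph{quasi}-polynomial, $\|\alpha\|_{L^\infty}\le\|\exp(O(\log_+^2\Delta))\alpha\|_\H$ for $\alpha\in\H^K\cap\H^{\fin}$, which does not obviously give Fr\'echet continuity of $\alpha\mapsto M_\alpha$.

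Second, and more seriously, your identification of $X$ presupposes a smooth structure that is not given. After Gelfand duality $X$ is merely compact Hausdorff; there is no tangent bundle for ``the three vector fields from $\g$'' to frame, and a free continuous action of a Lie group on a compact connected space with an ergodic invariant probability measure need not have open orbits nor force $X$ to be a manifold (solenoidal inverse limits of the kind behind Remark~\ref{rem:discrete_vs_pure_pt} show this for $G=\PSL_2(\R)$ itself). The paper instead derives from the $L^\infty$ bounds a \emph{qualitative Sobolev embedding property}—convolution by some fixed nonnegative continuous $\varphi\in L^1(G)$ maps $L^2(X,\mu)$ into $L^\infty(X,\mu)$—and then proves (Theorem~\ref{thm:embedding_homog}), by foliating a neighborhood by local $G$-orbits and, were $X$ inhomogeneous, exhibiting infinitely many disjoint $G$-thickened open sets supporting a function in $L^2\setminus L^\infty$ unimproved by convolution, that any connected Hausdorff $(X,\mu)$ with this property is $H\backslash G$ for some closed cocompact $H$; simplicity of $\PSL_2(\R)$ then forces $H$ discrete. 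So the quantitative half feeds the qualitative half through Sobolev embedding, a link your outline omits entirely.
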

	
	We prove this in Sections~\ref{sec:complex_conj}--\ref{sec:bulk_tail_general}.
	The corresponding uniqueness statement also holds:
	
	\begin{prop} \label{prop:mult_rep_uniqueness}
		Let $\Gamma,\Gamma'$ be cocompact lattices in $G$. Suppose $\Phi \colon L^2(\Gamma \backslash G) \to L^2(\Gamma' \backslash G)$ is an isomorphism of multiplicative representations.
		Then there exists a unique element $g \in G/\Gamma$ such that $\Gamma' = g\Gamma g^{-1}$ and
		\begin{align*}
			\Phi(f)(x)
			= f(g^{-1}x)
		\end{align*}
		for all $f \in L^2(\Gamma \backslash G)$.
	\end{prop}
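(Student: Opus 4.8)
The plan is to recover the homogeneous space $\Gamma\backslash G$ from the algebra of multiplication operators via Gelfand duality, transport this identification along $\Phi$, and then invoke the rigidity of transitive $G$-actions. The argument has three steps.

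\emph{Step 1: $\Phi$ preserves the structure.} Being a unitary isomorphism of $G$-representations, $\Phi$ carries $\H^{\fin}$ to $\H^{\fin}$ and $\H^{\infty}$ to $\H^{\infty}$ and preserves inner products. Since $\Phi(\mathbf{1})$ acts as a unit and the unit is unique, $\Phi(\mathbf{1})=\mathbf{1}$. A one-line computation with \eqref{eqn:3_term_crossing}, using multiplicativity of $\Phi$ and the uniqueness of complex conjugates, gives $\Phi(\overline\alpha)=\overline{\Phi(\alpha)}$; moreover \eqref{eqn:3_term_crossing} applied inside $L^2(\Gamma\backslash G)$ forces the complex conjugate there to be the pointwise complex conjugate of $\alpha$.

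\emph{Step 2: build the C*-algebra and dualize.} For $\H=L^2(\Gamma\backslash G)$ and $\alpha\in\H^{\fin}\subseteq C^{\infty}(\Gamma\backslash G)$, pointwise multiplication by $\alpha$ defines a bounded operator $M_\alpha$ on $L^2(\Gamma\backslash G)$ with $\|M_\alpha\|=\|\alpha\|_{C^0(\Gamma\backslash G)}$. Let $\A_\Gamma\subseteq B(L^2(\Gamma\backslash G))$ be the C*-algebra generated by $\{M_\alpha:\alpha\in\H^{\fin}\}$. Because the unital $*$-subalgebra of $C(\Gamma\backslash G)$ generated by $\H^{\fin}$ separates points (it is dense, being dense already in $C^{\infty}(\Gamma\backslash G)$), Stone--Weierstrass gives $\A_\Gamma=\{M_\gamma:\gamma\in C(\Gamma\backslash G)\}\cong C(\Gamma\backslash G)$; by Theorem~\ref{thm:Gelfand_duality} its Gelfand spectrum is $\Gamma\backslash G$ via evaluation at points, and the conjugation action of $G$ on $\A_\Gamma$ corresponds to right translation on $\Gamma\backslash G$. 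The same applies to $\Gamma'$. By multiplicativity of $\Phi$ (and boundedness, to pass from the dense subspace $\H^{\fin}$ to all of $L^2$) one has $\Phi M_\alpha\Phi^{-1}=M_{\Phi(\alpha)}$, so conjugation by $\Phi$ is a $G$-equivariant $*$-isomorphism $\A_\Gamma\to\A_{\Gamma'}$. Dualizing produces a $G$-equivariant homeomorphism $\phi\colon\Gamma'\backslash G\to\Gamma\backslash G$ with $\Phi(f)(y)=f(\phi(y))$ for all $f\in C(\Gamma\backslash G)$, hence for all $f\in L^2(\Gamma\backslash G)$ by density.

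\emph{Step 3: classify $\phi$ and conclude.} Choose $g\in G$ with $\phi(\Gamma' e)=\Gamma g^{-1}$, where $\Gamma' e$ is the identity coset. Right-equivariance forces $\phi(\Gamma' x)=\Gamma g^{-1}x$ for all $x\in G$; well-definedness of this formula requires $g^{-1}\Gamma' g\subseteq\Gamma$, and injectivity of $\phi$ forces $g\Gamma g^{-1}\subseteq\Gamma'$, so together $\Gamma'=g\Gamma g^{-1}$. Then $\Phi(f)(x)=f(\phi(\Gamma' x))=f(g^{-1}x)$, which is the claimed formula. For uniqueness, the formula determines $\phi$ (continuous functions separate points of $\Gamma\backslash G$), hence determines $\Gamma g^{-1}=\phi(\Gamma' e)$, hence determines the class of $g$ in $G/\Gamma$. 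The only step with genuine content is the identification of the spectrum in Step 2; here the boundedness of $M_\alpha$ is immediate for $L^2(\Gamma\backslash G)$, but it is precisely the quantitative phenomenon requiring work in the proof of Theorem~\ref{thm:eq_Gelfand_duality}, and one could alternatively deduce the proposition from that proof by tracking its functoriality in $\H$.
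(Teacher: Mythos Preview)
Your proof is correct and follows essentially the same route as the paper: both identify $C(\Gamma\backslash G)$ as a commutative C*-algebra, show that $\Phi$ induces a $G$-equivariant C*-isomorphism, and then invoke Gelfand duality to obtain the equivariant homeomorphism $\phi$. The only organizational difference is that the paper works directly with function spaces---proving first that $\|\Phi(f)\|_{L^\infty}=\|f\|_{L^\infty}$ on $\H^{\fin}$ via the operator-norm characterization, then extending by density to a C*-isomorphism $C(\Gamma\backslash G)\to C(\Gamma'\backslash G)$---whereas you pass through the algebra $\A_\Gamma$ of multiplication operators and use Stone--Weierstrass to identify its spectrum; this is exactly the framework of Section~\ref{sec:reduce_to_L^infty}, so your closing remark about functoriality in $\H$ is apt.
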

	
	We prove this in Section~\ref{sec:uniqueness}.
	Proposition~\ref{prop:mult_rep_uniqueness} is much easier than Theorem~\ref{thm:eq_Gelfand_duality}.
	
	\begin{rem}[Motivation for Definition~\ref{def:mult_rep}]
		One could consider other definitions of multiplicative representation, with a different domain of multiplication or with different axioms.
		Definition~\ref{def:mult_rep} is chosen so that when multiplicative representations are encoded by numerical data, Theorem~\ref{thm:eq_Gelfand_duality} translates into a clean numerical statement (Theorem~\ref{thm:main}).
		More specifically, the domain $\H^{\fin}$ is chosen so that Proposition~\ref{prop:basis_spans_H^fin} holds, and the axioms are chosen so that Proposition~\ref{prop:mult_rep_vs_HB} can be proved with relatively little work.
	\end{rem}
	
	\begin{rem}[Technical comments on Definition~\ref{def:mult_rep}]
		\label{rem:on_mult_rep_def}
		\,
		
		\begin{itemize} \itemsep = 0.5em
			\item The unit is unique because if $\mathbf{1}, \mathbf{1}'$ are both units, then $\mathbf{1} = \mathbf{1} \mathbf{1}' = \mathbf{1}'$.
			
			
			\item Since $\H^{\fin}$ is closed under the $\g$-action but not the $G$-action, equivariance of multiplication has to be stated with respect to $\g$ rather than $G$. This is why equivariance amounts to the product rule \eqref{eqn:prod_rule}. Note that the left hand side of \eqref{eqn:prod_rule} makes sense because the multiplication map takes values in $\H^{\infty}$.
			
			\item It is a pleasant feature that existence of complex conjugates is a property rather than part of the data of a multiplicative representation. We will see quite easily in Section~\ref{sec:complex_conj} that complex conjugation on $\H^{\fin}$ obeys all the expected properties.

			\item By commutativity, the crossing equation \eqref{eqn:crossing} is trivial for $\sigma = (12)$ and $\sigma = (34)$. The permutations $(12),(23),(34)$ generate $S_4$, so crossing symmetry for all permutations is equivalent to crossing symmetry for the single permutation $\sigma = (23)$.
			
			\item 
			Crossing symmetry is a substitute for associativity of multiplication.
			Instead of \eqref{eqn:crossing}, we would have preferred to assume $(\alpha\beta)\gamma = \alpha(\beta\gamma)$ for all $\alpha,\beta,\gamma \in \H^{\fin}$. Unfortunately, $\H^{\fin}$ is not closed under multiplication, so $(\alpha\beta)\gamma$ is not defined in general. The standard workaround is to test the equality $(\alpha\beta)\gamma = \alpha(\beta\gamma)$ against a fourth vector $\delta \in \H^{\fin}$ and write
			\begin{align} \label{eqn:associativity_vs_crossing}
				\langle \alpha\beta, \overline{\gamma}\delta \rangle_{\H}
				= ``\langle (\alpha\beta)\gamma, \delta \rangle_{\H}"
				= ``\langle \alpha(\beta\gamma), \delta \rangle_{\H}"
				= \langle \beta\gamma, \overline{\alpha}\delta \rangle_{\H},
			\end{align}
			with the middle equality by associativity.
			After rewriting $\alpha\beta$ as $\beta\alpha$ in the left hand side, the equality of the left and right hand sides becomes \eqref{eqn:crossing} with $\alpha_1=\beta$, $\alpha_2=\alpha$, $\alpha_3=\gamma$, $\alpha_4=\overline{\delta}$, and $\sigma = (23)$.
			
			
			
			\item The normalization and ergodicity axioms are not crucial. If these are dropped from Definition~\ref{def:mult_rep}, then the proof of Theorem~\ref{thm:eq_Gelfand_duality} easily adapts to show that every multiplicative representation is of the form $L^2(X,\mu)$ for some compact $G$-space $X$ and some finite $G$-invariant measure $\mu$, with $X$ a disjoint union of points and $G$-spaces of the form $\Gamma \backslash G$.
		\end{itemize}
	\end{rem}
	
	\begin{rem}[Other groups] \label{rem:other_gps}
		Definition~\ref{def:mult_rep} extends essentially verbatim to other semisimple Lie groups, and one can ask if Theorem~\ref{thm:eq_Gelfand_duality} extends as well.
		Specifically for $G = \PGL_2(\C)$, the effectiveness of \cite{Bonifacio--Mazac--Pal} suggests that Theorem~\ref{thm:eq_Gelfand_duality} holds.
		A proof of this would solve \cite[Open Problem~8.1]{Bonifacio--Mazac--Pal}.
		At the moment, the quantitative part of the proof of Theorem~\ref{thm:eq_Gelfand_duality} makes significant use of the structure of irreducible representations of $G = \PSL_2(\R)$.
		However, the proof does not use anything that would preclude generalization --- notably, it does not use uniqueness of trilinear functionals
		as in \cite{Prasad,Loke}.
	\end{rem}
	
	\subsection{Multiplicative spectra and the second main theorem} \label{subsec:hyp_bootstrap_eqns}
	
	In this subsection we define \emph{multiplicative spectra}, formulate the hyperbolic bootstrap equations, and state our second main theorem.
	Again, we need some preliminary definitions.
	
	
	
	Let $\mathbf{H}$ be the upper half-plane with its hyperbolic metric. The group $G = \PSL_2(\R)$ acts on $\mathbf{H}$ by M\"obius transformations, preserving the metric and preserving orientation.
	The stabilizer of $i \in \mathbf{H}$ is $K = \PSO_2(\R)$, and thus we identify $\mathbf{H} = G/K$.
	In this paper, a \textit{hyperbolic 2-orbifold} is a quotient of $\mathbf{H}$ by a discrete subgroup $\Gamma$ of $G$.
	We will only work with compact hyperbolic 2-orbifolds, so that the Laplace spectrum is discrete.
	A \textit{hyperbolic surface} is a hyperbolic 2-orbifold with no orbifold points.
	The \textit{topological type} of a compact hyperbolic 2-orbifold of genus $g$ with $s$ orbifold points is $[g;m_1,\dots,m_s]$, where $2 \leq m_1 \leq \cdots \leq m_s$ are the orders of the orbifold points.
	
	For $k \in \Z_{\geq 1}$ and $\Gamma$ a cocompact lattice in $G$, let $M_{2k}(\Gamma)$ be the space of holomorphic modular forms of weight $2k$ for $\Gamma$, i.e., the space of holomorphic functions $f \colon \mathbf{H} \to \C$ such that
	\begin{align} \label{eqn:modular_form_def}
		f\Big(\frac{az+b}{cz+d}\Big) = (cz+d)^{2k} f(z)
		\qquad \text{for all} \qquad
		\begin{pmatrix}
			a & b \\
			c & d
		\end{pmatrix}
		\in \Gamma.
	\end{align}
	
	\begin{defn}[Holomorphic spectrum] \label{defn:holomorphic_spectrum}
		The \textit{holomorphic spectrum} of a compact hyperbolic 2-orbifold $\Gamma \backslash \mathbf{H}$ is the multiset of positive integers
		\begin{align} \label{eqn:hol_spectrum_def}
			\{k_1 \leq k_2 \leq \cdots\}
		\end{align}
		in which each $k \in \Z_{\geq 1}$ appears with multiplicity $\dim M_{2k}(\Gamma)$.
	\end{defn}
	
	By Riemann--Roch (in the form \cite[Theorem~4.9]{Milne}), the topological type of $\Gamma \backslash \mathbf{H}$ determines the holomorphic spectrum and vice versa.
	
	\begin{exmp} \label{exmp:hol_spectrum}
		The holomorphic spectrum of a genus 2 surface (with no orbifold points) is
		\begin{align*}
			\{1,1,2,2,2,3,3,3,3,3,4,4,4,4,4,4,4,5,5,5,5,5,5,5,5,5,6,6,6,6,6,6,6,6,6,6,6,\dots\}.
		\end{align*}
		More generally, the holomorphic spectrum of a genus $g$ surface contains each $k \in \Z_{\geq 2}$ with multiplicity $(g-1)(2k-1)$, and contains $k=1$ with multiplicity $g$.
	\end{exmp}
	
	The Laplace and holomorphic spectra of $\Gamma \backslash \mathbf{H}$ together determine the isomorphism class of $L^2(\Gamma \backslash G, \R)$ as a real unitary $G$-representation, and vice versa.
	The precise correspondence is stated in Subsection~\ref{subsec:equivalence}.
	
	\begin{defn}[Multiplication table] \label{defn:mult_table}
		Let $\Gamma \backslash \mathbf{H}$ be a compact hyperbolic 2-orbifold, let $I$ be an indexing set, and let $\{\psi_i\}_{i \in I}$ be an $I$-indexed orthonormal basis of $L^2(\Gamma \backslash G)$. The \textit{multiplication table of $\Gamma \backslash \mathbf{H}$ with respect to $\{\psi_i\}_{i \in I}$} is $\{C_{ij}^{\ell}\}_{i,j,\ell \in I}$, where
		\begin{align} \label{eqn:C_ij^l_def}
			C_{ij}^{\ell}
			= \langle \psi_i \psi_j, \psi_{\ell} \rangle_{L^2(\Gamma \backslash G)}
		\end{align}
		are the unique complex numbers such that
		\begin{align*}
			\psi_i \psi_j = \sum_{\ell \in I} C_{ij}^{\ell} \psi_{\ell}
		\end{align*}
		for all $i,j \in I$
		(assume the $\psi_i$ are all in $L^4$, so that $\psi_i\psi_j \in L^2$ and the inner product in \eqref{eqn:C_ij^l_def} makes sense; soon we will assume much more regularity than this).
	\end{defn}
	
	Under the standard identification of $\Gamma \backslash G$ with the unit tangent bundle of $\Gamma \backslash \mathbf{H}$, Haar measure on $\Gamma \backslash G$ corresponds to Liouville measure on $T^1(\Gamma \backslash \mathbf{H})$. Thus the multiplication table can be viewed as an invariant of a compact hyperbolic 2-orbifold $\Sigma$ equipped with an orthonormal basis of $L^2(T^1\Sigma$), where the unit tangent bundle $T^1\Sigma$ has the Liouville probability measure.
	This perspective shows that the multiplication table is independent of the realization of $\Sigma$ as a quotient of $\mathbf{H}$.
	
	
	We combine the Laplace spectrum, holomorphic spectrum, and multiplication table into one master invariant.
	
	\begin{defn}[Multiplicative spectrum] \label{defn:mult_spectrum}
		With notation as in Definition~\ref{defn:mult_table}, the \textit{multiplicative spectrum of $\Gamma \backslash \mathbf{H}$ with respect to $\{\psi_i\}_{i \in I}$} is the triple
		\begin{align*}
			(\{\lambda_r\}_{r \geq 0}, \{k_r\}_{r \geq 1}, \{C_{ij}^{\ell}\}_{i,j,\ell \in I}),
		\end{align*}
		where $\{\lambda_r\}_{r \geq 0}$ is the spectrum of the Laplacian on $\Gamma \backslash \mathbf{H}$ (indexed in increasing order as in \eqref{eqn:Laplace_spectrum}), $\{k_r\}_{r \geq 1}$ is the holomorphic spectrum of $\Gamma \backslash \mathbf{H}$ (indexed in increasing order as in \eqref{eqn:hol_spectrum_def}), and $\{C_{ij}^{\ell}\}_{i,j,\ell \in I}$ is the multiplication table of $\Gamma \backslash \mathbf{H}$ with respect to $\{\psi_i\}_{i \in I}$.
	\end{defn}
	
	Among all orthonormal bases of $L^2(\Gamma \backslash G)$, there is a well-behaved class which we call \textit{$(\g,K)$-adapted}, because bases in this class interact via simple formulas with the actions of $\g$ and $K$ on $C^{\infty}(\Gamma \backslash G)$ (see Proposition~\ref{prop:(g,K)-adapted_key_properties} for these formulas).
	The construction of such bases is very similar to the construction of bases of primary and descendant states for a CFT.
	We defer the precise definition of $(\g,K)$-adapted bases to Section~\ref{sec:(g,K)-adapted}.
	For now, the important features of $(\g,K)$-adapted bases $\{\psi_i\}_{i \in I}$ are the following two bullet points, as well as Proposition~\ref{prop:basis_exist_unique} below.
	\begin{itemize} \itemsep = 0.5em
		\item The basis elements $\psi_i$ are automorphic forms on $\Gamma \backslash G$ (i.e., simultaneous eigenfunctions for the Casimir operator and for translation by elements of $K$).
		
		\item The indexing set $I$ is a subset of $\Z^2$ depending only on the topological type of $\Gamma \backslash \mathbf{H}$.
		Explicitly, $I$ is defined in terms of the holomorphic spectrum $\{k_r\}_{r \geq 1}$ by \eqref{eqn:I_hol_def} below.
		We take $I$ to be a subset of $\Z^2$ so that each index $i$ has two components $i_1,i_2$; the first component $i_1$ indexes the Casimir eigenvalue of $\psi_i$, and the second component $i_2$ is equal to the weight of $\psi_i$.
		In our normalization, the weight can be any integer, not just any even integer (c.f. Remark~\ref{rem:wts_PSL_2_vs_SL_2}).
	\end{itemize}
	Although the precise definition of the indexing set $I$ will not be motivated until Section~\ref{sec:(g,K)-adapted}, we state it here for full transparency:
	\begin{align} \label{eqn:I_hol_def}
		I =
		\{i = (i_1,i_2) \in \Z^2 : i_1 > 0 \text{ or } i = (0,0) \text{ or } (i_1 < 0 \text{ and } |i_2| \geq k_{-i_1})\}.
	\end{align}
	Since the $k_r$ are positive integers,
	\begin{align} \label{eqn:i2=0_implies_i1geq0}
		i \in I \text{ and } i_2 = 0
		\quad\implies\quad
		i_1 \geq 0.
	\end{align}
	
	\begin{prop}[Existence and uniqueness of $(\g,K)$-adapted bases] \label{prop:basis_exist_unique}
		Let $\Gamma \backslash \mathbf{H}$ be a compact hyperbolic 2-orbifold, let $\{k_r\}_{r \geq 1}$ be its holomorphic spectrum, and let $I$ be as in \eqref{eqn:I_hol_def}. Then there exists a $(\g,K)$-adapted basis $\{\psi_i\}_{i \in I}$ of $L^2(\Gamma \backslash G)$. If $\{\psi_i'\}_{i \in I}$ is another such basis, then there is a unique automorphism of $L^2(\Gamma \backslash G, \R)$, as a real unitary $G$-representation, which (after complexifying) takes $\psi_i$ to $\psi_i'$ for all $i \in I$.
	\end{prop}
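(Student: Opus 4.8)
The plan is to read the basis off the decomposition of $L^2(\Gamma\backslash G)$ into irreducibles together with the representation theory of $\mathfrak{sl}_2$, and to produce the intertwiner in the uniqueness statement as the map sending one basis to the other. For existence I would begin from the spectral decomposition of $L^2(\Gamma\backslash G)$ for cocompact $\Gamma$ (the correspondence recalled in Subsection~\ref{subsec:equivalence}): it is the Hilbert direct sum of the trivial representation (multiplicity one, since $\Gamma\backslash G$ is connected), of principal and complementary series $\mathcal{P}_{\lambda_r}$ ($r\ge 1$) whose Casimir parameters are read off from the nonzero Laplace eigenvalues, and of holomorphic and antiholomorphic discrete series $\mathcal{D}^{+}_{k}$, $\mathcal{D}^{-}_{k}$ in which the lowest weight $k$ runs over the holomorphic spectrum $\{k_r\}_{r\ge 1}$, the multiplicity of $\mathcal{D}^{+}_{k}$ (equivalently of $\mathcal{D}^{-}_{k}$) being $\dim M_{2k}(\Gamma)$. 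All multiplicities are finite because the spectrum is discrete. Fixing an orthogonal such decomposition — arbitrary when a Casimir eigenvalue has multiplicity $>1$, which is the only choice in the whole construction — I match the index set $I$ of \eqref{eqn:I_hol_def} to the $K$-types of the summands: $(0,0)$ is the trivial summand, carrying the constant function $\mathbf{1}$ (of $L^2$-norm one under the Haar probability measure); for $i_1>0$ the indices $(i_1,i_2)$, $i_2\in\Z$, run over the full set of $K$-types of the $i_1$-th copy of $\mathcal{P}_{\lambda_{i_1}}$ (whose $K$-types are exactly $\Z$ in this paper's normalization); and for $i_1<0$, with $k=k_{-i_1}$, the indices $i_2\ge k$ run over the $K$-types of a copy of $\mathcal{D}^{+}_{k}$ and the indices $i_2\le -k$ over those of its paired copy of $\mathcal{D}^{-}_{k}$. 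This is a bijection precisely because the multiplicity of $\mathcal{D}^{+}_{k}$ matches the multiplicity of $k$ in $\{k_r\}_{r\ge 1}$.

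To fill in the vectors, I note that in each irreducible summand $\pi$ the $K$-isotypic pieces of $\pi^{\fin}$ are lines. I would choose a seed — a unit vector in the weight-zero line lying in $L^2(\Gamma\backslash G,\R)$ for a principal/complementary summand, or a unit lowest-weight vector of $\mathcal{D}^{+}_{k}$ with its complex conjugate serving as the weight-$(-k)$ vector of the paired $\mathcal{D}^{-}_{k}$ — and propagate to the remaining weights by applying the raising and lowering elements of $\g$ and rescaling at each step by the positive constant prescribed in Proposition~\ref{prop:(g,K)-adapted_key_properties}. The elementary $\mathfrak{sl}_2$ computation shows these are exactly the constants that preserve unit norm, so the resulting vectors lie in $\pi^{\fin}\subseteq\H^{\fin}$, form an orthonormal weight basis of $\pi^{\fin}$ obeying the required $\g$- and $K$-formulas, and — given the reality conventions imposed on the seeds — satisfy $\overline{\psi_i}=\psi_{(i_1,-i_2)}$. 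The union over all summands is orthonormal (distinct summands of the orthogonal decomposition are orthogonal) and spans $\H^{\fin}=\bigoplus_\pi\pi^{\fin}$, which is dense; hence it is an orthonormal basis, $(\g,K)$-adapted and indexed by $I$.

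For uniqueness, given a second $(\g,K)$-adapted basis $\{\psi_i'\}_{i\in I}$, I set $\Phi(\psi_i)=\psi_i'$ and extend by linearity and continuity; since both families are orthonormal bases, $\Phi$ is a unitary of $L^2(\Gamma\backslash G)$. Because $\Phi$ respects weights and respects the constants by which the raising and lowering elements of $\g$ act — these being the same for any two $(\g,K)$-adapted bases — it commutes with the $\g$- and $K$-actions on the core $\H^{\fin}$, and since $\Phi$ is bounded and $G=\PSL_2(\R)$ is connected, the standard differentiation argument upgrades this to $G$-equivariance. Moreover the conjugation formula $\overline{\psi_i}=\psi_{(i_1,-i_2)}$ (which is forced by the $(\g,K)$-adapted conditions together with the fact that complex conjugation is a $G$-equivariant antilinear isometry carrying weight $n$ to weight $-n$, hence holds in both bases) shows that $\Phi$ intertwines complex conjugation and thus restricts to an automorphism of the real unitary $G$-representation $L^2(\Gamma\backslash G,\R)$ whose complexification takes $\psi_i$ to $\psi_i'$. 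Uniqueness of such an automorphism is immediate, since the $\psi_i$ span the dense subspace $\H^{\fin}$.

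The main obstacle is the internal consistency of the $(\g,K)$-adapted normalization within a single irreducible: one must check that the prescribed raising/lowering constants, unit-norm and orthogonality, and the conjugation constraint can all be met simultaneously, and that the residual freedom — a sign on each principal/complementary summand, a phase on each discrete-series pair, and the ordering of summands with equal Casimir eigenvalue — is precisely a real unitary $G$-automorphism of $L^2(\Gamma\backslash G,\R)$, so that Proposition~\ref{prop:basis_exist_unique} holds exactly as stated. This is bookkeeping layered on the $\mathfrak{sl}_2$ representation theory, but it is where the care goes; the passage from $\g$- to $G$-equivariance of $\Phi$ and the uniqueness of $\Phi$ are then routine.
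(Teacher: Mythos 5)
Your proof is correct in its essentials and traces the same route the paper does, but the paper packages nearly all of your work into Definition~\ref{defn:(g,K)-adapted}, so that Proposition~\ref{prop:(g,K)-adapted_exist_unique} (of which Proposition~\ref{prop:basis_exist_unique} is the special case) is declared ``obvious from the definition.'' Concretely, the paper defines a $(\g,K)$-adapted basis to be the image of a fixed model basis $\{\eta_i\}_{i\in I}$ under some isomorphism of real unitary $G$-representations $\H_{\R}\simeq\bigoplus_{r\in\Z}\pi_{r,\R}$. Existence is then the statement that such an isomorphism exists (it does, since both sides have the same Laplace and holomorphic spectra, and one can arrange $\mathbf{1}\mapsto\eta_{(0,0)}$ on the one-dimensional trivial summand). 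Uniqueness is the statement that, given two such isomorphisms $\Phi_1,\Phi_2\colon\H_{\R}\to\bigoplus\pi_{r,\R}$ for the two bases, the composite $\Phi_2^{-1}\circ\Phi_1$ is the desired automorphism; being a composition of isomorphisms of real unitary $G$-representations, it is automatically $G$-equivariant. Your proof instead builds the basis directly from a chosen spectral decomposition of $L^2(\Gamma\backslash G)$, then reconstructs the intertwiner $\Phi$ from the key formulas in Proposition~\ref{prop:(g,K)-adapted_key_properties} and has to re-derive $G$-equivariance from $\g$-equivariance via a differentiation argument. That step is fine here (it uses that $K$-finite vectors in a unitary representation of a semisimple Lie group are analytic, so the exponential Taylor series converges), but it is a genuinely more cumbersome route than the paper's, where $G$-equivariance comes for free.

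One concrete slip: you assert twice that the conjugation formula is $\overline{\psi_i}=\psi_{(i_1,-i_2)}$. This is incorrect; the correct formula, recorded as \eqref{eqn:psi_i_bar_vs_bar_psi_i}, is $\overline{\psi_i}=(-1)^{i_2}\psi_{\overline{i}}$. The sign is forced by the raising/lowering normalizations you impose: if $\varphi$ is a real $K$-invariant seed with $\Delta\varphi=\lambda\varphi$ and $\psi_{(r,1)}=E\varphi/\|E\varphi\|$, then the requirement $\overline{E}\psi_{(r,0)}=-\sqrt{\lambda}\,\psi_{(r,-1)}$ gives $\psi_{(r,-1)}=-\overline{E}\varphi/\|\overline{E}\varphi\|=-\overline{\psi_{(r,1)}}$, not $+\overline{\psi_{(r,1)}}$. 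Your construction, if carried out literally with the signed constants from Proposition~\ref{prop:(g,K)-adapted_key_properties}, would in fact produce the correct $(-1)^{i_2}$, so the error is in the stated formula rather than in the construction; but a basis satisfying your stated formula together with \eqref{eqn:E_psi_i_formula}--\eqref{eqn:Ebar_psi_i_formula} does not exist, and such a basis would not be $(\g,K)$-adapted in the paper's sense. The uniqueness argument is unaffected since both bases obey the same (correct) conjugation relation, but the formula should be fixed.
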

	
	Proposition~\ref{prop:basis_exist_unique} is a special case of Proposition~\ref{prop:(g,K)-adapted_exist_unique}.
	We note that the automorphism in Proposition~\ref{prop:basis_exist_unique} need not be induced by an automorphism of the $G$-space $\Gamma \backslash G$.
	Thus from a geometric point of view, $(\g,K)$-adapted bases are \emph{not} canonical.
	
	\begin{defn}[Candidate spectrum] \label{defn:potential_spectrum}
		A \emph{candidate spectrum} $\mathcal{S}$ is a tuple
		\begin{align*}
			\mathcal{S} = (\{\lambda_r\}_{r \geq 0}, \{k_r\}_{r \geq 1}, \{C_{ij}^{\ell}\}_{i,j,\ell \in I}),
		\end{align*}
		where
		\begin{align} \label{eqn:lambda_r_to_infty}
			0 = \lambda_0 < \lambda_1 \leq \lambda_2 \leq \cdots \to \infty
		\end{align}
		are real numbers,
		\begin{align} \label{eqn:k_r_to_infty}
			k_1 \leq k_2 \leq \cdots \to \infty
		\end{align}
		are positive integers, $I$ is defined in terms of the $k_r$ by \eqref{eqn:I_hol_def}, and the $C_{ij}^{\ell}$ are complex numbers.
	\end{defn}
	
	The conditions \eqref{eqn:lambda_r_to_infty} and \eqref{eqn:k_r_to_infty} imply that $\{\lambda_r\}_{r \geq 0}$ is discrete and $\{k_r\}_{r \geq 1}$ has finite multiplicities. The significance of these two conditions is discussed in Remark~\ref{rem:analytic_hypotheses}.
	
	The second of our two main theorems is
	
	\begin{thm}[Converse theorem for multiplicative spectra] \label{thm:main}
		A candidate spectrum $\mathcal{S}$ is the multiplicative spectrum of a compact hyperbolic 2-orbifold with respect to a $(\g,K)$-adapted basis if and only if $\mathcal{S}$ solves the hyperbolic bootstrap equations \eqref{eqn:SE1}--\eqref{eqn:SE6} in Definition~\ref{defn:spectral_eqns} below.
	\end{thm}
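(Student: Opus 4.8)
The plan is to deduce Theorem~\ref{thm:main} from Theorem~\ref{thm:eq_Gelfand_duality} by transporting both the hypothesis and the conclusion to the setting of multiplicative representations; the bridge is Proposition~\ref{prop:mult_rep_vs_HB}. I would set up that proposition as the assertion that a multiplicative representation $\H$ equipped with a $(\g,K)$-adapted basis $\{\psi_i\}_{i \in I}$ determines, and is determined up to canonical isomorphism by, the tuple $(\{\lambda_r\}_{r \geq 0}, \{k_r\}_{r \geq 1}, \{C_{ij}^\ell\})$ --- where $\{\lambda_r\}$ lists the Casimir eigenvalues of the principal series occurring in $\H_{\R}$, $\{k_r\}$ lists the lowest weights of its discrete series, $I$ is recovered from $\{k_r\}$ via \eqref{eqn:I_hol_def}, and $C_{ij}^\ell = \langle \psi_i \psi_j, \psi_\ell \rangle_{\H}$ --- and that the tuples so obtained are exactly the candidate spectra solving the hyperbolic bootstrap equations \eqref{eqn:SE1}--\eqref{eqn:SE6}. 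Granting this, the ``only if'' direction of Theorem~\ref{thm:main} is immediate: for a compact hyperbolic $2$-orbifold $\Gamma \backslash \mathbf{H}$, the space $L^2(\Gamma \backslash G)$ with pointwise multiplication is a multiplicative representation, so a multiplicative spectrum of $\Gamma \backslash \mathbf{H}$ with respect to a $(\g,K)$-adapted basis is a tuple of the above type and hence solves the equations.

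For the ``if'' direction, suppose a candidate spectrum $\mathcal{S} = (\{\lambda_r\}, \{k_r\}, \{C_{ij}^\ell\})$ solves \eqref{eqn:SE1}--\eqref{eqn:SE6}. I would first build a multiplicative representation realizing $\mathcal{S}$: let $\H$ be the Hilbert space with orthonormal basis $\{\psi_i\}_{i \in I}$, equipped with commuting $K$- and $\g$-actions defined by the structure formulas of Proposition~\ref{prop:(g,K)-adapted_key_properties}, using principal-series parameters $\lambda_{i_1}$ for $i_1 > 0$ and discrete-series parameters $k_{-i_1}$ for $i_1 < 0$; by \eqref{eqn:lambda_r_to_infty}--\eqref{eqn:k_r_to_infty} this makes $\H$ a unitary $G$-representation with discrete spectrum, with $\{\psi_i\}$ a $(\g,K)$-adapted basis of it. Define multiplication on $\H^{\fin}$ by $\psi_i \psi_j = \sum_{\ell} C_{ij}^\ell \psi_\ell$ and bilinear extension. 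Definition~\ref{defn:spectral_eqns} is arranged so that the axioms of Definition~\ref{def:mult_rep} follow: commutativity, the unit and normalization axioms, and ergodicity reduce to linear relations among the $C_{ij}^\ell$ together with the fact that \eqref{eqn:lambda_r_to_infty} makes the trivial representation occur exactly once; the product rule \eqref{eqn:prod_rule}, tested against basis vectors, becomes the linear recursions in the $C_{ij}^\ell$ reflecting that raising and lowering operators shift the weight by $\pm 1$; existence of complex conjugates \eqref{eqn:3_term_crossing} determines $\overline{\psi_i}$ (a signed basis vector) and yields the conjugation-symmetry relations; and crossing symmetry \eqref{eqn:crossing} is precisely the quadratic bootstrap equations. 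Since $I$ is infinite, $\H$ is nontrivial, and Theorem~\ref{thm:eq_Gelfand_duality} furnishes a cocompact lattice $\Gamma$ together with an isomorphism of multiplicative representations $\Phi \colon \H \to L^2(\Gamma \backslash G)$. Because being $(\g,K)$-adapted is intrinsic to the pair (unitary $G$-representation, multiplication), $\{\Phi(\psi_i)\}$ is a $(\g,K)$-adapted basis of $L^2(\Gamma \backslash G)$; its multiplication table is $\langle (\Phi\psi_i)(\Phi\psi_j), \Phi\psi_\ell \rangle_{L^2(\Gamma \backslash G)} = C_{ij}^\ell$, and the Laplace and holomorphic spectra of the compact hyperbolic $2$-orbifold $\Gamma \backslash \mathbf{H}$ are $\{\lambda_r\}$ and $\{k_r\}$. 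Hence $\mathcal{S}$ is the multiplicative spectrum of $\Gamma \backslash \mathbf{H}$ with respect to $\{\Phi(\psi_i)\}$.

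The substantial input is Theorem~\ref{thm:eq_Gelfand_duality}; beyond it, the one genuine difficulty --- the analytic technicality flagged in Remark~\ref{rem:smoothness_of_products} --- occurs when checking that the multiplication defined above takes values in $\H^{\infty}$: one must show $\sum_{\ell} C_{ij}^\ell \psi_\ell$ is a \emph{smooth} vector, not merely an element of $\H$. I expect this in two steps: a crossing relation among \eqref{eqn:SE1}--\eqref{eqn:SE6} bounds $\sum_{\ell} |C_{ij}^\ell|^2$, so the product lies in $\H$ and defines a distributional vector on which the product rule \eqref{eqn:prod_rule} holds; then, because the Casimir acts elliptically and \eqref{eqn:prod_rule} allows the product to be differentiated arbitrarily often within $\H$, elliptic regularity upgrades it to a smooth vector. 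Everything else --- matching each axiom to its equation, and checking that the defining conditions for a $(\g,K)$-adapted basis transport along $\Phi$ --- is routine given the explicit forms of Definition~\ref{defn:spectral_eqns} and the definition in Section~\ref{sec:(g,K)-adapted}.
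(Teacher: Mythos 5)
Your proposal is correct and follows essentially the same route as the paper: factor Theorem~\ref{thm:main} through Proposition~\ref{prop:mult_rep_vs_HB}, whose two directions are proved in Sections~\ref{sec:spectral_eqns_pf} and~\ref{sec:classification_implies_main_thm}, and then invoke Theorem~\ref{thm:eq_Gelfand_duality} to pass from the abstract multiplicative representation to $L^2(\Gamma\backslash G)$. You correctly locate the one genuine technicality in verifying that products land in $\H^{\infty}$; the paper does this via Proposition~\ref{prop:poly_decay_C_ij^l}, which establishes polynomial decay $\sum_\ell |\lambda_{\ell_1}|^N |C_{ij}^\ell|^2 < \infty$ for all $N$ by iterating the Casimir recursion of Lemma~\ref{lem:num_recursion} together with a positivity argument for the base case (Propositions~\ref{prop:weak_to_strong_primitive}--\ref{prop:weak_to_strong_K-inv}), then feeds this into Proposition~\ref{prop:smooth_iff_spectral_decay} (elliptic regularity) --- so your two-step sketch tracks the actual argument, but the first step (``a crossing relation bounds $\sum_\ell |C_{ij}^\ell|^2$'') underestimates the work: the $N=0$ case already requires combining several instances of~\eqref{eqn:SE6} with the sign structure forced by~\eqref{eqn:SE2}--\eqref{eqn:SE3}, and the higher-$N$ cases need the recursion, not just a single crossing relation.
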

	
	
	Remark~\ref{rem:convergence_assumptions} gives a slight strengthening of Theorem~\ref{thm:main}.
	This remark is of a technical nature, and should not be read before Definition~\ref{defn:spectral_eqns}.
	
	Again, there is a corresponding uniqueness statement:
	
	\begin{prop} \label{prop:mult_spectra_uniqueness}
		Let $(\Gamma \backslash \mathbf{H}, \{\psi_i\}_{i \in I})$ and $(\Gamma' \backslash \mathbf{H}, \{\psi_i'\}_{i \in I'})$ be compact hyperbolic 2-orbifolds equipped with $(\g,K)$-adapted bases.
		If both have the same multiplicative spectrum, then $I = I'$, and there exists a unique element $g \in G/\Gamma$ such that $\Gamma' = g\Gamma g^{-1}$ and
		\begin{align*}
			\psi_i'(x)
			= \psi_i(g^{-1}x)
		\end{align*}
		for all $i \in I$.
	\end{prop}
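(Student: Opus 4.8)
The plan is to reduce the statement to Proposition~\ref{prop:mult_rep_uniqueness}. Concretely, I will produce an isomorphism of multiplicative representations $\Phi\colon L^2(\Gamma\backslash G)\to L^2(\Gamma'\backslash G)$ with $\Phi(\psi_i)=\psi_i'$ for every $i$, and then invoke Proposition~\ref{prop:mult_rep_uniqueness}.

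First, $I=I'$, since both index sets are obtained from the common holomorphic spectrum $\{k_r\}_{r\geq 1}$ by formula \eqref{eqn:I_hol_def}. Next, because $\Gamma\backslash\mathbf{H}$ and $\Gamma'\backslash\mathbf{H}$ have the same Laplace spectrum $\{\lambda_r\}_{r\geq 0}$ and the same holomorphic spectrum, the real unitary $G$-representations $L^2(\Gamma\backslash G,\R)$ and $L^2(\Gamma'\backslash G,\R)$ are isomorphic (this is the correspondence recorded in Subsection~\ref{subsec:equivalence}). Being $(\g,K)$-adapted is a condition on a basis that refers only to the $\g$- and $K$-actions, and the associated index set is determined by the representation, so transporting $\{\psi_i\}_{i\in I}$ across any such isomorphism produces a $(\g,K)$-adapted basis of $L^2(\Gamma'\backslash G)$, again indexed by $I$. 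Applying the uniqueness clause of Proposition~\ref{prop:basis_exist_unique} to this transported basis and to $\{\psi_i'\}_{i\in I}$ within $L^2(\Gamma'\backslash G)$, and composing the two maps, I obtain an isomorphism $\Phi$ of real unitary $G$-representations $L^2(\Gamma\backslash G,\R)\to L^2(\Gamma'\backslash G,\R)$ with $\Phi(\psi_i)=\psi_i'$ for all $i\in I$. Complexifying, $\Phi$ is a $G$-equivariant unitary; in particular it maps $\H^{\fin}$ onto $(\H')^{\fin}$ and $\H^{\infty}$ onto $(\H')^{\infty}$, where $\H=L^2(\Gamma\backslash G)$ and $\H'=L^2(\Gamma'\backslash G)$.

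It remains to see that $\Phi$ respects multiplication, so that it is an isomorphism of multiplicative representations. The $\psi_i$ span $\H^{\fin}$ (Proposition~\ref{prop:basis_spans_H^fin}), so by bilinearity it is enough to check $\Phi(\psi_i\psi_j)=\psi_i'\psi_j'$ for $i,j\in I$. Since $\psi_i\psi_j=\sum_{\ell\in I}C_{ij}^{\ell}\psi_{\ell}$ with convergence in $L^2$ (Definition~\ref{defn:mult_table}), since $\Phi$ is continuous, and since the two multiplication tables coincide,
\begin{align*}
\Phi(\psi_i\psi_j)=\sum_{\ell\in I}C_{ij}^{\ell}\,\Phi(\psi_{\ell})=\sum_{\ell\in I}C_{ij}^{\ell}\,\psi_{\ell}'=\psi_i'\psi_j'.
\end{align*}
Thus $\Phi$ is an isomorphism of multiplicative representations, and Proposition~\ref{prop:mult_rep_uniqueness} yields a unique $g\in G/\Gamma$ with $\Gamma'=g\Gamma g^{-1}$ and $\Phi(f)(x)=f(g^{-1}x)$; specializing to $f=\psi_i$ gives $\psi_i'(x)=\psi_i(g^{-1}x)$, and the uniqueness of $g$ is exactly the uniqueness asserted in Proposition~\ref{prop:mult_rep_uniqueness}.

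I do not expect a genuine obstacle: the proposition is a corollary of Proposition~\ref{prop:mult_rep_uniqueness} together with the rigidity of $(\g,K)$-adapted bases. The one point deserving care is the construction of $\Phi$ in the second paragraph — specifically the assertion that $(\g,K)$-adaptedness and its index set are preserved under isomorphisms of real unitary $G$-representations, so that the uniqueness half of Proposition~\ref{prop:basis_exist_unique} applies; equivalently, one is using that the matrices of the $\g$- and $K$-actions in a $(\g,K)$-adapted basis depend only on $I$ and not on the underlying orbifold.
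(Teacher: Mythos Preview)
Your proof is correct and follows essentially the same route as the paper's: build a $G$-equivariant unitary $\Phi$ sending $\psi_i$ to $\psi_i'$, verify that $\Phi$ respects multiplication via the common multiplication table, and then apply Proposition~\ref{prop:mult_rep_uniqueness}. The only cosmetic difference is that the paper invokes Proposition~\ref{prop:(g,K)-adapted_exist_unique} (the two-space version) to produce $\Phi$ in one step, whereas you transport along an arbitrary isomorphism and then apply the uniqueness clause of Proposition~\ref{prop:basis_exist_unique} inside $L^2(\Gamma'\backslash G)$; these are equivalent, and your closing caveat about $(\g,K)$-adaptedness being a purely representation-theoretic notion is exactly what justifies the transport.
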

	
	In Section~\ref{sec:uniqueness}, we will deduce Proposition~\ref{prop:mult_spectra_uniqueness} from Proposition~\ref{prop:mult_rep_uniqueness}.
	A very similar uniqueness result, for general closed Riemannian manifolds, is \cite{Schaefer}.
	Both Proposition~\ref{prop:mult_spectra_uniqueness} and \cite{Schaefer} use Gelfand duality (Theorem~\ref{thm:Gelfand_duality}).
	
	%
	
	\begin{defn}[Hyperbolic bootstrap equations] \label{defn:spectral_eqns}
		Let
		\begin{align*}
			\mathcal{S}
			= (\{\lambda_r\}_{r \geq 0}, \{k_r\}_{r \geq 1}, \{C_{ij}^{\ell}\}_{i,j,\ell \in I})
		\end{align*}
		be a candidate spectrum.
		Before stating the equations, let us fix some additional notation.
		Given $i,j,\ell \in \Z^2$ not all in $I$, put $C_{ij}^{\ell}
		= 0$.
		For $r$ a negative integer, let
		\begin{align} \label{eqn:lambda_{-r}_def}
			\lambda_r = -k_{-r}(k_{-r}-1)
		\end{align}
		(this is the Casimir eigenvalue of the discrete series representation of $G$ with lowest weight $k_{-r}$; see Proposition~\ref{prop:lowest_wt_wt} and Remark~\ref{rem:wts_PSL_2_vs_SL_2}).
		Now $\lambda_r$ is defined for all $r \in \Z$. It follows from \eqref{eqn:I_hol_def}, \eqref{eqn:lambda_r_to_infty}, and \eqref{eqn:lambda_{-r}_def} that for all $i = (i_1,i_2) \in I$,
		\begin{align} \label{eqn:sqrt_nonnegativity}
			\lambda_{i_1} + i_2(i_2+1) \geq 0
			\qquad \text{and} \qquad
			\lambda_{i_1} + i_2(i_2-1) \geq 0.
		\end{align}
		This implies that the square roots in \eqref{eqn:SE5} below are well-defined.
		For $i = (i_1,i_2) \in \Z^2$, denote
		\begin{align*} 
			\overline{i} = (i_1,-i_2)
			\qquad \text{and} \qquad
			i^+ = (i_1,i_2+1)
			\qquad \text{and} \qquad
			i^- = (i_1,i_2-1).
		\end{align*}
		Note that $I$ is preserved by $i \mapsto \overline{i}$, but not by $i \mapsto i^+$ or $i \mapsto i^-$.
		With the above notation, the six hyperbolic bootstrap equations are below. After each equation, we give a brief description of where it comes from.
		\begin{itemize} \itemsep = 0.5em
			\item \textbf{Hyperbolic Bootstrap Equation 1:} For all $i,j \in I$,
			\begin{align} \label{eqn:SE1}
				C_{ij}^{\ell}
				= C_{ji}^{\ell}.
				\tag{HB1}
			\end{align}
			\textit{Description}: This encodes that multiplication (of functions on $\Gamma \backslash G$) is commutative.
			
			\item \textbf{Hyperbolic Bootstrap Equation 2:} For all $i,j,\ell \in I$ with $i_2+j_2 \neq \ell_2$,
			\begin{align} \label{eqn:SE2}
				C_{ij}^{\ell}
				= 0.
				\tag{HB2}
			\end{align}
			\textit{Description}: This encodes that multiplication is $K$-equivariant.
			
			\item \textbf{Hyperbolic Bootstrap Equation 3:} For all $i,j,\ell \in I$,
			\begin{align} \label{eqn:SE3}
				\overline{C_{ij}^{\ell}}
				= C_{\overline{i} \, \overline{j}}^{\overline{\ell}}.
				\tag{HB3}
			\end{align}
			\textit{Description}: This encodes that multiplication commutes with complex conjugation.
			
			\item \textbf{Hyperbolic Bootstrap Equation 4:} For $i=(0,0)$ and all $j,\ell \in I$,
			\begin{align} \label{eqn:SE4}
				C_{ij}^{\ell} = \1_{j=\ell}.
				\tag{HB4}
			\end{align}
			\textit{Description}: This encodes that the constant function $1$ is a multiplicative identity.
			
			\item \textbf{Hyperbolic Bootstrap Equation 5:} For all $i,j,\ell \in I$,
			\begin{align} \label{eqn:SE5}
				\sqrt{\lambda_{\ell_1} + \ell_2(\ell_2-1)} \, C_{ij}^{\ell^-}
				=
				\sqrt{\lambda_{i_1} + i_2(i_2+1)} \, C_{i^+j}^{\ell}
				+ \sqrt{\lambda_{j_1} + j_2(j_2+1)} \, C_{ij^+}^{\ell}.
				\tag{HB5}
			\end{align}
			\textit{Description}: This encodes the product rule
			\begin{align*}
				E(\psi_i \psi_j) = (E\psi_i) \psi_j + \psi_i (E\psi_j),
			\end{align*}
			where $E \in \g$ is the raising operator (defined in \eqref{eqn:H,E_def}) acting on $C^{\infty}(\Gamma \backslash G)$.
			
			\item \textbf{Hyperbolic Bootstrap Equation 6:} For all $i,j,i',j' \in I$, the following holds with absolute convergence on both sides:
			\begin{align} \label{eqn:SE6}
				\sum_{\ell \in I} (-1)^{\ell_2} C_{ij}^{\ell} C_{i'j'}^{\overline{\ell}}
				= \sum_{\ell \in I} (-1)^{\ell_2} C_{ii'}^{\ell} C_{jj'}^{\overline{\ell}}.
				\tag{HB6}
			\end{align}
			
			\noindent
			\textit{Description}: This encodes the crossing equation
			\begin{align*}
				\int_{\Gamma \backslash G} (\psi_i\psi_j) (\psi_{i'} \psi_{j'})
				= \int_{\Gamma \backslash G} (\psi_i \psi_{i'}) (\psi_j \psi_{j'})
			\end{align*}
			(c.f. \eqref{eqn:crossing}). The sign $(-1)^{\ell_2}$ is due to \eqref{eqn:psi_i_bar_vs_bar_psi_i}.
			We could remove the sign by using a different definition of $(\g,K)$-adapted basis, but this would introduce signs in the other equations.
		\end{itemize}
	\end{defn}
	
	\begin{rem}[Application to Question~\ref{qtn:spectral_multisets_smooth}] \label{rem:application_to_Q1}
		Let $g$ and $\{\lambda_r\}_{r \geq 0}$ be as in Question~\ref{qtn:spectral_multisets_smooth}.
		Let $\{k_r\}_{r \geq 1}$ be the holomorphic spectrum of a genus $g$ surface, described concretely in Example~\ref{exmp:hol_spectrum}.
		Let $I$ be the indexing set \eqref{eqn:I_hol_def}.
		Then by Theorem~\ref{thm:main} and the fact that the holomorphic spectrum determines the topological type of a compact hyperbolic 2-orbifold, $\{\lambda_r\}_{r \geq 0}$ is the Laplace spectrum of a compact hyperbolic surface of genus $g$ if and only if there exist $\{C_{ij}^{\ell}\}_{i,j,\ell \in I}$ solving the hyperbolic bootstrap equations.
		Since each of these equations is either linear or quadratic in the $C_{ij}^{\ell}$, the condition that $\{\lambda_r\}_{r \geq 0}$ is a Laplace spectrum of the desired type is equivalent to feasibility of a quadratically constrained quadratic program (QCQP).
		Physicists working on the conformal bootstrap have well-developed software to determine feasibility of such QCQPs, principally the semidefinite programming solver SDPB \cite{DSD_SDPB,DSD--Landry,Rychkov--Su}.
		The SDPB was used to prove the upper bound on $\lambda_1$ for genus $2$ surfaces mentioned in Subsection~\ref{subsec:intro:motivation}.
		For details on how the hyperbolic bootstrap equations can be used to bound $\lambda_1$, see Subsection~\ref{subsec:applications:lambda_1}.
	\end{rem}
	
	\begin{rem}[Analogy with the conformal bootstrap]
		The conformal bootstrap equations are all quadratic in the OPE coefficients. In contrast, the first five hyperbolic bootstrap equations are linear in the $C_{ij}^{\ell}$.  Using these linear equations to eliminate variables, the hyperbolic bootstrap equations can be reduced to a system of purely quadratic equations. This system has fewer variables but more complicated coefficients (c.f. \cite[Theorem~4.12]{Bonifacio--Mazac--Pal} for hyperbolic 3-manifolds). It is this system of purely quadratic equations which is most closely analogous to the conformal bootstrap equations in CFT. This raises the question of how to interpret the linear equations \eqref{eqn:SE1}--\eqref{eqn:SE5} from the point of view of CFT.
		The first four are trivial and need no interpretation, but for each $i,j,\ell \in I$ we have a nontrivial linear relation \eqref{eqn:SE5}.
		The CFT analogs of these relations are the Casimir recursion relations \cite{Hogervorst--Rychkov,Costa--Hansen--Penedones--Trevisani} used to compute conformal blocks (see \cite[Section~III.F.3]{Poland--Rychkov--Vichi} for a brief exposition).
	\end{rem}

	A consequence of Theorem~\ref{thm:main}, which can be stated without mention of hyperbolic surfaces, is
	
	\begin{cor} \label{cor:unique_bootstrap_soln}
		There exists a unique candidate spectrum which solves the hyperbolic bootstrap equations and satisfies $\lambda_1 \geq 30$.
	\end{cor}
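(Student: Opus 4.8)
The plan is to run the statement through Theorem~\ref{thm:main} and reduce it to a finite problem about small hyperbolic orbifolds. By Theorem~\ref{thm:main}, a candidate spectrum $\mathcal{S}=(\{\lambda_r\}_{r\ge 0},\{k_r\}_{r\ge 1},\{C_{ij}^\ell\})$ solves the hyperbolic bootstrap equations \eqref{eqn:SE1}--\eqref{eqn:SE6} if and only if it is the multiplicative spectrum of some compact hyperbolic $2$-orbifold $\Sigma$ with respect to some $(\g,K)$-adapted basis, and in that case $\lambda_1$ equals the bottom of the nonzero Laplace spectrum of $\Sigma$. So it suffices to show (i) up to isometry there is a unique compact hyperbolic $2$-orbifold $\Sigma_0$ with $\lambda_1(\Sigma_0)\ge 30$, and (ii) the multiplicative spectrum of $\Sigma_0$ does not depend on the chosen $(\g,K)$-adapted basis. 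The indexing set $I$ is then forced for every such solution, since by \eqref{eqn:I_hol_def} it is determined by the holomorphic spectrum, which must be that of $\Sigma_0$.

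For (i), I would first cut the search down to finitely many orbifolds using an isoperimetric eigenvalue bound. Yang--Yau type inequalities give $\lambda_1\cdot\mathrm{area}\le 8\pi(g+1)$; a hyperbolic $2$-orbifold of genus $g\ge 1$ has area bounded below (equal to $4\pi(g-1)$ for $g\ge 2$, at least $\pi$ for genus-$1$ orbifolds), so its $\lambda_1$ is universally far below $30$, and $\Sigma_0$ must have genus $0$. For genus-$0$ orbifolds a Hersch-type bound $\lambda_1\cdot\mathrm{area}\le 8\pi$ forces $\mathrm{area}(\Sigma_0)\le 8\pi/30$, which by Gauss--Bonnet leaves only an explicit finite list of triangle orbifolds $[0;p,q,r]$: essentially $[0;2,3,r]$ with $7\le r\le 30$, $[0;2,4,r]$ with $5\le r\le 8$, $[0;2,5,5]$, $[0;2,5,6]$, $[0;3,3,4]$, $[0;3,3,5]$ (every hyperbolic $2$-orbifold with $\ge 4$ cone points or with positive genus already has area $>8\pi/30$). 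One then checks that each orbifold on this list \emph{other than} $[0;2,3,7]$ has $\lambda_1<30$, while $\lambda_1([0;2,3,7])\ge 30$, so $\Sigma_0=[0;2,3,7]$, the unique minimal-area hyperbolic $2$-orbifold.

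The upper bounds $\lambda_1<30$ in this check come from explicit test functions (and a Rayleigh-quotient monotonicity argument in the cone orders should reduce the list to a few cases), so the step I expect to be the \textbf{main obstacle} is the single \emph{lower} bound $\lambda_1([0;2,3,7])\ge 30$: its numerical value is around $46$, comfortably above $30$, but lower bounds on $\lambda_1$ are not delivered by trial functions and need a genuine argument --- e.g. a validated numerical computation of the low spectrum of the hyperbolic $(\pi/2,\pi/3,\pi/7)$-triangle with the relevant mixed boundary conditions, in the spirit of \cite{Strohmaier--Uski}, or a semidefinite-programming certificate built from a truncation of the hyperbolic bootstrap equations themselves. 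For (ii), the point is the rigidity of $\Sigma_0$: a sphere with three cone points of pairwise distinct orders has trivial orientation-preserving isometry group, so by Proposition~\ref{prop:mult_rep_uniqueness} the multiplicative representation $L^2(\Gamma\backslash G)$ has only the identity automorphism; combined with Proposition~\ref{prop:basis_exist_unique} and the explicit recursive construction of $(\g,K)$-adapted bases in Section~\ref{sec:(g,K)-adapted}, one checks that the remaining freedom in the basis does not change the structure constants $\{C_{ij}^\ell\}$, which yields the uniqueness in (ii) and completes the proof.
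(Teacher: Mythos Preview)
The paper's proof is two lines: apply Theorem~\ref{thm:main} and cite \cite{Kravchuk--Mazac--Pal} for the fact that the $(2,3,7)$ triangle orbifold is the unique compact hyperbolic $2$-orbifold with $\lambda_1\ge 30$. You take a genuinely different route by attempting to \emph{reprove} that classification from scratch; this is essentially the main theorem of \cite{Kravchuk--Mazac--Pal}, so you are replacing a citation by the content of an entire paper.

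Your outline for part (i) has real obstacles beyond the one you flag. The Hersch and Yang--Yau inequalities you invoke are theorems about smooth surfaces; extending them to hyperbolic orbifolds with cone points is not automatic, and the resulting finite list and its case-by-case elimination is exactly what \cite{Kravchuk--Mazac--Pal} establishes---but via bootstrap/SDP upper bounds, not via Hersch--Yang--Yau. So even before the lower bound $\lambda_1([0;2,3,7])\ge 30$ that you correctly identify as the hard step, the reduction to a finite list already needs work you have not supplied. The paper sidesteps all of this by citing the result.

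Your part (ii) raises a genuine point that the paper's proof leaves implicit, but your resolution does not go through. Proposition~\ref{prop:mult_rep_uniqueness} constrains automorphisms of $L^2(\Gamma\backslash G)$ \emph{as a multiplicative representation}, whereas by Proposition~\ref{prop:basis_exist_unique} two $(\g,K)$-adapted bases are related by an automorphism of $L^2(\Gamma\backslash G,\R)$ \emph{as a real unitary $G$-representation}. The latter group is strictly larger: even when all Laplace eigenvalues and all $\dim M_{2k}(\Gamma)$ are $\le 1$, one may independently flip the sign of each generator $\psi_{(r,0)}$ (for $r\ge 1$) or $\psi_{(-r,k_r)}$, which sends $C_{ij}^\ell\mapsto \epsilon_{i_1}\epsilon_{j_1}\epsilon_{\ell_1}C_{ij}^\ell$ and genuinely alters the candidate spectrum (e.g.\ $C_{(1,0)(2,0)}^{(3,0)}$ changes sign under $\epsilon_1\mapsto -\epsilon_1$). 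The triviality of the orientation-preserving isometry group of $[0;2,3,7]$ does not eliminate this freedom. So ``unique'' in the corollary must be read modulo this sign (and, in the presence of multiplicities, orthogonal) ambiguity in the choice of $(\g,K)$-adapted basis; neither your argument nor the paper's brief proof addresses this explicitly.
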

	
	\begin{proof}
		This follows from Theorem~\ref{thm:main} together with the fact \cite{Kravchuk--Mazac--Pal} that there is a unique hyperbolic 2-orbifold with $\lambda_1 \geq 30$, namely the $(2,3,7)$-triangle orbifold.
	\end{proof}

	By Corollary~\ref{cor:unique_bootstrap_soln}, the space of solutions to the hyperbolic bootstrap equations contains an isolated point.
	It is known that the space of solutions to the 2d conformal bootstrap equations (with Virasoro symmetry) contains isolated points, e.g., the 2d Ising model.
	This is expected but unknown in 3d: again, the 3d Ising model should be isolated --- the ``island" in \cite{Chang_et_al_25} should shrink to a point.
	This statement can be formulated rigorously without defining what a 3d CFT is.
	To prove it would likely require a result analogous to Theorem~\ref{thm:main} for the 3d conformal bootstrap equations.
	It seems difficult even to make a precise conjecture for such a result.

	\subsection{Equivalence of main theorems} \label{subsec:equivalence}
	
	To state the equivalence, we need a few more definitions.
	
	\begin{defn}[Laplace and holomorphic spectra of $G$-representations] \label{defn:Laplace_hol_spectra_rep}
		Let $\H_{\R}$ be a real unitary representation of $G$ with discrete spectrum.
		Then by the classification of real irreducible unitary representations of $G$ (Subsection~\ref{subsec:irrep_classification}), there exist unique increasing sequences
		\begin{align*}
			\lambda_0 \leq \lambda_1 \leq \cdots
			\qquad \text{and} \qquad
			k_1 \leq k_2 \leq \cdots
		\end{align*}
		such that $\H_{\R}$ is the direct sum of principal series with Casimir eigenvalues $\lambda_r$ and discrete series with lowest positive weights $k_r$.
		Here $\{\lambda_r\}_{r \geq 0}$ and $\{k_r\}_{r \geq 1}$ could be finite or infinite.
		The \textit{Laplace spectrum} of $\H_{\R}$ is $\{\lambda_r\}_{r \geq 0}$, and the \textit{holomorphic spectrum} of $\H_{\R}$ is $\{k_r\}_{r \geq 1}$.
	\end{defn}
	
	For $\Gamma$ a cocompact lattice in $G$, the Laplace and holomorphic spectra of $L^2(\Gamma \backslash G, \R)$ coincide with the Laplace and holomorphic spectra of $\Gamma \backslash \mathbf{H}$ (see, e.g., \cite[Chapter~2]{Bump}).
	
	\begin{defn}[Bi-infinite spectrum]
		Let $\H_{\R}$ be a real unitary representation of $G$ with discrete spectrum.
		We say that $\H_{\R}$ has \textit{bi-infinite spectrum} when both the Laplace and holomorphic spectra of $\H_{\R}$ are infinite.
	\end{defn}
	
	\begin{rem}[Real subspace of a multiplicative representation] \label{rem:real_subspace}
		Let $\H$ be a multiplicative representation.
		Then the following facts are established in Section~\ref{sec:complex_conj}.
		Complex conjugation on $\H^{\fin}$ extends by continuity to a $G$-equivariant antilinear involution on $\H$ commuting with both multiplication and the inner product.
		Denote $\H_{\R} = \{v \in \H : \overline{v} = v\}$.
		Then $\H_{\R}$ is a real subrepresentation with complexification $\H$.
		The unit $\mathbf{1}$ is in $\H_{\R}$.
	\end{rem}
	
	We define the Laplace and holomorphic spectra of a multiplicative representation to be the Laplace and holomorphic spectra of $\H_{\R}$, where $\H_{\R}$ is as in Remark~\ref{rem:real_subspace}.
	Then given an orthonormal basis $\{\psi_i\}_{i \in I}$ of a multiplicative representation $\H$, with $\psi_i \in \H^{\fin}$ for all $i \in I$, we define the multiplication table and multiplicative spectrum of $\H$ in exactly the same way as in Definitions~\ref{defn:mult_table} and \ref{defn:mult_spectrum}, setting $C_{ij}^{\ell} = \langle \psi_i \psi_j, \psi_{\ell} \rangle_{\H}$.
	
	The notion of $(\g,K)$-adapted basis defined in Section~\ref{sec:(g,K)-adapted} makes sense for multiplicative representations with bi-infinite spectrum;
	we require bi-infinite spectrum only for notational convenience.
	Proposition~\ref{prop:basis_exist_unique} generalizes to this setting (the generalization is Proposition~\ref{prop:(g,K)-adapted_exist_unique}).
	
	The reason we choose $\H^{\fin}$ to be the domain of multiplication in Definition~\ref{def:mult_rep} is
	
	\begin{prop} \label{prop:basis_spans_H^fin}
		Let $\H$ be a unitary representation of $G$ as in Section~\ref{sec:(g,K)-adapted} (e.g., a multiplicative representation with bi-infinite spectrum). Then any $(\g,K)$-adapted basis $\{\psi_i\}_{i \in I}$ of $\H$ is a basis for $\H^{\fin}$ as an abstract vector space.
	\end{prop}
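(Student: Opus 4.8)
The plan is to reduce the statement to an identification of $\H^{\fin}$ with an algebraic direct sum of finite-dimensional subspaces, each of which is visibly spanned by a subfamily of the basis. For $n \in \Z$ and $\lambda \in \R$, let $\H[n,\lambda] \subseteq \H^{\infty}$ denote the subspace of vectors transforming under $K$ with weight $n$ and lying in the $\lambda$-eigenspace of the Casimir operator. From Section~\ref{sec:(g,K)-adapted} (see Proposition~\ref{prop:(g,K)-adapted_key_properties}) I will use only that a $(\g,K)$-adapted basis $\{\psi_i\}_{i \in I}$ of $\H$ is an orthonormal basis of the Hilbert space $\H$, and that each $\psi_i$ has $K$-weight $i_2$ and Casimir eigenvalue $\lambda_{i_1}$, so that $\psi_i \in \H[i_2, \lambda_{i_1}]$; in particular none of the raising/lowering-operator formulas for a $(\g,K)$-adapted basis are needed.

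First I would prove that each $\H[n,\lambda]$ is finite-dimensional and that $\H^{\fin} = \bigoplus_{n,\lambda} \H[n,\lambda]$ as an algebraic direct sum. Fix the decomposition $\H = \bigoplus_{\pi \in \Pi} \pi$ used to define $\H^{\fin}$, and for each summand $\pi$ let $\pi[n]$ be its space of weight-$n$ vectors. Every irreducible unitary representation of $G = \PSL_2(\R)$ has each $K$-weight with multiplicity at most one, so $\dim \pi[n] \leq 1$; and since $\H$ has discrete spectrum, only finitely many summands $\pi \in \Pi$ have any given Casimir eigenvalue. A smooth vector of weight $n$ and Casimir eigenvalue $\lambda$ has vanishing component in every summand with a different Casimir eigenvalue, so $\H[n,\lambda]$ is the orthogonal direct sum of the finitely many $\pi[n]$ with $\pi$ having Casimir eigenvalue $\lambda$; in particular $\dim \H[n,\lambda] < \infty$. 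Decomposing each $\pi^{\fin} = \bigoplus_n \pi[n]$ into weight spaces and regrouping by Casimir eigenvalue then gives $\H^{\fin} = \bigoplus_{\pi} \pi^{\fin} = \bigoplus_{n,\lambda} \H[n,\lambda]$; this also makes transparent the independence of $\H^{\fin}$ from the chosen decomposition asserted in Definition~\ref{defn:H^fin}.

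Next I would show that $\{\psi_i\}_{i \in I}$ restricts to an orthonormal basis of each $\H[n,\lambda]$. For $i \in I$: if $i_2 = n$ and $\lambda_{i_1} = \lambda$ then $\psi_i \in \H[n,\lambda]$; otherwise $\psi_i \perp \H[n,\lambda]$ --- when $i_2 \neq n$ because distinct $K$-isotypic subspaces are orthogonal, and when $i_2 = n$ but $\lambda_{i_1} \neq \lambda$ because the Casimir operator is symmetric on smooth vectors, so its eigenvectors for distinct (real) eigenvalues are orthogonal. Hence the orthonormal basis $\{\psi_i\}_{i \in I}$ of $\H$ is adapted to the closed subspace $\H[n,\lambda]$, and the standard argument --- any $v \in \H[n,\lambda]$ orthogonal to all $\psi_i$ with $i_2 = n$ and $\lambda_{i_1} = \lambda$ is then orthogonal to every $\psi_i$, hence zero --- shows that $\{\psi_i : i_2 = n, \ \lambda_{i_1} = \lambda\}$ is an orthonormal basis of $\H[n,\lambda]$. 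Since $\H[n,\lambda]$ is finite-dimensional, these $\psi_i$ span it using finite linear combinations. Summing over $(n,\lambda)$ and using the previous paragraph gives $\Span_{\C}\{\psi_i : i \in I\} = \bigoplus_{n,\lambda} \H[n,\lambda] = \H^{\fin}$; and the $\psi_i$ are linearly independent because orthonormal, so $\{\psi_i\}_{i \in I}$ is a (Hamel) basis of $\H^{\fin}$.

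The real content lies entirely in the second paragraph --- the finite-dimensionality of the $\H[n,\lambda]$ and the identification of $\H^{\fin}$ with their algebraic sum. The point worth flagging, and the only place where any hypothesis beyond ``orthonormal basis of weight vectors and Casimir eigenvectors'' enters, is that $K$-finiteness alone does not place a vector in $\H^{\fin}$: the weight-$n$ isotype of $\H$ is typically infinite-dimensional, and one also needs finiteness under the Casimir operator (equivalently, under $Z(\g)$), at which point discreteness of the spectrum of $\H$ is what forces the relevant joint eigenspaces $\H[n,\lambda]$ to be finite-dimensional. Everything else is bookkeeping with orthogonal decompositions, and I do not anticipate a serious obstacle.
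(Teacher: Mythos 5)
Your proof is correct, and it takes a somewhat different route from the one the paper leaves implicit. The paper omits the proof with the remark that it is obvious once $(\g,K)$-adapted bases have been defined; the intended argument is to apply the defining isomorphism $\H_{\R} \simeq \bigoplus_{r \in \Z} \pi_{r,\R}$ sending $\psi_i \mapsto \eta_i = \xi_{\pi_{i_1},i_2}$, note that for each $r$ the vectors $\{\xi_{\pi_r,n}\}_n$ are by construction an orthonormal basis of $K$-finite automorphic vectors in $\pi_r$ (and since each $K$-weight in an irreducible has multiplicity $\leq 1$, they span $\pi_r^{\fin}$), and then sum over $r$ using $\H^{\fin} = \bigoplus_r \pi_r^{\fin}$. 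This is shorter because it reads the conclusion directly off the construction. Your argument instead extracts from Proposition~\ref{prop:(g,K)-adapted_key_properties} only the minimal spectral data --- orthonormality, prescribed $K$-weights, and prescribed Casimir eigenvalues --- and then reconstitutes the direct-sum picture intrinsically via the joint eigenspaces $\H[n,\lambda]$. This is slightly longer, but it has the merit of making clear that the proposition depends only on the list of properties in Proposition~\ref{prop:(g,K)-adapted_key_properties} and not on the particular normalizations or raising/lowering formulas used in the construction, and as you note it makes the decomposition-independence of $\H^{\fin}$ transparent along the way. One small implicit step worth being aware of: showing that a smooth weight-$n$ Casimir eigenvector has vanishing component in summands of different eigenvalue uses that the spectral projections of $\Delta$ commute with $G$ and hence preserve $\H^{\infty}$; this is standard but is doing real work in your first reduction.
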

	
	Proposition~\ref{prop:basis_spans_H^fin} will be obvious once we have defined $(\g,K)$-adapted bases. We omit the proof.
	
	
	\begin{prop}[Equivalence of main theorems] \label{prop:mult_rep_vs_HB}
		A candidate spectrum $\mathcal{S}$ is the multiplicative spectrum of a multiplicative representation with bi-infinite spectrum, with respect to a $(\g,K)$-adapted basis, if and only if $\mathcal{S}$ solves the hyperbolic bootstrap equations.
	\end{prop}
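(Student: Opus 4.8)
The plan is to prove this as a dictionary between the axioms of Definition~\ref{def:mult_rep} and the equations \eqref{eqn:SE1}--\eqref{eqn:SE6}, read off from the behaviour of a $(\g,K)$-adapted basis $\{\psi_i\}_{i\in I}$. I will use the basic properties of such bases established in Section~\ref{sec:(g,K)-adapted} (Proposition~\ref{prop:(g,K)-adapted_key_properties}): $\{\psi_i\}$ is orthonormal; $\psi_i$ has $K$-weight $i_2$ and Casimir eigenvalue $\lambda_{i_1}$; the raising operator satisfies $E\psi_i=\sqrt{\lambda_{i_1}+i_2(i_2+1)}\,\psi_{i^+}$, with the convention $\psi_{i^+}:=0$ when $i^+\notin I$, which is consistent because the radical then vanishes; the Cartan element of $\g$ acts by the weight; and complex conjugation acts by $\overline{\psi_i}=(-1)^{i_2}\psi_{\overline i}$ (equation~\eqref{eqn:psi_i_bar_vs_bar_psi_i}). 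Since $\{\psi_i\}_{i\in I}$ is a vector-space basis of $\H^{\fin}$ (Proposition~\ref{prop:basis_spans_H^fin}), specifying a multiplication $\H^{\fin}\times\H^{\fin}\to\H$ is the same data as specifying structure constants $C_{ij}^\ell$, related by $\psi_i\psi_j=\sum_{\ell}C_{ij}^\ell\psi_\ell$ and $C_{ij}^\ell=\langle\psi_i\psi_j,\psi_\ell\rangle_\H$.

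For the forward direction, I would start from a multiplicative representation $\H$ with bi-infinite spectrum, fix a $(\g,K)$-adapted basis, and put $C_{ij}^\ell=\langle\psi_i\psi_j,\psi_\ell\rangle_\H$. Ergodicity makes $\H^G$ one-dimensional (it is nonzero since $\mathbf 1\in\H^G$), so the trivial representation occurs with multiplicity one and $\lambda_0=0<\lambda_1$; hence the triple is a candidate spectrum. Each equation is then immediate from the matching axiom: \eqref{eqn:SE1} from commutativity; \eqref{eqn:SE2} because the product rule applied to the Cartan element forces $\psi_i\psi_j$ to have $K$-weight $i_2+j_2$, so $\langle\psi_i\psi_j,\psi_\ell\rangle=0$ unless $\ell_2=i_2+j_2$; \eqref{eqn:SE4} because the unit is the unique $G$-fixed unit vector, namely $\psi_{(0,0)}$; \eqref{eqn:SE3} by conjugating the definition of $C_{ij}^\ell$ and using that complex conjugation commutes with multiplication and with the inner product together with $\overline{\psi_i}=(-1)^{i_2}\psi_{\overline i}$; \eqref{eqn:SE5} from the product rule for $E$ and the formula $E\psi_i=\sqrt{\lambda_{i_1}+i_2(i_2+1)}\,\psi_{i^+}$; and \eqref{eqn:SE6} by expanding the crossing equation \eqref{eqn:crossing} for $\sigma=(23)$ in the orthonormal basis, the sign $(-1)^{\ell_2}$ arising through $\overline{\psi_\ell}=(-1)^{\ell_2}\psi_{\overline\ell}$. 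Absolute convergence in \eqref{eqn:SE6} is automatic, since the relevant products lie in $\H$ and both sides are honest inner-product expansions.

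For the converse, I would take a candidate spectrum $\mathcal S=(\{\lambda_r\},\{k_r\},\{C_{ij}^\ell\})$ solving \eqref{eqn:SE1}--\eqref{eqn:SE6}. Since $\lambda_r,k_r\to\infty$, the real unitary representation $\H_{\R}$ with Laplace spectrum $\{\lambda_r\}$ and holomorphic spectrum $\{k_r\}$ — the direct sum of the corresponding irreducibles (Subsection~\ref{subsec:irrep_classification}) — has bi-infinite spectrum; let $\H$ be its complexification, fix a $(\g,K)$-adapted basis $\{\psi_i\}_{i\in I}$ (Proposition~\ref{prop:(g,K)-adapted_exist_unique}), and define multiplication bilinearly by $\psi_i\psi_j=\sum_{\ell\in I}C_{ij}^\ell\psi_\ell$. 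First I would check this lands in $\H$: applying the absolute-convergence clause of \eqref{eqn:SE6} with $(i',j')=(\overline i,\overline j)$ and then \eqref{eqn:SE3} gives $\sum_\ell|C_{ij}^\ell|^2=\sum_\ell|C_{ij}^\ell|\,|C_{\overline i\,\overline j}^{\overline\ell}|<\infty$. Granting for the moment that the image in fact lies in $\H^{\infty}$, the axioms of Definition~\ref{def:mult_rep} follow by reversing the computations above: commutativity from \eqref{eqn:SE1}; unit $\mathbf 1=\psi_{(0,0)}$ from \eqref{eqn:SE4}; normalization and ergodicity from orthonormality and $\lambda_0=0<\lambda_1$; existence of complex conjugates, with $\overline{\psi_i}=(-1)^{i_2}\psi_{\overline i}$, by expanding \eqref{eqn:3_term_crossing} and identifying it with the instance of \eqref{eqn:SE6} having one index equal to $(0,0)$ (together with \eqref{eqn:SE1}--\eqref{eqn:SE4}); crossing symmetry for $\sigma=(23)$, hence for all of $S_4$ by Remark~\ref{rem:on_mult_rep_def}, from \eqref{eqn:SE6}; and the product rule on the spanning set $\{E,H,F\}$ of $\g$ from \eqref{eqn:SE5}, from \eqref{eqn:SE2}, and from the complex conjugate of \eqref{eqn:SE5} via \eqref{eqn:SE3}, respectively.

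The main obstacle, I expect, is the parenthetical claim that multiplication takes values in $\H^{\infty}$ and not merely in $\H$; this is the analytic technicality flagged near Remark~\ref{rem:smoothness_of_products}. My plan is to use the standard criterion that a subspace $V\subseteq\H$ is contained in $\H^{\infty}$ provided $V$ lies in the domain of $\overline{d\pi(X)}$ and is carried into itself by $\overline{d\pi(X)}$ for every $X$ in a spanning set of $\g$ (equivalently, $V$ lies in the domain of every power of the closure of $d\pi(\Delta)$ for a fixed elliptic $\Delta\in U(\g)$). I would take $V$ to be the linear span of the products $\{\psi_i\psi_j\}_{i,j\in I}$, which is exactly the image of multiplication, and check stability: by \eqref{eqn:SE5}, the formal series $\sum_\ell C_{ij}^\ell\,(E\psi_\ell)$ has the same coefficients, with respect to $\{\psi_m\}$, as $\sqrt{\lambda_{i_1}+i_2(i_2+1)}\,\psi_{i^+}\psi_j+\sqrt{\lambda_{j_1}+j_2(j_2+1)}\,\psi_i\psi_{j^+}$, which already lies in $V\subseteq\H$; hence $\psi_i\psi_j\in\operatorname{Dom}(\overline{d\pi(E)})$ with image in $V$; the lowering operator is handled symmetrically via \eqref{eqn:SE3}; and by \eqref{eqn:SE2} the vector $\psi_i\psi_j$ is a $K$-weight vector, hence in the domain of, and a scalar multiple under, the compact Cartan. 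Thus $V$ is $\g$-stable in the required sense, so $V\subseteq\H^{\infty}$, which completes the proof. Everything outside this last step I expect to be essentially mechanical.
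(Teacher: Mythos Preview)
Your proposal is correct, and the forward direction together with the axiom verifications in the converse direction match the paper's Sections~\ref{sec:spectral_eqns_pf} and~\ref{sec:classification_implies_main_thm} essentially line for line. The genuine difference is in how you establish that products land in $\H^{\infty}$.

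The paper proves this by first establishing the polynomial decay estimate $\sum_{\ell}|\lambda_{\ell_1}|^N|C_{ij}^{\ell}|^2<\infty$ for all $N$ (Proposition~\ref{prop:poly_decay_C_ij^l}), via a rather intricate analysis of \eqref{eqn:SE6} that exploits positivity (Propositions~\ref{prop:C_+n_-n_l_size}--\ref{prop:weak_to_strong_K-inv}), and then invokes Proposition~\ref{prop:smooth_iff_spectral_decay}. Your route is more structural: once the base case $\sum_\ell|C_{ij}^\ell|^2<\infty$ is read off from the absolute-convergence clause of \eqref{eqn:SE6}, you use \eqref{eqn:SE5} to show that the span $V$ of products is carried into itself by the closures of $E$, $\overline{E}$, $H$, and then invoke the characterization of $\H^\infty$ as the common domain of all iterated products of these closures. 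This is legitimate---the criterion you cite follows from the standard fact that $\H^\infty=\bigcap\text{Dom}(\overline{d\pi(X_{i_1})}\cdots\overline{d\pi(X_{i_n})})$, and $V$ being invariant under each closure immediately places it in every such domain. In effect your argument recovers polynomial decay by iteration (each application of $\overline{d\pi(\overline{E})}\,\overline{d\pi(E)}$ multiplies the $\ell$-th coefficient by $-(\lambda_{\ell_1}+\text{const})$), just packaged differently. What your approach buys is a cleaner proof of Proposition~\ref{prop:mult_rep_vs_HB} as stated; what the paper's approach buys is that it works for \emph{weak} solutions in the sense of Remark~\ref{rem:convergence_assumptions}, where absolute convergence in \eqref{eqn:SE6} is not assumed and must itself be derived---this is where the positivity arguments of Subsection~\ref{subsec:poly_decay} become essential, and your argument would not start without that input.

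One small notational point: the paper writes $\overline{E}$ for the lowering operator, not $F$.
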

	
	We prove Proposition~\ref{prop:mult_rep_vs_HB} in Sections~\ref{sec:spectral_eqns_pf} and \ref{sec:classification_implies_main_thm}.
	Theorem~\ref{thm:main} is a direct consequence of Proposition~\ref{prop:mult_rep_vs_HB} and Theorem~\ref{thm:eq_Gelfand_duality}.
	In the converse direction, Proposition~\ref{prop:mult_rep_vs_HB} and Theorem~\ref{thm:main} almost imply Theorem~\ref{thm:eq_Gelfand_duality}, though not completely. Fortunately, we do not need this direction.
	
	\subsection{Technical remarks}
	
	\begin{rem}[Discrete spectrum is necessary] \label{rem:discrete_vs_pure_pt}
		Suppose that instead of requiring $\H$ to have discrete spectrum in Definition~\ref{def:mult_rep}, we only require it to have countable spectrum in the sense of Definition~\ref{defn:pure_point_spectrum}. Then the spectrum of $\H$ may have infinite multiplicities, so the definition of $\H^{\fin}$ in Definition~\ref{defn:H^fin} may depend on the choice of irreducible decomposition of $\H$, but this is not a serious problem: just fix a choice. Then Definition~\ref{def:mult_rep} still makes sense, and one can ask if Theorem~\ref{thm:eq_Gelfand_duality} remains true. The following counterexample shows that it does not.
		Let $\Gamma_0 \supsetneq \Gamma_1 \supsetneq \Gamma_2 \supsetneq \cdots$ be a decreasing chain of cocompact lattices in $G$. As usual, equip $\Gamma_n \backslash G$ with the Haar probability measure, and view $L^2(\Gamma_n \backslash G)$ as a multiplicative representation. Then the natural maps
		\begin{align*}
			L^2(\Gamma_0 \backslash G)
			\to L^2(\Gamma_1 \backslash G)
			\to L^2(\Gamma_2 \backslash G)
			\to \cdots
		\end{align*}
		are $G$-equivariant isometric inclusions compatible with multiplication. Let $\H$ be the inductive limit of these Hilbert spaces (i.e., the completion of the increasing union). Then $\H$ inherits a $G$-action, making $\H$ into a unitary representation with countable spectrum. Choose an irreducible decomposition of $\H$ such that each irreducible appears in $L^2(\Gamma_n \backslash G)$ for some $n$. Then $\H^{\fin}$ (defined with respect to the chosen decomposition) is the increasing union of the $L^2(\Gamma_n \backslash G)^{\fin}$. The multiplication maps $L^2(\Gamma_n \backslash G)^{\fin} \times L^2(\Gamma_n \backslash G)^{\fin} \to L^2(\Gamma_n \backslash G)^{\infty}$ assemble to give a multiplication $\H^{\fin} \times \H^{\fin} \to \H^{\infty}$. This makes $\H$ into a nontrivial multiplicative representation, except with countable rather than discrete spectrum. Since $\H$ has neither discrete nor continuous spectrum, $\H$ is not of the form $L^2(\Gamma \backslash G)$ for any lattice $\Gamma$.
	\end{rem}
	
	\begin{rem}[Use of discrete spectrum] \label{rem:using_lambda_r_to_infty}
		Because the spectrum of a multiplicative representation $\H$ is discrete, band-limited subspaces of $\H^K$ are finite-dimensional. Certain pairs of norms in the proof of Theorem~\ref{thm:eq_Gelfand_duality} will consequently be defined on a finite-dimensional space, and will thus be equivalent. Equivalence of two norms means that each is bounded by a finite positive constant $C$ times the other.
		Since discrete spectrum is only a qualitative assumption, we will have no quantitative control \textit{a priori} on $C$.
		However, we will be able to prove certain ``self-improving" estimates for $C$ which when combined with $C<\infty$ imply that $C$ is quantitatively bounded, giving us the control we need.
		To illustrate this mechanism, suppose given some $C \in [0,\infty]$, and suppose one can show that $C \leq 1+\frac{1}{2}C$. Then $C < \infty$ automatically implies $C \leq 2$.
		Remark~\ref{rem:discrete_spectrum_first_use} and the paragraph above it explain where such constants $C$ come up in the proof of Theorem~\ref{thm:eq_Gelfand_duality}.
	\end{rem}
	
	\begin{rem}[Analytic hypotheses in Theorem~\ref{thm:main}] \label{rem:analytic_hypotheses}
		We emphasize that Definition~\ref{defn:potential_spectrum} imposes no quantitative growth rate on $\lambda_r$ and $k_r$.
		By Remark~\ref{rem:discrete_vs_pure_pt}, at least one of the qualitative ``discrete spectrum" conditions \eqref{eqn:lambda_r_to_infty} and \eqref{eqn:k_r_to_infty} is necessary for Theorem~\ref{thm:main} to hold.
		We do not use \eqref{eqn:k_r_to_infty} in any way except to simplify notation, but Remark~\ref{rem:using_lambda_r_to_infty} demonstrates that we do use \eqref{eqn:lambda_r_to_infty} in a critical way.
		As for the $C_{ij}^{\ell}$, Theorem~\ref{thm:main} only makes the mild analytic assumption that both sides of \eqref{eqn:SE6} converge absolutely. In fact, Remark~\ref{rem:convergence_assumptions} explains that even this mild assumption can be removed, so no analytic assumptions on $C_{ij}^{\ell}$ are necessary at all.
		To sum up, as advertised in the abstract, the only analytic hypothesis we really use in the proof of Theorem~\ref{thm:main} is that $\{\lambda_r\}_{r \geq 0}$ is discrete. This is philosophically important because linear/semidefinite programming as in \cite{Bonifacio_22_2,Kravchuk--Mazac--Pal} does not make use of any analytic information about $\lambda_r,k_r,C_{ij}^{\ell}$.
	\end{rem}
	
	A portion of the proof of Theorem~\ref{thm:eq_Gelfand_duality} can be simplified if $\H$ obeys a \emph{polynomial Weyl law} (Definition~\ref{def:poly_Weyl_law} below).
	We present this simplification in Section~\ref{sec:bulk_tail_poly}.
	Remark~\ref{rem:why_not_assume_Weyl?} justifies going the extra mile in Section~\ref{sec:bulk_tail_general} to prove Theorem~\ref{thm:eq_Gelfand_duality} for $\H$ with arbitrary discrete spectrum.
	
	\begin{defn}[Polynomial Weyl law] \label{def:poly_Weyl_law}
		Let $\{\lambda_r\}_{r \geq 0}$ be nonnegative real numbers.
		We say that $\{\lambda_r\}_{r \geq 0}$ obeys a \textit{polynomial Weyl law} if for $\Lambda \geq 1$,
		\begin{align} \label{eqn:poly_Weyl_law}
			\#\{\lambda \in [0,\Lambda] : \lambda = \lambda_r \text{ for some } r\}
			\lesssim \Lambda^{O(1)}.
		\end{align}
		We say a multiplicative representation obeys a polynomial Weyl law if its Laplace spectrum does.
	\end{defn}
	
	Note that $\LHS\eqref{eqn:poly_Weyl_law}$ does not count with multiplicity.
	Thus the arguments in Section~\ref{sec:bulk_tail_poly} work even if the Laplace spectrum of $\H$ has rapidly growing multiplicities, as long as \eqref{eqn:poly_Weyl_law} holds.
	This indicates that these arguments can be extended to the general case.
	This is done in Section~\ref{sec:bulk_tail_general}.
	
	\begin{rem}[Why not assume a polynomial Weyl law?] \label{rem:why_not_assume_Weyl?}
		We give two reasons in addition to the fact that linear/semidefinite programming does not take into account the growth rate of the $\lambda_r$.
		\begin{itemize} \itemsep = 0.5em
			\item The spectrum of a CFT in $d$ dimensions does not satisfy a polynomial Weyl law. Instead, the size of the spectrum in the interval $[0,\Lambda]$ is expected to be equal to $\exp(c\Lambda^{\frac{d-1}{d}})$ for some $c = c(\Lambda)$ bounded above and below as $\Lambda \to \infty$ \cite{Shaghoulian}.
			This is the generalized Cardy formula, first derived by Cardy for $d=2$ using the modular bootstrap \cite{Cardy}.
			Thus from the point of view of the analogy with CFT, it is preferable not to assume a polynomial Weyl law in Theorem~\ref{thm:eq_Gelfand_duality}.
			
			\item The extra work needed to extend Theorem~\ref{thm:eq_Gelfand_duality} from the polynomial Weyl law case to the general case is, in the language of the conformal bootstrap, to generalize certain estimates for single correlators to mixed correlators. This manifests in the proof as extending bounds for Casimir eigenvectors to approximate Casimir eigenvectors. Subsection~\ref{subsec:outline:bulk_tail_general} explains why we need to work with approximate eigenvectors when we allow arbitrary discrete spectrum.
			This is notable from a technical standpoint because it illustrates a common theme in the bootstrap, namely that results proved by linear programming with single correlators can often be improved by semidefinite programming with mixed correlators (as in, e.g., \cite{Kos_Poland_DSD,Chester_et_al,Chester_et_al_2,Erramilli_et_al,Chang_et_al_25}).
			Unusually, the improvement in this paper is qualitative (removing the polynomial Weyl law assumption) rather than numerical.
		\end{itemize}
	\end{rem}
	
	\begin{rem}[Smoothness of products] \label{rem:smoothness_of_products}
		Let $\H$ be a multiplicative representation.
		By definition, the product of two elements of $\H^{\fin}$ is in $\H^{\infty}$.
		This is a nontrivial analytic condition.
		Given a candidate spectrum $\mathcal{S}$ solving the hyperbolic bootstrap equations, to prove Proposition~\ref{prop:mult_rep_vs_HB} we have to build a multiplicative representation $\H$ with multiplicative spectrum $\mathcal{S}$.
		This construction is entirely formal, except for checking that multiplication lands in $\H^{\infty}$.
		In order to check this, we will need a certain polynomial decay estimate on the $C_{ij}^{\ell}$.
		This estimate, Proposition~\ref{prop:poly_decay_C_ij^l}, will imply the desired smoothness similarly to how polynomial decay of Fourier coefficients implies smoothness for functions on $\R/\Z$.
		The proof of Proposition~\ref{prop:poly_decay_C_ij^l} can be thought of as linear programming --- indeed, it uses positivity in a crucial way.
	\end{rem}
	
	\begin{rem}[Convergence assumptions in \eqref{eqn:SE6}] \label{rem:convergence_assumptions}
		Linear programming as in \cite{Bonifacio_22_2,Kravchuk--Mazac--Pal} or as in the proof of Proposition~\ref{prop:poly_decay_C_ij^l} never uses absolute convergence in \eqref{eqn:SE6}.
		Instead, \eqref{eqn:SE6} is only used when both sides are \textit{already known to converge} in $\overline{\R}+i\overline{\R}$ in the sense of Definition~\ref{defn:weak_convergence} below.
		Then \eqref{eqn:SE6} is interpreted as an equality in $\overline{\R}+i\overline{\R}$.
		With this in mind, let us say that a candidate spectrum $\mathcal{S}$ is a \textit{weak solution} to the hyperbolic bootstrap equations if it obeys \eqref{eqn:SE1}--\eqref{eqn:SE5}, and obeys \eqref{eqn:SE6} for the subset of $i,j,i',j' \in I$ for which both sides of \eqref{eqn:SE6} converge in $\overline{\R}+i\overline{\R}$.
		Then Proposition~\ref{prop:poly_decay_C_ij^l} implies that a weak solution is automatically a solution in the original sense, i.e., \eqref{eqn:SE6} in fact holds with absolute convergence for all $i,j,i',j' \in I$.
		This implication is explained in Remark~\ref{rem:weak_to_strong}.
		Thus the proof of Theorem~\ref{thm:main} shows that every weak solution comes from a compact hyperbolic 2-orbifold.
		This justifies the claim in Remark~\ref{rem:analytic_hypotheses} that we need not make any analytic assumptions on the $C_{ij}^{\ell}$ in Theorem~\ref{thm:main}.
	\end{rem}
	
	\begin{defn}[Convergence in $\overline{\R} + i\overline{\R}$] \label{defn:weak_convergence}
		Denote $\overline{\R} = [-\infty,+\infty]$.
		We say that an unordered sum $\sum_k x_k$ of real numbers \textit{converges in $\overline{\R}$} if at least one of
		\begin{align*}
			\sum_{k \, : \, x_k > 0} x_k
			\qquad \text{and} \qquad
			\sum_{k \, : \, x_k < 0} x_k
		\end{align*}
		is finite (note that absolute convergence is equivalent to both being finite).
		More generally, we say that an unordered sum $\sum_k z_k$ of complex numbers \textit{converges in $\overline{\R} + i\overline{\R}$} if both $\sum_k \re z_k$ and $\sum_k \im z_k$ converge in $\overline{\R}$.
		Then $\sum_k z_k$ has an unambiguous value in $\overline{\R} + i\overline{\R}$.
	\end{defn}
	
	Convergence in $\overline{\R}+i\overline{\R}$ is a useful notion because it can often be verified algebraically (in particular it can often be verified by a computer), as in the following prototypical example.
	
	\begin{exmp}[Automatic convergence in $\overline{\R}+i\overline{\R}$]
		Let $z_k \in \C$. Assume that for all but finitely many $k$, one has a formula $z_k = a_k|c_k|^2$ with $a_k \geq 0$ and $c_k \in \C$.
		Then $\sum_k z_k$ converges in $\overline{\R}+i\overline{\R}$.
	\end{exmp}
	
	\subsection{Organization}
	
	Section~\ref{sec:sl_2} recalls standard facts about the group $G = \PSL_2(\R)$ and establishes the notation we will need to do calculations with representations of $G$.
	Section~\ref{sec:(g,K)-adapted} defines $(\g,K)$-adapted bases and proves their main properties.
	Sections~\ref{sec:spectral_eqns_pf} and \ref{sec:classification_implies_main_thm} prove the ``only if" and ``if" parts of Proposition~\ref{prop:mult_rep_vs_HB}, respectively.
	Section~\ref{sec:applications} explains how \cite{Kravchuk--Mazac--Pal} and \cite{Adve_et_al} can be seen as applications of the hyperbolic bootstrap equations.
	Section~\ref{sec:C*} recalls the definition and classification of commutative unital C*-algebras.
	Section~\ref{sec:uniqueness} proves the two uniqueness results, Propositions~\ref{prop:mult_rep_uniqueness} and \ref{prop:mult_spectra_uniqueness}.
	Sections~\ref{sec:complex_conj}--\ref{sec:bulk_tail_general} prove Theorem~\ref{thm:eq_Gelfand_duality}.
	Let $\H$ be a multiplicative representation.
	Then Section~\ref{sec:complex_conj} proves the expected properties of complex conjugation on $\H^{\fin}$. Section~\ref{sec:L^infty_L^4} introduces $L^{\infty}$ and $L^4$ norms on $\H^{\fin}$ which agree with the usual $L^{\infty}$ and $L^4$ norms when $\H = L^2(\Gamma \backslash G)$. With the preliminaries in Sections~\ref{sec:complex_conj} and \ref{sec:L^infty_L^4} out of the way, Section~\ref{sec:outline_existence} outlines the proof of Theorem~\ref{thm:eq_Gelfand_duality}. The logical structure of the proof is:
	\begin{align} \label{eqn:pf_structure}
		\emptyset
		\xrightarrow{\text{Sections~\ref{sec:bulk_tail_poly}, \ref{sec:bulk_tail_general}}}
		\left\{
		\begin{matrix}
			\text{Thm~\ref{thm:L^4_quasi-Sobolev}} \\
			\text{Thm~\ref{thm:exp_decay_form}} \\
			\text{Thm~\ref{thm:exp_decay_quasimode}}
		\end{matrix}
		\right\}
		\xrightarrow{\text{Section~\ref{sec:reduce_to_bulk_tail}}}
		\left\{
		\begin{matrix}
			\text{Thm~\ref{thm:high_deriv_L^infty_bd}} \\
			\text{Thm~\ref{thm:L^infty_quasi-Sobolev}}
		\end{matrix}
		\right\}
		\xrightarrow{\text{Section~\ref{sec:reduce_to_L^infty}}}
		\{\text{Thm~\ref{thm:weak_structure_thm}}\}
		\xrightarrow{\text{Section~\ref{sec:embedding_homog}}}
		\{\text{Thm~\ref{thm:eq_Gelfand_duality}}\}.
	\end{align}
	Arrows denote implications.
	One can see that the proof is presented backwards, as a series of reductions.
	As a warmup for Section~\ref{sec:bulk_tail_general}, Section~\ref{sec:bulk_tail_poly} proves Theorems~\ref{thm:L^4_quasi-Sobolev}, \ref{thm:exp_decay_form}, and \ref{thm:exp_decay_quasimode} in the case where $\H$ obeys a polynomial Weyl law.

	
	
	\subsection{Suggestions for reading} \label{subsec:reading}
	
	Sections~\ref{sec:sl_2} and \ref{sec:C*} review standard material.
	Sections~\ref{sec:(g,K)-adapted}--\ref{sec:applications}, Section~\ref{sec:uniqueness}, and Sections~\ref{sec:complex_conj}--\ref{sec:bulk_tail_general} can be read in any order.
	
	The quickest route to the core ideas in the paper is to read the proof of Theorem~\ref{thm:eq_Gelfand_duality} assuming a polynomial Weyl law.
	This consists of Sections~\ref{sec:complex_conj}--\ref{sec:bulk_tail_poly}.
	The proof begins in earnest in Section~\ref{sec:embedding_homog}, following the elementary Sections~\ref{sec:complex_conj} and \ref{sec:L^infty_L^4} and the outline in Section~\ref{sec:outline_existence}.
	
	\subsection{Notation for estimates}
	
	Given a nonnegative number $X$, the expression $O(X)$ denotes a quantity which in absolute value is $\leq CX$ for some absolute constant $C \geq 0$. If we want to allow $C$ to depend on parameters $a_1,\dots,a_n$, then we indicate this in the notation by writing $O_{a_1,\dots,a_n}(X)$. We refer to $C$ as the \emph{$O$-constant}. The notation $X \lesssim Y$ or $Y \gtrsim X$ means that $Y$ is nonnegative and $X \leq O(Y)$. Almost always, $X$ will be nonnegative as well. We write $X \sim Y$ when $X \lesssim Y \lesssim X$.
	For nonnegative $X,Y$, the notation $X \ll Y$ or $Y \gg X$ means that $X \leq cY$ for some constant $c>0$ which is sufficiently small for the relevant context, but not too small in the sense that $c \gtrsim 1$. Two sample uses of the symbol $\gg$ are:
	\begin{enumerate} \itemsep = 0.5em
		\item[1.] ``Let $X \gg 1$. Then $X^{1/100} \geq \log X$."
		
		\item[2.] ``Let $X,Y \geq 0$. Then $X \lesssim Y$ or $X \gg Y$ (or both). In the former case, we do [something], and in the latter case, we do [something else]."
	\end{enumerate}
	We refer to the $O$-constant implicit in the symbols $\lesssim,\gtrsim,\sim,\ll,\gg$ as the \emph{implicit constant}. If the implicit constant depends on parameters, then we again indicate this with a subscript.
	From Section~\ref{sec:complex_conj} onward, all $O$-constants, implicit constants, and absolute constants may depend on the multiplicative representation $\H$ under consideration.
	
	
	\subsection{Further notation and conventions} \label{subsec:intro:conventions}
	
	\begin{itemize} \itemsep = 0.5em
		\item Unless stated otherwise, $G = \PSL_2(\R)$ and $K = \PSO_2(\R)$. The Lie algebra of $G$ is $\g_{\R}$, and $\g$ is the complexification of $\g_{\R}$.
		
		\item For $\Gamma$ a lattice in $G$, we always give $\Gamma \backslash G$ the Haar probability measure.
		
		\item By default, representations and function spaces are over $\C$. When we work over $\R$, we say so explicitly.
		
		\item Inner products are linear in the first argument and antilinear in the second.
		
		\item Isomorphisms of unitary representations preserve the inner product.
		
		\item For $\E$ a subset of a representation of $G$ and $H \subseteq G$ a subgroup, $\E^H$ denotes the subset of vectors in $\E$ fixed by $H$.
		
		\item We use boldface $\mathbf{1}$ to denote the unit in a multiplicative representation, and double-struck $\1$ to denote an indicator function.
		
		\item Whenever we discuss the hyperbolic bootstrap equations, we use the notation from Definition~\ref{defn:spectral_eqns}.
		
		\item We abbreviate ``left hand side" and ``right hand side" by ``LHS" and ``RHS" respectively.
		
		\item For a Hilbert space $\H$, we write $\B(\H)$ for the C*-algebra of bounded linear operators on $\H$.
		
		\item From Section~\ref{sec:complex_conj} onward, $\H$ denotes a multiplicative representation, and the notation in Subsection~\ref{subsec:hyp_bootstrap_eqns} ceases to be relevant.
		
		
		\item A \emph{$G$-space} is a topological space $X$ with a $G$-action, such that the action map $G \times X \to X$ is continuous.
		
		\item All measures are Borel measures on topological spaces.
		
		\item Our convention for order of operations is that differentiation comes before multiplication. So for example if $\H$ is a multiplicative representation, $\alpha,\beta \in \H^{\fin}$, and $X,Y \in \g$, then $X\alpha Y\beta$ means $(X\alpha)(Y\beta)$ rather than $X(\alpha Y\beta)$. 
		
		\item We interpret $0^0$ as being equal to $1$. Thus we may write, e.g., $n^n \geq 1$ for all $n \in \Z_{\geq 0}$.
		
		\item We define
		\begin{align} \label{eqn:log_+_def}
			\log_+(x) = \log\max\{x,2\},
		\end{align}
		so that $\log_+$ is defined on all of $\R$, uniformly bounded below, and equal to $\log$ for $x \geq 2$.
	\end{itemize}
	
	\subsection*{Acknowledgments}
	
	This project benefited from conversations with a large number of people, among whom I highlight three: Peter Sarnak (my advisor), Dalimil Maz\'a\v{c}, and Sridip Pal.
	I was introduced to the conformal bootstrap by Peter, who pointed me to \cite{Kravchuk--Mazac--Pal}.
	Soon after, I met Dalimil at a conference, where in his talk he raised the possibility of a converse theorem for \cite{Kravchuk--Mazac--Pal}.
	We began talking then, and much of what I know about the conformal bootstrap comes from conversations with him.
	Both Peter and Dalimil have given feedback on several drafts of this paper.
	I thank them both for their guidance, encouragement, and support.
	I also thank Sridip and Dalimil for inviting me to visit Caltech and IPhT Saclay, respectively, and for their hospitality while I was there.
	Before meeting Sridip, I had proved Theorem~\ref{thm:eq_Gelfand_duality} assuming a polynomial Weyl law.
	It was at Sridip's insistence that I worked out how to remove this assumption.
	More recently, my collaboration with Dalimil, Sridip, and the other authors of \cite{Adve_et_al} has been a source of joy.
	
	Figure~\ref{fig:torus} was created with ChatGPT.
	
	I am supported by the US National Science Foundation Graduate Research Fellowship Program under Grant No. DGE-2039656.

	\section{Preliminaries on $\PSL_2(\R)$} \label{sec:sl_2}
	
	In this section, we state all the facts we need about the representation theory of $G = \PSL_2(\R)$.
	Standard facts are often stated without proof.
	For details, see \cite{Knapp,Bump}.
	This section also sets up much of the notation used later on.

	
	\subsection{The Lie algebra}
	\label{subsec:lie_alg}
	
	We have $\g_{\R} = \Sl_2(\R)$ and $\g = \Sl_2(\C)$.
	Let
	\begin{align} \label{eqn:H,E_def}
		H = \frac{1}{2i}
		\begin{pmatrix}
			0 & 1 \\
			-1 & 0
		\end{pmatrix}
		\qquad \text{and} \qquad
		E = \frac{1}{2}
		\begin{pmatrix}
			1 & i \\
			i & -1
		\end{pmatrix},
	\end{align}
	viewed as elements of $\g$.
	Then
	\begin{align*}
		\g = \C H \oplus \C E \oplus \C\overline{E}
		\qquad \text{and} \qquad
		\Lie(K) = i\R H.
	\end{align*}
	The basis elements $H,E,\overline{E}$ satisfy the commutation relations
	\begin{align} \label{eqn:comm_rlns}
		[H,E] = E,
		\qquad
		[H,\overline{E}] = -\overline{E},
		\qquad
		[E,\overline{E}] = 2H.
	\end{align}
	
	Any unitary representation of $G$ has a weight space decomposition given by diagonalizing the action of $K = \PSO_2(\R)$. Weights are canonically elements of $\widehat{K}$, i.e., unitary characters of $K$.
	We identify $\widehat{K} \simeq \Z$ by defining the $n$th character of $K$ to be $e^{i\theta H} \mapsto e^{in\theta}$ for $\theta \in \R/2\pi\Z$; this is well-defined because $\theta \mapsto e^{i\theta H}$ is an isomorphism from $\R/2\pi\Z$ to $K$.
	With this convention, a vector $v$ in a representation of $G$ has weight $n$ when $e^{i\theta H}v = e^{in\theta}v$, or equivalently when $Hv = nv$. If $v$ has weight $n$ in the complexification of a real representation, then its complex conjugate $\overline{v}$ has weight $-n$. This is because $\overline{H} = -H$. It follows from \eqref{eqn:comm_rlns} that if $v$ has weight $n$, then
	$Ev$ has weight $n+1$ and $\overline{E}v$ has weight $n-1$. So $E,\overline{E}$ are raising and lowering operators, respectively. If $\H$ is a multiplicative representation and $\alpha,\beta \in \H^{\fin}$ have weights $n,m$, respectively, then
	\begin{align} \label{eqn:H_product_rule}
		H(\alpha\beta)
		= (H\alpha)\beta + \alpha(H\beta)
		= (n+m)\alpha\beta
	\end{align}
	by the product rule, so $\alpha\beta$ has weight $n+m$.
	In particular, if $\alpha \in \H^{\fin}$ is a weight vector, then by Remark~\ref{rem:real_subspace} (more specifically Proposition~\ref{prop:bar_equivariant}) and \eqref{eqn:H_product_rule}, $|\alpha|^2$ has weight $0$ and hence is in $\H^K$.
	
	\begin{rem}[Normalization of weights for $\PSL_2(\R)$ vs $\SL_2(\R)$] \label{rem:wts_PSL_2_vs_SL_2}
		Suppose $f$ is a holomorphic modular form of weight $2k$ for some lattice $\Gamma$ in $G$, as in \eqref{eqn:modular_form_def}.
		Then $f \, dz^k$ is $\Gamma$-invariant on $\mathbf{H}$, so it defines a function on the unit tangent bundle $T^1(\Gamma \backslash \mathbf{H}) \simeq \Gamma \backslash G$.
		In our convention, this function has weight $k$ rather than $2k$ as an element of the representation $L^2(\Gamma \backslash G)$.
		This convention is the natural one for representations of $\PSL_2(\R)$ as opposed to $\SL_2(\R)$.
		Related to this, $E$ and $\overline{E}$ raise and lower weights by $1$ rather than $2$.
	\end{rem}
	
	Normalize the Casimir operator $\Delta$ by
	\begin{align} \label{eqn:Casimir_def}
		\Delta
		= -H^2 - \frac{1}{2}(E\overline{E} + \overline{E}E).
	\end{align}
	This is in the center $\mathfrak{Z}(\g)$ of the universal enveloping algebra $\mathfrak{U}(\g)$. It is in fact in $\mathfrak{Z}(\g_{\R})$ because $\overline{\Delta} = \Delta$. Rearranging \eqref{eqn:Casimir_def} using the commutation relations \eqref{eqn:comm_rlns},
	\begin{align} \label{eqn:EEbar}
		E\overline{E} = -(\Delta + H^2 - H)
		\qquad \text{and} \qquad
		\overline{E}E = -(\Delta + H^2 + H).
	\end{align}
	In particular, in a unitary representation $\H$ of $G$, the Casimir acts on $K$-invariant vectors by
	\begin{align} \label{eqn:Delta|_H^K}
		\Delta|_{\mathcal{H}^K}
		= -E\overline{E}|_{\mathcal{H}^K}
		= -\overline{E}E|_{\H^K}.
	\end{align}
	If $v,w$ are smooth vectors in a unitary representation $\H$ of $G$, and if $X \in \g$, then
	\begin{align} \label{eqn:Lie_alg_unitarity}
		\langle Xv,w \rangle_{\H}
		= -\langle v, \overline{X}w \rangle_{\H}.
	\end{align}
	In other words, the formal adjoint of $X \in \g$ is $X^{\star} = -\overline{X}$. Using the notation $X^{\star}$, we can rewrite \eqref{eqn:Casimir_def} as
	\begin{align*}
		\Delta = -H^{\star}H + \frac{1}{2}(\overline{E}^{\star}\overline{E} + E^{\star}E).
	\end{align*}
	From this and \eqref{eqn:Delta|_H^K}, it follows that $\Delta$ is formally self-adjoint on $\H$ and formally positive semidefinite on $\H^K$, i.e.,
	\begin{align*}
		\langle \Delta v, w \rangle
		= \langle v, \Delta w \rangle
		\qquad \text{and} \qquad
		\langle \Delta u,u \rangle
		\geq 0
	\end{align*}
	for all $v,w \in \H^{\infty}$ and $u \in \H^K \cap \H^{\infty}$. Let
	\begin{align} \label{eqn:P_def}
		\P
		= \Delta + 2H^2
		= H^{\star}H + \frac{1}{2}(\overline{E}^{\star} \overline{E} + E^{\star}E).
	\end{align}
	Then $\P$ is formally positive semidefinite on all of $\H$, i.e., $\langle \P v,v \rangle \geq 0$ for all $v \in \H^{\infty}$. When $\P$ is viewed as a left-invariant differential operator on functions on $G$, it is elliptic.
	Indeed, this differential operator is the Laplacian for a certain left-invariant Riemannian metric on $G$.
	
	From now on, we will often use the above equations without comment.
	
	\begin{defn}[Automorphic vector] \label{def:aut_vector}
		An \emph{automorphic vector} in a unitary representation of $G$ is a smooth vector which is both a weight vector and a Casimir eigenvector, i.e., an eigenvector for both $H$ and $\Delta$.
	\end{defn}
	
	By definition, for $\Gamma$ a cocompact lattice in $G$, automorphic vectors in $L^2(\Gamma \backslash G)$ are the same as automorphic forms on $\Gamma \backslash G$.
	
	We will talk about lowest/highest weight vectors in not-necessarily-irreducible representations of $G$.
	When we do so, we mean the following.
	
	\begin{defn}[Lowest/highest weight vector] \label{def:lo_hi}
		A \emph{lowest} (resp. \emph{highest}) \emph{weight vector} in a unitary representation of $G$ is a nonzero smooth weight vector $f$ with nonzero weight such that $\overline{E}f = 0$ (resp. $Ef = 0$).
	\end{defn}
	
	The nonzero weight condition excludes $G$-invariant vectors from being lowest or highest weight vectors.
	A vector $f$ in the complexification of a real unitary representation of $G$ is a lowest weight vector if and only if $\overline{f}$ is a highest weight vector.
	
	\begin{prop} \label{prop:lowest_wt_wt}
		Let $f$ be a lowest (resp. highest) weight vector of weight $k$ (resp. $-k$) in a unitary representation $\H$ of $G$. Then $k \in \Z_{\geq 1}$, and $f$ is an automorphic vector in $\H$ with Casimir eigenvalue $-k(k-1)$.
	\end{prop}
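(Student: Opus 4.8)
The plan is to use the commutation relations \eqref{eqn:comm_rlns} and the formal adjoint formula \eqref{eqn:Lie_alg_unitarity} to run the standard $\mathfrak{sl}_2$ highest/lowest weight argument, keeping careful track of unitarity to force $k$ to be a positive integer. I will treat the lowest weight case; the highest weight case follows by applying complex conjugation (which exchanges $E \leftrightarrow \overline E$ and negates weights).

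First I would compute the Casimir eigenvalue. Since $f$ has weight $k$ we have $Hf = kf$, and since $f$ is a lowest weight vector, $\overline E f = 0$. Using the second identity in \eqref{eqn:EEbar}, namely $\overline E E = -(\Delta + H^2 + H)$, and applying it to $f$: the left side is $\overline E (Ef)$, but it is cleaner to use the first identity $E\overline E = -(\Delta + H^2 - H)$ directly on $f$, which gives $0 = E\overline E f = -(\Delta + H^2 - H)f = -(\Delta f + k^2 f - kf)$, hence $\Delta f = -(k^2 - k)f = -k(k-1)f$. This shows $f$ is a Casimir eigenvector with eigenvalue $-k(k-1)$; combined with $Hf = kf$ and smoothness, $f$ is an automorphic vector in $\H$ by Definition~\ref{def:aut_vector}.

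Next I would show $k \in \Z_{\ge 1}$. Consider the vectors $f_0 = f$, $f_1 = Ef$, $f_2 = E^2 f$, $\dots$, where by the weight computation after \eqref{eqn:comm_rlns} the vector $f_m$ has weight $k+m$. I claim these span a representation on which we can compute norms. Using $[E, \overline E] = 2H$ and $\overline E f_0 = 0$, an induction gives $\overline E f_m = m(2k + m - 1) f_{m-1}$ (the commutator relation plus the weight of $f_{m-1}$ being $k + m - 1$ feeds the recursion). Then by unitarity \eqref{eqn:Lie_alg_unitarity}, which says $E^\star = -\overline E$,
\begin{align*}
\|f_m\|^2 = \langle E f_{m-1}, f_m\rangle = -\langle f_{m-1}, \overline E f_m\rangle = -m(2k+m-1)\langle f_{m-1}, f_{m-1}\rangle = -m(2k+m-1)\|f_{m-1}\|^2.
\end{align*}
Wait — the sign here is the crux, and getting the normalization of $E^\star$ exactly right (including whether it is $-\overline E$ or $+\overline E$, and an overall constant) is precisely the main obstacle; I would pin this down from \eqref{eqn:Lie_alg_unitarity} as stated. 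Assuming $\|f_m\|^2 = m(2k+m-1)\|f_{m-1}\|^2$ after the dust settles (the two minus signs cancel), positivity of norms forces $2k + m - 1 > 0$ for every $m \ge 1$ for which $f_{m-1} \ne 0$; since $k$ is real this is automatic for large $m$, so it gives no contradiction and in fact the chain $f_0, f_1, \dots$ is infinite and generates an irreducible lowest weight subrepresentation. To force $k \in \Z_{\ge 1}$ I instead need to examine the possibility of \emph{lowering} past $f$: apply the same machinery to the hypothetical vector $g$ with $Eg \propto f$. Rather, the cleaner route is: the subrepresentation generated by $f$ is an irreducible unitary lowest weight module, and the classification of such modules for $\PSL_2(\R)$ (recalled in Section~\ref{sec:sl_2}, the discrete series) shows the lowest weight must be a positive integer — the integrality comes from the representation exponentiating to the group $K = \PSO_2(\R)$, whose characters are indexed by $\Z$, so weights are integers, and positivity of the weight is part of Definition~\ref{def:lo_hi}.

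So the key steps, in order, are: (1) derive $\Delta f = -k(k-1)f$ from \eqref{eqn:EEbar} and $\overline E f = 0$; (2) build the raising chain $f_m = E^m f$ and the lowering formula $\overline E f_m = m(2k+m-1)f_{m-1}$ from \eqref{eqn:comm_rlns}; (3) use unitarity \eqref{eqn:Lie_alg_unitarity} to get the norm recursion and identify the subrepresentation generated by $f$ as an irreducible unitary lowest weight module; (4) invoke the classification of irreducible unitary representations of $G$ (or just integrality of $K$-weights plus positivity) to conclude $k \in \Z_{\ge 1}$. The main obstacle I anticipate is step (3): making sure the sign conventions in \eqref{eqn:Lie_alg_unitarity} and the Casimir normalization \eqref{eqn:Casimir_def} are handled consistently so that the norm recursion comes out with the right sign — a single sign error here would either trivialize the statement or make it false, so this bookkeeping deserves care even though the underlying representation theory is completely standard.
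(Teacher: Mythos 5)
Your Casimir eigenvalue computation is correct and matches the paper exactly: $0 = E\overline{E}f = -(\Delta + H^2 - H)f$ gives $\Delta f = -k(k-1)f$. Your norm recursion is also correct after the dust settles; you can verify that $\overline{E}f_m = -m(2k+m-1)f_{m-1}$ (note the minus sign, which your stated formula omits), and then the extra minus from $E^\star = -\overline{E}$ in \eqref{eqn:Lie_alg_unitarity} cancels it, giving $\|f_m\|^2 = m(2k+m-1)\|f_{m-1}\|^2$ as you guessed. So the sign bookkeeping you flag as ``the main obstacle'' actually works out.

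The genuine gap is in your treatment of this recursion. You write that $2k+m-1>0$ ``is automatic for large $m$, so it gives no contradiction,'' and then reach for heavier machinery. But the whole point is the $m=1$ case, which is \emph{not} automatic: it gives $\|Ef\|^2 = 2k\|f\|^2 \geq 0$, and since $f\neq 0$ this forces $k\geq 0$. That is the paper's entire argument for step (4). Combined with $k\neq 0$ (Definition~\ref{def:lo_hi} requires nonzero weight) and the fact that weights are integers (via $\widehat{K}\simeq\Z$), you get $k\geq 1$. You miss this and instead either (a) invoke the classification of irreducible unitary representations from Subsection~\ref{subsec:irrep_classification} — but the paper derives that classification using Proposition~\ref{prop:lowest_wt_wt}, so within this paper the appeal is circular — or (b) claim that ``positivity of the weight is part of Definition~\ref{def:lo_hi},'' which is false: the definition requires the weight to be \emph{nonzero}, not positive, and deducing positivity is precisely what unitarity buys you. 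You derived the exact inequality you need and then declined to use it.
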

	
	We give the proof of this well-known fact in the case of lowest weight vectors, because it is a short computation which serves as a warmup for more involved computations in Sections~\ref{sec:reduce_to_bulk_tail}, \ref{sec:bulk_tail_poly}, and \ref{sec:bulk_tail_general}.
	The case of highest weight vectors is almost identical.
	
	\begin{proof}[Proof for lowest weight vectors]
		Let $f \in \H$ be a lowest weight vector of weight $k$. Then using \eqref{eqn:Lie_alg_unitarity} and \eqref{eqn:comm_rlns}, write
		\begin{align*}
			0
			\leq \|Ef\|_{\H}^2
			= -\langle \overline{E}Ef,f \rangle_{\H}
			= 2\langle Hf,f \rangle_{\H} - \langle E\overline{E}f,f \rangle_{\H} 
			= 2k\|f\|_{\H}^2,
		\end{align*}
		where the term $\langle E\overline{E}f,f \rangle_{\H}$ vanishes because $f$ is lowest weight. This forces $k \geq 0$. By definition, $k \neq 0$, so we must have $k \geq 1$. Next, by \eqref{eqn:EEbar},
		\begin{align*}
			0 = -E\overline{E}f
			= (\Delta+H^2-H)f
			= \Delta f + k(k-1)f,
		\end{align*}
		where the first equality is because $f$ is lowest weight.
		Rearranging, $\Delta f = -k(k-1)f$.
	\end{proof}
	
	
	\subsection{Irreducible unitary representations}
	\label{subsec:irrep_classification}
	
	Let $\pi_{\R}$ be a real irreducible unitary representation of $G$, and let $\pi$ be its complexification. By a suitable version of Schur's lemma, the Casimir acts on $\pi^{\infty}$ by multiplication by a real scalar $\lambda$. Depending on whether or not $\pi_{\R}$ (or equivalently $\pi$) has a nonzero $K$-invariant vector, $\pi_{\R}$ is classified into one of two types: \textit{principal series} or \textit{discrete series} (we do not distinguish between principal and complementary series).
	
	\begin{itemize} \itemsep = 0.5em
		\item (Principal series) $\pi_{\R}$ is \emph{principal series} if $\pi_{\R}^K \neq 0$. In this case, as a complex unitary representation, $\pi$ is also irreducible. Since the Casimir is formally positive semidefinite on $\pi^K$, we have $\lambda \geq 0$. The Casimir eigenvalue $\lambda$ determines $\pi_{\R}$ up to isomorphism. If $\lambda = 0$, then $\pi_{\R}$ is the trivial representation. If $\lambda > 0$, let $\varphi \in \pi^K$ be nonzero. Then $\varphi$ is a smooth vector, and
		\begin{align*}
			\dots,
			\overline{E}^2\varphi,
			\overline{E}\varphi,
			\varphi,
			E\varphi,
			E^2\varphi,
			\dots
		\end{align*}
		are nonzero automorphic vectors forming an orthogonal basis of $\pi$.
		There are no lowest or highest weight vectors in $\pi$ in the sense of Definition~\ref{def:lo_hi} (this is trivial when $\lambda=0$ and follows from Proposition~\ref{prop:lowest_wt_wt} when $\lambda > 0$).
		
		\item (Discrete series) $\pi_{\R}$ is \emph{discrete series} if $\pi_{\R}^K = 0$. In this case, as a complex unitary representation, $\pi$ splits into two irreducible pieces as $\pi = \pi^+ \oplus \pi^-$, where $\pi^+$ (resp. $\pi^-$) is the closed subspace of $\pi$ generated by positive (resp. negative) weight vectors. One has $\pi^- = \overline{\pi^+}$. Let $k \in \Z_{\geq 1}$ be the lowest positive weight in $\pi$. Then $\pi_{\R}$ is determined up to isomorphism by $k$.
		Let $f \in \pi^+$ be nonzero and have weight $k$. Then $f$ is a lowest weight vector, so by Proposition~\ref{prop:lowest_wt_wt}, we have $\lambda = -k(k-1)$. The vectors
		\begin{align*}
			f,Ef,E^2f,\dots
		\end{align*}
		are nonzero automorphic vectors forming an orthogonal basis of $\pi^+$.
		It follows from this, complex conjugation symmetry, and Proposition~\ref{prop:lowest_wt_wt} that all lowest (resp. highest) weight vectors in $\pi$ in the sense of Definition~\ref{def:lo_hi} are scalar multiples of $f$ (resp. $\overline{f}$).
	\end{itemize}
	
	All complex irreducible unitary representations of $G$ arise as above from real representations, i.e., either as the complexification of a real principal series representation, or as $\pi^+$ or $\pi^-$ for some real discrete series representation $\pi_{\R}$.
	
	The following two propositions are immediate consequences of the above classification.
	
	\begin{prop} \label{prop:Maass_span_H^fin}
		Let $\H$ be a unitary representation of $G$ with discrete spectrum. Then $\H^{\fin}$ is the algebraic linear span of the automorphic vectors in $\H$, and as an abstract vector space, $\H^{\fin}$ has countable dimension.
	\end{prop}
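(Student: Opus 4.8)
The plan is to reduce to the irreducible case using the decomposition afforded by discrete spectrum, and then read off the statement from the explicit bases exhibited in Subsection~\ref{subsec:irrep_classification}. Since $\H$ has discrete spectrum, its spectrum $\Pi$ is a countable multiset with finite multiplicities, and fixing a decomposition $\H = \bigoplus_{\pi \in \Pi}\pi$ we have $\H^{\fin} = \bigoplus_{\pi \in \Pi}\pi^{\fin}$ as an algebraic direct sum (independent of the decomposition, by Definition~\ref{defn:H^fin}). So it suffices to show (a) that each $\pi^{\fin}$ is the linear span of its automorphic vectors and has countable dimension, and (b) that every automorphic vector of $\H$ lies in $\H^{\fin}$, in fact in the span of the automorphic vectors of the individual summands.

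For (a): in an irreducible $\pi$ every weight space is at most one-dimensional, and $\pi^{\fin}$ is the algebraic direct sum of its weight spaces; a nonzero weight vector in $\pi^{\fin} \subseteq \pi^{\infty}$ is automatically smooth and, since the Casimir acts on $\pi^{\infty}$ by a scalar, is a Casimir eigenvector, hence is an automorphic vector. Concretely, from the classification: if $\pi$ is a (complexified) principal series with Casimir eigenvalue $\lambda$, a nonzero $\varphi \in \pi^K$ yields the orthogonal basis $\{\overline{E}^m\varphi\}_{m\geq 0} \cup \{E^m\varphi\}_{m\geq 0}$ of automorphic vectors, one in each weight in $\Z$ (when $\lambda = 0$ the representation is trivial and $\varphi$ is the only basis vector); if $\pi = \pi^+$ is a discrete-series summand with lowest weight $k$, a lowest weight vector $f$ yields the orthogonal basis $\{E^m f\}_{m\geq 0}$ of automorphic vectors in weights $k,k+1,\dots$, and similarly for $\pi^-$. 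In all cases $\pi^{\fin}$ is the span of this countable family of automorphic vectors.

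For (b): let $f \in \H$ be an automorphic vector of weight $n$ and Casimir eigenvalue $\lambda$. Orthogonal projection onto each $\pi$-isotypic piece is $G$-equivariant, hence preserves smooth vectors and commutes with $H$ and $\Delta$, so the component of $f$ in each $\pi \in \Pi$ is again a (possibly zero) weight-$n$, Casimir-$\lambda$ vector. A single Casimir eigenvalue is a bounded subset of $\widehat{G}$, so by discreteness of $\Pi$ only finitely many $\pi \in \Pi$ have Casimir eigenvalue $\lambda$; within each such $\pi$ the weight-$n$ space is at most one-dimensional and, when nonzero, is spanned by one of the automorphic basis vectors of (a). Hence $f$ has only finitely many nonzero isotypic components, so $f \in \bigoplus_{\pi \in \Pi}\pi^{\fin} = \H^{\fin}$, and $f$ lies in the span of the automorphic vectors of the summands. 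Combining with (a), the span of all automorphic vectors of $\H$ equals $\H^{\fin}$, which has countable dimension because $\Pi$ is countable and each $\pi^{\fin}$ does.

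There is no genuinely hard step here; the one point requiring care — and the only place the hypothesis is used — is the appeal to discreteness of $\Pi$ in (b) to guarantee that ``weight $n$, Casimir eigenvalue $\lambda$'' cuts out a finite-dimensional subspace of $\H$. Without this, an automorphic vector could in principle have infinitely many nonzero isotypic components and fail to lie in the algebraic direct sum $\H^{\fin}$.
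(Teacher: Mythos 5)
Your proof is correct, and it fills out what the paper simply declares to be ``an immediate consequence of the above classification'' (Subsection~\ref{subsec:irrep_classification}) without further argument, so there is no competing proof to compare against. Your parts (a) and (b) are exactly the intended reduction to irreducibles via the algebraic direct sum $\H^{\fin} = \bigoplus_{\pi}\pi^{\fin}$ together with the explicit automorphic bases of each $\pi^{\fin}$; and you are right to isolate (b) — that any single automorphic vector of $\H$ has only finitely many nonzero components, because discreteness of $\Pi$ forces the Casimir eigenspace $\H_{\Delta=\lambda}$ to be a finite direct sum of irreducibles — as the one place the discrete-spectrum hypothesis does real work, a point the paper leaves implicit.
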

	
	%
	
	\begin{prop} \label{prop:Maass_is_raised}
		Let $\alpha$ be an automorphic vector of nonnegative weight in a unitary representation of $G$ with discrete spectrum. Then there is either a $K$-invariant automorphic vector $\varphi$ or a lowest weight vector $f$ such that $\alpha = E^m\varphi$ or $\alpha = E^mf$ for some $m \in \Z_{\geq 0}$.
	\end{prop}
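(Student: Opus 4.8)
The plan is to localize the statement to a single irreducible summand using discreteness of the spectrum, and then read off the answer from the classification in Subsection~\ref{subsec:irrep_classification}. Fix an irreducible decomposition $\H = \bigoplus_\pi \pi$, and let $n \ge 0$ and $\lambda \in \R$ be the weight and Casimir eigenvalue of $\alpha$. Since the orthogonal projection $P_\pi$ onto each (closed, $G$-invariant) summand $\pi$ commutes with the $G$-action, it preserves smooth vectors and commutes with the action of $\g$ on $\H^{\infty}$; hence the component $\alpha_\pi := P_\pi \alpha$ is, for each $\pi$, either zero or again an automorphic vector of weight $n$ and Casimir eigenvalue $\lambda$. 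In particular every $\pi$ with $\alpha_\pi \ne 0$ contains a nonzero vector of nonnegative weight $n$ with Casimir eigenvalue $\lambda$.

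Next I would determine which $\pi$ this forces. By the classification, if $\lambda > 0$ then $\pi$ must be (the complexification of) a principal series representation; if $\lambda = 0$ and $n = 0$ then $\pi$ must be trivial; and if $\lambda \le 0$ with $(\lambda,n) \ne (0,0)$ then, since $-k(k-1)$ is strictly increasing in $k \in \Z_{\ge 1}$, the equation $\lambda = -k(k-1)$ has a unique solution $k \ge 1$, and $\pi$ must be the holomorphic piece $\pi^+$ of the discrete series of lowest weight $k$, with $n \ge k$ (as $\pi^+$ carries only weights $\ge k$). Crucially, because $n$ and $\lambda$ are fixed, exactly one of these cases occurs for all $\pi$ with $\alpha_\pi \ne 0$ at once; and the trivial case is subsumed in the principal series case treated below, with $m = n = 0$. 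Set $m = n$ in the principal series case and $m = n - k$ in the discrete series case, so $m \ge 0$ throughout.

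Finally I would build the generator by lowering. In each relevant $\pi$, the distinguished generator $g_\pi$ --- the $K$-invariant automorphic vector $\varphi_\pi$ in the principal series case, or the lowest weight vector $f_\pi$ in the discrete series case --- satisfies $\overline{E}^m E^m g_\pi = \mu\, g_\pi$ for a scalar $\mu$ depending only on $\lambda$ and $m$ (computed from the relations \eqref{eqn:comm_rlns} and \eqref{eqn:EEbar} applied to vectors of the appropriate weights), and $\mu \ne 0$ because along the chain of weights involved, no application of $E$ or $\overline{E}$ hits a highest or lowest weight vector --- there are none in a principal series representation, and none of the relevant weights $\ge k$ are lowest/highest in $\pi^+$, by Proposition~\ref{prop:lowest_wt_wt}. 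Since $\alpha_\pi = c_\pi E^m g_\pi$ for scalars $c_\pi$, and $\overline{E}^m$ commutes with each $P_\pi$, we get that $\overline{E}^m \alpha$ has $P_\pi$-component $c_\pi \mu\, g_\pi$ for every $\pi$; hence $g := \mu^{-1}\overline{E}^m \alpha \in \H^{\infty}$ (as $\overline{E}$ preserves $\H^{\infty}$) has $P_\pi$-component $c_\pi g_\pi$, so $g$ is $K$-invariant (resp.\ has weight $k$ and is killed by $\overline{E}$, the weight-$0$ subspace of $\pi^+$ being zero) and satisfies $E^m g = \alpha$, and $g \ne 0$ since $\alpha \ne 0$. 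Taking $\varphi := g$ or $f := g$ finishes the proof. I expect the only real subtlety to be the bookkeeping in these two reductions --- that the projections $\alpha_\pi$ stay smooth, and that the normalizing scalar $\mu$ is nonzero --- both of which are forced by the classification but should be spelled out; the rest is formal.
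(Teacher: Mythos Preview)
Your argument is correct and matches what the paper has in mind: the paper simply records this as an immediate consequence of the classification in Subsection~\ref{subsec:irrep_classification} and gives no further proof, so your write-up is a faithful unpacking of that one-line claim.

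Two small corrections of phrasing. First, $-k(k-1)$ is strictly \emph{decreasing} (not increasing) in $k \in \Z_{\ge 1}$; what you actually use is injectivity, which still holds. Second, the sentence ``no application of $E$ or $\overline{E}$ hits a highest or lowest weight vector'' is not literally true in the discrete series case --- you do apply $E$ to the lowest weight vector $f_\pi$ --- but the point you need is only that $\overline{E}$ is never applied to a lowest weight vector in the chain $\overline{E}^m E^m g_\pi$, which is what makes $\mu \ne 0$.

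One simplification worth noting: the irreducible decomposition is not really needed. Since $\alpha$ is already a simultaneous eigenvector for $\Delta$ and $H$, the relations \eqref{eqn:EEbar} let you compute $E^m \overline{E}^m \alpha = \mu\,\alpha$ directly as a scalar multiple, with $\mu = (-1)^m \prod_{j=1}^{m}(\lambda + (n-j)(n-j+1))$ in the principal series case (and analogously in the discrete series case). The same computation of $\|\overline{E}^{j}\alpha\|_{\H}^2$ both forces $\lambda$ to be of the form $-k(k-1)$ when $\lambda \le 0$ and shows that $f := \overline{E}^{n-k}\alpha$ is killed by $\overline{E}$, all without ever invoking the projections $P_\pi$.
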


	\subsection{Spectral decomposition and functional calculus for the Casimir} \label{subsec:sl_2:func_calc}
	
	Given a unitary representation $\H$ of $G$ with discrete spectrum, let $\H_{\Delta=\lambda}$ denote the $\lambda$-eigenspace of the Casimir (i.e., the closure in $\H$ of the $\lambda$-eigenspace of the Casimir acting on $\H^{\infty}$). Because $\H$ has discrete spectrum, each $\H_{\Delta=\lambda}$ is a direct sum of finitely many irreducible subrepresentations, and
	\begin{align*}
		\H = \bigoplus_{\lambda \in \R} \H_{\Delta=\lambda},
	\end{align*}
	where the set $\{\lambda : \H_{\Delta=\lambda} \neq 0\}$ has finite intersection with each bounded subset of $\R$. Let $\1_{\Delta=\lambda}$ denote the orthogonal projection onto $\H_{\Delta=\lambda}$. More generally, given a logical statement $S$ depending on a real number, i.e., a function $S \colon \R \to \{\true,\false\}$, let
	\begin{align*}
		\H_{S(\Delta)}
		= \bigoplus_{\lambda \colon S(\lambda) = \true} \H_{\Delta=\lambda}
	\end{align*}
	and $\1_{S(\Delta)}$ the orthogonal projection onto $\H_{S(\Delta)}$. For example, for any $R \geq 0$, one could consider $\H_{|\Delta|\leq R}$, the direct sum of Casimir eigenspaces with eigenvalues bounded by $R$.
	For $f \colon \R \to \C$ and $v \in \H$, define
	\begin{align} \label{eqn:norm_f(Delta)v_def}
		\|f(\Delta)v\|_{\H}
		= \Big(\sum_{\lambda \in \R} |f(\lambda)|^2 \|\1_{\Delta = \lambda}v\|_{\H}^2\Big)^{\frac{1}{2}}.
	\end{align}
	If this is finite, let
	\begin{align*}
		f(\Delta)v
		= \sum_{\lambda \in \R} f(\lambda) \1_{\Delta=\lambda} v.
	\end{align*}
	By finiteness of \eqref{eqn:norm_f(Delta)v_def}, this sum converges unconditionally to a well-defined element of $\H$ with norm given by \eqref{eqn:norm_f(Delta)v_def}. The operator $f(\Delta)$ obtained in this way is densely defined with domain containing $\H^{\fin}$, and is bounded if $f$ is bounded.
	For example, if $f(\mu) = \1_{S(\mu)}$ with $S$ as above, then $f(\Delta) = \1_{S(\Delta)}$.
	Observe that $\|f(\Delta)v\|_{\H}$ is monotonically increasing with $|f|$. This allows us to use $O$-notation in the functional calculus. An example we will often use is $\|\exp(O(\log_+^2\Delta))v\|_{\H}$, which denotes a quantity equal to $\|f(\Delta)v\|_{\H}$ for some $f \colon \R \to \C$ with $|f(x)| = \exp(O(\log_+^2 x))$.
	Recall from \eqref{eqn:log_+_def} that $\log_+(x) = \log\max\{x,2\}$.
	The $\exp \log^2$ type asymptotic arises naturally in the proof of Theorem~\ref{thm:L^4_quasi-Sobolev}, and appears for this reason in many subsequent estimates.
	
	Since $\H_{\Delta=\lambda}^K = 0$ for $\lambda < 0$, all of the above makes sense with $\H$ replaced by $\H^K$ and $\R$ replaced by $[0,\infty)$. For example, for any $R \geq 0$, we can consider $\H_{\Delta \leq R}^K$. Since $\H$ has discrete spectrum, and since each irreducible unitary representation of $G$ has a finite-dimensional (in fact $\leq 1$-dimensional) subspace of $K$-invariants, $\H_{\Delta \leq R}^K$ is finite-dimensional and $\H_{\Delta \leq R}^K \subseteq \H^{\fin}$. These two facts will be used repeatedly.
	
	\subsection{Smooth and super-smooth vectors} As above, let $\H$ be a unitary representation of $G$ with discrete spectrum.
	
	\begin{prop} \label{prop:smooth_iff_spectral_decay}
		Let $v \in \H$ be a weight vector. Then $v \in \H^{\infty}$ if and only if $\|\Delta^Nv\|_{\H} < \infty$ for all $N \geq 0$.
	\end{prop}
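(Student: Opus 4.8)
The plan is to transfer the hypothesis from the Casimir $\Delta$ --- which is \emph{not} elliptic as a left-invariant operator on $G$ --- to the elliptic operator $\P = \Delta + 2H^2$ of \eqref{eqn:P_def}, and then quote the standard elliptic-regularity description of smooth vectors. The assumption that $v$ is a \emph{weight} vector is exactly what makes this transfer possible: on weight-$n$ vectors, $\P$ differs from $\Delta$ only by the scalar $2n^2$. The forward implication requires none of this machinery: if $v \in \H^{\infty}$ then $\Delta^N \in \mathfrak{U}(\g)$ acts on $v$ for every $N \geq 0$, producing an element $\Delta^N v \in \H$, so $\|\Delta^N v\|_{\H} < \infty$.

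For the converse, suppose $v$ has weight $n$ and $\|\Delta^N v\|_{\H} < \infty$ for all $N \geq 0$. Using discreteness of the spectrum, fix an irreducible decomposition $\H = \bigoplus_{\pi} \pi$ and write $v = \sum_{\pi} v_{\pi}$ with $v_{\pi} \in \pi$. Each $v_{\pi}$ is a weight-$n$ vector in the irreducible $\pi$, and by the classification in Subsection~\ref{subsec:irrep_classification} every weight space of an irreducible unitary representation of $G$ is at most one-dimensional, so $v_{\pi} = c_{\pi} u_{\pi}$ with $c_{\pi}$ a scalar and $u_{\pi} \in \pi$ a unit automorphic vector of weight $n$. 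Writing $\lambda_{\pi}$ for the Casimir eigenvalue of $\pi$, we have $\Delta v_{\pi} = \lambda_{\pi} v_{\pi}$ and $H v_{\pi} = n v_{\pi}$, hence $\P v_{\pi} = (\lambda_{\pi} + 2n^2) v_{\pi}$. Therefore $\|\P^N v\|_{\H}^2 = \sum_{\pi} |c_{\pi}|^2 (\lambda_{\pi} + 2n^2)^{2N}$; expanding the power binomially bounds this by a finite linear combination (coefficients depending on $N$ and $n$) of the sums $\sum_{\pi} |c_{\pi}|^2 \lambda_{\pi}^{j} = \langle \Delta^j v, v \rangle_{\H}$, each of which is finite by hypothesis and Cauchy--Schwarz. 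So $v$ lies in the domain of every power of $\P$, where $\P$ is viewed as the self-adjoint operator on $\H$ obtained by closing its restriction to $\H^{\fin}$ (essentially self-adjoint there, since $\H^{\fin}$ has an orthonormal basis of automorphic eigenvectors of $\P$).

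To conclude, recall from just after \eqref{eqn:P_def} that $\P$ is elliptic as a left-invariant differential operator on $G$ --- it is the Laplacian of a left-invariant Riemannian metric --- so the standard elliptic-regularity characterization of smooth vectors gives $\H^{\infty} = \bigcap_{N \geq 0} \operatorname{dom}(\P^N)$ (see \cite{Knapp}), whence $v \in \H^{\infty}$. The one non-elementary ingredient is this last identity. If a self-contained argument is preferred, one can instead bound $\|X u_{\pi}\|_{\H} \lesssim_{X,n} (1 + |\lambda_{\pi}|)^{O_X(1)}$ for each $X \in \mathfrak{U}(\g)$ using the commutation relations \eqref{eqn:comm_rlns}, the identity $X^{\star} = -\overline{X}$ from \eqref{eqn:Lie_alg_unitarity}, and the Poincar\'e--Birkhoff--Witt basis, and then observe that the truncations $\sum_{|\lambda_{\pi}| \leq R} c_{\pi} u_{\pi}$ --- which are smooth, being finite sums of automorphic vectors by discreteness of the spectrum --- form a Cauchy net converging to $v$ in the Fr\'echet space $\H^{\infty}$. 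Either way, the main point, and the only place the weight hypothesis enters, is the reduction from $\Delta$ to the elliptic $\P$.
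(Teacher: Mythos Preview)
Your proof is correct and follows essentially the same route as the paper's sketch: both observe that on a weight-$n$ vector $\P$ differs from $\Delta$ by the scalar $2n^2$, transfer the hypothesis to $\P$, and then invoke elliptic regularity for $\P$ to conclude smoothness. Your version is simply more explicit about the spectral decomposition and the binomial bookkeeping, and you additionally sketch a self-contained alternative via truncations, whereas the paper compresses the transfer step into a single sentence.
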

	
	Here $\|\Delta^Nv\|_{\H}$ is well-defined by \eqref{eqn:norm_f(Delta)v_def} even if $v$ is not smooth.
	Proposition~\ref{prop:smooth_iff_spectral_decay} is a standard fact with multiple proofs.
	One way to see it is as a consequence of elliptic regularity:
	
	\begin{proof}[Sketch of proof of Proposition~\ref{prop:smooth_iff_spectral_decay}]
		If $v \in \H^{\infty}$, then it is trivial that $\|\Delta^Nv\|_{\H} < \infty$. Conversely, if $\|\Delta^Nv\|_{\H} < \infty$ for all $N \geq 0$, then since $v$ is a weight vector, $\|\P^Nv\|_{\H} < \infty$ for all $N \in \Z_{\geq 0}$, where $\P$ is defined by \eqref{eqn:P_def}. As mentioned after \eqref{eqn:P_def}, $\P$ is elliptic, so by elliptic regularity, $g \mapsto gv$ must be a smooth $\H$-valued function on $G$. This means that $v \in \H^{\infty}$.
	\end{proof}
	
	By Proposition~\ref{prop:smooth_iff_spectral_decay}, the following property is stronger than smoothness.
	
	\begin{defn}[Super-smooth vector] 
		A vector $v \in \H$ is \emph{super-smooth} if it is $K$-invariant and
		\begin{align} \label{eqn:super-smooth}
			\|\exp(A\log_+^2\Delta) v\|_{\H} < \infty
		\end{align}
		for every $A \geq 0$.
	\end{defn}
	
	The technical lemma below
	is used to prove Corollary~\ref{cor:qual_Sob_emb}.
	The defining property \eqref{eqn:super-smooth} of super-smoothness is chosen to be the weakest one which makes the proof of Corollary~\ref{cor:qual_Sob_emb} valid.
	
	\begin{lem} \label{lem:positive_super-smooth}
		There exists a nonnegative continuous function $\varphi \in L^1(G)$, not identically zero, such that for all $v \in \H$, the convolution
		\begin{align*}
			\varphi \ast v := \int_G \varphi(g) gv \, dg
		\end{align*}
		is super-smooth.
	\end{lem}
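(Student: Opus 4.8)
The plan is to produce $\varphi$ by smoothing a bump function against the positivity structure of the Casimir. Pick a nonnegative, nonzero, smooth function $\chi \in C^\infty_c(G)$ supported in a small neighborhood of the identity, and set $\varphi = \chi^* \ast \chi$, where $\chi^*(g) = \overline{\chi(g^{-1})}$. Then $\varphi$ is continuous, nonnegative (in fact $\varphi(e) = \|\chi\|_{L^2(G)}^2 > 0$, so $\varphi$ is not identically zero), compactly supported, and — crucially — the convolution operator $v \mapsto \varphi \ast v$ on $\H$ equals $\pi(\chi)^*\pi(\chi)$, a positive semidefinite bounded operator, where $\pi(\chi) = \int_G \chi(g)\, \pi(g)\, dg$. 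So for this $\varphi$ the operator $T = \varphi \ast (\cdot)$ is a positive self-adjoint element of $\B(\H)$. The real content is to show $Tv$ is super-smooth for every $v \in \H$, i.e. $K$-invariant with $\|\exp(A\log_+^2\Delta)\, Tv\|_\H < \infty$ for all $A \geq 0$.

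For $K$-invariance, I would not expect $Tv$ to be $K$-invariant for a general $\chi$; instead I would \emph{additionally} choose $\chi$ to be bi-$K$-invariant, i.e. $\chi(k_1 g k_2) = \chi(g)$ for $k_1,k_2 \in K$. Such $\chi$ exist (average any bump over $K \times K$), and then $\pi(\chi)$ maps all of $\H$ into $\H^K$ and kills $(\H^K)^\perp$; the same holds for $\varphi = \chi^* \ast \chi$ since $\varphi$ is again bi-$K$-invariant. So $Tv \in \H^K$ automatically. Now $T$ commutes with the $G$-action up to the usual convolution identities — more to the point, $T$ commutes with the Casimir $\Delta$ (as $\varphi\ast(\cdot)$ is built from the regular representation, which commutes with the center of $\mathfrak{U}(\g)$). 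Hence on the Casimir eigenspace $\H_{\Delta = \lambda}$, $T$ acts as a scalar $\tau(\lambda) \geq 0$ when restricted to $\H^K_{\Delta=\lambda}$ (using that each irreducible has at most a one-dimensional space of $K$-invariants and $T$ annihilates the non-$K$-invariant part). Then
\begin{align*}
	\|\exp(A\log_+^2\Delta)\, Tv\|_\H^2
	= \sum_{\lambda \geq 0} e^{2A\log_+^2\lambda}\, \tau(\lambda)^2\, \|\1_{\Delta=\lambda} v\|_\H^2,
\end{align*}
so it suffices to show $\tau(\lambda)$ decays faster than $e^{-A\log_+^2 \lambda}$ for every $A$, i.e. faster than any inverse power of $\lambda$ times a little more — precisely, $\tau(\lambda) \lesssim_{A} e^{-A \log_+^2 \lambda}$.

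The decay of $\tau(\lambda)$ is the main obstacle, and I would obtain it from the fact that $\chi$ is smooth and bi-$K$-invariant. On a spherical (principal-series-type) irreducible $\pi$ with Casimir eigenvalue $\lambda > 0$, the scalar by which $\pi(\chi)$ acts on $\pi^K$ is the spherical transform $\widehat{\chi}(\lambda)$ — equivalently $\langle \pi(\chi)\varphi_\lambda, \varphi_\lambda\rangle$ against the normalized spherical function — and $\tau(\lambda) = |\widehat{\chi}(\lambda)|^2$. Since $\Delta$ is elliptic (it is $\P$ on $K$-invariants, cf.\ the discussion after \eqref{eqn:P_def}), repeated integration by parts against $\Delta$ gives, for each $N$, $|\widehat{\chi}(\lambda)| = |\widehat{\Delta^N \chi}(\lambda)| / |\lambda|^N \lesssim_N \lambda^{-N}$ for $\lambda \geq 1$, using that $\Delta^N \chi \in C^\infty_c(G)$ and that the spherical function is bounded (indeed $\leq 1$ in absolute value on the tempered range, with at-most-polynomial growth off it, which is harmless after one more application of this bound). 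Hence $\tau(\lambda)$ decays faster than any polynomial, which is more than enough to beat $e^{-A\log_+^2\lambda}$ for every fixed $A$ — note $e^{A\log_+^2\lambda}$ grows subpolynomially, so even $\tau(\lambda) \lesssim \lambda^{-1}$ would essentially suffice, but the smooth-$\chi$ argument gives arbitrary polynomial decay for free. Plugging this into the displayed sum and using $\|v\|_\H < \infty$ shows $\|\exp(A\log_+^2\Delta)\, Tv\|_\H < \infty$ for all $A \geq 0$, completing the proof. The one point requiring a little care is the contribution of complementary-series parameters (small $\lambda > 0$) and of $\lambda = 0$: these form a bounded range of $\lambda$ contributing finitely many eigenspaces with $e^{2A\log_+^2\lambda}$ bounded, so they cause no trouble.
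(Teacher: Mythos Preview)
Your overall framework is reasonable and is in the spirit of the Selberg--Harish-Chandra transform approach the paper mentions as an alternative. The genuine gap is the claim that ``$e^{A\log_+^2\lambda}$ grows subpolynomially.'' This is false: for $\lambda \geq 2$ one has $e^{A\log_+^2\lambda} = e^{A(\log\lambda)^2} = \lambda^{A\log\lambda}$, which grows \emph{faster} than any fixed power of $\lambda$ (the paper calls this growth rate ``quasipolynomial''). Consequently, polynomial decay of $\tau(\lambda)$ --- which is all your integration-by-parts argument yields for a generic $\chi \in C_c^\infty(G)$ --- is not enough. You need, for every $A$, $\sup_{\lambda} \lambda^{A\log\lambda}\,\tau(\lambda) < \infty$; optimizing your bound $\tau(\lambda) \leq \|\Delta^N\chi\|_{L^1}^2\,\lambda^{-2N}$ over $N \approx A\log\lambda$ shows this requires $\|\Delta^N\chi\|_{L^1}$ to grow slower than $e^{cN^2}$ for every $c>0$, and a general smooth compactly supported $\chi$ need not satisfy this.

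This is exactly why the paper's proof invokes the Denjoy--Carleman theorem to build $\varphi$ from a Gevrey-class function, giving the controlled derivative bounds $\|\Delta^n\varphi\|_{L^\infty(G)} \lesssim O(n)^{2\sigma n}$; these bounds are what make the dyadic decomposition argument go through. Your argument can be repaired along the same lines: take $\chi$ to be Gevrey rather than merely smooth (or, following the Selberg--Harish-Chandra route the paper sketches, choose $\chi$ so its spherical transform is a Gaussian, which gives exponential rather than polynomial decay of $\tau(\lambda)$). Either way, the missing ingredient is quantitative control on $\|\Delta^N\chi\|$ as a function of $N$, not just its finiteness for each $N$.
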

	
	There are more sophisticated results, such as \cite{Nelson_59}, about smoothing vectors in representations of general Lie groups.
	Indeed, one can prove Lemma~\ref{lem:positive_super-smooth} using the heat kernel technique of \cite[Section~8]{Nelson_59}.
	Since Lemma~\ref{lem:positive_super-smooth} is for $G = \PSL_2(\R)$ specifically, one can alternatively use the Selberg--Harish-Chandra transform \cite[Proposition~11.2.9]{Deitmar--Echterhoff} to give a short proof of Lemma~\ref{lem:positive_super-smooth} with an explicit construction of $\varphi$: take $g$ in \cite[Proposition~11.2.9]{Deitmar--Echterhoff} to be a Gaussian, and then take $\varphi$ to be the function $f$ in \cite[Proposition~11.2.9]{Deitmar--Echterhoff}.
	Both the heat kernel argument and the Selberg--Harish-Chandra argument give the stronger conclusion that $\varphi \ast v$ is an analytic vector for $\Delta$ (the notion of an analytic vector for an unbounded operator is defined at the beginning of \cite{Nelson_59}).
	Since we only need $\varphi \ast v$ to be super-smooth, a more elementary proof is available, using nothing specific about $\PSL_2(\R)$.
	We sketch this elementary proof below.
	
	\begin{proof}[Sketch of proof of Lemma~\ref{lem:positive_super-smooth}]
		Fix any $\sigma > 1$.
		By the Denjoy--Carleman theorem \cite[Theorem~4.1.15]{Krantz--Parks} and \cite[Remark~4.1.18]{Krantz--Parks}, there exists $f \in C_c^{\infty}(\R)$, not identically zero, satisfying the derivative bounds $\|f^{(n)}\|_{L^{\infty}} \lesssim O(n)^{\sigma n}$ for all $n \in \Z_{\geq 0}$ (i.e., $f$ is of Gevrey class $\sigma$).
		Using $f$ as a building block, it is easy to construct a function $\varphi \in C_c^{\infty}(G)$, not identically zero, such that for any compact subset $\K \subseteq \g$ and any $X_1,\dots,X_n \in \K$,
		\begin{align} \label{eqn:phi_deriv_bds}
			\|X_1 \cdots X_n \varphi\|_{L^{\infty}(G)}
			\lesssim_{\K} O_{\K}(n)^{\sigma n}.
		\end{align}
		By replacing $\varphi$ with its square, we may assume $\varphi$ is nonnegative.
		Then by averaging $\varphi$ by $K$, we may assume $\varphi$ is left $K$-invariant.
		An immediate consequence of \eqref{eqn:phi_deriv_bds} is that
		\begin{align} \label{eqn:phi_Casimir_deriv_bds}
			\|\Delta^n \varphi\|_{L^{\infty}(G)}
			\lesssim O(n)^{2\sigma n}
		\end{align}
		for all $n \in \Z_{\geq 0}$.
		Now let $v \in \H$.
		Since $\varphi$ is left $K$-invariant, $\varphi \ast v$ is $K$-invariant. By \eqref{eqn:phi_Casimir_deriv_bds},
		\begin{align*} 
			\|\Delta^n(\varphi \ast v)\|_{\H}
			\lesssim O(n)^{2\sigma n} \|v\|_{\H}
		\end{align*}
		for all $n \in \Z_{\geq 0}$.
		From this, a simple dyadic decomposition argument shows that for every $A \geq 0$,
		\begin{align} \label{eqn:phi_star_v_super-smooth}
			\|\exp(A\log_+^2\Delta)(\varphi \ast v)\|_{\H}
			< \infty
		\end{align}
		with room to spare (for a similar dyadic decomposition argument, see the proof of Theorem~\ref{thm:exp_decay_form} at the end of Subsection~\ref{subsec:bulk:first_tail_bd}).
		Since $\varphi \ast v$ is $K$-invariant and satisfies \eqref{eqn:phi_star_v_super-smooth}, it is super-smooth.
	\end{proof}
	
	This proof shows that in Lemma~\ref{lem:positive_super-smooth}, one can take $\varphi \in C_c^{\infty}(G)$. This would not be possible if we required $\varphi \ast v$ to be an analytic vector for $\Delta$.
	These subtleties are unimportant for us.
	
	\section{$(\g,K)$-Adapted bases} \label{sec:(g,K)-adapted}
	
	We first give a definition-by-construction of $(\g,K)$-adapted bases, and then we prove their most important properties (Propositions~\ref{prop:(g,K)-adapted_exist_unique} and \ref{prop:(g,K)-adapted_key_properties}).
	Once we prove these properties, we can forget about the construction entirely.
	
	Fix once and for all a choice of representative of each isomorphism class of real irreducible unitary representations of $G$ (normally we do not distinguish representations in the same isomorphism class, but in this section it is helpful for clarity).
	In addition, for each representative $\pi_{\R}$ with complexification $\pi$, fix a unit vector $\varphi_{\pi} \in \pi_{\R}^K$ if $\pi_{\R}$ is principal series, or fix a unit vector $f_{\pi}$ of lowest positive weight in $\pi$ if $\pi_{\R}$ is discrete series.
	These unit vectors are analogous to primary states in a CFT.
	Let $\{\xi_{\pi,n}\}_n$ be the following orthonormal basis of $\pi$.
	\begin{itemize} \itemsep = 0.5em
		\item If $\pi_{\R}$ is trivial, define $\xi_{\pi,n}$ for $n=0$ by
		\begin{align*}
			\xi_{\pi,n} = \varphi_{\pi}.
		\end{align*}
		
		\item If $\pi_{\R}$ is principal series and nontrivial, define $\xi_{\pi,n}$ for $n \in \Z$ by
		\begin{align} \label{eqn:xi_def_princ}
			\xi_{\pi,n} = \frac{E^n \varphi_{\pi}}{\|E^n \varphi_{\pi}\|}
			\text{ for } n \geq 0
			\qquad \text{and} \qquad
			\xi_{\pi,n} = (-1)^n \overline{\xi_{\pi,-n}} \text{ for } n \leq 0
		\end{align}
		(note that these are consistent for $n=0$ because $\varphi_{\pi}$ is real).
		
		\item If $\pi_{\R}$ is discrete series, let $k=k_{\pi}$ be the weight of $f_{\pi}$. Define $\xi_{\pi,n}$ for integers $|n| \geq k$ by
		\begin{align} \label{eqn:xi_def_disc}
			\xi_{\pi,n} = \frac{E^{n-k} f_{\pi}}{\|E^{n-k} f_{\pi}\|} \text{ for } n \geq k
			\qquad \text{and} \qquad
			\xi_{\pi,n} = (-1)^n \overline{\xi_{\pi,-n}} \text{ for } n \leq -k.
		\end{align}
	\end{itemize}
	In all cases, $\xi_{\pi,n}$ is an automorphic vector of weight $n$.
	By the structure theory in Subsection~\ref{subsec:irrep_classification}, $\{\xi_{\pi,n}\}_n$ is indeed a well-defined orthonormal basis of $\pi$.
	This can be thought of as a basis of primary and descendant states.
	
	Now let $\H_{\R}$ be a real unitary representation of $G$ with discrete bi-infinite spectrum. Let $\H$ be its complexification.
	Assume the trivial representation appears exactly once in $\H_{\R}$, i.e., $\dim \H_{\R}^G = 1$.
	Then the Laplace and holomorphic spectra of $\H_{\R}$ satisfy \eqref{eqn:lambda_r_to_infty} and \eqref{eqn:k_r_to_infty}, respectively.
	Assume further that $\H_{\R}^G$ is generated by a distinguished unit vector $\mathbf{1}$.
	The main example to keep in mind is $\H$ a multiplicative representation with bi-infinite spectrum, $\H_{\R}$ as in Remark~\ref{rem:real_subspace}, and $\mathbf{1}$ the unit in $\H$.
	%
	%
	
	Let $\{\lambda_r\}_{r \geq 0}$ and $\{k_r\}_{r \geq 1}$ be the Laplace and holomorphic spectra of $\H_{\R}$.
	Then
	\begin{align*}
		\H_{\R} \simeq \bigoplus_{r \in \Z} \pi_{r,\R},
	\end{align*}
	where $\pi_{r,\R}$ is the chosen representative of the isomorphism class of
	\begin{align*}
		\begin{cases}
			\text{real principal series representations with Casimir eigenvalue } \lambda_r &\text{for } r \geq 0, \\
			\text{real discrete series representations with lowest positive weight } k_{-r} &\text{for } r < 0.
		\end{cases}
	\end{align*}
	Let $\pi_r$ be the complexification of $\pi_{r,\R}$.
	Define $I \subseteq \Z^2$ in terms of the $k_r$ by \eqref{eqn:I_hol_def}.
	The motivation for \eqref{eqn:I_hol_def} is that for $(r,n) \in \Z^2$, one has $(r,n) \in I$ if and only if $n$ is in the range in which $\xi_{\pi_r,n}$ is defined.
	Note that $I$ depends only on the isomorphism class of $\H_{\R}$.
	For $i = (i_1,i_2) \in I$, let
	\begin{align*}
		\eta_i = \xi_{\pi_{i_1},i_2}.
	\end{align*}
	Then $\{\eta_i\}_{i \in I}$ is an orthonormal basis of $\bigoplus_{r \in \Z} \pi_r$ consisting of automorphic vectors.
	Each $\eta_i$ has Casimir eigenvalue $\lambda_{i_1}$ and weight $i_2$ (here we use the notation \eqref{eqn:lambda_{-r}_def} when $i_1 < 0$).
	
	\begin{defn}[$(\g,K)$-Adapted basis] \label{defn:(g,K)-adapted}
		A \textit{$(\g,K)$-adapted basis} of $\H$ is an orthonormal basis $\{\psi_i\}_{i \in I}$ of $\H$, such that $\psi_{(0,0)} = \mathbf{1}$, and such that there exists an isomorphism $\H_{\R} \simeq \bigoplus_{r \in \Z} \pi_{r,\R}$ which (after complexifying) takes $\psi_i$ to $\eta_i$ for all $i \in I$.
		
		When we talk about $(\g,K)$-adapted bases of multiplicative representations, we understand that $\H_{\R}$ is as in Remark~\ref{rem:real_subspace} and $\mathbf{1}$ is the unit.
	\end{defn}
	
	
	The following proposition is obvious from Definition~\ref{defn:(g,K)-adapted}.
	
	\begin{prop}[Existence and uniqueness of $(\g,K)$-adapted bases] \label{prop:(g,K)-adapted_exist_unique}
		There exists a $(\g,K)$-adapted basis $\{\psi_i\}_{i \in I}$ of $\H$.
		Furthermore, if $\H_{\R}',\H',\mathbf{1}'$ are also as above, with $\H_{\R} \simeq \H_{\R}'$ as real unitary representations, and if $\{\psi_i'\}_{i \in I}$ is a $(\g,K)$-adapted basis of $\H'$, then there exists a unique isomorphism $\H_{\R} \simeq \H_{\R}'$ which (after complexifying) takes $\psi_i$ to $\psi_i'$.
	\end{prop}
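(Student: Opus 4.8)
The plan is to unwind Definition~\ref{defn:(g,K)-adapted}: the statement is a near-tautology once one is careful about two small points, namely a sign ambiguity on the trivial summand (for existence) and the passage between a real isomorphism and its complexification (for uniqueness).

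\textit{Existence.} I would begin from the classification recalled in Definition~\ref{defn:Laplace_hol_spectra_rep}: since $\H_\R$ has discrete bi-infinite spectrum with Laplace spectrum $\{\lambda_r\}_{r\ge 0}$ and holomorphic spectrum $\{k_r\}_{r\ge 1}$, there is an isomorphism of real unitary representations $\Phi_0 \colon \H_\R \xrightarrow{\sim} \bigoplus_{r\in\Z}\pi_{r,\R}$. Complexifying and passing to $G$-invariants, $\Phi_0$ carries the unit vector $\mathbf 1$ (which spans $\H_\R^G$ by hypothesis) to a unit vector spanning $\bigl(\bigoplus_r \pi_{r,\R}\bigr)^G = \pi_{0,\R}$; since $\lambda_0 = 0$ the summand $\pi_{0,\R}$ is the trivial representation, spanned over $\R$ by the chosen unit vector $\varphi_{\pi_0} = \eta_{(0,0)}$, so $\Phi_0(\mathbf 1) = \pm\eta_{(0,0)}$. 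After post-composing $\Phi_0$, if necessary, with the automorphism of $\bigoplus_r \pi_{r,\R}$ equal to $-\id$ on the $\pi_{0,\R}$-summand and $\id$ on every other summand, I obtain an isomorphism $\Phi$ with $\Phi(\mathbf 1) = \eta_{(0,0)}$. Then I set $\psi_i := \Phi^{-1}(\eta_i)$ for $i\in I$, writing $\Phi$ also for its complexification. Since $\{\eta_i\}_{i\in I}$ is an orthonormal basis of $\bigoplus_r \pi_r$ and $\Phi$ is unitary, $\{\psi_i\}_{i\in I}$ is an orthonormal basis of $\H$; moreover $\psi_{(0,0)} = \mathbf 1$, and $\Phi$ itself exhibits $\{\psi_i\}_{i\in I}$ as $(\g,K)$-adapted.

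\textit{Uniqueness.} Given $(\g,K)$-adapted bases $\{\psi_i\}_{i\in I}$ of $\H$ and $\{\psi_i'\}_{i\in I}$ of $\H'$ with $\H_\R \simeq \H_\R'$: these two real representations have the same Laplace and holomorphic spectra (Definition~\ref{defn:Laplace_hol_spectra_rep}), hence the same indexing set $I$ from \eqref{eqn:I_hol_def} and the same target $\bigoplus_r \pi_{r,\R}$ with the same distinguished vectors $\eta_i$. Definition~\ref{defn:(g,K)-adapted} supplies isomorphisms $\Phi\colon \H_\R \to \bigoplus_r \pi_{r,\R}$ and $\Phi'\colon \H_\R' \to \bigoplus_r \pi_{r,\R}$ whose complexifications send $\psi_i \mapsto \eta_i$ and $\psi_i'\mapsto\eta_i$ respectively. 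Then $\Psi := (\Phi')^{-1}\circ\Phi$ is an isomorphism $\H_\R \to \H_\R'$ whose complexification sends each $\psi_i$ to $\psi_i'$, giving existence of the desired isomorphism. For uniqueness: an isomorphism of real unitary representations is determined by its complexification, which is bounded (indeed unitary) and therefore determined by its values on the orthonormal basis $\{\psi_i\}_{i\in I}$ of $\H$; hence two isomorphisms $\H_\R \to \H_\R'$ both complexifying to $\psi_i \mapsto \psi_i'$ must coincide.

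I do not expect a genuine obstacle here: the proposition is essentially bookkeeping, and the only steps that require a moment's thought are the two flagged above — fixing the sign on the trivial summand so that $\psi_{(0,0)} = \mathbf 1$, and the real-versus-complex translation used in the uniqueness argument.
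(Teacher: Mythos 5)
Your proposal is correct and is the natural unwinding of Definition~\ref{defn:(g,K)-adapted}; the paper itself gives no proof, stating that the proposition ``is obvious from Definition~\ref{defn:(g,K)-adapted}.'' The two points you flag --- normalizing the sign on the $\pi_{0,\R}$ summand so that $\psi_{(0,0)}=\mathbf{1}$ rather than $-\mathbf{1}$, and using that a real isomorphism is recovered from its (bounded, hence basis-determined) complexification --- are exactly the bookkeeping that makes the claim hold, and you have handled both correctly.
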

	
	The properties of $(\g,K)$-adapted bases which we will use to make calculations are the following. We use the notation
	from Definition~\ref{defn:spectral_eqns}.
	
	\begin{prop}[Key properties of $(\g,K)$-adapted bases] \label{prop:(g,K)-adapted_key_properties}
		Let $\{\psi_i\}_{i \in I}$ be a $(\g,K)$-adapted basis of $\H$.
		Then $\psi_{(0,0)} = \mathbf{1}$.
		For $i \in \Z^2 \setminus I$, denote $\psi_i = 0$.
		Then the following hold for all $i \in I$.
		\begin{itemize} \itemsep = 0.5em
			\item $\psi_i$ has Casimir eigenvalue $\lambda_{i_1}$ and weight $i_2$.
			
			\item The complex conjugate of $\psi_i$ is
			\begin{align} \label{eqn:psi_i_bar_vs_bar_psi_i}
				\overline{\psi_i} = (-1)^{i_2} \psi_{\overline{i}}.
			\end{align}
			
			\item The raising operator $E$ acts on $\psi_i$ by
			\begin{align} \label{eqn:E_psi_i_formula}
				E\psi_i
				= \sqrt{\lambda_{i_1} + i_2(i_2+1)} \, \psi_{i^+}.
			\end{align}
			
			\item The lowering operator $\overline{E}$ acts on $\psi_i$ by
			\begin{align} \label{eqn:Ebar_psi_i_formula}
				\overline{E}\psi_i
				= -\sqrt{\lambda_{i_1} + i_2(i_2-1)} \, \psi_{i^-}.
			\end{align}
			
		\end{itemize}
	\end{prop}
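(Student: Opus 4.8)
The plan is to reduce the statement to a computation inside a single irreducible summand and then track phases carefully. By Definition~\ref{defn:(g,K)-adapted}, there is an isomorphism of real representations $\H_{\R}\simeq\bigoplus_{r}\pi_{r,\R}$ carrying $\psi_i$ to $\eta_i=\xi_{\pi_{i_1},i_2}$; its complexification is a $G$-equivariant isometry that also intertwines complex conjugation, so it suffices to verify the four displayed identities for the vectors $\xi_{\pi,n}$ inside a fixed complex irreducible $\pi$, whose Casimir eigenvalue is $\lambda_{i_1}$ under the correspondence (using the convention \eqref{eqn:lambda_{-r}_def} when $i_1<0$). That $\xi_{\pi,n}$ is an automorphic vector of weight $n$ with Casimir eigenvalue $\lambda_{i_1}$ is immediate from the construction: it is obtained by applying powers of $E$ (which raises weights by $1$) to a weight vector, and $\Delta$ acts on $\pi^{\infty}$ by the scalar $\lambda_{i_1}$ by Schur's lemma. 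The conjugation identity \eqref{eqn:psi_i_bar_vs_bar_psi_i} is likewise essentially built in: the relations in \eqref{eqn:xi_def_princ} and \eqref{eqn:xi_def_disc} read $\xi_{\pi,n}=(-1)^n\overline{\xi_{\pi,-n}}$ for $n$ in the relevant non-positive range, and conjugating this (and reading it also with $n$ replaced by $-n$) yields $\overline{\xi_{\pi,n}}=(-1)^n\xi_{\pi,-n}$ for every $n$ in the index range; for $i\notin I$ both sides of \eqref{eqn:psi_i_bar_vs_bar_psi_i} vanish, since $I$ is stable under $i\mapsto\overline{i}$.

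For the raising identity \eqref{eqn:E_psi_i_formula} I would first compute norms. Using unitarity \eqref{eqn:Lie_alg_unitarity} (so $X^{\star}=-\overline{X}$), the formal self-adjointness of $\overline{E}E=-(\Delta+H^2+H)$, and the fact that this operator acts on $\xi_{\pi,n}$ by $-(\lambda_{i_1}+n^2+n)$ (by \eqref{eqn:EEbar}), one gets
\[
  \|E\xi_{\pi,n}\|^2=-\langle\overline{E}E\,\xi_{\pi,n},\xi_{\pi,n}\rangle=\lambda_{i_1}+n(n+1),
\]
and symmetrically $\|\overline{E}\,\xi_{\pi,n}\|^2=\lambda_{i_1}+n(n-1)$ via $E\overline{E}=-(\Delta+H^2-H)$; in particular both radicands in \eqref{eqn:sqrt_nonnegativity} are nonnegative. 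Since $E$ raises the weight by $1$ and each weight space of $\pi$ is at most one-dimensional (Subsection~\ref{subsec:irrep_classification}), $E\xi_{\pi,n}$ is a scalar multiple of $\xi_{\pi,n+1}$ whenever the latter is defined; when it is not --- which by \eqref{eqn:I_hol_def} occurs only in the degenerate cases $\lambda_{i_1}=0$, $n=0$ (where $\pi$ is trivial) or $\pi$ discrete series, $n=-k_{-i_1}$ --- one checks directly that $\lambda_{i_1}+n(n+1)=0$, so $E\xi_{\pi,n}$ has norm zero and \eqref{eqn:E_psi_i_formula} holds trivially with $\psi_{i^+}=0$. The analogous remarks apply to $\overline{E}$ and \eqref{eqn:Ebar_psi_i_formula}.

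It remains to fix the phase of the scalar when $\xi_{\pi,n+1}$ is defined and the radicand is positive. For $n$ in the ascending range ($n\geq0$ in the principal case, $n\geq k_{-i_1}$ in the discrete case), $\xi_{\pi,n}$ and $\xi_{\pi,n+1}$ are by construction the positive normalizations of $E^{n}\varphi_{\pi}$ and $E^{n+1}\varphi_{\pi}$ (respectively of $E^{n-k}f_{\pi}$ and $E^{n+1-k}f_{\pi}$), so $E\xi_{\pi,n}$ is a positive real multiple of $\xi_{\pi,n+1}$, hence equals $\sqrt{\lambda_{i_1}+n(n+1)}\,\xi_{\pi,n+1}$ by the norm computation. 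On this same range \eqref{eqn:Ebar_psi_i_formula} then follows by applying $\overline{E}$ to $E\xi_{\pi,n-1}=\sqrt{\lambda_{i_1}+n(n-1)}\,\xi_{\pi,n}$ and using that $\overline{E}E$ acts on $\xi_{\pi,n-1}$ by $-(\lambda_{i_1}+n(n-1))$, which forces the minus sign. Finally \eqref{eqn:psi_i_bar_vs_bar_psi_i} together with $\overline{E}\,\overline{v}=\overline{Ev}$ propagates both \eqref{eqn:E_psi_i_formula} and \eqref{eqn:Ebar_psi_i_formula} from the ascending range to the complementary (descending) range: for instance $\overline{E}\psi_i=(-1)^{i_2}\overline{E\psi_{\overline{i}}}$, and expanding $E\psi_{\overline{i}}$ by \eqref{eqn:E_psi_i_formula} and re-applying \eqref{eqn:psi_i_bar_vs_bar_psi_i} yields \eqref{eqn:Ebar_psi_i_formula} once the signs $(-1)^{i_2}(-1)^{i_2-1}=-1$ combine. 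The step I expect to require the most care is precisely this sign and phase bookkeeping at the interface of the two ranges and at the distinguished vectors $\varphi_{\pi}$, $f_{\pi}$: one must confirm that the two normalizations agree at the overlap $n=0$ in the nontrivial principal series (they do, because $\varphi_{\pi}$ is real), that $E$ annihilates the top weight vector $\overline{f_{\pi}}$ while $\overline{E}$ annihilates $f_{\pi}$ in the discrete series, and that in each such degenerate configuration the relevant radicand $\lambda_{i_1}+n(n\pm1)$ vanishes, so that the convention $\psi_i=0$ for $i\notin I$ makes every formula hold verbatim. None of this is conceptually hard, but a careless sign here would corrupt the later bootstrap computations.
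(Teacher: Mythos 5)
Your proposal is correct and follows essentially the same approach as the paper: one computes the norm of $E\psi_i$ via $\overline{E}E = -(\Delta + H^2 + H)$, observes that $E\psi_i$ is a scalar multiple of $\psi_{i^+}$, pins down the sign on the ascending weight range directly from the construction, and transports everything to the complementary range and to $\overline{E}$ through the conjugation identity \eqref{eqn:psi_i_bar_vs_bar_psi_i}. The only difference is in the order of steps — the paper fixes the nonnegative sign first (treating the descending-weight case by rewriting $\psi_{\overline{i}}$ as a raised vector and applying the diagonal operator $\overline{E}E$, rather than first proving \eqref{eqn:Ebar_psi_i_formula} on the ascending range as you do) and then computes the norm — but the mathematical content is the same, and the interface points you flag (reality of $\varphi_\pi$ at $n=0$, the vanishing radicands at the discrete-series lowest/highest weights) are exactly where the care is needed.
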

	
	\begin{proof}
		By definition, $\psi_{(0,0)} = \mathbf{1}$.
		The first two bullet points are also clear from the definition, but the third and fourth require arguments.
		To prove \eqref{eqn:E_psi_i_formula}, we first show that
		\begin{align} \label{eqn:Epsi_i_some_c_i}
			E\psi_i = c_i \psi_{i^+}
			\qquad \text{for some} \qquad
			c_i \geq 0;
		\end{align}
		we will compute $c_i$ afterward by equating norms.
		We check \eqref{eqn:Epsi_i_some_c_i} in different ways in different cases.
		
		\noindent
		\emph{Case 1}: $i = (0,0)$.
		
		Then $\psi_i = \mathbf{1}$ is $G$-invariant, so $E\psi_i = 0$, and we can take $c_i = 0$ in \eqref{eqn:Epsi_i_some_c_i}.
		
		\noindent
		\emph{Case 2}: $i_1 < 0$ and $i_2 = -k_{-i_1}$.
		
		Then $\psi_i$ is a highest weight vector, so $E\psi_i = 0$, and we can again take $c_i = 0$.
		
		\noindent
		\emph{Case 3}: $i \neq (0,0)$ and $i_2 \geq 0$.
		
		Then \eqref{eqn:Epsi_i_some_c_i} is immediate from \eqref{eqn:xi_def_princ} when $i_1>0$ and from \eqref{eqn:xi_def_disc} when $i_1<0$. Moreover, it is implicit in \eqref{eqn:xi_def_princ} and \eqref{eqn:xi_def_disc} that $E\psi_i \neq 0$, so $c_i > 0$.
		
		\noindent
		\emph{Case 4}: ($i_1 > 0$ and $i_2 < 0$) or ($i_1 < 0$ and $i_2 < -k_{-i_1}$).
		
		The conditions on $i$ imply that $i^+ \in I$ and $(i^+)_2 = i_2+1 \leq 0$.
		Since $I$ is preserved by $i \mapsto \overline{i}$, we have $\overline{i^+} \in I$. We know \eqref{eqn:Epsi_i_some_c_i} with $\overline{i^+}$ in place of $i$ by Case 3, and moreover the proof in Case 3 tells us that $c_{\overline{i^+}} > 0$.
		Combining this with \eqref{eqn:psi_i_bar_vs_bar_psi_i}, and noting that $\overline{i} = (\overline{i^+})^+$, we can write
		\begin{align*}
			\overline{E\psi_i}
			= (-1)^{i_2} \overline{E} \psi_{\overline{i}}
			= (-1)^{i_2} \overline{E} \psi_{(\overline{i^+})^+}
			= \frac{(-1)^{i_2}}{c_{\overline{i^+}}} \overline{E} E\psi_{\overline{i^+}}.
		\end{align*}
		By the first bullet point in Proposition~\ref{prop:(g,K)-adapted_key_properties}, $\psi_{\overline{i^+}}$ has Casimir eigenvalue $\lambda_{i_1}$ and weight $-i_2-1$.
		Thus by \eqref{eqn:EEbar},
		\begin{align*}
			\overline{E\psi_i}
			= \frac{(-1)^{i_2+1}}{c_{\overline{i^+}}} (\Delta+H^2+H) \psi_{\overline{i^+}}
			= \frac{(-1)^{i_2+1}}{c_{\overline{i^+}}} (\lambda_{i_1} + i_2(i_2+1)) \psi_{\overline{i^+}}.
		\end{align*}
		Taking complex conjugates using \eqref{eqn:psi_i_bar_vs_bar_psi_i}, we get
		\begin{align*}
			E\psi_i = \frac{\lambda_{i_1} + i_2(i_2+1)}{c_{\overline{i^+}}} \psi_{i^+}.
		\end{align*}
		The numerator is nonnegative by \eqref{eqn:sqrt_nonnegativity}, so \eqref{eqn:Epsi_i_some_c_i} holds.
		
		Cases 1--4 cover all possibilities, so \eqref{eqn:Epsi_i_some_c_i} holds for all $i \in I$.
		To prove \eqref{eqn:E_psi_i_formula}, it remains to show that we can take
		\begin{align} \label{eqn:c_i_formula}
			c_i = \sqrt{\lambda_{i_1} + i_2(i_2+1)}.
		\end{align}
		If $i^+ \not\in I$, then $\psi_{i^+} = 0$, so $E\psi_i = 0$ by \eqref{eqn:Epsi_i_some_c_i}, and hence \eqref{eqn:Epsi_i_some_c_i} holds with any choice of $c_i$. Therefore we may assume $i^+ \in I$. Then $\|\psi_{i^+}\|_{\H} = 1$, so taking norms in \eqref{eqn:Epsi_i_some_c_i} gives $\|E\psi_i\|_{\H} = c_i$.
		We compute
		\begin{align} \label{eqn:argument_is_norm_squared}
			c_i^2
			= \|E\psi_i\|_{\H}^2
			= -\langle \overline{E}E\psi_i, \psi_i \rangle_{\H}
			= \langle (\Delta+H^2+H) \psi_i, \psi_i \rangle_{\H}
			= (\lambda_{i_1} + i_2(i_2+1)) \|\psi_i\|_{\H}^2.
		\end{align}
		Here we have used \eqref{eqn:Lie_alg_unitarity}, \eqref{eqn:EEbar}, and the first bullet point in Proposition~\ref{prop:(g,K)-adapted_key_properties}.
		Since $i \in I$, we have $\|\psi_i\|_{\H} = 1$.
		Thus we obtain \eqref{eqn:c_i_formula} upon taking square roots in \eqref{eqn:argument_is_norm_squared}.
		Hence \eqref{eqn:E_psi_i_formula} is established.
		
		%
		%
		
		We finally come to the last bullet point.
		By \eqref{eqn:psi_i_bar_vs_bar_psi_i} and \eqref{eqn:E_psi_i_formula},
		\begin{align*}
			\overline{\overline{E}\psi_i}
			= E\overline{\psi_i}
			= (-1)^{i_2} E\psi_{\overline{i}}
			= (-1)^{i_2} \sqrt{\lambda_{i_1} + (-i_2)(-i_2+1)} \, \psi_{\overline{i}^+}.
		\end{align*}
		Noting that $(-i_2)(-i_2+1) = i_2(i_2-1)$ and $\overline{i}^+ = \overline{i^-}$, taking complex conjugates using \eqref{eqn:psi_i_bar_vs_bar_psi_i} yields \eqref{eqn:Ebar_psi_i_formula}.
		This completes the proof of Proposition~\ref{prop:(g,K)-adapted_key_properties}.
	\end{proof}
	
	\section{Proof of the hyperbolic bootstrap equations} \label{sec:spectral_eqns_pf}
	
	In this section we prove the ``only if" direction in Proposition~\ref{prop:mult_rep_vs_HB}, namely that if
	\begin{align*}
		\mathcal{S} = (\{\lambda_r\}_{r \geq 0}, \{k_r\}_{r \geq 1}, \{C_{ij}^{\ell}\}_{i,j,\ell \in I})
	\end{align*}
	is the multiplicative spectrum of a multiplicative representation $\H$ with bi-infinite spectrum, with respect to a $(\g,K)$-adapted basis $\{\psi_i\}_{i \in I}$, then $\mathcal{S}$ solves the hyperbolic bootstrap equations.
	
	\begin{proof}[Proof of \eqref{eqn:SE1}]
		Since $\psi_i \psi_j = \psi_j \psi_i$, \eqref{eqn:SE1} is trivial.
	\end{proof}
	
	\begin{proof}[Proof of \eqref{eqn:SE2}]
		By the first bullet point in Proposition~\ref{prop:(g,K)-adapted_key_properties}, $\psi_i \psi_j$ has weight $i_2+j_2$, and $\psi_{\ell}$ has weight $\ell_2$. Thus $\psi_i \psi_j$ and $\psi_{\ell}$ are orthogonal unless $i_2+j_2 = \ell_2$.
		In other words, \eqref{eqn:SE2} holds.
	\end{proof}
	
	\begin{proof}[Proof of \eqref{eqn:SE3}]
		If $i_2+j_2 \neq \ell_2$, then by \eqref{eqn:SE2}, both sides of \eqref{eqn:SE3} vanish.
		Therefore, we may assume $i_2+j_2 = \ell_2$.
		By Remark~\ref{rem:real_subspace} (more specifically Propositions~\ref{prop:bar_extends} and \ref{prop:conj_commutes_w_mult}) and \eqref{eqn:psi_i_bar_vs_bar_psi_i},
		\begin{align*}
			\overline{C_{ij}^{\ell}}
			= \langle \overline{\psi_i} \, \overline{\psi_j}, \overline{\psi_{\ell}} \rangle_{\H}
			= (-1)^{i_2+j_2+\ell_2} \langle \psi_{\overline{i}} \psi_{\overline{j}}, \psi_{\overline{\ell}} \rangle_{\H}
			= (-1)^{i_2+j_2+\ell_2} C_{\overline{i} \, \overline{j}}^{\overline{\ell}}.
		\end{align*}
		Since $i_2+j_2 = \ell_2$, the sign is $+1$.
		Thus \eqref{eqn:SE3} holds.
	\end{proof}
	
	\begin{proof}[Proof of \eqref{eqn:SE4}]
		Since $i = (0,0)$, we have $\psi_i = \mathbf{1}$. Therefore
		\begin{align*}
			C_{ij}^{\ell}
			= \langle \psi_i \psi_j, \psi_{\ell} \rangle_{\H}
			= \langle \psi_j, \psi_{\ell} \rangle_{\H}
			= \1_{j=\ell},
		\end{align*}
		with the last equality by orthonormality. This is \eqref{eqn:SE4}.
	\end{proof}
	
	\begin{proof}[Proof of \eqref{eqn:SE5}]
		By \eqref{eqn:Ebar_psi_i_formula},
		\begin{align*}
			\LHS\eqref{eqn:SE5}
			= -\langle \psi_i\psi_j, \overline{E} \psi_{\ell} \rangle_{\H}.
		\end{align*}
		Moving $\overline{E}$ to the other side via \eqref{eqn:Lie_alg_unitarity}, applying the product rule, and then evaluating the result using \eqref{eqn:E_psi_i_formula},
		\begin{align*}
			&\LHS\eqref{eqn:SE5}
			= \langle E(\psi_i\psi_j), \psi_{\ell} \rangle_{\H}
			= \langle (E\psi_i)\psi_j, \psi_{\ell} \rangle_{\H}
			+ \langle \psi_i(E\psi_j), \psi_{\ell} \rangle_{\H}
			= \RHS\eqref{eqn:SE5}.
			\qedhere
		\end{align*}
		%
		%
	\end{proof}
	
	\begin{proof}[Proof of \eqref{eqn:SE6}]
		By \eqref{eqn:psi_i_bar_vs_bar_psi_i}, spectral expansion, \eqref{eqn:SE3}, and \eqref{eqn:SE2},
		\begin{align} \label{eqn:crossing_to_C's}
			\langle \psi_i \psi_j, \overline{\psi_{i'}} \, \overline{\psi_{j'}} \rangle_{\H}
			&= (-1)^{i_2'+j_2'} \langle \psi_i \psi_j, \psi_{\overline{i'}} \psi_{\overline{j'}} \rangle_{\H}
			\notag
			\\&= (-1)^{i_2'+j_2'} \sum_{\ell \in I} C_{ij}^{\ell} \overline{C_{\overline{i'} \, \overline{j'}}^{\ell}}
			= (-1)^{i_2'+j_2'} \sum_{\ell \in I} C_{ij}^{\ell} C_{i'j'}^{\overline{\ell}}
			= \sum_{\ell \in I} (-1)^{\ell_2} C_{ij}^{\ell} C_{i'j'}^{\overline{\ell}},
		\end{align}
		with absolute convergence on the right hand side by Bessel's inequality.
		By crossing symmetry in Definition~\ref{def:mult_rep}, the left hand side is unchanged by permutation of the indices, so the right hand side must be as well.
		In particular, switching $i'$ and $j$ preserves the right hand side.
		This is precisely the content of \eqref{eqn:SE6}.
	\end{proof}
	
	
	\section{Building a multiplicative representation from a solution}
	\label{sec:classification_implies_main_thm}
	
	In this section we prove the ``if" direction in Proposition~\ref{prop:mult_rep_vs_HB}, namely that if
	\begin{align*}
		\mathcal{S}
		= (\{\lambda_r\}_{r \geq 0}, \{k_r\}_{r \geq 1}, \{C_{ij}^{\ell}\}_{i,j,\ell \in I})
	\end{align*}
	is a candidate spectrum solving the hyperbolic bootstrap equations,
	then $\mathcal{S}$ is the multiplicative spectrum of a multiplicative representation with bi-infinite spectrum, with respect to a $(\g,K)$-adapted basis.
	
	\begin{rem}[Convergence assumptions on $\mathcal{S}$] \label{rem:convergence_for_poly_decay}
		We only assume $\mathcal{S}$ is a weak solution as defined in Remark~\ref{rem:convergence_assumptions}.
		This means that instead of assuming \eqref{eqn:SE6} with absolute convergence for all $i,j,i',j' \in I$, we only assume the following: if $i,j,i',j' \in I$ are such that both sides of \eqref{eqn:SE6} converge in $\overline{\R}+i\overline{\R}$ in the sense of Definition~\ref{defn:weak_convergence}, then \eqref{eqn:SE6} holds as an equality in $\overline{\R}+i\overline{\R}$.
		%
	\end{rem}
	
	\subsection{Polynomial decay of $C_{ij}^{\ell}$ as $\ell \to \infty$}
	\label{subsec:poly_decay}
	
	The main result of this subsection is
	
	\begin{prop}
		\label{prop:poly_decay_C_ij^l}
		Let $i,j \in I$ and $N \geq 0$. Then
		\begin{align} \label{eqn:weak_to_strong}
			\sum_{\ell \in I} |\lambda_{\ell_1}|^N |C_{ij}^{\ell}|^2
			< \infty.
		\end{align}
	\end{prop}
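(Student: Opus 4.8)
The plan is to read \eqref{eqn:weak_to_strong} as the numerical manifestation of the statement that a product of two smooth vectors is smooth, and to extract it from the bootstrap equations using positivity. Throughout, $\psi_i\psi_j$ denotes the formal sum $\sum_\ell C_{ij}^\ell\psi_\ell$, which a priori is only a linear functional on $\Span\{\psi_\ell\}$; the content of Proposition~\ref{prop:poly_decay_C_ij^l} is precisely that it is a genuine, and in fact smooth, vector. Note that at this point the representation $\H$, its $G$-action, and the relations $\Delta\psi_\ell=\lambda_{\ell_1}\psi_\ell$, \eqref{eqn:E_psi_i_formula}, \eqref{eqn:Ebar_psi_i_formula} are already available; only the multiplication is not yet known to land in $\H^{\infty}$.

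\emph{Step 1: reduction to the unweighted case.} By \eqref{eqn:SE2} the support of $\ell\mapsto C_{ij}^\ell$ lies in $\{\ell_2=i_2+j_2\}$, and since $k_r\to\infty$ only finitely many such $\ell$ have $\ell_1<0$; these contribute finitely to \eqref{eqn:weak_to_strong}, so it suffices to bound $\sum_{\ell_1\geq 0}\lambda_{\ell_1}^N|C_{ij}^\ell|^2$. Conjugating \eqref{eqn:SE5} via \eqref{eqn:SE3} (equivalently, conjugating the product rule $E(\psi_i\psi_j)=(E\psi_i)\psi_j+\psi_i(E\psi_j)$ it encodes) gives the lowering-operator product rule $\overline{E}(\psi_a\psi_b)=(\overline{E}\psi_a)\psi_b+\psi_a(\overline{E}\psi_b)$ as an identity of functionals on $\Span\{\psi_\ell\}$, valid for all $a,b\in I$; iterating yields $\overline{E}^N(\psi_i\psi_j)=\sum_{k=0}^N\binom{N}{k}(\overline{E}^k\psi_i)(\overline{E}^{N-k}\psi_j)$. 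By \eqref{eqn:Ebar_psi_i_formula} each summand is a scalar multiple of $\psi_{(i_1,i_2-k)}\psi_{(j_1,j_2-N+k)}$, so \emph{granted} that $\|\psi_a\psi_b\|_{\H}^2=\sum_\ell|C_{ab}^\ell|^2<\infty$ for all $a,b\in I$, the triangle inequality gives $\|\overline{E}^N(\psi_i\psi_j)\|_{\H}<\infty$; on the other hand \eqref{eqn:Ebar_psi_i_formula} also gives $\|\overline{E}^N(\psi_i\psi_j)\|_{\H}^2=\sum_\ell|C_{ij}^\ell|^2\prod_{m=0}^{N-1}\bigl(\lambda_{\ell_1}+(\ell_2-m)(\ell_2-m-1)\bigr)$, and for $\ell_1\geq 0$ every factor is $\geq\lambda_{\ell_1}\geq 0$, so this dominates $\sum_{\ell_1\geq 0}\lambda_{\ell_1}^N|C_{ij}^\ell|^2$. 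Thus everything reduces to the unweighted case $N=0$: $\sum_\ell|C_{ab}^\ell|^2<\infty$ for all $a,b\in I$.

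\emph{Step 2: the unweighted case via crossing and positivity.} Expanding \eqref{eqn:crossing_to_C's} and feeding $(i,j,i',j')=(a,\overline{b},\overline{a},b)$ into the weak form of \eqref{eqn:SE6} permitted by Remark~\ref{rem:convergence_for_poly_decay}: by \eqref{eqn:SE3} the left side equals $(-1)^{a_2-b_2}\sum_\ell|C_{a\overline{b}}^\ell|^2$, which always converges in $\overline{\R}$; using \eqref{eqn:SE2} and \eqref{eqn:i2=0_implies_i1geq0} the right side collapses to $\sum_{\ell_1\geq 0}C_{a\overline{a}}^{(\ell_1,0)}C_{b\overline{b}}^{(\ell_1,0)}$, and by \eqref{eqn:SE1}, \eqref{eqn:SE3} the entries $C_{c\overline{c}}^{(\ell_1,0)}$ are real. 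Hence, \emph{granted} that $\sum_{\ell_1}\bigl(C_{c\overline{c}}^{(\ell_1,0)}\bigr)^2<\infty$ for every $c\in I$, the right side converges absolutely by Cauchy--Schwarz, \eqref{eqn:SE6} applies as an equality in $\overline{\R}$, and we conclude $\sum_\ell|C_{a\overline{b}}^\ell|^2<\infty$; since $b\mapsto\overline{b}$ is a bijection of $I$ this is the $N=0$ case for all pairs. So the whole proposition reduces to showing, for every $c\in I$, that the $K$-invariant formal vector $|\psi_c|^2:=\psi_c\overline{\psi_c}$ is square-summable, i.e. $\sum_{\ell_1}\bigl(C_{c\overline{c}}^{(\ell_1,0)}\bigr)^2<\infty$.

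\emph{Step 3: the primary estimate (expected to be the main obstacle).} Assembling $\Delta$ from $E,\overline{E},H$ via \eqref{eqn:Casimir_def}, \eqref{eqn:EEbar}, \eqref{eqn:E_psi_i_formula}, \eqref{eqn:Ebar_psi_i_formula} turns \eqref{eqn:SE5} into the Casimir recursion $\Delta\bigl(|\psi_{(c_1,n)}|^2\bigr)=(2\lambda_{c_1}+2n^2)|\psi_{(c_1,n)}|^2-(\lambda_{c_1}+n(n-1))|\psi_{(c_1,n-1)}|^2-(\lambda_{c_1}+n(n+1))|\psi_{(c_1,n+1)}|^2$ (using \eqref{eqn:psi_i_bar_vs_bar_psi_i} to identify $\psi_{(c_1,n)}\psi_{(c_1,-n)}$ with $(-1)^n|\psi_{(c_1,n)}|^2$); pairing against $\psi_{(\ell_1,0)}$ makes $b_n(\ell_1):=\langle|\psi_{(c_1,n)}|^2,\psi_{(\ell_1,0)}\rangle_{\H}$ satisfy a three-term recurrence in $n$ with $\ell_1$-linear coefficients, anchored at a primary value: $b_0(\ell_1)=C_{\varphi\varphi}^{(\ell_1,0)}$ for $\varphi$ a $K$-invariant primary (principal series), or $b_{k_{-c_1}}(\ell_1)=C_{f\overline{f}}^{(\ell_1,0)}$ for $f$ a lowest-weight primary (discrete series). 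Because the recurrence coefficients are $\ell_1$-dependent, propagating finiteness through it costs powers of $\lambda_{\ell_1}$, so Steps 1--3 must really be run together as a single induction on $N$, whose base is the assertion that $\sum_{\ell_1}\lambda_{\ell_1}^M\bigl(C_{\varphi\varphi}^{(\ell_1,0)}\bigr)^2$ and $\sum_{\ell_1}\lambda_{\ell_1}^M\bigl(C_{f\overline{f}}^{(\ell_1,0)}\bigr)^2$ are finite for all $M$. This last estimate is where positivity is essential: from the Bochner-type identity $\Delta(\varphi^2)=2\lambda_{c_1}\varphi^2-2|E\varphi|^2$ (again extracted from \eqref{eqn:SE5}), the nonnegativity $\langle\Delta u,u\rangle_{\H}\geq 0$ for $u=\1_{\Delta\leq R}(\varphi^2)\in\H^{\fin}$, and the manifest sign of the self-correlator terms $\langle|\psi_a|^2,|\psi_b|^2\rangle_{\H}$ produced by crossing, one aims to bound $\|\1_{\Delta\leq R}(\varphi^2)\|_{\H}^2=\sum_{\lambda_{\ell_1}\leq R}\bigl(C_{\varphi\varphi}^{(\ell_1,0)}\bigr)^2$ uniformly in $R$, and then let $R\to\infty$. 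I expect the reductions in Steps 1--2 to be routine bookkeeping in the representation theory of Section~\ref{sec:sl_2} and in the weak-convergence conventions of Definition~\ref{defn:weak_convergence}, and this uniform positivity bound at the primary level — essentially a quantitative, linear-programming-flavored version of "$\varphi^2\in\H^{\infty}$" — to be the crux of the proof.
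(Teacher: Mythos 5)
Your Steps~1 and~2 are sound and essentially match the paper's argument. Step~2 is literally the paper's $N=0$ base case (crossing with $(a,\overline{b},\overline{a},b)$, LHS has constant sign hence converges in $\overline{\R}$, RHS is Cauchy--Schwarz'd against the self-correlator bounds). Step~1 is a legitimate variant: you iterate the $\overline{E}$ product rule to convert the weight $|\lambda_{\ell_1}|^N$ into finitely many unweighted products, whereas the paper runs an explicit induction on even $N$ via the Casimir recursion $|\lambda_{\ell_1}C_{ij}^\ell|\lesssim |C_{ij}^\ell|+|C_{i^+j^-}^\ell|+|C_{i^-j^+}^\ell|$ (Lemma~\ref{lem:num_recursion}); these are two faces of the same manipulation. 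You have also correctly located the crux: a weighted primary self-correlator bound, strong enough to survive the costs of the Casimir recursion linking descendants to primaries.

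The gap is in Step~3, which you flag yourself as incomplete. The Bochner identity $\Delta(\varphi^2)=2\lambda\varphi^2-2|E\varphi|^2$ together with $\langle\Delta u,u\rangle_{\H}\geq 0$ does not give the needed bound: that positivity holds for every $u\in\H^K\cap\H^{\infty}$ and carries no information about $\|u\|_{\H}$. More to the point, you need $\sum_{\ell_1}\lambda_{\ell_1}^M\bigl(C_{\varphi\varphi}^{(\ell_1,0)}\bigr)^2<\infty$ for \emph{all} $M$ (this is the content of $\varphi^2\in\H^{\infty}$, as you note), but the sketch only aims at a uniform-in-$R$ bound on $\|\1_{\Delta\leq R}(\varphi^2)\|_{\H}^2$, which is $M=0$. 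The paper gets all $M$ at once from a different mechanism. It first iterates the Casimir recursion to produce explicit positive proportionality factors: for $i=(r,0)$ and for $j=(-r,k_r)$ one has $C_{i^{+n}\overline{i^{+n}}}^\ell = A\,C_{ii}^\ell$ with $A\sim_{r,n}\lambda_{\ell_1}^n$ positive once $\lambda_{\ell_1}\gg_{r,n}1$ (Proposition~\ref{prop:C_+n_-n_l_size}), and similarly for $j$. It then feeds the specific tuple $(i^{+n},i^{+(n+1)},\overline{i^{+n}},\overline{i^{+(n+1)}})$ into \eqref{eqn:SE6}: the left side has $\ell_2=2n+1$ odd, so all its terms are nonpositive ($-|\cdot|^2$), hence it converges in $\overline{\R}$ to a value in $(-\infty,0]$; the right side has $\ell_2=0$, and by the proportionality its terms are $\sim_{r,n}\lambda_{\ell_1}^{2n+1}(C_{ii}^\ell)^2\geq 0$ up to finitely many exceptions, so it converges in $\overline{\R}+i\overline{\R}$ to something other than $-\infty$. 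Equality of the two sides then forces absolute convergence, and absolute convergence of the right side \emph{is} the weighted bound $\sum_{\ell_1}\lambda_{\ell_1}^{2n+1}(C_{ii}^\ell)^2<\infty$. That specific choice of crossing tuple, and the sign/asymptotic analysis built on Proposition~\ref{prop:C_+n_-n_l_size}, is the missing piece of your Step~3; your Bochner/positivity heuristic does not produce it.
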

	
	The proof of this proposition is by elementary analysis of the hyperbolic bootstrap equations.
	
	\begin{rem}[Weak solutions are solutions] \label{rem:weak_to_strong}
		By Cauchy--Schwarz, the $N=0$ case of Proposition~\ref{prop:poly_decay_C_ij^l} implies that both sides of \eqref{eqn:SE6} converge absolutely for all $i,j,i',j' \in I$.
		In view of Remark~\ref{rem:convergence_for_poly_decay}, this is not a circular argument.
	\end{rem}
	
	
	We first prove an analog of \eqref{eqn:SE4} in which $\ell = (0,0)$ rather than $i = (0,0)$.
	
	\begin{lem} \label{lem:SE4_l=0}
		For $\ell = (0,0)$ and all $i,j \in I$,
		\begin{align*}
			C_{ij}^{\ell}
			= (-1)^{i_2} \1_{i=\overline{j}}.
		\end{align*}
	\end{lem}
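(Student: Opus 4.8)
The plan is to derive this directly from the crossing equation \eqref{eqn:SE6}, specialized so that every sum collapses to a single term. First I would note that $(0,0) \in I$ and $\overline{(0,0)} = (0,0)$, so there is no convergence issue: even in the weak-solution setting of Remark~\ref{rem:convergence_for_poly_decay}, the instances of \eqref{eqn:SE6} used below have only finitely many nonzero terms on each side, hence converge trivially in $\overline{\R} + i\overline{\R}$, and \eqref{eqn:SE6} applies.

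The key step is to apply \eqref{eqn:SE6} with $i' = j' = (0,0)$. On the left-hand side, \eqref{eqn:SE4} (with both lower indices $(0,0)$ and upper index $\overline{\ell}$) gives $C_{(0,0),(0,0)}^{\overline{\ell}} = \1_{\overline{\ell} = (0,0)} = \1_{\ell = (0,0)}$, so the sum reduces to the single term $\ell = (0,0)$, namely $C_{ij}^{(0,0)}$. On the right-hand side, \eqref{eqn:SE1} together with \eqref{eqn:SE4} gives $C_{i,(0,0)}^{\ell} = C_{(0,0),i}^{\ell} = \1_{\ell = i}$, and likewise $C_{j,(0,0)}^{\overline{\ell}} = \1_{\overline{\ell} = j}$; the product is nonzero only when $\ell = i$ and $\overline{i} = j$, that is, when $i = \overline{j}$, and in that case the surviving term equals $(-1)^{\ell_2} = (-1)^{i_2}$. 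Equating the two sides yields $C_{ij}^{(0,0)} = (-1)^{i_2}\1_{i = \overline{j}}$, which is exactly the claim.

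There is essentially no obstacle here; the only point requiring a moment's care is that we are entitled to invoke \eqref{eqn:SE6} at all, given that $\mathcal{S}$ is only assumed to be a weak solution — and this is settled by the observation in the first paragraph that the relevant sums are finite. (One could alternatively run the argument in two moves: first apply \eqref{eqn:SE6} with $i' = (0,0)$ to get the symmetry $(-1)^{j'_2} C_{ij}^{\overline{j'}} = (-1)^{i_2} C_{jj'}^{\overline{i}}$, then set $j' = (0,0)$ and use \eqref{eqn:SE4}; this gives the same conclusion with the same convergence bookkeeping.)
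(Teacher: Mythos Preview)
Your proof is correct and follows essentially the same route as the paper: specialize \eqref{eqn:SE6} to $i'=j'=(0,0)$, use \eqref{eqn:SE4} (and \eqref{eqn:SE1}) to collapse each side to a single term, and justify applicability of \eqref{eqn:SE6} in the weak-solution setting by noting that at most one term in each sum is nonzero. The only cosmetic difference is that the paper uses a fresh summation variable $m$ rather than overloading $\ell$.
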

	
	\begin{proof}
		Define $i' = j' = (0,0)$. Then by \eqref{eqn:SE4}, followed by \eqref{eqn:SE6}, followed by \eqref{eqn:SE1} and \eqref{eqn:SE4},
		\begin{align*}
			C_{ij}^{\ell}
			= \sum_{m \in I} (-1)^{m_2} C_{ij}^m C_{i'j'}^{\overline{m}}
			= \sum_{m \in I} (-1)^{m_2} C_{ii'}^m C_{jj'}^{\overline{m}}
			= \sum_{m \in I} (-1)^{m_2} \1_{i=m} \1_{j=\overline{m}}
			= (-1)^{i_2} \1_{i=\overline{j}}.
		\end{align*}
		Here \eqref{eqn:SE6} is applicable according to Remark~\ref{rem:convergence_for_poly_decay} since at most one term in each sum is nonzero.
	\end{proof}
	
	We next prove an analog of \eqref{eqn:SE5} with $\ell^+,i^-,j^-$ in place of $\ell^-,i^+,j^+$.
	
	\begin{lem} \label{lem:SE5_inverted}
		For all $i,j,\ell \in I$,
		\begin{align} \label{eqn:SE5_inverted}
			\sqrt{\lambda_{\ell_1} + \ell_2(\ell_2+1)} \, C_{ij}^{\ell^+}
			=
			\sqrt{\lambda_{i_1} + i_2(i_2-1)} \, C_{i^-j}^{\ell}
			+ \sqrt{\lambda_{j_1} + j_2(j_2-1)} \, C_{ij^-}^{\ell}.
		\end{align}
	\end{lem}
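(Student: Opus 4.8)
The plan is to derive \eqref{eqn:SE5_inverted} purely formally from \eqref{eqn:SE5} and \eqref{eqn:SE3}, by running \eqref{eqn:SE5} on the triple of complex-conjugated indices $(\overline{i},\overline{j},\overline{\ell})$ and then conjugating the result. No input beyond the bootstrap equations already at hand is needed.

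First I would record the elementary index identities
\begin{align*}
	\overline{i}^{\,+} = \overline{i^-},
	\qquad
	\overline{j}^{\,+} = \overline{j^-},
	\qquad
	\overline{\ell}^{\,-} = \overline{\ell^+},
\end{align*}
immediate from $\overline{i} = (i_1,-i_2)$ and $i^{\pm} = (i_1, i_2 \pm 1)$, together with the matching identities for the radicands
\begin{align*}
	\lambda_{\overline{\ell}_1} + \overline{\ell}_2(\overline{\ell}_2-1) = \lambda_{\ell_1} + \ell_2(\ell_2+1),
	\qquad
	\lambda_{\overline{i}_1} + \overline{i}_2(\overline{i}_2+1) = \lambda_{i_1} + i_2(i_2-1),
\end{align*}
and similarly with $j$ in place of $i$. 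Since $I$ is preserved by $i \mapsto \overline{i}$ and $i,j,\ell \in I$, I may apply \eqref{eqn:SE5} to $(\overline{i},\overline{j},\overline{\ell})$; rewriting every index and radicand by the identities above turns it into
\begin{align*}
	\sqrt{\lambda_{\ell_1} + \ell_2(\ell_2+1)} \, C_{\overline{i}\,\overline{j}}^{\,\overline{\ell^+}}
	= \sqrt{\lambda_{i_1} + i_2(i_2-1)} \, C_{\overline{i^-}\,\overline{j}}^{\,\overline{\ell}}
	+ \sqrt{\lambda_{j_1} + j_2(j_2-1)} \, C_{\overline{i}\,\overline{j^-}}^{\,\overline{\ell}}.
\end{align*}
Now I would apply \eqref{eqn:SE3} in the form $C_{\overline{a}\,\overline{b}}^{\,\overline{c}} = \overline{C_{ab}^{\,c}}$ to each of the three structure constants, with $(a,b,c)$ equal to $(i,j,\ell^+)$, $(i^-,j,\ell)$, and $(i,j^-,\ell)$ respectively. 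This replaces the displayed equation by the complex conjugate of \eqref{eqn:SE5_inverted}. Conjugating back, and using that the three radicands are $\geq 0$ by \eqref{eqn:sqrt_nonnegativity} so that the square roots are real, yields \eqref{eqn:SE5_inverted}.

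There is no real obstacle; the proof is a pure symmetry manipulation. The only point requiring a moment's care is the index bookkeeping — in particular checking that when, say, $i^- \notin I$ (so $C_{i^-j}^{\ell} = 0$ by convention) one also has $\overline{i^-} = \overline{i}^{\,+} \notin I$ because $I$ is $\overline{(\cdot)}$-invariant, so both sides of the manipulation degrade compatibly and \eqref{eqn:SE5_inverted} holds trivially in that case as well.
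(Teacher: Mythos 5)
Your proof is correct and takes essentially the same route as the paper: apply \eqref{eqn:SE5} to the conjugated indices $(\overline{i},\overline{j},\overline{\ell})$ and then conjugate via \eqref{eqn:SE3}. The only difference is cosmetic — you spell out the index and radicand identities and flag the edge case when $i^- \notin I$, which the paper leaves implicit.
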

	
	\begin{proof}
		Since $I$ is closed under $i \mapsto \overline{i}$, we can apply \eqref{eqn:SE5} with $\overline{i},\overline{j},\overline{\ell}$ in place of $i,j,\ell$ to get
		\begin{align*}
			\sqrt{\lambda_{\ell_1} + \ell_2(\ell_2+1)} \, C_{\overline{i}\,\overline{j}}^{\overline{\ell}^-}
			=
			\sqrt{\lambda_{i_1} + i_2(i_2-1)} \, C_{\overline{i}^+\overline{j}}^{\overline{\ell}}
			+ \sqrt{\lambda_{j_1} + j_2(j_2-1)} \, C_{\overline{i}\,\overline{j}^+}^{\overline{\ell}}.
		\end{align*}
		Taking complex conjugates using \eqref{eqn:SE3} yields \eqref{eqn:SE5_inverted}.
	\end{proof}
	
	The following lemma is obtained by combining \eqref{eqn:SE5} with Lemma~\ref{lem:SE5_inverted}.
	
	\begin{lem} \label{lem:num_recursion}
		For all $i,j,\ell \in I$,
		\begin{align}
			(\lambda_{\ell_1} - \lambda_{i_1} - \lambda_{j_1} + 2i_2j_2) C_{ij}^{\ell}
			= \, &\sqrt{(\lambda_{i_1} + i_2(i_2+1))(\lambda_{j_1} + j_2(j_2-1))} \, C_{i^+j^-}^{\ell}
			\notag
			\\&+ \sqrt{(\lambda_{i_1} + i_2(i_2-1))(\lambda_{j_1} + j_2(j_2+1))} \, C_{i^-j^+}^{\ell}.
			\label{eqn:num_recursion}
		\end{align}
	\end{lem}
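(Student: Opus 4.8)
The plan is to derive \eqref{eqn:num_recursion} purely algebraically by combining \eqref{eqn:SE5} with \eqref{eqn:SE5_inverted} after re-indexing; the identity is the numerical shadow of the computation of $\langle\Delta(\psi_i\psi_j),\psi_\ell\rangle$ via the product rule, but since in this section we only have a (weak) solution of the bootstrap equations, everything must be done with the $C_{ij}^\ell$ directly. For $i\in I$ abbreviate $e_i^{\pm}=\sqrt{\lambda_{i_1}+i_2(i_2\pm 1)}$, which is real by \eqref{eqn:sqrt_nonnegativity}; then \eqref{eqn:SE5} reads $e_\ell^{-}C_{ij}^{\ell^-}=e_i^{+}C_{i^+j}^{\ell}+e_j^{+}C_{ij^+}^{\ell}$ and \eqref{eqn:SE5_inverted} reads $e_\ell^{+}C_{ij}^{\ell^+}=e_i^{-}C_{i^-j}^{\ell}+e_j^{-}C_{ij^-}^{\ell}$. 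Two bookkeeping facts are used repeatedly: $e_{i^+}^{-}=e_i^{+}$ and $e_{i^-}^{+}=e_i^{-}$ (because $(i_2+1)i_2=i_2(i_2+1)$ and $(i_2-1)i_2=i_2(i_2-1)$), and $(e_i^{\pm})^2=\lambda_{i_1}+i_2(i_2\pm 1)$. Finally, $C_{ij}^{\ell}$, $C_{i^+j^-}^{\ell}$, and $C_{i^-j^+}^{\ell}$ all have top index $\ell$ and bottom weight $i_2+j_2$, so by \eqref{eqn:SE2} all three vanish unless $\ell_2=i_2+j_2$; hence it suffices to treat that case, the other being $0=0$.

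First I would record three auxiliary identities, valid for all $i,j,\ell\in I$, each a single instance of \eqref{eqn:SE5} or \eqref{eqn:SE5_inverted} with shifted indices:
\begin{align*}
\text{(A)}\quad & e_\ell^{+}\,C_{ij}^{\ell}=e_i^{+}\,C_{i^+j}^{\ell^+}+e_j^{+}\,C_{ij^+}^{\ell^+},\\
\text{(B)}\quad & e_\ell^{+}\,C_{i^+j}^{\ell^+}=e_i^{+}\,C_{ij}^{\ell}+e_j^{-}\,C_{i^+j^-}^{\ell},\\
\text{(C)}\quad & e_\ell^{+}\,C_{ij^+}^{\ell^+}=e_i^{-}\,C_{i^-j^+}^{\ell}+e_j^{+}\,C_{ij}^{\ell}.
\end{align*}
Identity (A) is \eqref{eqn:SE5} with $\ell$ replaced by $\ell^+$, using $(\ell^+)^-=\ell$ and $e_{\ell^+}^{-}=e_\ell^{+}$; identity (B) is \eqref{eqn:SE5_inverted} with $i$ replaced by $i^+$, using $(i^+)^-=i$ and $e_{i^+}^{-}=e_i^{+}$; identity (C) is \eqref{eqn:SE5_inverted} with $j$ replaced by $j^+$. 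If a shifted index ($\ell^+$, $i^+$, or $j^+$) happens to leave $I$, then by \eqref{eqn:I_hol_def} and \eqref{eqn:lambda_{-r}_def} the relevant second coordinate equals the highest weight $-k_{-\ell_1}$, $-k_{-i_1}$, or $-k_{-j_1}$, at which the corresponding square root $e_\ell^{+}$, $e_i^{+}$, or $e_j^{+}$ is zero; since the displayed structure constants with an index off $I$ vanish by convention, the identity still holds as $0=0$.

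Next I would multiply (A) through by $e_\ell^{+}$ and substitute (B) into its first term and (C) into its second, obtaining
\begin{align*}
(e_\ell^{+})^2\,C_{ij}^{\ell}=e_i^{+}\bigl(e_i^{+}C_{ij}^{\ell}+e_j^{-}C_{i^+j^-}^{\ell}\bigr)+e_j^{+}\bigl(e_i^{-}C_{i^-j^+}^{\ell}+e_j^{+}C_{ij}^{\ell}\bigr).
\end{align*}
Rearranging and inserting $(e_\ell^{+})^2=\lambda_{\ell_1}+\ell_2(\ell_2+1)$, $(e_i^{+})^2=\lambda_{i_1}+i_2(i_2+1)$, $(e_j^{+})^2=\lambda_{j_1}+j_2(j_2+1)$ gives
\begin{align*}
\bigl(\lambda_{\ell_1}+\ell_2(\ell_2+1)-\lambda_{i_1}-i_2(i_2+1)-\lambda_{j_1}-j_2(j_2+1)\bigr)C_{ij}^{\ell}=e_i^{+}e_j^{-}C_{i^+j^-}^{\ell}+e_i^{-}e_j^{+}C_{i^-j^+}^{\ell}.
\end{align*}
Since $\ell_2=i_2+j_2$, the elementary identity $\ell_2(\ell_2+1)-i_2(i_2+1)-j_2(j_2+1)=2i_2j_2$ collapses the left-hand coefficient to $\lambda_{\ell_1}-\lambda_{i_1}-\lambda_{j_1}+2i_2j_2$, while $e_i^{+}e_j^{-}$ and $e_i^{-}e_j^{+}$ are exactly the two square-root factors in \eqref{eqn:num_recursion}. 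This is the claim.

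I expect the only genuine obstacle to be the case-checking flagged in the second paragraph: confirming, from the explicit form \eqref{eqn:I_hol_def} of $I$ together with \eqref{eqn:lambda_{-r}_def}, that each square root appearing as a coefficient in (A)--(C) vanishes precisely when the corresponding shifted index exits $I$, so that the whole manipulation is consistent with the convention $C_{ij}^{\ell}=0$ off $I$. Everything beyond that is routine arithmetic with the quantities $e_i^{\pm}$.
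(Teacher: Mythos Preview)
Your proof is correct and follows essentially the same approach as the paper---combining \eqref{eqn:SE5} and Lemma~\ref{lem:SE5_inverted} after shifting indices---except that you work with $\ell^+$ where the paper works with $\ell^-$, which lets you absorb the case $\ell=(0,0)$ into the uniform edge-case analysis rather than treating it separately via Lemma~\ref{lem:SE4_l=0}. One small imprecision: your claim that $m\in I$, $m^+\notin I$ forces $m_2=-k_{-m_1}$ misses the possibility $m=(0,0)$, but the conclusion $e_m^{+}=0$ still holds there (since $\lambda_0=0$), so the argument goes through unchanged.
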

	
	\begin{proof}
		Let us first verify this when $\ell = (0,0)$.
		Then by Lemma~\ref{lem:SE4_l=0}, both sides vanish unless $i=\overline{j}$, in which case \eqref{eqn:num_recursion} reduces to
		\begin{align*}
			(-2\lambda_{i_1} - 2i_2^2) (-1)^{i_2}
			= (\lambda_{i_1} + i_2(i_2+1)) (-1)^{i_2+1}
			+ (\lambda_{i_1} + i_2(i_2-1)) (-1)^{i_2-1}.
		\end{align*}
		This is true by elementary algebra.
		
		Suppose now that $\ell \neq (0,0)$.
		Then we see from the definition of $I$ that at least one of $\ell^+ \in I$ or $\ell^- \in I$ must hold.
		We will treat the case where $\ell^- \in I$.
		The case $\ell^+ \in I$ can be dealt with by an almost identical computation (writing $\ell = (\ell^+)^-$ instead of $\ell = (\ell^-)^+$), or can be deduced from the case $\ell^- \in I$ by a ``complex conjugation trick" similar to the proof of Lemma~\ref{lem:SE5_inverted}.
		So assume $\ell^- \in I$.
		Then writing $\ell = (\ell^-)^+$ and applying Lemma~\ref{lem:SE5_inverted},
		\begin{align*}
			\sqrt{\lambda_{\ell_1} + \ell_2(\ell_2-1)} \, C_{ij}^{\ell}
			= \sqrt{\lambda_{\ell_1} + \ell_2(\ell_2-1)} \, C_{ij}^{(\ell^-)^+}
			= \sqrt{\lambda_{i_1} + i_2(i_2-1)} \, C_{i^-j}^{\ell^-}
			+ \sqrt{\lambda_{j_1} + j_2(j_2-1)} \, C_{ij^-}^{\ell^-}.
		\end{align*}
		Multiplying both sides by $\sqrt{\lambda_{\ell_1} + \ell_2(\ell_2-1)}$,
		\begin{align}
			(\lambda_{\ell_1} + \ell_2(\ell_2-1)) C_{ij}^{\ell}
			&= \sqrt{\lambda_{i_1} + i_2(i_2-1)} \sqrt{\lambda_{\ell_1} + \ell_2(\ell_2-1)} \, C_{i^-j}^{\ell^-}
			\notag
			\\&+ \sqrt{\lambda_{j_1} + j_2(j_2-1)} \sqrt{\lambda_{\ell_1} + \ell_2(\ell_2-1)} \, C_{ij^-}^{\ell^-}.
			\label{eqn:to_rewrite_by_SE5}
		\end{align}
		We claim that the first term on the right hand side can be rewritten as
		\begin{align}
			\sqrt{\lambda_{i_1} + i_2(i_2-1)} \sqrt{\lambda_{\ell_1} + \ell_2(\ell_2-1)} \, C_{i^-j}^{\ell^-}
			= \sqrt{\lambda_{i_1} + i_2(i_2-1)} \, &\Big(\sqrt{\lambda_{i_1} + i_2(i_2-1)} \, C_{ij}^{\ell}
			\notag
			\\&+ \sqrt{\lambda_{j_1} + j_2(j_2+1)} \, C_{i^-j^+}^{\ell}\Big).
			\label{eqn:SE5_with_prefactor}
		\end{align}
		If $i^- \in I$, then \eqref{eqn:SE5} with $i^-$ in place of $i$ implies \eqref{eqn:SE5_with_prefactor}.
		If $i^- \not\in I$, then it follows from the definition of $I$ that either $i_1 = i_2 = 0$, or $i_1 < 0$ and $i_2 = k_{-i_1}$.
		In the former case, $\lambda_{i_1} = 0$, while in the latter case, $\lambda_{i_1} = -k_{-i_1}(k_{-i_1}-1)$ by \eqref{eqn:lambda_{-r}_def}.
		We see that in both cases,
		\begin{align*}
			\lambda_{i_1} + i_2(i_2-1) = 0,
		\end{align*}
		and hence \eqref{eqn:SE5_with_prefactor} holds with both sides equal to $0$.
		Thus \eqref{eqn:SE5_with_prefactor} holds in all cases.
		Since $i,j$ are arbitrary in \eqref{eqn:SE5_with_prefactor}, we can switch $i,j$ to get
		\begin{align}
			\sqrt{\lambda_{j_1} + j_2(j_2-1)} \sqrt{\lambda_{\ell_1} + \ell_2(\ell_2-1)} \, C_{ij^-}^{\ell^-}
			= \sqrt{\lambda_{j_1} + j_2(j_2-1)} \, &\Big(\sqrt{\lambda_{j_1} + j_2(j_2-1)} \, C_{ij}^{\ell}
			\notag
			\\&+ \sqrt{\lambda_{i_1} + i_2(i_2+1)} \, C_{i^+j^-}^{\ell}\Big).
			\label{eqn:SE5_with_prefactor_switched}
		\end{align}
		Expanding the right hand sides of \eqref{eqn:SE5_with_prefactor} and \eqref{eqn:SE5_with_prefactor_switched}, and then inserting \eqref{eqn:SE5_with_prefactor} and \eqref{eqn:SE5_with_prefactor_switched} into \eqref{eqn:to_rewrite_by_SE5},
		\begin{align*}
			(\lambda_{\ell_1} + \ell_2(\ell_2-1)) C_{ij}^{\ell}
			&= (\lambda_{i_1} + i_2(i_2-1)) C_{ij}^{\ell}
			+ \sqrt{(\lambda_{i_1} + i_2(i_2-1)) (\lambda_{j_1} + j_2(j_2+1))} \, C_{i^-j^+}^{\ell}
			\\&+ (\lambda_{j_1} + j_2(j_2-1)) C_{ij}^{\ell}
			+ \sqrt{(\lambda_{i_1} + i_2(i_2+1)) (\lambda_{j_1} + j_2(j_2-1))} \, C_{i^+j^-}^{\ell}.
		\end{align*}
		Rearranging,
		\begin{align*}
			(\lambda_{\ell_1} - \lambda_{i_1} - \lambda_{j_1} + \ell_2(\ell_2-1) - i_2(i_2-1) - j_2(j_2-1)) C_{ij}^{\ell}
			&= \sqrt{(\lambda_{i_1} + i_2(i_2+1)) (\lambda_{j_1} + j_2(j_2-1))} \, C_{i^+j^-}^{\ell}
			\\&+ \sqrt{(\lambda_{i_1} + i_2(i_2-1)) (\lambda_{j_1} + j_2(j_2+1))} \, C_{i^-j^+}^{\ell}.
		\end{align*}
		By \eqref{eqn:SE2}, we may assume $i_2+j_2 = \ell_2$, or else both sides of \eqref{eqn:num_recursion} will be zero.
		Then
		\begin{align*}
			\ell_2(\ell_2-1) - i_2(i_2-1) - j_2(j_2-1)
			= 2i_2j_2.
		\end{align*}
		Inserting this into the equation above yields \eqref{eqn:num_recursion}.
	\end{proof}
	
	\begin{cor} \label{cor:num_recursion_specialized}
		For all $i,\ell \in I$,
		\begin{align} \label{eqn:num_recursion_specialized}
			(\lambda_{i_1} + i_2(i_2+1)) C_{i^+\overline{i^+}}^{\ell}
			= (\lambda_{\ell_1} - 2\lambda_{i_1} - 2i_2^2) C_{i\overline{i}}^{\ell}
			- (\lambda_{i_1} + i_2(i_2-1)) C_{i^-\overline{i^-}}^{\ell}.
		\end{align}
	\end{cor}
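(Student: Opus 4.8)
The plan is to obtain \eqref{eqn:num_recursion_specialized} simply as the special case $j = \overline{i}$ of the recursion \eqref{eqn:num_recursion} established in Lemma~\ref{lem:num_recursion}. This substitution is legitimate because $I$ is preserved by $i \mapsto \overline{i}$, so $\overline{i} \in I$ whenever $i \in I$. The work is then purely bookkeeping plus one appeal to \eqref{eqn:sqrt_nonnegativity} to simplify the square roots.

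First I would record the elementary identities. Since $\overline{i} = (i_1,-i_2)$, its first coordinate is $i_1$, hence $\lambda_{(\overline{i})_1} = \lambda_{i_1}$, and the scalar multiplying $C_{i\overline{i}}^{\ell}$ on the left of \eqref{eqn:num_recursion} collapses to $\lambda_{\ell_1} - \lambda_{i_1} - \lambda_{i_1} + 2 i_2(-i_2) = \lambda_{\ell_1} - 2\lambda_{i_1} - 2i_2^2$, matching the coefficient in \eqref{eqn:num_recursion_specialized}. On the right, one checks the index identities $(\overline{i})^- = \overline{i^+}$ and $(\overline{i})^+ = \overline{i^-}$ (both amount to $(i_1, -i_2 \mp 1)$), so with $j = \overline{i}$ the two structure constants $C_{i^+ j^-}^{\ell}$ and $C_{i^- j^+}^{\ell}$ become $C_{i^+ \overline{i^+}}^{\ell}$ and $C_{i^- \overline{i^-}}^{\ell}$, as required.

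Next I would simplify the two radicands. With $j = \overline{i}$ we have $j_2(j_2-1) = (-i_2)(-i_2-1) = i_2(i_2+1)$ and $j_2(j_2+1) = (-i_2)(-i_2+1) = i_2(i_2-1)$, together with $\lambda_{j_1} = \lambda_{i_1}$; hence the radicands in \eqref{eqn:num_recursion} become the perfect squares $(\lambda_{i_1}+i_2(i_2+1))^2$ and $(\lambda_{i_1}+i_2(i_2-1))^2$. Since $i \in I$, both bases are nonnegative by \eqref{eqn:sqrt_nonnegativity}, so the square roots evaluate to $\lambda_{i_1}+i_2(i_2+1)$ and $\lambda_{i_1}+i_2(i_2-1)$ respectively. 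Substituting, \eqref{eqn:num_recursion} with $j = \overline{i}$ reads $(\lambda_{\ell_1} - 2\lambda_{i_1} - 2i_2^2) C_{i\overline{i}}^{\ell} = (\lambda_{i_1}+i_2(i_2+1)) C_{i^+\overline{i^+}}^{\ell} + (\lambda_{i_1}+i_2(i_2-1)) C_{i^-\overline{i^-}}^{\ell}$, and isolating the first term on the right is exactly \eqref{eqn:num_recursion_specialized}.

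I do not expect any genuine obstacle: the corollary is a direct specialization, and the only point requiring attention is the sign of the radicands, which is precisely what \eqref{eqn:sqrt_nonnegativity} was arranged to guarantee. If one wanted to be scrupulous about edge cases (e.g.\ when $i^+$ or $i^-$ fails to lie in $I$), note that \eqref{eqn:num_recursion} already holds for all $i, j, \ell \in I$ with the out-of-range $C$'s set to $0$, so no separate case analysis is needed here.
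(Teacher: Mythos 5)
Your proof is correct and is precisely the paper's own argument: the paper's proof of Corollary~\ref{cor:num_recursion_specialized} reads, in full, ``Take $j=\overline{i}$ in Lemma~\ref{lem:num_recursion} and rearrange.'' You have simply filled in the routine bookkeeping (the index identities $(\overline{i})^\pm=\overline{i^\mp}$, the simplification of the radicands to perfect squares, and the appeal to \eqref{eqn:sqrt_nonnegativity} to take the positive square root) that the paper leaves implicit.
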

	
	\begin{proof}
		Take $j = \overline{i}$ in Lemma \ref{lem:num_recursion} and rearrange.
	\end{proof}
	
	We will prove the proposition below by iterating Corollary~\ref{cor:num_recursion_specialized}.
	Given $i = (i_1,i_2) \in \Z^2$ and $n \in \Z$, denote $i^{+n} = (i_1,i_2+n)$.
	In the remainder of this subsection, allow all implicit constants to depend on $\mathcal{S}$.
	
	\begin{prop} \label{prop:C_+n_-n_l_size}
		Let $r \in \Z_{\geq 1}$ and $n \in \Z_{\geq 0}$. Let $i = (r,0)$ and $j = (-r,k_r)$. Let $\ell \in I$ with $\ell_1 > 0$ and $\lambda_{\ell_1} \gg_{r,n} 1$. Then there are positive quantities
		\begin{align} \label{eqn:A,B_size}
			A \sim_{r,n} \lambda_{\ell_1}^n
			\qquad \text{and} \qquad
			B \sim_{r,n} \lambda_{\ell_1}^n
		\end{align}
		such that
		\begin{align} \label{eqn:A,B_def}
			C_{i^{+n} \overline{i^{+n}}}^{\ell}
			= A C_{ii}^{\ell}
			\qquad \text{and} \qquad
			C_{j^{+n} \overline{j^{+n}}}^{\ell}
			= B C_{j\overline{j}}^{\ell}
		\end{align}
		(of course, if $C_{ii}^{\ell} \neq 0$ (resp. $C_{j\overline{j}}^{\ell} \neq 0$), then $A$ (resp. $B$) is determined by division).
	\end{prop}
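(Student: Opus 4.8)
The plan is to iterate the three-term recurrence of Corollary~\ref{cor:num_recursion_specialized} along the weight direction, with the base case coming from commutativity (for $A$) and from the edge of the discrete series (for $B$).

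Fix $\ell \in I$ with $\ell_1 > 0$. For the first claim, set $a_n = C_{i^{+n}\,\overline{i^{+n}}}^{\ell}$ for $n \in \Z$, where $i^{+n} = (r,n)$; these all lie in $I$ since $r > 0$. Applying Corollary~\ref{cor:num_recursion_specialized} with $i^{+n}$ in place of $i$ gives, for every $n \in \Z_{\geq 0}$,
\begin{align*}
(\lambda_r + n(n+1))\,a_{n+1} = (\lambda_{\ell_1} - 2\lambda_r - 2n^2)\,a_n - (\lambda_r + n(n-1))\,a_{n-1}.
\end{align*}
Since $i_2 = 0$ we have $\overline{i^{+n}} = i^{+(-n)}$, so by \eqref{eqn:SE1}, $a_{-n} = a_n$; in particular $a_{-1} = a_1$, and the $n = 0$ case of the recurrence collapses to $2\lambda_r a_1 = (\lambda_{\ell_1} - 2\lambda_r)a_0$. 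Because $\lambda_r > 0$ for $r \geq 1$ by \eqref{eqn:lambda_r_to_infty}, the denominators $\lambda_r + n(n+1)$ are positive, so one can solve upward: inductively $a_n = A_n a_0$ with $A_0 = 1$, $A_1 = (\lambda_{\ell_1} - 2\lambda_r)/(2\lambda_r)$, and $A_{n+1}$ expressed in terms of $A_n$ and $A_{n-1}$ via the recurrence. The desired $A$ is $A_n$.

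The argument for $B$ is the same, except the base case uses the discrete series. For $j = (-r,k_r)$ we have $\lambda_{-r} = -k_r(k_r-1)$ by \eqref{eqn:lambda_{-r}_def}, and $j^{+n} = (-r,k_r+n) \in I$ for all $n \in \Z_{\geq 0}$. Setting $b_n = C_{j^{+n}\,\overline{j^{+n}}}^{\ell}$ and applying Corollary~\ref{cor:num_recursion_specialized} with $j^{+n}$ in place of $i$, the coefficients simplify (using $\lambda_{-r} = -k_r(k_r-1)$) to
\begin{align*}
(n+1)(n+2k_r)\,b_{n+1} = (\lambda_{\ell_1} - 2k_r - 4k_r n - 2n^2)\,b_n - n(n+2k_r-1)\,b_{n-1}.
\end{align*}
The coefficient of $b_{n-1}$ vanishes at $n = 0$ (equivalently $(-r,k_r-1) \notin I$, so $b_{-1} = 0$), giving $2k_r b_1 = (\lambda_{\ell_1} - 2k_r)b_0$; and the leading coefficients $(n+1)(n+2k_r)$ are positive since $k_r \geq 1$. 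So again one solves upward to get $b_n = B_n b_0$, and $B = B_n$.

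It then remains to check $A_n, B_n \sim_{r,n} \lambda_{\ell_1}^n$ with $A_n, B_n > 0$ once $\lambda_{\ell_1} \gg_{r,n} 1$. This is a short induction: $A_0 = B_0 = 1$ and $A_1, B_1 \sim_r \lambda_{\ell_1}$ are immediate from the explicit formulas, and in the inductive step the first term of each numerator is $\sim_{r,n} \lambda_{\ell_1}\cdot\lambda_{\ell_1}^n = \lambda_{\ell_1}^{n+1}$ while the subtracted term is only $\sim_{r,n}\lambda_{\ell_1}^{n-1}$; so once $\lambda_{\ell_1}$ exceeds a threshold depending on $r$ and $n$ the first term dominates (say by a factor $2$), making the numerator positive and $\sim_{r,n}\lambda_{\ell_1}^{n+1}$, and dividing by the positive constant $\lambda_r + n(n+1)$ (resp.\ $(n+1)(n+2k_r)$) preserves this. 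Only finitely many steps occur for fixed $n$, so a single threshold $\lambda_{\ell_1}\gg_{r,n}1$ suffices. The main obstacle is precisely this sign-and-order bookkeeping — ensuring a difference of two positive quantities stays comparable to its dominant summand through each step — but it is mechanical given that the coefficients are explicit polynomials in $n$ with $n$ held fixed.
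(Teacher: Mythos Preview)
Your proof is correct and follows essentially the same approach as the paper's: both iterate Corollary~\ref{cor:num_recursion_specialized} along the weight direction, handle the $n=1$ base case for $A$ via the symmetry $a_{-1}=a_1$ (from \eqref{eqn:SE1}) and for $B$ via the vanishing at the edge of the discrete series, and then check positivity and the asymptotic $\sim_{r,n}\lambda_{\ell_1}^n$ by an easy induction in which the leading $\lambda_{\ell_1}$-term dominates. Your explicit simplification of the $B$-recurrence coefficients to $(n+1)(n+2k_r)$ and $n(n+2k_r-1)$ is a minor presentational improvement.
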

	
	\begin{proof}
		Let us note at the outset that $i^{+n}, j^{+n} \in I$ for all $n \in \Z_{\geq 0}$.
		We will induct on $n$, with base cases $n=0$ and $n=1$. First, when $n=0$ we can take $A = B = 1$. Next, suppose $n=1$. Since $i_2 = 0$, we have by \eqref{eqn:SE1} that $C_{i^+\overline{i^+}}^{\ell} = C_{i^-\overline{i^-}}^{\ell}$. Thus by Corollary \ref{cor:num_recursion_specialized} with the given choices of $i,\ell$,
		\begin{align*}
			2\lambda_{i_1} C_{i^+\overline{i^+}}^{\ell}
			= (\lambda_{\ell_1} - 2\lambda_{i_1}) C_{ii}^{\ell},
		\end{align*}
		where we have moved the second term on the right hand side of \eqref{eqn:num_recursion_specialized} to the left, and used that $\overline{i} = i$ because $i_2 = 0$. Since $i_1 = r \geq 1$, we have $\lambda_{i_1} > 0$, so we can divide by $\lambda_{i_1}$ and take
		\begin{align*}
			A = \frac{\lambda_{\ell_1} - 2\lambda_{i_1}}{2\lambda_{i_1}}.
		\end{align*}
		This will be positive and $\sim_r \lambda_{\ell_1}$ when $\lambda_{\ell_1} \gg_r 1$. This establishes the first half of \eqref{eqn:A,B_size} and \eqref{eqn:A,B_def} when $n=1$. For the second half (still with $n=1$), Corollary \ref{cor:num_recursion_specialized} with $j$ in place of $i$ says that
		\begin{align} \label{eqn:num_recursion_with_j}
			(\lambda_{j_1} + j_2(j_2+1)) C_{j^+\overline{j^+}}^{\ell}
			= (\lambda_{\ell_1} - 2\lambda_{j_1} - 2j_2^2) C_{j\overline{j}}^{\ell} - (\lambda_{j_1} + j_2(j_2-1)) C_{j^-\overline{j^-}}^{\ell}.
		\end{align}
		By \eqref{eqn:lambda_{-r}_def}, we have $\lambda_{j_1} = -k_r(k_r-1)$. By the definition of $j$, we also have $j_2 = k_r$ and $j^- \not\in I$. The latter implies $C_{j^-\overline{j^-}}^{\ell} = 0$ by definition. Therefore \eqref{eqn:num_recursion_with_j} reduces to
		\begin{align*}
			2k_r C_{j^+\overline{j^+}}^{\ell}
			= (\lambda_{\ell_1} - 2k_r) C_{j\overline{j}}^{\ell}.
		\end{align*}
		Thus we can take
		\begin{align} \label{eqn:B_formula_n=1}
			B = \frac{\lambda_{\ell_1} - 2k_r}{2k_r}.
		\end{align}
		This will be positive and $\sim_r \lambda_{\ell_1}$ when $\lambda_{\ell_1} \gg_r 1$. The base cases $n=0$ and $n=1$ are now complete.
		
		For the induction step, let $n \in \Z_{\geq 1}$, and let us prove the proposition for $n+1$. By Corollary \ref{cor:num_recursion_specialized} with $i^{+n} = (r,n)$ in place of $i$,
		\begin{align*}
			(\lambda_{i_1} + n(n+1)) C_{i^{+(n+1)} \overline{i^{+(n+1)}}}^{\ell}
			= (\lambda_{\ell_1} - 2\lambda_{i_1} - 2n^2) C_{i^{+n} \overline{i^{+n}}}^{\ell}
			- (\lambda_{i_1} + n(n-1)) C_{i^{+(n-1)} \overline{i^{+(n-1)}}}^{\ell}.
		\end{align*}
		By induction, for $\lambda_{\ell_1} \gg_{r,n} 1$, we have
		\begin{align*}
			C_{i^{+n} \overline{i^{+n}}}^{\ell}
			= A' C_{ii}^{\ell}
			\qquad \text{and} \qquad
			C_{i^{+(n-1)} \overline{i^{+(n-1)}}}^{\ell}
			= A'' C_{ii}^{\ell}
		\end{align*}
		for some positive $A',A''$ with $A' \sim_{r,n} \lambda_{\ell_1}^n$ and $A'' \sim_{r,n} \lambda_{\ell_1}^{n-1}$. Thus
		\begin{align*}
			C_{i^{+(n+1)} \overline{i^{+(n+1)}}}^{\ell}
			= A C_{ii}^{\ell}
			\qquad \text{with} \qquad
			A = \frac{(\lambda_{\ell_1} - 2\lambda_{i_1} - 2n^2)A' - (\lambda_{i_1} + n(n-1))A''}{\lambda_{i_1} + n(n+1)}.
		\end{align*}
		For $\lambda_{\ell_1} \gg_{r,n} 1$, this $A$ is positive and $\sim_{r,n} \lambda_{\ell_1}^{n+1}$. This completes the induction for the first half of the proposition. The induction step for the second half can be carried out in almost exactly the same way, with $j$ replacing $i$ and $B$ replacing $A$, and with minor differences in algebra.
	\end{proof}
	
	The proof of Proposition~\ref{prop:C_+n_-n_l_size} shows that $A,B$ obey second-order recurrences in $n$.
	We will not need these recurrences in this section, but we will use the recurrence for $B$ in Section~\ref{sec:applications}, so we record it here.
	
	\begin{cor} \label{cor:C_+n_-n_formula_explicit}
		Let $r \in \Z_{\geq 1}$ and $j = (-r,k_r)$. Let $\ell \in I$. Then for $n \in \Z_{\geq 0}$,
		\begin{align} \label{eqn:C_+n_-n_formula_explicit}
			C_{j^{+n} \overline{j^{+n}}}^{\ell}
			= B_{k_r,n}(\lambda_{\ell_1}) C_{j\overline{j}}^{\ell},
		\end{align}
		where $B_{k,n}$ is given by the second-order recurrence
		\begin{align} \label{eqn:B_k,n_recurrence}
			B_{k,n+1}(\lambda)
			= \frac{(\lambda + 2k(k-1) - 2(n+k)^2) B_{k,n}(\lambda) - ((n+k)(n+k-1) - k(k-1)) B_{k,n-1}(\lambda)}{(n+k)(n+k+1) - k(k-1)}
		\end{align}
		for $n \geq 1$, with initial conditions
		\begin{align*}
			B_{k,0}(\lambda)
			= 1
			\qquad \text{and} \qquad
			B_{k,1}(\lambda)
			= \frac{\lambda}{2k}-1.
		\end{align*}
	\end{cor}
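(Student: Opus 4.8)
The plan is to prove \eqref{eqn:C_+n_-n_formula_explicit} by induction on $n$, extracting the exact coefficients from the computation already carried out in the proof of Proposition~\ref{prop:C_+n_-n_l_size}. Throughout write $k = k_r$ and $\lambda = \lambda_{\ell_1}$. Since $j_1 = -r < 0$, formula \eqref{eqn:lambda_{-r}_def} gives $\lambda_{j_1} = -k(k-1)$, and by definition $j_2 = k$; also $j^{+n} = (-r, k+n) \in I$ for every $n \in \Z_{\geq 0}$. The base case $n = 0$ is immediate with $B_{k,0}(\lambda) = 1$. For $n = 1$, I would apply Corollary~\ref{cor:num_recursion_specialized} with $i = j$: since $j^- = (-r, k-1) \notin I$ we have $C_{j^-\overline{j^-}}^{\ell} = 0$, and using $\lambda_{j_1} + k(k+1) = 2k$ and $\lambda_{j_1} + k(k-1) = 0$ the identity collapses to $2k\, C_{j^+\overline{j^+}}^{\ell} = (\lambda - 2k)\, C_{j\overline{j}}^{\ell}$, i.e.\ $C_{j^+\overline{j^+}}^{\ell} = B_{k,1}(\lambda)\, C_{j\overline{j}}^{\ell}$ with $B_{k,1}(\lambda) = \tfrac{\lambda}{2k} - 1$.

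For the induction step, fix $n \geq 1$ and apply Corollary~\ref{cor:num_recursion_specialized} with $i = j^{+n}$, so $i^{\pm} = j^{+(n\pm 1)}$ and $\overline{i} = \overline{j^{+n}}$. Substituting $\lambda_{i_1} = -k(k-1)$ and $i_2 = k+n$ turns the three coefficients $\lambda_{i_1} + i_2(i_2+1)$, $\lambda - 2\lambda_{i_1} - 2i_2^2$, $\lambda_{i_1} + i_2(i_2-1)$ into exactly $(k+n)(k+n+1) - k(k-1)$, $\lambda + 2k(k-1) - 2(k+n)^2$, $(k+n)(k+n-1) - k(k-1)$, which are the coefficients appearing in \eqref{eqn:B_k,n_recurrence}. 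The leading one equals $(n+1)(n+2k) > 0$, so we may divide; inserting the inductive hypotheses $C_{j^{+n}\overline{j^{+n}}}^{\ell} = B_{k,n}(\lambda)\, C_{j\overline{j}}^{\ell}$ and $C_{j^{+(n-1)}\overline{j^{+(n-1)}}}^{\ell} = B_{k,n-1}(\lambda)\, C_{j\overline{j}}^{\ell}$ yields $C_{j^{+(n+1)}\overline{j^{+(n+1)}}}^{\ell} = B_{k,n+1}(\lambda)\, C_{j\overline{j}}^{\ell}$ with $B_{k,n+1}$ defined precisely by \eqref{eqn:B_k,n_recurrence}.

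I do not expect a genuine obstacle: all the analytic content is already packaged in Corollary~\ref{cor:num_recursion_specialized}, and what remains is the bookkeeping above together with three elementary checks --- the two specializations $\lambda_{j_1} = -k(k-1)$ and $j^- \notin I$, and the positivity of the denominator $(n+1)(n+2k)$. The one point worth emphasizing is that, in contrast to Proposition~\ref{prop:C_+n_-n_l_size} where division was legitimate only once $\lambda_{\ell_1}$ was large, here every division is by a positive integer, so the recurrence holds unconditionally; this is exactly what lets us record it in the closed form \eqref{eqn:B_k,n_recurrence}.
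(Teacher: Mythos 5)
Your proof is correct and follows the same route as the paper: the paper's own proof is just the one-sentence observation that the cases $n=0,1$ come from the earlier computation and the induction step follows from Corollary~\ref{cor:num_recursion_specialized} together with $\lambda_{j_1} = -k_r(k_r-1)$. You have simply spelled out the coefficient bookkeeping, and the check that the denominator $(n+k)(n+k+1)-k(k-1)=(n+1)(n+2k)$ is a positive integer is exactly the point justifying the unconditional division.
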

	
	\begin{proof}
		The case $n=0$ is trivial. The case $n=1$ is \eqref{eqn:B_formula_n=1}. If \eqref{eqn:C_+n_-n_formula_explicit} holds for $n$ and for $n-1$, then it holds for $n+1$ because of Corollary~\ref{cor:num_recursion_specialized} and the fact that $\lambda_{j_1} = -k_r(k_r-1)$.
	\end{proof}
	
	We can now prove the following special case of our goal, Proposition~\ref{prop:poly_decay_C_ij^l}.
	
	\begin{prop} \label{prop:weak_to_strong_primitive}
		Let $r \in \Z_{\geq 1}$. Let $i = (r,0)$ and $j = (-r,k_r)$. Let $N \geq 0$. Then
		\begin{align*}
			\sum_{\ell \in I} |\lambda_{\ell_1}|^N |C_{ii}^{\ell}|^2 < \infty
			\qquad \text{and} \qquad
			\sum_{\ell \in I} |\lambda_{\ell_1}|^N |C_{j\overline{j}}^{\ell}|^2 < \infty.
		\end{align*}
	\end{prop}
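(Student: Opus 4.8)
The plan is to read off both sums from the crossing equation \eqref{eqn:SE6}, applied to carefully chosen quadruples, together with Proposition~\ref{prop:C_+n_-n_l_size}, which rewrites the relevant structure constants in terms of $C_{ii}^\ell$ and $C_{j\overline{j}}^\ell$. First note that, by \eqref{eqn:SE2}, \eqref{eqn:SE3}, \eqref{eqn:SE1} and \eqref{eqn:i2=0_implies_i1geq0}, both $C_{ii}^\ell$ and $C_{j\overline{j}}^\ell$ vanish unless $\ell = (\ell_1,0)$ with $\ell_1 \geq 0$, and are real for such $\ell$; in particular $\lambda_{\ell_1} \geq 0$ on their supports, so the absolute values in the statement are harmless and $|C_{ii}^\ell|^2 = (C_{ii}^\ell)^2$.

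Write $i = (r,0)$. For each $n \geq 0$, apply \eqref{eqn:SE6} with $(i,j,i',j') = ((r,n+1),(r,-(n+1)),(r,n),(r,-n))$. Using \eqref{eqn:SE2} to see that the surviving $\ell$ have $\ell_2 = 0$ on the left and $\ell_2 = 2n+1$ on the right, and \eqref{eqn:SE3} to identify $C_{(r,-(n+1))(r,-n)}^{\overline{\ell}} = \overline{C_{(r,n+1)(r,n)}^\ell}$, the equation becomes
\begin{align*}
\sum_{\ell} C_{(r,n+1)(r,-(n+1))}^\ell \, C_{(r,n)(r,-n)}^\ell
= -\sum_\ell \bigl|C_{(r,n+1)(r,n)}^\ell\bigr|^2 .
\end{align*}
By Proposition~\ref{prop:C_+n_-n_l_size} (applied with $n$ and with $n+1$), for all but finitely many $\ell$ the left-hand summand equals $\bigl(\text{a quantity } \sim_{r,n} \lambda_{\ell_1}^{2n+1}\bigr)(C_{ii}^\ell)^2 \geq 0$, the exceptional terms being finite in number and finite in size by discreteness of $\{\lambda_r\}$. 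Hence the left side has finite negative part (so converges in $\overline{\R}+i\overline{\R}$) and the right side is $\leq 0$ (so converges), so \eqref{eqn:SE6} applies in the sense of Remark~\ref{rem:convergence_for_poly_decay} and the displayed equality holds in $\overline{\R}$. Since the right side is $\leq 0$ while the left side lies in $(-\infty,+\infty]$, both sides are finite; discarding the finitely many exceptional terms on the left then gives $\sum_\ell \lambda_{\ell_1}^{2n+1}(C_{ii}^\ell)^2 < \infty$ for every $n \geq 0$.

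The remaining powers follow formally. From the $n=0$ case, $\sum_\ell (C_{ii}^\ell)^2 \leq \sum_{\ell:\lambda_{\ell_1}\leq 1}(C_{ii}^\ell)^2 + \sum_{\ell:\lambda_{\ell_1}>1}\lambda_{\ell_1}(C_{ii}^\ell)^2 < \infty$, the first sum being finite by discreteness. For even $N = 2n \geq 2$, the pointwise inequality $\lambda^{2n} \leq \tfrac12(\lambda^{2n-1}+\lambda^{2n+1})$ (AM--GM, valid for $\lambda \geq 0$) gives $\sum_\ell \lambda_{\ell_1}^{2n}(C_{ii}^\ell)^2 \leq \tfrac12\bigl(\sum_\ell \lambda_{\ell_1}^{2n-1}(C_{ii}^\ell)^2 + \sum_\ell \lambda_{\ell_1}^{2n+1}(C_{ii}^\ell)^2\bigr) < \infty$. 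This proves the first assertion. The second, for $j = (-r,k_r)$, is obtained identically: every index $j^{+m} = (-r,k_r+m)$ and $\overline{j^{+m}}$ lies in $I$ since $|k_r+m| \geq k_r$, one applies \eqref{eqn:SE6} with $(i,j,i',j') = (j^{+(n+1)},\overline{j^{+(n+1)}},j^{+n},\overline{j^{+n}})$ and uses Proposition~\ref{prop:C_+n_-n_l_size} (or the explicit recurrence of Corollary~\ref{cor:C_+n_-n_formula_explicit}) to rewrite $C_{j^{+n}\overline{j^{+n}}}^\ell$ as a multiple of $C_{j\overline{j}}^\ell$; the weight $(k_r+n+1)+(k_r+n)$ of $\psi_{j^{+(n+1)}}\psi_{j^{+n}}$ is again odd, so the right-hand side of the crossing equation is $\leq 0$ and the same conclusion follows.

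The step doing the real work is the choice of crossing quadruple: testing crossing symmetry against the pair $(\psi_{(r,n+1)}\psi_{\overline{(r,n+1)}},\ \psi_{(r,n)}\psi_{\overline{(r,n)}})$ and substituting via Proposition~\ref{prop:C_+n_-n_l_size} turns \eqref{eqn:SE6} into the assertion that $\sum_\ell (\text{nonnegative},\ \gtrsim \lambda_{\ell_1}^{2n+1})(C_{ii}^\ell)^2$ equals a nonpositive quantity, which forces it to be finite --- this is the use of positivity alluded to in Remark~\ref{rem:smoothness_of_products}, the odd total weight $2n+1$ being exactly what supplies the favorable sign. The only technical care needed is that, since we work with a weak solution, convergence in $\overline{\R}+i\overline{\R}$ of both sides must be checked before invoking \eqref{eqn:SE6}; this is immediate because the potentially divergent part of each side has a definite sign and only finitely many terms (the small-$\lambda_{\ell_1}$ ones, finite by discreteness) escape the substitution.
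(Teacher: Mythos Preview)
Your proof is correct and follows essentially the same route as the paper: the same crossing quadruple (up to relabeling, your $(i,j,i',j')=(i^{+(n+1)},\overline{i^{+(n+1)}},i^{+n},\overline{i^{+n}})$ yields the identical equation as the paper's choice with the two sides swapped), the same substitution via Proposition~\ref{prop:C_+n_-n_l_size}, and the same sign/convergence argument. Your explicit interpolation to even exponents via AM--GM is a harmless variation on the paper's implicit ``$n$ arbitrarily large suffices'' remark.
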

	
	
	
	\begin{proof}
		Let $n \in \Z_{\geq 0}$. By \eqref{eqn:SE6}, with assumptions as in Remark~\ref{rem:convergence_for_poly_decay}, we will have
		\begin{align} \label{eqn:+n_+n+1_crossing}
			\sum_{\ell \in I} (-1)^{\ell_2} C_{i^{+n} i^{+(n+1)}}^{\ell} C_{\overline{i^{+n}} \, \overline{i^{+(n+1)}}}^{\overline{\ell}}
			= \sum_{\ell \in I} (-1)^{\ell_2} C_{i^{+n} \overline{i^{+n}}}^{\ell} C_{i^{+(n+1)} \, \overline{i^{+(n+1)}}}^{\overline{\ell}}
		\end{align}
		as soon as we can show that both sides converge in $\overline{\R} + i\overline{\R}$.
		By \eqref{eqn:SE2}, the summand on the left hand side of \eqref{eqn:+n_+n+1_crossing} is zero unless $\ell_2 = 2n+1$ (an odd number), in which case the summand is nonpositive by \eqref{eqn:SE3}.
		Therefore $\LHS\eqref{eqn:+n_+n+1_crossing}$ converges in $\overline{\R}$ to a nonpositive value.
		By \eqref{eqn:SE2}, the summand on the right is zero unless $\ell_2 = 0$.
		By \eqref{eqn:i2=0_implies_i1geq0} and \eqref{eqn:lambda_r_to_infty}, for all but finitely many $\ell \in I$ with $\ell_2 = 0$, we have $\lambda_{\ell_1} \gg_{r,n} 1$.
		For such $\ell$, Proposition~\ref{prop:C_+n_-n_l_size} tells us that
		\begin{align} \label{eqn:+n_+n+1_asymp}
			C_{i^{+n} \overline{i^{+n}}}^{\ell} C_{i^{+(n+1)} \, \overline{i^{+(n+1)}}}^{\overline{\ell}}
			\sim_{r,n} \lambda_{\ell_1}^{2n+1} (C_{ii}^{\ell})^2
		\end{align}
		(with positive implicit constant).
		Since $i_2 = \ell_2 = 0$, it follows from \eqref{eqn:SE3} that $C_{ii}^{\ell} \in \R$.
		Therefore $\RHS\eqref{eqn:+n_+n+1_asymp} \geq 0$, and hence $\LHS\eqref{eqn:+n_+n+1_asymp} \geq 0$.
		Thus all but finitely many terms in $\RHS\eqref{eqn:+n_+n+1_crossing}$ are nonnegative, and consequently $\RHS\eqref{eqn:+n_+n+1_crossing}$ converges in $\overline{\R}+i\overline{\R}$ to a value other than $-\infty$.
		We now know that the left and right hand sides of \eqref{eqn:+n_+n+1_crossing} both converge in $\overline{\R}+i\overline{\R}$, so we can apply \eqref{eqn:SE6} to get that they have the same value in $\overline{\R}+i\overline{\R}$. We further know that their common value must be in the interval $(-\infty,0]$; in particular, it must be finite.
		Since all but finitely many terms on each side of \eqref{eqn:+n_+n+1_crossing} have the same sign, we conclude that both sides converge absolutely.
		Absolute convergence on the right hand side means
		\begin{align} \label{eqn:RHS_+n_+n+1_crossing_abs}
			\sum_{\substack{\ell \in I \\ \ell_2 = 0}} |C_{i^{+n} \overline{i^{+n}}}^{\ell} C_{i^{+(n+1)} \, \overline{i^{+(n+1)}}}^{\overline{\ell}}|
			< \infty
		\end{align}
		(we have included the condition $\ell_2 = 0$ in the sum, but this makes no difference because the summand vanishes when $\ell_2 \neq 0$).
		Plugging \eqref{eqn:+n_+n+1_asymp} into $\eqref{eqn:RHS_+n_+n+1_crossing_abs}$ for all but finitely many $\ell$, we obtain
		\begin{align*}
			\sum_{\substack{\ell \in I \\ \ell_2 = 0}} |\lambda_{\ell_1}|^{2n+1} |C_{ii}^{\ell}|^2
			< \infty
		\end{align*}
		(the absolute values are redundant).
		We can remove the condition $\ell_2 = 0$ because again, by \eqref{eqn:SE2}, the summand vanishes when $\ell_2 \neq 0$.
		Since $n$ may be taken arbitrarily large, the first half of the proposition follows.
		The second half can be proved by a very similar argument.
	\end{proof}
	
	The next proposition is a slight generalization of Proposition~\ref{prop:weak_to_strong_primitive}.
	
	\begin{prop} \label{prop:weak_to_strong_K-inv}
		Let $m \in I$ and $N \geq 0$. Then
		\begin{align} \label{eqn:weak_to_strong_K-inv}
			\sum_{\ell \in I} |\lambda_{\ell_1}|^N |C_{m\overline{m}}^{\ell}|^2 < \infty.
		\end{align}
	\end{prop}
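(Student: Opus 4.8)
The plan is to reduce the general statement to Proposition~\ref{prop:weak_to_strong_primitive} by observing that every $m \in I$ is obtained by applying the raising operator finitely many times to one of the two ``base'' indices $(r,0)$ or $(-r,k_r)$ considered there, and that Proposition~\ref{prop:C_+n_-n_l_size} controls exactly how $C_{m\overline{m}}^{\ell}$ grows under this operation.

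First I would dispose of the trivial reductions. By \eqref{eqn:SE2} one has $C_{m\overline{m}}^{\ell} = 0$ unless $\ell_2 = (\overline{m})_2 + m_2 = 0$, so only indices $\ell$ with $\ell_2 = 0$ contribute to \eqref{eqn:weak_to_strong_K-inv}, and by \eqref{eqn:i2=0_implies_i1geq0} such $\ell$ have $\ell_1 \geq 0$; in particular $\ell = (0,0)$, together with any finite set of $\ell$ with $\ell_1 > 0$, contributes only a finite amount to the sum. If $m = (0,0)$, then \eqref{eqn:SE4} gives $C_{m\overline{m}}^{\ell} = \1_{\ell = (0,0)}$ and \eqref{eqn:weak_to_strong_K-inv} is immediate; by \eqref{eqn:I_hol_def} every other $m \in I$ has $m_1 \neq 0$. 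Finally, using commutativity \eqref{eqn:SE1} and then \eqref{eqn:SE3} (note $\overline{\ell} = \ell$ when $\ell_2 = 0$), one gets $|C_{m\overline{m}}^{\ell}| = |C_{\overline{m}\,m}^{\ell}| = |C_{\overline{m}\,\overline{\overline{m}}}^{\ell}|$, so replacing $m$ by $\overline{m}$ if necessary we may assume $m_2 \geq 0$ when $m_1 > 0$ and $m_2 \geq k_{-m_1}$ when $m_1 < 0$.

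Next, when $m_1 > 0$ I would set $i = (m_1,0)$ and $n = m_2 \in \Z_{\geq 0}$, so that $m = i^{+n}$, $\overline{m} = \overline{i^{+n}}$ and $i = \overline{i}$; when $m_1 < 0$ I would set $r = -m_1$, $i = (-r,k_r)$ and $n = m_2 - k_r \in \Z_{\geq 0}$, so again $m = i^{+n}$ and $\overline{m} = \overline{i^{+n}}$. In either case Proposition~\ref{prop:C_+n_-n_l_size} (its first half when $m_1 > 0$, its second half when $m_1 < 0$) provides, for all $\ell \in I$ with $\ell_1 > 0$ and $\lambda_{\ell_1} \gg_m 1$, a factorization $C_{m\overline{m}}^{\ell} = Q\, C_{i\overline{i}}^{\ell}$ with $Q \sim_m \lambda_{\ell_1}^{\,n}$, hence $|C_{m\overline{m}}^{\ell}|^2 \sim_m \lambda_{\ell_1}^{\,2n} |C_{i\overline{i}}^{\ell}|^2$. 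Summing over $\ell$ and absorbing the finitely many $\ell$ for which the asymptotic may fail into a constant,
\[
\sum_{\ell \in I} |\lambda_{\ell_1}|^N |C_{m\overline{m}}^{\ell}|^2 \;\lesssim_{m,N}\; 1 + \sum_{\ell \in I} |\lambda_{\ell_1}|^{N+2n} |C_{i\overline{i}}^{\ell}|^2,
\]
and the right-hand side is finite by Proposition~\ref{prop:weak_to_strong_primitive} (applied with $r = m_1$, resp. $r = -m_1$). This finishes the argument.

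I do not expect a genuine obstacle: the real content has already been packaged into Propositions~\ref{prop:C_+n_-n_l_size} and \ref{prop:weak_to_strong_primitive}, and what remains is bookkeeping. The one step needing mild care is the conjugation reduction to $m_2 \geq 0$ (resp. $m_2 \geq k_{-m_1}$), since it is precisely this that lets us express $m$ as a \emph{nonnegative} number of raising operators applied to a base index and thereby invoke Proposition~\ref{prop:C_+n_-n_l_size} directly.
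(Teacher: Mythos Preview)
Your proposal is correct and essentially identical to the paper's proof: reduce to $m_2 \geq 0$ via \eqref{eqn:SE1}, dispose of $m=(0,0)$ via \eqref{eqn:SE4}, write $m$ as a nonnegative number of raisings applied to a base index, then use Proposition~\ref{prop:C_+n_-n_l_size} to bound $|C_{m\overline{m}}^{\ell}|$ by $\lambda_{\ell_1}^{n}$ times the base quantity and conclude via Proposition~\ref{prop:weak_to_strong_primitive}. One minor remark: for the conjugation step, \eqref{eqn:SE1} alone already gives $C_{m\overline{m}}^{\ell} = C_{\overline{m}m}^{\ell} = C_{\overline{m}\,\overline{\overline{m}}}^{\ell}$, so the appeal to \eqref{eqn:SE3} is unnecessary (though harmless).
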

	
	\begin{proof}
		We may assume $m_2 \geq 0$ by replacing $m$ with $\overline{m}$ if necessary; by \eqref{eqn:SE1}, this will not change \eqref{eqn:weak_to_strong_K-inv}.
		The case $m = (0,0)$ is trivial by \eqref{eqn:SE4}, so assume furthermore that $m \neq (0,0)$.
		Then we see from the definition of $I$ that either $m = i^{+n}$ or $m = j^{+n}$ for some $n \in \Z_{\geq 0}$, with $i,j$ of the form in Proposition \ref{prop:weak_to_strong_primitive}. We claim that
		\begin{align} \label{eqn:C_mml_bd}
			|C_{m\overline{m}}^{\ell}|
			\lesssim_m |\lambda_{\ell_1}|^n |C_{ii}^{\ell}|
			\qquad \text{or} \qquad
			|C_{m\overline{m}}^{\ell}|
			\lesssim_m |\lambda_{\ell_1}|^n |C_{j\overline{j}}^{\ell}|
		\end{align}
		for all but finitely many $\ell \in I$. If $\ell_2 \neq 0$, then by \eqref{eqn:SE2}, we have $C_{m\overline{m}}^{\ell} = 0$, and \eqref{eqn:C_mml_bd} holds trivially. By \eqref{eqn:i2=0_implies_i1geq0} and \eqref{eqn:lambda_r_to_infty}, we have $\lambda_{\ell_1} \gg_m 1$ for all but finitely many $\ell \in I$ with $\ell_2 = 0$. For such $\ell$, Proposition \ref{prop:C_+n_-n_l_size} implies \eqref{eqn:C_mml_bd}. Thus the claim holds. Inserting \eqref{eqn:C_mml_bd} into \eqref{eqn:weak_to_strong_K-inv} for all but finitely many $\ell$, we obtain \eqref{eqn:weak_to_strong_K-inv} from Proposition \ref{prop:weak_to_strong_primitive}.
	\end{proof}
	
	We are finally ready to prove the main result of this subsection, Proposition~\ref{prop:poly_decay_C_ij^l}.
	
	\begin{proof}[Proof of Proposition~\ref{prop:poly_decay_C_ij^l}]
		We will prove \eqref{eqn:weak_to_strong} for nonnegative even integers $N$ by induction. We begin with the base case $N=0$. By \eqref{eqn:SE6}, with assumptions as in Remark~\ref{rem:convergence_for_poly_decay}, we will have
		\begin{align*}
			\sum_{\ell \in I} (-1)^{\ell_2} C_{ij}^{\ell} C_{\overline{i} \, \overline{j}}^{\overline{\ell}}
			= \sum_{\ell \in I} (-1)^{\ell_2} C_{i\overline{i}}^{\ell} C_{j \overline{j}}^{\overline{\ell}}
		\end{align*}
		as soon as we can show that both sides converge in $\overline{\R}+i\overline{\R}$.
		By \eqref{eqn:SE2} and \eqref{eqn:SE3}, the terms on the left hand side are either zero or have sign $(-1)^{i_2+j_2}$. In particular, they are either all nonnegative or all nonpositive, so the left hand side converges in $\overline{\R}$. By Cauchy--Schwarz and the $N=0$ case of Proposition \ref{prop:weak_to_strong_K-inv}, the right hand side converges absolutely in $\C$.
		Thus \eqref{eqn:SE6} applies and tells us in particular that the left hand side converges in $\overline{\R}$ to an element of $\overline{\R} \cap \C = \R$.
		Therefore the left hand side is finite, and since the terms on the left all have the same sign, the left hand side converges absolutely.
		By \eqref{eqn:SE3}, this absolute convergence is exactly the same as the $N=0$ case of \eqref{eqn:weak_to_strong}. Hence the base case holds.
		
		For the induction step, let $N \in \Z_{\geq 2}$, assume Proposition~\ref{prop:poly_decay_C_ij^l} holds for $N-2$, and let us prove the proposition for $N$. By Lemma \ref{lem:num_recursion},
		\begin{align*}
			|\lambda_{\ell_1} C_{ij}^{\ell}|
			\lesssim_{i,j} |C_{ij}^{\ell}| + |C_{i^+j^-}^{\ell}| + |C_{i^-j^+}^{\ell}|.
		\end{align*}
		Squaring this and inserting it in the left hand side of \eqref{eqn:weak_to_strong},
		\begin{align*}
			\sum_{\ell \in I} |\lambda_{\ell_1}|^N |C_{ij}^{\ell}|^2
			\lesssim_{i,j} \sum_{\ell \in I} |\lambda_{\ell_1}|^{N-2} (|C_{ij}^{\ell}|^2 + |C_{i^+j^-}^{\ell}|^2 + |C_{i^-j^+}^{\ell}|^2).
		\end{align*}
		The right hand side is finite by induction, so the proof is complete.
	\end{proof}
	
	\subsection{Building the multiplicative representation}
	
	Let $\H_{\R}$ be a real unitary representation of $G$ with Laplace spectrum $\{\lambda_r\}_{r \geq 0}$ and holomorphic spectrum $\{k_r\}_{r \geq 1}$.
	By \eqref{eqn:lambda_r_to_infty} and \eqref{eqn:k_r_to_infty}, $\H_{\R}$ has discrete bi-infinite spectrum, and the trivial representation appears exactly once in $\H_{\R}$.
	Fix a choice of unit vector $\mathbf{1} \in \H_{\R}^G$.
	Let $\H$ be the complexification of $\H_{\R}$.
	Let $\{\psi_i\}_{i \in I}$ be a $(\g,K)$-adapted basis of $\H$ with $\psi_{(0,0)} = \mathbf{1}$.
	By Proposition~\ref{prop:basis_spans_H^fin}, $\{\psi_i\}_{i \in I}$ is a basis for $\H^{\fin}$ as an abstract vector space.
	Define a bilinear multiplication map $\H^{\fin} \times \H^{\fin} \to \H^{\infty}$ by setting
	\begin{align*} 
		\psi_i \psi_j
		= \sum_{\ell \in I} C_{ij}^{\ell} \psi_{\ell}
	\end{align*}
	and extending by linearity. The right hand side is in $\H^{\infty}$ by Propositions~\ref{prop:smooth_iff_spectral_decay} and \ref{prop:poly_decay_C_ij^l}.
	
	\begin{prop} \label{prop:H_is_mult_rep}
		With the above multiplication, $\H$ is a multiplicative representation.
		Moreover, for all $\alpha \in \H^{\fin}$, the complex conjugate of $\alpha$ in the sense of Definition~\ref{def:mult_rep} coincides with the complex conjugate of $\alpha$ determined by viewing $\H$ as the complexification of $\H_{\R}$.
	\end{prop}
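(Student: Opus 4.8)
The plan is to verify the axioms of Definition~\ref{def:mult_rep} one at a time, reading each off from the hyperbolic bootstrap equations via Proposition~\ref{prop:(g,K)-adapted_key_properties}; I would define $\overline\alpha$ to be the conjugate of $\alpha$ coming from the real form $\H_\R$, so that the second assertion of the proposition is subsumed in the first (the conjugate in the sense of Definition~\ref{def:mult_rep} is the unique element of $\H^{\fin}$ satisfying \eqref{eqn:3_term_crossing}, and the verification of that axiom will be carried out for the real-form conjugate). Since each axiom is multilinear or conjugate-multilinear in the vectors that appear, and $\{\psi_i\}_{i\in I}$ spans $\H^{\fin}$, it is enough to check each axiom with the inputs taken to be basis vectors. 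Commutativity is \eqref{eqn:SE1}. Normalization holds since the basis is orthonormal. The unit $\mathbf{1}=\psi_{(0,0)}$ lies in $\H^{\fin}$, and $\mathbf{1}\alpha=\alpha$ is \eqref{eqn:SE4}. Ergodicity holds because $\H_\R$ contains the trivial representation exactly once, so $\H^G=\C\psi_{(0,0)}=\C\mathbf{1}$.

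For equivariance it suffices to prove the product rule \eqref{eqn:prod_rule} for $X\in\{H,E,\overline{E}\}$, since these span $\g$. For $H$, Proposition~\ref{prop:(g,K)-adapted_key_properties} and \eqref{eqn:SE2} show that $H$ acts as the scalar $i_2+j_2$ on every term of $\psi_i\psi_j=\sum_\ell C_{ij}^\ell\psi_\ell$, which matches $(H\psi_i)\psi_j+\psi_i(H\psi_j)$. For $E$, I would apply $E$ termwise to this series (legitimate because it converges in $\H^\infty$ by Propositions~\ref{prop:smooth_iff_spectral_decay} and \ref{prop:poly_decay_C_ij^l}), expand $E(\psi_i\psi_j)$, $(E\psi_i)\psi_j$ and $\psi_i(E\psi_j)$ using \eqref{eqn:E_psi_i_formula}, and compare coefficients of each $\psi_\ell$; after reindexing $\ell\mapsto\ell^-$ the resulting identity is exactly \eqref{eqn:SE5}, the boundary cases $i^+\notin I$, $\ell^-\notin I$, etc.\ being harmless since the corresponding vectors vanish and the corresponding square roots are $0$ by \eqref{eqn:sqrt_nonnegativity}. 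The product rule for $\overline{E}$ follows the same way from \eqref{eqn:SE5_inverted} (Lemma~\ref{lem:SE5_inverted}) via \eqref{eqn:Ebar_psi_i_formula}, or by conjugating the $E$-identity once conjugation is known to commute with multiplication.

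Existence of complex conjugates is the one axiom needing a genuine manipulation of the equations, and I expect it, together with keeping track of the signs $(-1)^{i_2}$, to be the main point of the proof. Taking $\overline{\psi_i}=(-1)^{i_2}\psi_{\overline i}$ as in \eqref{eqn:psi_i_bar_vs_bar_psi_i}, equation \eqref{eqn:3_term_crossing} evaluated on $\psi_i,\psi_j,\psi_\ell$ unwinds, using orthonormality and \eqref{eqn:SE3}, into the identity $C_{ij}^\ell=(-1)^{i_2}C_{i\,\overline\ell}^{\,\overline j}$. To establish this, I would specialize \eqref{eqn:SE6} to $i'=(0,0)$ and evaluate the two $\1$-valued factors using \eqref{eqn:SE4} and \eqref{eqn:SE1}; because each side then has at most one nonzero term, this is legitimate even under the weak-solution hypothesis of Remark~\ref{rem:convergence_for_poly_decay}. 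The outcome is $(-1)^{j_2}C_{ij}^\ell=C_{j\,\overline\ell}^{\,\overline i}$, and combining it with \eqref{eqn:SE1}--\eqref{eqn:SE3} gives the identity above. Being linear in $\alpha$ and $\beta$ and conjugate-linear in $\gamma$, equation \eqref{eqn:3_term_crossing} then holds for all of $\H^{\fin}$, which also proves the final sentence of the proposition. For crossing symmetry, first note that by the $N=0$ case of Proposition~\ref{prop:poly_decay_C_ij^l} (Remark~\ref{rem:weak_to_strong}) both sides of \eqref{eqn:SE6} converge absolutely; expanding $\langle\psi_i\psi_j,\overline{\psi_{i'}}\,\overline{\psi_{j'}}\rangle$ in the basis and collapsing signs via \eqref{eqn:SE2} and \eqref{eqn:SE3} identifies it with $\sum_\ell(-1)^{\ell_2}C_{ij}^\ell C_{i'j'}^{\overline\ell}$, and likewise $\langle\psi_i\psi_{i'},\overline{\psi_j}\,\overline{\psi_{j'}}\rangle=\sum_\ell(-1)^{\ell_2}C_{ii'}^\ell C_{jj'}^{\overline\ell}$. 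Hence \eqref{eqn:SE6} is \eqref{eqn:crossing} for the transposition $\sigma=(23)$ on basis vectors; together with commutativity (handling $(12)$ and $(34)$) and the fact that $(12),(23),(34)$ generate $S_4$, this yields \eqref{eqn:crossing} for all $\sigma$ and all $\alpha_1,\dots,\alpha_4\in\H^{\fin}$, completing the proof that $\H$ is a multiplicative representation.
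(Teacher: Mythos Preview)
Your proposal is correct and follows essentially the same route as the paper: verify each axiom on basis vectors, deriving commutativity, unit, normalization, ergodicity from \eqref{eqn:SE1}, \eqref{eqn:SE4}, and the setup; equivariance for $H,E,\overline E$ from \eqref{eqn:SE2}, \eqref{eqn:SE5}, and Lemma~\ref{lem:SE5_inverted}; the adjoint identity $C_{ij}^{\ell}=(-1)^{i_2}C_{i\overline\ell}^{\,\overline j}$ by specializing \eqref{eqn:SE6} with one index equal to $(0,0)$; and crossing symmetry from \eqref{eqn:SE6} via the computation \eqref{eqn:crossing_to_C's}. The only cosmetic difference is that the paper tests the product rule weakly against $\psi_\ell$ (moving $X$ across the inner product via \eqref{eqn:Lie_alg_unitarity}) rather than differentiating the series termwise, which sidesteps the reindexing bookkeeping you mention; either way is fine.
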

	
	Assuming this proposition, the multiplicative spectrum of $\H$ with respect to $\{\psi_i\}_{i \in I}$ is $\mathcal{S}$ by construction, and the ``if" direction in Proposition~\ref{prop:mult_rep_vs_HB} holds.
	
	To prove Proposition~\ref{prop:H_is_mult_rep}, we need to verify the axioms in Definition~\ref{def:mult_rep}: commutativity, existence of a unit, normalization, ergodicity, equivariance, existence of complex conjugates (compatibly with $\H = \H_{\R} \otimes_{\R} \C$), and crossing symmetry.
	
	Recall from Proposition~\ref{prop:(g,K)-adapted_key_properties} that $\psi_i$ has Casimir eigenvalue $\lambda_{i_1}$ and weight $i_2$. We shall use this without comment from now on.
	
	\begin{proof}[Proof of commutativity]
		By \eqref{eqn:SE1}, we have $\psi_i \psi_j = \psi_j \psi_i$ for all $i,j \in I$. Commutativity follows by linearity.
	\end{proof}
	
	\begin{proof}[Proof of existence of a unit]
		We claim that $\mathbf{1}$ is a unit. To show this, it suffices by linearity to check that $\mathbf{1} \psi_j = \psi_j$ for all $j \in I$. Denote $i=(0,0)$. Then by \eqref{eqn:SE4},
		\begin{align*}
			&\mathbf{1} \psi_j
			= \psi_i \psi_j
			= \sum_{\ell \in I}  C_{ij}^{\ell} \psi_{\ell}
			= \psi_j.
			\qedhere
		\end{align*}
	\end{proof}
	
	\begin{proof}[Proof of normalization]
		Above, we chose $\mathbf{1}$ to be a unit vector.
	\end{proof}
	
	\begin{proof}[Proof of ergodicity]
		We already saw that the trivial representation appears exactly once in $\H_{\R}$. By definition, $\mathbf{1} \in \H_{\R}^G$ is a unit vector. It follows that $\H^G = \mathbf{\C} \mathbf{1}$.
	\end{proof}
	
	\begin{proof}[Proof of equivariance]
		By linearity, it suffices to prove the product rule
		\begin{align*}
			X(\psi_i\psi_j)
			= (X\psi_i) \psi_j + \psi_i(X\psi_j)
		\end{align*}
		for all $i,j \in I$ and $X \in \{H,E,\overline{E}\}$.
		Testing this against $\psi_{\ell}$, it in fact suffices to prove
		\begin{align} \label{eqn:weak_prod_rule}
			-\langle \psi_i\psi_j, \overline{X}\psi_{\ell} \rangle_{\H}
			= \langle (X\psi_i) \psi_j, \psi_{\ell} \rangle_{\H}
			+ \langle \psi_i (X\psi_j), \psi_{\ell} \rangle_{\H}
		\end{align}
		for all $i,j,\ell \in I$ and $X \in \{H,E,\overline{E}\}$.
		
		First suppose $X = H$. Then since $-\overline{H} = H$,
		\begin{align*}
			\LHS\eqref{eqn:weak_prod_rule}
			= \ell_2 C_{ij}^{\ell}
			\qquad \text{and} \qquad
			\RHS\eqref{eqn:weak_prod_rule}
			= (i_2+j_2) C_{ij}^{\ell}.
		\end{align*}
		By \eqref{eqn:SE2}, both sides are zero unless $i_2+j_2 = \ell_2$, in which case both sides are equal.
		
		Next suppose $X = E$. Then by \eqref{eqn:E_psi_i_formula} and \eqref{eqn:Ebar_psi_i_formula},
		\begin{align*}
			\LHS\eqref{eqn:weak_prod_rule}
			= \sqrt{\lambda_{\ell_1} + \ell_2(\ell_2-1)} \, C_{ij}^{\ell^-}
		\end{align*}
		and
		\begin{align*}
			\RHS\eqref{eqn:weak_prod_rule}
			= \sqrt{\lambda_{i_1} + i_2(i_2+1)} \, C_{i^+j}^{\ell}
			+ \sqrt{\lambda_{j_1} + j_2(j_2+1)} \, C_{ij^+}^{\ell}.
		\end{align*}
		These are equal by \eqref{eqn:SE5}.
		
		Finally, suppose $X = \overline{E}$.
		Then again by \eqref{eqn:E_psi_i_formula} and \eqref{eqn:Ebar_psi_i_formula},
		\begin{align*}
			\LHS\eqref{eqn:weak_prod_rule}
			= -\sqrt{\lambda_{\ell_1} + \ell_2(\ell_2+1)} \, C_{ij}^{\ell^+}
		\end{align*}
		and
		\begin{align*}
			\RHS\eqref{eqn:weak_prod_rule}
			= -\sqrt{\lambda_{i_1} + i_2(i_2-1)} \, C_{i^-j}^{\ell}
			- \sqrt{\lambda_{j_1} + j_2(j_2-1)} \, C_{ij^-}^{\ell}.
		\end{align*}
		These are equal by Lemma~\ref{lem:SE5_inverted}.
	\end{proof}
	
	\begin{proof}[Proof of existence of complex conjugates (compatibly with $\H = \H_{\R} \otimes_{\R} \C$)]
		For $\alpha \in \H^{\fin}$, let $\overline{\alpha}$ be the complex conjugate of $\alpha$ obtained by viewing $\H$ as the complexification of $\H_{\R}$. We must show that $\overline{\alpha}$ is the complex conjugate of $\alpha$ in the sense of Definition~\ref{def:mult_rep}.
		By linearity, it suffices to prove this for $\alpha = \psi_i$ for $i \in I$. By linearity again, this is equivalent to
		\begin{align} \label{eqn:psi_i_adjoint}
			\langle \psi_i\psi_j, \psi_{\ell} \rangle_{\H}
			= \langle \psi_j, \overline{\psi_i} \psi_{\ell} \rangle_{\H}
		\end{align}
		for all $j,\ell \in I$.
		In summary, we are reduced to proving \eqref{eqn:psi_i_adjoint} for all $i,j,\ell \in I$.
		We have
		\begin{align*}
			\LHS\eqref{eqn:psi_i_adjoint}
			= C_{ij}^{\ell}
			\quad \text{and} \quad
			\RHS\eqref{eqn:psi_i_adjoint}
			= \overline{\langle \overline{\psi_i}\psi_{\ell}, \psi_j \rangle_{\H}}
			= (-1)^{i_2} \overline{\langle \psi_{\overline{i}} \psi_{\ell}, \psi_j \rangle_{\H}}
			= (-1)^{i_2} \overline{C_{\overline{i} \ell}^j}
			= (-1)^{i_2} C_{i\overline{\ell}}^{\overline{j}},
		\end{align*}
		where the last equality is by \eqref{eqn:SE3}.
		Thus it remains to show that
		\begin{align} \label{eqn:ijl_permutable_with_bars}
			C_{ij}^{\ell} = (-1)^{i_2} C_{i\overline{\ell}}^{\overline{j}}
		\end{align}
		for all $i,j,\ell \in I$.
		Define $m = (0,0)$.
		Then by \eqref{eqn:SE1} and \eqref{eqn:SE4}, followed by \eqref{eqn:SE6}, followed by \eqref{eqn:SE1} and \eqref{eqn:SE4} again,
		\begin{align} \label{eqn:permutability_pf_by_SE6}
			(-1)^{\ell_2} C_{ij}^{\ell}
			= \sum_{n \in I} (-1)^{n_2} C_{ij}^n C_{\overline{\ell}m}^{\overline{n}}
			= \sum_{n \in I} (-1)^{n_2} C_{i\overline{\ell}}^n C_{jm}^{\overline{n}}
			= (-1)^{j_2} C_{i\overline{\ell}}^{\overline{j}}.
		\end{align}
		By \eqref{eqn:SE2}, both sides of \eqref{eqn:ijl_permutable_with_bars} vanish unless $i_2+j_2 = \ell_2$, so we may assume $i_2+j_2=\ell_2$.
		Then \eqref{eqn:permutability_pf_by_SE6} implies \eqref{eqn:ijl_permutable_with_bars}.
	\end{proof}
	
	\begin{proof}[Proof of crossing symmetry]
		As explained in Remark~\ref{rem:on_mult_rep_def}, it suffices to check the crossing equation \eqref{eqn:crossing} for the permutation $\sigma = (23)$.
		Therefore, by linearity, it suffices to show that
		\begin{align} \label{eqn:psi_crossing}
			\langle \psi_i\psi_j, \overline{\psi_{i'}} \, \overline{\psi_{j'}} \rangle_{\H}
			= \langle \psi_i \psi_{i'}, \overline{\psi_j} \, \overline{\psi_{j'}} \rangle_{\H}
		\end{align}
		for all $i,j,i',j' \in I$.
		By the same computation as in the proof of \eqref{eqn:SE6} in Section~\ref{sec:spectral_eqns_pf}, namely \eqref{eqn:crossing_to_C's},
		\begin{align*}
			\langle \psi_i \psi_j, \overline{\psi_{i'}} \, \overline{\psi_{j'}} \rangle_{\H}
			= \sum_{\ell \in I} (-1)^{\ell_2} C_{ij}^{\ell} C_{i'j'}^{\overline{\ell}}.
		\end{align*}
		By \eqref{eqn:SE6}, the right hand side is preserved by switching $i'$ and $j$, so the left hand side must be preserved by the same switch.
		This means that \eqref{eqn:psi_crossing} holds.
	\end{proof}
	
	The proof of Proposition~\ref{prop:H_is_mult_rep} is complete. Between Section~\ref{sec:spectral_eqns_pf} and this section, we have now proved both the ``if" and ``only if" directions in Proposition~\ref{prop:mult_rep_vs_HB}.
	
	\section{Applications of the hyperbolic bootstrap equations} \label{sec:applications}
	
	There have been two strands of applications of the conformal bootstrap in the context of hyperbolic surface spectra. The first, in the low energy regime, is the work \cite{Bonifacio_22_2,Kravchuk--Mazac--Pal,Gesteau--Pal--Simmons-Duffin--Xu} on bounds for small eigenvalues. The second, in the high energy regime, is the work \cite{Bernstein--Reznikov_10,Adve_et_al} on subconvexity for triple product $L$-functions.
	To show that the hyperbolic bootstrap equations capture the information used in the conformal bootstrap,
	in this section we illustrate how the proofs in \cite{Kravchuk--Mazac--Pal} and \cite{Adve_et_al} can be phrased in terms of these equations.
	The proofs in \cite{Bonifacio_22_2,Gesteau--Pal--Simmons-Duffin--Xu} and \cite{Bernstein--Reznikov_10} can be phrased similarly (though \cite{Gesteau--Pal--Simmons-Duffin--Xu} works with $\SL_2(\R)$ rather than $\PSL_2(\R)$ and thus needs a slight generalization of the hyperbolic bootstrap equations).
	
	\subsection{Bounds on $\lambda_1$} \label{subsec:applications:lambda_1}
	
	The \emph{bass note spectrum} of compact hyperbolic 2-orbifolds, defined by Sarnak in \cite[Lecture~1]{Sarnak_Chern_lectures}, is the set of positive real numbers
	\begin{align*} 
		\{\lambda_1(\Gamma \backslash \mathbf{H}) : \Gamma \text{ is a cocompact lattice in } G = \PSL_2(\R)\},
	\end{align*}
	i.e., the set of numbers which arise as the first Laplace eigenvalue of a compact hyperbolic 2-orbifold.
	In \cite[Figure~2 and Conjecture~4.2]{Kravchuk--Mazac--Pal}, Kravchuk, Maz\'a\v{c}, and Pal give a conjectural description of the bass note spectrum, and in \cite[Theorem~4.3]{Kravchuk--Mazac--Pal} they prove almost all of the conjecture.
	The following is a summary of their conjecture.
	
	\begin{conj} \label{conj:bass}
		The bass note spectrum of compact hyperbolic 2-orbifolds is
		\begin{align} \label{eqn:bass_note_spectrum_conj}
			(0,15.7902...] \cup \{23.0785...\} \cup \{28.0798...\} \cup \{44.8883...\},
		\end{align}
		where these four numbers are the first eigenvalues of specific triangle orbifolds given in \cite[Conjecture~4.2]{Kravchuk--Mazac--Pal}.
	\end{conj}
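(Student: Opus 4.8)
The plan is to recast Conjecture~\ref{conj:bass} as a question about feasibility of the hyperbolic bootstrap equations, using Theorem~\ref{thm:main}, and then to prove the two inclusions defining the set \eqref{eqn:bass_note_spectrum_conj} separately. By Theorem~\ref{thm:main} together with the fact that the holomorphic spectrum determines the topological type, a real number $\lambda>0$ lies in the bass note spectrum if and only if there is a holomorphic spectrum $\{k_r\}_{r\ge 1}$ of some compact hyperbolic $2$-orbifold --- equivalently a topological type $[g;m_1,\dots,m_s]$ with $2g-2+\sum_i(1-1/m_i)>0$ --- and a tuple $\{C_{ij}^{\ell}\}$ making $(\{\lambda_r\}_{r\ge 0},\{k_r\}_{r\ge 1},\{C_{ij}^{\ell}\})$ a solution of the hyperbolic bootstrap equations \eqref{eqn:SE1}--\eqref{eqn:SE6} with $\lambda_1=\lambda$. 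Thus the bass note spectrum is the union, over all admissible holomorphic spectra, of the set of $\lambda_1$ for which the associated QCQP is feasible, and everything reduces to understanding this feasibility region.

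\emph{Completeness} (no $\lambda_1$ outside \eqref{eqn:bass_note_spectrum_conj} occurs). For each admissible holomorphic spectrum, pass to the linear/semidefinite relaxation of \eqref{eqn:SE1}--\eqref{eqn:SE6} --- eliminating variables using the linear equations \eqref{eqn:SE1}--\eqref{eqn:SE5} and applying semidefinite programming to the quadratic relations \eqref{eqn:SE6}, as in \cite{Kravchuk--Mazac--Pal} --- to obtain a rigorous upper bound $\Lambda[g;m_1,\dots,m_s]$ for $\lambda_1$. Two ingredients are then required. First, a uniform tail estimate: enlarging the holomorphic spectrum (more weights $k_r$, or larger ones, which by Gauss--Bonnet means larger area) only adds constraints, so $\Lambda$ is monotone, and one must make this quantitative enough that $\Lambda[g;m_1,\dots,m_s]\le 15.7902\dots$ for every topological type of area exceeding some explicit $A_0$; since there are only finitely many hyperbolic $2$-orbifolds of area $\le A_0$, this reduces the problem to a finite list (small-area triangle orbifolds plus a handful of others). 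Second, for each type on that list one runs the relaxation and checks that the resulting bound either lies in \eqref{eqn:bass_note_spectrum_conj} or falls below $15.79$; for the three candidate isolated values one must in addition supply a matching lower bound --- a rigorous eigenvalue bracket as in \cite{Strohmaier--Uski}, or the near-extremal rigidity underlying Corollary~\ref{cor:unique_bootstrap_soln} --- to confine $\lambda_1$ to a small interval and hence establish that the three gaps in \eqref{eqn:bass_note_spectrum_conj} are genuine.

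\emph{Membership} (every $\lambda_1$ in \eqref{eqn:bass_note_spectrum_conj} occurs). The three isolated points are by definition the first Laplace eigenvalues of the three named triangle orbifolds, computed to the stated precision. For the interval $(0,15.7902\dots]$, choose a connected family of compact hyperbolic $2$-orbifolds along which $\lambda_1$ varies continuously --- for instance Teichm\"uller deformations of a fixed genus, or controlled degenerations of small-area orbifolds --- arranged so that $\lambda_1$ takes arbitrarily small positive values along the family and attains the claimed right endpoint; the intermediate value theorem then covers the whole half-open interval.

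\emph{Main obstacle.} The heart of the difficulty --- and the reason \cite{Kravchuk--Mazac--Pal} obtain only ``almost all'' of the conjecture --- is twofold. One must turn the \emph{a priori} infinite family of topological types into a finite one by a genuinely effective, monotone, explicit control of $\Lambda[g;m_1,\dots,m_s]$ in terms of area; and one must then pin down the \emph{exact} boundary data, namely the right endpoint $15.7902\dots$ of the interval and the three isolated values. These last numbers are presently known only numerically, and certifying them rigorously --- together with the strictness of the three gaps --- is precisely the portion of Conjecture~\ref{conj:bass} that remains open.
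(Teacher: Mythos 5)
The statement you were asked to prove is labelled a \emph{Conjecture} in the paper, not a theorem, and the paper offers no proof of it --- indeed the surrounding text explicitly attributes the statement to \cite{Kravchuk--Mazac--Pal} and notes that they prove only ``almost all'' of it. There is therefore no paper proof to compare yours against. Your writeup recognizes this: it is not a proof but a high-level strategy sketch, and you candidly identify the genuinely open parts at the end.

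As a strategy sketch your outline is sound and matches the approach actually used in \cite{Kravchuk--Mazac--Pal}: recast via Theorem~\ref{thm:main} as a feasibility question for the hyperbolic bootstrap equations (this is exactly the content of Remark~\ref{rem:application_to_Q1}), get upper bounds on $\lambda_1$ by linear/SDP relaxation for each topological type, reduce to a finite list of types by showing that large area forces $\lambda_1$ below the interval endpoint, handle the finite list computationally, and fill the interval $(0,15.7902\ldots]$ by a deformation/intermediate-value argument. Two small caveats worth noting. First, Theorem~\ref{thm:main} is not actually needed for this conjecture --- \cite{Kravchuk--Mazac--Pal} attack the problem directly via the crossing equations without a converse theorem, since for the bounds one only needs that genuine orbifolds solve the equations (the ``only if'' direction, which is elementary), not the converse. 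Second, your ``completeness'' paragraph understates the difficulty of the endpoint: the right endpoint $15.7902\ldots$ must be \emph{attained} by some orbifold (the $[0;2,2,2,3]$ orbifold, conjecturally) for the interval to be closed on the right, and certifying that the SDP bound converges exactly to this eigenvalue is the hard saturation question discussed in Remark~\ref{rem:saturation} --- this is not merely a matter of checking a finite list. But since the statement is an open conjecture, none of this constitutes an error on your part; you have correctly described what is known and what remains open.
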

	
	Qualitatively, \eqref{eqn:bass_note_spectrum_conj} consists of an interval together with three singleton outliers.
	None of the triangle orbifolds which conjecturally give rise to the three outliers carry holomorphic modular forms of weight 4.
	Therefore, a sample consequence of Conjecture~\ref{conj:bass} is that if $\Gamma \backslash \mathbf{H}$ carries a holomorphic form of weight 4, then $\lambda_1(\Gamma \backslash \mathbf{H})$ lies in the interval.
	Kravchuk, Maz\'a\v{c}, and Pal almost prove this: they show that $\lambda_1(\Gamma \backslash \mathbf{H}) \leq 15.79144$ (see \cite[Table~2]{Kravchuk--Mazac--Pal}).
	To prove this bound they used linear programming on a computer. An easier estimate, which can be obtained by hand with the same technique, is the theorem below.
	The case $k=2$ of the theorem gives $\lambda_1(\Gamma \backslash \mathbf{H}) \leq 16$ for $\Gamma \backslash \mathbf{H}$ as above.
	
	\begin{thm} \label{thm:KMP_basic_bd}
		Let $\lambda_1$ be the first eigenvalue of a compact hyperbolic 2-orbifold admitting a nonzero holomorphic modular form of weight $2k$.
		Then
		\begin{align} \label{eqn:KMP_basic_bd}
			\lambda_1
			\leq \frac{\sqrt{33k^2+18k+1} + 9k+1}{2}.
		\end{align}
	\end{thm}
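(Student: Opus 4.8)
The plan is to carry out the linear-programming argument of \cite{Kravchuk--Mazac--Pal} inside the formalism of the hyperbolic bootstrap equations. First, translate the hypothesis. By Proposition~\ref{prop:lowest_wt_wt} and Remark~\ref{rem:wts_PSL_2_vs_SL_2}, a nonzero holomorphic modular form of weight $2k$ for $\Gamma$ corresponds to a lowest weight vector of weight $k$ in $L^2(\Gamma\backslash G)$; in a $(\g,K)$-adapted basis this is $\psi_h$ with $h=(-r_0,k)\in I$, where $k_{r_0}=k$, so $\psi_h$ has Casimir eigenvalue $\lambda_{-r_0}=-k(k-1)$. By Proposition~\ref{prop:mult_rep_vs_HB} the multiplicative spectrum of $L^2(\Gamma\backslash G)$ with respect to this basis satisfies \eqref{eqn:SE1}--\eqref{eqn:SE6}, so from here on everything is a manipulation of those equations, with $\{\lambda_r\}_{r\ge 0}$ the Laplace spectrum and $\lambda_1$ the quantity to be bounded.

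Second, set up the program. The $K$-invariant vector $\psi_h\overline{\psi_h}$ (the automorphic avatar of $|f|^2$) expands in Maass forms with coefficients $x_{\ell_1}:=(C_{h\bar h}^{(\ell_1,0)})^2\ge 0$ attached to Casimir eigenvalues $\lambda_{\ell_1}\ge\lambda_1$ for $\ell_1\ge 1$, while by Lemma~\ref{lem:SE4_l=0} the identity contributes $x_0=(C_{h\bar h}^{(0,0)})^2=1$. Because $\lambda_{h_1}+k(k-1)=0$, the recursion of Corollary~\ref{cor:num_recursion_specialized} collapses, and Corollary~\ref{cor:C_+n_-n_formula_explicit} gives the \emph{Casimir recursion} $C_{h^{+n}\overline{h^{+n}}}^{(\ell_1,0)}=B_{k,n}(\lambda_{\ell_1})\,C_{h\bar h}^{(\ell_1,0)}$, where $B_{k,0}\equiv 1$, $B_{k,1}(\lambda)=\tfrac{\lambda-2k}{2k}$, $B_{k,2}(\lambda)=\tfrac{\lambda^2-(8k+2)\lambda+8k^2+4k}{8k^2+4k}$, and (from the recurrence) $B_{k,n}(0)=(-1)^n$. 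These $B_{k,n}$ play the role of conformal blocks, resolved in terms of raising-operator descendants.

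Third, extract the constraints. Feeding \eqref{eqn:SE6} the four-point functions $\langle\psi_{h^{+a}}\overline{\psi_{h^{+a}}}\,\psi_{h^{+b}}\overline{\psi_{h^{+b}}}\rangle$ for small $a,b$ equates the $K$-invariant channel $\sum_{\ell_1\ge 0}B_{k,a}(\lambda_{\ell_1})B_{k,b}(\lambda_{\ell_1})x_{\ell_1}$ with a weight-$(2k+a+b)$ channel equal to $\pm\|\psi_{h^{+a}}\psi_{h^{+b}}\|^2$; since $\psi_{h^{+a}}\psi_{h^{+b}}$ is annihilated by a power of $\overline{E}$, this crossed channel contains only descendants of holomorphic forms of weight $4k,4k+2,\dots$ and no principal series, so the product rule \eqref{eqn:SE5} identifies it in terms of $\|\psi_h^2\|^2=1+\sum_{\ell_1\ge1}x_{\ell_1}$ together with finitely many explicit nonnegative numbers coming from genuinely new holomorphic forms. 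The upshot is the exact sum rule $\sum_{\ell_1\ge 1}(\lambda_{\ell_1}-k)\,x_{\ell_1}=k$ together with a handful of further linear inequalities in the $x_{\ell_1}$, with coefficients the values $B_{k,n}(\lambda_{\ell_1})$ and with crossed-channel positivity entering with a definite sign. One then runs the dual linear program: seek a combination $\omega(\lambda)=\sum_n c_n B_{k,n}(\lambda)$ (using $n=0,1,2$, i.e.\ two ``derivatives'' of crossing), with nonnegative multipliers on the inequality constraints, such that $\omega$ has a single sign on $[\Lambda,\infty)$ opposite to its sign at $\lambda=0$; a configuration with $\lambda_1\ge\Lambda$ then contradicts the sum rule. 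Optimizing the multipliers, the extremal functional is the quadratic whose roots solve $\Lambda^2-(9k+1)\Lambda+12k^2=0$, and its larger root $\Lambda=\tfrac12\big(\sqrt{33k^2+18k+1}+9k+1\big)$ is exactly the bound \eqref{eqn:KMP_basic_bd}; the case $k=2$ gives $\lambda_1\le 16$.

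The main obstacle is this last step: producing the correct few-term combination and checking its sign properties on both channels. The point requiring care is the crossed channel: treating $\langle\psi_h\overline{\psi_h}\,\psi_{h^{+2}}\overline{\psi_{h^{+2}}}\rangle$ and its relatives forces one to account for the new nonnegative contributions in the $\psi_h\times\psi_{h^{+2}}$ OPE (the hyperbolic shadow of a mixed-correlator term), without which the single sum rule $\sum_{\ell_1\ge1}(\lambda_{\ell_1}-k)x_{\ell_1}=k$ alone only forces some Maass eigenvalue to exceed $k$ and is too weak to bound $\lambda_1$ from above. Everything else — the translation, the block recursion, and the sum-rule bookkeeping — is routine once the hyperbolic bootstrap equations are in hand.
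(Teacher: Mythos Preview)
Your outline is on the right track and matches the paper's strategy in its broad strokes: translate to the hyperbolic bootstrap equations, use the recursion $C_{h^{+n}\overline{h^{+n}}}^{(r,0)}=B_{k,n}(\lambda_r)C_{h\bar h}^{(r,0)}$ (Corollary~\ref{cor:C_+n_-n_formula_explicit}), and take a small linear combination of crossing equations for descendants of $\psi_h$. Your sum rule $\sum_{r\ge 1}(\lambda_r-k)x_r=k$ is exactly what the paper extracts from the $N=0$ case (Lemma~\ref{lem:cancel_discrete_explicit}), and you have correctly identified the target polynomial $\lambda^2-(9k+1)\lambda+12k^2$.

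However, there is a genuine gap, which you yourself flag: you have not produced the functional, and your proposed mechanism for doing so is not quite what works. You describe the crossed channel as supplying \emph{inequalities} to be fed into a dual linear program. The paper's key observation (Lemma~\ref{lem:cancel_discrete_contribution}) is sharper: by taking a linear combination of the two families of crossing equations \eqref{eqn:s-t_crossing_=} and \eqref{eqn:s-t_crossing_+1} with carefully chosen coefficients $b_0,\dots,b_N$, one makes the crossed (discrete-series) channel vanish \emph{identically}, not just contribute with a sign. This turns the problem into an exact identity in the $x_r$, namely Proposition~\ref{prop:KMP_(2.47)}: $\sum_{r\ge 1}\lambda_r(\lambda_r^2-(9k+1)\lambda_r+12k^2)\,x_r=0$. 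Note the functional is \emph{cubic}, $\lambda(\lambda^2-(9k+1)\lambda+12k^2)$, not quadratic: the extra factor of $\lambda$ is what kills the identity contribution at $r=0$ (since $\lambda_0=0$), and this is arranged by the specific choice $a_0=-1$, $a_1=1$ in Lemma~\ref{lem:cancel_discrete_explicit}. The proof then concludes by combining this equality with Lemma~\ref{lem:nonvanishing_C_exists} (some $x_r\ne 0$, which follows from the $N=0$ sum rule you already have). Your inequality-based framing would not obviously yield the exact cubic, and the ``mixed-correlator term'' you worry about is precisely what the $b_n$'s are engineered to eliminate rather than bound.
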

	
	The inequality \eqref{eqn:KMP_basic_bd} is \cite[(2.48)]{Kravchuk--Mazac--Pal}.
	Although it is far from the sharpest estimate in \cite{Kravchuk--Mazac--Pal}, it is still remarkably accurate given that it has such a simple form.
	The quality of \eqref{eqn:KMP_basic_bd} is illustrated in the table below, which displays the largest known $\lambda_1$ of a compact hyperbolic 2-orbifold with a holomorphic form of weight $2k$.
	The last two columns in the table come from \cite[Table~2]{Kravchuk--Mazac--Pal}.
	\begin{center}
		\begin{tabular}{|c|c|c|c|}
			\hline
			$k$ & $\RHS\eqref{eqn:KMP_basic_bd}$ & Largest known $\lambda_1$ & Topological type of maximizer \\
			\hline
			1 & 8.6055... & 8.4677... & [1;2] \\
			2 & 16 & 15.7902... & [0;2,2,2,3] \\
			3 & 23.3808... & 23.0785... & [0;3,3,4] \\
			4 & 30.7577... & 28.0798... & [0;2,4,5] \\
			6 & 45.5069... & 44.8883... & [0;2,3,7] \\
			\hline
		\end{tabular}
	\end{center}
	We note, as explained below Table~2 in \cite{Kravchuk--Mazac--Pal}, that every 2-orbifold admits a holomorphic form of weight $2k$ for some $k \in \{1,2,3,4,6\}$.
	
	
	We now outline the proof of Theorem~\ref{thm:KMP_basic_bd} in the language of this paper.
	This is not a direct translation from \cite{Kravchuk--Mazac--Pal}, but the general mechanism is the same.
	Let $\Gamma \backslash \mathbf{H}$ be a compact hyperbolic 2-orbifold as in Theorem~\ref{thm:KMP_basic_bd}, and let
	\begin{align*}
		\mathcal{S}
		= (\{\lambda_r\}_{r \geq 0}, \{k_r\}_{r \geq 1}, \{C_{ij}^{\ell}\}_{i,j,\ell \in I})
	\end{align*}
	be the multiplicative spectrum of $\Gamma \backslash \mathbf{H}$ with respect to some choice of $(\g,K)$-adapted basis.
	The only information needed to prove Theorem~\ref{thm:KMP_basic_bd} is that $\mathcal{S}$ solves the hyperbolic bootstrap equations.
	Since $\RHS\eqref{eqn:KMP_basic_bd}$ increases with $k$, we may assume $k = k_1$.
	Then in the remainder of this subsection, let $i = (-1,k) \in I$.
	Following \cite{Kravchuk--Mazac--Pal}, we will deduce Theorem~\ref{thm:KMP_basic_bd} from the two results below.
	
	\begin{prop} \label{prop:KMP_(2.47)}
		One has
		\begin{align} \label{eqn:KMP_(2.47)}
			\sum_{r=1}^{\infty} \lambda_r(\lambda_r^2 - (9k+1)\lambda_r + 12k^2) |C_{i\overline{i}}^{(r,0)}|^2
			= 0.
		\end{align}
	\end{prop}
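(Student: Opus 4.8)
The plan is to derive \eqref{eqn:KMP_(2.47)} from a handful of crossing equations \eqref{eqn:SE6}, converting each into a moment identity for $\{\lambda_r\}$ by means of the recursion relations of Section~\ref{sec:classification_implies_main_thm} (equivalently the polynomials $B_{k,n}$ of Corollary~\ref{cor:C_+n_-n_formula_explicit}) and the product rule \eqref{eqn:SE5}. Work inside the multiplicative representation realizing $\mathcal{S}$ --- concretely $\H=L^2(\Gamma\backslash G)$ with the given $(\g,K)$-adapted basis $\{\psi_i\}_{i\in I}$ --- so that $\psi_i$, for $i=(-1,k)$, is a lowest weight vector of weight $k$; every manipulation below uses only \eqref{eqn:SE1}--\eqref{eqn:SE6}. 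Put $S_n=\sum_{r\ge 0}\lambda_r^n|C_{i\overline i}^{(r,0)}|^2$, finite by Proposition~\ref{prop:poly_decay_C_ij^l}; since $\lambda_0=0$, for $n\ge 1$ this agrees with $\sum_{r\ge 1}$, and the assertion \eqref{eqn:KMP_(2.47)} is exactly $S_3-(9k+1)S_2+12k^2S_1=0$. Because $\overline E\psi_i=0$, the product rule makes $\psi_i^2$ a lowest weight vector of weight $2k$; hence by \eqref{eqn:SE2} and \eqref{eqn:Ebar_psi_i_formula} it is supported on basis vectors of Casimir eigenvalue $-2k(2k-1)$, and iterating \eqref{eqn:E_psi_i_formula} gives $\|E^m(\psi_i^2)\|_\H^2=c_{k,m}\|\psi_i^2\|_\H^2$ with $c_{k,1}=4k$, $c_{k,2}=8k(4k+1)$, $c_{k,3}=48k(2k+1)(4k+1)$.

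First I would record the low moments. Crossing symmetry \eqref{eqn:crossing} applied to $\langle\psi_i^2,\overline{\psi_i}^{\,2}\rangle_\H$ and permuted gives $\|\psi_i^2\|_\H^2=S_0$; and, using $\psi_i(E\psi_i)=\tfrac12E(\psi_i^2)$ together with the norm identities above, one gets $\|\psi_i(E\psi_i)\|_\H^2=\tfrac12\|\psi_i^2\|_\H^2$, which upon expanding the corresponding crossing equation via $B_{k,1}(\lambda)=(\lambda-2k)/2k$ reads $S_1=kS_0$.

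The heart of the argument is two cubic crossing equations. Let $\psi_{i^+},\psi_{i^{++}},\psi_{i^{+3}}$ be the normalized descendants $E^m\psi_i/\|E^m\psi_i\|_\H$. Crossing symmetry applied to $\langle\psi_i\overline{\psi_i},\,\overline{\psi_{i^{+3}}}\psi_{i^{+3}}\rangle_\H$ and to $\langle\psi_{i^+}\overline{\psi_{i^+}},\,\overline{\psi_{i^{++}}}\psi_{i^{++}}\rangle_\H$ --- expanding the $K$-invariant channels via $C_{i^{+n}\overline{i^{+n}}}^{(r,0)}=B_{k,n}(\lambda_r)C_{i\overline i}^{(r,0)}$ (Corollary~\ref{cor:C_+n_-n_formula_explicit}, with $B_{k,3}$ and $B_{k,1}B_{k,2}$ obtained from the recursion \eqref{eqn:B_k,n_recurrence}) and recognizing the other channels as squared $L^2$-norms of products --- yields two identities of the form $S_3+(\text{explicit quadratic in }S_0,S_1,S_2)=-A_k\|\psi_i\psi_{i^{+3}}\|_\H^2$ and $S_3+(\text{explicit quadratic in }S_0,S_1,S_2)=-B_k\|\psi_{i^+}\psi_{i^{++}}\|_\H^2$ with explicit positive $A_k,B_k$. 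To eliminate the two product-norms I would use the Leibniz identity $E^3(\psi_i^2)=2(E^3\psi_i)\psi_i+6(E\psi_i)(E^2\psi_i)$: writing $u=(E^3\psi_i)\psi_i$, $v=(E\psi_i)(E^2\psi_i)$, $w=\tfrac12E^3(\psi_i^2)$ one has $u=w-3v$, so $\|u\|_\H^2-9\|v\|_\H^2=\|w\|_\H^2-6\re\langle w,v\rangle_\H$, and the inner product $\langle E^3(\psi_i^2),(E\psi_i)(E^2\psi_i)\rangle_\H$ is computed purely formally by repeatedly moving $\overline E$ across products with \eqref{eqn:Lie_alg_unitarity}, \eqref{eqn:EEbar}, and $\overline E\psi_i=0$, collapsing everything to a multiple of $\|\psi_i^2\|_\H^2=S_0$. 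This produces the relation $2(k+1)\|\psi_i\psi_{i^{+3}}\|_\H^2-6k\|\psi_{i^+}\psi_{i^{++}}\|_\H^2=-(2k-1)S_0$, into which all the ``new-primary'' parts of the two products have cancelled. Combining it with the two cubic identities to delete both product-norms gives $2S_3-2(9k+1)S_2+3(12k^2-1)S_1-3k(4k^2-1)S_0=0$, and inserting $S_1=kS_0$ reduces this to $S_3-(9k+1)S_2+12k^2S_1=0$, i.e.\ \eqref{eqn:KMP_(2.47)}.

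The step I expect to be the real obstacle is the cancellation in the Leibniz computation: each of $\|\psi_i\psi_{i^{+3}}\|_\H^2$ and $\|\psi_{i^+}\psi_{i^{++}}\|_\H^2$ individually feels a ``new'' lowest weight vector of weight $2k+3$ (the component of the product orthogonal to the submodule generated by $\psi_i^2$), and is not determined by $S_0$ alone; what makes the argument go through is that these unknown norms drop out of the particular combination $\|u\|_\H^2-9\|v\|_\H^2$, which is forced by $u+3v=w$ lying in that submodule. The remaining work --- expanding the $B_{k,n}$, tracking the constants $c_{k,m}$, and confirming that each crossing equation is legitimately applied (the relevant series converge absolutely by Proposition~\ref{prop:poly_decay_C_ij^l}) --- is routine.
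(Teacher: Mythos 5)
Your argument is correct but routes around the cubic moment identity quite differently from the paper. The paper's proof also takes a linear combination of crossing equations, but it uses \eqref{eqn:s-t_crossing_=} and \eqref{eqn:s-t_crossing_+1} with $n \in \{0,1\}$, i.e., the pairs $(i^{+n},i^{+m})$ with $n,m\le 1$ and $|n-m|\le 1$, and invokes Lemma~\ref{lem:cancel_discrete_explicit} --- the $N=1$ case of Lemma~\ref{lem:cancel_discrete_contribution} --- to pick coefficients $b_0=-\tfrac{4k+2}{k+1}$, $b_1=\tfrac{4k+2}{k+1}$ that make the holomorphic (``discrete'') channel vanish identically, with $a_0=-1$, $a_1=1$ chosen to kill the $r=0$ term; the resulting polynomial $R_k$ in \eqref{eqn:R_k_formula_clean} is then exactly the cubic. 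You instead go straight to the moments $S_m=\sum_r\lambda_r^m|C_{i\bar i}^{(r,0)}|^2$, pull in the crossing equations at $(i,i^{+3})$ and $(i^+,i^{++})$ (neither of which appears in the paper's $N=1$ combination), and eliminate the two resulting product norms $\|\psi_i\psi_{i^{+3}}\|_\H^2$ and $\|\psi_{i^+}\psi_{i^{++}}\|_\H^2$ by exploiting the exact Leibniz identity $E^3(\psi_i^2)=2(E^3\psi_i)\psi_i+6(E\psi_i)(E^2\psi_i)$, whose cross term $\langle E^3(\psi_i^2),(E\psi_i)(E^2\psi_i)\rangle_\H$ collapses to a multiple of $S_0$ because $\overline E\psi_i=0$. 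I checked the pieces: $S_1=kS_0$ follows from the $n=0$ crossing and the norm identity $\|\psi_i\psi_{i^+}\|_\H^2=\tfrac12 S_0$; the Leibniz elimination does produce $2(k+1)\|\psi_i\psi_{i^{+3}}\|_\H^2-6k\|\psi_{i^+}\psi_{i^{++}}\|_\H^2=-(2k-1)S_0$; and the leading and $\lambda^2$ coefficients of $B_{k,3}$ and $B_{k,1}B_{k,2}$ (namely $\tfrac{1}{24k(k+1)(2k+1)}$, $-\tfrac{9k+5}{12k(k+1)(2k+1)}$ and $\tfrac{1}{8k^2(2k+1)}$, $-\tfrac{5k+1}{4k^2(2k+1)}$) combine to reproduce the $-2(9k+1)S_2$ term you claim, so the bookkeeping is consistent. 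The trade-off is essentially one of generality versus concreteness: Lemma~\ref{lem:cancel_discrete_contribution} sets up a uniform elimination scheme that works for any $N$, which is what makes the computer-assisted improvements in \cite{Kravchuk--Mazac--Pal} possible and is the reason the paper frames things this way; your Leibniz identity is a clean, self-contained substitute tailored to the $N=1$ computation, avoiding the variable-elimination sketch in Lemma~\ref{lem:cancel_discrete_contribution}, but it would have to be re-engineered by hand at each $N$.
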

	
	
	\begin{lem} \label{lem:nonvanishing_C_exists}
		There exists $r \in \Z_{\geq 1}$ such that $C_{i\overline{i}}^{(r,0)} \neq 0$.
	\end{lem}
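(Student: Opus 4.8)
The plan is to argue by contradiction. Suppose $C_{i\overline{i}}^{(r,0)} = 0$ for every $r \in \Z_{\geq 1}$, where $i = (-1,k)$ and $k = k_1$ as above; I will derive that $E\psi_i = 0$, which is false. First I would pin down $\psi_i\psi_{\overline i}$ exactly. Since $\psi_i$ has weight $k$ and $\overline{i} = (-1,-k)$, expanding $\psi_i\psi_{\overline i}$ in the $(\g,K)$-adapted basis and applying $K$-equivariance \eqref{eqn:SE2} shows that $\psi_i\psi_{\overline i}$ is a combination of the basis vectors $\psi_{(r,0)}$ with $(r,0)\in I$, and by \eqref{eqn:i2=0_implies_i1geq0} these all have $r\geq 0$. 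Under the standing assumption the expansion collapses to $\psi_i\psi_{\overline i} = C_{i\overline i}^{(0,0)}\mathbf{1}$, and Lemma~\ref{lem:SE4_l=0} (with $\overline i$ in place of $j$) gives $C_{i\overline i}^{(0,0)} = (-1)^{i_2}\1_{i=\overline{\overline i}} = (-1)^k$. Combined with $\overline{\psi_i} = (-1)^{i_2}\psi_{\overline i}$ from \eqref{eqn:psi_i_bar_vs_bar_psi_i}, this yields $\psi_i\overline{\psi_i} = \mathbf{1}$, i.e. $\psi_i$ would be invertible under the multiplication with inverse $\overline{\psi_i}$.

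Next I would turn invertibility into a contradiction using the product rule and crossing symmetry. The vector $\psi_i$ is a lowest weight vector in the sense of Definition~\ref{def:lo_hi}: it has weight $k\geq 1$ and $\overline{E}\psi_i = 0$ (by \eqref{eqn:Ebar_psi_i_formula}, since $i^- = (-1,k-1)\notin I$). Hence $E\overline{\psi_i} = \overline{\overline{E}\psi_i} = 0$, and the product rule \eqref{eqn:prod_rule} gives $\overline{\psi_i}\cdot(E\psi_i) = (E\psi_i)\overline{\psi_i} = E(\psi_i\overline{\psi_i}) = E\mathbf{1} = 0$. On the other hand, the $\sigma=(23)$ case of \eqref{eqn:crossing}, applied with $(\alpha_1,\alpha_2,\alpha_3,\alpha_4) = (\overline{\psi_i},\,E\psi_i,\,\psi_i,\,\overline{E\psi_i})$, gives the identity $\|\overline{\psi_i}\cdot E\psi_i\|_{\H}^2 = \langle |\psi_i|^2,\, |E\psi_i|^2\rangle_{\H}$. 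Using the hypothesis $|\psi_i|^2 = \psi_i\overline{\psi_i} = \mathbf{1}$ and then \eqref{eqn:3_term_crossing} with $\gamma=\mathbf{1}$, the right-hand side equals $\langle\mathbf{1},|E\psi_i|^2\rangle_{\H} = \|E\psi_i\|_{\H}^2$. Comparing, $\|E\psi_i\|_{\H} = 0$. But by \eqref{eqn:E_psi_i_formula} (or by the norm computation in the proof of Proposition~\ref{prop:lowest_wt_wt}), $\|E\psi_i\|_{\H}^2 = \lambda_{-1} + k(k+1) = 2k > 0$, a contradiction. Hence $C_{i\overline i}^{(r,0)} \neq 0$ for some $r\geq 1$.

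The only delicate point will be the derivation of the identity $\|\phi\alpha\|_{\H}^2 = \langle |\phi|^2, |\alpha|^2\rangle_{\H}$ (for $\phi,\alpha\in\H^{\fin}$) from crossing symmetry: one cannot simply apply \eqref{eqn:3_term_crossing} twice, because $\phi\alpha$ need not lie in $\H^{\fin}$, so one genuinely needs \eqref{eqn:crossing} --- this is exactly the ``associativity tested against a fourth vector'' move of \eqref{eqn:associativity_vs_crossing}. Apart from that, the argument is a short formal manipulation of the axioms of a multiplicative representation (equivalently, of the hyperbolic bootstrap equations together with the already-proved Lemma~\ref{lem:SE4_l=0}), and uses nothing about $\Gamma\backslash\mathbf{H}$ beyond what is encoded in $\mathcal{S}$. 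Conceptually, the lemma asserts that the ``holomorphic form'' $\psi_i$ cannot be nowhere-vanishing; in the geometric picture this is the statement that a nonzero holomorphic modular form of positive weight on a compact hyperbolic $2$-orbifold must have a zero.
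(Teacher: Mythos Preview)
Your argument is correct and takes a genuinely different route from the paper. The paper stays entirely in the ``numerical'' language of the $C_{ij}^{\ell}$: it forms the linear combination \eqref{eqn:s-t_crossing_combo} with $N=0$, $a_0=1$, $b_0=2$ (the case of Lemma~\ref{lem:cancel_discrete_explicit} that kills the discrete-series contribution on the right of \eqref{eqn:s-t_crossing_combo}), then uses the $n=1$ case of Corollary~\ref{cor:C_+n_-n_formula_explicit} to rewrite the left side as $\sum_{\ell}(-1)^{\ell_2}\bigl[1+2(\tfrac{\lambda_{\ell_1}}{2k}-1)\bigr]|C_{i\overline i}^{\ell}|^2=0$; since the $\ell=(0,0)$ term has bracket $-1\neq 0$, some $\ell\neq(0,0)$ must contribute, and \eqref{eqn:SE2} together with \eqref{eqn:i2=0_implies_i1geq0} forces $\ell=(r,0)$ with $r\geq 1$. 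This is the same machinery (Lemmas~\ref{lem:cancel_discrete_contribution}, \ref{lem:cancel_discrete_explicit}, Corollary~\ref{cor:C_+n_-n_formula_explicit}) used for Proposition~\ref{prop:KMP_(2.47)}, so the paper's proof reinforces the section's theme of deriving constraints by taking linear combinations of \eqref{eqn:SE6}.

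Your argument is more structural: the hypothesis forces $|\psi_i|^2=\mathbf{1}$, and then the product rule plus the $\sigma=(23)$ crossing equation yield $\|E\psi_i\|_{\H}=0$, contradicting $\|E\psi_i\|_{\H}^2=2k$. This is shorter and self-contained (it does not rely on the sketched Lemmas~\ref{lem:cancel_discrete_contribution} and \ref{lem:cancel_discrete_explicit}), and your closing remark correctly identifies its geometric meaning. The trade-off is that it does not illustrate the linear-programming mechanism that Section~\ref{subsec:applications:lambda_1} is meant to showcase, and it is phrased in the multiplicative-representation language rather than purely in terms of \eqref{eqn:SE1}--\eqref{eqn:SE6}; but as you note, this is only cosmetic, since every step translates directly (the product-rule step is an instance of \eqref{eqn:SE5}, and your crossing step is an instance of \eqref{eqn:SE6}).
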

	
	\begin{proof}[Proof of Theorem~\ref{thm:KMP_basic_bd} assuming Proposition~\ref{prop:KMP_(2.47)} and Lemma~\ref{lem:nonvanishing_C_exists}]
		The right hand side of \eqref{eqn:KMP_basic_bd} is the larger of the two roots of the quadratic polynomial $\lambda \mapsto \lambda^2 - (9k+1)\lambda + 12k^2$. Thus if \eqref{eqn:KMP_basic_bd} fails, then $\LHS\eqref{eqn:KMP_(2.47)}$ is nonnegative, and in fact by Lemma~\ref{lem:nonvanishing_C_exists} it is strictly positive. This is a contradiction.
	\end{proof}
	
	It remains to prove Proposition~\ref{prop:KMP_(2.47)} and Lemma~\ref{lem:nonvanishing_C_exists}.
	As in Section~\ref{sec:classification_implies_main_thm}, denote
	\begin{align*}
		i^{+n}
		= (i_1,i_2+n)
		= (-1,k+n).
	\end{align*}
	We will use the following instances of \eqref{eqn:SE6}: for $n \in \Z_{\geq 0}$,
	\begin{align} \label{eqn:s-t_crossing_=}
		\sum_{\ell \in I} (-1)^{\ell_2} C_{i^{+n} \overline{i^{+n}}}^{\ell} C_{i^{+n} \overline{i^{+n}}}^{\overline{\ell}}
		= \sum_{\ell \in I} (-1)^{\ell_2} C_{i^{+n} i^{+n}}^{\ell} C_{\overline{i^{+n}} \, \overline{i^{+n}}}^{\overline{\ell}}
	\end{align}
	and
	\begin{align} \label{eqn:s-t_crossing_+1}
		\sum_{\ell \in I} (-1)^{\ell_2} C_{i^{+n} \overline{i^{+n}}}^{\ell} C_{i^{+(n+1)} \overline{i^{+(n+1)}}}^{\overline{\ell}}
		= \sum_{\ell \in I} (-1)^{\ell_2} C_{i^{+n} i^{+(n+1)}}^{\ell} C_{\overline{i^{+n}} \, \overline{i^{+(n+1)}}}^{\overline{\ell}}.
	\end{align}
	Actually, for Proposition~\ref{prop:KMP_(2.47)} we will only use the cases $n=0$ and $n=1$, and for Lemma~\ref{lem:nonvanishing_C_exists}, we will only use the case $n=0$.
	Given $N \in \Z_{\geq 0}$ and coefficients $a_0,\dots,a_N,b_0,\dots,b_N \in \R$, form the linear combination
	\begin{align} \label{eqn:s-t_crossing_combo}
		&\sum_{\ell \in I} (-1)^{\ell_2} \sum_{n=0}^{N} (a_n C_{i^{+n} \overline{i^{+n}}}^{\ell} C_{i^{+n} \overline{i^{+n}}}^{\overline{\ell}}
		+ b_n C_{i^{+n} \overline{i^{+n}}}^{\ell} C_{i^{+(n+1)} \overline{i^{+(n+1)}}}^{\overline{\ell}})
		\notag
		\\= &\sum_{\ell \in I} (-1)^{\ell_2} \sum_{n=0}^{N} (a_n C_{i^{+n} i^{+n}}^{\ell} C_{\overline{i^{+n}} \, \overline{i^{+n}}}^{\overline{\ell}}
		+ b_n C_{i^{+n} i^{+(n+1)}}^{\ell} C_{\overline{i^{+n}} \, \overline{i^{+(n+1)}}}^{\overline{\ell}})
	\end{align}
	of \eqref{eqn:s-t_crossing_=} and \eqref{eqn:s-t_crossing_+1}.
	
	\begin{lem} \label{lem:cancel_discrete_contribution}
		Let $N \in \Z_{\geq 0}$ and $a_0,\dots,a_N \in \R$. Then there exist $b_0,\dots,b_N \in \R$, depending only on $k,N,a_0,\dots,a_N$, such that $\RHS\eqref{eqn:s-t_crossing_combo} = 0$.
	\end{lem}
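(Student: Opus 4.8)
The plan is to expand $\RHS\eqref{eqn:s-t_crossing_combo}$ explicitly, recognize it as a triangular linear system in $b_0,\dots,b_N$ with surface-independent coefficients, and solve it. First, applying \eqref{eqn:SE3} to the rightmost $C$-factor in each summand turns the $n$-th summand on the right of \eqref{eqn:s-t_crossing_combo} into $(-1)^{\ell_2}\bigl(a_n|C_{i^{+n}i^{+n}}^{\ell}|^2 + b_n|C_{i^{+n}i^{+(n+1)}}^{\ell}|^2\bigr)$; by \eqref{eqn:SE2} the first term is supported on $\ell_2 = 2(k+n)$ and the second on $\ell_2 = 2k+2n+1$, so the signs $(-1)^{\ell_2}$ are $+1$ and $-1$ respectively, and summing over $\ell\in I$ gives
\begin{align*}
	\RHS\eqref{eqn:s-t_crossing_combo}
	= \sum_{n=0}^{N}\Bigl(a_n\,\|\psi_{i^{+n}}^2\|_{\H}^2 - b_n\,\|\psi_{i^{+n}}\psi_{i^{+(n+1)}}\|_{\H}^2\Bigr).
\end{align*}
By Proposition~\ref{prop:(g,K)-adapted_key_properties}, $\psi_i$ is a lowest weight vector of weight $k$ and $\psi_{i^{+a}}$ is a positive multiple of $E^a\psi_i$, so $\{\psi_{i^{+a}}\}_{a\ge 0}$ is an orthonormal basis of a copy of the holomorphic discrete series $\pi^{+}_{k}$ of lowest weight $k$ inside $\H$. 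The span of all products $\psi_{i^{+a}}\psi_{i^{+b}}$ is, by the product rule, a $\g$-submodule of $\H^{\infty}$ and a quotient of $(\pi^{+}_{k})^{\fin}\otimes(\pi^{+}_{k})^{\fin}\cong\bigoplus_{j\ge 0}(\pi^{+}_{2k+j})^{\fin}$, where $\pi^{+}_{m}$ is the holomorphic discrete series of lowest weight $m$, with Casimir eigenvalue $-m(m-1)$; hence every $\psi_{i^{+a}}\psi_{i^{+b}}$ is a finite sum of automorphic vectors with Casimir eigenvalues of the form $-(2k+j)(2k+j-1)$. Since $\psi_{i^{+n}}\otimes\psi_{i^{+n}}$ and $\psi_{i^{+n}}\otimes\psi_{i^{+(n+1)}}+\psi_{i^{+(n+1)}}\otimes\psi_{i^{+n}}$ are symmetric tensors, and the transposition acts by $(-1)^{j}$ on the $\pi^{+}_{2k+j}$-isotypic part of $(\pi^{+}_k)^{\otimes 2}$ (visible from the lowest weight vector), only even $j$ occur in $\psi_{i^{+n}}^{2}$ and $\psi_{i^{+n}}\psi_{i^{+(n+1)}}$.

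Next I would decompose the two families of norms along these discrete series towers. For each even $j$, set $C_j := \sum_{v}|\langle\psi_i^2,v\rangle_{\H}|^2\ge 0$, the sum over unit lowest weight vectors $v$ of the copies of $\pi^{+}_{2k+j}$ in $\H$; this is the only surface-dependent quantity. Because the projection of multiplication onto any copy of $\pi^{+}_{2k+j}$ is $\g$-equivariant, it equals the scalar $\langle\psi_i^2,v\rangle_{\H}$ times a universal Clebsch--Gordan map, giving $\|\psi_{i^{+n}}^2\|_{\H}^2=\sum_{j\text{ even}}C_j\,s_j(n,n)$ and $\|\psi_{i^{+n}}\psi_{i^{+(n+1)}}\|_{\H}^2=\sum_{j\text{ even}}C_j\,s_j(n,n+1)$, where $s_j(a,b)\ge 0$ is universal (computable from the formulas in Proposition~\ref{prop:(g,K)-adapted_key_properties}) and vanishes unless $j\le a+b$, since $\psi_{i^{+a}}\psi_{i^{+b}}$ has weight $2k+a+b$ and $\pi^{+}_{2k+j}$ carries no vector of weight below $2k+j$. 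Therefore $\RHS\eqref{eqn:s-t_crossing_combo}=\sum_{j\text{ even},\,0\le j\le 2N}C_j\,L_j(b)$ with $L_j(b)=\sum_{n\ge j/2}\bigl(a_n s_j(n,n)-b_n s_j(n,n+1)\bigr)$, and it suffices to choose $b$ with $L_j(b)=0$ for all even $j\in[0,2N]$ — note there are exactly $N+1$ such equations in the $N+1$ unknowns $b_0,\dots,b_N$, the counting being what forces the parity restriction to matter.

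Finally I would solve this system. With $j=2p$, the equation $L_{2p}(b)=0$ reads $\sum_{n=p}^{N}b_n s_{2p}(n,n+1)=\sum_{n=p}^{N}a_n s_{2p}(n,n)$ and involves only $b_p,\dots,b_N$, so the system is triangular and is solved successively for $b_N,b_{N-1},\dots,b_0$ once each pivot $s_{2p}(p,p+1)$ is nonzero. By the product rule $E(\psi_{i^{+p}}^2)=2\mu_p\,\psi_{i^{+p}}\psi_{i^{+(p+1)}}$ with $\mu_p=\sqrt{\lambda_{i_1}+(k+p)(k+p+1)}>0$, so the $\pi^{+}_{2k+2p}$-component of $\psi_{i^{+p}}\psi_{i^{+(p+1)}}$ is a nonzero multiple of $E$ applied to the $\pi^{+}_{2k+2p}$-component of $\psi_{i^{+p}}^2$, and since $E$ is injective on a discrete series, $s_{2p}(p,p+1)\ne 0$ exactly when $\psi_{i^{+p}}^2$ has a nonzero $\pi^{+}_{2k+2p}$-component. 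As $\{\psi_{i^{+a}}\otimes\psi_{i^{+b}}\}$ is orthonormal and $\psi_{i^{+p}}^2$ comes from the unit tensor $\psi_{i^{+p}}\otimes\psi_{i^{+p}}$, this reduces to the nonvanishing of the coefficient $c_p$ of $\psi_{i^{+p}}\otimes\psi_{i^{+p}}$ in the lowest weight vector $w=\sum_{a+b=2p}c_a\,\psi_{i^{+a}}\otimes\psi_{i^{+b}}$ of $\pi^{+}_{2k+2p}\subseteq(\pi^{+}_k)^{\otimes 2}$; imposing $\overline{E}w=0$ via \eqref{eqn:Ebar_psi_i_formula} yields $c_{m+1}\sqrt{(m+1)(2k+m)}=-c_m\sqrt{(2p-m)(2k+2p-m-1)}$, with both square roots strictly positive for $0\le m\le 2p-1$, which forces every $c_a\ne 0$ once $w\ne 0$, so in particular $c_p\ne 0$. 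This nonvanishing is the only step that is not pure bookkeeping, and I expect it to be the main point requiring care. Since the $s_{2p}(\cdot,\cdot)$ and $\mu_p$ depend only on $k$, the $b_0,\dots,b_N$ produced by solving the triangular system depend only on $k,N,a_0,\dots,a_N$, as required.
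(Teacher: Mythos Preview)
Your overall strategy is sound and matches the paper's sketch: both arguments reduce $\RHS\eqref{eqn:s-t_crossing_combo}$ to a linear combination of $N+1$ surface-dependent ``discrete series masses'' with universal coefficients, and then solve the resulting square system in $b_0,\dots,b_N$. The paper's quantities $D_n=\sum_{\ell\in I_{2k+2n}}|C_{i^{+n}i^{+n}}^{\ell}|^2$ are the same invariants you are after, up to universal nonzero factors. Your triangular-system argument and the proof that the pivots are nonzero via $c_p\neq 0$ are correct and actually go further than the paper, which merely asserts solvability.

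However, your definition of $C_j$ is wrong, and the error propagates. You set $C_j=\sum_v|\langle\psi_i^2,v\rangle_{\H}|^2$ over unit lowest weight vectors $v$ of weight $2k+j$; but $\psi_i^2$ has weight $2k$, so $\langle\psi_i^2,v\rangle_{\H}=0$ whenever $j>0$, and your $C_j$ collapses to $C_0\delta_{j,0}$. The companion claim that $P_v\circ\text{mult}$ equals $\langle\psi_i^2,v\rangle_{\H}$ times a universal Clebsch--Gordan map fails for the same reason: evaluating on $\psi_i\otimes\psi_i$ gives $0=0$ for $j>0$ and cannot determine the scalar. The correct scalar is $\langle\text{mult}(w_j),v\rangle_{\H}$, where $w_j$ is the unit lowest weight vector of the $\pi^+_{2k+j}$-summand inside $\pi_k^+\otimes\pi_k^+$, and the correct surface-dependent quantity is $C_j=\|\text{mult}(w_j)\|_{\H}^2$; then $s_j(a,b)=|d_j(a,b)|^2$ with $d_j(a,b)$ the universal Clebsch--Gordan coefficient, and your argument proceeds as written. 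A second, smaller slip: in the pivot step you write ``$s_{2p}(p,p+1)\ne 0$ exactly when $\psi_{i^{+p}}^2$ has a nonzero $\pi^+_{2k+2p}$-component,'' which conflates the universal number $s_{2p}(p,p+1)$ with a surface-dependent condition (the component in $\H$ vanishes whenever $C_{2p}=0$). The intended computation---apply $E$ to $\psi_{i^{+p}}\otimes\psi_{i^{+p}}$ inside the tensor product and use injectivity of $E$ there---is correct once phrased in $\pi_k^+\otimes\pi_k^+$ rather than in $\H$.
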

	
	The proof is elementary but lengthy, so we only give a sketch.
	
	\begin{proof}[Sketch of proof]
		For $\ell \in I$ and $n \in \Z_{\geq 0}$, we have
		\begin{align} \label{eqn:HB5_same_terms}
			\sqrt{\lambda_{\ell_1} + \ell_2(\ell_2-1)} \, C_{i^{+n} i^{+n}}^{\ell^-}
			= 2\sqrt{\lambda_{i_1} + (i_2+n)(i_2+n+1)} \, C_{i^{+n}i^{+(n+1)}}^{\ell}
		\end{align}
		by \eqref{eqn:SE5} followed by \eqref{eqn:SE1}, and
		\begin{align} \label{eqn:HB5_inverted_same_terms}
			\sqrt{\lambda_{\ell_1} + \ell_2(\ell_2+1)} \, C_{i^{+n} i^{+n}}^{\ell^+}
			= 2\sqrt{\lambda_{i_1} + (i_2+n)(i_2+n-1)} \, C_{i^{+(n-1)}i^{+n}}^{\ell}
		\end{align}
		by Lemma~\ref{lem:SE5_inverted} followed by \eqref{eqn:SE1}.
		Since $\lambda_{i_1} = -k(k-1)$ by \eqref{eqn:lambda_{-r}_def} and $i_2 = k$, the square roots on the right hand sides of \eqref{eqn:HB5_same_terms} and \eqref{eqn:HB5_inverted_same_terms} depend only on $k$ and $n$.
		
		Using \eqref{eqn:HB5_same_terms}, \eqref{eqn:HB5_inverted_same_terms}, \eqref{eqn:SE2}, and \eqref{eqn:SE3}, all of which are linear relations between the $C$'s, one can eliminate variables in $\RHS\eqref{eqn:s-t_crossing_combo}$ to the point where $\RHS\eqref{eqn:s-t_crossing_combo}$ is expressed as a linear combination of the $N+1$ quantities $D_0,\dots,D_N$ defined by
		\begin{align*}
			D_n = \sum_{\ell \in I_{2k+2n}} |C_{i^{+n} i^{+n}}^{\ell}|^2,
		\end{align*}
		where
		\begin{align*}
			I_m = \{\ell \in I : \ell_1 < 0 \text{ and } \ell_2 = k_{-\ell_1} = m\}
		\end{align*}
		is the set of indices $\ell$ for which $\psi_{\ell}$ is a lowest weight vector in $L^2(\Gamma \backslash G)$ of weight $m$.
		
		Now, since we have $N+1$ degrees of freedom in the choice of $b_0,\dots,b_N$, it is plausible that we can choose the $b_n$ such that no matter what values the $N+1$ variables $D_n$ take, $\RHS\eqref{eqn:s-t_crossing_combo}$ always vanishes. Indeed, performing the elimination of variables explicitly yields an explicit formula for $\RHS\eqref{eqn:s-t_crossing_combo}$ in terms of the $D_n$, and from this formula one can see that such a choice of $b$'s exists.
	\end{proof}
	
	
	For use in the proofs of Proposition~\ref{prop:KMP_(2.47)} and Lemma~\ref{lem:nonvanishing_C_exists}, we record two instances of Lemma~\ref{lem:cancel_discrete_contribution}.
	
	\begin{lem} \label{lem:cancel_discrete_explicit}
		If $N = 0$ and $a_0 = 1$, then taking $b_0 = 2$ makes $\RHS\eqref{eqn:s-t_crossing_combo}$ vanish.
		If $N = 1$, $a_0 = -1$, and $a_1 = 1$, then taking
		\begin{align} \label{eqn:b0_b1_def}
			b_0 = -\frac{4k+2}{k+1}
			\qquad \text{and} \qquad
			b_1 = \frac{4k+2}{k+1}
		\end{align}
		makes $\RHS\eqref{eqn:s-t_crossing_combo}$ vanish.
	\end{lem}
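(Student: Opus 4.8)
The plan is to make explicit, in these two cases, the reduction of $\RHS\eqref{eqn:s-t_crossing_combo}$ to a linear combination of $D_0,\dots,D_N$ that is sketched in the proof of Lemma~\ref{lem:cancel_discrete_contribution}, and then to solve for the $b_n$. Throughout I write $i=(-1,k)$ and $i^{+n}=(-1,k+n)$ and use $\lambda_{i_1}=-k(k-1)$ from \eqref{eqn:lambda_{-r}_def}, which makes $\lambda_{i_1}+(k+n)(k+n+1)=(n+1)(n+2k)$ and $\lambda_{i_1}+(k+n)(k+n-1)=n(n+2k-1)$ the square roots appearing in \eqref{eqn:HB5_same_terms} and \eqref{eqn:HB5_inverted_same_terms}. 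Using \eqref{eqn:SE2}, \eqref{eqn:SE3}, and the parity of $\ell_2$, the sum $\RHS\eqref{eqn:s-t_crossing_combo}$ collapses to $\sum_{n=0}^{N}\bigl(a_n\Sigma^{(a)}_n-b_n\Sigma^{(b)}_n\bigr)$, where $\Sigma^{(a)}_n=\sum_{\ell\in I,\,\ell_2=2k+2n}|C_{i^{+n}i^{+n}}^{\ell}|^2$ (the $a$-terms have $\ell_2$ even) and $\Sigma^{(b)}_n=\sum_{\ell\in I,\,\ell_2=2k+2n+1}|C_{i^{+n}i^{+(n+1)}}^{\ell}|^2$ (the $b$-terms have $\ell_2$ odd); moreover \eqref{eqn:HB5_same_terms} rewrites $\Sigma^{(b)}_n=\tfrac{1}{4(n+1)(n+2k)}\sum_{\ell\in I,\,\ell_2=2k+2n}\bigl(\lambda_{\ell_1}+(2k+2n)(2k+2n+1)\bigr)|C_{i^{+n}i^{+n}}^{\ell}|^2$. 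So everything comes down to understanding the support of the coefficients $C_{i^{+n}i^{+n}}^{\ell}$ and evaluating $\lambda_{\ell_1}$ there.

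For $N=0$ this is short. Equation \eqref{eqn:HB5_inverted_same_terms} with $n=0$ reads $\sqrt{\lambda_{\ell_1}+\ell_2(\ell_2+1)}\,C_{ii}^{\ell^-\text{-shifted-up}}=0$, i.e.\ $\sqrt{\lambda_{\ell_1}+\ell_2(\ell_2+1)}\,C_{ii}^{(\ell_1,\ell_2+1)}=0$, for all $\ell\in I$; since $\lambda_{\ell_1}+(2k-1)(2k)>0$ for every $\ell\in I$ with $\ell_2=2k-1$ (by \eqref{eqn:sqrt_nonnegativity}, \eqref{eqn:lambda_r_to_infty}, and the constraint $|\ell_2|\ge k_{-\ell_1}$ in \eqref{eqn:I_hol_def}, which forces $k_{-\ell_1}\le 2k-1$), we get that $C_{ii}^{m}=0$ unless $m\in I_{2k}$. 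On $I_{2k}$ one has $\lambda_{m_1}=-2k(2k-1)$ by \eqref{eqn:lambda_{-r}_def}, hence $\lambda_{m_1}+2k(2k+1)=4k$, and the formula for $\Sigma^{(b)}_0$ gives $\Sigma^{(b)}_0=\tfrac12\Sigma^{(a)}_0=\tfrac12 D_0$. Thus $\RHS\eqref{eqn:s-t_crossing_combo}=(a_0-\tfrac{b_0}{2})D_0$, which vanishes identically exactly when $b_0=2a_0=2$.

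For $N=1$ the same mechanism applies one level up, except that now $C_{i^+i^+}^{\ell}$ is genuinely supported beyond the lowest-weight indices $I_{2k+2}$: it also has a descendant piece supported on those $\ell$ with $\ell_1<0$ and $k_{-\ell_1}=2k$ (correspondingly, $\psi_{i^+}^2$ is not a lowest-weight vector). I would pin this down by feeding $C_{ii^+}^{\ell^-}$ — which, by \eqref{eqn:HB5_same_terms} with $n=0$, is supported on that same descendant set and is a multiple of $C_{ii}^{(\ell_1,\ell_2-2)}$ — into \eqref{eqn:SE5} with $(i,j)=(i,i^+)$ to express $C_{ii^{++}}^{\ell}$ through $C_{i^+i^+}^{\ell}$ and $C_{ii}^{(\ell_1,\ell_2-2)}$, and then substituting this into the Casimir recursion Lemma~\ref{lem:num_recursion} with $i=j=i^+$, which relates $C_{i^+i^+}^{\ell}$ to $C_{ii^{++}}^{\ell}$. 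Off the descendant set the recursion forces $C_{i^+i^+}^{\ell}=0$ unless $\ell\in I_{2k+2}$, while on it one finds $|C_{i^+i^+}^{\ell}|^2=\tfrac{2k}{4k+1}|C_{ii}^{(\ell_1,\ell_2-2)}|^2$; evaluating $\lambda_{\ell_1}$ piecewise (on $I_{2k}$, resp.\ $I_{2k+2}$) then yields $\Sigma^{(a)}_1=\tfrac{2k}{4k+1}D_0+D_1$ and, via the $\Sigma^{(b)}_1$ formula, $\Sigma^{(b)}_1=\tfrac{3k}{2(4k+1)}D_0+\tfrac{k+1}{2(2k+1)}D_1$. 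Plugging in $a_0=-1$, $a_1=1$ turns $\RHS\eqref{eqn:s-t_crossing_combo}$ into $c_0 D_0+c_1 D_1$ with $c_1=1-\tfrac{(k+1)b_1}{2(2k+1)}$ and $c_0$ an explicit affine function of $b_0,b_1$; the linear system $c_0=c_1=0$ then has the unique solution \eqref{eqn:b0_b1_def}.

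The main obstacle is precisely this last step for $N=1$: since $\psi_{i^+}^2$ is not lowest-weight, one cannot simply project onto $I_{2k+2}$, and the descendant contributions — which feed back into the $D_0$-coefficient — must be carried correctly through the chain \eqref{eqn:HB5_same_terms}, \eqref{eqn:HB5_inverted_same_terms}, \eqref{eqn:SE5}, and Lemma~\ref{lem:num_recursion}, keeping track of all the square-root prefactors and of which index ranges lie in $I$. By contrast, the $N=0$ case and the algebraic endgame (solving the $2\times 2$ linear system) are routine.
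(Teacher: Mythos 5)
Your proposal is correct, and it is essentially the approach the paper intends: the paper's own proof of this lemma is a one-line pointer back to the sketch of Lemma~\ref{lem:cancel_discrete_contribution} (``carry out the elimination in these two cases''), and you have made that elimination explicit. I checked the key intermediate claims: using \eqref{eqn:SE3} and \eqref{eqn:SE2} to reduce $\RHS\eqref{eqn:s-t_crossing_combo}$ to $\sum_n (a_n\Sigma^{(a)}_n - b_n\Sigma^{(b)}_n)$; using \eqref{eqn:HB5_inverted_same_terms} at $n=0$ together with the vanishing of $\lambda_{i_1}+k(k-1)$ to show $C_{ii}^m$ is supported on $I_{2k}$ and hence $\Sigma^{(b)}_0 = \tfrac12 D_0$; and for $N=1$, chaining \eqref{eqn:HB5_same_terms}, \eqref{eqn:SE5}, and Lemma~\ref{lem:num_recursion} with $i=j=i^+$ to obtain $|C_{i^+i^+}^{\ell}|^2 = \tfrac{2k}{4k+1}|C_{ii}^{(\ell_1,2k)}|^2$ on the descendant set and vanishing off $I_{2k+2}\cup\{\text{descendants of }I_{2k}\}$. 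Your resulting expressions $\Sigma^{(a)}_1 = \tfrac{2k}{4k+1}D_0 + D_1$ and $\Sigma^{(b)}_1 = \tfrac{3k}{2(4k+1)}D_0 + \tfrac{k+1}{2(2k+1)}D_1$ are correct, and solving $c_1=0$ then $c_0=0$ (the numerator factors as $-(2k+1)(4k+1)$) recovers \eqref{eqn:b0_b1_def}. The only caveat is that your treatment of the ``off the descendant set'' vanishing for $N=1$ is stated as a plan rather than carried out; the needed case analysis (including the borderline $k_{-\ell_1}=2k+1$ case, where $C_{ii^+}^{(\ell_1,2k+1)}=0$ forces $C_{i^+i^+}^{\ell}=0$ via the recursion) does go through, so this is a matter of exposition rather than a gap.
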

	
	\begin{proof}[Sketch of proof]
		Carry out the proof of Lemma~\ref{lem:cancel_discrete_contribution} in these two cases.
	\end{proof}
	
	We can now prove Lemma~\ref{lem:nonvanishing_C_exists}.
	
	\begin{proof}[Proof of Lemma~\ref{lem:nonvanishing_C_exists}]
		Let $N=0$, $a_0=1$, and $b_0=2$.
		Then by Lemma~\ref{lem:cancel_discrete_explicit}, the equation \eqref{eqn:s-t_crossing_combo} reduces to
		\begin{align*}
			\sum_{\ell \in I} (-1)^{\ell_2} (C_{i\overline{i}}^{\ell} C_{i\overline{i}}^{\overline{\ell}} + 2C_{i\overline{i}}^{\ell} C_{i^+\overline{i^+}}^{\overline{\ell}})
			= 0.
		\end{align*}
		By the case $n=1$ of Corollary~\ref{cor:C_+n_-n_formula_explicit}, this further simplifies to
		\begin{align*}
			\sum_{\ell \in I} (-1)^{\ell_2} \Big[1 + 2\Big(\frac{\lambda_{\ell_1}}{2k}-1\Big)\Big] C_{i\overline{i}}^{\ell} C_{i\overline{i}}^{\overline{\ell}}
			= 0.
		\end{align*}
		When $\ell = (0,0)$, the expression in brackets is nonzero, so by Lemma~\ref{lem:SE4_l=0}, the summand is nonzero. On the other hand, the sum of all the terms is equal to zero, so at least one term besides $\ell = (0,0)$ must be nonzero.
		This forces $C_{i\overline{i}}^{\ell} \neq 0$ for some $\ell \neq (0,0)$.
		By \eqref{eqn:SE2} and \eqref{eqn:i2=0_implies_i1geq0}, such an $\ell$ must be of the form $\ell = (r,0)$ for some $r \geq 1$.
	\end{proof}
	
	We finally prove Proposition~\ref{prop:KMP_(2.47)}, completing the proof of Theorem~\ref{thm:KMP_basic_bd}.
	
	\begin{proof}[Proof of Proposition~\ref{prop:KMP_(2.47)}]
		Let $N=1$, $a_0=-1$, $a_1=1$, and $b_0,b_1$ as in \eqref{eqn:b0_b1_def}.
		Then by Lemma~\ref{lem:cancel_discrete_explicit}, the equation \eqref{eqn:s-t_crossing_combo} reduces to
		\begin{align} \label{eqn:s-t_crossing_combo_N=1}
			\sum_{\ell \in I} (-1)^{\ell_2} \Big[-C_{i\overline{i}}^{\ell} C_{i\overline{i}}^{\overline{\ell}}
			- \Big(\frac{4k+2}{k+1}\Big) C_{i\overline{i}}^{\ell} C_{i^+\overline{i^+}}^{\overline{\ell}}
			+ C_{i^+\overline{i^+}}^{\ell} C_{i^+\overline{i^+}}^{\overline{\ell}}
			+ \Big(\frac{4k+2}{k+1}\Big) C_{i^+\overline{i^+}}^{\ell} C_{i^{+2}\overline{i^{+2}}}^{\overline{\ell}} \Big]
			= 0.
		\end{align}
		By Corollary~\ref{cor:C_+n_-n_formula_explicit}, we have that for all $\ell \in I$,
		\begin{align*}
			C_{i^+\overline{i^+}}^{\ell}
			= \Big(\frac{\lambda_{\ell_1}}{2k} - 1\Big) C_{i\overline{i}}^{\ell}
			\qquad \text{and} \qquad
			C_{i^{+2}\overline{i^{+2}}}^{\ell}
			= \frac{1}{4k+2}\Big[(\lambda_{\ell_1}-6k-2) \Big(\frac{\lambda_{\ell_1}}{2k} - 1\Big) - 2k\Big] C_{i\overline{i}}^{\ell}.
		\end{align*}
		Inserting these into \eqref{eqn:s-t_crossing_combo_N=1},
		\begin{align} \label{eqn:s-t_crossing_R}
			\sum_{\ell \in I} (-1)^{\ell_2} R_k(\lambda_{\ell_1}) C_{i\overline{i}}^{\ell} C_{i\overline{i}}^{\overline{\ell}}
			= 0,
		\end{align}
		where
		\begin{align*}
			R_k(\lambda)
			= -1
			- \Big(\frac{4k+2}{k+1}\Big) \Big(\frac{\lambda}{2k}-1\Big)
			+ \Big(\frac{\lambda}{2k}-1\Big)^2
			+ \Big(\frac{1}{k+1}\Big) \Big(\frac{\lambda}{2k}-1\Big) \Big[(\lambda-6k-2) \Big(\frac{\lambda}{2k} - 1\Big) - 2k\Big].
		\end{align*}
		Expanding this formula,
		\begin{align} \label{eqn:R_k_formula_clean}
			R_k(\lambda)
			= \frac{\lambda(\lambda^2 - (9k+1)\lambda + 12 k^2)}{4 k^2 (k+1)}.
		\end{align}
		By \eqref{eqn:SE1} and \eqref{eqn:SE3}, we have $C_{i\overline{i}}^{\ell} C_{i\overline{i}}^{\overline{\ell}} = |C_{i\overline{i}}^{\ell}|^2$, and by \eqref{eqn:SE2} and \eqref{eqn:i2=0_implies_i1geq0}, we have $C_{i\overline{i}}^{\ell} = 0$ unless $\ell = (r,0)$ for some $r \geq 0$.
		Thus multiplying \eqref{eqn:s-t_crossing_R} by $4k^2(k+1)$ yields
		\begin{align} \label{eqn:KMP_(2.47)_r=0}
			\sum_{r=0}^{\infty} \lambda_r(\lambda_r^2 - (9k+1)\lambda_r + 12k^2) |C_{i\overline{i}}^{(r,0)}|^2
			= 0.
		\end{align}
		The term where $r=0$ vanishes, so this is equivalent to \eqref{eqn:KMP_(2.47)}.
	\end{proof}
	
	The motivation for choosing $a_0=-1$ and $a_1=1$ above is to make the $r=0$ term in \eqref{eqn:KMP_(2.47)_r=0} vanish; this condition determines $(a_0,a_1)$ uniquely up to scalars.
	
	\begin{rem} \label{rem:saturation}
		The above proof of Theorem~\ref{thm:KMP_basic_bd} used \eqref{eqn:s-t_crossing_combo} with $N=1$. The bounds in \cite[Table~2]{Kravchuk--Mazac--Pal} can be proved using \eqref{eqn:s-t_crossing_combo} with $N$ larger, optimizing the choice of parameters $a_0,\dots,a_N$ and $b_0,\dots,b_N$ (c.f. \cite[Section~3.9]{Kravchuk--Mazac--Pal}).
		For each $N$, there is some optimal upper bound on $\lambda_1$ that can be obtained from \eqref{eqn:s-t_crossing_combo} in this way, and one can ask if this bound converges as $N \to \infty$ to the largest known $\lambda_1$ in the table above.
		Although \cite{Kravchuk--Mazac--Pal} found that this bound comes close, \cite{Radcliffe} gives strong numerical evidence that it does not converge all the way to the largest known $\lambda_1$.
		Theorem~\ref{thm:main} suggests that by using more instances of \eqref{eqn:SE6} than just \eqref{eqn:s-t_crossing_=} and \eqref{eqn:s-t_crossing_+1}, it may be possible to prove sharper bounds which do converge to the truth in the limit as more and more hyperbolic bootstrap equations are used.
		Inspired by the work \cite{Viazovska,Cohn_et_al} and \cite{Mazac_17,Mazac--Paulos_I} on sphere packing and the 1d conformal bootstrap (which by \cite{Hartman--Mazac--Rastelli} are closely related),
		one could even speculate that there is an infinite linear combination of instances of \eqref{eqn:SE6}, playing the role of \eqref{eqn:s-t_crossing_combo}, which gives an upper bound exactly saturated by the $\lambda_1$'s in the table above.
		Such a linear combination is called an \emph{extremal functional} in the conformal bootstrap literature.
		It would be striking if an extremal functional could be found explicitly.
		Because of the inexplicit nature of Laplace eigenvalues, this seems much harder than the analogous problem of finding a magic function for sphere packing (``magic function" is defined for example in \cite{Cohn_24}).
		For further discussion, we refer to \cite[Section~5]{Kravchuk--Mazac--Pal} and \cite{Radcliffe}.
	\end{rem}
	
	\subsection{Subconvexity for triple product $L$-functions} \label{subsec:applications:triple}
	
	Let $\Gamma$ be a cocompact lattice in $G$, and let $\psi$ be an automorphic form on $\Gamma \backslash G$.
	Then $|\psi|^2$ has weight $0$, so $|\psi|^2$ is a function on $\Gamma \backslash G/K = \Gamma \backslash \mathbf{H}$, and thus can be expanded in a Laplace eigenbasis.
	To express this in our notation, let $\{\psi_i\}_{i \in I}$ be a $(\g,K)$-adapted basis of $L^2(\Gamma \backslash G)$, and let
	\begin{align*}
		\mathcal{S}
		= (\{\lambda_r\}_{r \geq 0}, \{k_r\}_{r \geq 1}, \{C_{ij}^{\ell}\}_{i,j,\ell \in I})
	\end{align*}
	be the multiplicative spectrum of $\Gamma \backslash \mathbf{H}$ with respect to $\{\psi_i\}_{i \in I}$.
	Then $\{\psi_{(r,0)}\}_{r \geq 0}$ is a Laplace eigenbasis of $\Gamma \backslash G/K = \Gamma \backslash \mathbf{H}$.
	Suppose $\psi = \psi_i$ for some $i \in I$.
	Then the expansion of $|\psi|^2$ in the above eigenbasis is
	\begin{align*}
		|\psi|^2
		= \psi_i \overline{\psi_i}
		= (-1)^{i_2} \psi_i \psi_{\overline{i}}
		= (-1)^{i_2} \sum_{\ell \in I} C_{i\overline{i}}^{\ell} \psi_{\ell}
		= (-1)^{i_2} \sum_{r=0}^{\infty} C_{i\overline{i}}^{(r,0)} \psi_{(r,0)}.
	\end{align*}
	Here we have used \eqref{eqn:psi_i_bar_vs_bar_psi_i} for the second equality and \eqref{eqn:SE2} and \eqref{eqn:i2=0_implies_i1geq0} for the last equality.
	
	The rate of decay of the coefficients $C_{i\overline{i}}^{(r,0)}$ as $r \to \infty$ has been well studied due to the connection to $L$-functions explained below. For more discussion and references, see \cite{Adve_et_al}.
	The sharpest
	estimates on the rate of decay
	can be recast as applications of the hyperbolic bootstrap equations.
	These estimates are summarized in the following theorem.
	In the rest of this subsection, implicit constants may depend on $\Gamma$.
	
	\begin{thm} \label{thm:triple_bds}
		Let $r,\rho \in \Z_{\geq 1}$ (think of $\rho$ as fixed, and think of $r$ as going to infinity).
		\begin{itemize} \itemsep = 0.5em
			\item \textnormal{(Maass case; \cite{Bernstein--Reznikov_10})}
			Let $i = (\rho,0)$. Then for every $\varepsilon > 0$,
			\begin{align} \label{eqn:triple_bd_Maass}
				|C_{i\overline{i}}^{(r,0)}|
				\lesssim_{\rho,\varepsilon} \lambda_r^{-\frac{1}{12}+\varepsilon} e^{-\frac{\pi}{2}\sqrt{\lambda_r}}.
			\end{align}
			
			\item \textnormal{(Holomorphic case; \cite{Adve_et_al})}
			Let $k = k_{\rho}$ and $i = (-\rho,k)$. Then for every $\varepsilon > 0$,
			\begin{align} \label{eqn:triple_bd_holo}
				|C_{i\overline{i}}^{(r,0)}|
				\lesssim_{\rho,\varepsilon} \lambda_r^{k-\frac{1}{6}+\varepsilon} e^{-\frac{\pi}{2}\sqrt{\lambda_r}}.
			\end{align}
		\end{itemize}
	\end{thm}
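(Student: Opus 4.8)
The plan is to separate the two features of the bound: the exponential factor $e^{-\frac{\pi}{2}\sqrt{\lambda_r}}$, which is soft and falls out of the recursion relations alone, and the polynomial factor, which is the genuine subconvexity input. Throughout, $i$ is fixed and $\lambda_r\to\infty$, and the idea is to raise the weight of $\psi_i$ by a large amount $n$ comparable to $\sqrt{\lambda_r}$, exploiting that this amplifies $C_{i\overline i}^{(r,0)}$ by a factor that is exponentially large in $\sqrt{\lambda_r}$. In the holomorphic case Corollary~\ref{cor:C_+n_-n_formula_explicit} gives $C_{i^{+n}\overline{i^{+n}}}^{(r,0)}=B_{k,n}(\lambda_r)\,C_{i\overline i}^{(r,0)}$ with $k=k_\rho$. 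In the Maass case ($i=(\rho,0)$), iterating Corollary~\ref{cor:num_recursion_specialized} exactly as in the proof of Corollary~\ref{cor:C_+n_-n_formula_explicit} produces polynomials $A_m=A_{\lambda_\rho,m}$ with $C_{i^{+m}\overline{i^{+m}}}^{(r,0)}=A_m(\lambda_r)\,C_{i\overline i}^{(r,0)}$, where $A_0=1$, $A_1(\lambda)=(\lambda-2\lambda_\rho)/(2\lambda_\rho)$, and for $m\ge 1$
\begin{align*}
(\lambda_\rho+m(m+1))A_{m+1}(\lambda) &= (\lambda-2\lambda_\rho-2m^2)A_m(\lambda)-(\lambda_\rho+m(m-1))A_{m-1}(\lambda).
\end{align*}
After rescaling, both $\{B_{k,m}\}_m$ and $\{A_m\}_m$ are families of orthogonal polynomials in $\lambda$ (the three-term recurrences have positive off-diagonal coefficients), so all their zeros are $O(m^2)$; in particular $B_{k,n}(\lambda_r),A_n(\lambda_r)>0$ once $\lambda_r\gg_\rho n^2$, and then $C_{i\overline i}^{(r,0)}=C_{i^{+n}\overline{i^{+n}}}^{(r,0)}/B_{k,n}(\lambda_r)$, resp.\ with $A_n$ in place of $B_{k,n}$.

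Next I would prove a lower bound for the amplification factor, which is where the constant $\tfrac\pi2$ appears. For $m<\tfrac12\sqrt{\lambda_r}$ the normalized recurrences are slowly-varying perturbations of $X_{m+1}=(\lambda_r m^{-2}-2)X_m-X_{m-1}$, whose dominant characteristic root is $x_+(m)=\tfrac12\bigl(\alpha_m-2+\sqrt{(\alpha_m-2)^2-4}\,\bigr)>1$ with $\alpha_m=\lambda_r m^{-2}>4$; a discrete WKB (Levinson-type) estimate then gives $B_{k,n}(\lambda_r),A_n(\lambda_r)\gtrsim_\rho \lambda_r^{-O(1)}\prod_{m\le n}x_+(m)$. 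Taking $n=\lfloor\tfrac12(1-\varepsilon)\sqrt{\lambda_r}\rfloor$ and evaluating $\sum_{m\le n}\log x_+(m)$ by Euler--Maclaurin, the substitution $\alpha_m-2=2\cosh 2\psi$ (so $\log x_+=2\psi$ and $m=\tfrac12\sqrt{\lambda_r}\,\operatorname{sech}\psi$) turns the sum into $\sqrt{\lambda_r}\int_0^{\psi_{\max}}\psi\sinh\psi\operatorname{sech}^2\psi\,d\psi$, and integrating by parts one finds $\int_0^\infty\psi\sinh\psi\operatorname{sech}^2\psi\,d\psi=\int_0^\infty\operatorname{sech}\psi\,d\psi=\tfrac\pi2$. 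Hence $B_{k,n}(\lambda_r),A_n(\lambda_r)\gtrsim_{\rho,\varepsilon}\lambda_r^{-O(1)}e^{(\frac\pi2-\varepsilon)\sqrt{\lambda_r}}$, which is precisely the origin of the factors $e^{-\frac\pi2\sqrt{\lambda_r}}$ in \eqref{eqn:triple_bd_Maass} and \eqref{eqn:triple_bd_holo}.

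It then remains to bound the shifted coefficient $C_{i^{+n}\overline{i^{+n}}}^{(r,0)}$ from above. Crudely, Cauchy--Schwarz together with $\|\psi_{(r,0)}\|_{\mathcal H}=1$ gives $|C_{i^{+n}\overline{i^{+n}}}^{(r,0)}|\le\|\psi_{i^{+n}}\|_{L^4}^2$, and since $\psi_{i^{+n}}$ is an automorphic vector of fixed Casimir eigenvalue $\lambda_{i_1}$ and weight at most $n\lesssim\sqrt{\lambda_r}$, the $L^4$ quasi-Sobolev inequality (Theorem~\ref{thm:L^4_quasi-Sobolev}) makes this $\lesssim_\rho\lambda_r^{O(1)}$; combined with the previous paragraph this already yields $|C_{i\overline i}^{(r,0)}|\lesssim_{\rho,\varepsilon}\lambda_r^{O(1)}e^{-(\frac\pi2-\varepsilon)\sqrt{\lambda_r}}$, but with a non-sharp polynomial power. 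To reach the exponents $-\tfrac1{12}$ (Maass) and $k-\tfrac16$ (holomorphic) one must replace this single use of Cauchy--Schwarz by the subconvexity-strength estimates of \cite{Bernstein--Reznikov_10} and \cite{Adve_et_al} respectively, recast as follows: bound $C_{i^{+n}\overline{i^{+n}}}^{(r,0)}$ not in isolation but by testing several crossing equations \eqref{eqn:SE6} in which $\psi_{(r,0)}$ and its weight shifts appear against a judiciously chosen linear combination and invoking positivity — the same mechanism used in Lemma~\ref{lem:cancel_discrete_contribution} and Proposition~\ref{prop:poly_decay_C_ij^l} — which is what upgrades the convexity exponent to the Burgess-type exponent; optimizing $\varepsilon$ against the choice of $n$ then restores the constant $\tfrac\pi2$ exactly.

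The main obstacle is this last step: formulating and executing the subconvexity estimate entirely through \eqref{eqn:SE6} and positivity — producing, in effect, the extremal-functional-type combination that \cite{Bernstein--Reznikov_10} and \cite{Adve_et_al} construct by analytic means — and doing so uniformly as both $n\to\infty$ and $\lambda_r\to\infty$. A secondary technical point is controlling the error terms in the discrete WKB and Euler--Maclaurin estimates well enough (of size $\lambda_r^{O(1)}$) that they are absorbed into the $\varepsilon$ in the exponent; this requires care but no new ideas.
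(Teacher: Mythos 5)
The paper does not actually prove Theorem~\ref{thm:triple_bds}; the citations to \cite{Bernstein--Reznikov_10} and \cite{Adve_et_al} are part of the statement, and the content of Subsection~\ref{subsec:applications:triple} is to show that the \emph{key analytic input} to the holomorphic case, namely the spectral-reciprocity identity of Proposition~\ref{prop:KMP_crossing}, can be rederived purely from the hyperbolic bootstrap equations \eqref{eqn:SE6} via the infinite linear combination \eqref{eqn:u-t_crossing_combo}, leaving the Burgess-quality polynomial exponent to the cited papers. Your proposal attempts something more ambitious: a direct proof by weight amplification plus a discrete WKB analysis of the recursion for $B_{k,n}$. That is a genuinely different route, and the Euler--Maclaurin computation producing $\int_0^\infty\psi\sinh\psi\operatorname{sech}^2\psi\,d\psi=\tfrac{\pi}{2}$ is a nice alternative to the ${}_2F_1$ steepest-descent bound in Lemma~\ref{lem:hyp_asymptotic}. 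Your observation that the $B_{k,n}$ (after monic rescaling) form an orthogonal-polynomial sequence, so their zeros are $O(n^2)$, is also a legitimate way to justify positivity where the paper instead appeals directly to the asymptotics of Proposition~\ref{prop:C_+n_-n_l_size}.

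However, there is a real gap, and you have correctly identified it yourself: you do not obtain the stated polynomial exponents $-\tfrac{1}{12}$ (Maass) or $k-\tfrac{1}{6}$ (holomorphic). Moreover, the loss is not merely in the polynomial power. Your bound has the shape $\lambda_r^{O(1)}e^{-(\frac{\pi}{2}-\varepsilon)\sqrt{\lambda_r}}$, with $\varepsilon$ sitting in the exponential, whereas the theorem has the $\varepsilon$ in the polynomial factor and the constant $\tfrac{\pi}{2}$ exact. Since $e^{\varepsilon\sqrt{\lambda_r}}$ swamps any fixed polynomial, this is a strictly weaker estimate, and ``optimizing $\varepsilon$ against $n$'' will not close the gap without an independent argument that controls the WKB error terms to within a polynomial, uniformly in $n\sim\sqrt{\lambda_r}$. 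That is precisely the content of the subconvexity input from \cite{Bernstein--Reznikov_10,Adve_et_al}, and neither the paper nor your proposal reprove it. A secondary issue: your invocation of Theorem~\ref{thm:L^4_quasi-Sobolev} to bound $\|\psi_{i^{+n}}\|_{L^4}$ is not valid, since that theorem is stated only for $K$-invariant vectors $\alpha\in\H^K\cap\H^{\fin}$, while $\psi_{i^{+n}}$ has weight $i_2+n\ne 0$. The correct ingredient is Lemma~\ref{lem:bulk_bd_for_Weyl}, which gives $\|\psi_j\|_{L^4}\lesssim_{j_1}(|j_2|+1)^{O(1)}$; but note that its proof uses Sobolev embedding on the manifold $\Gamma\backslash G$, i.e., it relies on the geometric realization and not only on the hyperbolic bootstrap equations.
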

	
	In the former case, $\psi_i$ is a Maass form, and in the latter case, $\psi_i$ is a holomorphic modular form (in both cases viewed as functions on $\Gamma \backslash G$).
	
	The exponential decay in $\sqrt{\lambda_r}$ is a general phenomenon independent of the constant curvature of $\Gamma \backslash \mathbf{H}$ (see Subsection~\ref{subsec:outline:bds_G/Gamma} for a soft argument which gives such exponential decay).
	However, the fact that $\frac{\pi}{2}$ is the correct constant in the exponent does use constant curvature. This was first established in the generality of Theorem~\ref{thm:triple_bds} by Sarnak \cite{Sarnak_94}, who was motivated by applications to number theory as suggested by Selberg \cite{Selberg}.
	The achievement of \cite{Bernstein--Reznikov_10,Adve_et_al} is to give the best known polynomial factor in front of the exponential.
	One can formulate an analog of Theorem~\ref{thm:triple_bds} for non-cocompact $\Gamma$ as well (including Eisenstein series in the Laplace eigenbasis).
	Then specifically for $\Gamma = \PSL_2(\Z)$, Blomer, Jana, and Nelson \cite{Blomer--Jana--Nelson} showed that $\RHS\eqref{eqn:triple_bd_Maass}$ can be improved to
	\begin{align} \label{eqn:BJN_bd}
		\lambda_r^{-\frac{1}{6}+\varepsilon} e^{-\frac{\pi}{2}\sqrt{\lambda_r}}.
	\end{align}
	Their method should generalize to congruence subgroups of $\PSL_2(\Z)$.
	Aside from \cite{Blomer--Jana--Nelson}, no other improvement to \eqref{eqn:triple_bd_Maass} or \eqref{eqn:triple_bd_holo} is known even for any single $\Gamma$.
	
	Theorem~\ref{thm:triple_bds}
	is of particular interest when $\Gamma$ is arithmetic in a suitable sense, and when the basis elements $\psi_j$ are eigenfunctions for the Hecke operators.
	Then whenever $C_{i\overline{i}}^{(r,0)}$ is nonzero, the Watson--Ichino formula \cite{Watson,Ichino} expresses $|C_{i\overline{i}}^{(r,0)}|^2$ as the central value of a triple product $L$-function multiplied by an explicit constant of proportionality.
	The estimates \eqref{eqn:triple_bd_Maass} and \eqref{eqn:triple_bd_holo} translate to subconvex bounds for these $L$-functions in the spectral aspect, and in the holomorphic case, \eqref{eqn:triple_bd_holo} is a bound of \emph{Weyl quality} (see \cite{Michel} for an introduction to subconvexity).
	For those triple product $L$-functions related to one of the $C$'s coming from $\Gamma = \PSL_2(\Z)$, one also has the Weyl bound in the Maass case by \cite{Blomer--Jana--Nelson}.
	Experience with subconvexity for lower degree $L$-functions suggests that to improve upon the Weyl bound would require major new ideas.
	Thus $\frac{1}{6}$ in \eqref{eqn:triple_bd_holo} and \eqref{eqn:BJN_bd} is a significant threshold.
	
	At a high level, the proofs of \eqref{eqn:triple_bd_Maass} and \eqref{eqn:triple_bd_holo} are similar (see \cite[Section~6]{Adve_et_al} for a comparison).
	Here, we focus on \eqref{eqn:triple_bd_holo}.
	So let $k$ and $i$ be as in the holomorphic case of Theorem~\ref{thm:triple_bds}.
	Then the only information about $C_{i\overline{i}}^{(r,0)}$ used in the proof of \eqref{eqn:triple_bd_holo} is \cite[Corollary~3.8]{Adve_et_al}, which is equivalent to the following proposition.
	In the statement, ${}_2F_1$ is the Gauss hypergeometric function, and $s_r$ denotes one of the two solutions to the quadratic equation $s_r(1-s_r) = \lambda_r$ (it does not matter which one).
	
	\begin{prop} \label{prop:KMP_crossing}
		Let $z \in \C \setminus [1,\infty)$. Then
		\begin{align} \label{eqn:KMP_crossing}
			\sum_{r=0}^{\infty} {}_2F_1(s_r,1-s_r,1,z) |C_{i\overline{i}}^{(r,0)}|^2
			= \Big(\frac{1}{1-z}\Big)^{2k} \sum_{r=0}^{\infty} {}_2F_1\Big(s_r,1-s_r,1, \frac{z}{z-1}\Big) |C_{i\overline{i}}^{(r,0)}|^2,
		\end{align}
		with both sides converging absolutely and locally uniformly in $z$.
	\end{prop}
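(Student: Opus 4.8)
The plan is to derive \eqref{eqn:KMP_crossing} by applying the crossing equation \eqref{eqn:SE6} to a suitable generating-function combination of the indices $i^{+n}$, then summing over $n$ against the Taylor coefficients of a hypergeometric function. Recall from the proof of Proposition~\ref{prop:C_+n_-n_l_size} and Corollary~\ref{cor:C_+n_-n_formula_explicit} that for $j = (-\rho, k)$ (here $j = i$) and any $\ell \in I$ with $\ell_2 = 0$ we have $C_{i^{+n}\overline{i^{+n}}}^{\ell} = B_{k,n}(\lambda_{\ell_1}) C_{i\overline{i}}^{\ell}$, where $B_{k,n}$ obeys the recurrence \eqref{eqn:B_k,n_recurrence}. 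The first step is to recognize the generating function $\sum_{n\geq 0} B_{k,n}(\lambda) w^n$ as (a multiple of) a Gauss hypergeometric function in $w$. Concretely, the three-term recurrence \eqref{eqn:B_k,n_recurrence} is precisely the contiguous relation satisfied by the coefficients of ${}_2F_1(s,1-s,1,\cdot)$ after the substitution $\lambda = s(1-s)$ and an appropriate change of variable $w \mapsto z$; identifying the two is a routine but essential calculation. The normalization is fixed by $B_{k,0} = 1$ and $B_{k,1}(\lambda) = \lambda/(2k) - 1$.

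Granting this identification, the second step is to expand both sides of \eqref{eqn:KMP_crossing} as power series in $z$ (valid for $|z|$ small) and match coefficients against instances of \eqref{eqn:SE6}. The left side of \eqref{eqn:KMP_crossing}, after using $C_{i\overline{i}}^{(r,0)} \in \R$ (from \eqref{eqn:SE1} and \eqref{eqn:SE3}) and the relation $C_{i^{+n}\overline{i^{+n}}}^{(r,0)} = B_{k,n}(\lambda_r) C_{i\overline{i}}^{(r,0)}$, becomes a double sum $\sum_n w^n \sum_r (\text{coefficient}) C_{i^{+n}\overline{i^{+n}}}^{(r,0)} C_{i\overline{i}}^{(r,0)}$, and each inner sum is $\RHS\eqref{eqn:s-t_crossing_=}$-type data, i.e., a $t$-channel sum that \eqref{eqn:SE6} equates to the corresponding $s$-channel sum $\sum_\ell (-1)^{\ell_2} C_{i^{+n} i}^{\ell} C_{\overline{i^{+n}}\,\overline{i}}^{\ell}$. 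The factor $(1/(1-z))^{2k}$ on the right side of \eqref{eqn:KMP_crossing} should emerge from the hypergeometric transformation formula ${}_2F_1(a,b,c,z) = (1-z)^{-a}\,{}_2F_1(a, c-b, c, z/(z-1))$ — with $a = s_r$, $b = 1-s_r$, $c=1$ this gives the factor $(1-z)^{-s_r}$, which does not look right, so one rather uses the symmetric transformation ${}_2F_1(a,b,c,z) = (1-z)^{c-a-b}\,{}_2F_1(c-a,c-b,c,z)$ combined with a Pfaff transformation, tracking the weight $2k$; sorting out exactly which classical ${}_2F_1$ identity produces the clean prefactor $(1-z)^{-2k}$ and the argument $z/(z-1)$ is where the bookkeeping lives. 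Crucially, the $s$-channel sums on the two sides must match term by term after this transformation — this is exactly the crossing symmetry \eqref{eqn:crossing}/\eqref{eqn:SE6} applied with the generating-function weights, and it is the conceptual heart of the identity.

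The third step is convergence and analytic continuation. For small $|z|$ the manipulations above are justified by the polynomial decay of Proposition~\ref{prop:poly_decay_C_ij^l}: since $B_{k,n}(\lambda_r) = O_{k,n}(\lambda_r^n)$ and $\sum_r \lambda_r^N |C_{i\overline{i}}^{(r,0)}|^2 < \infty$ for all $N$, the double series converges absolutely. To extend to all $z \in \C \setminus [1,\infty)$ one invokes the exponential-decay estimate on the triple-product coefficients, Theorem~\ref{thm:triple_bds} (holomorphic case): $|C_{i\overline{i}}^{(r,0)}| \lesssim_{\rho,\varepsilon} \lambda_r^{k - 1/6 + \varepsilon} e^{-\tfrac{\pi}{2}\sqrt{\lambda_r}}$, combined with the large-$s_r$ asymptotics of ${}_2F_1(s_r, 1-s_r, 1, z)$ for fixed $z \notin [1,\infty)$, which grows at most like $e^{c(z)\sqrt{\lambda_r}}$ with $c(z) < \pi/2$ on compact subsets of $\C\setminus[1,\infty)$ (this is the standard bound reflecting that the ${}_2F_1$ is a Legendre/conical function and $[1,\infty)$ is its branch cut). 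Absolute and locally uniform convergence of both sides on $\C\setminus[1,\infty)$ follows, and the identity propagates from a neighborhood of $z=0$ by the identity theorem. The main obstacle I anticipate is the second step — pinning down the precise hypergeometric transformation that yields the exact prefactor $(1-z)^{-2k}$ and argument $z/(z-1)$, and verifying that the $s$-channel data on both sides genuinely coincide under that transformation (as opposed to merely being formally similar); this is where one must be careful that the weight-$2k$ twist in $i = (-\rho, k)$ is accounted for correctly, since it is exactly this twist that breaks the naive symmetry and produces the asymmetric form of \eqref{eqn:KMP_crossing}.
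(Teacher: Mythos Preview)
Your high-level strategy---form an infinite linear combination of instances of \eqref{eqn:SE6} indexed by $n$, weighted by the Taylor coefficients of a hypergeometric series, and identify the two resulting sums with the two sides of \eqref{eqn:KMP_crossing}---is exactly the paper's approach. But two of your three steps have real gaps.

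\textbf{Convergence for $|z|<1$.} Proposition~\ref{prop:poly_decay_C_ij^l} alone does not give what you claim. It tells you that $\sum_r \lambda_r^N |C_{i\overline{i}}^{(r,0)}|^2 < \infty$ for each fixed $N$, but with no control on how this quantity grows with $N$. Since $B_{k,n}(\lambda_r)$ is a degree-$n$ polynomial in $\lambda_r$, the inner sums $\sum_r |B_{k,n}(\lambda_r)|\,|C_{i\overline{i}}^{(r,0)}|^2$ are finite for each $n$ but could grow super-exponentially in $n$, and then the geometric factor $|z|^n$ does not save you. The paper handles this differently: it first proves absolute convergence of the linear combination \eqref{eqn:u-t_crossing_combo} of crossing equations using Lemma~\ref{lem:bulk_bd_for_Weyl}, which bounds $\sum_{\ell} |C_{jj'}^{\ell}|^2$ \emph{polynomially in the weights} $|j_2|,|j_2'|$. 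That lemma is proved by Sobolev embedding on $\Gamma\backslash G$ and is explicitly flagged in the paper as an extra analytic input beyond the hyperbolic bootstrap equations. Once \eqref{eqn:u-t_crossing_combo} converges absolutely, the rearrangements in Lemmas~\ref{lem:KMP_crossing=combo_LHS} and \ref{lem:KMP_crossing=combo_RHS} transfer convergence to both sides of \eqref{eqn:KMP_crossing}.

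\textbf{Analytic continuation.} Your proposal to invoke Theorem~\ref{thm:triple_bds} here is circular: the paper states that the only information about $C_{i\overline{i}}^{(r,0)}$ used in the proof of \eqref{eqn:triple_bd_holo} is precisely Proposition~\ref{prop:KMP_crossing}. The paper's argument is self-contained: once convergence of $\LHS\eqref{eqn:KMP_crossing}$ is known for $|z|<1$, taking $z \in (0,1)$ close to $1$ and using the hypergeometric \emph{lower} bound \eqref{eqn:2F1_lower_bd} forces $\sum_r e^{(\pi-\delta)\sqrt{\lambda_r}} |C_{i\overline{i}}^{(r,0)}|^2 < \infty$ for every $\delta > 0$. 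This exponential decay, combined with the upper bound \eqref{eqn:2F1_upper_bd}, then gives absolute and locally uniform convergence on all of $\C \setminus [1,\infty)$, and analytic continuation finishes the proof.

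On the hypergeometric identifications: the paper avoids hunting for the right classical transformation. For the left side it proves the closed-form recursion $|C_{\overline{i}i^{+n}}^{(r,n)}|^2 = \frac{(s_r)_n(1-s_r)_n}{n!(2k)_n}|C_{i\overline{i}}^{(r,0)}|^2$ (Lemma~\ref{lem:C_recurrence_for_Weyl}), which directly produces the ${}_2F_1$ power series. For the right side it shows that $\sum_n (-1)^n \frac{(2k)_n}{n!} B_{k,n}(\lambda) z^n$ and $(1-z)^{-2k}\,{}_2F_1(s,1-s,1,\frac{z}{z-1})$ satisfy the same second-order ODE with the same initial data at $z=0$; the factor $(1-z)^{-2k}$ emerges from that ODE comparison, not from a named ${}_2F_1$ identity.
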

	
	From the point of view of the conformal bootstrap, \eqref{eqn:KMP_crossing} can be interpreted as a crossing equation, while from the point of view of the analytic theory of automorphic forms, \eqref{eqn:KMP_crossing} can be interpreted as a spectral reciprocity formula.
	
	The convergence assertion in Proposition~\ref{prop:KMP_crossing} is powerful by itself --- we will see when we prove \eqref{eqn:pi_correct_exponent} that convergence of $\LHS\eqref{eqn:KMP_crossing}$ implies
	\begin{align*}
		|C_{i\overline{i}}^{(r,0)}|
		\lesssim e^{-(\frac{\pi}{2}-o(1)) \sqrt{\lambda_r}}
	\end{align*}
	as $r \to \infty$.
	
	The plan for the remainder of this subsection is to give an alternative proof of Proposition~\ref{prop:KMP_crossing} based on the hyperbolic bootstrap equations.
	This proof is quite different from the one in \cite{Adve_et_al} (which in turn follows \cite{Kravchuk--Mazac--Pal}).
	We will derive \eqref{eqn:KMP_crossing} as a formal identity by taking linear combinations of crossing equations as in the proof of Proposition~\ref{prop:KMP_(2.47)}. This formal derivation uses only that $\mathcal{S}$ solves the hyperbolic bootstrap equations.
	Unlike in the proof of Proposition~\ref{prop:KMP_(2.47)}, we will take an infinite linear combination, and to prove convergence, we will need an additional estimate on the $C$'s (``additional" in the sense that we do not assume it in Theorem~\ref{thm:main}).
	This estimate is the lemma below, and its proof uses that $\mathcal{S}$ comes from a 2-orbifold.
	Specifically, the proof uses a Sobolev embedding inequality for functions on $\Gamma \backslash G$.
	
	\begin{lem} \label{lem:bulk_bd_for_Weyl}
		Let $j,j' \in I$. Then
		\begin{align} \label{eqn:bulk_bd_for_Weyl}
			\sum_{\ell \in I} |C_{jj'}^{\ell}|^2
			\lesssim_{j_1,j_1'} (|j_2|+|j_2'|+1)^{O(1)}.
		\end{align}
	\end{lem}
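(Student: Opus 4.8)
The plan is to read $\sum_{\ell \in I} |C_{jj'}^{\ell}|^2$ as an $L^2$-norm and bound it by Sobolev embedding on $\Gamma \backslash G$. Since $\{\psi_i\}_{i \in I}$ is an orthonormal basis of $L^2(\Gamma \backslash G)$ and the automorphic forms $\psi_j,\psi_{j'}$ are smooth on the compact quotient (by Definition~\ref{def:aut_vector} and elliptic regularity), the product $\psi_j\psi_{j'}$ is bounded, hence lies in $L^2(\Gamma \backslash G)$, and Parseval gives
\begin{align*}
	\sum_{\ell \in I} |C_{jj'}^{\ell}|^2
	= \|\psi_j \psi_{j'}\|_{L^2(\Gamma \backslash G)}^2
	\leq \|\psi_j\|_{L^{\infty}(\Gamma \backslash G)}^2 \, \|\psi_{j'}\|_{L^2(\Gamma \backslash G)}^2
	= \|\psi_j\|_{L^{\infty}(\Gamma \backslash G)}^2.
\end{align*}
So it suffices to prove $\|\psi_j\|_{L^{\infty}(\Gamma \backslash G)} \lesssim_{j_1} (1+|j_2|)^{O(1)}$; combined with the same bound with the roles of $j$ and $j'$ interchanged, this yields \eqref{eqn:bulk_bd_for_Weyl}.

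To bound $\|\psi_j\|_{L^{\infty}}$, recall from \eqref{eqn:P_def} that $\P = \Delta + 2H^2$ is a left-invariant elliptic differential operator of order $2$ on $G$, hence descends to an elliptic operator on $\Gamma \backslash G$. Since $\psi_j$ has Casimir eigenvalue $\lambda_{j_1}$ and weight $j_2$ by Proposition~\ref{prop:(g,K)-adapted_key_properties}, we have $\P\psi_j = (\lambda_{j_1} + 2j_2^2)\psi_j$, and $\lambda_{j_1} + 2j_2^2 \geq 0$ by \eqref{eqn:sqrt_nonnegativity} (averaging the two inequalities there gives $\lambda_{j_1} + j_2^2 \geq 0$). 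Passing to a torsion-free finite-index normal subgroup $\Gamma' \trianglelefteq \Gamma$ (Selberg's lemma), so that $M := \Gamma' \backslash G$ is a compact Riemannian $3$-manifold and $\P$ is a genuine elliptic operator on $M$, the pullback of $\psi_j$ to $M$ has the same $L^{\infty}$ and $L^2$ norms and the same $\P$-eigenvalue. Elliptic regularity for $\P$ on $M$ gives $\|f\|_{H^2(M)} \lesssim \|f\|_{L^2(M)} + \|\P f\|_{L^2(M)}$, and $H^2(M) \hookrightarrow C^0(M)$ since $\dim M = 3 < 4$; therefore
\begin{align*}
	\|\psi_j\|_{L^{\infty}(\Gamma \backslash G)}
	\lesssim \|\psi_j\|_{L^2} + \|\P\psi_j\|_{L^2}
	= 1 + (\lambda_{j_1} + 2j_2^2)
	\lesssim_{j_1} (1+|j_2|)^2,
\end{align*}
which completes the proof.

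The argument is essentially routine; the only points needing care are the passage to a manifold cover (alternatively, one invokes Sobolev embedding directly on the compact orbifold $\Gamma \backslash \mathbf{H}$, or its unit tangent bundle) and the identification $\sum_{\ell} |C_{jj'}^{\ell}|^2 = \|\psi_j\psi_{j'}\|_{L^2}^2$, which hinges on $\psi_j\psi_{j'} \in L^2(\Gamma \backslash G)$ — automatic here because automorphic forms on a compact quotient are smooth, hence bounded. One could instead use the $L^4$ inequality $\|\psi_j\psi_{j'}\|_{L^2} \leq \|\psi_j\|_{L^4}\|\psi_{j'}\|_{L^4}$ with $\|\psi_j\|_{L^4} \lesssim_{j_1} (1+|j_2|)^{O(1)}$, but the $L^{\infty}$ route above is the shortest and already suffices.
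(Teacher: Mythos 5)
Your proof is correct, and it takes a slightly different route from the paper's. After the common first step (Parseval/orthonormality gives $\sum_{\ell}|C_{jj'}^{\ell}|^2 = \|\psi_j\psi_{j'}\|_{L^2}^2$), the paper applies $L^4$--Cauchy--Schwarz, reducing to $\|\psi_j\|_{L^4} \lesssim_{j_1}(|j_2|+1)^{O(1)}$, and then proves this via Sobolev embedding and an induction on the degree of $\mathcal{D} \in \mathfrak{U}(\g)$ using the $E,\overline{E},H$ formulas in Proposition~\ref{prop:(g,K)-adapted_key_properties}. You instead apply $L^{\infty}\times L^2$ H\"older and then bound $\|\psi_j\|_{L^{\infty}}$ directly via elliptic regularity for the operator $\P = \Delta + 2H^2$, exploiting that $\psi_j$ is a $\P$-eigenfunction. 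Both routes are valid. Yours gives the formally stronger conclusion $\sum_{\ell}|C_{jj'}^{\ell}|^2 \lesssim_{j_1}(|j_2|+1)^{O(1)}$ with no dependence on $j'$ at all (so the ``interchange $j,j'$'' sentence is superfluous), and the elliptic-regularity step is a one-shot computation rather than an induction. The paper's route stays closer to the abstract $L^4$ machinery developed elsewhere in the paper and needs only a low-order Sobolev embedding ($H^1 \hookrightarrow L^4$ in dimension $3$, vs.\ your $H^2 \hookrightarrow C^0$), but this distinction is cosmetic since $\psi_j$ is smooth. One small simplification you should make: the digression through Selberg's lemma and a torsion-free cover is unnecessary, because $\Gamma$ always acts freely on $G$ by left translation (even when $\Gamma$ has torsion), so $\Gamma\backslash G \simeq T^1(\Gamma\backslash\mathbf{H})$ is already a compact $3$-manifold on which elliptic regularity and Sobolev embedding apply directly --- your parenthetical ``alternatively'' is in fact the primary and only argument needed.
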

	
	\begin{proof}
		Write
		\begin{align*}
			\sum_{\ell \in I} |C_{jj'}^{\ell}|^2
			= \|\psi_j \psi_{j'}\|_{L^2(\Gamma \backslash G)}^2
			\leq \|\psi_j\|_{L^4(\Gamma \backslash G)}^2 \|\psi_{j'}\|_{L^4(\Gamma \backslash G)}^2.
		\end{align*}
		This reduces us to showing that
		\begin{align*}
			\|\psi_j\|_{L^4(\Gamma \backslash G)}
			\lesssim_{j_1} (|j_2|+1)^{O(1)}.
		\end{align*}
		By Sobolev embedding, it further suffices to show that
		\begin{align*}
			\|\mathcal{D}\psi_j\|_{L^2(\Gamma \backslash G)}
			\lesssim_{\mathcal{D},j_1} (|j_2|+1)^{O_{\mathcal{D}}(1)}
		\end{align*}
		for any differential operator $\mathcal{D}$ in the universal enveloping algebra $\mathfrak{U}(\g)$.
		Since $\g$ is spanned by $H,E,\overline{E}$, this follows from Proposition~\ref{prop:(g,K)-adapted_key_properties} by induction on the degree of $\mathcal{D}$.
	\end{proof}
	
	
	To prove Proposition~\ref{prop:KMP_crossing}, we will use the following instances of \eqref{eqn:SE6}, one for each $n \in \Z_{\geq 0}$:
	\begin{align} \label{eqn:u-t_crossing_n}
		\sum_{\ell \in I} (-1)^{\ell_2} C_{\overline{i}i^{+n}}^{\ell} C_{i\overline{i^{+n}}}^{\overline{\ell}}
		= \sum_{\ell \in I} (-1)^{\ell_2} C_{\overline{i}i}^{\ell} C_{i^{+n}\overline{i^{+n}}}^{\overline{\ell}}
	\end{align}
	(as usual, $i^{+n} = (i_1,i_2+n)$).
	For $z \in \C$ with $|z| < 1$, consider the infinite linear combination
	\begin{align} \label{eqn:u-t_crossing_combo}
		\sum_{n=0}^{\infty} (-1)^n \frac{(2k)_n}{n!} z^n \sum_{\ell \in I} (-1)^{\ell_2} C_{\overline{i}i^{+n}}^{\ell} C_{i\overline{i^{+n}}}^{\overline{\ell}}
		= \sum_{n=0}^{\infty} (-1)^n \frac{(2k)_n}{n!} z^n \sum_{\ell \in I} (-1)^{\ell_2} C_{\overline{i}i}^{\ell} C_{i^{+n}\overline{i^{+n}}}^{\overline{\ell}}
	\end{align}
	of \eqref{eqn:u-t_crossing_n}, where
	\begin{align*}
		(q)_n = q(q+1) \cdots (q+n-1)
	\end{align*}
	denotes the (rising) Pochhammer symbol (and $(q)_0 = 1$).
	By Cauchy--Schwarz and Lemma~\ref{lem:bulk_bd_for_Weyl}, both sides of \eqref{eqn:u-t_crossing_combo} converge absolutely and locally uniformly in $z$ for $|z| < 1$.
	Thus, to prove Proposition~\ref{prop:KMP_crossing} in the case $|z| < 1$, it suffices to prove the two lemmas below.
	
	\begin{lem} \label{lem:KMP_crossing=combo_LHS}
		Let $z \in \C$ with $|z| < 1$. Then $\LHS\eqref{eqn:u-t_crossing_combo}
		= \LHS\eqref{eqn:KMP_crossing}$.
	\end{lem}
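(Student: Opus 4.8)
The plan is to recognize the left-hand side of \eqref{eqn:u-t_crossing_combo} as a term-by-term resummation of the left-hand side of \eqref{eqn:KMP_crossing}. First I would interchange the order of summation over $n$ and $\ell$ in $\LHS\eqref{eqn:u-t_crossing_combo}$, which is justified by the absolute and locally uniform convergence established above via Cauchy--Schwarz and Lemma~\ref{lem:bulk_bd_for_Weyl}. This reduces the claim to an identity, for each fixed $\ell$, expressing $\sum_{n \geq 0} (-1)^n \frac{(2k)_n}{n!} z^n \, (-1)^{\ell_2} C_{\overline{i}i^{+n}}^{\ell} \, C_{i\overline{i^{+n}}}^{\overline{\ell}}$ as the $\ell$-term in $\LHS\eqref{eqn:KMP_crossing}$, i.e.\ as ${}_2F_1(s_r, 1-s_r, 1, z) \, |C_{i\overline{i}}^{(r,0)}|^2$ when $\ell = (r,0)$ (and zero otherwise, consistently with \eqref{eqn:SE2} and \eqref{eqn:i2=0_implies_i1geq0} forcing $C_{i\overline{i}}^{\ell}$ to vanish unless $\ell = (r,0)$, and the same constraint applied to $C_{\overline{i}i^{+n}}^{\ell}$ forcing $\ell_2 = n$). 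Wait---$C_{\overline{i}i^{+n}}^{\ell}$ has weight index $-i_2 + i_2 + n = n$, so only $\ell$ with $\ell_2 = n$ contributes; the interchange must therefore be organized so that for each $\ell$ exactly one value of $n$ (namely $n = \ell_2$) survives on the left before resummation is even visible. I would instead keep the sum over $n$ outside and, for each $n$, use \eqref{eqn:SE2} to restrict the inner $\ell$-sum to $\ell_2 = n$.

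The computational heart is then to evaluate $C_{\overline{i}i^{+n}}^{\ell}$ and $C_{i\overline{i^{+n}}}^{\overline{\ell}}$ in terms of $C_{i\overline{i}}^{(r,0)}$. The second factor is handled by the conjugation symmetry \eqref{eqn:SE3} together with \eqref{eqn:SE1}, reducing it to a $C$ of the shape already controlled. For the first factor, I would repeatedly apply the raising-operator recursion \eqref{eqn:HB5_same_terms} (or equivalently \eqref{eqn:SE5} combined with \eqref{eqn:SE1}) to climb from $C_{\overline{i}i}^{\ell'}$ up to $C_{\overline{i}i^{+n}}^{\ell}$, exactly as in the proof of Proposition~\ref{prop:KMP_(2.47)}: since $\lambda_{i_1} = -k(k-1)$ and $i_2 = k$, the square-root prefactors collapse to quantities depending only on $k$ and $n$. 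This expresses each summand as $C_{i\overline{i}}^{(r,0)} \overline{C_{i\overline{i}}^{(r,0)}} = |C_{i\overline{i}}^{(r,0)}|^2$ times an explicit rational function of $k$, $n$, and $\lambda_r$ (via $s_r(1-s_r) = \lambda_r$). Summing over $n$ with the weights $(-1)^n \frac{(2k)_n}{n!} z^n$, the resulting series in $z$ must be identified with the hypergeometric series $\sum_n \frac{(s_r)_n (1-s_r)_n}{(1)_n \, n!} z^n = {}_2F_1(s_r, 1-s_r, 1, z)$; this is a matter of checking that the ratio of consecutive coefficients matches, i.e.\ that the climbing recursion for the $C$'s together with the $(2k)_n/n!$ weights reproduces the Gauss contiguous relation for ${}_2F_1$. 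This is the step I expect to be the main obstacle: getting the bookkeeping of the square-root normalizations exactly right so that the Pochhammer symbols $(s_r)_n (1-s_r)_n$ emerge, rather than merely up to an $n$-dependent scalar. I anticipate this is where one imports (or re-derives) the explicit hypergeometric expression for the descendant structure constants that underlies \cite[Corollary~3.8]{Adve_et_al}; once that computation is in hand, the identity $\LHS\eqref{eqn:u-t_crossing_combo} = \LHS\eqref{eqn:KMP_crossing}$ for $|z| < 1$ follows, and the locally uniform convergence lets us extend it to $z \in \C \setminus [1,\infty)$ by analytic continuation if needed for the companion lemma.
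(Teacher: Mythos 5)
Your plan is the same as the paper's. The calculation you flag as the main obstacle---that the raising recursion together with the weights $(-1)^n(2k)_n/n!$ reproduces the Pochhammer symbols $(s_r)_n(1-s_r)_n/(n!)^2$---is precisely what the paper establishes as Lemma~\ref{lem:C_recurrence_for_Weyl}, by induction on $n$ using \eqref{eqn:SE5}; the inductive step hinges on the factorization $\lambda_r + n(n-1) = (s_r+n-1)\bigl(1-s_r+(n-1)\bigr)$, which is where the Pochhammers come from. One small shortcut you are overcomplicating: by \eqref{eqn:SE3} alone, $C_{i\overline{i^{+n}}}^{\overline{\ell}} = \overline{C_{\overline{i}i^{+n}}^{\ell}}$, so each summand is already $|C_{\overline{i}i^{+n}}^{\ell}|^2$ and there is no need to reduce the two factors separately.
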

	
	\begin{lem} \label{lem:KMP_crossing=combo_RHS}
		Let $z \in \C$ with $|z| < 1$. Then $\RHS\eqref{eqn:u-t_crossing_combo}
		= \RHS\eqref{eqn:KMP_crossing}$.
	\end{lem}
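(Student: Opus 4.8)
The plan is to unwind $\RHS\eqref{eqn:u-t_crossing_combo}$ into a single sum over the Laplace spectrum, reduce the lemma to one generating-function identity, and then prove that identity. Recall $i=(-\rho,k)$ with $k=k_\rho$, so $\lambda_{i_1}=-k(k-1)$, and $i^{+n}=(-\rho,k+n)$. For fixed $n$ the inner sum in $\RHS\eqref{eqn:u-t_crossing_combo}$ involves $C_{\overline i i}^{\ell}$, which equals $C_{i\overline i}^{\ell}$ by \eqref{eqn:SE1} and, by \eqref{eqn:SE2} together with \eqref{eqn:i2=0_implies_i1geq0}, vanishes unless $\ell=(r,0)$ with $r\ge 0$ (so $\overline\ell=\ell$); moreover \eqref{eqn:SE1} and \eqref{eqn:SE3} show $C_{i\overline i}^{(r,0)}\in\R$. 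By Corollary~\ref{cor:C_+n_-n_formula_explicit}, applied with $\rho$ in the role of its index ``$r$'' and $i=(-\rho,k_\rho)$ in the role of its ``$j$'', one has $C_{i^{+n}\overline{i^{+n}}}^{(r,0)}=B_{k,n}(\lambda_r)\,C_{i\overline i}^{(r,0)}$ for all $r\ge 0$. Since $\RHS\eqref{eqn:u-t_crossing_combo}$ converges absolutely (established before the statement of the lemmas), the two sums may be interchanged, giving
\[
\RHS\eqref{eqn:u-t_crossing_combo}=\sum_{r=0}^{\infty}|C_{i\overline i}^{(r,0)}|^2\,\Phi_k(\lambda_r;z),\qquad
\Phi_k(\lambda;z):=\sum_{n=0}^{\infty}(-1)^n\frac{(2k)_n}{n!}z^n B_{k,n}(\lambda),
\]
with all sums absolutely convergent for $|z|<1$ (here $(q)_n=q(q+1)\cdots(q+n-1)$).

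\textbf{Second reduction.} Comparing this with the shape of $\RHS\eqref{eqn:KMP_crossing}$, the lemma follows once we prove the termwise identity
\[
\Phi_k(\lambda;z)=\Big(\tfrac{1}{1-z}\Big)^{2k}{}_2F_1\!\Big(s,1-s,1,\tfrac{z}{z-1}\Big),\qquad s(1-s)=\lambda,\ |z|<1.
\]
(The right-hand side depends on $s$ only through $\lambda$, being invariant under $s\mapsto 1-s$, consistent with $B_{k,n}$ being a polynomial in $\lambda$; and once the identity is known, convergence of $\RHS\eqref{eqn:KMP_crossing}$ for $|z|<1$ is inherited from that of $\RHS\eqref{eqn:u-t_crossing_combo}$.)

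\textbf{Proof of the identity.} I would establish the closed form
\[
B_{k,n}(\lambda)=\frac{(-1)^n n!}{(2k)_n}\sum_{m=0}^{n}\frac{(s)_m^2}{(m!)^2}\,\frac{(2k-s)_{n-m}}{(n-m)!}
\]
by induction on $n$ using the recurrence in Corollary~\ref{cor:C_+n_-n_formula_explicit}: the cases $n=0,1$ are a direct check (for $n=1$ one uses $B_{k,1}(\lambda)=\lambda/(2k)-1=-\tfrac{1}{2k}(2k-s+s^2)$), and the induction step reduces to a polynomial identity in $n,k,s$. Granting this, $\Phi_k(\lambda;z)$ is precisely the Cauchy product of the binomial series $(1-z)^{s-2k}=\sum_j\frac{(2k-s)_j}{j!}z^j$ with ${}_2F_1(s,s,1,z)=\sum_m\frac{(s)_m^2}{(m!)^2}z^m$, so $\Phi_k(\lambda;z)=(1-z)^{s-2k}\,{}_2F_1(s,s,1,z)$. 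Finally, Pfaff's transformation ${}_2F_1(s,1-s,1,w)=(1-w)^{-s}{}_2F_1(s,s,1,\tfrac{w}{w-1})$ with $w=z/(z-1)$ (for which $w/(w-1)=z$ and $1-w=(1-z)^{-1}$) gives ${}_2F_1(s,1-s,1,\tfrac{z}{z-1})=(1-z)^{s}\,{}_2F_1(s,s,1,z)$, whence $\Phi_k(\lambda;z)=(1-z)^{-2k}\,{}_2F_1(s,1-s,1,\tfrac{z}{z-1})$, as needed. An alternative route that avoids the closed form is to translate the recurrence directly into a second-order linear ODE in $z$ for $\Phi_k(\cdot;z)$, identify it with the pullback along $w\mapsto w/(w-1)$ of the hypergeometric equation twisted by $(1-z)^{-2k}$, and match the two initial conditions $\Phi_k(\lambda;0)=1$ and $\partial_z\Phi_k(\lambda;0)=-2k\,B_{k,1}(\lambda)=2k-\lambda$.

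\textbf{Main obstacle.} The crux is the generating-function identity for the polynomials $B_{k,n}$. Both routes above are elementary but bookkeeping-heavy: the inductive verification requires confirming a three-term polynomial recurrence for the closed form, while the ODE route requires carefully converting a recurrence whose coefficients are quadratic in $n$ into a second-order ODE and then conjugating the hypergeometric equation correctly. One must be attentive to the branch of $(1-z)^{s-2k}$ on $|z|<1$ and to the fact that the final answer is a polynomial in $\lambda$ even though the intermediate expressions involve $s$; the rest of the argument (the manipulations with \eqref{eqn:SE1}--\eqref{eqn:SE3}, \eqref{eqn:i2=0_implies_i1geq0}, Corollary~\ref{cor:C_+n_-n_formula_explicit}, and the rearrangement of absolutely convergent sums) is routine.
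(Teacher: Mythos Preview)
Your proposal is correct and the reduction is identical to the paper's: use Corollary~\ref{cor:C_+n_-n_formula_explicit} together with \eqref{eqn:SE1}--\eqref{eqn:SE3} and \eqref{eqn:i2=0_implies_i1geq0} to collapse $\RHS\eqref{eqn:u-t_crossing_combo}$ to $\sum_{r\ge 0}|C_{i\overline i}^{(r,0)}|^2\,\Phi_k(\lambda_r;z)$, then reduce to the single identity $\Phi_k(\lambda;z)=(1-z)^{-2k}\,{}_2F_1(s,1-s,1,z/(z-1))$.

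For that identity, the paper takes exactly your ``alternative route'': convert the recurrence \eqref{eqn:B_k,n_recurrence} into a second-order ODE in $z$ for $\Phi_k$, check it agrees with the hypergeometric ODE after the substitution $w=z/(z-1)$ and the twist by $(1-z)^{-2k}$, and match initial data at $z=0$ (the paper omits details). Your primary route---a closed form for $B_{k,n}$ as a convolution, recognition of $\Phi_k$ as the Cauchy product $(1-z)^{s-2k}\,{}_2F_1(s,s,1,z)$, followed by Pfaff---is a different and somewhat more explicit proof of the same special-function identity. It buys a concrete formula for each $B_{k,n}$ at the cost of an inductive verification that the closed form satisfies \eqref{eqn:B_k,n_recurrence}; the ODE route avoids that bookkeeping but hides the coefficients. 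Either is fine here.
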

	
	Before we prove these two lemmas, let us see how the full Proposition~\ref{prop:KMP_crossing} can be deduced once we know it for $|z| < 1$.
	The key is the following hypergeometric asymptotic.
	
	\begin{lem} \label{lem:hyp_asymptotic}
		Fix $z \in \C \setminus [1,\infty)$. Then there exists $\delta = \delta(z) > 0$, such that if $s \in \C$ satisfies $s(1-s) = \lambda$ with $\lambda \gg_z 1$, then
		\begin{align} \label{eqn:2F1_upper_bd}
			|{}_2F_1(s,1-s,1,z)|
			\leq e^{(\pi-\delta)\sqrt{\lambda}}.
		\end{align}
		Furthermore, $\delta(z)$ may be taken to be locally uniformly bounded below as a function of $z$.
		
		Conversely, for each $\delta > 0$, if $z \in (0,1)$ is sufficiently close to $1$ depending on $\delta$, and if $s \in \C$ satisfies $s(1-s) = \lambda$ with $\lambda \gg_{\delta} 1$, then
		\begin{align} \label{eqn:2F1_lower_bd}
			{}_2F_1(s,1-s,1,z) \geq e^{(\pi-\delta) \sqrt{\lambda}}.
		\end{align}
	\end{lem}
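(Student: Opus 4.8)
The plan is to obtain both bounds from the Euler integral representation of the hypergeometric function. Write $s = \tfrac12 + i\mu$ with $\mu = \sqrt{\lambda - \tfrac14}$; the choice of root is immaterial since ${}_2F_1(s,1-s,1,z) = {}_2F_1(1-s,s,1,z)$. Using $\Gamma(s)\Gamma(1-s) = \pi/\sin(\pi s) = \pi/\cosh(\pi\mu)$, the Euler integral gives, for every $z \in \C \setminus [1,\infty)$,
\begin{equation} \label{eqn:euler_for_2F1}
	{}_2F_1(s,1-s,1,z)
	= \frac{\cosh(\pi\mu)}{\pi} \int_0^1 \frac{e^{i\mu\Psi_z(t)}}{\sqrt{t(1-t)(1-zt)}} \, dt,
	\qquad
	\Psi_z(t) = \log\frac{1-t}{t(1-zt)},
\end{equation}
with principal branches; this is valid because $1 - zt \notin (-\infty,0]$ for $t \in [0,1]$ when $z \notin [1,\infty)$. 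The prefactor $\cosh(\pi\mu) \sim \tfrac12 e^{\pi\mu}$ accounts for the $e^{\pi\sqrt{\lambda}}$ in \eqref{eqn:2F1_upper_bd}; the substance of the lemma is that the integral in \eqref{eqn:euler_for_2F1} decays exponentially in $\mu$, at a rate that is positive for $z \notin [1,\infty)$ but degenerates to $0$ as $z \to 1^-$.

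For \eqref{eqn:2F1_upper_bd} I would deform the contour in \eqref{eqn:euler_for_2F1} off the real axis. The key claim is that for any compact $\K \subseteq \C \setminus [1,\infty)$ there is a path $\gamma_z$ from $0$ to $1$, varying continuously with $z \in \K$, on which $\im \Psi_z \geq \eta$ for some $\eta = \eta(\K) > 0$. Indeed: if $\im z \neq 0$ then $\im \Psi_z(t) = -\arg(1-zt)$ already has constant sign on $(0,1)$ (and $\im\Psi_z(1) = -\arg(1-z)$ lies on the same side), so only a short detour into the appropriate half-plane near $t = 0$ is needed; if $z \in (0,1)$ then $\Psi_z$ is real but $\Psi_z'(t) = -\big(z(t-1)^2+(1-z)\big)\big/\big(t(1-t)(1-zt)\big) < 0$ on $(0,1)$ — its only zeros are the non-real points $1 \pm i\sqrt{(1-z)/z}$ — so shifting the contour slightly below the real axis (rounding the corners at $0$ and $1$ into the lower half-plane) makes $\im\Psi_z$ strictly positive along the whole path, the integrable $|\cdot|^{-1/2}$ singularities at the endpoints doing no harm; and if $z \in (-\infty,0]$ one needs no deformation at all, since ${}_2F_1(s,1-s,1,z) = \varphi_\mu(r)$ with $z = -\sinh^2(r/2)$ is a spherical function of the (unitary) spherical principal series, hence $|{}_2F_1(s,1-s,1,z)| \leq 1$. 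As $\K$ lies at positive distance from $[1,\infty)$ these constructions can be made uniform in $z \in \K$, and \eqref{eqn:euler_for_2F1} then yields $|{}_2F_1(s,1-s,1,z)| \lesssim_\K e^{(\pi-\eta)\mu} \leq e^{(\pi-\delta)\sqrt{\lambda}}$ for $\lambda \gg_\K 1$, with $\delta = \delta(\K) > 0$. Taking $\K$ a small disk about a given $z$ shows that $\delta(z)$ may be chosen locally uniformly bounded below.

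For \eqref{eqn:2F1_lower_bd} the starting point is that for real $z \in (0,1)$ every coefficient of the hypergeometric series is positive, since $(s)_n(1-s)_n = \prod_{j=0}^{n-1}\big((j+\tfrac12)^2 + \mu^2\big) > 0$; hence $z \mapsto {}_2F_1(s,1-s,1,z)$ is positive and increasing on $[0,1)$. Given $\delta > 0$ it therefore suffices to prove the bound at a single $z^{\ast} = z^{\ast}(\delta) < 1$, after which it holds for all $z \in [z^{\ast},1)$. At $z^{\ast}$ I would bound ${}_2F_1(s,1-s,1,z^{\ast})$ below by the single series term with $n = \lfloor \mu \tan(\theta^{\ast}/2)\rfloor$, where $\cos\theta^{\ast} = 1 - 2z^{\ast}$; a routine Stirling and Euler--Maclaurin estimate shows this term equals $\exp\big(\mu\theta^{\ast} + O_{z^{\ast}}(\log\mu)\big)$, the exponent simplifying to exactly $\mu\theta^{\ast}$ by a short elementary identity. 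Since $\theta^{\ast} = \arccos(1-2z^{\ast}) \to \pi$ as $z^{\ast} \to 1^-$, one picks $z^{\ast}$ with $\theta^{\ast} > \pi - \tfrac{\delta}{2}$, and then for $\lambda \gg_\delta 1$ gets ${}_2F_1(s,1-s,1,z^{\ast}) \geq e^{(\pi-\delta)\sqrt{\lambda}}$, which is \eqref{eqn:2F1_lower_bd}.

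The step I expect to be the main obstacle is the contour deformation in the upper bound: one must check that $\{\im\Psi_z > 0\}$ admits a path from $0$ to $1$ carrying a uniform lower barrier, and handle the integrable endpoint singularities — in effect a steepest-descent analysis, the relevant saddle points being $t_\pm = 1 \pm i\sqrt{(1-z)/z}$ (the analysis may be non-uniform, since a compact $\K$ stays away from $z = 1$). An expedient alternative would be to cite the classical uniform asymptotics of ${}_2F_1\big(\tfrac12+\nu,\tfrac12-\nu,1,z\big)$ as $\nu \to i\infty$ — equivalently, of the conical (Ferrers) Legendre function $\mathsf{P}_{-1/2+i\mu}(1-2z)$ — from which both bounds follow, in fact with the sharp exponent $\mu\arccos(1-2z)$ for $z \in (0,1)$, comfortably inside what Proposition~\ref{prop:KMP_crossing} needs.
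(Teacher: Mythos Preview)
Your approach matches the paper's, which only gives a one-line sketch (``apply the method of steepest descent to Euler's integral representation for ${}_2F_1$'') and explicitly declines to give a complete proof. Your upper bound via contour deformation of the Euler integral is exactly this, carried out with considerably more care than the paper provides; the identification of the saddle points $t_\pm = 1 \pm i\sqrt{(1-z)/z}$ and the case analysis in $z$ are correct.

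For the lower bound you take a slightly different route than a full steepest-descent asymptotic: positivity of the series coefficients $(s)_n(1-s)_n = \prod_{j=0}^{n-1}\big((j+\tfrac12)^2+\mu^2\big)$ gives monotonicity in $z \in (0,1)$, and then a single-term lower bound at the optimal $n \approx \mu\tan(\theta^*/2)$ yields the exponent $\mu\theta^*$ directly. This is cleaner than extracting the lower bound from a two-sided saddle-point asymptotic, since you only need a one-sided estimate and can avoid controlling the remaining terms. The identity that makes the exponent collapse to $\mu\theta^*$ is correct (at $\tau = \tan(\theta^*/2)$ one has $\tau\log(1+1/\tau^2) = -\tau\log z^*$, so only $2\arctan\tau = \theta^*$ survives).
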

	
	\begin{proof}[Idea of proof]
		Apply the method of steepest descent to Euler's integral representation for ${}_2F_1$.
	\end{proof}
	
	We do not give a complete proof of Lemma~\ref{lem:hyp_asymptotic} because it is tangential to our main story.
	
	\begin{proof}[Proof of Proposition~\ref{prop:KMP_crossing} for general $z$ assuming Proposition~\ref{prop:KMP_crossing} for $|z| < 1$]
		By assumption, $\LHS\eqref{eqn:KMP_crossing}$ converges absolutely for $|z| < 1$. It follows from \eqref{eqn:2F1_lower_bd} that for every $\delta>0$,
		\begin{align} \label{eqn:pi_correct_exponent}
			\sum_{r=0}^{\infty} |C_{i\overline{i}}^{(r,0)}|^2 e^{(\pi-\delta)\sqrt{\lambda_r}} < \infty.
		\end{align}
		Thus by \eqref{eqn:2F1_upper_bd}, both sides of \eqref{eqn:KMP_crossing} converge absolutely for all $z \in \C \setminus [1,\infty)$.
		Since $\delta = \delta(z)$ in \eqref{eqn:2F1_upper_bd} may be taken locally uniformly bounded below, both sides of \eqref{eqn:KMP_crossing} also converge locally uniformly.
		Hence both sides of \eqref{eqn:KMP_crossing} are well-defined holomorphic functions on all of $\C \setminus [1,\infty)$.
		By assumption, \eqref{eqn:KMP_crossing} holds for $|z| < 1$, so it holds for all $z \in \C \setminus [1,\infty)$ by analytic continuation.
	\end{proof}
	
	It remains to prove Lemmas~\ref{lem:KMP_crossing=combo_LHS} and \ref{lem:KMP_crossing=combo_RHS}.
	To do so, we need one final lemma, which will allow us to compute $\LHS\eqref{eqn:u-t_crossing_combo}$.
	
	\begin{lem} \label{lem:C_recurrence_for_Weyl}
		Let $\ell \in I$ and $n \in \Z_{\geq 0}$. Suppose $C_{\overline{i}i^{+n}}^{\ell} \neq 0$. Then $\ell = (r,n)$ for some $r \geq 0$, and
		\begin{align} \label{eqn:C_recurrence_for_Weyl}
			|C_{\overline{i}i^{+n}}^{\ell}|^2
			= \frac{(s_r)_n (1-s_r)_n}{n!(2k)_n} |C_{i\overline{i}}^{(r,0)}|^2.
		\end{align}
	\end{lem}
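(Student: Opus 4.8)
The plan is to express the product $\psi_{\overline{i}}\psi_{i^{+n}}$ in terms of $\psi_{\overline{i}}\psi_i$ by repeatedly applying the raising operator, and then to read off the coefficient along $\psi_\ell$. Recall $i=(-\rho,k)$ with $k=k_\rho$, so $\overline{i}=(-\rho,-k)$ and, since $(\overline i)_2=-k_{-(\overline i)_1}$, the vector $\psi_{\overline{i}}$ is a highest weight vector; in particular $E\psi_{\overline{i}}=0$ (this is ``Case~2'' in the proof of Proposition~\ref{prop:(g,K)-adapted_key_properties}). Applying the product rule \eqref{eqn:prod_rule} with $X=E$, $\alpha=\psi_{\overline i}$, $\beta=\psi_{i^{+m}}$ (note $i^{+m}=(-\rho,k+m)\in I$ for $m\geq 0$), and then \eqref{eqn:E_psi_i_formula}, one gets
\[
  E(\psi_{\overline i}\psi_{i^{+m}})=\psi_{\overline i}(E\psi_{i^{+m}})=\sqrt{\lambda_{i_1}+(k+m)(k+m+1)}\;\psi_{\overline i}\psi_{i^{+(m+1)}}.
\]
Since $\lambda_{i_1}=\lambda_{-\rho}=-k(k-1)$ by \eqref{eqn:lambda_{-r}_def}, the radicand simplifies to $(m+1)(m+2k)$. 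Iterating from $m=0$ to $m=n-1$ and telescoping gives
\[
  \psi_{\overline i}\psi_{i^{+n}}=\frac{1}{\sqrt{n!\,(2k)_n}}\;E^n(\psi_{\overline i}\psi_i),
\]
using $\prod_{m=0}^{n-1}(m+1)=n!$ and $\prod_{m=0}^{n-1}(m+2k)=(2k)_n$.

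Next I would take the inner product of both sides with $\psi_\ell$ and move $E^n$ onto the other factor via $E^\star=-\overline E$ (equation \eqref{eqn:Lie_alg_unitarity}), so that
\[
  C_{\overline i\,i^{+n}}^{\ell}=\langle\psi_{\overline i}\psi_{i^{+n}},\psi_\ell\rangle_{\H}=\frac{(-1)^n}{\sqrt{n!\,(2k)_n}}\;\langle\psi_{\overline i}\psi_i,\overline E^{\,n}\psi_\ell\rangle_{\H}.
\]
Iterating \eqref{eqn:Ebar_psi_i_formula} shows $\overline E^{\,n}\psi_\ell=(-1)^n\bigl(\prod_{m=0}^{n-1}\sqrt{\lambda_{\ell_1}+(\ell_2-m)(\ell_2-m-1)}\bigr)\psi_{\ell^{-n}}$, a real multiple of $\psi_{\ell^{-n}}$. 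By \eqref{eqn:SE2} the pairing $\langle\psi_{\overline i}\psi_i,\psi_{\ell^{-n}}\rangle_{\H}=C_{\overline i\,i}^{\ell^{-n}}$ vanishes unless $\psi_{\ell^{-n}}$ has weight $0$, i.e.\ $\ell_2=n$; and then $\ell^{-n}=(\ell_1,0)\in I$ forces $\ell_1=r\geq 0$ by \eqref{eqn:i2=0_implies_i1geq0}, which gives the structural claim $\ell=(r,n)$. Writing $s_r(1-s_r)=\lambda_r$, the identity $\lambda_r+m(m+1)=(s_r+m)(1-s_r+m)$ turns the radical product (over $m=0,\dots,n-1$, since $\ell_2-m$ runs through $n,\dots,1$) into $\sqrt{(s_r)_n(1-s_r)_n}$. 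Combining everything, using $C_{\overline i\,i}^{(r,0)}=C_{i\overline i}^{(r,0)}$ (commutativity \eqref{eqn:SE1}), the two powers of $(-1)^n$ cancel and we obtain $C_{\overline i\,i^{+n}}^{(r,n)}=\sqrt{(s_r)_n(1-s_r)_n/(n!(2k)_n)}\;C_{i\overline i}^{(r,0)}$, whence \eqref{eqn:C_recurrence_for_Weyl} on taking absolute squares.

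I do not expect a serious obstacle: the argument is entirely formal given the $(\g,K)$-adapted basis formulas of Proposition~\ref{prop:(g,K)-adapted_key_properties}. The one place that needs care is the bookkeeping — verifying the two Pochhammer simplifications $\lambda_{i_1}+(k+m)(k+m+1)=(m+1)(m+2k)$ and $\lambda_r+m(m+1)=(s_r+m)(1-s_r+m)$, and checking the index shifts (which elements of $\Z^2$ lie in $I$, so that all products $\psi_{\overline i}\psi_{i^{+m}}$ and all conjugates $\overline E^{\,n}\psi_\ell$ are legitimate). In particular one must confirm that $\psi_{\overline i}$ is genuinely annihilated by $E$, which is exactly the boundary case $(\overline i)_2=-k_{-(\overline i)_1}$ of a discrete series representation; everything else is routine algebra.
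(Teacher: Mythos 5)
Your proof is correct and matches the paper's in substance: the paper applies \eqref{eqn:SE5} once (using $\overline{i}^+\notin I$, i.e.\ that $\psi_{\overline{i}}$ is highest weight) to obtain the single-step recursion $|C_{\overline{i}i^{+n}}^{\ell}|^2 = \frac{\lambda_{\ell_1}+\ell_2(\ell_2-1)}{n(2k+n-1)}|C_{\overline{i}i^{+(n-1)}}^{\ell^-}|^2$ and then inducts on $n$, which is exactly the one-step version of your telescoping product-rule/adjointness calculation with $E\psi_{\overline{i}}=0$. Unrolling the paper's induction reproduces your Pochhammer simplifications $-k(k-1)+(k+m)(k+m+1)=(m+1)(m+2k)$ and $\lambda_r+j(j+1)=(s_r+j)(1-s_r+j)$, so the two arguments coincide up to presentation.
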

	
	\begin{proof}
		We induct on $n$.
		For the base case, suppose $n=0$.
		Then by \eqref{eqn:SE2} and \eqref{eqn:i2=0_implies_i1geq0}, nonvanishing of $C_{\overline{i}i^{+n}}^{\ell}$ forces $\ell = (r,0)$ for some $r \geq 0$.
		Since $n=0$, the quotient in $\RHS\eqref{eqn:C_recurrence_for_Weyl}$ is equal to $1$. Thus \eqref{eqn:C_recurrence_for_Weyl} holds by \eqref{eqn:SE1}.
		
		For the induction step, suppose $n>0$.
		Then by \eqref{eqn:SE5} and the fact that $\overline{i}^+ \not\in I$,
		\begin{align*}
			\sqrt{\lambda_{\ell_1} + \ell_2(\ell_2-1)} \, C_{\overline{i}i^{+(n-1)}}^{\ell^-}
			= \sqrt{\lambda_{i_1} + (i_2+n-1)(i_2+n)} \, C_{\overline{i} i^{+n}}^{\ell}.
		\end{align*}
		Inserting $\lambda_{i_1} = -k(k-1)$ and $i_2 = k$, and then squaring both sides,
		\begin{align*}
			(\lambda_{\ell_1} + \ell_2(\ell_2-1)) |C_{\overline{i}i^{+(n-1)}}^{\ell^-}|^2
			= (-k(k-1) + (k+n-1)(k+n)) |C_{\overline{i}i^{+n}}^{\ell}|^2.
		\end{align*}
		Simplifying the coefficient on the right hand side, and then dividing both sides by this coefficient,
		\begin{align} \label{eqn:C_recurrence_before_induction}
			|C_{\overline{i}i^{+n}}^{\ell}|^2
			= \frac{\lambda_{\ell_1} + \ell_2(\ell_2-1)}{n(2k+n-1)} |C_{\overline{i}i^{+(n-1)}}^{\ell^-}|^2.
		\end{align}
		By assumption, the left hand side is nonzero, so the right hand side must be nonzero. Then by induction, $\ell^- = (r,n-1)$ for some $r \geq 0$, so $\ell = (r,n)$.
		Furthermore, by induction, we can rewrite the right hand side using \eqref{eqn:C_recurrence_for_Weyl}, to get
		\begin{align} \label{eqn:C_recurrence_after_induction}
			|C_{\overline{i}i^{+n}}^{\ell}|^2
			= \frac{\lambda_r + n(n-1)}{n(2k+n-1)}
			\,
			\frac{(s_r)_{n-1} (1-s_r)_{n-1}}{(n-1)!(2k)_{n-1}} |C_{i\overline{i}}^{(r,0)}|^2.
		\end{align}
		We note that the numerator in the first quotient in $\RHS\eqref{eqn:C_recurrence_after_induction}$ agrees with the numerator in $\RHS\eqref{eqn:C_recurrence_before_induction}$ because $\ell_1 = r$ and $\ell_2 = n$.
		This numerator factors as
		\begin{align*} 
			\lambda_r + n(n-1)
			= s_r(1-s_r) + n(n-1)
			= (s_r+n-1) (1-s_r + (n-1)).
		\end{align*}
		Inserting this into \eqref{eqn:C_recurrence_after_induction} yields \eqref{eqn:C_recurrence_for_Weyl}.
	\end{proof}
	
	We are finally ready to prove Lemmas~\ref{lem:KMP_crossing=combo_LHS} and \ref{lem:KMP_crossing=combo_RHS}.
	
	\begin{proof}[Proof of Lemma~\ref{lem:KMP_crossing=combo_LHS}]
		By \eqref{eqn:SE3},
		\begin{align*}
			\LHS\eqref{eqn:u-t_crossing_combo}
			= \sum_{n=0}^{\infty} (-1)^n \frac{(2k)_n}{n!} z^n \sum_{\ell \in I} (-1)^{\ell_2} |C_{\overline{i}i^{+n}}^{\ell}|^2.
		\end{align*}
		By Lemma~\ref{lem:C_recurrence_for_Weyl}, this simplifies to
		\begin{align*}
			\LHS\eqref{eqn:u-t_crossing_combo}
			= \sum_{r=0}^{\infty} \Big[\sum_{n=0}^{\infty} \frac{(s_r)_n(1-s_r)_n}{n!^2} z^n \Big] |C_{i\overline{i}}^{(r,0)}|^2.
		\end{align*}
		The expression in brackets is the standard power series expansion of the hypergeometric which appears in $\LHS\eqref{eqn:KMP_crossing}$.
	\end{proof}
	
	\begin{proof}[Proof of Lemma~\ref{lem:KMP_crossing=combo_RHS}]
		Let $B_{k,n}$ be as in Corollary~\ref{cor:C_+n_-n_formula_explicit}.
		Then
		\begin{align*}
			\RHS\eqref{eqn:u-t_crossing_combo}
			= \sum_{\ell \in I} (-1)^{\ell_2} \Big[\sum_{n=0}^{\infty} (-1)^n \frac{(2k)_n}{n!} B_{k,n}(\lambda_{\ell_1}) z^n \Big] C_{\overline{i}i}^{\ell} C_{i\overline{i}}^{\overline{\ell}}.
		\end{align*}
		By \eqref{eqn:SE1} and \eqref{eqn:SE3}, we have $C_{\overline{i}i}^{\ell} C_{i\overline{i}}^{\overline{\ell}} = |C_{i\overline{i}}^{\ell}|^2$. By \eqref{eqn:SE2} and \eqref{eqn:i2=0_implies_i1geq0}, we get
		\begin{align*}
			\RHS\eqref{eqn:u-t_crossing_combo}
			= \sum_{r=0}^{\infty} \Big[\sum_{n=0}^{\infty} (-1)^n \frac{(2k)_n}{n!} B_{k,n}(\lambda_r) z^n \Big] |C_{i\overline{i}}^{(r,0)}|^2.
		\end{align*}
		Thus to prove Lemma~\ref{lem:KMP_crossing=combo_RHS}, we just need to show that
		\begin{align} \label{eqn:t-block_power_series}
			\sum_{n=0}^{\infty} (-1)^n \frac{(2k)_n}{n!} B_{k,n}(\lambda_r) z^n
			= \Big(\frac{1}{1-z}\Big)^{2k} {}_2F_1\Big(s_r,1-s_r,1, \frac{z}{z-1}\Big).
		\end{align}
		This can be done by using the recurrence \eqref{eqn:B_k,n_recurrence} and the hypergeometric ODE to check that both sides of \eqref{eqn:t-block_power_series} obey the same ODE, with the same initial conditions at $z=0$.
		We omit the details.
	\end{proof}
	
	The proof of Proposition~\ref{prop:KMP_crossing} is now complete.
	
	\section{Preliminaries on C*-algebras} \label{sec:C*}
	
	A \emph{C*-algebra} is a complex Banach algebra $\A$ with an antilinear involution $A \mapsto A^*$ such that
	\begin{align*}
		(AB)^* = B^*A^*
		\qquad \text{and} \qquad
		\|A^*A\| = \|A\|^2
	\end{align*}
	for all $A,B \in \A$. A morphism of C*-algebras (or \emph{C*-homomorphism}) is a norm-decreasing map which respects both the algebra structure and the involution.
	For us, the two most important examples of C*-algebras are:
	\begin{itemize} \itemsep = 0.5em
		\item Let $X$ be a compact Hausdorff space. Then $C(X)$ is a commutative unital C*-algebra with the uniform norm and with complex conjugation as the involution.
		
		\item Let $\H$ be a Hilbert space. Then $\B(\H)$ is a C*-algebra with the operator norm and with the adjoint as the involution. If $\A \subseteq \B(\H)$ is a subalgebra which is closed in the operator norm topology and closed under taking adjoints, then $\A$ is a C*-subalgebra.
	\end{itemize}
	
	Commutative unital C*-algebras were classified by Gelfand.
	There are many textbook accounts of the classification, such as \cite[Chapter~1]{Folland}.
	
	\begin{thm}[Gelfand duality] \label{thm:Gelfand_duality}
		The functor $X \mapsto C(X)$ is an equivalence from the opposite category of compact Hausdorff spaces to the category of commutative unital C*-algebras.
	\end{thm}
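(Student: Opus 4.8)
The plan is to prove Gelfand duality in the standard way, by constructing an explicit quasi-inverse functor sending a commutative unital C*-algebra $\A$ to its \emph{spectrum} (maximal ideal space, equivalently space of characters) $\widehat{\A}$, and then verifying the two natural transformations $\A \to C(\widehat{\A})$ and $X \to \widehat{C(X)}$ are isomorphisms. First I would define, for a commutative unital C*-algebra $\A$, the set $\widehat{\A}$ of nonzero C*-homomorphisms $\A \to \C$ (characters), topologized as a subspace of the unit ball of $\A^*$ with the weak-* topology; by Banach--Alaoglu this ball is compact, and $\widehat{\A}$ is a weak-* closed subset (the defining equations $\chi(ab) = \chi(a)\chi(b)$, $\chi(\mathbf{1}) = 1$ are weak-* closed conditions), hence $\widehat{\A}$ is compact Hausdorff. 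The \emph{Gelfand transform} $\Gamma_\A \colon \A \to C(\widehat{\A})$ sends $a \mapsto \hat a$, where $\hat a(\chi) = \chi(a)$; it is visibly an algebra homomorphism, and one checks it is unital and contractive.

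The heart of the argument is the spectral theory identifying the norm with the supremum. The key steps, in order: (i) in a unital Banach algebra, every maximal ideal is the kernel of a character and conversely (using that $\A/\mathfrak{m}$ is a unital Banach division algebra, hence $\C$ by Gelfand--Mazur), so $\chi(a) = 0$ for some $\chi$ iff $a$ is non-invertible, i.e.\ the range of $\hat a$ is exactly the spectrum $\sigma(a)$; (ii) consequently $\|\hat a\|_\infty = r(a)$, the spectral radius; (iii) for self-adjoint $a$ (i.e.\ $a = a^*$) one has $r(a) = \|a\|$, proved via the C*-identity: $\|a\|^2 = \|a^2\|$ and iterating gives $\|a\| = \|a^{2^n}\|^{1/2^n} \to r(a)$; (iv) for general $a$, $\|a\|^2 = \|a^*a\| = r(a^*a) = \|\widehat{a^*a}\|_\infty = \|\hat a\|_\infty^2$ using commutativity and that characters are automatically $*$-preserving (which itself follows from positivity of $\chi$ on self-adjoint elements, or from the fact that $\chi(a^*) = \overline{\chi(a)}$ since $\sigma(a^*) = \overline{\sigma(a)}$ and $\chi(a)\in\sigma(a)$). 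Thus $\Gamma_\A$ is an isometric $*$-homomorphism; its image is a closed $*$-subalgebra of $C(\widehat{\A})$ containing constants and, by definition of the weak-* topology, separating points of $\widehat{\A}$, so by the Stone--Weierstrass theorem the image is all of $C(\widehat{\A})$. Hence $\Gamma_\A$ is an isomorphism of C*-algebras.

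For the other composite: given a compact Hausdorff space $X$, I would show the evaluation map $\mathrm{ev}\colon X \to \widehat{C(X)}$, $x \mapsto \mathrm{ev}_x$, is a homeomorphism. It is continuous (weak-* convergence of $\mathrm{ev}_{x_\alpha}$ means pointwise convergence of $f(x_\alpha)$ for all $f \in C(X)$, which holds if $x_\alpha \to x$) and injective (Urysohn separates points of $X$). Surjectivity: given a character $\chi$, its kernel is a maximal ideal of $C(X)$; if $\ker\chi$ were contained in no $\mathfrak{m}_x := \{f : f(x) = 0\}$, then for each $x$ pick $f_x \in \ker\chi$ with $f_x(x) \neq 0$, extract a finite subcover, and $\sum |f_x|^2 = \sum f_x \overline{f_x} \in \ker\chi$ is a nowhere-vanishing, hence invertible, element of $\ker\chi$ — contradiction; so $\ker\chi \subseteq \mathfrak{m}_x$ for some $x$, and by maximality $\ker\chi = \mathfrak{m}_x$, forcing $\chi = \mathrm{ev}_x$. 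A continuous bijection from a compact space to a Hausdorff space is a homeomorphism. Finally I would check functoriality and naturality: a continuous map $f\colon X \to Y$ induces $f^*\colon C(Y) \to C(X)$, $g\mapsto g\circ f$, a unital C*-homomorphism; this is functorial and contravariant, and the squares defining naturality of $\mathrm{ev}$ and $\Gamma$ commute by direct inspection. Assembling: $X \mapsto C(X)$ and $\A \mapsto \widehat{\A}$ are mutually quasi-inverse, giving the asserted equivalence of categories.

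The main obstacle is really steps (iii)--(iv) of the second paragraph — the passage from the purely Banach-algebraic fact $\|\hat a\|_\infty = r(a)$ to the \emph{isometry} $\|\hat a\|_\infty = \|a\|$, which is exactly where the C*-identity $\|a^*a\| = \|a\|^2$ is indispensable and where one must know characters respect the involution. Everything else (Banach--Alaoglu, Gelfand--Mazur, Stone--Weierstrass, Urysohn, compact-to-Hausdorff rigidity) is invoked as a black box, and the functoriality/naturality bookkeeping is routine. So the proof reduces to carefully chaining these standard ingredients; there is no genuinely new difficulty, which is consistent with this being cited in the paper as a known classical theorem.
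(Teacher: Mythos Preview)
Your proof is correct and follows the standard textbook route (characters via Gelfand--Mazur, isometry from the C*-identity plus spectral radius formula, surjectivity via Stone--Weierstrass, and the evaluation homeomorphism for the other direction). Note, however, that the paper does not actually prove this theorem: it is stated as a classical result with a reference to Folland's book, and the surrounding text only unpacks what the equivalence means concretely (full faithfulness and essential surjectivity). So there is nothing to compare your argument against in the paper itself; you have supplied a complete proof where the paper simply cites one.
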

	
	Concretely, this means the following two things. The first says that $X \mapsto C(X)$ is fully faithful, and the second says that $X \mapsto C(X)$ is essentially surjective.
	\begin{enumerate} \itemsep = 0.5em
		\item[(1)] If $X,Y$ are compact Hausdorff spaces and $\Phi \colon C(X) \to C(Y)$ is a C*-homomorphism, then there is a unique continuous map $T \colon Y \to X$ such that $\Phi(f) = f \circ T$ for all $f \in C(X)$.
		
		\item[(2)] Every commutative unital C*-algebra $\A$ is isomorphic to $C(X)$ for some compact Hausdorff space $X$. It follows from (1) that $X$ is unique up to homeomorphism. The space $X$ is called the \emph{spectrum} of $\A$.
	\end{enumerate}
	The book \cite{Folland} cited above only proves essential surjectivity, but full faithfulness is easy to see from the proof of essential surjectivity.
	
	
	A compact Hausdorff space $X$ is metrizable if and only if $C(X)$ is separable (i.e., has a countable dense subset). Therefore, the spectrum of a commutative unital C*-algebra $\A$ is metrizable if and only if $\A$ is separable.
	
	A \emph{state} on a C*-algebra $\A$ is a positive linear functional of norm $1$. Positivity means that the functional takes a nonnegative value on $A^*A$ for all $A \in \A$. Given a Hilbert space $\H$, a unital C*-subalgebra $\A \subseteq \B(\H)$, and a vector $v \in \H$ of norm $1$, the \emph{Gelfand--Naimark--Segal (GNS) state} on $\A$ associated to $v$ is the functional $A \mapsto \langle Av,v \rangle_{\H}$.
	
	\section{Proofs of uniqueness results}
	\label{sec:uniqueness}
	
	In this section we prove Propositions~\ref{prop:mult_rep_uniqueness} and \ref{prop:mult_spectra_uniqueness}.
	Proposition~\ref{prop:mult_spectra_uniqueness} is essentially a corollary of Proposition~\ref{prop:mult_rep_uniqueness}, so we start with Proposition~\ref{prop:mult_rep_uniqueness}.
	Recall the statement:
	
	\begin{prop*}[Restatement of Proposition~\ref{prop:mult_rep_uniqueness}]
		Let $\Gamma,\Gamma'$ be cocompact lattices in $G$. Suppose $\Phi \colon L^2(\Gamma \backslash G) \to L^2(\Gamma' \backslash G)$ is an isomorphism of multiplicative representations.
		Then there exists a unique element $g \in G/\Gamma$ such that $\Gamma' = g\Gamma g^{-1}$ and
		\begin{align} \label{eqn:Phi_given_by_translation}
			\Phi(f)(x)
			= f(g^{-1}x)
		\end{align}
		for all $f \in L^2(\Gamma \backslash G)$.
	\end{prop*}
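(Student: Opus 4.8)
The plan is to invoke Gelfand duality (Theorem~\ref{thm:Gelfand_duality}) to realize $\Phi$ as pullback along a homeomorphism of the underlying $G$-spaces, and then to use $G$-equivariance to pin that homeomorphism down to a left translation. As a preliminary, I would record three consequences of $\Phi$ being an isomorphism of multiplicative representations. Since $\Phi$ is a unitary isomorphism of unitary representations with discrete spectrum, it preserves $K$-types and hence restricts to a linear bijection $\H^{\fin} \to \H^{\fin}$. Because $\Phi$ respects multiplication and is bijective on $\H^{\fin}$, the element $\Phi(\mathbf 1)$ is a (hence the) unit of $L^2(\Gamma' \backslash G)$, so $\Phi(\mathbf 1) = \mathbf 1$. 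And a two-line computation with \eqref{eqn:3_term_crossing} using unitarity of $\Phi$ shows $\Phi(\overline\alpha) = \overline{\Phi(\alpha)}$ for all $\alpha \in \H^{\fin}$.

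The heart of the argument is the $C^*$-algebra step. For $\varphi \in C(\Gamma \backslash G)$ let $M_\varphi$ denote the multiplication operator on $L^2(\Gamma \backslash G)$; then $\varphi \mapsto M_\varphi$ is an isometric unital $*$-homomorphism onto a $C^*$-subalgebra $\A_\Gamma \subseteq \B(L^2(\Gamma \backslash G))$, and similarly for $\Gamma'$. Each $\alpha \in \H^{\fin}$ is a finite sum of automorphic forms, hence smooth and bounded on the compact space $\Gamma \backslash G$, so $M_\alpha \in \A_\Gamma$. For $\alpha,\beta \in \H^{\fin}$, using that multiplication on $L^2(\Gamma \backslash G)$ and on $L^2(\Gamma' \backslash G)$ is pointwise, $\Phi(M_\alpha \beta) = \Phi(\alpha\beta) = \Phi(\alpha)\Phi(\beta) = M_{\Phi(\alpha)}\Phi(\beta)$, so by density of $\H^{\fin}$ and boundedness of these operators, $\Phi M_\alpha \Phi^{-1} = M_{\Phi(\alpha)}$. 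Since $\H^{\fin}$ is dense in $\H^\infty = C^\infty(\Gamma \backslash G)$ in the Fréchet topology, which dominates the uniform topology, $\H^{\fin}$ is uniformly dense in $C(\Gamma \backslash G)$; as $\varphi \mapsto M_\varphi$ is isometric, the $C^*$-algebra generated by $\{M_\alpha : \alpha \in \H^{\fin}\}$ is all of $\A_\Gamma$, and likewise on the $\Gamma'$ side because $\Phi(\H^{\fin}) = \H^{\fin}$. Conjugation by $\Phi$ therefore carries $\A_\Gamma$ isomorphically onto $\A_{\Gamma'}$, giving a unital $C^*$-isomorphism $C(\Gamma \backslash G) \to C(\Gamma' \backslash G)$; by the full-faithfulness half of Theorem~\ref{thm:Gelfand_duality} it has the form $\varphi \mapsto \varphi \circ T$ for a unique homeomorphism $T \colon \Gamma' \backslash G \to \Gamma \backslash G$. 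Evaluating the identity $\Phi M_\varphi \Phi^{-1} = M_{\varphi \circ T}$ at the constant function $\mathbf 1 = \Phi(\mathbf 1)$ and using $M_\varphi \mathbf 1 = \varphi$ gives $\Phi(\varphi) = \varphi \circ T$ for all $\varphi \in C(\Gamma \backslash G)$; since $\Phi$ is unitary, $T$ pushes forward Haar measure on $\Gamma' \backslash G$ to Haar measure on $\Gamma \backslash G$, so $\Phi(f) = f \circ T$ extends to all $f \in L^2(\Gamma \backslash G)$.

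Finally I would exploit equivariance. Writing the right regular action as $(g \cdot f)(x) = f(xg)$, the relation $\Phi(g \cdot f) = g \cdot \Phi(f)$ becomes $f(T(y)g) = f(T(yg))$ for all $f \in C(\Gamma \backslash G)$, $y \in \Gamma' \backslash G$, $g \in G$; since continuous functions separate points, $T$ is equivariant for the right $G$-actions. As $G$ acts transitively, this forces $T$ to be a left translation: if $T(\Gamma' e) = \Gamma g_0$ then $T(\Gamma' x) = \Gamma g_0 x$, and well-definedness of this formula — applied to $T$ and to $T^{-1}$ — forces $g_0 \Gamma' g_0^{-1} = \Gamma$. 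Putting $g = g_0^{-1}$ yields $\Gamma' = g\Gamma g^{-1}$ and $\Phi(f)(x) = f(\Gamma g_0 x) = f(g^{-1}x)$. Uniqueness of $g$ in $G/\Gamma$ is immediate: two such translations agree on $\Gamma \backslash G$ only if they differ by right multiplication by an element of $\Gamma$.

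I expect the only real obstacle to be bookkeeping in the $C^*$-algebra step: correctly identifying the abstract multiplication on $\H^{\fin}$ with the pointwise product of continuous functions, verifying that $\{M_\alpha : \alpha \in \H^{\fin}\}$ generates $\A_\Gamma$ (equivalently, the uniform density of $\H^{\fin}$ in $C(\Gamma \backslash G)$), and handling the passage between the a priori unbounded multiplication maps $\H^{\fin} \times \H^{\fin} \to \H^\infty$ and the honest bounded operators $M_\alpha$. Once $T$ is in hand, the equivariance argument producing the translation is routine. This is in keeping with the remark that Proposition~\ref{prop:mult_rep_uniqueness} is much easier than Theorem~\ref{thm:eq_Gelfand_duality}: the $C^*$-algebra here is manufactured directly from bounded multiplication operators, with no quantitative input needed.
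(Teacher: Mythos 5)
Your proof is correct and takes essentially the same route as the paper's: use Gelfand duality (Theorem~\ref{thm:Gelfand_duality}) to produce a homeomorphism $T\colon \Gamma'\backslash G \to \Gamma\backslash G$ intertwining $\Phi$, then use $G$-equivariance to pin $T$ down to a left translation. The only presentational difference is in the $C^*$-algebra step: the paper (Lemmas~\ref{lem:Phi_preserves_L^infty}--\ref{lem:Phi_preserves_conjugation}) shows directly that $\Phi$ restricts to an isometric $*$-isomorphism $C(\Gamma\backslash G) \to C(\Gamma'\backslash G)$, whereas you transport the algebras of bounded multiplication operators via $\Phi M_\alpha \Phi^{-1} = M_{\Phi(\alpha)}$ and recover $\Phi|_{C(\Gamma\backslash G)} = (\cdot)\circ T$ by evaluating at the unit; both hinge on the identical observation that the sup norm is the operator norm of multiplication on $L^2$ (the content of Lemma~\ref{lem:Phi_preserves_L^infty}).
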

	
	We first prove a sequence of lemmas with notation as in Proposition~\ref{prop:mult_rep_uniqueness}.
	Recall that $L^2(\Gamma \backslash G)^{\fin}$ is dense in $C^{\infty}(\Gamma \backslash G)$, and in particular dense in $C(\Gamma \backslash G)$ with respect to the uniform norm.
	
	\begin{lem} \label{lem:Phi_preserves_L^infty}
		Let $f \in L^2(\Gamma \backslash G)^{\fin}$. Then $\|\Phi(f)\|_{L^{\infty}}
		= \|f\|_{L^{\infty}}$.
	\end{lem}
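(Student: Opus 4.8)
Both $\H=L^2(\Gamma\backslash G)$ and $\H'=L^2(\Gamma'\backslash G)$ are concrete function spaces, and since $f\in\H^{\fin}$ is a finite linear combination of automorphic forms, it is a smooth --- hence bounded --- function on the compact space $\Gamma\backslash G$. Thus pointwise multiplication by $f$ defines a bounded operator $M_f\in\B(\H)$, and by the standard fact that $\|M_g\|_{\B(L^2)}=\|g\|_{L^\infty}$ for bounded $g$ (using that $g$ is continuous so that $\{|g|>M\}$ is open and nonempty whenever $M<\|g\|_{L^\infty}$, and placing an $L^2$ bump there), we have $\|M_f\|_{\B(\H)}=\|f\|_{L^\infty}$; likewise $M_{\Phi(f)}\in\B(\H')$ has norm $\|\Phi(f)\|_{L^\infty}$. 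The plan is to show that $M_f$ and $M_{\Phi(f)}$ are unitarily conjugate via $\Phi$, so that they have equal operator norms.

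Since $\Phi$ respects multiplication, for every $\alpha\in\H^{\fin}$ one has $\Phi(f\alpha)=\Phi(f)\Phi(\alpha)=M_{\Phi(f)}\Phi(\alpha)$, that is, $\Phi M_f\alpha=M_{\Phi(f)}\Phi\alpha$ on the dense subspace $\H^{\fin}$, which $\Phi$ carries bijectively onto $\H'^{\fin}$ (the subspace $\H^{\fin}$ is intrinsic --- it is the span of the automorphic vectors by Proposition~\ref{prop:Maass_span_H^fin}, and is in any case the domain of multiplication, which an isomorphism of multiplicative representations must preserve). The operators $\Phi M_f$ and $M_{\Phi(f)}\Phi$ are both bounded $\H\to\H'$ and agree on a dense subspace, hence agree on all of $\H$. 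Therefore $M_{\Phi(f)}=\Phi M_f\Phi^{-1}$ with $\Phi$ unitary, and so $\|f\|_{L^\infty}=\|M_f\|_{\B(\H)}=\|M_{\Phi(f)}\|_{\B(\H')}=\|\Phi(f)\|_{L^\infty}$.

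The argument is entirely soft. The only facts it uses are: an isomorphism of multiplicative representations is unitary, respects the product $\H^{\fin}\times\H^{\fin}\to\H^\infty$, and preserves $\H^{\fin}$; the elements of $\H^{\fin}$ are bounded functions on the compact manifold $\Gamma\backslash G$; and the identification of the multiplier norm with the $L^\infty$ norm. I expect no genuine obstacle here --- the real content of the lemma is simply that the uniform norm on $C(\Gamma\backslash G)$ is recovered from the multiplicative-representation structure, and it is precisely this that will allow $\Phi$ to extend to a $C^*$-isomorphism $C(\Gamma\backslash G)\xrightarrow{\sim}C(\Gamma'\backslash G)$ in the remainder of the proof of Proposition~\ref{prop:mult_rep_uniqueness}.
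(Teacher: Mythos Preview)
Your proof is correct and essentially identical to the paper's. The paper writes the same argument as a one-line chain of suprema,
\[
\|\Phi(f)\|_{L^\infty}=\sup_{v}\|\Phi(f)v\|_{L^2}=\sup_{w}\|\Phi(f)\Phi(w)\|_{L^2}=\sup_{w}\|fw\|_{L^2}=\|f\|_{L^\infty},
\]
with $v$ ranging over the unit ball of $\H'^{\fin}$ and $w$ over the unit ball of $\H^{\fin}$; your unitary-conjugation phrasing $M_{\Phi(f)}=\Phi M_f\Phi^{-1}$ is exactly this computation repackaged.
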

	
	
	\begin{proof}
		Using that $L^2(\Gamma' \backslash G)^{\fin}$ is dense in $L^2(\Gamma' \backslash G)$, that $\Phi$ is an isomorphism, and that $L^2(\Gamma \backslash G)^{\fin}$ is dense in $L^2(\Gamma \backslash G)$,
		\begin{align*}
			\|\Phi(f)\|_{L^{\infty}}
			= \sup_v \|\Phi(f)v\|_{L^2}
			= \sup_w \|\Phi(f) \Phi(w)\|_{L^2}
			= \sup_w \|fw\|_{L^2}
			= \|f\|_{L^{\infty}},
		\end{align*}
		with suprema over $v \in L^2(\Gamma' \backslash G)^{\fin}$ with $\|v\|_{L^2} \leq 1$ and $w \in L^2(\Gamma \backslash G)^{\fin}$ with $\|w\|_{L^2} \leq 1$.
	\end{proof}
	
	\begin{lem} \label{lem:Phi_iso_on_C}
		Let $f \in C(\Gamma \backslash G)$. Then $\Phi(f) \in C(\Gamma' \backslash G)$, and $\|\Phi(f)\|_{L^{\infty}} = \|f\|_{L^{\infty}}$.
	\end{lem}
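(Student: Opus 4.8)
The plan is to bootstrap Lemma~\ref{lem:Phi_preserves_L^infty} from $L^2(\Gamma \backslash G)^{\fin}$ up to all of $C(\Gamma \backslash G)$ via a routine density-plus-completeness argument. First I would fix $f \in C(\Gamma \backslash G)$ and, using that $L^2(\Gamma \backslash G)^{\fin}$ is dense in $C(\Gamma \backslash G)$ in the uniform norm (recalled just above the lemma), choose a sequence $f_n \in L^2(\Gamma \backslash G)^{\fin}$ with $f_n \to f$ uniformly. Since $\Phi$ is linear and $(f_n)$ is uniformly Cauchy, Lemma~\ref{lem:Phi_preserves_L^infty} gives $\|\Phi(f_n) - \Phi(f_m)\|_{L^\infty} = \|\Phi(f_n - f_m)\|_{L^\infty} = \|f_n - f_m\|_{L^\infty}$, so $(\Phi(f_n))$ is uniformly Cauchy in $C(\Gamma' \backslash G)$ and hence converges uniformly to some $h \in C(\Gamma' \backslash G)$.

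Next I would identify $h$ with $\Phi(f)$. Because $\Gamma \backslash G$ carries a probability measure, uniform convergence $f_n \to f$ implies $f_n \to f$ in $L^2(\Gamma \backslash G)$, and since $\Phi$ is an isometry of Hilbert spaces, $\Phi(f_n) \to \Phi(f)$ in $L^2(\Gamma' \backslash G)$. On the other hand, $\Phi(f_n) \to h$ uniformly and hence (again by finiteness of the measure) in $L^2(\Gamma' \backslash G)$. Therefore $\Phi(f) = h$ as elements of $L^2(\Gamma' \backslash G)$, so $\Phi(f)$ admits a continuous representative; that is, $\Phi(f) \in C(\Gamma' \backslash G)$.

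Finally, the norm identity is obtained by passing to the limit: $\|\Phi(f)\|_{L^\infty} = \|h\|_{L^\infty} = \lim_n \|\Phi(f_n)\|_{L^\infty} = \lim_n \|f_n\|_{L^\infty} = \|f\|_{L^\infty}$, where the outer equalities use uniform convergence and the middle one is Lemma~\ref{lem:Phi_preserves_L^infty}. There is no genuine obstacle in this lemma; it is purely a matter of combining density of $\H^{\fin}$ in $C(\Gamma \backslash G)$, completeness of $(C(\Gamma' \backslash G), \|\cdot\|_{L^\infty})$, and continuity of $\Phi$ on $L^2$. The one point worth stating carefully is the compatibility of the uniform limit with the $L^2$ limit, which is immediate once one records that both $\Gamma \backslash G$ and $\Gamma' \backslash G$ have finite total mass.
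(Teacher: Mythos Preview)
Your proposal is correct and follows essentially the same approach as the paper: approximate $f$ uniformly by elements of $L^2(\Gamma\backslash G)^{\fin}$, use Lemma~\ref{lem:Phi_preserves_L^infty} to see that the images form a uniform Cauchy sequence, and identify the uniform limit with $\Phi(f)$ via $L^2$ continuity of $\Phi$. The paper's write-up is slightly terser but logically identical.
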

	
	\begin{proof}
		Since $L^2(\Gamma \backslash G)^{\fin}$ is dense in $C(\Gamma \backslash G)$, there is a sequence $f_n \in L^2(\Gamma \backslash G)^{\fin}$ converging uniformly to $f$.
		Since $\Phi$ is an isometry of $L^2$ spaces, $\Phi(f_n) \to \Phi(f)$ in $L^2$.
		By Lemma~\ref{lem:Phi_preserves_L^infty}, the sequence $\Phi(f_n)$ is Cauchy with respect to the uniform norm, so in fact $\Phi(f_n) \to \Phi(f)$ uniformly. A uniform limit of continuous functions is continuous, so $\Phi(f)$ is continuous.
		Lemma~\ref{lem:Phi_preserves_L^infty} together with the uniform convergence of $\Phi(f_n)$ to $\Phi(f)$ implies that $\|\Phi(f)\|_{L^{\infty}} = \|f\|_{L^{\infty}}$.
	\end{proof}
	
	\begin{lem} \label{lem:Phi_algebra_iso}
		Let $f,h \in C(\Gamma \backslash G)$.
		Then $\Phi(fh) = \Phi(f) \Phi(h)$.
	\end{lem}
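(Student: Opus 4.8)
The plan is to prove $\Phi(fh)=\Phi(f)\Phi(h)$ by a soft density-and-continuity argument, leveraging the two preceding lemmas. The key points are that pointwise multiplication $C(\Gamma\backslash G)\times C(\Gamma\backslash G)\to C(\Gamma\backslash G)$ is jointly continuous for the uniform norm, with $\|fh\|_{L^{\infty}}\le\|f\|_{L^{\infty}}\|h\|_{L^{\infty}}$, and that $\Phi$, being an isomorphism of multiplicative representations, restricts to a $G$-equivariant isometry $C^{\infty}(\Gamma\backslash G)\to C^{\infty}(\Gamma'\backslash G)$ that respects multiplication on the dense subspace $L^2(\Gamma\backslash G)^{\fin}$ --- where the product is a priori only known to land in $C^{\infty}(\Gamma'\backslash G)\subseteq C(\Gamma'\backslash G)$, not in $L^2(\Gamma'\backslash G)^{\fin}$.

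First I would fix $f,h\in C(\Gamma\backslash G)$ and choose sequences $f_n,h_n\in L^2(\Gamma\backslash G)^{\fin}$ converging uniformly to $f$ and $h$ respectively; this is possible since $L^2(\Gamma\backslash G)^{\fin}$ is dense in $C(\Gamma\backslash G)$ for the uniform norm. For each $n$, the defining property of $\Phi$ (respecting multiplication) gives $\Phi(f_nh_n)=\Phi(f_n)\Phi(h_n)$ as an identity in $C^{\infty}(\Gamma'\backslash G)$. Then I would pass to the limit on both sides. On the left, $f_nh_n\to fh$ uniformly (the $f_n,h_n$ being uniformly bounded), and $f_nh_n$, $fh$ all lie in $C(\Gamma\backslash G)$, so Lemma~\ref{lem:Phi_iso_on_C} gives $\|\Phi(f_nh_n)-\Phi(f_mh_m)\|_{L^{\infty}}=\|f_nh_n-f_mh_m\|_{L^{\infty}}\to0$; thus $\Phi(f_nh_n)$ is uniformly Cauchy, and since it also converges to $\Phi(fh)$ in $L^2$ (as $\Phi$ is an $L^2$-isometry), it converges uniformly to $\Phi(fh)$. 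On the right, Lemma~\ref{lem:Phi_iso_on_C} likewise gives $\Phi(f_n)\to\Phi(f)$ and $\Phi(h_n)\to\Phi(h)$ uniformly, with all these functions uniformly bounded continuous functions on $\Gamma'\backslash G$, so $\Phi(f_n)\Phi(h_n)\to\Phi(f)\Phi(h)$ uniformly as well. Taking $n\to\infty$ in $\Phi(f_nh_n)=\Phi(f_n)\Phi(h_n)$ yields $\Phi(fh)=\Phi(f)\Phi(h)$.

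I do not expect a genuine obstacle here: the entire argument reduces to the $L^{\infty}$-isometry of $\Phi$ on continuous functions established in Lemma~\ref{lem:Phi_iso_on_C} together with joint continuity of pointwise multiplication for the uniform norm. The only subtlety worth flagging is that a product of elements of $L^2(\Gamma\backslash G)^{\fin}$ need not lie in $L^2(\Gamma\backslash G)^{\fin}$ but only in $C^{\infty}(\Gamma\backslash G)$, which is why the multiplicativity identity for $\Phi$ must be invoked as an identity in $C^{\infty}$ (equivalently, using that $\Phi$ restricts to a $G$-equivariant isometry $C^{\infty}(\Gamma\backslash G)\to C^{\infty}(\Gamma'\backslash G)$) rather than attempting to stay inside the $K$-finite subspace.
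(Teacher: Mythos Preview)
Your proof is correct and follows essentially the same approach as the paper: approximate $f,h$ uniformly by elements of $L^2(\Gamma\backslash G)^{\fin}$, use that $\Phi$ respects multiplication on that subspace, and pass to the limit via Lemma~\ref{lem:Phi_iso_on_C}. The only minor difference is that the paper applies Lemma~\ref{lem:Phi_iso_on_C} directly to $f_nh_n - fh \in C(\Gamma\backslash G)$ to get $\Phi(f_nh_n)\to\Phi(fh)$ uniformly in one step, whereas you go through a Cauchy-plus-$L^2$-limit argument; both are fine.
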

	
	\begin{proof}
		Let $f_n,h_n \in L^2(\Gamma \backslash G)^{\fin}$ with $f_n \to f$ and $h_n \to h$ uniformly. Then $f_nh_n \to fh$ uniformly. It follows from Lemma~\ref{lem:Phi_iso_on_C} that $\Phi(f_n)$, $\Phi(h_n)$, and $\Phi(f_nh_n)$ converge uniformly to $f$, $h$, and $fh$, respectively. Therefore
		\begin{align*}
			\Phi(fh)
			= \lim_{n \to \infty} \Phi(f_nh_n)
			= \lim_{n \to \infty} \Phi(f_n) \Phi(h_n)
			= \Phi(f) \Phi(h).
		\end{align*}
		Here the second equality is because $\Phi$ is an isomorphism of multiplicative representations.
	\end{proof}
	
	\begin{lem} \label{lem:Phi_preserves_conjugation}
		Let $f \in L^2(\Gamma \backslash G)$. Then $\Phi(\overline{f}) = \overline{\Phi(f)}$.
	\end{lem}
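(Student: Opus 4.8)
The plan is to exploit the fact that, in a multiplicative representation, the complex conjugate is not extra data but is \emph{characterized} by the three-term identity \eqref{eqn:3_term_crossing}: for $f \in \H^{\fin}$, $\overline{f}$ is the unique element of $\H^{\fin}$ with $\langle f\beta,\gamma\rangle_{\H} = \langle \beta, \overline{f}\gamma\rangle_{\H}$ for all $\beta,\gamma \in \H^{\fin}$. Since an isomorphism of multiplicative representations preserves the inner product, the multiplication, and (being an isomorphism of unitary representations with discrete spectrum) the subspace of $K$-finite vectors, $\Phi$ transports this characterization from $L^2(\Gamma\backslash G)$ to $L^2(\Gamma'\backslash G)$. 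I would also note at the outset that for $f \in L^2(\Gamma\backslash G)^{\fin}$ the abstract complex conjugate of Definition~\ref{def:mult_rep} agrees with the pointwise complex conjugate $\bar f$, because $\langle fg,h\rangle_{L^2} = \int fg\bar h = \int g\,\overline{\bar f h} = \langle g,\bar f h\rangle_{L^2}$ shows pointwise conjugation satisfies \eqref{eqn:3_term_crossing}; hence it suffices to prove $\Phi(\overline{f}) = \overline{\Phi(f)}$ on the dense subspace $L^2(\Gamma\backslash G)^{\fin}$ and then pass to a limit.

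For the main step, fix $f \in L^2(\Gamma\backslash G)^{\fin}$ and check that $\Phi(\overline{f})$ satisfies the defining property of $\overline{\Phi(f)}$. Given arbitrary $\beta,\gamma \in L^2(\Gamma'\backslash G)^{\fin}$, write $\beta = \Phi(b)$ and $\gamma = \Phi(c)$ with $b,c \in L^2(\Gamma\backslash G)^{\fin}$, which is possible because $\Phi$ restricts to a bijection of the $\H^{\fin}$'s. Then, using that $\Phi$ respects multiplication and the inner product and that $\overline{f}$ obeys \eqref{eqn:3_term_crossing} downstairs,
\begin{align*}
	\langle \Phi(f)\beta, \gamma \rangle_{L^2}
	&= \langle \Phi(f)\Phi(b), \Phi(c) \rangle_{L^2}
	= \langle \Phi(fb), \Phi(c) \rangle_{L^2}
	= \langle fb, c \rangle_{L^2} \\
	&= \langle b, \overline{f}c \rangle_{L^2}
	= \langle \Phi(b), \Phi(\overline{f}c) \rangle_{L^2}
	= \langle \Phi(b), \Phi(\overline{f})\Phi(c) \rangle_{L^2}
	= \langle \beta, \Phi(\overline{f})\gamma \rangle_{L^2}.
\end{align*}
By the uniqueness of complex conjugates noted after \eqref{eqn:3_term_crossing}, this forces $\overline{\Phi(f)} = \Phi(\overline{f})$.

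To finish, I would observe that both maps $f \mapsto \Phi(\overline{f})$ and $f \mapsto \overline{\Phi(f)}$ are antilinear isometries $L^2(\Gamma\backslash G) \to L^2(\Gamma'\backslash G)$ (complex conjugation is isometric by Remark~\ref{rem:real_subspace}, and $\Phi$ is an isomorphism of unitary representations), hence continuous; since they coincide on the dense subspace $L^2(\Gamma\backslash G)^{\fin}$, they coincide everywhere, giving $\Phi(\overline{f}) = \overline{\Phi(f)}$ for all $f$. There is no serious obstacle in this argument; the only points needing a word of justification are that $\Phi$ carries $\H^{\fin}$ bijectively to $\H^{\fin}$ and that the abstract conjugate on $L^2(\Gamma\backslash G)^{\fin}$ is pointwise conjugation, both of which are immediate.
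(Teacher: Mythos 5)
Your proof is correct and follows the same route as the paper: reduce by continuity to $f\in L^2(\Gamma\backslash G)^{\fin}$, observe that the abstract conjugate of Definition~\ref{def:mult_rep} coincides with pointwise conjugation, and conclude that $\Phi$ intertwines the two conjugations. You merely unpack explicitly the paper's one-line claim that an isomorphism of multiplicative representations must carry $\overline{f}$ to the abstract conjugate of $\Phi(f)$, by verifying the defining identity \eqref{eqn:3_term_crossing} directly.
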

	
	\begin{proof}
		By continuity, it suffices to prove this for $f$ in $L^2(\Gamma \backslash G)^{\fin}$.
		Then since $\Phi$ is an isomorphism of multiplicative representations, $\Phi(\overline{f})$ is the complex conjugate of $\Phi(f)$ in the sense of Definition~\ref{def:mult_rep}. Complex conjugation on $L^2(\Gamma \backslash G)$ as a multiplicative representation coincides with complex conjugation of functions, so $\Phi(\overline{f}) = \overline{\Phi(f)}$.
	\end{proof}
	
	We are now ready to prove Proposition~\ref{prop:mult_rep_uniqueness}.
	
	\begin{proof}[Proof of Proposition~\ref{prop:mult_rep_uniqueness}]
		By Lemmas~\ref{lem:Phi_iso_on_C}, \ref{lem:Phi_algebra_iso}, and \ref{lem:Phi_preserves_conjugation}, $\Phi$ is a C*-homomorphism from $C(\Gamma \backslash G)$ to $C(\Gamma' \backslash G)$. Applying the same argument to $\Phi^{-1}$ shows that $\Phi$ is a C*-isomorphism.
		Thus by Gelfand duality (Theorem~\ref{thm:Gelfand_duality}), there is a unique homeomorphism $T \colon \Gamma' \backslash G \to \Gamma \backslash G$ such that $\Phi(f) = f \circ T$ for all $f \in C(\Gamma \backslash G)$.
		Since $\Phi$ is $G$-equivariant, Gelfand duality implies that $T$ is $G$-equivariant.
		Let $e$ denote the identity coset in $\Gamma' \backslash G$, and set $g = T(e)^{-1} \in G/\Gamma$.
		Then for $x \in G$,
		\begin{align} \label{eqn:T_formula}
			T(x)
			= T(ex)
			= T(e)x
			= g^{-1}x,
		\end{align}
		where the second equality is because $T$ is equivariant.
		In words, \eqref{eqn:T_formula} says that $T$ is given by left-translation by $g^{-1}$.
		Thus we have the following equality in $\Gamma \backslash G$:
		\begin{align*}
			g^{-1}\Gamma'
			= T(\Gamma')
			= T(e)
			= g^{-1}.
		\end{align*}
		The fact that this holds in $\Gamma \backslash G$ means that $g^{-1}\Gamma' \subseteq \Gamma g^{-1}$ as subsets of $G$.
		Rearranging, $\Gamma' \subseteq g\Gamma g^{-1}$.
		Since $T$, given by \eqref{eqn:T_formula}, is a homeomorphism, we must have $\Gamma' = g\Gamma g^{-1}$.
		The formula \eqref{eqn:T_formula} implies \eqref{eqn:Phi_given_by_translation} for all $f \in C(\Gamma \backslash G)$.
		Since $\Phi$ is an isometry of $L^2$ spaces, \eqref{eqn:Phi_given_by_translation} extends to all $f \in L^2(\Gamma \backslash G)$ by continuity.
		Since $T$ is unique, the uniqueness of $g$ in Proposition~\ref{prop:mult_rep_uniqueness} is clear.
	\end{proof}
	
	Proposition~\ref{prop:mult_spectra_uniqueness} follows easily from Proposition~\ref{prop:mult_rep_uniqueness}.
	Recall the statement of Proposition~\ref{prop:mult_spectra_uniqueness}:
	
	\begin{prop*}[Restatement of Proposition~\ref{prop:mult_spectra_uniqueness}]
		Let $(\Gamma \backslash \mathbf{H}, \{\psi_i\}_{i \in I})$ and $(\Gamma' \backslash \mathbf{H}, \{\psi_i'\}_{i \in I'})$ be compact hyperbolic 2-orbifolds equipped with $(\g,K)$-adapted bases.
		If both have the same multiplicative spectrum, then $I = I'$, and there exists a unique element $g \in G/\Gamma$ such that $\Gamma' = g\Gamma g^{-1}$ and
		\begin{align*}
			\psi_i'(x)
			= \psi_i(g^{-1}x)
		\end{align*}
		for all $i \in I$.
	\end{prop*}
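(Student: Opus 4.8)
The plan is to reduce Proposition~\ref{prop:mult_spectra_uniqueness} to Proposition~\ref{prop:mult_rep_uniqueness} by producing an isomorphism of multiplicative representations $\Phi \colon L^2(\Gamma \backslash G) \to L^2(\Gamma' \backslash G)$ carrying $\psi_i$ to $\psi_i'$, and then reading off $g$ from the translation formula in Proposition~\ref{prop:mult_rep_uniqueness}. First observe that $I = I'$ is immediate: by Definition~\ref{defn:mult_spectrum} the holomorphic spectrum $\{k_r\}_{r \geq 1}$ is part of the multiplicative spectrum, and by \eqref{eqn:I_hol_def} the indexing set depends only on $\{k_r\}_{r \geq 1}$; since the two orbifolds share a multiplicative spectrum, they share both $\{\lambda_r\}_{r \geq 0}$ and $\{k_r\}_{r \geq 1}$, so $I = I'$, and by Definition~\ref{defn:Laplace_hol_spectra_rep} the representations $L^2(\Gamma \backslash G, \R)$ and $L^2(\Gamma' \backslash G, \R)$ are isomorphic as real unitary representations of $G$.

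Next, construct $\Phi$. Since $\{\psi_i\}_{i \in I}$ and $\{\psi_i'\}_{i \in I}$ are both $(\g,K)$-adapted, Proposition~\ref{prop:(g,K)-adapted_exist_unique} (applied with $\H_{\R} = L^2(\Gamma \backslash G, \R)$, $\H_{\R}' = L^2(\Gamma' \backslash G, \R)$, and the constant functions as distinguished units) gives a unique isomorphism of real unitary representations whose complexification $\Phi$ sends $\psi_i$ to $\psi_i'$ for every $i \in I$. Being an isomorphism of unitary representations, $\Phi$ is $G$-equivariant, hence restricts to an isomorphism $L^2(\Gamma \backslash G)^{\infty} \to L^2(\Gamma' \backslash G)^{\infty}$ of smooth vectors and to an isomorphism of $\H^{\fin}$'s carrying $\psi_i$ to $\psi_i'$. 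It remains to check that $\Phi$ respects multiplication. For $i,j \in I$, the product $\psi_i \psi_j = \sum_{\ell} C_{ij}^{\ell} \psi_{\ell}$ converges in $L^2(\Gamma \backslash G)^{\infty}$ by Propositions~\ref{prop:smooth_iff_spectral_decay} and \ref{prop:poly_decay_C_ij^l}, so applying the continuous map $\Phi$ gives $\Phi(\psi_i \psi_j) = \sum_{\ell} C_{ij}^{\ell} \psi_{\ell}'$; since the two multiplication tables coincide, the right-hand side equals $\psi_i' \psi_j' = \Phi(\psi_i)\Phi(\psi_j)$. By bilinearity, $\Phi(\alpha\beta) = \Phi(\alpha)\Phi(\beta)$ for all $\alpha,\beta \in \H^{\fin}$, and $\Phi$ preserves the unit and the inner product, so $\Phi$ is an isomorphism of multiplicative representations.

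Finally, apply Proposition~\ref{prop:mult_rep_uniqueness} to $\Phi$: there is a unique $g \in G/\Gamma$ with $\Gamma' = g\Gamma g^{-1}$ and $\Phi(f)(x) = f(g^{-1}x)$ for all $f$; taking $f = \psi_i$ yields $\psi_i'(x) = \psi_i(g^{-1}x)$ for all $i \in I$, and uniqueness of $g$ is inherited from Proposition~\ref{prop:mult_rep_uniqueness}. There is no genuinely hard step here; the only place demanding a little care is the verification that $\Phi$, produced abstractly from the representation theory of $G$, is compatible with multiplication, and this is exactly where the hypothesis that the full multiplication tables agree (not merely the Laplace and holomorphic spectra) is used, together with the decay input of Proposition~\ref{prop:poly_decay_C_ij^l} needed to realize $\psi_i\psi_j$ as an element of $\H^{\infty}$ on which $\Phi$ acts continuously.
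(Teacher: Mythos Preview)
Your proof is correct and follows essentially the same approach as the paper: use equality of Laplace and holomorphic spectra to get $I=I'$ and an isomorphism $\Phi$ of real unitary representations via Proposition~\ref{prop:(g,K)-adapted_exist_unique}, check that equality of multiplication tables makes $\Phi$ an isomorphism of multiplicative representations, and then apply Proposition~\ref{prop:mult_rep_uniqueness}. One minor simplification: since here $\psi_i\psi_j \in C^{\infty}(\Gamma\backslash G) = L^2(\Gamma\backslash G)^{\infty}$ automatically, you do not need to invoke Proposition~\ref{prop:poly_decay_C_ij^l}; it suffices to compute $\langle \Phi(\psi_i\psi_j),\psi_\ell'\rangle = \langle \psi_i\psi_j,\psi_\ell\rangle = C_{ij}^{\ell}$ directly using that $\Phi$ is unitary.
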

	
	\begin{proof}
		Since $\Gamma \backslash \mathbf{H}$ and $\Gamma' \backslash \mathbf{H}$ have the same Laplace and holomorphic spectra, $L^2(\Gamma \backslash G, \R)$ and $L^2(\Gamma' \backslash G, \R)$ are isomorphic as real unitary representations, and $I = I'$.
		Thus by Proposition~\ref{prop:(g,K)-adapted_exist_unique}, there exists a unique isomorphism $\Phi_{\R} \colon L^2(\Gamma \backslash G, \R) \to L^2(\Gamma' \backslash G, \R)$ whose complexification $\Phi$ satisfies $\Phi(\psi_i) = \psi_i'$ for all $i \in I$.
		Since $(\Gamma \backslash \mathbf{H}, \{\psi_i\}_{i \in I})$ and $(\Gamma' \backslash \mathbf{H}, \{\psi_i'\}_{i \in I})$ have the same multiplication table, $\Phi$ is an isomorphism of multiplicative representations.
		Therefore Proposition~\ref{prop:mult_rep_uniqueness} implies that there exists an element $g \in G/\Gamma$ satisfying the desired properties.
		Since $\Phi_{\R}$ is unique, $g$ is unique.
	\end{proof}
	
	\section{Elementary properties of complex conjugation on $\H^{\fin}$} \label{sec:complex_conj}
	
	The remainder of the paper is dedicated to the proof of Theorem~\ref{thm:eq_Gelfand_duality}.
	So from now on, let $\H$ be a multiplicative representation.
	
	We show in this section that complex conjugation on $\H^{\fin}$, as defined by the ``existence of complex conjugates" axiom in Definition~\ref{def:mult_rep}, obeys the expected properties.
	In particular, we verify all the assertions in Remark~\ref{rem:real_subspace}.
	These properties will be used freely without comment later on.
	
	\begin{prop} \label{prop:1_bar=1}
		One has $\overline{\mathbf{1}} = \mathbf{1}$.
	\end{prop}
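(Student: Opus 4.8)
The statement $\overline{\mathbf{1}} = \mathbf{1}$ should follow directly from the defining property of complex conjugation together with the unit axiom, so the plan is essentially a one-line verification. First I would recall that by the ``existence of complex conjugates'' axiom, $\overline{\mathbf{1}}$ is the unique element of $\H^{\fin}$ satisfying $\langle \mathbf{1}\beta, \gamma \rangle_{\H} = \langle \beta, \overline{\mathbf{1}}\,\gamma \rangle_{\H}$ for all $\beta,\gamma \in \H^{\fin}$ (uniqueness being the consequence of density of $\H^{\fin}$ in $\H$ noted just after \eqref{eqn:3_term_crossing}, obtained by setting $\gamma = \mathbf{1}$ and ranging over $\beta$). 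Next I would apply the unit axiom twice: on the left, $\mathbf{1}\beta = \beta$, and on the right, $\mathbf{1}\gamma = \gamma$, so that the candidate $\delta = \mathbf{1}$ satisfies $\langle \mathbf{1}\beta,\gamma\rangle_{\H} = \langle \beta,\gamma\rangle_{\H} = \langle \beta, \mathbf{1}\gamma\rangle_{\H}$ for all $\beta,\gamma$. By the uniqueness of the complex conjugate, this forces $\overline{\mathbf{1}} = \mathbf{1}$.

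There is no real obstacle here; the only thing to be careful about is to cite the correct axioms (existence of a unit, and existence/uniqueness of complex conjugates from Definition~\ref{def:mult_rep}) and to phrase the uniqueness step correctly. One could alternatively phrase it as: $\langle \beta, (\mathbf{1} - \overline{\mathbf{1}})\gamma\rangle_{\H} = 0$ for all $\beta$, hence $(\mathbf{1} - \overline{\mathbf{1}})\gamma = 0$ for all $\gamma \in \H^{\fin}$, and taking $\gamma = \mathbf{1}$ gives $\mathbf{1} - \overline{\mathbf{1}} \cdot \mathbf{1} = \mathbf{1} - \overline{\mathbf{1}} = 0$ using the unit axiom once more. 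Either formulation is a short routine argument requiring no further input.
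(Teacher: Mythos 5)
Your proof is correct and rests on the same ingredients as the paper's: the unit axiom, the defining adjointness property of complex conjugation, and a uniqueness observation. The only (cosmetic) difference is which uniqueness fact you invoke at the end: you observe that $\mathbf{1}$ satisfies the adjointness property characterizing $\overline{\mathbf{1}}$ and appeal to uniqueness of the complex conjugate, whereas the paper instead observes (by a density argument) that $\overline{\mathbf{1}}$ acts as a unit and appeals to uniqueness of the unit from Remark~\ref{rem:on_mult_rep_def}; these two endgames are interchangeable one-liners.
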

	
	\begin{proof}
		For all $\alpha,\beta \in \H^{\fin}$,
		\begin{align*}
			\langle \alpha,\beta \rangle_{\H}
			= \langle \mathbf{1}\alpha, \beta \rangle_{\H}
			= \langle \alpha, \overline{\mathbf{1}}\beta \rangle_{\H}.
		\end{align*}
		Since $\H^{\fin}$ is dense in $\H$, it follows that $\beta = \overline{\mathbf{1}}\beta$. Thus $\overline{\mathbf{1}}$ is a unit, and hence $\overline{\mathbf{1}} = \mathbf{1}$ because the unit is unique.
	\end{proof}
	
	\begin{prop} \label{prop:alpha_bar_bar=alpha}
		Let $\alpha \in \H^{\fin}$. Then $\overline{\overline{\alpha}} = \alpha$.
	\end{prop}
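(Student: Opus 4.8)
The plan is to show that $\overline{\overline{\alpha}}$ satisfies the same adjointness relation that, by the ``existence of complex conjugates'' axiom, uniquely characterizes $\alpha$, and then conclude using density of $\H^{\fin}$ and the unit axiom. First I would write down the two instances of \eqref{eqn:3_term_crossing} that I will play off against each other: for all $\beta,\gamma \in \H^{\fin}$,
\begin{align*}
	\langle \alpha\beta,\gamma \rangle_{\H} = \langle \beta, \overline{\alpha}\gamma \rangle_{\H}
	\qquad \text{and} \qquad
	\langle \overline{\alpha}\beta,\gamma \rangle_{\H} = \langle \beta, \overline{\overline{\alpha}}\gamma \rangle_{\H}.
\end{align*}

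Next I would compute $\langle \overline{\alpha}\beta,\gamma \rangle_{\H}$ a second way. Using that the inner product is antilinear in its second slot, and then applying the first identity above with the pair $(\gamma,\beta)$ in place of $(\beta,\gamma)$ (which is legitimate since that identity is quantified over all $\beta,\gamma \in \H^{\fin}$),
\begin{align*}
	\langle \overline{\alpha}\beta,\gamma \rangle_{\H}
	= \overline{\langle \gamma, \overline{\alpha}\beta \rangle_{\H}}
	= \overline{\langle \alpha\gamma, \beta \rangle_{\H}}
	= \langle \beta, \alpha\gamma \rangle_{\H}.
\end{align*}
Comparing this with the second identity gives $\langle \beta, \alpha\gamma \rangle_{\H} = \langle \beta, \overline{\overline{\alpha}}\gamma \rangle_{\H}$ for all $\beta \in \H^{\fin}$. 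Since $\H^{\fin}$ is dense in $\H$, this forces $\alpha\gamma = \overline{\overline{\alpha}}\gamma$ for every $\gamma \in \H^{\fin}$; taking $\gamma = \mathbf{1}$ and invoking the unit axiom then yields $\alpha = \overline{\overline{\alpha}}$.

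I do not expect a genuine obstacle here; the proof is a short formal manipulation. The only point requiring care is the bookkeeping of which argument of the inner product carries the complex conjugate, and noting that commutativity of multiplication is not actually needed for this particular computation (although it would give an equivalent one-line variant). The real content is just that \eqref{eqn:3_term_crossing}, being symmetric under relabeling $\beta \leftrightarrow \gamma$, already encodes both ``$\overline{\alpha}$ is left-adjoint to $\alpha$'' and ``$\alpha$ is left-adjoint to $\overline{\alpha}$,'' after which uniqueness of the complex conjugate closes the argument.
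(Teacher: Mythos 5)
Your proof is correct and is essentially the paper's proof: both compute $\langle \overline{\alpha}\beta,\gamma\rangle_{\H} = \overline{\langle\gamma,\overline{\alpha}\beta\rangle_{\H}} = \overline{\langle\alpha\gamma,\beta\rangle_{\H}} = \langle\beta,\alpha\gamma\rangle_{\H}$ and then invoke uniqueness of the complex conjugate. Your concluding step of comparing with the defining relation for $\overline{\overline{\alpha}}$ and specializing to $\gamma=\mathbf{1}$ is just a slightly more explicit rendering of ``$\alpha$ satisfies the adjointness property characterizing $\overline{\overline{\alpha}}$.''
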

	
	\begin{proof}
		For all $\beta,\gamma \in \H^{\fin}$,
		\begin{align*}
			\langle \overline{\alpha} \beta, \gamma \rangle_{\H}
			= \overline{\langle \gamma, \overline{\alpha} \beta \rangle_{\H}}
			= \overline{\langle \alpha\gamma, \beta \rangle_{\H}}
			= \langle \beta, \alpha\gamma \rangle_{\H}.
		\end{align*}
		This shows that $\alpha$ satisfies the adjointness property characterizing $\overline{\overline{\alpha}}$.
	\end{proof}
	
	\begin{prop} \label{prop:inner_product_bar}
		Let $\alpha,\beta \in \H^{\fin}$. Then
		\begin{align*}
			\langle \overline{\alpha},\overline{\beta} \rangle_{\H}
			= \overline{\langle \alpha,\beta \rangle_{\H}}.
		\end{align*}
	\end{prop}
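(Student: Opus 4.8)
The plan is to deduce the identity purely formally from the ``existence of complex conjugates'' axiom, commutativity, and the two propositions already proved in this section. The only real idea needed is the right specialization of the adjointness relation \eqref{eqn:3_term_crossing}.

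First I would establish the symmetry relation
\begin{align*}
	\langle \beta, \overline{\alpha} \rangle_{\H}
	= \langle \alpha, \overline{\beta} \rangle_{\H}
	\qquad \text{for all } \alpha,\beta \in \H^{\fin}.
\end{align*}
To get this, apply \eqref{eqn:3_term_crossing} with $\gamma = \mathbf{1}$ and use that $\overline{\alpha}\mathbf{1} = \overline{\alpha}$ (since $\mathbf{1}$ is the unit), which gives $\langle \alpha\beta, \mathbf{1} \rangle_{\H} = \langle \beta, \overline{\alpha} \rangle_{\H}$. Commutativity ($\alpha\beta = \beta\alpha$) then lets me also write $\langle \alpha\beta, \mathbf{1} \rangle_{\H} = \langle \beta\alpha, \mathbf{1} \rangle_{\H} = \langle \alpha, \overline{\beta} \rangle_{\H}$, and comparing the two expressions yields the displayed symmetry.

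Next I would substitute $\overline{\alpha}$ (which lies in $\H^{\fin}$ by the axiom) in place of $\alpha$ in the symmetry relation, obtaining $\langle \beta, \overline{\overline{\alpha}} \rangle_{\H} = \langle \overline{\alpha}, \overline{\beta} \rangle_{\H}$. By Proposition~\ref{prop:alpha_bar_bar=alpha}, $\overline{\overline{\alpha}} = \alpha$, so this reads $\langle \beta, \alpha \rangle_{\H} = \langle \overline{\alpha}, \overline{\beta} \rangle_{\H}$. Finally, conjugate-symmetry of the inner product gives $\langle \beta, \alpha \rangle_{\H} = \overline{\langle \alpha, \beta \rangle_{\H}}$, which is the claim.

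There is no substantial obstacle here: the argument is a short chain of substitutions, and the ``hard part'' is merely choosing to test against $\gamma = \mathbf{1}$ and then feeding $\overline{\alpha}$ back into the resulting identity. (One could alternatively note that Proposition~\ref{prop:inner_product_bar}, together with Propositions~\ref{prop:1_bar=1} and \ref{prop:alpha_bar_bar=alpha}, is exactly what is needed to conclude that complex conjugation on $\H^{\fin}$ is an isometric antilinear involution, and hence extends by continuity to $\H$ as asserted in Remark~\ref{rem:real_subspace}.)
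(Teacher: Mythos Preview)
Your proof is correct and takes essentially the same approach as the paper: both arguments specialize the adjointness relation \eqref{eqn:3_term_crossing} with $\gamma = \mathbf{1}$, use commutativity and Proposition~\ref{prop:alpha_bar_bar=alpha} to arrive at $\langle \overline{\alpha},\overline{\beta}\rangle_{\H} = \langle \beta,\alpha\rangle_{\H}$, and finish by conjugate symmetry of the inner product. The paper simply presents this as a single chain of equalities rather than isolating the intermediate symmetry $\langle \beta,\overline{\alpha}\rangle_{\H} = \langle \alpha,\overline{\beta}\rangle_{\H}$ as you do, but the content is identical.
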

	
	\begin{proof}
		Using that $\overline{\overline{\alpha}} = \alpha$ by Proposition~\ref{prop:alpha_bar_bar=alpha},
		\begin{align*}
			&\langle \overline{\alpha},\overline{\beta} \rangle_{\H}
			= \langle \overline{\alpha},\overline{\beta} \mathbf{1} \rangle_{\H}
			= \langle \overline{\alpha} \beta, \mathbf{1} \rangle_{\H}
			= \langle \beta,\alpha \rangle_{\H}
			= \overline{\langle \alpha,\beta \rangle_{\H}}.
			\qedhere
		\end{align*}
	\end{proof}
	
	\begin{prop} \label{prop:bar_antilinear}
		Complex conjugation is antilinear, i.e., for all $\alpha,\beta \in \H^{\fin}$ and $c \in \C$,
		\begin{align*}
			\overline{c\alpha+\beta}
			= \overline{c} \, \overline{\alpha} + \overline{\beta}.
		\end{align*}
	\end{prop}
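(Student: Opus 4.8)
The plan is to verify that the candidate vector $\overline{c}\,\overline{\alpha} + \overline{\beta}$ satisfies the defining adjointness property \eqref{eqn:3_term_crossing} for the complex conjugate of $c\alpha+\beta$, and then invoke the uniqueness of complex conjugates noted in Definition~\ref{def:mult_rep}. This mirrors the strategy already used in the proofs of Propositions~\ref{prop:1_bar=1}, \ref{prop:alpha_bar_bar=alpha}, and \ref{prop:inner_product_bar}.

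Concretely, I would fix $\gamma,\delta \in \H^{\fin}$ and expand $\langle (c\alpha+\beta)\gamma, \delta \rangle_{\H}$. Using bilinearity of multiplication, this equals $c\langle \alpha\gamma,\delta \rangle_{\H} + \langle \beta\gamma,\delta \rangle_{\H}$ (linearity of the inner product in the first argument), which by \eqref{eqn:3_term_crossing} applied to $\alpha$ and to $\beta$ becomes $c\langle \gamma,\overline{\alpha}\delta \rangle_{\H} + \langle \gamma,\overline{\beta}\delta \rangle_{\H}$. On the other side, $\langle \gamma, (\overline{c}\,\overline{\alpha}+\overline{\beta})\delta \rangle_{\H}$ expands, again by bilinearity of multiplication and now antilinearity of the inner product in the second argument, to $\overline{\overline{c}}\,\langle \gamma,\overline{\alpha}\delta \rangle_{\H} + \langle \gamma,\overline{\beta}\delta \rangle_{\H} = c\langle \gamma,\overline{\alpha}\delta \rangle_{\H} + \langle \gamma,\overline{\beta}\delta \rangle_{\H}$. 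Since these agree for all $\gamma,\delta \in \H^{\fin}$, the vector $\overline{c}\,\overline{\alpha}+\overline{\beta}$ has the property characterizing $\overline{c\alpha+\beta}$, so the two coincide.

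There is no real obstacle here: the only inputs are bilinearity of the multiplication map, the $\C$-(anti)linearity of the inner product in each slot, and the uniqueness clause in the "existence of complex conjugates" axiom, all of which are available. The single point worth stating carefully is the placement of the conjugate bar on the scalar $c$ — it arrives on the right-hand side through the antilinearity of $\langle\,\cdot\,,\,\cdot\,\rangle_{\H}$ in the second argument, which is exactly why $\overline{c}$ rather than $c$ appears in the formula.
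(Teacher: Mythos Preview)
Your proof is correct and is precisely the approach the paper takes: the paper's proof is a single line stating that $\overline{c}\,\overline{\alpha}+\overline{\beta}$ obviously satisfies the adjointness property characterizing $\overline{c\alpha+\beta}$, and you have simply spelled out that verification in full.
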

	
	\begin{proof}
		Clearly $\overline{c} \, \overline{\alpha} + \overline{\beta}$ obeys the adjointness property characterizing $\overline{c\alpha+\beta}$.
	\end{proof}
	
	Putting Propositions~\ref{prop:alpha_bar_bar=alpha}, \ref{prop:inner_product_bar}, and \ref{prop:bar_antilinear} together, we obtain
	
	\begin{prop} \label{prop:bar_extends}
		Complex conjugation extends by continuity to an antilinear involution on $\H$, denoted $v \mapsto \overline{v}$, with
		\begin{align} \label{eqn:bar_extended_preserves_norm}
			\langle \overline{v},\overline{w} \rangle_{\H}
			= \overline{\langle v,w \rangle_{\H}}
			\qquad \text{and in particular} \qquad
			\|\overline{v}\|_{\H}
			= \|v\|_{\H}
		\end{align}
		for all $v,w \in \H$.
	\end{prop}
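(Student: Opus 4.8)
The plan is to extend complex conjugation from the dense subspace $\H^{\fin}$ to all of $\H$ by the standard bounded-transformation argument, using that it is an isometry there, and then check that the finitely many identities in question pass to the limit. First I would note that Proposition~\ref{prop:inner_product_bar}, applied with $\beta = \alpha$, gives $\|\overline{\alpha}\|_{\H}^2 = \langle \overline{\alpha},\overline{\alpha} \rangle_{\H} = \overline{\langle \alpha,\alpha \rangle_{\H}} = \|\alpha\|_{\H}^2$, since $\langle \alpha,\alpha \rangle_{\H}$ is a nonnegative real number. So $\alpha \mapsto \overline{\alpha}$ is norm-preserving on $\H^{\fin}$. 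Being additive by Proposition~\ref{prop:bar_antilinear} and norm-preserving on the dense subspace $\H^{\fin}$, it is uniformly continuous there, hence extends uniquely to a continuous map $v \mapsto \overline{v}$ on $\H$, still additive and norm-preserving.

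Next I would verify that the extension retains the algebraic structure, each property holding on $\H^{\fin}$ and being preserved under limits. For antilinearity: for fixed $c \in \C$ the maps $v \mapsto \overline{cv}$ and $v \mapsto \overline{c}\,\overline{v}$ are both continuous on $\H$ (scalar multiplication by a fixed scalar is continuous) and agree on $\H^{\fin}$ by Proposition~\ref{prop:bar_antilinear}, so they agree on all of $\H$. For the involution property: $v \mapsto \overline{\overline{v}}$ and the identity are both continuous on $\H$ and agree on $\H^{\fin}$ by Proposition~\ref{prop:alpha_bar_bar=alpha}, so they agree on $\H$; in particular conjugation is a conjugate-linear bijective isometry of $\H$. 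Finally, for the sesquilinear identity \eqref{eqn:bar_extended_preserves_norm}: both $(v,w) \mapsto \langle \overline{v},\overline{w} \rangle_{\H}$ and $(v,w) \mapsto \overline{\langle v,w \rangle_{\H}}$ are jointly continuous on $\H \times \H$ and agree on $\H^{\fin} \times \H^{\fin}$ by Proposition~\ref{prop:inner_product_bar}, so they agree everywhere; specializing to $w = v$ yields $\|\overline{v}\|_{\H} = \|v\|_{\H}$.

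There is essentially no genuine obstacle here; this is the routine ``extend a densely defined isometry by continuity and push the identities to the limit'' argument. The only points worth a moment's attention are that conjugation is antilinear rather than linear (so one records that scalar multiplication by a fixed scalar is continuous before invoking density), and that one should confirm the extension is a true involution—and hence in particular surjective onto $\H$—which is exactly the content of $\overline{\overline{v}} = v$.
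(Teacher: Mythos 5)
Your argument is correct and is essentially the same as the paper's, which likewise notes that conjugation is norm-preserving and antilinear on the dense subspace $\H^{\fin}$ (by Propositions~\ref{prop:inner_product_bar} and \ref{prop:bar_antilinear}), extends it by continuity, and transfers the identities from Propositions~\ref{prop:alpha_bar_bar=alpha}, \ref{prop:inner_product_bar}, and \ref{prop:bar_antilinear} to the extension by a density argument. You have merely spelled out the routine continuity checks that the paper leaves implicit.
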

	
	\begin{proof}
		By Propositions~\ref{prop:inner_product_bar} and \ref{prop:bar_antilinear}, complex conjugation on $\H^{\fin}$ is norm-preserving and antilinear, so it extends by continuity to $\H$. The stated properties of complex conjugation on $\H$ then follow by continuity from the analogous properties on $\H^{\fin}$, which are due to Propositions~\ref{prop:alpha_bar_bar=alpha}, \ref{prop:inner_product_bar}, and~\ref{prop:bar_antilinear}.
	\end{proof}
	
	From now on, whenever we use complex conjugation on $\H$, we understand that it is defined through Proposition~\ref{prop:bar_extends}. Given a linear subspace $\E \subseteq \H$, denote
	\begin{align*}
		\E_{\R} = \{v \in \E : \overline{v} = v\}.
	\end{align*}
	This is an $\R$-linear subspace of $\E$. For each $v \in \H$, there is a unique way to write $v = \re v + i \im v$ for some $\re v, \im v \in \H_{\R}$, namely by taking
	\begin{align*}
		\re v = \frac{1}{2}(v+\overline{v})
		\qquad \text{and} \qquad
		\im v = \frac{1}{2i}(v-\overline{v}).
	\end{align*}
	The existence and uniqueness of this decomposition into real and imaginary parts means that $\H = \H_{\R} \otimes_{\R} \C$ as vector spaces.
	
	An immediate corollary of \eqref{eqn:bar_extended_preserves_norm} in Proposition~\ref{prop:bar_extends} is
	
	\begin{cor} \label{cor:inner_product_real}
		Let $v,w \in \H_{\R}$. Then $\langle v,w \rangle_{\H} \in \R$.
	\end{cor}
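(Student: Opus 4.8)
The plan is to read off the conclusion directly from the identity $\langle \overline{v}, \overline{w} \rangle_{\H} = \overline{\langle v,w \rangle_{\H}}$ recorded in \eqref{eqn:bar_extended_preserves_norm} of Proposition~\ref{prop:bar_extends}. First I would recall that by definition of $\H_{\R}$, the hypothesis $v,w \in \H_{\R}$ means exactly $\overline{v} = v$ and $\overline{w} = w$, where $\overline{\phantom{v}}$ is the extended complex conjugation on $\H$ furnished by Proposition~\ref{prop:bar_extends}.

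Substituting these into the first half of \eqref{eqn:bar_extended_preserves_norm} gives
\begin{align*}
	\langle v,w \rangle_{\H}
	= \langle \overline{v}, \overline{w} \rangle_{\H}
	= \overline{\langle v,w \rangle_{\H}},
\end{align*}
so the complex number $\langle v,w \rangle_{\H}$ equals its own conjugate and is therefore real. This completes the argument.

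There is no real obstacle here: the content is entirely contained in Proposition~\ref{prop:bar_extends}, whose proof in turn rests on the elementary facts about conjugation on $\H^{\fin}$ established in Propositions~\ref{prop:alpha_bar_bar=alpha}, \ref{prop:inner_product_bar}, and \ref{prop:bar_antilinear}. The only thing worth stating explicitly is the unwinding of the definition of $\H_{\R}$; everything else is a one-line substitution.
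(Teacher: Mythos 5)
Your proof is correct and is exactly the one-line argument the paper has in mind: the corollary is announced as immediate from \eqref{eqn:bar_extended_preserves_norm}, with no further proof given, and your substitution $\langle v,w\rangle_{\H}=\langle\overline{v},\overline{w}\rangle_{\H}=\overline{\langle v,w\rangle_{\H}}$ is precisely the intended unwinding.
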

	
	\begin{prop} \label{prop:norm_re_im_pythagoras}
		Let $v \in \H$. Then $\|v\|_{\H}^2 = \|\re v\|_{\H}^2 + \|\im v\|_{\H}^2$.
	\end{prop}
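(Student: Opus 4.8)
The plan is to expand $\|v\|_{\H}^2$ directly using the decomposition $v = \re v + i\im v$ together with the (complex) bilinearity of the inner product, and then invoke Corollary~\ref{cor:inner_product_real} to see that the cross terms cancel. Concretely, since $\re v, \im v \in \H_{\R}$ and the inner product is linear in the first variable and antilinear in the second,
\begin{align*}
	\|v\|_{\H}^2
	&= \langle \re v + i\im v, \re v + i\im v \rangle_{\H} \\
	&= \|\re v\|_{\H}^2 + \|\im v\|_{\H}^2 + \langle \re v, i\im v \rangle_{\H} + \langle i\im v, \re v \rangle_{\H} \\
	&= \|\re v\|_{\H}^2 + \|\im v\|_{\H}^2 - i\langle \re v, \im v \rangle_{\H} + i\langle \im v, \re v \rangle_{\H}.
\end{align*}
By Corollary~\ref{cor:inner_product_real}, both $\langle \re v, \im v \rangle_{\H}$ and $\langle \im v, \re v \rangle_{\H}$ are real, and they are complex conjugates of one another (by the Hermitian symmetry of the inner product), hence equal. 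Therefore $-i\langle \re v, \im v \rangle_{\H} + i\langle \im v, \re v \rangle_{\H} = 0$, and the desired identity follows.

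There is essentially no obstacle here: this is just the Pythagorean theorem for the real-orthogonal splitting $\H = \H_{\R} \oplus i\H_{\R}$, and all of the content is already packaged in the reality statement Corollary~\ref{cor:inner_product_real} (which itself is an immediate consequence of \eqref{eqn:bar_extended_preserves_norm} in Proposition~\ref{prop:bar_extends}). If anything deserves a remark, it is only the bookkeeping about which argument of $\langle\cdot,\cdot\rangle_{\H}$ is antilinear, so that the two cross terms come out as $\mp i\langle \re v, \im v\rangle_{\H}$ rather than with the same sign.
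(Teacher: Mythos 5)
Your proof is correct and is exactly the paper's approach: expand $\|\re v + i\im v\|_{\H}^2$ by sesquilinearity and observe that the cross terms cancel because $\langle \re v,\im v\rangle_{\H}$ is real by Corollary~\ref{cor:inner_product_real}. You simply spell out the one-line argument in the paper in full detail.
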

	
	\begin{proof}
		Expand $\|\re v + i \im v\|_{\H}^2$ and note that the cross terms vanish because of Corollary~\ref{cor:inner_product_real}.
	\end{proof}
	
	\begin{prop} \label{prop:conj_commutes_w_mult}
		Let $\alpha,\beta \in \H^{\fin}$. Then $\overline{\alpha\beta} = \overline{\alpha}\overline{\beta}$. In particular if $\alpha,\beta \in \H_{\R}^{\fin}$, then $\alpha\beta \in \H_{\R}^{\infty}$.
	\end{prop}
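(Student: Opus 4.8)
The plan is to verify $\overline{\alpha\beta} = \overline{\alpha}\,\overline{\beta}$ as an identity in $\H$ by pairing both sides against an arbitrary $\gamma \in \H^{\fin}$ and invoking density of $\H^{\fin}$ in $\H$. First I would push the (extended) conjugation off the left-hand side using Proposition~\ref{prop:bar_extends}: since $\overline{\overline{\gamma}} = \gamma$ by Proposition~\ref{prop:alpha_bar_bar=alpha},
\[
\langle \overline{\alpha\beta}, \gamma \rangle_{\H}
= \langle \overline{\alpha\beta}, \overline{\overline{\gamma}} \rangle_{\H}
= \overline{\langle \alpha\beta, \overline{\gamma} \rangle_{\H}}.
\]
On the other side, $\overline{\alpha},\overline{\beta} \in \H^{\fin}$, so the ``existence of complex conjugates'' axiom applies with $\overline{\alpha}$ as the conjugating vector, and using $\overline{\overline{\alpha}} = \alpha$,
\[
\langle \overline{\alpha}\,\overline{\beta}, \gamma \rangle_{\H}
= \langle \overline{\beta}, \overline{\overline{\alpha}}\, \gamma \rangle_{\H}
= \langle \overline{\beta}, \alpha\gamma \rangle_{\H}
= \overline{\langle \alpha\gamma, \overline{\beta} \rangle_{\H}}.
\]
Thus everything reduces to showing $\langle \alpha\beta, \overline{\gamma} \rangle_{\H} = \langle \alpha\gamma, \overline{\beta} \rangle_{\H}$.

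For this I would use commutativity together with the complex conjugates axiom: writing $\alpha\beta = \beta\alpha$ and applying the axiom (with $\beta$ as conjugating vector) gives $\langle \alpha\beta, \overline{\gamma} \rangle_{\H} = \langle \alpha, \overline{\beta}\,\overline{\gamma} \rangle_{\H}$, and symmetrically $\langle \alpha\gamma, \overline{\beta} \rangle_{\H} = \langle \alpha, \overline{\gamma}\,\overline{\beta} \rangle_{\H}$. Since $\overline{\beta}\,\overline{\gamma} = \overline{\gamma}\,\overline{\beta}$ by commutativity, these agree, so $\langle \overline{\alpha\beta}, \gamma \rangle_{\H} = \langle \overline{\alpha}\,\overline{\beta}, \gamma \rangle_{\H}$ for all $\gamma \in \H^{\fin}$; density yields $\overline{\alpha\beta} = \overline{\alpha}\,\overline{\beta}$. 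The ``in particular'' claim is then immediate: if $\overline{\alpha} = \alpha$ and $\overline{\beta} = \beta$ then $\overline{\alpha\beta} = \alpha\beta$, and since $\alpha\beta \in \H^{\infty}$ by the definition of a multiplicative representation, we get $\alpha\beta \in \H_{\R}^{\infty}$.

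There is no real obstacle here — the argument is a formal juggling of the axioms. The only thing requiring care is bookkeeping about where products live: $\H^{\fin}$ is not closed under multiplication, so $\alpha\beta$ and $\overline{\beta}\,\overline{\gamma}$ lie in $\H^{\infty}$ rather than $\H^{\fin}$, and the conjugation appearing on the left-hand side is the continuous extension of Proposition~\ref{prop:bar_extends}; at each step one must confirm that the complex conjugates axiom is applied only with the conjugating vector and the ``inner'' argument genuinely in $\H^{\fin}$, the remaining test vector being allowed to be any element of $\H$. One could instead deduce the identity from crossing symmetry by padding to a four-point expression involving $\mathbf{1}$, but the route above is shorter and uses strictly less.
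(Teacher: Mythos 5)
Your argument is correct and uses exactly the same ingredients as the paper's proof (Proposition~\ref{prop:bar_extends}, the complex-conjugates axiom, commutativity, and density of $\H^{\fin}$); the paper simply strings the steps into a single five-term chain of equalities starting from $\langle \overline{\alpha\beta},\gamma\rangle_{\H}$, whereas you work inward from both ends to meet at the middle identity $\langle \alpha\beta,\overline{\gamma}\rangle_{\H} = \langle \alpha\gamma,\overline{\beta}\rangle_{\H}$ — a cosmetic reorganization rather than a different route. Your care about where each product lives and which argument of the axiom must be in $\H^{\fin}$ is exactly the right bookkeeping.
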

	
	\begin{proof}
		Let $\gamma \in \H^{\fin}$ be arbitrary. Then by \eqref{eqn:bar_extended_preserves_norm} in Proposition~\ref{prop:bar_extends},
		\begin{align*}
			\langle \overline{\alpha\beta}, \gamma \rangle_{\H}
			= \overline{\langle \alpha\beta, \overline{\gamma} \rangle_{\H}}
			= \overline{\langle \beta, \overline{\alpha} \, \overline{\gamma} \rangle_{\H}}
			= \overline{\langle \beta\gamma, \overline{\alpha} \rangle_{\H}}
			= \overline{\langle \gamma, \overline{\alpha} \overline{\beta} \rangle_{\H}}
			= \langle \overline{\alpha} \overline{\beta}, \gamma \rangle_{\H}.
		\end{align*}
		Since $\gamma$ was arbitrary, it follows by density that $\overline{\alpha\beta} = \overline{\alpha} \overline{\beta}$.
	\end{proof}
	
	
	\begin{prop} \label{prop:bar_equivariant}
		The $G$-action respects complex conjugation, i.e., $\overline{gv} = g\overline{v}$ for all $g \in G$ and $v \in \H$. Consequently $\overline{Xv} = \overline{X} \overline{v}$ for all $X \in \g$ and $v \in \H^{\infty}$.
	\end{prop}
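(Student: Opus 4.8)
The plan is to first establish the infinitesimal version of the statement on the dense subspace $\H^{\fin}$, namely that $\overline{X\alpha}=\overline{X}\,\overline{\alpha}$ for all $\alpha\in\H^{\fin}$ and $X\in\g$, and then integrate it up to the group.

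For the infinitesimal statement, observe that both $X\alpha$ and $\overline{X}\,\overline{\alpha}$ lie in $\H^{\fin}$, since the $\g$-action and (by the ``existence of complex conjugates'' axiom) complex conjugation both preserve $\H^{\fin}$. By uniqueness of the complex conjugate it therefore suffices to check that $\overline{X}\,\overline{\alpha}$ has the adjointness property \eqref{eqn:3_term_crossing} characterizing $\overline{X\alpha}$, i.e.\ that
\begin{align*}
\langle (X\alpha)\beta,\gamma\rangle_{\H}
= \langle \beta,(\overline{X}\,\overline{\alpha})\gamma\rangle_{\H}
\end{align*}
for all $\beta,\gamma\in\H^{\fin}$. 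This is a short computation with the axioms: write $(X\alpha)\beta=X(\alpha\beta)-\alpha(X\beta)$ by the product rule \eqref{eqn:prod_rule}; move $X$ (resp.\ $\overline{X}$) across the inner product via \eqref{eqn:Lie_alg_unitarity}, which applies because products of vectors in $\H^{\fin}$ lie in $\H^{\infty}$; use the adjointness axiom \eqref{eqn:3_term_crossing} to move $\overline{\alpha}$ across; and expand $\overline{X}(\overline{\alpha}\gamma)$ by the product rule once more. The two ``cross'' terms cancel and the displayed identity remains. I expect this step to carry essentially all the content of the proposition, as it is the only place the algebraic structure of a multiplicative representation intervenes; the rest is soft.

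For the passage to $G$, write $J$ for complex conjugation on $\H$, which by Proposition~\ref{prop:bar_extends} is an antiunitary involution with $J\H^{\fin}=\H^{\fin}$. For $X$ in the real Lie algebra $\g_{\R}=\Sl_2(\R)$ one has $\overline{X}=X$, so the infinitesimal statement says $J$ commutes with the action of $X$ on $\H^{\fin}$. Now $X$, regarded as a skew-symmetric operator on $\H^{\infty}$, is essentially skew-adjoint, and $\H^{\fin}$ is a core for it: indeed $\H^{\fin}$ is dense in $\H^{\infty}$ in the Fr\'echet topology, hence in the graph norm of $X$, while $\H^{\infty}$ is itself a core. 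Since $J$ is antiunitary and commutes on this core with $X$, it commutes with the skew-adjoint closure of $X$, hence with the one-parameter unitary group generated by that closure, which acts as $v\mapsto(\exp tX)v$ (here I use that $J$ fixes real scalars). As $G=\PSL_2(\R)$ is connected it is generated by these subgroups as $X$ ranges over $\g_{\R}$, so $\overline{gv}=g\overline{v}$ for all $g\in G$ and $v\in\H$. (Alternatively one avoids operator theory by using that $K$-finite vectors are analytic vectors: for $\alpha\in\H^{\fin}$, $X\in\g_{\R}$ and $|t|$ small, $\overline{\exp(tX)\alpha}=\sum_{n}\tfrac{t^{n}}{n!}\overline{X^{n}\alpha}=\sum_{n}\tfrac{t^{n}}{n!}X^{n}\overline{\alpha}=\exp(tX)\overline{\alpha}$, which extends in $t$ by real-analyticity and in $g$ by writing $g$ as a product of exponentials, then to all of $\H$ by density.)

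Finally, for the stated consequence: since $g\mapsto g\overline{v}=\overline{gv}$ is smooth whenever $v\in\H^{\infty}$ (it is the composite of the smooth orbit map with the continuous map $J$), $\overline{v}$ lies in $\H^{\infty}$. Writing $X=X_{1}+iX_{2}$ with $X_{j}\in\g_{\R}$ and using antilinearity of $J$, $\overline{Xv}=\overline{X_{1}v}-i\,\overline{X_{2}v}$; and for $X_{j}\in\g_{\R}$, differentiating $\overline{\exp(tX_{j})v}=\exp(tX_{j})\overline{v}$ at $t=0$ gives $\overline{X_{j}v}=X_{j}\overline{v}$. Hence $\overline{Xv}=(X_{1}-iX_{2})\overline{v}=\overline{X}\,\overline{v}$.
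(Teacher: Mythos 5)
Your proof is correct, and it departs from the paper in two respects worth flagging.

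For the infinitesimal step on $\H^{\fin}$ you verify the adjointness identity
$\langle (X\alpha)\beta,\gamma\rangle
= \langle \beta,(\overline{X}\overline{\alpha})\gamma\rangle$
against arbitrary $\beta,\gamma$, applying the product rule twice and matching the resulting four terms. This works, but the paper does something slicker: it pairs with $\mathbf{1}$ from the outset, using $\overline{X}\mathbf{1}=0$ (ergodicity of the unit) so that the product rule only appears once and there is nothing to match up; the identity $\overline{X\alpha}=\overline{X}\overline{\alpha}$ then follows by the uniqueness criterion (taking $\gamma=\mathbf{1}$). Your version is a mild reshuffling and is correct, but the paper's $\mathbf{1}$-pairing trick is the shorter route and is worth internalizing since it recurs.

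For the integration step your main argument is operator-theoretic: $\H^{\fin}$ is a core for the skew-adjoint closure of each $X\in\g_{\R}$, $J$ is an antiunitary preserving that core and commuting with $X$ on it, hence $J$ conjugates the closure to itself, hence it commutes with the generated one-parameter unitary group, and then one quotes connectedness of $G$. This is valid and arguably cleaner in spirit, but it imports heavier functional-analytic machinery (Stone's theorem, closure and core arguments, commutation of an antiunitary with a skew-adjoint operator) than the paper needs. The paper instead passes to real-analytic vectors, Taylor-expands $\exp(X)v$ for small $X$, and runs an open–closed argument on $\{g:\overline{gv}=g\overline{v}\}$ using connectedness. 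Your parenthetical alternative is essentially the paper's argument, specialized by the extra observation that $K$-finite smooth vectors are analytic (Harish-Chandra); the paper avoids needing that theorem by only invoking density of analytic vectors.

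Finally, you derive the consequence $\overline{Xv}=\overline{X}\overline{v}$ for $v\in\H^{\infty}$ from the group identity by noting $J$ preserves $\H^{\infty}$ and differentiating $\overline{\exp(tX)v}=\exp(tX)\overline{v}$, then extending to $\g$ by antilinearity. This is actually tidier than what the paper writes: the paper appeals to density of $\H^{\fin}$ in $\H^{\infty}$ to pass from $\H^{\fin}$ to $\H^{\infty}$, which glosses over the continuity of $J$ in the Fr\'echet topology, whereas your route via the $G$-action sidesteps that issue entirely.
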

	
	\begin{proof}
		We prove these two assertions in the reverse order, so we start with $\overline{Xv} = \overline{X} \overline{v}$. By density, it suffices to show that $\overline{X\alpha} = \overline{X}\overline{\alpha}$ for all $\alpha \in \H^{\fin}$. Let $\beta \in \H^{\fin}$ be arbitrary. Since $\mathbf{1}$ is $G$-invariant, we have $\overline{X}\mathbf{1} = 0$, so using \eqref{eqn:Lie_alg_unitarity} and the product rule, we can write
		\begin{align*}
			0 = -\langle \alpha\beta, \overline{X}\mathbf{1} \rangle_{\H}
			= \langle X(\alpha\beta), \mathbf{1} \rangle_{\H}
			&= \langle (X\alpha)\beta, \mathbf{1} \rangle_{\H} + \langle \alpha(X\beta), \mathbf{1} \rangle_{\H}
			\\&= \langle \beta, \overline{X\alpha} \rangle_{\H} + \langle X\beta, \overline{\alpha} \rangle_{\H}
			\\&= \langle \beta,\overline{X\alpha} \rangle_{\H} - \langle \beta, \overline{X}\overline{\alpha} \rangle_{\H}.
		\end{align*}
		Since $\beta$ was arbitrary, it follows by density that $\overline{X\alpha} = \overline{X}\overline{\alpha}$, as desired.
		
		We now show that $\overline{gv} = g\overline{v}$ for all $g \in G$ and $v \in \H$. By density, it suffices to prove this for $v$ a real analytic vector, i.e., such that $g \mapsto gv$ is a real analytic function from $G$ to $\H$. So fix $v \in \H$ real analytic. Consider $\Omega = \{g \in G : \overline{gv} = g\overline{v}\}$. This is evidently a closed subset of $G$. We claim that it is also open. To see this, let $g_0 \in \Omega$, and denote $w = g_0v$. We must show $g \in \Omega$ for $g$ sufficiently close to $g_0$. Write $g = \exp(X)g_0$ with $X \in \g_{\R}$ sufficiently small. Then
		\begin{align*}
			g \in \Omega
			\qquad \iff \qquad
			\overline{\exp(X)w}
			= \exp(X) \overline{w}.
		\end{align*}
		Since $v$ is real analytic and $w$ is a translate of $v$, we have that $w$ is also real analytic, so for $X$ sufficiently small, we can Taylor expand both sides to get
		\begin{align*}
			g \in \Omega
			\qquad \iff \qquad
			\sum_{n=0}^{\infty} \frac{1}{n!} \overline{X^nw}
			= \sum_{n=0}^{\infty} \frac{1}{n!} X^n\overline{w}.
		\end{align*}
		Noting that $X \in \g_{\R}$, this is true by the previous paragraph. Thus $\Omega$ is indeed open. Since $G$ is connected, we conclude that $\Omega = G$, so $\overline{gv} = g\overline{v}$ for all $g \in G$.
	\end{proof}
	
	
	\section{$L^{\infty}$ and $L^4$ norms on $\H^{\fin}$}
	\label{sec:L^infty_L^4}
	
	For each $\alpha \in \H^{\fin}$, let $M_{\alpha}$ denote the densely defined operator on $\H$, with domain $\H^{\fin}$, given by multiplication by $\alpha$. Define $\|\alpha\|_{L^{\infty}}$ to be the operator norm of $M_{\alpha}$, that is, the smallest constant $C \in [0,\infty]$ such that
	\begin{align*}
		\|\alpha\beta\|_{\H}
		\leq C\|\beta\|_{\H}
		\qquad \text{for all} \qquad
		\beta \in \H^{\fin}.
	\end{align*}
	The motivation for this definition is that whenever $f$ is a function on a measure space, the $L^{\infty}$ norm of $f$ can be characterized as the operator norm of multiplication-by-$f$ acting on $L^2$ (we already used this idea in the proof of Lemma~\ref{lem:Phi_preserves_L^infty}).
	Indeed, in the case $\H = L^2(\Gamma \backslash G)$, the $L^{\infty}$ norm on $\H^{\fin}$ coincides with the usual $L^{\infty}$ norm on functions on $\Gamma \backslash G$.
	In general, it is not at all clear that $\|\alpha\|_{L^{\infty}} < \infty$ for $\alpha \in \H^{\fin}$. This finiteness is the $n=0$ case of Theorem~\ref{thm:high_deriv_L^infty_bd}.
	
	Define the $L^4$ norm of $\alpha \in \H^{\fin}$ by
	\begin{align*}
		\|\alpha\|_{L^4}
		= \|\alpha^2\|_{\H}^{\frac{1}{2}}.
	\end{align*}
	This evidently agrees with the usual $L^4$ norm when $\H = L^2(\Gamma \backslash G)$.
	In general, for $\alpha \in \H^{\fin}$, we have $\alpha^2 \in \H^{\infty} \subseteq \H$, so $\|\alpha\|_{L^4} < \infty$. By crossing symmetry (the last axiom in Definition~\ref{def:mult_rep}),
	\begin{align} \label{eqn:|alpha|^2_H_norm}
		\|\alpha\|_{L^4}^4
		= \|\alpha^2\|_{\H}^2
		= \||\alpha|^2\|_{\H}^2.
	\end{align}
	
	Since complex conjugation commutes with multiplication and preserves the Hilbert space norm,
	\begin{align*}
		\|\overline{\alpha}\|_{L^{\infty}}
		= \|\alpha\|_{L^{\infty}}
		\qquad \text{and} \qquad
		\|\overline{\alpha}\|_{L^4}
		= \|\alpha\|_{L^4}
	\end{align*}
	for all $\alpha \in \H^{\fin}$. Since $M_{\alpha+\beta} = M_{\alpha} + M_{\beta}$, the $L^{\infty}$ norm satisfies the triangle inequality
	\begin{align*}
		\|\alpha+\beta\|_{L^{\infty}}
		\leq \|\alpha\|_{L^{\infty}} + \|\beta\|_{L^{\infty}}
	\end{align*}
	for all $\alpha,\beta \in \H^{\fin}$. The triangle inequality for the $L^4$ norm is Proposition~\ref{prop:L^4_triangle_ineq} below.
	
	\begin{prop}[$L^4$-Cauchy--Schwarz] \label{prop:L^4_C-S}
		Let $\alpha,\beta \in \H^{\fin}$. Then
		\begin{align*}
			\|\alpha\beta\|_{\H}
			\leq \|\alpha\|_{L^4} \|\beta\|_{L^4}.
		\end{align*}
	\end{prop}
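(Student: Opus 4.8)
The plan is to reduce this to the ordinary Cauchy--Schwarz inequality on the Hilbert space $\H$, using crossing symmetry (the last axiom of Definition~\ref{def:mult_rep}) to rewrite the left-hand side. The starting observation is that
\begin{align*}
\|\alpha\beta\|_{\H}^2
= \langle \alpha\beta, \alpha\beta \rangle_{\H}
= \langle \alpha\beta, \overline{\overline{\alpha}}\,\overline{\overline{\beta}} \rangle_{\H},
\end{align*}
where the second equality uses $\overline{\overline{\alpha}} = \alpha$ and $\overline{\overline{\beta}} = \beta$ from Proposition~\ref{prop:alpha_bar_bar=alpha}. The point of this rewriting is that the last expression is an instance of the bracket appearing in the crossing equation \eqref{eqn:crossing}, with $\alpha_1 = \alpha$, $\alpha_2 = \beta$, $\alpha_3 = \overline{\alpha}$, $\alpha_4 = \overline{\beta}$.

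First I would apply crossing symmetry with the permutation $\sigma = (23)$ to these four vectors. Since $\sigma$ sends $(1,2,3,4)$ to $(1,3,2,4)$, this yields
\begin{align*}
\|\alpha\beta\|_{\H}^2
= \langle \alpha_1\alpha_2, \overline{\alpha_3}\,\overline{\alpha_4} \rangle_{\H}
= \langle \alpha_1\alpha_3, \overline{\alpha_2}\,\overline{\alpha_4} \rangle_{\H}
= \langle \alpha\,\overline{\alpha}, \overline{\beta}\,\overline{\overline{\beta}} \rangle_{\H}
= \langle |\alpha|^2, |\beta|^2 \rangle_{\H},
\end{align*}
where in the last step I use $\overline{\overline{\beta}} = \beta$, the definition $|\alpha|^2 = \alpha\overline{\alpha}$, and commutativity (so that $\overline{\beta}\beta = \beta\overline{\beta} = |\beta|^2$).

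Then I would invoke the usual Cauchy--Schwarz inequality in $\H$, noting that $|\alpha|^2, |\beta|^2 \in \H^{\infty} \subseteq \H$, together with the identity $\||\alpha|^2\|_{\H} = \|\alpha\|_{L^4}^2$ which follows from \eqref{eqn:|alpha|^2_H_norm}. This gives
\begin{align*}
\|\alpha\beta\|_{\H}^2
= \langle |\alpha|^2, |\beta|^2 \rangle_{\H}
\leq \||\alpha|^2\|_{\H}\,\||\beta|^2\|_{\H}
= \|\alpha\|_{L^4}^2 \|\beta\|_{L^4}^2,
\end{align*}
and taking square roots completes the proof.

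There is no real obstacle here; the argument is purely formal. The only place requiring a moment of care is the bookkeeping of the permutation in \eqref{eqn:crossing} --- namely checking that $\sigma = (23)$ transports $\langle \alpha\beta, \overline{\overline{\alpha}}\,\overline{\overline{\beta}} \rangle_{\H}$ to $\langle \alpha\,\overline{\alpha}, \overline{\beta}\,\overline{\overline{\beta}} \rangle_{\H}$ rather than to some other configuration of the four vectors. Everything else is an application of the already-established properties of complex conjugation on $\H^{\fin}$ (Section~\ref{sec:complex_conj}) and the definitions of the $L^4$ and Hilbert space norms.
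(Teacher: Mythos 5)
Your proof is correct and takes essentially the same route as the paper: both pass through the crossing-symmetry identity $\|\alpha\beta\|_{\H}^2 = \langle |\alpha|^2, |\beta|^2 \rangle_{\H}$, then apply ordinary Cauchy--Schwarz in $\H$ together with \eqref{eqn:|alpha|^2_H_norm}. The only difference is that you unpack the permutation bookkeeping for $\sigma = (23)$ explicitly, which the paper leaves to the reader.
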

	
	\begin{proof}
		By crossing symmetry, Cauchy--Schwarz, and \eqref{eqn:|alpha|^2_H_norm},
		\begin{align*}
			&\|\alpha\beta\|_{\H}^2
			= \langle |\alpha|^2, |\beta|^2 \rangle_{\H}
			\leq \||\alpha|^2\|_{\H} \||\beta|^2\|_{\H}
			= \|\alpha\|_{L^4}^2 \|\beta\|_{L^4}^2.
			\qedhere
		\end{align*}
	\end{proof}
	
	\begin{prop}[Triangle inequality for the $L^4$ norm] \label{prop:L^4_triangle_ineq}
		Let $\alpha,\beta \in \H^{\fin}$. Then
		\begin{align*}
			\|\alpha+\beta\|_{L^4}
			\leq \|\alpha\|_{L^4} + \|\beta\|_{L^4}.
		\end{align*}
	\end{prop}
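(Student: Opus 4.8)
The plan is to mimic the classical proof that the $L^4$ norm satisfies the triangle inequality, using only the two facts already available: that $\|\gamma\|_{L^4}^2 = \|\gamma^2\|_{\H}$ by definition, and the $L^4$-Cauchy--Schwarz inequality $\|\alpha\beta\|_{\H} \leq \|\alpha\|_{L^4}\|\beta\|_{L^4}$ from Proposition~\ref{prop:L^4_C-S}. No appeal to an underlying measure space is needed; everything is formal manipulation in $\H$.

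First I would expand the square: since multiplication is commutative and bilinear, $(\alpha+\beta)^2 = \alpha^2 + 2\alpha\beta + \beta^2$ as elements of $\H^{\infty} \subseteq \H$. Applying the triangle inequality for the Hilbert space norm $\|\cdot\|_{\H}$ gives
\begin{align*}
	\|\alpha+\beta\|_{L^4}^2
	= \|(\alpha+\beta)^2\|_{\H}
	\leq \|\alpha^2\|_{\H} + 2\|\alpha\beta\|_{\H} + \|\beta^2\|_{\H}.
\end{align*}
Then I would substitute $\|\alpha^2\|_{\H} = \|\alpha\|_{L^4}^2$ and $\|\beta^2\|_{\H} = \|\beta\|_{L^4}^2$, and bound the cross term using Proposition~\ref{prop:L^4_C-S}, obtaining
\begin{align*}
	\|\alpha+\beta\|_{L^4}^2
	\leq \|\alpha\|_{L^4}^2 + 2\|\alpha\|_{L^4}\|\beta\|_{L^4} + \|\beta\|_{L^4}^2
	= \big(\|\alpha\|_{L^4} + \|\beta\|_{L^4}\big)^2.
\end{align*}
Taking square roots (both sides being nonnegative) yields the claimed inequality.

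There is essentially no obstacle here: the only nontrivial ingredient is $L^4$-Cauchy--Schwarz, which in turn rests on crossing symmetry, and that has already been established. I would just take care that all four quantities $\alpha^2$, $\beta^2$, $\alpha\beta$, $(\alpha+\beta)^2$ lie in $\H$ (they do, being products of elements of $\H^{\fin}$, hence in $\H^{\infty}$), so that the Hilbert space triangle inequality legitimately applies.
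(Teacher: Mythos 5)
Your proof is correct and follows exactly the same route as the paper's: expand $(\alpha+\beta)^2$, apply the Hilbert space triangle inequality, bound the cross term by $L^4$-Cauchy--Schwarz (Proposition~\ref{prop:L^4_C-S}), and take square roots. The only cosmetic difference is that you spell out explicitly that $\alpha^2,\beta^2,\alpha\beta,(\alpha+\beta)^2$ all lie in $\H$, which the paper leaves implicit.
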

	
	\begin{proof}
		Write
		\begin{align*}
			\|\alpha+\beta\|_{L^4}^2
			= \|(\alpha+\beta)^2\|_{\H}
			&\leq \|\alpha^2\|_{\H} + 2\|\alpha\beta\|_{\H} + \|\beta^2\|_{\H}
			\\&\leq \|\alpha\|_{L^4}^2 + 2\|\alpha\|_{L^4}\|\beta\|_{L^4} + \|\beta\|_{L^4}^2
			= (\|\alpha\|_{L^4} + \|\beta\|_{L^4})^2,
		\end{align*}
		where the inequality between the first and second lines is by $L^4$-Cauchy--Schwarz.
	\end{proof}
	
	\begin{prop}[Comparison of norms] \label{prop:L^2_leq_L^4_leq_L^infty}
		Let $\alpha \in \H^{\fin}$. Then
		\begin{align*}
			\|\alpha\|_{\H} \leq \|\alpha\|_{L^4} \leq \|\alpha\|_{L^{\infty}}.
		\end{align*}
	\end{prop}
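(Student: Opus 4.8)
The plan is to prove the two inequalities in turn, using only the axioms recorded in Definition~\ref{def:mult_rep} together with \eqref{eqn:|alpha|^2_H_norm} and the fact that $\|\mathbf{1}\|_{\H} = 1$.

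For the inequality $\|\alpha\|_{\H} \leq \|\alpha\|_{L^4}$, the first step is to rewrite $\|\alpha\|_{\H}^2$ using the ``existence of complex conjugates'' axiom \eqref{eqn:3_term_crossing}. Applying \eqref{eqn:3_term_crossing} with $\beta = \mathbf{1}$ and $\gamma = \alpha$, and using that $\mathbf{1}$ is the unit together with commutativity, one gets
\begin{align*}
	\|\alpha\|_{\H}^2
	= \langle \alpha, \alpha \rangle_{\H}
	= \langle \alpha \mathbf{1}, \alpha \rangle_{\H}
	= \langle \mathbf{1}, \overline{\alpha}\alpha \rangle_{\H}
	= \langle \mathbf{1}, |\alpha|^2 \rangle_{\H}.
\end{align*}
The second step is Cauchy--Schwarz in $\H$ together with the normalization $\|\mathbf{1}\|_{\H} = 1$, which gives $\langle \mathbf{1}, |\alpha|^2 \rangle_{\H} \leq \||\alpha|^2\|_{\H}$; since the left side is a nonnegative real, this is an inequality of the displayed quantity. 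The third step invokes \eqref{eqn:|alpha|^2_H_norm}, which identifies $\||\alpha|^2\|_{\H}$ with $\|\alpha\|_{L^4}^2$. Chaining these, $\|\alpha\|_{\H}^2 \leq \|\alpha\|_{L^4}^2$, and taking square roots finishes the first inequality.

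For $\|\alpha\|_{L^4} \leq \|\alpha\|_{L^{\infty}}$, I would start from the definition $\|\alpha\|_{L^4}^2 = \|\alpha^2\|_{\H} = \|M_{\alpha}\alpha\|_{\H}$ and apply the definition of $\|\alpha\|_{L^{\infty}}$ as the operator norm of $M_{\alpha}$ on $\H^{\fin}$, yielding $\|\alpha\|_{L^4}^2 \leq \|\alpha\|_{L^{\infty}} \|\alpha\|_{\H}$. Now feed in the first inequality $\|\alpha\|_{\H} \leq \|\alpha\|_{L^4}$ to get $\|\alpha\|_{L^4}^2 \leq \|\alpha\|_{L^{\infty}} \|\alpha\|_{L^4}$. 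Since $\|\alpha\|_{L^4} = \|\alpha^2\|_{\H}^{1/2} < \infty$ (as $\alpha^2 \in \H^{\infty} \subseteq \H$), the inequality is trivial if $\|\alpha\|_{L^4} = 0$ and otherwise follows by dividing by $\|\alpha\|_{L^4}$. Note this argument is valid even if $\|\alpha\|_{L^{\infty}} = +\infty$, so it does not rely on the not-yet-proved finiteness of the $L^{\infty}$ norm.

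There is no serious obstacle here: the proposition is a formal consequence of the axioms, and the only mild subtlety is bookkeeping about finiteness when dividing in the second part, which is handled by the observation that $\|\alpha\|_{L^4}$ is always finite. The step most worth stating carefully is the opening rewrite of $\|\alpha\|_{\H}^2$ as $\langle \mathbf{1}, |\alpha|^2 \rangle_{\H}$, since it is the one place where the adjointness axiom \eqref{eqn:3_term_crossing} and the unit/normalization axioms are all used at once.
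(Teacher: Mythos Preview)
Your proof is correct and essentially matches the paper's approach. The only cosmetic differences: for $\|\alpha\|_{\H} \leq \|\alpha\|_{L^4}$ the paper applies $L^4$-Cauchy--Schwarz (Proposition~\ref{prop:L^4_C-S}) to $\alpha\mathbf{1}$ rather than your adjointness-plus-ordinary-Cauchy--Schwarz, and for $\|\alpha\|_{L^4} \leq \|\alpha\|_{L^{\infty}}$ the paper applies the $L^{\infty}$ bound a second time to $\|\alpha\mathbf{1}\|_{\H}$ instead of invoking the first inequality and dividing; neither variation changes the substance.
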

	
	\begin{proof}
		By $L^4$-Cauchy--Schwarz and the normalization axiom in Definition~\ref{def:mult_rep},
		\begin{align*}
			\|\alpha\|_{\H}
			= \|\alpha\mathbf{1}\|_{\H}
			\leq \|\alpha\|_{L^4} \|\mathbf{1}\|_{L^4}
			= \|\alpha\|_{L^4}.
		\end{align*}
		This gives the first inequality. For the second, write
		\begin{align*}
			&\|\alpha\|_{L^4}^2
			= \|\alpha^2\|_{\H}
			\leq \|\alpha\|_{L^{\infty}} \|\alpha\|_{\H}
			= \|\alpha\|_{L^{\infty}} \|\alpha\mathbf{1}\|_{\H}
			\leq \|\alpha\|_{L^{\infty}}^2 \|\mathbf{1}\|_{\H}
			= \|\alpha\|_{L^{\infty}}^2.
			\qedhere
		\end{align*}
	\end{proof}
	
	
	\section{Outline of proof of the converse theorem for multiplicative representations} \label{sec:outline_existence}
	
	%
	Theorem~\ref{thm:eq_Gelfand_duality} says that if $\H$ is nontrivial, then $\H \simeq L^2(\Gamma \backslash G)$ for some cocompact lattice $\Gamma$ in $G$.
	In this section, we outline the proof of this theorem.
	
	
	We will use the notation for the functional calculus for the Casimir from Subsection~\ref{subsec:sl_2:func_calc}, as well as the $L^{\infty}$ and $L^4$ norms on $\H^{\fin}$ defined in Section~\ref{sec:L^infty_L^4}.
	
	\subsection{Roadmap}
	\label{subsec:outline:roadmap}
	
	The purpose of this subsection is to describe the structure of the proof and to state the most important intermediate results. For each of these intermediate results, we will either give some intuition as to why it is true, or in the next subsection we will explain why it is true in the case $\H = L^2(\Gamma \backslash G)$. Of course, by Theorem~\ref{thm:eq_Gelfand_duality}, this is the only nontrivial case, but it would be circular to use this. The proofs of these intermediate results in the general case are outlined in Subsections~\ref{subsec:outline:embedding_homog}--\ref{subsec:outline:bulk_tail_general}.
	
	We break the proof of Theorem~\ref{thm:eq_Gelfand_duality} into five steps.
	The first two are quantitative, and the last three are qualitative.
	
	\begin{step} \label{step:bulk_tail}
		Given $\lambda \geq 0$ and approximate Casimir eigenvectors $\alpha,\beta \in \H^K$ with approximate eigenvalue $\lambda$ (meaning that $\alpha,\beta \in \H_{\Delta \in I}^K$ for some small interval $I$ around $\lambda$), estimate the product $\alpha\beta$. More precisely, bound the Hilbert space norm of both the ``bulk" $\1_{\Delta \lesssim \lambda}(\alpha\beta)$ and the ``tail" $\1_{\Delta \gg \lambda}(\alpha\beta)$. In addition, given a lowest weight vector $f$, bound the tail of $|f|^2$.
		
		These three bounds correspond to Theorems~\ref{thm:L^4_quasi-Sobolev}, \ref{thm:exp_decay_quasimode}, and \ref{thm:exp_decay_form}, respectively. They are proved in Sections~\ref{sec:bulk_tail_poly} and \ref{sec:bulk_tail_general}.
	\end{step}
	
	\begin{step} \label{step:L^infty}
		From the results of Step~\ref{step:bulk_tail}, deduce two estimates on $L^{\infty}$ norms of automorphic vectors, the first with explicit dependence on the weight, and the second with explicit dependence on the Casimir eigenvalue.
		
		These two estimates correspond to Theorems~\ref{thm:high_deriv_L^infty_bd} and \ref{thm:L^infty_quasi-Sobolev}, and they are proved in Section~\ref{sec:reduce_to_bulk_tail}. Importantly, both of these theorems apply not just to automorphic vectors, but more generally to elements of $\H^{\fin}$. The case $n=0$ of Theorem~\ref{thm:high_deriv_L^infty_bd} says that the $L^{\infty}$ norm is finite-valued on $\H^{\fin}$. Thus for each $\alpha \in \H^{\fin}$, the multiplication operator $M_{\alpha}$ extends from $\H^{\fin}$ to a bounded linear operator on $\H$. We thereby view $M_{\alpha} \in \B(\H)$.
	\end{step}
	
	\begin{step} \label{step:C*}
		Let $\A \subseteq \B(\H)$ be the closed subalgebra generated by $\{M_{\alpha} : \alpha \in \H^{\fin}\}$.
		Show that $\A$ is closed under taking adjoints, so $\A$ is a C*-algebra. Show that $\A$ is commutative, unital, and separable. The unitary group $\U(\H)$ acts on $\B(\H)$ by conjugation; pull this action back along the representation $G \to U(\H)$ to obtain an action of $G$ on $\B(\H)$. 
		Use Theorem~\ref{thm:high_deriv_L^infty_bd} from Step~\ref{step:L^infty} to show that this $G$-action preserves $\A$, and that the action map $G \times \A \to \A$ is continuous.
		
		This is all done in Subsection~\ref{subsec:reduce_to_L^infty:C*-alg}.
	\end{step}
	
	\begin{step} \label{step:spectrum}
		By Gelfand duality, $\A \simeq C(X)$ for a unique compact Hausdorff space $X$, and the $G$-action on $\A$ induces a $G$-action on $X$. Since $\A$ is separable, $X$ is metrizable. Using that $G \times \A \to \A$ is continuous, show that $G \times X \to X$ is continuous. 
		Consider the GNS state on $\A \simeq C(X)$ associated to $\mathbf{1} \in \H$.
		By the Riesz representation theorem, this state is given by integration against a probability measure $\mu$ on $X$. Show that $\mu$ is $G$-invariant and has full support. Show that $\H$ identifies with $L^2(X,\mu)$ as unitary $G$-representations, in such a way that multiplication on $\H^{\fin}$ is given by pointwise multiplication on $X$. Translate known properties of $\H$ as a multiplicative representation into properties of $(X,\mu)$. In particular, translate Corollary~\ref{cor:qual_Sob_emb} of Theorem~\ref{thm:L^infty_quasi-Sobolev} from Step~\ref{step:L^infty} into the \emph{qualitative Sobolev embedding property} (Definition~\ref{defn:qualitative_Sob_emb}) for $(X,\mu)$.
		
		This is all done in Subsection~\ref{subsec:reduce_to_L^infty:spectrum}.
	\end{step}
	
	\begin{step} \label{step:embedding_G/Gamma}
		Deduce from the qualitative Sobolev embedding property that $X \simeq \Gamma \backslash G$ as $G$-spaces for some cocompact lattice $\Gamma$, and that $\mu$ is Haar measure.
		
		This deduction is Theorem~\ref{thm:embedding_G/Gamma}, proved in Section~\ref{sec:embedding_homog}.
	\end{step}
	
	
	As can be seen from the section numbering, we present these five steps almost backward, in the order \ref{step:embedding_G/Gamma}, \ref{step:C*}, \ref{step:spectrum}, \ref{step:L^infty}, \ref{step:bulk_tail}.
	This way, the end goal is always in sight.
	
	We now indicate the main results that make up each step (in the above order). The key definition for Step~\ref{step:embedding_G/Gamma} is

	%
	
	\begin{defn}[Qualitative Sobolev embedding property] \label{defn:qualitative_Sob_emb}
		Let $X$ be a $G$-space equipped with a $G$-invariant probability measure $\mu$. We say that $(X,\mu)$ has the \emph{qualitative Sobolev embedding property} if there exist $1 \leq p < q \leq \infty$ and a nonnegative continuous function $\varphi \in L^1(G)$, not identically zero, such that for all $f \in L^p(X,\mu)$, the convolution
		\begin{align} \label{eqn:phi_star_f_def}
			\varphi \ast f(x)
			= \int_G \varphi(g) f(g^{-1}x) \, dg
		\end{align}
		lies in $L^q(X,\mu)$.
	\end{defn}
	
	One should think of the operator $f \mapsto \varphi \ast f$ as ``smoothing along $G$-orbits." Then the slogan for the qualitative Sobolev embedding property is that ``smoothing along $G$-orbits improves integrability." One might worry that the source of improvement of integrability is cancellation in the integral \eqref{eqn:phi_star_f_def},
	not smoothing. We require $\varphi$ to be nonnegative so that when $f$ is also nonnegative, there can be no cancellation in \eqref{eqn:phi_star_f_def}.
	We require $\varphi$ to be continuous (and not identically zero) so that it is not just positive somewhere, but positive on a nonempty open set.
	
	By the end of Step~\ref{step:spectrum}, we will have established the following weak version of the converse theorem for multiplicative representations.
	
	\begin{thm}[Weak converse theorem] \label{thm:weak_structure_thm}
		There exists a connected compact metrizable $G$-space $X$ together with a $G$-invariant probability measure $\mu$ on $X$ of full support, such that $(X,\mu)$ has the qualitative Sobolev embedding property, $\H$ identifies with $L^2(X,\mu)$ as unitary representations of $G$, and the multiplication $\H^{\fin} \times \H^{\fin} \to \H^{\infty}$ is given by pointwise multiplication of functions on $X$.
	\end{thm}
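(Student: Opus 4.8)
The plan is to deduce Theorem~\ref{thm:weak_structure_thm} from the two $L^{\infty}$ estimates established in Step~\ref{step:L^infty}, namely Theorems~\ref{thm:high_deriv_L^infty_bd} and \ref{thm:L^infty_quasi-Sobolev}; what remains is Steps~\ref{step:C*} and \ref{step:spectrum}. By the $n=0$ case of Theorem~\ref{thm:high_deriv_L^infty_bd}, $\|\alpha\|_{L^{\infty}} < \infty$ for every $\alpha \in \H^{\fin}$, so each multiplication operator $M_{\alpha}$ extends uniquely to a bounded operator on $\H$, which I view as an element of $\B(\H)$. Let $\A \subseteq \B(\H)$ be the norm-closed subalgebra generated by $\{M_{\alpha} : \alpha \in \H^{\fin}\}$. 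The axioms of a multiplicative representation give, first, $M_{\mathbf 1} = \id$, so $\A$ is unital; second, $M_{\alpha}^{*} = M_{\overline{\alpha}}$ directly from \eqref{eqn:3_term_crossing}, with $\overline{\alpha} \in \H^{\fin}$, so $\A$ is closed under adjoints; third, $M_{\alpha} M_{\beta} = M_{\beta} M_{\alpha}$ for all $\alpha,\beta \in \H^{\fin}$, which one checks by testing $M_{\alpha}M_{\beta}\gamma$ and $M_{\beta}M_{\alpha}\gamma$ against $\delta \in \H^{\fin}$: using \eqref{eqn:3_term_crossing} both become instances of $\langle \alpha_1\alpha_2, \overline{\alpha_3}\,\overline{\alpha_4}\rangle_{\H}$ differing by the permutation $(13) \in S_4$, and crossing symmetry \eqref{eqn:crossing} identifies them. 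Hence $\A$ is a commutative unital C*-algebra, and it is separable because $\H^{\fin}$ has countable dimension (Proposition~\ref{prop:Maass_span_H^fin}).

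Next I equip $\A$ with the $G$-action obtained by pulling back the conjugation action of $\U(\H)$ on $\B(\H)$ along $\pi \colon G \to \U(\H)$, and I expect showing this action preserves $\A$ and is continuous to be the main obstacle, precisely because $\H^{\fin}$ is not $G$-stable so one cannot simply write $\pi(g)M_{\alpha}\pi(g)^{-1} = M_{g\alpha}$. Infinitesimally the product rule \eqref{eqn:prod_rule} gives $[d\pi(X), M_{\alpha}] = M_{X\alpha}$ for $X \in \g$, and $X\alpha \in \H^{\fin}$. For $g = \exp(X)$ with $X \in \g_{\R}$ small, I would expand $\pi(g)M_{\alpha}\pi(g)^{-1} = \sum_{n \geq 0} \tfrac{1}{n!}M_{X^{n}\alpha}$ as a commutator series. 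Since any $\alpha \in \H^{\fin}$ lies in a finite subsum of irreducibles of $\H$ on which the Casimir has finitely many (hence bounded) eigenvalues, Theorem~\ref{thm:L^infty_quasi-Sobolev} gives $\|X^{n}\alpha\|_{L^{\infty}} \lesssim_{\alpha} \|X^{n}\alpha\|_{\H}$, and analyticity of $K$-finite vectors makes $\sum_{n}\tfrac{1}{n!}\|X^{n}\alpha\|_{\H} < \infty$; thus the series converges in operator norm to an element of $\A$. Covering $G$ by such neighborhoods and using connectedness of $G$ shows $\pi(g)\A\pi(g)^{-1} = \A$ for all $g$, and the same local estimates, together with Theorem~\ref{thm:high_deriv_L^infty_bd} to control the higher-order terms uniformly, give continuity of $G \times \A \to \A$. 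This completes Step~\ref{step:C*}.

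For Step~\ref{step:spectrum}, Gelfand duality (Theorem~\ref{thm:Gelfand_duality}) produces a compact Hausdorff $X$ with $\A \simeq C(X)$, metrizable since $\A$ is separable, and the $G$-action on $\A$ transports functorially to a $G$-action on $X$; continuity of $G \times X \to X$ follows from continuity of $G \times \A \to \A$ by a standard argument using that characters of $\A$ are uniformly bounded. The GNS state attached to the unit vector $\mathbf 1$, namely $M_{\alpha} \mapsto \langle M_{\alpha}\mathbf 1, \mathbf 1\rangle_{\H} = \langle \alpha, \mathbf 1\rangle_{\H}$ (a state by the normalization axiom), is by the Riesz representation theorem integration against a Borel probability measure $\mu$ on $X$, and $\mu$ is $G$-invariant because $\mathbf 1$ is. Since $M_{\alpha}\mathbf 1 = \alpha$, the span $\A\mathbf 1$ contains $\H^{\fin}$ and is dense, so $\mathbf 1$ is cyclic; the GNS identification then realizes $\H \simeq L^{2}(X,\mu)$ as unitary $G$-representations with $\A$ acting by multiplication by continuous functions, and faithfulness of the inclusion $\A \subseteq \B(\H)$ forces $\supp\mu = X$. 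Under this identification $\H^{\fin} \hookrightarrow C(X)$, and since $\mathbf 1$ is the constant function $1$ and $M_{\alpha}, M_{\beta}$ act by pointwise multiplication by $\alpha,\beta$, one gets $\alpha\beta = M_{\alpha}M_{\beta}\mathbf 1 = \alpha \cdot \beta$ pointwise. Finally $X$ is connected: a proper clopen subset $U$ would be $G$-invariant ($G$ connected), would have $\mu(U) \in (0,1)$ by full support, and its indicator would be a nonconstant $G$-invariant element of $L^{2}(X,\mu)$, contradicting the ergodicity axiom $\H^{G} = \C\mathbf 1$.

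It remains to record the qualitative Sobolev embedding property for $(X,\mu)$, which is Corollary~\ref{cor:qual_Sob_emb} of Theorem~\ref{thm:L^infty_quasi-Sobolev} read through the above identification. The nonnegative continuous $\varphi \in L^{1}(G)$ of Lemma~\ref{lem:positive_super-smooth} has the property that $\varphi \ast v$ is super-smooth for every $v \in \H$, and Theorem~\ref{thm:L^infty_quasi-Sobolev} then bounds $\|\varphi \ast v\|_{L^{\infty}}$, so $f \mapsto \varphi \ast f$ maps $L^{2}(X,\mu)$ into $L^{\infty}(X,\mu)$; taking $p = 2$ and $q = \infty$ in Definition~\ref{defn:qualitative_Sob_emb} gives the property. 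Here one also notes that the abstract $L^{\infty}$ and $L^{4}$ norms of Section~\ref{sec:L^infty_L^4} coincide with the genuine $L^{\infty}(X,\mu)$ and $L^{4}(X,\mu)$ norms, which is immediate from the description of $M_{\alpha}$ as multiplication by $\alpha$ and from $\|\alpha\|_{L^4}^{4} = \|\alpha^{2}\|_{\H}^{2} = \int_{X}|\alpha|^{4}\,d\mu$. Assembling the outputs of Steps~\ref{step:C*} and \ref{step:spectrum} yields all the assertions of Theorem~\ref{thm:weak_structure_thm}.
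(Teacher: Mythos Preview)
Your overall architecture matches the paper's: build the commutative unital separable C*-algebra $\A$ from the $M_{\alpha}$, show it is $G$-stable via the exponential series, apply Gelfand duality, identify $\H \simeq L^2(X,\mu)$ via the GNS state at $\mathbf 1$, and read off connectedness, full support, pointwise multiplication, and the qualitative Sobolev property. Most steps are fine.

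There is, however, a genuine gap in your convergence argument for $\sum_{n\geq 0}\tfrac{1}{n!}M_{X^{n}\alpha}$. You invoke Theorem~\ref{thm:L^infty_quasi-Sobolev} to get $\|X^{n}\alpha\|_{L^{\infty}} \lesssim_{\alpha} \|X^{n}\alpha\|_{\H}$, reasoning that the Casimir eigenvalues of $X^{n}\alpha$ stay bounded. But Theorem~\ref{thm:L^infty_quasi-Sobolev} applies only to vectors in $\H^{K}$, and $X^{n}\alpha$ is almost never $K$-invariant (e.g., if $\alpha$ has weight $0$ and $X=E$, then $E^{n}\alpha$ has weight $n$). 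The paper instead feeds Theorem~\ref{thm:high_deriv_L^infty_bd} directly into this step: the bound $\|X^{n}\alpha\|_{L^{\infty}} \lesssim_{\alpha} O(n\|X\|_{\g})^{n}$ is exactly what makes the series converge in operator norm for $\|X\|_{\g}\ll 1$ (Proposition~\ref{prop:conj_M_alpha}). You mention Theorem~\ref{thm:high_deriv_L^infty_bd} only for continuity, but it is the essential input for convergence too, and its proof (Subsection~\ref{subsec:deriv_bds_in_L^infty}) is not a formality: it goes through Corollary~\ref{cor:weight_vector_L^infty_bd}, the identities of Subsection~\ref{subsec:basic_identities}, and the tail bounds of Step~\ref{step:bulk_tail}, none of which is captured by ``analyticity of $K$-finite vectors.'' A smaller point: in the Sobolev step, $\varphi\ast v$ is super-smooth but not in $\H^{\fin}$, so neither Theorem~\ref{thm:L^infty_quasi-Sobolev} nor $\|\cdot\|_{L^{\infty}}$ applies to it literally; the paper approximates by $\1_{\Delta\leq n}(\varphi\ast v)\in \H^{K}\cap\H^{\fin}$ (Corollary~\ref{cor:qual_Sob_emb}) and then tracks the limit through $\ev_{\mathbf 1}$ (Corollary~\ref{cor:qual_Sob_emb_into_A}, Lemma~\ref{lem:Im_ev_1_in_C(X)}, Corollary~\ref{cor:qual_Sob_emb_into_C(X)}).
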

	
	Connectedness of $X$ will come from the ergodicity axiom in the definition of a multiplicative representation.
	The proof of Theorem~\ref{thm:weak_structure_thm} shows that $(X,\mu)$ obeys the qualitative Sobolev embedding property with $p=2$ and $q=\infty$.
	Given Theorem~\ref{thm:weak_structure_thm}, it essentially remains to prove
	
	\begin{thm}[Sobolev embedding implies $X \simeq \Gamma \backslash G$] \label{thm:embedding_G/Gamma}
		As usual, let $G = \PSL_2(\R)$.
		Let $X$ be a connected Hausdorff $G$-space equipped with a $G$-invariant probability measure $\mu$ of full support. Assume $(X,\mu)$ has the qualitative Sobolev embedding property. Then either $X$ is a point, or $X \simeq \Gamma \backslash G$ as $G$-spaces for some cocompact lattice $\Gamma$ in $G$.
	\end{thm}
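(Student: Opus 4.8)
The plan is to use the qualitative Sobolev embedding property to show that, away from a null set, $G$ acts on $X$ freely, properly and transitively, so that $X$ is a homogeneous space with discrete isotropy; finiteness and invariance of $\mu$ then force this isotropy group to be a cocompact lattice and $\mu$ to be Haar measure. First I would record the basic smoothing operator. Let $\varphi$ and $1\le p<q\le\infty$ be as in Definition~\ref{defn:qualitative_Sob_emb}. The map $f\mapsto\varphi\ast f$ is everywhere defined $L^p(X,\mu)\to L^q(X,\mu)$ and has closed graph (if $f_n\to f$ in $L^p$ and $\varphi\ast f_n\to h$ in $L^q$, then along a subsequence $\varphi\ast f_n\to\varphi\ast f$ a.e.\ by dominated convergence, since $\varphi\in L^1(G)$, so $h=\varphi\ast f$), hence is bounded. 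Since $\mu(X)=1$ one has continuous inclusions $L^q\hookrightarrow L^r\hookrightarrow L^p$ for $p\le r\le q$, and $\varphi\ast(\varphi\ast f)=(\varphi\ast\varphi)\ast f$ with $\varphi\ast\varphi$ again nonnegative, continuous and in $L^1(G)$; iterating and rechoosing $\varphi$, one reduces to a convenient normalization of the improvement of integrability. I will carry out the argument in the case $p=2$, $q=\infty$ relevant for Theorem~\ref{thm:weak_structure_thm}, so that $T\colon f\mapsto\varphi\ast f$ is bounded $L^2(X,\mu)\to L^\infty(X,\mu)$. Replacing $\varphi$ by $\varphi\ast\widetilde\varphi$, where $\widetilde\varphi(g)=\varphi(g^{-1})$ (this preserves boundedness $L^2\to L^\infty$, since $\widetilde\varphi\ast$ is bounded on $L^2$ by Young's inequality), then averaging over $K$ on both sides (harmless, as left translations are isometries of every $L^r$) and normalizing $\int_G\varphi=1$, I may assume $\varphi$ is nonnegative, $K$-bi-invariant, symmetric, with $\int_G\varphi=1$; then $T$ is a positive self-adjoint contraction on $L^2(X,\mu)$ which fixes the constants and commutes with the Casimir $\Delta$ (as $\Delta$ lies in the centre of $\mathfrak U(\mathfrak g)$).

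Next I would show $T$ is compact and deduce discreteness of the Casimir spectrum. Since $T=T^\ast$ is bounded $L^2\to L^\infty$, its adjoint is bounded $L^1\to L^2$, so $T^2$ is bounded $L^1(X,\mu)\to L^\infty(X,\mu)$; such an operator is integration against a kernel in $L^\infty(\mu\times\mu)\subseteq L^2(\mu\times\mu)$, hence $T^2$ is Hilbert--Schmidt, hence compact, hence $T$ is compact. Being compact, self-adjoint, positive and commuting with $\Delta$, the operator $T$ is simultaneously diagonalizable with $\Delta$; moreover, since $\varphi$ is $K$-bi-invariant, $T$ acts by a scalar on each irreducible summand, this scalar being an explicit spherical (respectively matrix-coefficient) transform of $\varphi$ depending only on the Casimir parameter. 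Choosing $\varphi$ via the Selberg--Harish-Chandra transform exactly as in Lemma~\ref{lem:positive_super-smooth} --- so that this transform is strictly positive for every Casimir eigenvalue that can occur --- and using that the eigenvalues of a compact operator accumulate only at $0$, one concludes that $\Delta$ has discrete spectrum on each $K$-weight space, and hence that the set of Casimir eigenvalues of $L^2(X,\mu)$ is discrete; in particular $L^2(X,\mu)$ is a direct sum of irreducible unitary representations of $G$.

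The heart of the proof is the geometric step: the qualitative Sobolev embedding property forces $\mu$-almost every stabilizer $G_x$ to be discrete, and forces $X$ to be, on a conull set, locally modeled on an open subset of $G$ with $G$ acting by translations. Ruling out positive-dimensional stabilizers is the crux. If the $G$-invariant Borel set $\{x:\dim G_x\ge1\}$ had positive measure, then, restricting $\mu$ to the support of its restriction to that set (which preserves every hypothesis) and invoking the classification of closed subgroups of $\PSL_2(\R)$, almost every orbit would have dimension at most $2$; I would derive a contradiction from compactness of $T$ together with the fact that, for a test function supported on a small set $A$ transverse to such a thin orbit, the preimage $\{g\in G:g^{-1}x\in A\}$ contains full translates of a one-parameter subgroup, so that $\varphi\ast\1_A$ cannot be made small in $L^q$ relative to $\|\1_A\|_{L^p}$ as $A$ contracts transversally. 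The degenerate case in which $G_x=G$ on a positive-measure set is easier: there $T$ acts as the identity, so the Sobolev embedding property forces that piece of $(X,\mu)$ to be purely atomic with finitely many atoms, and connectedness of $X$ then yields the alternative that $X$ is a point. Once stabilizers are a.e.\ discrete, a.e.\ orbit $G_x\backslash G$ is three-dimensional, and the same input rules out transverse directions, so the orbit map is open onto a neighborhood; hence a.e.\ orbit is open, its complement is a union of orbits and therefore open, and connectedness of $X$ forces a single orbit of full measure, whose closure --- being closed, $G$-invariant, and of full $\mu$-measure, hence dense by the full-support hypothesis --- is all of $X$. So $G$ acts transitively; as $X$ is compact Hausdorff and $G$ is connected and $\sigma$-compact, $X$ is, as a $G$-space, the homogeneous space with isotropy $\Gamma:=G_{x_0}$, which is discrete. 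Then $\Gamma$ is cocompact because $X$ is compact, it is a lattice because $\mu$ is a finite invariant measure, and $\mu$ is the Haar probability measure by uniqueness of invariant probability measures on $\Gamma\backslash G$; thus $X\simeq\Gamma\backslash G$.

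The main obstacle is precisely this last step: converting the soft functional-analytic input --- boundedness, then compactness, of a single operator that smooths along $G$-orbits --- into hard topological information (discreteness of stabilizers and absence of transverse directions), with no a priori regularity of $\mu$; this is exactly what blocks a naive dimension count, since one does not yet know that $\mu$ is comparable to a smooth volume. Two examples show the hypotheses are used sharply. The disjoint union $\Gamma_1\backslash G\sqcup\Gamma_2\backslash G$ of quotients by two inequivalent cocompact lattices satisfies the Sobolev embedding property but not connectedness, and it cannot be glued into a connected $G$-space, because $\Gamma\backslash G$ has no $G$-fixed point whereas a gluing point would have to be fixed; this is why connectedness is essential. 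The one-point compactification $\mathbf H^\ast$ of the hyperbolic plane is a connected compact $G$-space, with a single $G$-fixed point and otherwise a single (two-dimensional) orbit, whose only $G$-invariant probability measure is the point mass at the fixed point; this shows that the full-support hypothesis on $\mu$ is what excludes pathological compactifications.
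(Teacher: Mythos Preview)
Your proposal is honest about its central gap, and that gap is real. The functional-analytic preliminaries (compactness of $T$, discrete Casimir spectrum) are plausible but do not drive the argument; the geometric step you yourself flag as ``the main obstacle'' is where all the content lies, and your sketch there does not go through. You want a set $A$ ``transverse to a thin orbit'' with $\varphi\ast\1_A$ staying large relative to $\|\1_A\|_{L^p}$ as $A$ shrinks, but $X$ is merely connected Hausdorff, so ``transverse'' has no meaning yet and you give no mechanism to produce such an $A$. You also assume $X$ compact at the end (``as $X$ is compact Hausdorff''), which is not a hypothesis --- it is part of what must be proved. And ``choosing $\varphi$ via the Selberg--Harish-Chandra transform'' is not available: $\varphi$ is \emph{given} by the Sobolev embedding hypothesis, and convolving it with other nonnegative $L^1$ functions cannot force its spherical transform to be nonvanishing.

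The paper bypasses all of this by proving the contrapositive via an explicit construction, with no spectral theory. It first shows (Theorem~\ref{thm:embedding_homog}, valid for any connected Lie group) that Sobolev embedding forces $X\simeq H\backslash G$ with $H$ closed cocompact, and then invokes simplicity of $\PSL_2(\R)$ (Proposition~\ref{prop:discrete_stabilizers}) to conclude $H$ is discrete or $H=G$. The first step rests on a purely topological criterion (Proposition~\ref{prop:top_characterization}): if $X$ is \emph{not} of the form $H\backslash G$ with $H$ closed cocompact, there exist a nonempty open $U\subseteq G$ and infinitely many nonempty open $V_i\subseteq X$ with the product sets $UV_i$ pairwise disjoint; setting $f=\sum_i c_i\,\1_U\ast\1_{V_i}$ with coefficients chosen so that $f\in L^p\setminus L^q$, one checks $\varphi\ast f\geq\delta f$ pointwise, contradicting Sobolev embedding. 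The $V_i$ are exactly the missing ``transverse'' sets: near any point of minimal stabilizer dimension there is a local foliation by local orbits with Hausdorff leaf space (Lemma~\ref{lem:partial_orbits}, proved using only upper semicontinuity of $\dim\Stab_x$ and point-set topology), and the $V_i$ are preimages of disjoint opens in that leaf space. No operator compactness, no spectral decomposition, and no compactness assumption on $X$ enter.
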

	
	The case where $X$ is a point corresponds to the trivial multiplicative representation. The nontriviality assumption in Theorem~\ref{thm:eq_Gelfand_duality} is used to eliminate this case. It would suffice to prove Theorem~\ref{thm:embedding_G/Gamma} for $X$ connected compact metrizable, but the proof only requires that $X$ is connected and Hausdorff. On the other hand, Theorem~\ref{thm:embedding_G/Gamma} does use something specific about $G$, namely that $G = \PSL_2(\R)$ is a noncompact simple Lie group. What is true in general is
	
	\begin{thm}[Sobolev embedding implies homogeneity] \label{thm:embedding_homog}
		Let $G$ be a connected Lie group, and let $X$ be a connected Hausdorff $G$-space equipped with a $G$-invariant probability measure $\mu$ of full support. Assume $(X,\mu)$ has the qualitative Sobolev embedding property. Then $X \simeq H \backslash G$ as $G$-spaces for some closed cocompact subgroup $H \subseteq G$.
	\end{thm}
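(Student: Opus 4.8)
The plan is to extract, from the qualitative Sobolev embedding property of Definition~\ref{defn:qualitative_Sob_emb}, a uniform quantitative control on how the invariant measure $\mu$ interacts with $G$-orbits, and then to read off both transitivity of the $G$-action and cocompactness of the stabilizer from that control together with the point-set topology of $X$. Throughout, $T$ denotes the operator $f \mapsto \varphi \ast f$ from \eqref{eqn:phi_star_f_def}.

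First I would upgrade the exponents. With $1 \le p < q \le \infty$ and $\varphi$ as in Definition~\ref{defn:qualitative_Sob_emb}, $T$ is bounded $L^p(X,\mu) \to L^q(X,\mu)$; it is also bounded $L^1 \to L^1$ (by invariance of $\mu$) and $L^\infty \to L^\infty$. Interpolating, and if necessary composing $T$ with itself — the $n$-fold convolution $\varphi^{\ast n}$ is again nonnegative, continuous, in $L^1(G)$, and not identically zero — one drives the gain in integrability up, so that after replacing $\varphi$ by a suitable $\varphi^{\ast n}$ we may assume a bound $\|\varphi \ast f\|_{L^\infty} \lesssim \|f\|_{L^2}$; at a minimum a single interpolation already gives $\|\varphi \ast f\|_{L^r} \lesssim \|f\|_{L^2}$ for some $r > 2$, which is all that is needed below.

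Second, I would translate this into a statement about orbit measures. For each $x \in X$, $\varphi \ast f(x) = \int_G \varphi(g) f(g^{-1}x)\,dg = \int_X f\,d\nu_x$, where $\nu_x$ is the pushforward of $\varphi\,dg$ under $g \mapsto g^{-1}x$; the bound above says that $f \mapsto \varphi \ast f(x)$ is a bounded functional on $L^2(X,\mu)$ uniformly in $x$, i.e. each $\nu_x$ is absolutely continuous with respect to $\mu$ with density bounded in $L^{r}(\mu)$ (some $r > 1$), uniformly in $x$. Two consequences: (a) $\nu_x \neq 0$ is carried by the orbit $Gx$, so absolute continuity forces $\mu(Gx) > 0$; since orbits are disjoint and $\mu$ is a probability measure, there are only finitely many orbits. (b) Fixing an open $V \subseteq G$ with $\varphi \geq \delta > 0$ on $V$, the same argument applied to $\delta\,\mathbf{1}_V$ yields a fixed neighborhood $W$ of the identity in $G$ and a constant $c > 0$ with $\mu(Wx) \geq c$ for every $x \in X$, where $Wx \subseteq Gx$ is the corresponding orbit neighborhood of $x$.

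From here the argument splits. For transitivity: $X$ is a finite disjoint union of $G$-orbits, and each orbit, being a homogeneous $G$-space carrying a $G$-invariant probability measure of full support in its subspace topology, is homeomorphic to $H_i \backslash G$ for a closed subgroup $H_i$ (a standard fact about orbits of continuous Lie group actions, e.g. Effros's theorem; in the cases of interest $X$ is compact metrizable, hence Polish, which is more than enough). Using connectedness of $X$, connectedness of the orbits, and the uniform estimate (b), one shows the partition cannot have more than one block — this is the step I expect to be the main obstacle, since it is where the measure-theoretic input must be married to the topology of $X$. Granting it, $G$ acts transitively, so $X \simeq H\backslash G$ as $G$-spaces with $H = \Stab(x_0)$ closed and $\mu$ the induced $G$-invariant probability measure. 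For cocompactness: if $H\backslash G$ were noncompact, pick $x_n \to \infty$ leaving every compact set; since each $x_n(WW^{-1})$ is relatively compact, one passes to a subsequence along which the sets $x_{n_k}W$ are pairwise disjoint, whence $\sum_k \mu(x_{n_k}W) = \infty$ by (b), contradicting $\mu(X) = 1$. Hence $H$ is cocompact, as claimed. (Theorem~\ref{thm:embedding_G/Gamma} then follows by the additional input that $G = \PSL_2(\R)$, being a noncompact simple Lie group, has no positive-dimensional closed subgroup whose quotient is compact and carries a $G$-invariant measure, forcing such an $H$ to be a cocompact lattice, with $X$ a point in the degenerate case $H = G$.)
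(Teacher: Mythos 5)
There is a genuine gap, and it is in the first paragraph, which the rest of the argument depends on. Iterating $T=\varphi\ast(\cdot)$ does \emph{not} reach $L^\infty$ in finitely many steps: Riesz--Thorin interpolation of $T\colon L^p\to L^q$ ($p<q<\infty$) against $T\colon L^\infty\to L^\infty$ gives $T\colon L^a\to L^{aq/p}$ for $a\ge p$, so $T^n\colon L^p\to L^{p(q/p)^n}$, which is finite for every $n$; interpolating against $L^1\to L^1$ only moves along the other side of the triangle and never hits the vertex $L^\infty$. And once you fall back on $T\colon L^2\to L^r$ with $r<\infty$, the resulting bound $\int_X|Tf|^r\,d\mu\lesssim\|f\|_{L^2}^r$ is an integral bound in $x$, not a bound on the linear functional $f\mapsto Tf(x)$ at a fixed $x$. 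Even in the case $p=2$, $q=\infty$ supplied by the application, the inequality $\|Tf\|_{L^\infty(\mu)}\le C\|f\|_{L^2(\mu)}$ controls the pointwise value $Tf(x)=\nu_x(f)$ only for $\mu$-a.e.~$x$, with the exceptional null set depending on $f$. So the crucial assertion that $\nu_x\ll\mu$ with $L^{r}$-density \emph{uniformly bounded in $x$} is not established, and without uniformity the ``$\mu(Gx)\ge(1/C)^{r'}$ for every $x$, hence finitely many orbits'' step has no floor.

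The second gap is the one you flag yourself: you do not actually derive transitivity from ``finitely many positive-measure orbits plus connectedness.'' Orbits of a continuous Lie group action need be neither open nor closed in general, so the orbit decomposition is not a clopen decomposition and connectedness does not immediately collapse it to one piece; Effros's theorem (which also needs a Polish hypothesis not present in the Hausdorff-only statement of Theorem~\ref{thm:embedding_homog}) relates non-meagerness to openness, not positive $\mu$-measure to closedness. The cocompactness argument at the end is fine once transitivity and the uniform lower bound on $\mu(Wx)$ are in hand, but neither has been obtained. For contrast: the paper argues contrapositively and bypasses all measure theory about orbit measures. It first proves the purely point-set Proposition~\ref{prop:top_characterization}, using upper semicontinuity of $x\mapsto\dim\Stab_x$, continuity of $x\mapsto\Lie\Stab_x$ on the minimal locus, and a local-orbit-space Lemma~\ref{lem:partial_orbits}, to show that if $X$ is not of the form $H\backslash G$ then there exist a fixed open $U\subseteq G$ and infinitely many nonempty opens $V_i\subseteq X$ with the $UV_i$ pairwise disjoint. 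The Sobolev embedding hypothesis is then refuted directly by testing against $f=\sum_i c_i\,\1_U\ast\1_{V_i}$ with $c_i$ chosen so that $f\in L^p$ but $f\notin L^q$, using only that $\varphi\ast(\1_U\ast\1_{V_i})\ge\delta\,\1_U\ast\1_{V_i}$ pointwise. That route avoids having to control $\nu_x$ at every $x$ or to show orbits have positive measure at all.
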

	
	This combined with the following known proposition implies Theorem~\ref{thm:embedding_G/Gamma}.
	
	\begin{prop} \label{prop:discrete_stabilizers}
		Let $G$ be a noncompact simple Lie group (e.g., $G = \PSL_2(\R)$). Let $H \subseteq G$ be a closed cocompact subgroup such that $H \backslash G$ admits a $G$-invariant probability measure. Then either $H$ is discrete or $H = G$.
	\end{prop}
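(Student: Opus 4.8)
The plan is to derive Proposition~\ref{prop:discrete_stabilizers} from the Borel density theorem; in fact the cocompactness hypothesis will not be needed, only the existence of a $G$-invariant probability measure on $H\backslash G$. Write $\mathfrak{g}$ for the Lie algebra of $G$ and $\mathfrak{h}\subseteq\mathfrak{g}$ for the Lie algebra of $H$, and put $d=\dim_{\R}\mathfrak{h}$. If $d=0$ then $H$ is discrete and there is nothing to prove; if $d=\dim_{\R}\mathfrak{g}$ then $H^{\circ}$ is open in $G$, hence equals $G$ since $G$ is connected, so $H=G$. We may therefore assume $0<d<\dim_{\R}\mathfrak{g}$ and aim for a contradiction, the point being to show that $\mathfrak{h}$ is then an ideal of $\mathfrak{g}$ — impossible, since $\mathfrak{g}$ is simple.

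Because $\Ad(h)\mathfrak{h}=\mathfrak{h}$ for every $h\in H$, the rule $Hg\mapsto\Ad(g)^{-1}\mathfrak{h}$ defines a continuous $G$-equivariant map $\Phi\colon H\backslash G\to\Gr_d(\mathfrak{g})$, where $G$ acts on the Grassmannian through $\Ad$. Post-composing with the Plücker embedding $\Gr_d(\mathfrak{g})\hookrightarrow\mathbf{P}(\Lambda^d\mathfrak{g})$ yields a continuous $G$-equivariant map $\overline{\Phi}$ into the projective space of the representation $\Lambda^d\Ad$ of $G$ on $\Lambda^d\mathfrak{g}$. Let $\mu$ be a $G$-invariant probability measure on $H\backslash G$, which exists by hypothesis; since $G$ acts transitively on $H\backslash G$, the support of $\mu$ is all of $H\backslash G$. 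Let $\nu=\overline{\Phi}_*\mu$, a $G$-invariant probability measure on $\mathbf{P}(\Lambda^d\mathfrak{g})$ whose support is $\overline{\overline{\Phi}(H\backslash G)}$ and in particular contains the Plücker point $[\mathfrak{h}]=\overline{\Phi}(H)$.

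The action on $\mathbf{P}(\Lambda^d\mathfrak{g})$ factors through the adjoint group $\Ad(G)=G/Z(G)$, which is a noncompact simple Lie group with trivial center, and $\nu$ is invariant under it. By the Borel density theorem in its measure-theoretic form (Furstenberg), for a semisimple Lie group with no compact factors every invariant probability measure on the projective space of a finite-dimensional representation is supported on the set of fixed points. Therefore $[\mathfrak{h}]$ is fixed by $\Ad(G)$; since $\Lambda^d\Ad(g)$ preserves the line through $v_1\wedge\cdots\wedge v_d$ (for a basis $v_1,\dots,v_d$ of $\mathfrak{h}$) if and only if $\Ad(g)\mathfrak{h}=\mathfrak{h}$, we conclude $\Ad(g)\mathfrak{h}=\mathfrak{h}$ for all $g\in G$, i.e.\ $[\mathfrak{g},\mathfrak{h}]\subseteq\mathfrak{h}$, so $\mathfrak{h}$ is an ideal of $\mathfrak{g}$ — the desired contradiction.

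The only substantial input is the Borel density theorem; everything else is formal. The one place deserving care is the reduction to the adjoint group when $Z(G)$ is infinite (as for the universal cover of $\PSL_2(\R)$), but this is routine since the linear action above already factors through $\Ad(G)$. For the case $G=\PSL_2(\R)$ actually used in this paper one can instead avoid Borel density entirely: up to conjugacy the nonzero proper subalgebras of $\mathfrak{sl}_2(\R)$ are $\mathfrak{so}_2$, a split Cartan line, a nilpotent line, and the Borel subalgebra, and a short case check shows that in each case $H\backslash G$ either fails to be compact or fails to carry a $G$-invariant probability measure — the Borel subalgebra being the case where $H\backslash G\cong\mathbf{P}^1$ is compact but the Borel subgroup is not unimodular.
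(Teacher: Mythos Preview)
Your proof is correct, and its overall architecture matches the paper's: both arguments pass to the line $[\mathfrak{h}]\in\mathbf{P}(\Lambda^d\mathfrak{g})$ via the Pl\"ucker/wedge setup and finish by showing that $\Ad(G)$ fixes this line, forcing $\mathfrak{h}$ to be an ideal. The difference is in how that fixed-point statement is obtained. You invoke the Borel density theorem (Furstenberg's lemma) as a black box applied to the pushforward measure $\nu$, and accordingly never use cocompactness---only the existence of a $G$-invariant probability measure on $H\backslash G$. The paper instead gives a short self-contained argument that is essentially a direct proof of the relevant case of Borel density: it first uses the invariant measure to deduce that $H$ is unimodular, so that $\Ad(H)$ fixes a vector $\omega\in\Lambda^d\mathfrak{h}$ (not merely its line); then cocompactness forces the orbit $\Ad(G)\omega$ to be compact; and finally the ``bounded polynomials are constant'' trick shows that every unipotent $u\in G$ fixes $\omega$, whence all of $G$ does since unipotents generate. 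So your route is more general (no cocompactness) but cites a nontrivial theorem, while the paper's route is more elementary and fully self-contained but uses both hypotheses. Your $\PSL_2(\R)$-specific case check is a third, independent route, not in the paper.
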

	
	For completeness, we provide a short self-contained proof of this proposition in Subsection~\ref{subsec:embedding_homog:discrete}.
	
	To summarize, Step~\ref{step:embedding_G/Gamma} reduces to proving Theorem~\ref{thm:embedding_homog}.
	The intuition for this theorem is as follows.
	Recall the slogan for the qualitative Sobolev embedding property is that ``smoothing along $G$-orbits improves integrability." If $X$ is not locally homogeneous, then smoothing along $G$-orbits amounts to smoothing along some (but not all) directions in $X$. It should then be possible to build a function $f$ which is smooth in these directions but which blows up in a transverse direction. Convolving $f$ with $\varphi$ will not qualitatively affect $f$, because $f$ is already smooth along $G$-orbits. This means that the integrability of $f$ will not improve, contradicting the qualitative Sobolev embedding property. We conclude, at least heuristically, that $X$ is locally homogeneous. A connectedness argument then allows us to upgrade local homogeneity to global homogeneity. An example of this heuristic at work is given in Subsection~\ref{subsec:outline:embedding_homog}.
	
	At this point, let $G = \PSL_2(\R)$ once more, and consider Steps~\ref{step:C*} and \ref{step:spectrum}. Although these two steps have a lot of components, almost all are reasonably straightforward. The only difficult thing to show is that the $G$-action on $\B(\H)$ preserves $\A$ (this is Theorem~\ref{thm:A_G-inv}).
	In other words, the restriction of the $G$-action on $\B(\H)$ to $\A$ is a map $G \times \A \to \B(\H)$, and we wish to show that the image of this map is $\A$.
	The reason this is difficult is that $\A$ is generated by (the multiplication operators corresponding to elements of) $\H^{\fin}$, and $\H^{\fin}$ is closed under the $\g$-action but not the $G$-action.
	To overcome this issue, we show that $G \times \A \to \B(\H)$ is obtained by exponentiating the $\g$-action on $\H^{\fin}$. We thus need the Taylor series for the appropriate exponential map to converge.
	In general, the Taylor series at $0$ for a function $f$ on the real line has a positive radius of convergence if and only if $f$ obeys the derivative bounds
	\begin{align*}
		|f^{(n)}(x)|
		\lesssim_f O_f(n)^n
	\end{align*}
	for $x$ in an interval around $0$, where the interval is independent of $n$. With this in mind, it is natural that we obtain convergence of the exponential Taylor series from the similar looking derivative bounds in Theorem~\ref{thm:high_deriv_L^infty_bd} below, which is the first of the two main results of Step~\ref{step:L^infty}. We defer a more detailed explanation to Subsection~\ref{subsec:outline:reduce_to_L^infty}.
	
	We now come to Step~\ref{step:L^infty}, which consists of the following two theorems.
	To state Theorem~\ref{thm:high_deriv_L^infty_bd}, fix once and for all a norm $\|\cdot\|_{\g}$ on the Lie algebra $\g$. Up to constants, it doesn't matter which one we choose.
	
	\begin{thm}[Derivative bounds in $L^{\infty}$] \label{thm:high_deriv_L^infty_bd}
		Let $\alpha \in \H^{\fin}$, let $X \in \g$, and let $n \in \Z_{\geq 0}$. Then
		\begin{align*} 
			\|X^n\alpha\|_{L^{\infty}}
			\lesssim_{\alpha} O(n\|X\|_{\g})^n.
		\end{align*}
	\end{thm}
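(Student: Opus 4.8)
The plan is to prove Theorem~\ref{thm:high_deriv_L^infty_bd} as a consequence of the norm comparison $\|\cdot\|_{\H} \leq \|\cdot\|_{L^4} \leq \|\cdot\|_{L^{\infty}}$ (Proposition~\ref{prop:L^2_leq_L^4_leq_L^infty}) together with the two estimates of Step~\ref{step:bulk_tail}, namely Theorems~\ref{thm:L^4_quasi-Sobolev}, \ref{thm:exp_decay_form}, and \ref{thm:exp_decay_quasimode}, which are permitted here. Since $\g = \C H \oplus \C E \oplus \C\overline{E}$, it suffices to treat $X$ drawn from a basis — indeed, by expanding an arbitrary $X$ in the basis $H,E,\overline{E}$, multiplying out $X^n$, and using the triangle inequality for $\|\cdot\|_{L^{\infty}}$, a bound of the form $\|Y^n\alpha\|_{L^{\infty}} \lesssim_{\alpha} O(n)^n$ for each $Y \in \{H,E,\overline{E}\}$ (and more generally for all products of $n$ basis elements) yields the claimed $O(n\|X\|_{\g})^n$. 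So I would reduce to bounding $\|Y_1 \cdots Y_n \alpha\|_{L^{\infty}}$ where each $Y_j \in \{H,E,\overline{E}\}$.

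The strategy for bounding $L^{\infty}$ norms of such a derivative is to use the defining property $\|\beta\|_{L^{\infty}} = \sup\{\|\beta\gamma\|_{\H} : \gamma \in \H^{\fin}, \|\gamma\|_{\H} \leq 1\}$ and test against $\gamma$. One writes $\langle (Y_1 \cdots Y_n \alpha)\gamma, \delta\rangle_{\H}$, moves the $Y_j$'s off of $\alpha$ onto $\gamma$ and $\delta$ using the product rule (equivariance, \eqref{eqn:prod_rule}) and formal unitarity $Y^{\star} = -\overline{Y}$ (\eqref{eqn:Lie_alg_unitarity}). Each application of the product rule splits a term into two, so after $n$ steps one has a sum of $2^n$ terms, each of the form $\pm\langle \alpha \cdot (Z_S \gamma), Z_{S^c}\delta\rangle_{\H}$ for a partition of the multi-index set into $S, S^c$. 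One then applies $L^4$-Cauchy--Schwarz (Proposition~\ref{prop:L^4_C-S}) to bound each such term by $\|\alpha\|_{L^4}\|Z_S\gamma\|_{L^4}\|Z_{S^c}\delta\|_{L^4}$ — but this is circular unless we control $L^4$ norms of derivatives. The resolution is to instead spectrally decompose: test only against weight vectors / Casimir eigenvectors, use that $Y$ acts on the $(\g,K)$-type data by the explicit formulas \eqref{eqn:E_psi_i_formula}, \eqref{eqn:Ebar_psi_i_formula} (which introduce factors $\sqrt{\lambda + O(\text{weight}^2)}$, i.e. polynomial in $\sqrt{\Delta}$), and then invoke the exponential decay estimates of Step~\ref{step:bulk_tail}: the product $\alpha\gamma$ has its spectral mass concentrated near $\Delta \lesssim \lambda_{\gamma}$ with a tail decaying like $\exp(-c\sqrt{\Delta})$ past that, so pulling out $n$ factors that each cost $O(\sqrt{\Delta})$ costs only $O(\sqrt{\Delta})^n$, and $\sup_{\Delta} \Delta^{n/2}\exp(-c\sqrt{\Delta}) = O(n/c)^n$ by calculus. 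Summing $2^n$ terms each of size $O(n)^n$ still gives $O(n)^n$, absorbing the $2^n$.

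The main obstacle — and the reason this theorem is deferred rather than proved directly — is that the norm comparison and the product rule only let us manipulate $\H^{\fin}$, but the estimates we need (Theorems~\ref{thm:L^4_quasi-Sobolev}, \ref{thm:exp_decay_quasimode}, \ref{thm:exp_decay_form}) are themselves the hard quantitative core of the paper, requiring the ``self-improving'' finite-dimensionality argument of Remark~\ref{rem:using_lambda_r_to_infty}. Granting those, the remaining difficulty is purely bookkeeping: organizing the $2^n$-term expansion so that the bound is genuinely $O(n)^n$ and not $O(n)^{2n}$ or worse, which requires being careful that the polynomial-in-$\sqrt{\Delta}$ prefactors accumulated from the $Y$-action formulas interact correctly with the exponential tail, and that the concentration of $\operatorname{supp}(\alpha\gamma)$ near the spectral parameter of $\gamma$ (rather than near something that grows with $n$) is preserved under the repeated applications of the product rule. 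I expect the cleanest route is an induction on $n$: prove $\|Y^n\alpha\|_{L^4} \lesssim_{\alpha} O(n)^n$ first (via the exponential-decay bound applied to $\|\exp(A\log_+^2\Delta)(\alpha^2)\|$-type quantities, or more directly via Theorem~\ref{thm:L^4_quasi-Sobolev}), feed that back in, and only at the end pass from $L^4$ to $L^{\infty}$ by one more application of $L^4$-Cauchy--Schwarz against a test vector whose own derivatives are controlled by the inductive hypothesis. The $n=0$ base case is exactly the assertion $\|\alpha\|_{L^{\infty}} < \infty$, which is where Theorem~\ref{thm:L^infty_quasi-Sobolev} / the finiteness in Step~\ref{step:L^infty} does its work.
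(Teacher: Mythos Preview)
Your proposal has a genuine gap. The strategy of testing $Y^n\alpha$ against an arbitrary $\gamma \in \H^{\fin}$ and moving derivatives onto $\gamma$ via the product rule cannot be closed with the tail bounds you cite. Theorems~\ref{thm:exp_decay_form} and~\ref{thm:exp_decay_quasimode} give exponential decay for $|f|^2$ and for products $\alpha\beta$ with $\alpha,\beta$ in the \emph{same} approximate Casimir eigenspace; they say nothing about $\alpha\gamma$ when $\alpha$ is fixed and $\lambda_\gamma \to \infty$. So when you move $Y$'s onto $\gamma$ you accumulate factors of order $\sqrt{\lambda_\gamma}$ with no compensating exponential in $\lambda_\gamma$, and the supremum over $\gamma$ is unbounded. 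Your calculus step ``$\sup_\Delta \Delta^{n/2}\exp(-c\sqrt{\Delta}) = O(n/c)^n$'' never becomes applicable. Similarly, the proposed intermediate step ``$\|Y^n\alpha\|_{L^4} \lesssim_\alpha O(n)^n$ via Theorem~\ref{thm:L^4_quasi-Sobolev}'' fails because that theorem only applies to $K$-invariant vectors, and $E^n\varphi$ has weight $n$.

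The missing idea is the algebraic identity of Proposition~\ref{prop:p_n_def}: for a real $K$-invariant automorphic vector $\varphi$ with Casimir eigenvalue $\lambda$, one has $|E^n\varphi|^2 = p_n(\lambda,\Delta)(\varphi^2)$, where $p_n$ is an explicit polynomial (and analogously Proposition~\ref{prop:q_k,n_def} for lowest weight vectors). The point is that this expresses $|E^n\varphi|^2$ as a function of $\Delta$ applied to the \emph{$n$-independent} vector $\varphi^2$. The paper then proceeds via Corollary~\ref{cor:weight_vector_L^infty_bd}, which gives $\|E^n\varphi\|_{L^\infty}^2 \leq \|\exp(O(\log_+^2\Delta))(|E^n\varphi|^2)\|_\H = \|\exp(O(\log_+^2\Delta))\, p_n(\lambda,\Delta)(\varphi^2)\|_\H$. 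Now Corollary~\ref{cor:exp_decay_quasimode_qual} provides $\|\exp(c\sqrt{\Delta})(\varphi^2)\|_\H < \infty$, and since $|p_n(\lambda,\mu)| \leq O(\lambda+\mu+n^2)^n$ (Lemma~\ref{lem:p_n_triv_bd}), a dyadic decomposition and the calculus you describe yield $\|E^n\varphi\|_{L^\infty} \lesssim_\varphi O(n)^n$ (Proposition~\ref{prop:E^n_phi_L^infty_bd}). For general $X$, one then writes $X^n\alpha$ explicitly as a linear combination $\sum_r a_{n,r}\varphi_r$ of the vectors $\varphi_r = E^r\varphi$ (or $\overline{E}^{|r|}\varphi$), with coefficients $|a_{n,r}| \leq O(1)^n(n+m+\sqrt\lambda)^{n+m-|r|}$ obtained from the raising/lowering formulas \eqref{eqn:phi_r_g-action}--\eqref{eqn:f_r_g-action}, and applies the $E^r$-bound to each $\varphi_r$. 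The combinatorics of coefficients times $O(|r|)^{|r|}$ then collapses to $O(n)^n$.
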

	
	We emphasize that the $O$-constant, which is raised to the $n$th power, is independent of $\alpha$.
	
	\begin{thm}[Quasi-Sobolev embedding into $L^{\infty}$] \label{thm:L^infty_quasi-Sobolev}
		Let $\alpha \in \H^K \cap \H^{\fin}$. Then
		\begin{align} \label{eqn:L^infty_quasi-Sobolev}
			\|\alpha\|_{L^{\infty}}
			\leq \|\exp(O(\log_+^2\Delta))\alpha\|_{\H}.
		\end{align}
	\end{thm}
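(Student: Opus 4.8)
The plan is to bound the operator norm $\|\alpha\|_{L^\infty}$ of multiplication by $\alpha$ --- equivalently, to show $\|\alpha\beta\|_\H \le \|\exp(O(\log_+^2\Delta))\alpha\|_\H\,\|\beta\|_\H$ for every $\beta\in\H^{\fin}$, so that \eqref{eqn:L^infty_quasi-Sobolev} follows on taking the supremum over nonzero $\beta$. The first step is to pass to $K$-invariant data via crossing symmetry: exactly as in the proof of Proposition~\ref{prop:L^4_C-S}, $\|\alpha\beta\|_\H^2=\langle|\alpha|^2,|\beta|^2\rangle_\H$, and more generally $\langle\alpha\gamma,\alpha\delta\rangle_\H=\langle|\alpha|^2,\overline\gamma\delta\rangle_\H$ for $\gamma,\delta\in\H^{\fin}$, with $|\alpha|^2,|\beta|^2,\overline\gamma\delta\in\H^K\cap\H^\infty$. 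So the whole problem becomes one of estimating such pairings of $K$-invariant vectors, and the tools for this are precisely the three estimates of Step~\ref{step:bulk_tail}: Theorem~\ref{thm:L^4_quasi-Sobolev} controls the low-Casimir ``bulk'' of a product of (approximate) Casimir eigenvectors, while Theorems~\ref{thm:exp_decay_quasimode} and~\ref{thm:exp_decay_form} give exponential decay of the high-Casimir ``tail'' (for a general product, resp.\ for $|f|^2$ with $f$ lowest weight). Since $\alpha\in\H^{\fin}$ has Casimir support in a fixed finite interval $[0,\Lambda_0]$, combining these yields both $\||\alpha|^2\|_\H=\|\alpha\|_{L^4}^2\le(\|\exp(O(\log_+^2\Delta))\alpha\|_\H)^2$ (using \eqref{eqn:|alpha|^2_H_norm}) and an exponentially decaying profile $\|\1_{\Delta=\nu}|\alpha|^2\|_\H$.

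The heart of the argument is a Littlewood--Paley organization of $\beta$ along the Casimir. Write $\beta=\sum_j\beta_j$ with $\beta_j=\1_{\Delta\in B_j}\beta\in\H^{\fin}$, where the windows $B_j$ partition $[0,\infty)$ with width comparable to $\sqrt{(1+\Lambda_0)(1+\mu_j)}$ --- the Casimir-spreading that multiplication by the fixed low-frequency vector $\alpha$ induces around the center $\mu_j$ of $B_j$ (the finitely many bottom windows sit in a fixed finite-dimensional band-limited subspace and are absorbed into constants). Two properties then need to be extracted from Step~\ref{step:bulk_tail}. First, a \emph{per-window bound} $\|\alpha\beta_j\|_\H\lesssim\|\exp(O(\log_+^2\Delta))\alpha\|_\H\,\|\beta_j\|_\H$ that is \emph{uniform in $j$}: one expands $\|\alpha\beta_j\|_\H^2=\langle|\alpha|^2,|\beta_j|^2\rangle_\H=\sum_\nu\langle\1_{\Delta=\nu}|\alpha|^2,\1_{\Delta=\nu}|\beta_j|^2\rangle_\H$, isolates the $\nu=0$ term (which equals $\|\alpha\|_\H^2\|\beta_j\|_\H^2$), and pairs the rapidly decaying profile of $|\alpha|^2$ against the low-Casimir part of $|\beta_j|^2=\overline{\beta_j}\beta_j$ supplied by the bulk bound, summing in $\nu$ by Cauchy--Schwarz with a summable weight. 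Second, \emph{almost-orthogonality across windows}: $\alpha\beta_j$ has Casimir spectrum essentially supported in $B_j$ and its $O(1)$ neighbors (bulk bound) with an exponentially small tail beyond (tail bounds), so $|\langle\alpha\beta_j,\alpha\beta_{j'}\rangle_\H|\lesssim e^{-c|j-j'|}\|\alpha\beta_j\|_\H\|\alpha\beta_{j'}\|_\H$. Expanding $\|\alpha\beta\|_\H^2=\sum_{j,j'}\langle\alpha\beta_j,\alpha\beta_{j'}\rangle_\H$, applying Schur's test to this $O(1)$-bandwidth matrix, and inserting the per-window bound gives $\|\alpha\beta\|_\H^2\lesssim\sum_j\|\alpha\beta_j\|_\H^2\lesssim(\|\exp(O(\log_+^2\Delta))\alpha\|_\H)^2\sum_j\|\beta_j\|_\H^2=(\|\exp(O(\log_+^2\Delta))\alpha\|_\H)^2\|\beta\|_\H^2$, with the absolute implicit constant absorbed into the $O$.

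The main obstacle is the uniform-in-$j$ per-window bound: applied carelessly, the bulk estimate for the product $\alpha\cdot\beta_j$ carries an $\exp(O(\log_+^2\Delta))$ weight on \emph{both} factors, and the weight on $\beta_j$ degenerates into a factor $\exp(O(\log_+^2\mu_j))$ that would be fatal after summing over $j$. So one must use the estimates of Step~\ref{step:bulk_tail} in a form that records that multiplying by a \emph{fixed} bounded-Casimir vector costs only a weighted $L^2$-norm of that vector and no power of the ambient frequency --- the abstract counterpart of the fact that multiplying a high eigenfunction by a fixed function does not amplify the $L^2$-norm --- and in particular to use the genuinely exponential decay of the tail of $|\alpha|^2$ rather than merely $|\alpha|^2\in\H^\infty$, together with the form of Theorem~\ref{thm:L^4_quasi-Sobolev} in which the Casimir cutoff is localized. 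This is also why one works with products here rather than bootstrapping an $L^4$-type embedding up to $L^\infty$: $\H^{\fin}$ is not closed under multiplication, so $\alpha^2,\alpha^4,\dots$ leave $\H^{\fin}$ and the hypotheses of Theorem~\ref{thm:L^4_quasi-Sobolev} cease to apply. Matching the window widths to the spreading so the almost-orthogonality matrix has $O(1)$ bandwidth, the Schur bookkeeping, the treatment of the finitely many bottom windows, and the passage between the operator-norm and pairing formulations are then routine.
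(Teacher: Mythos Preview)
Your approach has a genuine gap at exactly the point you flag as the main obstacle: the uniform-in-$j$ per-window bound $\|\alpha\beta_j\|_\H\lesssim\|\exp(O(\log_+^2\Delta))\alpha\|_\H\,\|\beta_j\|_\H$ cannot be extracted from Theorems~\ref{thm:L^4_quasi-Sobolev}, \ref{thm:exp_decay_form}, \ref{thm:exp_decay_quasimode} in the way you describe. Expanding $\langle|\alpha|^2,|\beta_j|^2\rangle_\H$ along Casimir eigenvalues $\nu$, the only control you have on $\|\1_{\Delta=\nu}|\beta_j|^2\|_\H$ for small $\nu$ is the trivial bound $\le\||\beta_j|^2\|_\H=\|\beta_j\|_{L^4}^2$, which by Theorem~\ref{thm:L^4_quasi-Sobolev} carries the fatal factor $\exp(O(\log_+^2\mu_j))$. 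The exponential decay of the profile of $|\alpha|^2$ kicks in only for $\nu$ large (above a threshold depending on the Casimir support of $\alpha$); for $\nu$ small, say $\nu=\lambda_1$, the factor $\|\1_{\Delta=\nu}|\alpha|^2\|_\H$ is merely bounded, not small, and you are left with a term of order $\exp(O(\log_+^2\mu_j))\|\beta_j\|_\H^2$. No choice of Cauchy--Schwarz weight rescues this, because the mass of $|\beta_j|^2$ at low Casimir need not be small relative to $\|\beta_j\|_\H^2$. Your proposed resolution---``use the estimates in a form that records that multiplying by a fixed bounded-Casimir vector costs no power of the ambient frequency''---is precisely the statement $\|\alpha\|_{L^\infty}<\infty$ with the right dependence, i.e., the theorem itself. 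The almost-orthogonality step has a related problem: Theorem~\ref{thm:exp_decay_quasimode} only controls products $\gamma\delta$ with $\gamma,\delta$ in the \emph{same} $I_i$, so you have no direct tail bound on $\overline{\beta_j}\beta_{j'}$ for $j\neq j'$, nor on $\alpha\beta_j$ with $\alpha,\beta_j$ in different windows.

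The paper takes a route you explicitly dismiss. It \emph{does} bootstrap from $L^4$ up to $L^\infty$, via ad hoc modified norms $\|\cdot\|_{\widetilde L^p}$ on $\H^{\fin}$ (defined so that $\|\gamma\|_{\widetilde L^p}$ is the best constant in $\|\gamma\delta\|_\H\le C\|\delta\|_{L^4}^{4/p}\|\delta\|_\H^{1-4/p}$), and circumvents the ``$\H^{\fin}$ not closed under products'' obstruction by interpreting $\||\alpha|^2\|_{\widetilde L^p}$ as $\liminf_{\Lambda\to\infty}\|\1_{\Delta\le\Lambda}(|\alpha|^2)\|_{\widetilde L^p}$, which is legitimate since $\1_{\Delta\le\Lambda}(|\alpha|^2)\in\H^K\cap\H^{\fin}$. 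The key inequality is $\|\alpha\|_{\widetilde L^{2p}}^2\le\liminf_\Lambda\|\1_{\Delta\le\Lambda}(|\alpha|^2)\|_{\widetilde L^p}$, and one inducts on $p$ (through powers of $2$) for $\alpha\in\H_{\Delta\in I_i}^K$. The reason this avoids your blowup is that the induction is applied to the pieces $\1_{\Delta\in I_j}(|\alpha|^2)$, and by the tail bound (Theorem~\ref{thm:exp_decay_quasimode}) these are negligible once $\lambda_j\gg\lambda_i$; so the factors $\exp(A\log_+^2\lambda_j)$ from the induction hypothesis never exceed $\exp((A+O(1))\log_+^2\lambda_i)$, and one closes the induction with constants independent of $p$. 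Taking $p\to\infty$ gives $\|\alpha\|_{L^\infty}\lesssim\exp(O(\log_+^2\lambda_i))\|\alpha\|_\H$ for $\alpha\in\H_{\Delta\in I_i}^K$, and the full theorem follows by the triangle inequality over the partition $\{I_i\}$.
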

	
	We add the prefix ``quasi-" because the right hand side involves $f(\Delta)$ with $f \colon [0,\infty) \to \R$ a function of quasipolynomial rather than polynomial growth.
	As we will see in the next subsection, when $\H = L^2(\Gamma \backslash G)$, it is possible to replace $\exp(O(\log_+^2\Delta))$ with $(\Delta+1)^{O(1)}$ in \eqref{eqn:L^infty_quasi-Sobolev}. I do not know how to do this for a general multiplicative representation without going through Theorem~\ref{thm:eq_Gelfand_duality}. Fortunately, most of the time we will be dealing in exponentials, so the difference between polynomial and quasipolynomial will be irrelevant.
	
	%
		%
		%
		%
	
	If $\alpha$ is an automorphic vector of nonnegative weight and $X = E$ is the raising operator, then the $L^{\infty}$ bounds on the automorphic vectors $E^n\alpha$ given by Theorem~\ref{thm:high_deriv_L^infty_bd} have explicit dependence on the weight $n$, because the implicit and $O$-constants are independent of $n$.
	On the other hand, if $\alpha$ is an automorphic vector to which Theorem~\ref{thm:L^infty_quasi-Sobolev} applies, then $\alpha \in \H^K$ has weight $0$, and the bound on $\|\alpha\|_{L^{\infty}}$ given by Theorem~\ref{thm:L^infty_quasi-Sobolev} depends explicitly on the Casimir eigenvalue of $\alpha$. This justifies the description of these two estimates in the statement of Step~\ref{step:L^infty} above.
	The proofs of Theorems~\ref{thm:high_deriv_L^infty_bd} and \ref{thm:L^infty_quasi-Sobolev} are outlined in Subsection~\ref{subsec:outline:reduce_to_bulk_tail}.
	
	We have explained that Theorem~\ref{thm:high_deriv_L^infty_bd} is used to exponentiate the $\g$-action on $\H^{\fin}$ to a $G$-action on $\A$, and in Step~\ref{step:spectrum} it is stated that a corollary of Theorem~\ref{thm:L^infty_quasi-Sobolev} is used to prove the qualitative Sobolev embedding property for $(X,\mu)$. Theorem~\ref{thm:high_deriv_L^infty_bd} and this corollary are the only results from Steps~\ref{step:bulk_tail} and \ref{step:L^infty} which are used in Steps~\ref{step:C*}, \ref{step:spectrum}, and \ref{step:embedding_G/Gamma}. This corollary is the following. In the statement, $\mathcal{C}$ is the dense subspace of $\H$ consisting of vectors which are limits in $\H$ of $L^{\infty}$-Cauchy sequences in $\H^{\fin}$. So if $\H = L^2(\Gamma \backslash G)$, then $\mathcal{C} = C(\Gamma \backslash G)$.
	
	\begin{cor}[Qualitative Sobolev embedding into $L^{\infty}$] \label{cor:qual_Sob_emb}
		There exists a nonnegative continuous function $\varphi \in L^1(G)$, not identically zero, such that for all $v \in \H$, the convolution
		\begin{align*}
			\varphi \ast v
			= \int_G \varphi(g) gv \, dg
		\end{align*}
		lies in $\mathcal{C}$.
	\end{cor}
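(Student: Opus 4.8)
The plan is to reuse the very function $\varphi$ supplied by Lemma~\ref{lem:positive_super-smooth}. With that choice, for any $v \in \H$ the convolution $w := \varphi \ast v$ is super-smooth; in particular $w \in \H^K$ and $\|\exp(A\log_+^2\Delta)w\|_{\H} < \infty$ for every $A \geq 0$. It therefore suffices to show that an arbitrary super-smooth vector $w$ lies in $\mathcal{C}$, i.e.\ is an $\H$-limit of an $L^\infty$-Cauchy sequence in $\H^{\fin}$. The sequence I would use is the sequence of Casimir truncations $w_n := \1_{\Delta \leq n}\, w$ for $n \in \Z_{\geq 1}$.

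First, each $w_n$ is a legitimate element of $\H^{\fin}$: since $w$ is $K$-invariant so is $w_n$, and by the discussion at the end of Subsection~\ref{subsec:sl_2:func_calc} the space $\H^K_{\Delta \leq n}$ is finite-dimensional and contained in $\H^{\fin}$. By the functional calculus for $\Delta$ we have $w_n \to w$ in $\H$ as $n \to \infty$, so the only thing left to verify is that $(w_n)_{n \geq 1}$ is Cauchy in the $L^\infty$ norm.

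For $m > n$ we have $w_m - w_n = \1_{n < \Delta \leq m}\, w \in \H^K \cap \H^{\fin}$, so Theorem~\ref{thm:L^infty_quasi-Sobolev} applies and yields, for some absolute constant $A_0 \geq 0$ (the one hidden in its $O(\cdot)$) and using monotonicity of the functional calculus norm in $|f|$,
\begin{align*}
\|w_m - w_n\|_{L^\infty}^2
&\leq \|\exp(A_0 \log_+^2\Delta)\,\1_{n < \Delta \leq m}\, w\|_{\H}^2 \\
&= \sum_{n < \lambda \leq m} \exp\!\big(2A_0 \log_+^2\lambda\big)\, \|\1_{\Delta=\lambda} w\|_{\H}^2 .
\end{align*}
The last expression is a tail of the series $\sum_{\lambda} \exp(2A_0 \log_+^2\lambda)\,\|\1_{\Delta=\lambda}w\|_{\H}^2 = \|\exp(A_0\log_+^2\Delta)w\|_{\H}^2$, which is finite precisely because $w$ is super-smooth (apply the defining estimate with $A = A_0$). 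Hence this tail tends to $0$ as $n \to \infty$, so $(w_n)$ is $L^\infty$-Cauchy; combined with $w_n \to w$ in $\H$, this gives $w \in \mathcal{C}$, as desired.

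All the real content here has been outsourced: the existence of a nonnegative continuous smoothing kernel whose convolutions are super-smooth is Lemma~\ref{lem:positive_super-smooth}, and the quasi-Sobolev inequality $\|\alpha\|_{L^\infty} \leq \|\exp(O(\log_+^2\Delta))\alpha\|_{\H}$ on $\H^K \cap \H^{\fin}$ is Theorem~\ref{thm:L^infty_quasi-Sobolev} --- the proof of the latter, carried out in Section~\ref{sec:reduce_to_bulk_tail}, is the genuine obstacle. Within the present argument the only point needing care is the matching of exponents: the constant $A_0$ coming from the $O(\cdot)$ in Theorem~\ref{thm:L^infty_quasi-Sobolev} is a fixed absolute constant, and the definition of super-smoothness was set up to allow an arbitrary $A$, so it comfortably absorbs whatever $A_0$ arises. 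Thus, modulo the deferred theorem, the corollary follows with no further difficulty.
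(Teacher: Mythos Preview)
Your proof is correct and follows essentially the same approach as the paper: take $\varphi$ from Lemma~\ref{lem:positive_super-smooth}, set $w=\varphi\ast v$, truncate $w_n=\1_{\Delta\le n}w$, and use Theorem~\ref{thm:L^infty_quasi-Sobolev} together with super-smoothness to show the truncations are $L^\infty$-Cauchy. The paper's argument is identical up to notation (it calls the truncations $\alpha_n$).
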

	
	\begin{proof}[Proof of Corollary~\ref{cor:qual_Sob_emb} assuming Theorem~\ref{thm:L^infty_quasi-Sobolev}]
		Let $\varphi$ be as in Lemma~\ref{lem:positive_super-smooth}. Let $v \in \H$ be arbitrary, and denote $w = \varphi \ast v$.
		By Lemma~\ref{lem:positive_super-smooth}, $w$ is super-smooth.
		We wish to show that $w \in \mathcal{C}$, i.e., that there is a sequence $\alpha_n \in \H^{\fin}$ which is Cauchy with respect to the $L^{\infty}$ norm and which converges to $w$ in $\H$. Let $\alpha_n = \1_{\Delta \leq n}w$. Clearly $\alpha_n \to w$ in $\H$. Super-smooth vectors are $K$-invariant, so $w \in \H^K$. Thus $\alpha_n \in \H_{\Delta \leq n}^K$, and hence $\alpha_n \in \H^K \cap \H^{\fin}$. Therefore we can apply Theorem~\ref{thm:L^infty_quasi-Sobolev} to see that for $m \leq n$,
		\begin{align*}
			\|\alpha_n-\alpha_m\|_{L^{\infty}}
			\leq \|\exp(O(\log_+^2\Delta))(\alpha_n-\alpha_m)\|_{\H}
			= \|\1_{\Delta \in (m,n]} \exp(O(\log_+^2\Delta)) w\|_{\H}.
		\end{align*}
		The right hand side goes to zero as $m,n \to \infty$ because $w$ is super-smooth.
		Thus the $\alpha_n$ are indeed Cauchy in $L^{\infty}$, and we conclude that $w \in \mathcal{C}$ as required.
	\end{proof}

	This completes our discussion of Step~\ref{step:L^infty}.
	The three main results of Step~\ref{step:bulk_tail} are
	
	\begin{thm}[Quasi-Sobolev embedding into $L^4$] \label{thm:L^4_quasi-Sobolev}
		Let $\alpha \in \H^K \cap \H^{\fin}$. Then
		\begin{align*}
			\|\alpha\|_{L^4}
			\leq \|\exp(O(\log_+^2 \Delta)) \alpha\|_{\H}.
		\end{align*}
	\end{thm}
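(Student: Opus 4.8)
The plan is to prove Theorem~\ref{thm:L^4_quasi-Sobolev} in tandem with the two tail estimates (Theorems~\ref{thm:exp_decay_form} and \ref{thm:exp_decay_quasimode}), via a self-improving bootstrap on the Casimir scale of exactly the kind advertised in Remark~\ref{rem:using_lambda_r_to_infty}. The first move is a reduction to band-limited vectors: writing $\alpha=\sum_{j\geq0}\alpha_j$ with $\alpha_j=\1_{\Delta\in(2^{j-1},2^j]}\alpha$ (and $\alpha_0=\1_{\Delta\leq1}\alpha$), the $L^4$ triangle inequality (Proposition~\ref{prop:L^4_triangle_ineq}) gives $\|\alpha\|_{L^4}\leq\sum_j\|\alpha_j\|_{L^4}$, and Cauchy--Schwarz in $j$ then reduces the theorem to the claim that there is an absolute constant $C$ with $\|\beta\|_{L^4}\leq\exp(C\log_+^2R)\,\|\beta\|_{\H}$ for all $R\geq1$ and all $\beta\in\H^K$ with Casimir spectral support in $(R/2,R]$. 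Because $\H$ has discrete spectrum, $\H^K_{\Delta\leq R}$ is finite-dimensional, so the best constant $B(R)$ in such an inequality is \emph{a priori finite}; all the content is in bounding it by $\exp(O(\log_+^2R))$, and the extra difficulty of allowing arbitrary discrete spectrum (no Weyl law) is that one must prove this along with the analogous constants for vectors of nonzero weight --- a ``mixed correlator'' estimate in the language of Remark~\ref{rem:why_not_assume_Weyl?} --- so the bootstrap is a simultaneous induction over Casimir scale \emph{and} weight.

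The algebraic engine is the following identity. If $v\in\H^{\fin}$ is a Casimir eigenvector of weight $m$ and eigenvalue $\lambda$, then writing $|w|^2=w\overline{w}$ for weight-$0$ products, a short computation with the commutation relations \eqref{eqn:comm_rlns} and \eqref{eqn:Casimir_def} (together with the product rule and $\overline{Xw}=\overline{X}\,\overline{w}$) yields
\[
\Delta(|v|^2)=2(\lambda+m^2)\,|v|^2-|Ev|^2-|\overline{E}v|^2,
\]
where $Ev,\overline{E}v$ are again Casimir eigenvectors of eigenvalue $\lambda$ and weights $m\pm1$, with $\|Ev\|_{\H}^2=(\lambda+m(m+1))\|v\|_{\H}^2$ and $\|\overline{E}v\|_{\H}^2=(\lambda+m(m-1))\|v\|_{\H}^2$ by \eqref{eqn:EEbar}; there is a parallel bilinear identity for a product of two eigenvectors, needed because $\alpha$ is only band-limited rather than an exact eigenvector. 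Read as a resolvent relation, away from the ``on-shell'' value $\mu=2(\lambda+m^2)$ this expresses $\1_{\Delta=\mu}|v|^2$ through $\1_{\Delta=\mu}|Ev|^2$ and $\1_{\Delta=\mu}|\overline{E}v|^2$ with a factor $1/(\mu-2(\lambda+m^2))$. Iterating $N$ times peels off the high-Casimir ``tail'' $\1_{\Delta>\Lambda}|\alpha|^2$ as a sum of $\lesssim2^N$ terms $\1_{\Delta>\Lambda}|w|^2$ with $w$ obtained from $\alpha$ by $N$ raising/lowering operators --- so $w$ has weight $\leq N$, eigenvalue still $\lesssim R$, and $\|w\|_{\H}\leq(R+O(N^2))^{N/2}\|\alpha\|_{\H}$ --- multiplied by $N$ resolvent factors of size $O(1/R)$ once $\Lambda\asymp R$ exceeds $2(R+O(N^2))$. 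Taking $N\asymp\log_+R$ and $\Lambda\asymp R$, the per-step norm cost is $R^{O(1)}$, so $N$ steps cost $R^{O(\log_+R)}=\exp(O(\log_+^2R))$: this is precisely where the $\exp\log^2$ shape is born. Each $\||w|^2\|_{\H}=\|w\|_{L^4}^2$ is then controlled by the inductive hypothesis at scale $\asymp R$ and weight $\leq N$.

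The complementary ``bulk'' piece $b:=\1_{\Delta\leq\Lambda}|\alpha|^2$ (and, near the resonance, the genuinely on-shell part) is handled by testing against a unit $\gamma\in\H^K$ with $\Delta$-support in $[0,\Lambda]$: the three-term crossing equation \eqref{eqn:3_term_crossing} gives $\langle|\alpha|^2,\gamma\rangle=\langle\overline{\alpha},\overline{\alpha}\gamma\rangle$, and then $L^4$-Cauchy--Schwarz (Proposition~\ref{prop:L^4_C-S}) bounds this by $\|\alpha\|_{\H}\,\|\alpha\|_{L^4}\,\|\gamma\|_{L^4}$; crucially, crossing symmetry makes the relevant inner products nonnegative (e.g.\ $\langle|v|^2,|u|^2\rangle=\|v\overline{u}\|_{\H}^2\geq0$), so no cancellation is lost, and one arrives at a self-improving inequality for the band-limited best constants. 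Since these constants are a priori finite, an AM--GM step absorbs a fixed fraction of $\|\alpha\|_{L^4}^2$, and unwinding the double induction on Casimir scale and weight (with $\asymp\log_+R$ geometric steps, each costing a polynomial-in-$R$ factor) gives $B(R)\leq\exp(O(\log_+^2R))$, whence the theorem.

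The main obstacle is making this bootstrap close uniformly: organizing the iterated, branching resolvent expansion so that the $2^N$-fold branching and the accumulating weight and norm factors are all dominated by the resolvent denominators, while keeping the intermediate vectors $w$ --- which carry nonzero weight and, for band-limited $\alpha$, are only approximate eigenvectors --- inside the scope of the inductive hypothesis, all with no appeal to a Weyl law so the multiplicities of the $\lambda_r$ are unconstrained. This is exactly the extra work that Section~\ref{sec:bulk_tail_general} carries out beyond the polynomial-Weyl-law warm-up in Section~\ref{sec:bulk_tail_poly}, where the crude count $\dim\H^K_{\Delta\leq R}\lesssim R^{O(1)}$ lets one instead decompose $\alpha$ into its $\lesssim R^{O(1)}$ exact Casimir eigencomponents and sum, sidestepping the mixed-correlator induction entirely.
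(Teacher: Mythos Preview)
Your identity $\Delta(|v|^2)=2(\lambda+m^2)|v|^2-|Ev|^2-|\overline{E}v|^2$ is correct and is indeed the germ of the paper's polynomials $p_n$ (it is the computation behind Proposition~\ref{prop:p_n_def}). But the resolvent iteration you propose does not gain anything, and the induction does not close. After $N$ steps the resolvent factors contribute $O(R^{-N})$ while the $\|w\|_{\H}$ factors contribute $O(R^{N/2})$ and the remaining $\|w\|_{L^4}^2$ still carries the unknown constant at the \emph{same} Casimir scale $R$ (now at nonzero weight). Your bulk estimate $\|\1_{\Delta\le\Lambda}|\alpha|^2\|_{\H}\le\|\alpha\|_{\H}\|\alpha\|_{L^4}\|\gamma\|_{L^4}$ then needs the $L^4$ constant at scale $\Lambda$, but your tail argument forces $\Lambda>2R$, so you are bounding $B(R)$ in terms of $B(CR)$ with $C>1$: the wrong direction. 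There is no scale reduction, hence no ``$\asymp\log_+R$ geometric steps,'' and the claimed origin of the $\exp(O(\log_+^2 R))$ shape is illusory. (A minor point: the iteration does not branch into $2^N$ independent leaves, since $\overline{E}Ev=-(\lambda+m(m+1))v$ feeds back; the correct bookkeeping is the second-order linear recurrence for $p_n$.)

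What the paper actually does is manufacture a genuine scale reduction $\Lambda\to c\Lambda$ with an \emph{explicit} $c<1$. The crossing inequality $\|E^{n+1}\varphi\,\overline{E}^n\varphi\|_{\H}^2\ge0$ yields $-\langle p_np_{n+1}(\lambda,\Delta)\varphi^2,\varphi^2\rangle\le0$; the sharper inequality (Proposition~\ref{prop:basic_ineq_3}) replaces $-p_np_{n+1}$ by the larger function $r_n$ of \eqref{eqn:r_n_def}. The key numerical step is Lemma~\ref{lem:R_def}: a specific nonnegative combination $R(\lambda,\mu)=\sum a_n\lambda^{2(N-n)}r_n(\lambda,\mu)$ satisfies $R(\lambda,\mu)\gtrsim\mu^{2N+1}$ already for $\mu\ge c\lambda$ with $c=\tfrac12$. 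This is what lets the tail be pushed all the way down to $\tfrac12\lambda$, so that the bulk argument (which you state correctly) invokes the inductive constant at scale $\tfrac12\Lambda$ and yields $C(\Lambda)\lesssim\Lambda^{O(1)}C(\tfrac12\Lambda)$; iterating \emph{this} gives the $\exp(O(\log_+^2\Lambda))$ bound. In the general (no Weyl law) case, the paper repeats this with approximate eigenvectors $\alpha\in\H^K_{|\Delta-\lambda|\le\lambda^{-p}}$ and the refined polynomials $p_{n,i,j}$, $s_{n,i,j}$; the extra correction terms produce an $O(\Lambda^{-1})C_{p,s}(\Lambda)^4$ on the right, which is absorbed exactly as in your self-improving philosophy --- but only after the scale reduction is already in hand.
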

	
	\begin{thm}[Exponential decay in the tail, part I] \label{thm:exp_decay_form}
		There is a positive constant $c \gtrsim 1$, such that whenever $f \in \H^{\fin}$ is a lowest weight vector,
		\begin{align} \label{eqn:exp_decay_form}
			\|\exp(c\sqrt{\Delta})(|f|^2)\|_{\H} < \infty.
		\end{align}
	\end{thm}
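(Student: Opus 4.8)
The plan is to deduce the exponential decay of the Casimir‑spectral coefficients of $|f|^2$ from a Gevrey‑$1$‑type derivative bound, $\norm{\Delta^n(|f|^2)}_\H \lesssim_f R^n (n!)^2$ for all $n\ge 0$ (with $R$ depending only on $\H$), and to obtain this bound from the three–term recursion satisfied by the Casimir acting on the $K$‑invariant vectors $|E^m f|^2$.

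\emph{Reduction to a derivative bound.} Suppose $\norm{\Delta^n(|f|^2)}_\H \le C_f R^n (n!)^2$ for every $n$. For $\Lambda\ge 1$, writing $v=|f|^2$ and using $\norm{\1_{\Delta>\Lambda}v}_\H \le \Lambda^{-n}\norm{\Delta^n v}_\H \lesssim_f (Rn^2/(e^2\Lambda))^n$ (Stirling) and optimizing over $n\sim\sqrt\Lambda$, one gets $\norm{\1_{\Delta>\Lambda}(|f|^2)}_\H \lesssim_f e^{-c'\sqrt\Lambda}$ for a $c'\gtrsim 1$ depending only on $R$. A dyadic decomposition $\Lambda=2^j$ then gives
$$\norm{\exp(c\sqrt\Delta)(|f|^2)}_\H^2 = \sum_\lambda e^{2c\sqrt\lambda}\norm{\1_{\Delta=\lambda}(|f|^2)}_\H^2 \lesssim_f \sum_j e^{2c\,2^{(j+1)/2}}e^{-2c'2^{j/2}} < \infty$$
for any $c<c'/\sqrt2$. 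This is the dyadic argument referred to in the proof of Lemma~\ref{lem:positive_super-smooth}.

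\emph{The recursion.} Let $f$ have weight $k$, so $\overline E f=0$, $\Delta f=-k(k-1)f$, and the vectors $f_m:=E^mf/\norm{E^mf}_\H$ ($m\ge 0$) are automorphic of weight $k+m$ with the same Casimir eigenvalue. Put $h_m:=|f_m|^2=f_m\overline{f_m}\in\H^K$; each $h_m$ is a product of two elements of $\H^{\fin}$, hence lies in $\H^\infty$ and has finite norm. Using the product rule \eqref{eqn:prod_rule}, the formulas \eqref{eqn:E_psi_i_formula}--\eqref{eqn:Ebar_psi_i_formula} for $E,\overline E$ on the discrete series generated by $f$, and \eqref{eqn:Delta|_H^K}, a direct computation yields
$$\Delta h_m = (\beta_m+\beta_{m-1})h_m - \beta_m h_{m+1} - \beta_{m-1}h_{m-1},\qquad \beta_m:=(m+1)(m+2k),$$
with $\beta_{-1}:=0$; this is a discrete Sturm--Liouville operator with weights $\beta_m\sim m^2$ (the same recursion underlying the block in \eqref{eqn:t-block_power_series}). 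Iterating, $\Delta^n h_0=\sum_{m=0}^n a_m^{(n)}h_m$, where $a_m^{(n+1)}=(\beta_m+\beta_{m-1})a_m^{(n)}-\beta_{m-1}a_{m-1}^{(n)}-\beta_m a_{m+1}^{(n)}$; since the recursion only reaches $m\le n$ and each step weight is $\lesssim n^2$, a crude path count gives $|a_m^{(n)}|\lesssim(3n^2)^n\lesssim R_0^n(n!)^2$ for an absolute $R_0$. Hence $\norm{\Delta^n h_0}_\H\le R_0^n(n!)^2\sum_{m=0}^n\norm{h_m}_\H$, and since $|f|^2=\norm{f}_\H^2 h_0$, the desired derivative bound follows once the partial sums $\sum_{m=0}^n\norm{h_m}_\H$ grow subexponentially in $n$ with rate independent of $f$ — for instance, once $\norm{h_m}_\H=\norm{f_m}_{L^4}^2$ is polynomially bounded in $m$ with exponent and implied constant not depending on $f$ in an essential way.

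\emph{The main obstacle.} This uniform‑in‑weight control of $\norm{h_m}_\H$ is the heart of the matter and is where positivity of the crossing equations and discreteness of the spectrum enter. Crossing gives $\langle h_m,h_{m'}\rangle_\H=\norm{f_m\overline{f_{m'}}}_\H^2=\norm{f_mf_{m'}}_\H^2\ge 0$, so the Gram matrix $M_{m,m'}=\langle h_m,h_{m'}\rangle_\H$ is entry‑wise nonnegative and positive semidefinite; moreover, because $\1_{\Delta=\lambda}h_m=b_m(\lambda)\1_{\Delta=\lambda}h_0$ for the polynomials $b_m$ generated by the recursion above, $M$ commutes with the operator $\Delta$ restricted to $\overline{\Span}\{h_m\}$, which has discrete spectrum with all eigenspaces at most one‑dimensional (spanned by $\1_{\Delta=\lambda}h_0$). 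The plan is to combine these constraints and the recursion with the merely qualitative finiteness $\norm{h_m}_\H<\infty$ — available since $\H$ has discrete spectrum, so band‑limited subspaces of $\H^K$ are finite‑dimensional — in a self‑improving estimate of the type described in Remark~\ref{rem:using_lambda_r_to_infty}, whereby the relevant growth constant, a priori only known to be finite, is shown to satisfy an inequality forcing it to be quantitatively bounded. The polynomial‑Weyl‑law case (Section~\ref{sec:bulk_tail_poly}) is easier because then the number of distinct Casimir eigenvalues below $\Lambda$ is $\lesssim\Lambda^{O(1)}$, which already makes the dyadic sum converge with crude exponents; the general case (Section~\ref{sec:bulk_tail_general}) requires treating the whole family $\{f_m\}$ and, as indicated in the introduction, passing to approximate Casimir eigenvectors.
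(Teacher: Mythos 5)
Your reduction to the Gevrey-type bound $\|\Delta^n(|f|^2)\|_\H \lesssim_f R^n(n!)^2$ with an absolute $R$ is sound, and the recursion $\Delta h_m = (\beta_m+\beta_{m-1})h_m - \beta_m h_{m+1} - \beta_{m-1}h_{m-1}$ is correct: it is the normalized form of the $q_{k,n}$-recursion in Proposition~\ref{prop:q_k,n_def}, and indeed $h_m = (-1)^m B_{k,m}(\Delta)h_0$. But the argument stops at a genuine gap, which you flag yourself: you need $\sum_{m=0}^n\|h_m\|_\H$ to grow with an \emph{absolute} exponential base (equivalently, $\|h_m\|_\H = \|E^mf\|_{L^4}^2/\|E^mf\|_\H^2 \lesssim_f A^m$ with $A$ independent of $f$). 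This uniform-in-$m$ control of the $L^4$ norm of raised lowest-weight vectors is precisely the content that cannot be assumed here, and attempting to extract it from Proposition~\ref{prop:E^n_f_L^infty_bd} or its ilk would be circular, since those results are downstream of Theorem~\ref{thm:exp_decay_form} in the paper's dependency graph. Also, as stated your path-count bound $|a_m^{(n)}|\lesssim (3n^2)^n$ ignores the $k$-dependence of $\beta_m=(m+1)(m+2k)$; it is only valid once $n\gtrsim k$, which in the end is the relevant range, but it would need to be said.

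The paper's proof (Subsection~\ref{subsec:bulk:first_tail_bd}) takes a route that sidesteps the $m>0$ vectors entirely, using a single positivity constraint rather than an infinite family of qualitative finiteness statements. Proposition~\ref{prop:basic_ineq_2} records the crossing positivity $-\langle q_{k,n}q_{k,n+1}(\Delta)(|f|^2),|f|^2\rangle_\H \leq 0$, i.e.\ $\langle|E^nf|^2,|E^{n+1}f|^2\rangle_\H \geq 0$. Because $-q_{k,n}q_{k,n+1}(\mu)$ is large and positive on the tail $\mu\gg k^2+n^2$ (Lemma~\ref{lem:q_k,n_sign}) and is trivially $O(k^2+n^2)^{2n+1}$ on the bulk $\mu<M$ (Lemma~\ref{lem:q_k,n_triv_bd}), this one inequality converts control of the bulk into control of the tail (Proposition~\ref{prop:|f|^2_k,n,M_norm}); the bulk is then bounded simply by $\||f|^2\|_\H = \|f\|_{L^4}^2 < \infty$. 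No control of $\|h_m\|_\H$ for $m>0$ is needed. Optimizing the exponent $n\sim\sqrt M$ and dyadically decomposing then gives \eqref{eqn:exp_decay_form}. So while the positivity you observe ($M_{m,m'}=\langle h_m,h_{m'}\rangle\ge 0$) is the right raw input, the paper needs only the $(m,m')=(n,n+1)$ instance, and uses it as a linear-programming functional rather than feeding it into a derivative-bound scheme; this is both shorter and escapes the uniformity problem your proposal runs into.
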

	
	\begin{thm}[Exponential decay in the tail, part II] \label{thm:exp_decay_quasimode}
		There are positive constants $C \lesssim 1$ and $c \gtrsim 1$, a partition of $[0,\infty)$ into intervals $I_i$ of length $\lesssim 1$, and points $\lambda_i \in I_i$ for each $i$, such that the partition has polynomial growth in the sense that
		\begin{align} \label{eqn:partition_growth}
			\#\{i : \lambda_i \leq X\} \lesssim X^{O(1)}
			\qquad \text{for} \qquad
			X \geq 1,
		\end{align}
		and such that for all $i$ and all $\alpha,\beta \in \H_{\Delta \in I_i}^K$,
		\begin{align} \label{eqn:exp_decay_quasimodes}
			\|\1_{\Delta \geq C\lambda_i} \exp(c\sqrt{\Delta})(\alpha\beta)\|_{\H}
			\leq \|\alpha\|_{\H} \|\beta\|_{\H}.
		\end{align}
	\end{thm}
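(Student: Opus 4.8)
The plan is to prove Theorem~\ref{thm:exp_decay_quasimode} by reducing to a squared bound and then running the ``convergence of the crossed channel'' argument familiar from Section~\ref{sec:applications} (and from the proof that weak solutions are solutions in Section~\ref{sec:classification_implies_main_thm}), adapted to approximate Casimir eigenvectors. First I would fix the partition once and for all: take $I_i = [i,i+1)$ and $\lambda_i = i$ for $i \in \Z_{\geq 0}$, so that $\#\{i : \lambda_i \leq X\} \leq X+1$, giving \eqref{eqn:partition_growth}. Next, using commutativity of multiplication, I would reduce to the diagonal: it suffices to produce constants $c \gtrsim 1$ and $C \lesssim 1$ with
\begin{align*}
	\|\1_{\Delta \geq C\lambda_i}\exp(c\sqrt{\Delta})(\gamma^2)\|_{\H} \leq \|\gamma\|_{\H}^2
	\qquad \text{for every } i \text{ and every } \gamma \in \H_{\Delta \in I_i}^K,
\end{align*}
since for $\alpha,\beta \in \H_{\Delta\in I_i}^K$ and $t > 0$ the vectors $t\alpha \pm t^{-1}\beta$ again lie in $\H_{\Delta\in I_i}^K$, and the polarization identity $\alpha\beta = \tfrac14\big((t\alpha+t^{-1}\beta)^2 - (t\alpha-t^{-1}\beta)^2\big)$ together with the triangle inequality yields $\|\1_{\Delta\geq C\lambda_i}\exp(c\sqrt\Delta)(\alpha\beta)\|_{\H} \leq \tfrac12\big(t^2\|\alpha\|_{\H}^2 + t^{-2}\|\beta\|_{\H}^2\big)$; optimizing over $t$ recovers \eqref{eqn:exp_decay_quasimodes} with the constant $1$ intact.

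For the squared bound I would expand $\gamma^2 \in \H^K$ in an orthonormal Casimir eigenbasis $\{\psi_a\}$ of $\H^K$, say $\Delta\psi_a = \mu_a\psi_a$, and set $c_a = \langle \gamma^2, \psi_a\rangle$, so that the quantity to be bounded is $\sum_{a:\mu_a \geq C\lambda_i} e^{2c\sqrt{\mu_a}}|c_a|^2$. Iterating the product rule \eqref{eqn:prod_rule} on the raised vectors $E^n\gamma$ produces, exactly as in Section~\ref{sec:applications} (compare the Maass case of Theorem~\ref{thm:triple_bds}, its holomorphic analog Proposition~\ref{prop:KMP_crossing}, and the recurrences of Section~\ref{sec:classification_implies_main_thm}), an identity of crossing type
\begin{align*}
	\sum_a F_\gamma(z;\mu_a)\,|c_a|^2 = \sum_a G_\gamma(z;\mu_a)\,|c_a|^2,
	\qquad z \in (0,1),
\end{align*}
in which $F_\gamma(z;\mu)$ is, up to a prefactor, a Gauss hypergeometric ${}_2F_1$ in the ``direct'' channel and $G_\gamma(z;\mu)$ the corresponding ``crossed'' function at argument $\tfrac{z}{z-1}<0$. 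The left side has nonnegative terms (by conjugation symmetry, Proposition~\ref{prop:conj_commutes_w_mult}, together with positivity of ${}_2F_1$ on $(0,1)$), hence converges in $[0,+\infty]$; the right side converges absolutely because $G_\gamma(z;\mu)$ stays bounded in the $\mu$-aspect while $\sum_a|c_a|^2 = \|\gamma^2\|_{\H}^2 = \|\gamma\|_{L^4}^4 < \infty$ (here Theorem~\ref{thm:L^4_quasi-Sobolev} supplies the quantitative bound $\|\gamma\|_{L^4}^2 \lesssim_{\lambda_i} \|\gamma\|_{\H}^2$). Since both sides converge, the crossing identity forces them equal, so the left side is finite; and by the hypergeometric lower bound of Lemma~\ref{lem:hyp_asymptotic} --- the same analyticity phenomenon underlying the exponent $\tfrac\pi2$ in Theorem~\ref{thm:triple_bds} --- one has $F_\gamma(z;\mu) \geq e^{(\pi-\delta(z))\sqrt{\mu}}$ for $z$ close enough to $1$, whence $\sum_a e^{(\pi-\delta)\sqrt{\mu_a}}|c_a|^2 < \infty$. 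This is far stronger than the weight $e^{2c\sqrt{\mu_a}}$ needed; tracking constants (all of which may depend on $\H$) then gives the displayed squared bound for $C$ large enough and $c$ small enough. The argument never invokes a Weyl law: discreteness of the spectrum enters only to make band-limited subspaces of $\H^K$ finite-dimensional, so that the a priori finiteness required to apply the crossing identity holds, exactly in the spirit of Remark~\ref{rem:using_lambda_r_to_infty}.

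The main obstacle is that $\gamma$ is only an approximate Casimir eigenvector --- its spectrum fills the interval $I_i$ rather than a single point --- so $F_\gamma$ and $G_\gamma$ are not single hypergeometrics but finite superpositions of them, indexed by the eigenvalues occurring in $\gamma$ (including ``off-diagonal'' pairs of distinct eigenvalues), and the clean recurrences of Section~\ref{sec:classification_implies_main_thm} hold only approximately. One must show that the exponential lower bound survives these superpositions and that the off-diagonal error terms remain summable, uniformly in $i$; this is precisely the single-correlator-to-mixed-correlator passage flagged in Remark~\ref{rem:why_not_assume_Weyl?}, explained in Subsection~\ref{subsec:outline:bulk_tail_general} and carried out in Section~\ref{sec:bulk_tail_general}, and it is where essentially all the difficulty resides. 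In the special case of a polynomial Weyl law one can instead take each $\lambda_i$ to be an actual Casimir eigenvalue and work with genuine eigenvectors, which is why the corresponding argument in Section~\ref{sec:bulk_tail_poly} is substantially shorter.
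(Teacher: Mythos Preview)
Your choice of partition $I_i=[i,i+1)$ and the polarization reduction to a squared bound are both correct and match the paper. The gap is in the core argument.

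You propose to prove the squared bound via a hypergeometric crossing identity of the type in Proposition~\ref{prop:KMP_crossing}, and to then invoke Lemma~\ref{lem:hyp_asymptotic}. But the paper explicitly flags, in the Remark immediately following Theorems~\ref{thm:L^4_quasi-Sobolev}--\ref{thm:exp_decay_quasimode}, that this route is \emph{not} available for a general multiplicative representation: deriving such an identity requires summing an infinite linear combination of crossing equations (as in \eqref{eqn:u-t_crossing_combo}), and the convergence of that sum needs an a priori polynomial bound on $\sum_{\ell}|C_{jj'}^{\ell}|^{2}$ in the weight of $j,j'$ --- exactly Lemma~\ref{lem:bulk_bd_for_Weyl}, whose proof uses genuine Sobolev embedding on $\Gamma\backslash G$. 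Theorem~\ref{thm:L^4_quasi-Sobolev} is no substitute: it applies only to $K$-invariant vectors, whereas the sum over $n$ involves the raised vectors $E^{n}\gamma$, which have weight $n$. Your claim that ``$G_{\gamma}(z;\mu)$ stays bounded in the $\mu$-aspect'' is therefore unjustified (and is not what actually happens: by Lemma~\ref{lem:hyp_asymptotic} the crossed-channel hypergeometric at any fixed $z\notin[1,\infty)$ still grows like $e^{(\pi-\delta)\sqrt{\mu}}$). Without the identity, the appeal to Lemma~\ref{lem:hyp_asymptotic} never gets off the ground. Finally, even in the genuine-eigenvector case, ``tracking constants'' in this argument would not obviously give $c,C$ uniform in $\lambda_{i}$.

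What the paper actually does is entirely different and avoids hypergeometrics. For an exact eigenvector $\varphi$ one has the polynomial identity $|E^{n}\varphi|^{2}=p_{n}(\lambda,\Delta)(\varphi^{2})$ (Proposition~\ref{prop:p_n_def}); crossing then yields the elementary inequality $-\langle p_{n}p_{n+1}(\lambda,\Delta)(\varphi^{2}),\varphi^{2}\rangle\le 0$ (Proposition~\ref{prop:basic_ineq_1}), and optimizing over the single integer $n$ (rather than summing a generating series in $z$) gives $\|\1_{\Delta\ge M}(\varphi^{2})\|_{\H}\le e^{-c\sqrt{M}}\|\varphi\|_{L^{4}}^{2}$. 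For approximate eigenvectors this is upgraded via the refined identity $\operatorname{Re}E^{n}\alpha\,\overline{E}^{n}\beta=\sum_{i,j}p_{n,i,j}(\lambda,\Delta)(\alpha_{i}\beta_{j})$ of Proposition~\ref{prop:p_n,i,j_def} and the refined inequality Proposition~\ref{prop:refined_ineq_1}; the correction terms are controlled by the uniform-in-$n$ asymptotics for $p_{n,i,j}$ in Proposition~\ref{prop:p_n,i,j_bds} (proved via the matrix-product Lemma~\ref{lem:matrix_product}), and discreteness of the spectrum is used exactly once, in the self-improving step \eqref{eqn:half_C_n,l,M_bd}, to absorb those correction terms. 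This is the content of Proposition~\ref{prop:C_n,lambda,M_bd}, from which the theorem follows by dyadic decomposition.
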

	
	\begin{rem}[Comparison with Theorem~\ref{thm:triple_bds}]
		When $\H = L^2(\Gamma \backslash G)$, the holomorphic case of Theorem~\ref{thm:triple_bds} is much stronger than Theorem~\ref{thm:exp_decay_form}. For example, it implies that \eqref{eqn:exp_decay_form} holds for any $c < \frac{\pi}{2}$.
		Since Theorem~\ref{thm:triple_bds} is proved using the hyperbolic bootstrap equations, it seems at first glance that one should be able to generalize Theorem~\ref{thm:triple_bds} to arbitrary multiplicative representations (without appealing to Theorem~\ref{thm:eq_Gelfand_duality}).
		However, the proof of Theorem~\ref{thm:triple_bds} uses an extra bit of analytic input which is not available for general multiplicative representations; for the holomorphic case of Theorem~\ref{thm:triple_bds}, this input is Lemma~\ref{lem:bulk_bd_for_Weyl}.
	\end{rem}
	
	The statement of Theorem~\ref{thm:exp_decay_quasimode} is more complicated than that of Theorem~\ref{thm:exp_decay_form}, but it has a simple corollary which is more closely analogous to Theorem~\ref{thm:exp_decay_form}:
	
	\begin{cor} \label{cor:exp_decay_quasimode_qual}
		Let $c$ and $I_i$ be as in Theorem~\ref{thm:exp_decay_quasimode}. Then for all $i$ and all $\alpha,\beta \in \H_{\Delta \in I_i}^K$,
		\begin{align*} 
			\|\exp(c\sqrt{\Delta})(\alpha\beta)\|_{\H}
			< \infty.
		\end{align*}
		In particular, this holds whenever $\alpha,\beta \in \H^K$ are Casimir eigenvectors with the same eigenvalue.
	\end{cor}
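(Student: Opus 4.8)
The plan is to split $\exp(c\sqrt{\Delta})(\alpha\beta)$ along the spectral cutoff $\1_{\Delta \geq C\lambda_i}$ supplied by Theorem~\ref{thm:exp_decay_quasimode} and to bound the two halves separately. Fix an index $i$ and vectors $\alpha,\beta \in \H_{\Delta \in I_i}^K$. First I would record a preliminary observation: since $\H$ has discrete spectrum and $I_i$ is bounded, $\H_{\Delta \in I_i}$ is a finite direct sum of irreducibles, each with at most one-dimensional space of $K$-invariants, so $\H_{\Delta \in I_i}^K$ is finite-dimensional and contained in $\H^{\fin}$ (see the discussion in Subsection~\ref{subsec:sl_2:func_calc}). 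In particular $\alpha,\beta \in \H^{\fin}$, hence the product $\alpha\beta$ is a well-defined element of $\H^{\infty} \subseteq \H$ and $\|\alpha\beta\|_{\H} < \infty$. This is the only place where discreteness of the spectrum enters, and it is also the one small point that is easily overlooked, though it is routine.

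For the high-frequency half, Theorem~\ref{thm:exp_decay_quasimode} gives directly
\begin{align*}
\|\1_{\Delta \geq C\lambda_i}\exp(c\sqrt{\Delta})(\alpha\beta)\|_{\H}
\leq \|\alpha\|_{\H}\|\beta\|_{\H} < \infty.
\end{align*}
For the low-frequency half, the function $x \mapsto \exp(c\sqrt{x})\1_{x < C\lambda_i}$ is bounded by $\exp(c\sqrt{C\lambda_i})$, so by the monotonicity of the functional calculus norm \eqref{eqn:norm_f(Delta)v_def} in the absolute value of its symbol,
\begin{align*}
\|\1_{\Delta < C\lambda_i}\exp(c\sqrt{\Delta})(\alpha\beta)\|_{\H}
\leq \exp(c\sqrt{C\lambda_i})\,\|\alpha\beta\|_{\H} < \infty,
\end{align*}
using the finiteness of $\|\alpha\beta\|_{\H}$ established above.

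Combining the two halves via the decomposition of the defining sum in \eqref{eqn:norm_f(Delta)v_def} for $f(x) = \exp(c\sqrt{x})$ into the ranges $\lambda \geq C\lambda_i$ and $\lambda < C\lambda_i$,
\begin{align*}
\|\exp(c\sqrt{\Delta})(\alpha\beta)\|_{\H}^2
= \|\1_{\Delta \geq C\lambda_i}\exp(c\sqrt{\Delta})(\alpha\beta)\|_{\H}^2
+ \|\1_{\Delta < C\lambda_i}\exp(c\sqrt{\Delta})(\alpha\beta)\|_{\H}^2,
\end{align*}
and both terms on the right are finite, which proves the first assertion. For the ``in particular'' statement, if $\alpha,\beta \in \H^K$ are Casimir eigenvectors with common eigenvalue $\lambda$, then $\lambda \geq 0$ since the Casimir is formally positive semidefinite on $\H^K$, so $\lambda$ lies in a unique member $I_i$ of the partition, and $\alpha,\beta \in \H_{\Delta=\lambda}^K \subseteq \H_{\Delta \in I_i}^K$, which reduces to the case just treated. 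I do not anticipate any genuine difficulty here: the corollary is a soft consequence of Theorem~\ref{thm:exp_decay_quasimode}, the only content being the observation that the complementary low-frequency contribution lives on a fixed bounded band of the spectrum, where $\exp(c\sqrt{\Delta})$ acts as a bounded operator and $\alpha\beta$ has finite norm.
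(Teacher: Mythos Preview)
Your proof is correct and follows essentially the same approach as the paper: split at the spectral threshold $C\lambda_i$, bound the high-frequency piece by Theorem~\ref{thm:exp_decay_quasimode}, and bound the low-frequency piece using that $\exp(c\sqrt{\Delta})$ is bounded on a finite band together with $\|\alpha\beta\|_{\H}<\infty$. The paper's version is terser (it writes the triangle-inequality bound in one line and does not spell out why $\alpha,\beta\in\H^{\fin}$ or why $\lambda\geq 0$ in the ``in particular'' clause), but the argument is the same.
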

	
	\begin{proof}[Proof of Corollary~\ref{cor:exp_decay_quasimode_qual} assuming Theorem~\ref{thm:exp_decay_quasimode}]
		Let $C$ and $\lambda_i$ also be as in Theorem~\ref{thm:exp_decay_quasimode}. Splitting $\1 = \1_{\Delta \leq C\lambda_i} + \1_{\Delta > C\lambda_i}$ and using \eqref{eqn:exp_decay_quasimodes},
		\begin{align*}
			\|\exp(c\sqrt{\Delta})(\alpha\beta)\|_{\H}
			\leq \exp(c\sqrt{C\lambda_i}) \|\alpha\beta\|_{\H} + \|\alpha\|_{\H} \|\beta\|_{\H},
		\end{align*}
		and the right hand side is finite.
	\end{proof}
	
	Theorem~\ref{thm:exp_decay_quasimode} clearly matches its description in the statement of Step~\ref{step:bulk_tail} as a bound on the ``tail" $\1_{\Delta \gg \lambda_i}(\alpha\beta)$. Taking into account the similarity between Theorem~\ref{thm:exp_decay_form} and Corollary~\ref{cor:exp_decay_quasimode_qual}, Theorem~\ref{thm:exp_decay_form} matches its description as well. It is perhaps less clear how Theorem~\ref{thm:L^4_quasi-Sobolev} fits into the picture. The connection is as follows. According to the statement of Step~\ref{step:bulk_tail}, given $\lambda \geq 0$ and approximate eigenvectors $\alpha,\beta \in \H^K$ with approximate eigenvalue $\lambda$, Theorem~\ref{thm:L^4_quasi-Sobolev} should give a bound on the ``bulk" $\1_{\Delta \lesssim \lambda}(\alpha\beta)$. By Theorem~\ref{thm:exp_decay_quasimode}, the tail $\1_{\Delta \gg \lambda}(\alpha\beta)$ is negligible, so bounding the bulk is equivalent to bounding $\alpha\beta$ itself. Of course, even without Theorem~\ref{thm:exp_decay_quasimode}, any upper bound for $\|\alpha\beta\|_{\H}$ is trivially also an upper bound for $\|\1_{\Delta \lesssim \lambda}(\alpha\beta)\|_{\H}$. Now, by $L^4$-Cauchy--Schwarz (Proposition~\ref{prop:L^4_C-S}), Theorem~\ref{thm:L^4_quasi-Sobolev}, and the fact that $\alpha,\beta \in \H_{\Delta \leq \lambda+O(1)}^K$ (which follows from $\alpha,\beta$ being approximate $\lambda$-eigenvectors),
	\begin{align*}
		\|\alpha\beta\|_{\H}
		\leq \|\alpha\|_{L^4} \|\beta\|_{L^4}
		\leq \|\exp(O(\log_+^2\Delta)) \alpha\|_{\H} \|\exp(O(\log_+^2\Delta)) \beta\|_{\H}
		\lesssim \exp(O(\log_+^2\lambda)) \|\alpha\|_{\H} \|\beta\|_{\H}.
	\end{align*}
	Thus indeed Theorem~\ref{thm:L^4_quasi-Sobolev} yields a bound on the bulk $\1_{\Delta \lesssim \lambda}(\alpha\beta)$.
	This sort of combination of Theorem~\ref{thm:L^4_quasi-Sobolev} with $L^4$-Cauchy--Schwarz will be used repeatedly.
	
	
	The proofs of Theorems~\ref{thm:L^4_quasi-Sobolev}, \ref{thm:exp_decay_form}, and \ref{thm:exp_decay_quasimode} are outlined in Subsection~\ref{subsec:outline:bulk_tail_poly} in the case where $\H$ obeys a polynomial Weyl law, and in Subsection~\ref{subsec:outline:bulk_tail_general} in the general case.
	
	
	\subsection{Proofs/heuristics for $L^{\infty}$, bulk, and tail bounds when $\H = L^2(\Gamma \backslash G)$}
	\label{subsec:outline:bds_G/Gamma}
	
	To get a feel for the $L^{\infty}$ bounds in Theorems~\ref{thm:high_deriv_L^infty_bd} and \ref{thm:L^infty_quasi-Sobolev} and the bulk and tail bounds in Theorems~\ref{thm:L^4_quasi-Sobolev}, \ref{thm:exp_decay_form}, and \ref{thm:exp_decay_quasimode}, it is helpful to see why they are true when $\H = L^2(\Gamma \backslash G)$. This also serves as a sanity check. Technically, nothing from this subsection is used in the proof of Theorem~\ref{thm:eq_Gelfand_duality}, but it may provide some intuition.
	
	In this subsection, let $\Gamma$ be a cocompact lattice in $G$, and let $\H$ be the multiplicative representation $L^2(\Gamma \backslash G)$. In this setting, we have access to tools from analysis on manifolds which make the above theorems much easier. The two most useful such tools are (crude) Sobolev inequalities and elliptic regularity. We will use elliptic regularity both in the smooth category and in the real analytic category. A precise form of the latter is
	
	\begin{lem}[Elliptic regularity in the real analytic category \cite{Boutet}] \label{lem:ell_reg_real_analytic}
		Let $M$ be a closed real analytic manifold, and embed $M$ as a totally real submanifold of a complex manifold $M_{\C}$. Given a neighborhood $U$ of $M$ in $M_{\C}$, let $\Hol(U)|_M$ denote the space of functions on $M$ obtained as restrictions of holomorphic functions on $U$. Let $\L$ be a second order elliptic linear differential operator on $M$ with real analytic coefficients. Assume $\L$ is self-adjoint and positive semidefinite on $L^2(M)$ (where the $L^2$ inner product is defined with respect to a real analytic density on $M$). Then for each $\varepsilon>0$, the functional calculus defines a bounded linear operator $\exp(-\varepsilon\sqrt{\L})$ on $L^2(M)$. These operators obey the following properties (i) and (ii).
		\begin{enumerate} \itemsep = 0.5em
			\item[(i)] For each $\varepsilon>0$, there exists a neighborhood $U$ of $M$ in $M_{\C}$ such that the image of $\exp(-\varepsilon\sqrt{\L})$ is contained in $\Hol(U)|_M$.
			
			\item[(ii)] For each neighborhood $U$ of $M$ in $M_{\C}$, there exists $\varepsilon > 0$ such that the image of $\exp(-\varepsilon\sqrt{\L})$ contains $\Hol(U)|_M$.
		\end{enumerate}
		In particular, by (i), there exists a neighborhood $U$ of $M$ in $M_{\C}$ (depending only on $M,M_{\C},\L$), such that every eigenfunction of $\L$ is contained in $\Hol(U)|_M$.
	\end{lem}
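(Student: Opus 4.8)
The plan is to reduce everything to the spectral decomposition of $\L$ together with one quantitative input: the analytic hypoellipticity of $\L$, in the uniform form proved by Boutet de Monvel. Since $M$ is compact and $\L$ is second order elliptic, self-adjoint, and positive semidefinite, $\L$ has discrete spectrum $0 \le \mu_0 \le \mu_1 \le \cdots \to \infty$ with an orthonormal eigenbasis $\{\phi_j\}$ of $L^2(M)$, and Weyl's law gives $\#\{j : \mu_j \le \Lambda\} \lesssim \Lambda^{(\dim M)/2}$ for $\Lambda \ge 1$; in particular $\sum_j e^{-c\sqrt{\mu_j}} < \infty$ for every $c > 0$. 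Boundedness of $\exp(-\varepsilon\sqrt{\L})$ on $L^2(M)$ is then immediate from the functional calculus, as $t \mapsto e^{-\varepsilon\sqrt{t}}$ is bounded on $[0,\infty)$. The key geometric input I would cite from \cite{Boutet} is: all complexifications of $M$ agree near $M$ (Bruhat--Whitney), and there is a nested family $\{U_r\}_{0 < r \le r_0}$ of neighborhoods of $M$ in $M_{\C}$, together with constants $C_r$, such that each eigenfunction $\phi_j$ extends to a holomorphic function $\widetilde\phi_j$ on $U_r$ with $\sup_{U_r}|\widetilde\phi_j| \le C_r e^{r\sqrt{\mu_j}}$. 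Everything else is bookkeeping powered by Weyl's law.

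For (i): given $\psi \in L^2(M)$, write $\psi = \sum_j c_j \phi_j$ with $\sum_j |c_j|^2 < \infty$, so that $\exp(-\varepsilon\sqrt{\L})\psi = \sum_j e^{-\varepsilon\sqrt{\mu_j}} c_j \phi_j$. Fix $r \in (0,\min\{\varepsilon, r_0\})$. On $U_r$ the series $\sum_j e^{-\varepsilon\sqrt{\mu_j}} c_j \widetilde\phi_j$ is bounded in sup norm by $C_r \sum_j e^{-(\varepsilon-r)\sqrt{\mu_j}}|c_j| \le C_r \big(\sum_j e^{-2(\varepsilon-r)\sqrt{\mu_j}}\big)^{1/2}\|\psi\|_{L^2}$, which is finite by Weyl's law; by the Weierstrass $M$-test it converges uniformly on $U_r$ to a holomorphic function whose restriction to $M$ is $\exp(-\varepsilon\sqrt{\L})\psi$. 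So $U = U_r$ works. The ``in particular'' clause follows by applying (i) with $\varepsilon = 1$: for the resulting $U$ — which depends only on $M$, $M_{\C}$, $\L$ — and any $\phi$ with $\L\phi = \mu\phi$, one has $\phi = \exp(-\sqrt{\L})(e^{\sqrt{\mu}}\phi)$ with $e^{\sqrt{\mu}}\phi \in L^2(M)$, hence $\phi \in \Hol(U)|_M$.

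For (ii): given a neighborhood $U$, fix $U' \Subset U$. If $F \in \Hol(U)$ then $g := F|_M$ is real analytic, and Cauchy estimates on $U'$ in fixed holomorphic coordinates, combined with the analyticity of the coefficients of $\L$ (contributing only a fixed geometric factor to the coefficients of the order-$2N$ operator $\L^N$), give $\|\L^N g\|_{L^2(M)} \le A_0\, A_1^{2N}(2N)!$ for all $N \ge 0$, where $A_1 > 0$ depends only on $U'$ (it degrades as $U'$ shrinks) and $A_0 < \infty$ depends on $F$. Writing $\langle g,\phi_j\rangle = \mu_j^{-N}\langle \L^N g,\phi_j\rangle$, bounding $|\langle \L^N g,\phi_j\rangle| \le \|\L^N g\|_{L^2}$, and optimizing over $N$ (taking $2N$ of size $\sim \sqrt{\mu_j}/A_1$) yields $|\langle g,\phi_j\rangle| \lesssim A_0\, e^{-c\sqrt{\mu_j}}$ with $c = c(U) > 0$ independent of $F$. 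Hence $\psi := \sum_j e^{\varepsilon\sqrt{\mu_j}}\langle g,\phi_j\rangle\phi_j$ has $\|\psi\|_{L^2}^2 \lesssim A_0^2\sum_j e^{-2(c-\varepsilon)\sqrt{\mu_j}} < \infty$ as soon as $\varepsilon < c$, again by Weyl's law, and by construction $\exp(-\varepsilon\sqrt{\L})\psi = g = F|_M$ in $L^2(M)$. Since $c$ depends only on $U$, taking $\varepsilon = c/2$ proves (ii).

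The main obstacle is the cited quantitative analytic-hypoellipticity estimate: that eigenfunctions extend to a \emph{uniform} complex neighborhood of $M$ with the exponential rate $r$ in $\sup_{U_r}|\widetilde\phi_j| \le C_r e^{r\sqrt{\mu_j}}$ shrinking as the tube shrinks. Its companion in part (ii) — the $(2N)!$-type bound for $\L^N$ applied to restrictions of holomorphic functions — is proved by the same combinatorics and is comparatively routine. Once both are available, parts (i) and (ii), and the eigenfunction consequence, are dominated-convergence arguments matched against Weyl's law.
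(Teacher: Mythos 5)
Your proposal is correct in outline, but the comparison to the paper is a bit unusual here: the paper does not prove this lemma at all. It is stated as a background fact and cited to \cite{Boutet}, with the only further commentary being a suggestion to verify the special case $M=(\R/\Z)^d$ by Fourier analysis, and the remark that the sharpest form comes from Boutet de Monvel's theory of Fourier integral operators with complex phase. Your proposal therefore does not diverge from ``the paper's proof'' so much as it supplies one where none was given, by reducing the lemma to a specific quantitative consequence of Boutet de Monvel's work.

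That reduction is sound, and the two halves are organized the right way. For (i), the input you cite --- a uniform Grauert tube $U_r$ on which the eigenfunctions $\phi_j$ extend holomorphically with $\sup_{U_r}|\widetilde\phi_j|\le C_r e^{r\sqrt{\mu_j}}$ --- together with Cauchy--Schwarz and Weyl's law gives uniform convergence of the complexified eigenfunction series for $\exp(-\varepsilon\sqrt{\L})\psi$, and the ``in particular'' clause follows as you say by writing an eigenfunction $\phi$ as $\exp(-\sqrt{\L})$ applied to $e^{\sqrt\mu}\phi$. For (ii), the reduction via $\langle g,\phi_j\rangle=\mu_j^{-N}\langle\L^N g,\phi_j\rangle$ and optimization of $N\sim\sqrt{\mu_j}$ is exactly right, and the factorial bound $\|\L^N g\|_{L^2(M)}\le A_0 A_1^{2N}(2N)!$ (with $A_1$ depending only on the tube, not on $g$) is indeed what is needed; it comes from iterating Cauchy's estimate for the holomorphic extension of $\L^N\tilde g$ on a chain of $N$ tubes each shrunk by $\sim\delta/N$, since $(2N/\delta)^{2N}\lesssim (e/\delta)^{2N}(2N)!$. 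I would push back slightly on calling this estimate ``comparatively routine'' --- it is not a one-line Cauchy estimate, because $\L^N$ is not a pure derivative and one must track how the analytic coefficients of $\L$ enter the iteration --- but it is a standard Kotake--Narasimhan-type bound, and your argument does not use it in a way that would break if the constant $A_1$ depended on a shrunk tube $U'\Subset U$, which is all you actually get.

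One nuance worth keeping in mind: in your citation from \cite{Boutet}, the extension estimate for eigenfunctions is itself proved via the very FIO-with-complex-phase machinery the paper alludes to, so your argument is not ``more elementary'' in the sense of avoiding that machinery; rather it makes the interface explicit, isolating (a) the quantitative eigenfunction extension as the one hard analytic input and (b) Weyl's law plus Cauchy estimates as the soft glue. That decomposition is genuinely useful and matches how Boutet de Monvel's original paper is organized, since the equivalence between coefficient decay $|c_j|\lesssim e^{-\varepsilon\sqrt{\mu_j}}$ and holomorphic extendability to the Grauert tube is essentially his theorem stated in two directions, which are your (i) and (ii).
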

	
	A neighborhood $U$ as in the last sentence of the lemma is called a \emph{Grauert tube} (see, e.g., \cite{Zelditch}).
	
	It is instructive to verify Lemma~\ref{lem:ell_reg_real_analytic} using Fourier analysis in the case where $M = (\R/\Z)^d$, $M_{\C} = (\C/\Z)^d$, and $\L$ is the Laplacian. The only misleading feature of this example is that the Grauert tube can be taken to be all of $M_{\C}$. In general, this is usually not possible.
	
	The sharpest form of Lemma~\ref{lem:ell_reg_real_analytic} is an application of the theory of Fourier integral operators with complex phase \cite{Boutet}.
	
	We now prove or give heuristic arguments for the five theorems above in the present setting where $\H = L^2(\Gamma \backslash G)$.
	All the arguments are soft in the sense that the key analytic tools, like Lemma~\ref{lem:ell_reg_real_analytic}, use nothing about the structure of $\Gamma \backslash G$ except that it is compact and real analytic.
	
	\begin{proof}[Proof of Theorem~\ref{thm:high_deriv_L^infty_bd} when $\H = L^2(\Gamma \backslash G)$]
		Let $M$ denote the closed real analytic manifold $\Gamma \backslash G$, and embed $M$ as a totally real submanifold of a complex manifold $M_{\C}$ (e.g., $M_{\C} = \Gamma \backslash \PSL_2(\C)$). View $\P$, defined by \eqref{eqn:P_def}, as a differential operator on $M$. Then $\P$ satisfies the hypotheses of Lemma~\ref{lem:ell_reg_real_analytic}, and all automorphic forms are eigenfunctions for $\P$. Thus by Lemma~\ref{lem:ell_reg_real_analytic}, there is a neighborhood $U$ of $M$ in $M_{\C}$ such that all automorphic forms on $M$ extend to holomorphic functions on $U$.
		
		Let $\alpha \in \H^{\fin}$. By Proposition~\ref{prop:Maass_span_H^fin}, $\alpha$ is a finite linear combination of automorphic forms, so $\alpha$ extends holomorphically to $U$.
		There is a neighborhood $\Omega$ of the origin in $\g$ such that the map $\g_{\R} \times M \to M$ by
		\begin{align} \label{eqn:translation_by_exp}
			(X,\Gamma g) \mapsto \Gamma g \exp(X)
		\end{align}
		extends holomorphically in $X$ to a map $\Omega \times M \to U$.
		We denote this extension also by \eqref{eqn:translation_by_exp}.
		Define $h \colon \Omega \to L^{\infty}(M)$ by
		\begin{align*}
			h(X)(\Gamma g) = \alpha(\Gamma g \exp(X)).
		\end{align*}
		Then $h$ is holomorphic on $\Omega$ (as a function valued in a Banach space).
		It follows from Cauchy's estimate for derivatives of holomorphic functions that for $n \in \Z_{\geq 0}$ and $X \in \g$,
		\begin{align*}
			\Big\|\frac{d^n}{dt^n}\Big|_{t=0} h(tX)\Big\|_{L^{\infty}(M)}
			\lesssim_{\alpha} O(n\|X\|_{\g})^n
		\end{align*}
		(the $O$-constant depends only on $\Omega$, which ultimately depends only on $\H$). The $n$th derivative at $t=0$ on the left hand side is just $X^n\alpha$. Thus we have the desired bound.
	\end{proof}
	
	\begin{proof}[Proof of Theorem~\ref{thm:L^infty_quasi-Sobolev} when $\H = L^2(\Gamma \backslash G)$]
		The condition that $\alpha \in \H^K$ means that $\alpha$ is a function on $\Gamma \backslash G / K = \Gamma \backslash \mathbf{H}$.
		Let $s \gg 1$ be a sufficiently large constant.
		Then by Sobolev embedding and elliptic regularity,
		\begin{align*}
			\|\alpha\|_{L^{\infty}}
			= \|\alpha\|_{L^{\infty}(\Gamma \backslash \mathbf{H})}
			\lesssim \|\alpha\|_{H^s(\Gamma \backslash \mathbf{H})}
			\sim_s \|(\Delta+1)^{\frac{s}{2}} \alpha\|_{L^2(\Gamma \backslash \mathbf{H})}
			= \|(\Delta+1)^{\frac{s}{2}} \alpha\|_{\H}.
		\end{align*}
		Thus in particular
		\begin{align*}
			\|\alpha\|_{L^{\infty}}
			\lesssim \|\exp(O(\log_+^2\Delta)) \alpha\|_{\H},
		\end{align*}
		and $\lesssim$ can be replaced by $\leq$ at the cost of increasing the $O$-constant.
	\end{proof}
	
	As referred to earlier, this argument gives the improved estimate $\|\alpha\|_{L^{\infty}} \lesssim \|(\Delta+1)^{O(1)}\alpha\|_{\H}$, with polynomial rather than quasipolynomial dependence on $\Delta$ in the right hand side.
	
	\begin{proof}[Proof of Theorem~\ref{thm:L^4_quasi-Sobolev} when $\H = L^2(\Gamma \backslash G)$]
		The $L^4$ norm is bounded by the $L^{\infty}$ norm, so Theorem~\ref{thm:L^4_quasi-Sobolev} follows from Theorem~\ref{thm:L^infty_quasi-Sobolev}, which we proved above in the case $\H = L^2(\Gamma \backslash G)$.
	\end{proof}
	
	\begin{proof}[Proof of Theorem~\ref{thm:exp_decay_form} when $\H = L^2(\Gamma \backslash G)$]
		Let $M,M_{\C},U$ be as in the proof of Theorem~\ref{thm:high_deriv_L^infty_bd} above, so $U$ is a neighborhood of $M$ in the complexification $M_{\C}$ such that all automorphic forms on $\Gamma \backslash G$ extend holomorphically to $U$. Let $f \in \H^{\fin}$ be a lowest weight vector. Then $f,\overline{f}$ are both automorphic forms, so they both extend holomorphically to $U$. Thus $|f|^2 \in \Hol(U)|_M$. By (ii) in Lemma~\ref{lem:ell_reg_real_analytic}, there exists $c>0$ depending only on $U$, such that $|f|^2$ is in the image of $\exp(-c\sqrt{\P})$ (in the lemma $c$ is called $\varepsilon$). This means that
		\begin{align} \label{eqn:exp(csqrt{P})(|f|^2)_finite}
			\|\exp(c\sqrt{\P})(|f|^2)\|_{\H} < \infty.
		\end{align}
		Since $|f|^2$ has weight $0$ and $\P|_{\H^K} = \Delta|_{\H^K}$, we have $\exp(c\sqrt{\P})(|f|^2) = \exp(c\sqrt{\Delta})(|f|^2)$. Thus \eqref{eqn:exp(csqrt{P})(|f|^2)_finite} gives the desired finiteness. It remains to check that $c \gtrsim 1$. Indeed, $c$ depends only on $U$ and hence only on $\H$.
	\end{proof}
	
	This argument shows more generally that if $F \in C^{\infty}(\Gamma \backslash G/K)$ is any polynomial combination of automorphic forms on $\Gamma \backslash G$, then
	\begin{align} \label{eqn:Sarnak's_bd}
		\|\exp(c\sqrt{\Delta})F\|_{L^2} < \infty.
	\end{align}
	A very similar argument shows that if $F$ is any polynomial combination of Laplace eigenfunctions on any closed real analytic Riemannian manifold, then \eqref{eqn:Sarnak's_bd} holds for some $c>0$, with $\Delta$ interpreted as the Laplacian.
	Sarnak showed that when the manifold is a hyperbolic surface, any $c<\frac{\pi}{2}$ works \cite{Sarnak_94}.
	This crucially uses constant curvature.
	
	\begin{proof}[Heuristic for Theorem~\ref{thm:exp_decay_quasimode} when $\H = L^2(\Gamma \backslash G)$]
		It suffices to prove the stronger statement that there are positive constants $C \lesssim 1$ and $c \gtrsim 1$, such that for all $\lambda \geq 1$ and $\alpha,\beta \in \H_{\Delta\leq\lambda}^K$,
		\begin{align*}
			\|\1_{\Delta \geq C\lambda} \exp(c\sqrt{\Delta})(\alpha\beta)\|_{\H}
			\leq \|\alpha\|_{\H} \|\beta\|_{\H}.
		\end{align*}
		Normalize $\|\alpha\|_{\H} = \|\beta\|_{\H} = 1$, so we want to show that the left hand side is at most $1$. By the same argument as in the proof of Theorem~\ref{thm:exp_decay_form} above, we see that $\|\exp(c\sqrt{\Delta})(\alpha\beta)\|_{\H} < \infty$ for $c$ sufficiently small. By definition, this finiteness means that
		\begin{align*}
			\sum_{\mu \geq 0} \exp(2c\sqrt{\mu}) \|\1_{\Delta=\mu}(\alpha\beta)\|_{\H}^2
			< \infty.
		\end{align*}
		Therefore $\|\1_{\Delta=\mu}(\alpha\beta)\|_{\H}$ decays exponentially in $\sqrt{\mu}$ as $\mu \to \infty$. However, the rate of decay will not be uniform in $\alpha,\beta$.
		One should imagine that there is some threshold $\mu_0$ depending on $\alpha,\beta$, such that $\1_{\Delta=\mu}(\alpha\beta)$ is small for $\mu \gg \mu_0$, but not necessarily for $\mu \lesssim \mu_0$.
		The content of Theorem~\ref{thm:exp_decay_quasimode} is that $\mu_0$ can be taken to be $\lambda$. This can be understood heuristically as follows. The condition that $\alpha,\beta \in \H_{\Delta\leq\lambda}^K$ means that $\alpha,\beta$ are linear combinations of Laplace eigenfunctions on $\Gamma \backslash G/K = \Gamma \backslash \mathbf{H}$ with eigenvalues $\leq \lambda$. In general, a Laplace eigenfunction of eigenvalue $\mu$ on a fixed closed Riemannian manifold is smooth at scales $\ll \frac{1}{\sqrt{\mu}}$ and oscillatory at scales $\gg \frac{1}{\sqrt{\mu}}$. For example, when the manifold is a flat torus, this can be seen explicitly by Fourier analysis. In our setting, it follows that $\alpha,\beta$ are smooth at scales $\ll \frac{1}{\sqrt{\lambda}}$. Therefore the product $\alpha\beta$ is also smooth at scales $\ll \frac{1}{\sqrt{\lambda}}$. Now $\1_{\Delta=\mu}(\alpha\beta)$ is a Laplace eigenfunction of eigenvalue $\mu$, so $\1_{\Delta=\mu}(\alpha\beta)$ should be oscillatory at scales $\gg \frac{1}{\sqrt{\mu}}$. Suppose $\mu \gg \lambda$. Then let $\nu$ be the geometric mean of $\lambda,\mu$, so $\lambda \ll \nu \ll \mu$ and $\frac{1}{\sqrt{\mu}} \ll \frac{1}{\sqrt{\nu}} \ll \frac{1}{\sqrt{\lambda}}$. At the intermediate scale $\frac{1}{\sqrt{\nu}}$, we have that $\alpha\beta$ is smooth and $\1_{\Delta=\mu}(\alpha\beta)$ is oscillatory. Thus $\alpha\beta$ and $\1_{\Delta=\mu}(\alpha\beta)$ are almost orthogonal in $L^2(\Gamma \backslash G) = \H$. Consequently
		\begin{align*}
			\|\1_{\Delta=\mu}(\alpha\beta)\|_{\H}^2
			= \langle \alpha\beta, \1_{\Delta=\mu}(\alpha\beta) \rangle_{\H}
		\end{align*}
		is small. In summary, we have argued that if $\mu \gg \mu_0 := \lambda$, then $\|\1_{\Delta=\mu}(\alpha\beta)\|_{\H}$ is small. As discussed above, this is morally the content of Theorem~\ref{thm:exp_decay_quasimode}.
	\end{proof}
	
	
	It is notable that the bounds in Theorems~\ref{thm:high_deriv_L^infty_bd}, \ref{thm:exp_decay_form}, and \ref{thm:exp_decay_quasimode} for general multiplicative representations are of the same strength as those given by Lemma~\ref{lem:ell_reg_real_analytic} for $L^2(\Gamma \backslash G)$.
	
	\subsection{Main ideas in Section~\ref{sec:embedding_homog}} \label{subsec:outline:embedding_homog}
	
	Section~\ref{sec:embedding_homog} carries out Step~\ref{step:embedding_G/Gamma}. As discussed in the roadmap above (Subsection~\ref{subsec:outline:roadmap}), Step~\ref{step:embedding_G/Gamma} boils down to proving Theorem~\ref{thm:embedding_homog}. So in this subsection, allow $G$ to be an arbitrary connected Lie group. We prove Theorem~\ref{thm:embedding_homog} in the contrapositive, so (in the notation of the theorem) we show that if $X$ is not a compact homogeneous $G$-space, then $(X,\mu)$ does not have the qualitative Sobolev embedding property. Let us illustrate this in an example.
	
	\begin{exmp}[Irrational flow on a 2-torus] \label{exmp:irrational_flow}
		Let $G = \R$. Let $X = (\R/\Z)^2$ with Lebesgue measure $\mu$. Fix a point $\omega \in S^1 \subseteq \R^2$ on the unit circle, and let $G$ act on $X$ by $t \cdot x = x+t\omega$ for $t \in \R = G$. If the line $L_{\omega} \subseteq \R^2$ through the origin and $\omega$ has irrational slope, then this action is uniquely ergodic with invariant measure $\mu$, but no matter what $\omega$ is, $X$ is never homogeneous as a $G$-space. We must therefore construct a counterexample to qualitative Sobolev embedding for $(X,\mu)$. Let $1 \leq p < q \leq \infty$, and let $\varphi \in L^1(G)$ be a nonnegative continuous function which is not identically zero. Then a counterexample is, by definition, a function $f \in L^p(X,\mu)$ with $\varphi \ast f \not\in L^q(X,\mu)$.
		We construct such a function $f$ explicitly below.
		The geometry underlying the construction is depicted in Figure~\ref{fig:torus}.
		
		Let $W \subseteq X$ be the projection to $(\R/\Z)^2$ of the disc of radius $0.2$ around the origin in $\R^2$. Then $W$ is foliated by line segments parallel to $L_{\omega}$. We think of these line segments as ``local $G$-orbits." Let $\omega^{\perp} \in S^1$ be a point orthogonal to $\omega$, and let $\{I_i\}$ be an infinite collection of disjoint nonempty open subintervals of $(-0.1,0.1)$. Then let
		\begin{align*}
			R_i = \{t\omega + u\omega^{\perp} : t \in (-0.1,0.1) \text{ and } u \in I_i\} \mod \Z^2.
		\end{align*}
		These $R_i$ are open rectangles in $W$, and they are disjoint because each local orbit intersects at most one $R_i$. In addition, each $R_i$ has length $0.2$ in the direction $\omega$. Let $f = \sum_i c_i \1_{R_i}$, where the $c_i$ are nonnegative coefficients such that $f \in L^p(X,\mu)$ but $f \not\in L^q(X,\mu)$.
		
		It remains to check that $\varphi \ast f \not\in L^q(X,\mu)$.
		By translation symmetry, we may assume $\varphi(0) > 0$. Then since $\varphi$ is nonnegative and continuous, $\varphi(t) \geq \varepsilon\1_{|t| \leq \varepsilon}$ for some small $\varepsilon > 0$. Since the $R_i$ have uniform length in the direction $\omega$, in particular length $\geq \varepsilon$, we have $\varphi \ast \1_{R_i} \geq \varepsilon^2\1_{R_i}$ pointwise.
		Therefore $\varphi \ast f \geq \varepsilon^2 f$, and hence $\varphi \ast f \not\in L^q(X,\mu)$. This completes Example~\ref{exmp:irrational_flow}.
	\end{exmp}
	
	\begin{figure}
		\centering
		\includegraphics[width=0.75\textwidth]{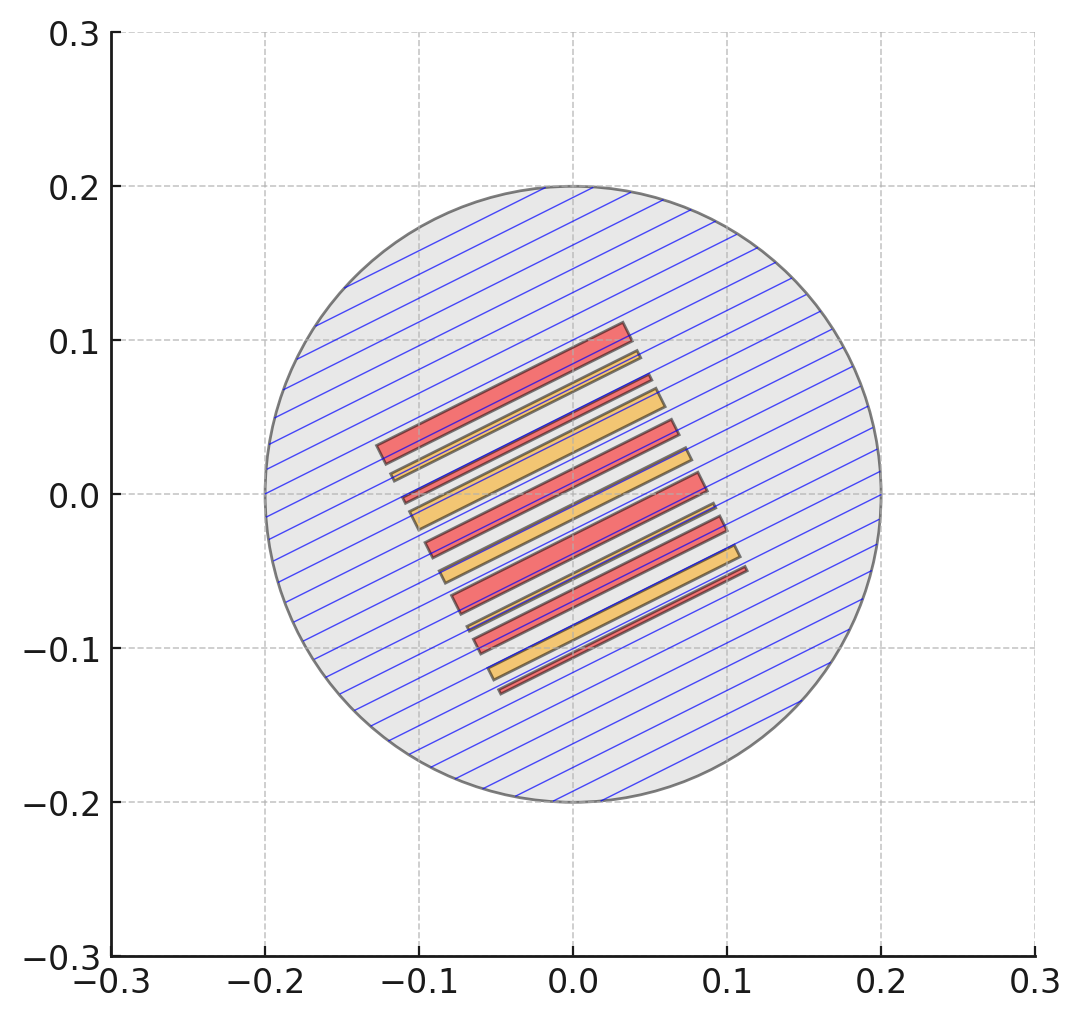}
		\caption{This is an illustration of Example~\ref{exmp:irrational_flow}. The disc is $W$. The blue lines are the local orbits.
			The red and orange rectangles are the $R_i$. In reality, there are of course infinitely many local orbits and rectangles $R_i$.}
		\label{fig:torus}
	\end{figure}
	
	The two key ingredients in Example~\ref{exmp:irrational_flow} are
	\begin{enumerate} \itemsep = 0.5em
		\item[(1)] There is a nonempty open subset $W \subseteq X$ foliated by local orbits; moreover, this foliation is non-pathological in the sense that the space of local orbits is Hausdorff.
		
		\item[(2)] There are infinitely many disjoint open sets $R_i$ in $X$ which have ``uniform length in the direction of the $G$-action."
	\end{enumerate}
	The purpose of (1) is to show that the $R_i$ in (2) are disjoint. Then (2) is used to build $f$.
	
	To prove Theorem~\ref{thm:embedding_homog} in the general case, $W$ is constructed in Lemma~\ref{lem:partial_orbits}, and the $R_i$ can be taken to be the product sets $UV_i$ appearing in Proposition~\ref{prop:top_characterization}. For technical reasons, we take $f$ to be of the form $\sum_i c_i\1_U \ast \1_{V_i}$ rather than $\sum_i c_i \1_{UV_i}$. Besides this, the proof in the general case proceeds in exactly the same way given these two ingredients.
	
	\subsection{Main ideas in Section~\ref{sec:reduce_to_L^infty}}
	\label{subsec:outline:reduce_to_L^infty}
	
	Let $G = \PSL_2(\R)$ again.
	Section~\ref{sec:reduce_to_L^infty} is split into two subsections which perform Steps~\ref{step:C*} and \ref{step:spectrum}, respectively.
	As discussed in the roadmap, the only difficult part of either of these two steps is the construction of the $G$-action on the C*-algebra $\A$. Recall that $\A$ is the closed subalgebra of $\B(\H)$ generated by $\{M_{\alpha} : \alpha \in \H^{\fin}\}$. We want $g \in G$ to act on $\A$ by $A \mapsto gAg^{-1}$, where this product is interpreted as a product of operators on $\H$. This will give a well-defined action as soon as we can show that $gAg^{-1} \in \A$; \textit{a priori} $gAg^{-1}$ is just an arbitrary bounded operator on $\H$.
	Since $G$ is connected, it is generated by any neighborhood of the identity, so it suffices to show that for $g$ sufficiently small, one has $gAg^{-1} \in \A$ for all $A \in \A$. Furthermore, since $\A$ is generated by the $M_{\alpha}$, it suffices to show that $gM_{\alpha}g^{-1} \in \A$ for all $\alpha \in \H^{\fin}$. If $\H = L^2(\Gamma \backslash G)$, then one can check that $gM_{\alpha}g^{-1} = M_{g\alpha}$, where $M_{g\alpha}$ is the operator on $L^2(\Gamma \backslash G)$ given by pointwise multiplication by $g\alpha$.
	Writing $g = \exp(X)$ for some $X \in \g_{\R}$ sufficiently small, and formally Taylor expanding in $X$,
	\begin{align*}
		gM_{\alpha}g^{-1}
		= M_{g\alpha}
		= M_{\exp(X)\alpha}
		= \sum_{n=0}^{\infty} \frac{1}{n!} M_{X^n\alpha}.
	\end{align*}
	For a general multiplicative representation, $M_{g\alpha}$ is not defined because $g\alpha \not\in \H^{\fin}$, but both the left and right hand sides of the above equation are well-defined. We show in Proposition~\ref{prop:conj_M_alpha} that equality of the left and right hand sides holds generally, with absolute convergence in operator norm on the right hand side by Theorem~\ref{thm:high_deriv_L^infty_bd} from Step~\ref{step:L^infty} (we use that $X$ is small to get convergence). This exhibits $gM_{\alpha}g^{-1}$ as a limit of elements of $\A$, so $gM_{\alpha}g^{-1} \in \A$ as desired.
	
	\subsection{Main ideas in Section~\ref{sec:reduce_to_bulk_tail}}
	\label{subsec:outline:reduce_to_bulk_tail}
	
	Section~\ref{sec:reduce_to_bulk_tail} carries out Step~\ref{step:L^infty}, which consists of the $L^{\infty}$ bounds in Theorems~\ref{thm:high_deriv_L^infty_bd} and \ref{thm:L^infty_quasi-Sobolev}. We prove Theorem~\ref{thm:L^infty_quasi-Sobolev} first in Subsection~\ref{subsec:L^infty_quasi-Sobolev}, and then prove Theorem~\ref{thm:high_deriv_L^infty_bd} in Subsection~\ref{subsec:deriv_bds_in_L^infty} using some algebraic identities from Subsection~\ref{subsec:basic_identities}.
	
	Let us begin by sketching the proof of Theorem~\ref{thm:L^infty_quasi-Sobolev}, the quasi-Sobolev embedding theorem. Let $I_i,\lambda_i$ be as in Theorem~\ref{thm:exp_decay_quasimode} from Step~\ref{step:bulk_tail}, so $\{I_i\}$ is a partition of $[0,\infty)$ into intervals of length $\lesssim 1$, and $\lambda_i \in I_i$. Then a short reduction using the triangle inequality shows that to prove Theorem~\ref{thm:L^infty_quasi-Sobolev}, it is enough to prove \eqref{eqn:L^infty_quasi-Sobolev} when $\alpha \in \H_{\Delta \in I_i}^K$ for some $i$. In this case, \eqref{eqn:L^infty_quasi-Sobolev} simplifies to
	\begin{align} \label{eqn:outline:L^infty_quasimode_bd}
		\|\alpha\|_{L^{\infty}}
		\lesssim \exp(O(\log_+^2\lambda_i)) \|\alpha\|_{\H}.
	\end{align}
	This simplified estimate is Proposition~\ref{prop:L^infty_quasi-Sobolev}, and this is the technical core of Subsection~\ref{subsec:L^infty_quasi-Sobolev}. If the $L^{\infty}$ norm were replaced by the $L^4$ norm, then we would already have \eqref{eqn:outline:L^infty_quasimode_bd} by Theorem~\ref{thm:L^4_quasi-Sobolev} from Step~\ref{step:bulk_tail}. With this in mind, the idea of the proof is to obtain $L^{\infty}$ control from $L^4$ control by an iterative process where at each step, $L^p$ control is boosted to $L^{2p}$ control. To make this precise, we need a good notion of $L^p$ norm on $\H^{\fin}$ for $p>4$ a power of $2$. The naive guess, extrapolating from the definition of the $L^4$ norm, would be to define $\|\alpha\|_{L^p} = \|\alpha^{\frac{p}{2}}\|_{\H}^{\frac{2}{p}}$. This unfortunately doesn't make sense, because $\alpha^{\frac{p}{2}}$ is a product of at least three elements of $\H^{\fin}$ when $p>4$, which typically is not defined. Indeed, as discussed above \eqref{eqn:associativity_vs_crossing}, the product of two elements of $\H^{\fin}$ is typically not in $\H^{\fin}$, and so cannot be multiplied by a third element. Nevertheless, let us pretend for now that we do have $L^p$ norms not just on $\H^{\fin}$ but on $\H^{\infty}$, and that these $L^p$ norms satisfy the usual properties. Then $\|\alpha\|_{L^{\infty}} = \lim_{p \to \infty} \|\alpha\|_{L^p}$, and for each $p$,
	\begin{align} \label{eqn:outline_L^2p_bdd_by_L^p}
		\|\alpha\|_{L^{2p}}^2
		= \||\alpha|^2\|_{L^p}
		\leq \|\1_{\Delta \gg \lambda_i}(|\alpha|^2)\|_{L^p} + \sum_{j \colon \lambda_j \lesssim \lambda_i} \|\1_{\Delta \in I_j}(|\alpha|^2)\|_{L^p},
	\end{align}
	where the inequality is by the triangle inequality. Assume by induction on $p$ that we already know \eqref{eqn:outline:L^infty_quasimode_bd} with $L^{\infty}$ replaced by $L^p$ (with the implicit and $O$-constants independent of $p$) for all $i$ and all $\alpha \in \H_{\Delta \in I_i}^K$. Then in \eqref{eqn:outline_L^2p_bdd_by_L^p}, the tail $\1_{\Delta \gg \lambda_i}(|\alpha|^2)$ can be discarded by induction and Theorem~\ref{thm:exp_decay_quasimode}, and the bulk terms $\1_{\Delta \in I_j}(|\alpha|^2)$ for $\lambda_j \lesssim \lambda_i$ can be estimated by induction and Theorem~\ref{thm:L^4_quasi-Sobolev}. This recovers \eqref{eqn:outline:L^infty_quasimode_bd} with $L^{\infty}$ replaced by $L^{2p}$ (with the same implicit and $O$-constants as for $L^p$). Taking $p \to \infty$, and using the independence of the constants from $p$, yields \eqref{eqn:outline:L^infty_quasimode_bd}. In practice, we will be able to run this argument using certain \textit{ad hoc} ``modified $L^p$ norms" on $\H^{\fin}$. 
	The reason we wanted the $L^p$ norms above to be defined on all of $\H^{\infty}$ instead of just $\H^{\fin}$ is so that we could make sense of $\||\alpha|^2\|_{L^p}$ for $\alpha \in \H^{\fin}$. We get around this by interpreting $\||\alpha|^2\|_{L^p}$ as $\liminf_{\Lambda \to \infty} \|\1_{|\Delta| \leq \Lambda}(|\alpha|^2)\|_{L^p}$, using that $\1_{\Delta \leq \Lambda}(|\alpha|^2) \in \H^{\fin}$.
	We will see that the modified $L^p$ norms share enough of the usual properties of $L^p$ norms to make the above argument go through more or less unchanged.
	
	Let us now turn to Theorem~\ref{thm:high_deriv_L^infty_bd}. The $n=0$ case of Theorem~\ref{thm:high_deriv_L^infty_bd} says that $\|\alpha\|_{L^{\infty}} < \infty$ for all $\alpha \in \H^{\fin}$, and the proof gives some information on how $\|\alpha\|_{L^{\infty}}$ depends on $\alpha$. In particular, applying the $n=0$ case of Theorem~\ref{thm:high_deriv_L^infty_bd} to $X^n\alpha$ for $n \in \Z_{\geq 0}$, we get $\|X^n\alpha\|_{L^{\infty}} < \infty$, and the proof gives a bound on $\|X^n\alpha\|_{L^{\infty}}$ in terms of $n$ and $\alpha$. This bound is the general case of Theorem~\ref{thm:high_deriv_L^infty_bd}. Therefore, to understand the general case, it essentially suffices to understand the $n=0$ case. So let $\alpha \in \H^{\fin}$, and let us explain why $\|\alpha\|_{L^{\infty}} < \infty$ (see the proof of Proposition~\ref{prop:bdd_mult_qual} for details). By Proposition~\ref{prop:Maass_span_H^fin} and the triangle inequality, we may assume $\alpha$ is an automorphic vector, and by complex conjugation symmetry, we may assume $\alpha$ has nonnegative weight. Then Proposition~\ref{prop:Maass_is_raised} gives two alternatives for the structure of $\alpha$, and both are treated in the same way. Suppose for concreteness that the first alternative holds, so $\alpha = E^n\varphi$ for some automorphic vector $\varphi \in \H^K$ and some $n \in \Z_{\geq 0}$. Splitting $\varphi$ into real and imaginary parts, we may assume $\varphi \in \H_{\R}$. If $n=0$, then $\alpha = \varphi \in \H^K$ and Theorem~\ref{thm:L^infty_quasi-Sobolev} gives $\|\alpha\|_{L^{\infty}} < \infty$, so the first new case is $n=1$, i.e., $\alpha = E\varphi$.
	Let us explain this case first.
	
	In general, we will show that for any $\beta \in \H^{\fin}$, one has
	\begin{align} \label{eqn:TT*_L^infty}
		\|\beta\|_{L^{\infty}}^2 \leq \||\beta|^2\|_{L^{\infty}}
	\end{align}
	(again, technically $\||\beta|^2\|_{L^{\infty}}$ is not defined because $|\beta|^2 \not\in \H^{\fin}$, but we interpret $\||\beta|^2\|_{L^{\infty}}$ as $\liminf_{\Lambda \to \infty} \|\1_{|\Delta| \leq \Lambda}(|\beta|^2)\|_{L^{\infty}}$).
	Of course, one expects equality in \eqref{eqn:TT*_L^infty}, but $\leq$ suffices.
	
	Now, we want to use \eqref{eqn:TT*_L^infty} to show that $\alpha = E\varphi$ has finite $L^{\infty}$ norm. So we want to show $\||E\varphi|^2\|_{L^{\infty}} < \infty$ (with the left hand side interpreted as above). Since $|E\varphi|^2$ has weight zero, it suffices by quasi-Sobolev embedding (Theorem~\ref{thm:L^infty_quasi-Sobolev}) to check that
	\begin{align} \label{eqn:outline:after_quasi-Sobolev}
		\|\exp(O(\log_+^2\Delta))(|E\varphi|^2)\|_{\H}
		< \infty.
	\end{align}
	If $|E\varphi|^2$ were replaced by $\varphi^2$, then this finiteness would follow from Corollary~\ref{cor:exp_decay_quasimode_qual} from Step~\ref{step:bulk_tail}. More generally, if $|E\varphi|^2$ were replaced by $\Delta^m(\varphi^2)$ for any $m \in \Z_{\geq 0}$, then we would still have finiteness, because $\Delta^m$ could be absorbed into $\exp(O(\log_+^2\Delta))$. The key observation which gives \eqref{eqn:outline:after_quasi-Sobolev} is that $|E\varphi|^2$ is a finite linear combination of the $\Delta^m(\varphi^2)$ for $m \in \Z_{\geq 0}$. Indeed, by the formula \eqref{eqn:Delta|_H^K} for the Casimir restricted to $\H^K$, and by the product rule,
	\begin{align*}
		\Delta(\varphi^2)
		= -\overline{E}E(\varphi^2)
		= -2\overline{E}(\varphi E\varphi)
		= -2E\varphi \overline{E}\varphi - 2\varphi \overline{E}E\varphi
		= -2|E\varphi|^2 + 2\varphi\Delta\varphi
		= -2|E\varphi|^2 + 2\lambda \varphi^2,
	\end{align*}
	where $\lambda$ is the Casimir eigenvalue of $\varphi$ (we used $\varphi \in \H_{\R}$ to write $E\varphi \overline{E}\varphi = |E\varphi|^2$). Rearranging,
	\begin{align} \label{eqn:outline:|Ephi|^2}
		|E\varphi|^2
		= \Big(\lambda - \frac{1}{2}\Delta\Big)(\varphi^2).
	\end{align}
	From this we obtain \eqref{eqn:outline:after_quasi-Sobolev} and hence $\|E\varphi\|_{L^{\infty}} < \infty$ as explained above.
	
	Recall that earlier we had $\alpha = E^n\varphi$ for some $n \in \Z_{\geq 0}$.
	The above handles the case $n=1$.
	To get $\|E^n\varphi\|_{L^{\infty}} < \infty$ for all $n$, the same method works, except that at the end we need to express $|E^n\varphi|^2$ as a linear combination of the $\Delta^m(\varphi^2)$ for $m \in \Z_{\geq 0}$. This is done in Subsection~\ref{subsec:basic_identities}, where we write down explicit polynomials $p_n$ in two variables with real coefficients such that
	\begin{align} \label{eqn:outline:p_n_def}
		|E^n\varphi|^2
		= p_n(\lambda,\Delta)(\varphi^2)
	\end{align}
	(see Proposition~\ref{prop:p_n_def}).
	For example, $p_0(\lambda,\mu) = 1$ because $|E^0\varphi|^2 = |\varphi|^2 = \varphi^2$ (remember $\varphi$ is real), and $p_1(\lambda,\mu) = \lambda - \frac{1}{2}\mu$ corresponding to \eqref{eqn:outline:|Ephi|^2}.
	This concludes our discussion of Theorem~\ref{thm:high_deriv_L^infty_bd}.
	
	\subsection{Main ideas in Section~\ref{sec:bulk_tail_poly}}
	\label{subsec:outline:bulk_tail_poly}
	
	Sections~\ref{sec:bulk_tail_poly} and \ref{sec:bulk_tail_general} do Step~\ref{step:bulk_tail}, which consists of the bulk and tail estimates in Theorems~\ref{thm:L^4_quasi-Sobolev}, \ref{thm:exp_decay_form}, and \ref{thm:exp_decay_quasimode}.
	The proofs of these three theorems are quite technical, especially Theorems~\ref{thm:L^4_quasi-Sobolev} and \ref{thm:exp_decay_quasimode}. Therefore, as a warmup, Section~\ref{sec:bulk_tail_poly} gives proofs which at various points make the simplifying assumption that $\H$ obeys a polynomial Weyl law (Definition~\ref{def:poly_Weyl_law}). We note, however, that the proof of Theorem~\ref{thm:exp_decay_form} in Section~\ref{sec:bulk_tail_poly} is unconditional.
	
	In this subsection, assume $\H$ obeys a polynomial Weyl law. The proof of Theorem~\ref{thm:exp_decay_form} is similar and easier than that of Theorem~\ref{thm:exp_decay_quasimode}, so let us focus on Theorems~\ref{thm:L^4_quasi-Sobolev} and \ref{thm:exp_decay_quasimode}. A major benefit of the polynomial Weyl law is that there exists a partition of $[0,\infty)$ into intervals $I_i$ of length $\lesssim 1$, which has polynomial growth in the sense of \eqref{eqn:partition_growth}, and such that each $I_i$ contains at most one Casimir eigenvalue (not counting multiplicity). Take the $I_i$ in Theorem~\ref{thm:exp_decay_quasimode} to be such a partition. Then Theorem~\ref{thm:exp_decay_quasimode} simplifies to the statement that there exists $c \gtrsim 1$ such that for all $\lambda \geq 0$, all $\alpha,\beta \in \H_{\Delta=\lambda}^K$, and any $M \gg \lambda+1$,
	\begin{align*}
		\|\1_{\Delta \geq M} \exp(c\sqrt{\Delta})(\alpha\beta)\|_{\H}
		\leq \|\alpha\|_{\H} \|\beta\|_{\H}.
	\end{align*}
	By splitting $\alpha,\beta$ into real and imaginary parts, and then applying the polarization identity to express $\alpha\beta$ as a linear combination of squares, it suffices to show that
	\begin{align} \label{eqn:outline:exp_decay_mode_new_H}
		\|\1_{\Delta \geq M} \exp(c\sqrt{\Delta})(\varphi^2)\|_{\H}
		\leq \|\varphi\|_{\H}^2
	\end{align}
	for all $\varphi \in \H_{\Delta=\lambda}^K \cap \H_{\R}$. This is Proposition~\ref{prop:exp_decay_mode_after_dyadic}.
	An immediate corollary of \eqref{eqn:outline:exp_decay_mode_new_H} would be
	\begin{align} \label{eqn:outline:exp_decay_mode_old_H}
		\|\1_{\Delta \geq M}(\varphi^2)\|_{\H}
		\leq \exp(-c\sqrt{M}) \|\varphi\|_{\H}^2.
	\end{align}
	Conversely, a dyadic decomposition argument shows that if one knows \eqref{eqn:outline:exp_decay_mode_old_H} for all $M \gg \lambda+1$, then one obtains \eqref{eqn:outline:exp_decay_mode_new_H} for $M \gg \lambda+1$ (after increasing the implicit constant in $M \gg \lambda+1$ and decreasing $c$). Thus it suffices to show \eqref{eqn:outline:exp_decay_mode_old_H}. This is \eqref{eqn:exp_decay_mode_old_L^2} in Proposition~\ref{prop:exp_decay_mode_old}. It is easier to show the same inequality with $\|\varphi\|_{L^4}$ on the right hand side instead of $\|\varphi\|_{\H}$, i.e.,
	\begin{align} \label{eqn:outline:exp_decay_mode_old_L^4}
		\|\1_{\Delta \geq M}(\varphi^2)\|_{\H}
		\leq \exp(-c\sqrt{M}) \|\varphi\|_{L^4}^2,
	\end{align}
	and in fact we will prove this without the polynomial Weyl law assumption.
	Once we have established Theorem~\ref{thm:L^4_quasi-Sobolev} (quasi-Sobolev embedding into $L^4$), it will be easy to deduce \eqref{eqn:outline:exp_decay_mode_old_H} from \eqref{eqn:outline:exp_decay_mode_old_L^4}, again after increasing the implicit constant in $M \gg \lambda+1$ and decreasing $c$. The inequality \eqref{eqn:outline:exp_decay_mode_old_L^4} is \eqref{eqn:exp_decay_mode_old_L^4} in Proposition~\ref{prop:exp_decay_mode_old}. To illustrate the method of proof for this inequality, let us derive the weaker estimate
	\begin{align} \label{eqn:outline:poly_decay_mode}
		\|\1_{\Delta \geq M}(\varphi^2)\|_{\H}
		\leq M^{-m} \|\varphi\|_{L^4}^2
		\qquad \text{for all} \qquad
		M \gg_{m,\varepsilon} \lambda^{1+\varepsilon} + 1,
	\end{align}
	for any fixed $m \geq 0$ and $\varepsilon>0$. Let $n$ be a large positive integer to be chosen later. By crossing symmetry and \eqref{eqn:outline:p_n_def}, we can write
	\begin{align}
		0 \leq \|E^{n+1}\varphi \overline{E}^n\varphi\|_{\H}^2
		&= \langle E^{n+1}\varphi \overline{E^n\varphi}, E^{n+1}\varphi \overline{E^n\varphi} \rangle_{\H}
		\notag
		\\&= \langle |E^n\varphi|^2, |E^{n+1}\varphi|^2 \rangle_{\H}
		= \langle p_n(\lambda,\Delta)(\varphi^2), p_{n+1}(\lambda,\Delta)(\varphi^2) \rangle_{\H}.
		\label{eqn:outline:derivation_of_basic_ineq_1}
	\end{align}
	Negating both sides, and using that $\Delta$ is self-adjoint and the $p_n$ have real coefficients, we get
	\begin{align} \label{eqn:outline:basic_ineq_1}
		-\langle p_n p_{n+1}(\lambda,\Delta)(\varphi^2), \varphi^2 \rangle_{\H} \leq 0.
	\end{align}
	The inequality \eqref{eqn:outline:basic_ineq_1} is Proposition~\ref{prop:basic_ineq_1}.
	Isolating the tail by moving the bulk to the right hand side in \eqref{eqn:outline:basic_ineq_1},
	\begin{align} \label{eqn:outline:basic_ineq_1_rearranged}
		-\langle \1_{\Delta \gg_n \lambda+1} \, p_np_{n+1}(\lambda,\Delta)(\varphi^2), \varphi^2 \rangle_{\H}
		\leq \langle \1_{\Delta \lesssim_n \lambda+1} \, p_np_{n+1}(\lambda,\Delta)(\varphi^2), \varphi^2 \rangle_{\H}
	\end{align}
	(where the spectral cutoffs on both sides are taken at the same point). It follows from the construction of $p_n$ that $p_n(\lambda,\mu)$ is a polynomial in $\lambda,\mu$ of total degree $n$, and for $n \geq 1$, when viewed as a polynomial in $\mu$ with $\lambda$ fixed, $p_n(\lambda,\mu)$ has degree $n$ and leading coefficient $(-1)^n\frac{1}{2}$. We can see this already for $n=0,1$, because as explained after \eqref{eqn:outline:p_n_def}, we have $p_0(\lambda,\mu) = 1$ and $p_1(\lambda,\mu) = \lambda - \frac{1}{2}\mu$. Consequently, for $n \geq 1$, the polynomial $-p_np_{n+1}(\lambda,\mu)$ has total degree $2n+1$ in $\lambda,\mu$, and has degree $2n+1$ with leading coefficient $\frac{1}{4}$ in $\mu$. Thus the leading term will dominate when $\mu \gg_n \lambda+1$. To be precise, for $\mu \gg_n \lambda+1$, we have $-p_np_{n+1}(\lambda,\mu) \geq 0$ and in fact $-p_np_{n+1}(\lambda,\mu) \sim \mu^{2n+1}$. Consequently,
	\begin{align*}
		M^{2n+1} \1_{\mu \geq M}
		\lesssim -\1_{\mu \gg_n \lambda+1} \, p_np_{n+1}(\lambda,\mu)
		\qquad \text{for all} \qquad
		M \gg_n \lambda+1 \text{ and } \mu \geq 0.
	\end{align*}
	Thus for $M \gg_n \lambda+1$,
	\begin{align*}
		M^{2n+1} \|\1_{\Delta \geq M}(\varphi^2)\|_{\H}^2
		= \langle M^{2n+1} \1_{\Delta \geq M}(\varphi^2), \varphi^2 \rangle_{\H}
		\lesssim -\langle \1_{\Delta \gg_n \lambda+1} \, p_np_{n+1}(\lambda,\Delta)(\varphi^2), \varphi^2 \rangle_{\H}.
	\end{align*}
	Applying \eqref{eqn:outline:basic_ineq_1_rearranged},
	\begin{align*}
		M^{2n+1} \|\1_{\Delta \geq M}(\varphi^2)\|_{\H}^2
		\lesssim \langle \1_{\Delta \lesssim_n \lambda+1} \, p_np_{n+1}(\lambda,\Delta)(\varphi^2), \varphi^2 \rangle_{\H}.
	\end{align*}
	Since $p_n$ has total degree $n$, we have $|p_n(\lambda,\mu)| \lesssim_n (\lambda+1)^n$ for $\mu \lesssim_n \lambda+1$. Therefore
	\begin{align*}
		M^{2n+1} \|\1_{\Delta \geq M}(\varphi^2)\|_{\H}^2
		\lesssim_n (\lambda+1)^{2n+1} \langle \varphi^2, \varphi^2 \rangle_{\H}
		= (\lambda+1)^{2n+1} \|\varphi\|_{L^4}^4.
	\end{align*}
	Dividing both sides by $M^{2n+1}$ and taking square roots,
	\begin{align*}
		\|\1_{\Delta \geq M}(\varphi^2)\|_{\H}
		\lesssim_n \Big(\frac{\lambda+1}{M}\Big)^{n+\frac{1}{2}} \|\varphi\|_{L^4}^2.
	\end{align*}
	Restricting to $M \gg_{m,\varepsilon} \lambda^{1+\varepsilon}+1$ and fixing $n$ sufficiently large depending on $m,\varepsilon$, we obtain
	\begin{align*}
		\|\1_{\Delta \geq M}(\varphi^2)\|_{\H}
		\lesssim_{m,\varepsilon} M^{-m} \|\varphi\|_{L^4}^2.
	\end{align*}
	This is almost the desired bound \eqref{eqn:outline:poly_decay_mode}, except we have $\lesssim_{m,\varepsilon}$ instead of $\leq$. This is fixed by applying the same bound with $m+1$ in place of $m$, and then increasing $M$ to absorb the implicit constant. We have now established \eqref{eqn:outline:poly_decay_mode}. To get \eqref{eqn:outline:exp_decay_mode_old_L^4} we squeeze a little more out of the same argument, analyzing $p_n$ more carefully and then optimizing the choice of $n$ in terms of $\lambda$ and $M$. This finishes our outline of the proof of Theorem~\ref{thm:exp_decay_quasimode}, and by extension the proof of Theorem~\ref{thm:exp_decay_form} which is similar.
	
	Let us now move on to Theorem~\ref{thm:L^4_quasi-Sobolev}, the qualitative Sobolev embedding theorem. Recall that Theorem~\ref{thm:L^4_quasi-Sobolev} is used in the proof of Theorem~\ref{thm:exp_decay_quasimode} to pass from \eqref{eqn:outline:exp_decay_mode_old_L^4} to \eqref{eqn:outline:exp_decay_mode_old_H}, so in Section~\ref{sec:bulk_tail_poly} we will actually prove Theorem~\ref{thm:L^4_quasi-Sobolev} first. A special case of Theorem~\ref{thm:L^4_quasi-Sobolev} is that when $\alpha = \varphi \in \H^K \cap \H^{\fin}$ is an automorphic vector,
	\begin{align} \label{eqn:outline:L^4_efn_bd}
		\|\varphi\|_{L^4}
		\leq \exp(O(\log_+^2\lambda)) \|\varphi\|_{\H},
	\end{align}
	where $\lambda \geq 0$ is the Casimir eigenvalue of $\varphi$. Conversely, if one knows \eqref{eqn:outline:L^4_efn_bd} for all $K$-invariant automorphic vectors $\varphi$, then one can recover Theorem~\ref{thm:L^4_quasi-Sobolev} by writing
	\begin{align} \label{eqn:outline:L^4_spectral_decomp_triangle_ineq}
		\|\alpha\|_{L^4}
		\leq \sum_{\lambda \geq 0} \|\1_{\Delta=\lambda} \alpha\|_{L^4}
	\end{align}
	and then estimating the right hand side by applying \eqref{eqn:outline:L^4_efn_bd} with $\varphi = \1_{\Delta=\lambda}\alpha$. In order for the application of the triangle inequality in \eqref{eqn:outline:L^4_spectral_decomp_triangle_ineq} not to be too lossy, it is crucial that we are assuming $\H$ obeys a polynomial Weyl law, or else the sum on the right hand side of \eqref{eqn:outline:L^4_spectral_decomp_triangle_ineq} would contain too many terms. The reduction of Theorem~\ref{thm:L^4_quasi-Sobolev} to \eqref{eqn:outline:L^4_efn_bd} takes place after the statement of Proposition~\ref{prop:C_s(Lambda)_inductive_bd}.
	It remains to prove \eqref{eqn:outline:L^4_efn_bd}.
	By splitting $\varphi$ into real and imaginary parts, we may assume $\varphi$ is real.
	We then prove \eqref{eqn:outline:L^4_efn_bd} by a sort of induction on $\lambda$. The base case $\lambda \lesssim 1$ is easy (see Lemma~\ref{lem:C(Lambda)_finite}), so let $\lambda \gg 1$. Then make the inductive assumption that \eqref{eqn:outline:L^4_efn_bd} holds for all automorphic vectors $\tilde{\varphi} \in \H_{\R}^K \cap \H^{\fin}$ with Casimir eigenvalue $\tilde{\lambda} \leq \frac{1}{2}\lambda$, and let us sketch the proof of \eqref{eqn:outline:L^4_efn_bd} for $\varphi,\lambda$. One of course has to be careful about implicit constants in inductive arguments, but in this outline we will be deliberately vague. Split
	\begin{align} \label{eqn:outline:L^4_split}
		\|\varphi\|_{L^4}^2
		= \|\varphi^2\|_{\H}
		\leq \|\1_{\Delta \leq \frac{1}{2}\lambda}(\varphi^2)\|_{\H} + \|\1_{\Delta > \frac{1}{2}\lambda}(\varphi^2)\|_{\H}.
	\end{align}
	Although it is not immediately apparent, we will be able to deal with the first term $\|\1_{\Delta \leq \frac{1}{2}\lambda}(\varphi^2)\|_{\H}$ by induction and a little extra massaging. Let us focus on the second term $\|\1_{\Delta > \frac{1}{2}\lambda}(\varphi^2)\|_{\H}$.
	Above, we estimated $\|\1_{\Delta \geq M}(\varphi^2)\|_{\H}$ for $M \gg \lambda$ (our final estimate was conditional on Theorem~\ref{thm:L^4_quasi-Sobolev}, but \eqref{eqn:outline:exp_decay_mode_old_L^4} was not). If we could push $M$ down to $\frac{1}{2}\lambda$, we would win. This is not possible if we ask for the same quality bound as in \eqref{eqn:outline:exp_decay_mode_old_L^4}, but we will be able to get a weaker (but sufficient) bound by a similar method. Above, the key inequality was \eqref{eqn:outline:basic_ineq_1}, which after rearranging allowed us to relate the tail to the bulk via \eqref{eqn:outline:basic_ineq_1_rearranged}. The first step in the derivation of \eqref{eqn:outline:basic_ineq_1} is the first inequality in \eqref{eqn:outline:derivation_of_basic_ineq_1}, which trivially lower bounds $\|E^{n+1}\varphi \overline{E}^n\varphi\|_{\H}^2$ by $0$. In the second half of Subsection~\ref{subsec:basic_ineqs} we find a nontrivial lower bound, which leads to the following modification of \eqref{eqn:outline:basic_ineq_1}:
	\begin{align*}
		\langle r_n(\lambda,\Delta)(\varphi^2), \varphi^2 \rangle_{\H}
		\leq 0,
	\end{align*}
	where $r_n(\lambda,\mu)$ is an explicit real-valued function of two variables defined by \eqref{eqn:r_n_def}. We then define a new function $R(\lambda,\mu)$ which is a positive linear combination of the $r_n$. By positivity,
	\begin{align*}
		\langle R(\lambda,\Delta)(\varphi^2), \varphi^2 \rangle_{\H} \leq 0.
	\end{align*}
	Rearranging as in \eqref{eqn:outline:basic_ineq_1_rearranged},
	\begin{align*}
		\langle \1_{\Delta > \frac{1}{2}\lambda} R(\lambda,\Delta)(\varphi^2), \varphi^2 \rangle_{\H}
		\leq -\langle \1_{\Delta \leq \frac{1}{2}\lambda} R(\lambda,\Delta)(\varphi^2), \varphi^2 \rangle_{\H}.
	\end{align*}
	The key property of $R(\lambda,\mu)$ is that for $\mu > \frac{1}{2}\lambda$, one has $R(\lambda,\mu) \gtrsim 1$ (see Lemma~\ref{lem:R_def}). One also has the trivial bound $|R(\lambda,\mu)| \lesssim \lambda^{O(1)}$ for $\mu \lesssim \lambda$. Therefore,
	\begin{align*}
		\|\1_{\Delta > \frac{1}{2}\lambda}(\varphi^2)\|_{\H}^2
		= \langle \1_{\Delta > \frac{1}{2}\lambda}(\varphi^2), \varphi^2 \rangle_{\H}
		&\lesssim \langle \1_{\Delta > \frac{1}{2}\lambda} R(\lambda,\Delta)(\varphi^2), \varphi^2 \rangle_{\H}
		\\&\leq -\langle \1_{\Delta \leq \frac{1}{2}\lambda} R(\lambda,\Delta)(\varphi^2), \varphi^2 \rangle_{\H}
		\lesssim \lambda^{O(1)} \|\1_{\Delta \leq \frac{1}{2}\lambda}(\varphi^2)\|_{\H}.
	\end{align*}
	This says that the second term in \eqref{eqn:outline:L^4_split} is bounded by $\lambda^{O(1)}$ times the first term. As mentioned after \eqref{eqn:outline:L^4_split}, the first term can be treated by induction with a little work.
	
	At each step of the induction, we lose a factor of $\lambda^{O(1)}$. In general, if $C(\lambda)$ is a function of $\lambda \geq 0$ satisfying $C(\lambda) \lesssim 1$ for $\lambda \lesssim 1$ and $C(\lambda) \lesssim \lambda^{O(1)} C(\frac{1}{2}\lambda)$ for $\lambda \gg 1$, then
	\begin{align*}
		C(\lambda) \leq \exp(O(\log_+^2\lambda)).
	\end{align*}
	This explains the quasipolynomial dependence on $\lambda$ in Theorem~\ref{thm:L^4_quasi-Sobolev}.
	
	\subsection{Main ideas in Section~\ref{sec:bulk_tail_general}}
	\label{subsec:outline:bulk_tail_general}
	
	Section~\ref{sec:bulk_tail_general} proves Theorems~\ref{thm:L^4_quasi-Sobolev} and \ref{thm:exp_decay_quasimode} unconditionally, completing Step~\ref{step:bulk_tail} (recall that the proof of Theorem~\ref{thm:exp_decay_form} in Section~\ref{sec:bulk_tail_poly} is already unconditional). The overall structure of the proofs of Theorems~\ref{thm:L^4_quasi-Sobolev} and \ref{thm:exp_decay_quasimode} is the same as in Section~\ref{sec:bulk_tail_poly}, but there are additional technical difficulties when $\H$ is not required to obey a polynomial Weyl law. We describe these difficulties and hint at how they are resolved below.
	
	We often want bounds on some $v \in \H^K$. If $\H$ obeys a polynomial Weyl law, then a typical strategy is to decompose
	\begin{align*}
		v = \sum_{\lambda \geq 0} \1_{\Delta=\lambda}v,
	\end{align*}
	estimate $\1_{\Delta=\lambda}v$ using that it is a $K$-invariant Casimir eigenvector, and then use the triangle inequality to recover a bound on $v$.
	Without the polynomial Weyl law, this application of the triangle inequality is unacceptably lossy.
	Therefore, in general, we instead decompose
	\begin{align*}
		v = \sum_i \1_{\Delta \in I_i}v
	\end{align*}
	as a sum of $K$-invariant approximate Casimir eigenvectors, where the $I_i$ are as in Theorem~\ref{thm:exp_decay_quasimode}. The condition in Theorem~\ref{thm:exp_decay_quasimode} that the partition $\{I_i\}$ of $[0,\infty)$ has polynomial growth serves as a substitute for the polynomial Weyl law. However, to estimate $\1_{\Delta \in I_i}v$, all of our machinery from Subsection~\ref{subsec:basic_identities} and Section~\ref{sec:bulk_tail_poly} for controlling $K$-invariant Casimir eigenvectors must be extended to approximate eigenvectors. In particular, the identity \eqref{eqn:outline:p_n_def} was crucial, and we need a generalization of \eqref{eqn:outline:p_n_def} for approximate eigenvectors $\alpha \in \H_{\R}^K \cap \H^{\fin}$.
	In fact, we need a formula not just for $|E^n\alpha|^2$, but for the real part of $E^n\alpha \overline{E}^n\beta$ whenever $\alpha,\beta \in \H_{\R}^K \cap \H^{\fin}$ are approximate eigenvectors with the same approximate eigenvalue. Let $\lambda \geq 0$ be a common approximate eigenvalue of $\alpha,\beta$. In Proposition~\ref{prop:p_n,i,j_def}, we will show that
	\begin{align} \label{eqn:outline:correction_terms}
		\re E^n\alpha \overline{E}^n\beta
		= p_n(\lambda,\Delta)(\alpha\beta) + \text{correction terms},
	\end{align}
	where $p_n$ is as in \eqref{eqn:outline:p_n_def}, and where the correction terms are $\R[\lambda,\Delta]$-linear combinations of the products $(\Delta-\lambda)^i\alpha (\Delta-\lambda)^j\beta$ for $i+j > 0$. This identity is true for all $\alpha,\beta \in \H_{\R}^K \cap \H^{\fin}$ and all $\lambda \in \R$, but the correction terms are only small when $\alpha,\beta$ are approximate $\lambda$-eigenvectors. When $\alpha,\beta$ are exact $\lambda$-eigenvectors, the correction terms vanish, and when in addition $\alpha=\beta$, the above identity reduces to \eqref{eqn:outline:p_n_def}.
	
	We have not yet explained where the discrete spectrum condition in Definition~\ref{def:mult_rep} is used; by Remark~\ref{rem:discrete_vs_pure_pt}, it must be used somewhere.
	It is used superficially in several places, but the first time it is used in an unavoidable way is in Section~\ref{sec:bulk_tail_general}. This is perhaps surprising, because discrete spectrum is a qualitative hypothesis, while Section~\ref{sec:bulk_tail_general} is quantitative in nature.
	Recall from Remark~\ref{rem:using_lambda_r_to_infty} that we will translate qualitative information into quantitative information via certain ``self-improving" inequalities.
	The toy example given in Remark~\ref{rem:using_lambda_r_to_infty} is the inequality $C \leq 1+\frac{1}{2}C$ for $C \in [0,\infty]$, which combined with the qualitative hypothesis $C<\infty$ implies the quantitative bound $C \leq 2$.
	In Section~\ref{sec:bulk_tail_general}, we will prove such inequalities with $C$ essentially a comparison constant between norms on the tensor product of two approximate eigenspaces in $\H^K$. For example, suppose given $\lambda \geq 1$ and $\|\cdot\|'$ a norm of interest on $\H_{|\Delta-\lambda| \leq \lambda^{-5}}^K \otimes \H_{|\Delta-\lambda| \leq \lambda^{-5}}^K$. Consider the smallest $C \in [0,\infty]$ such that
	\begin{align*}
		\|\alpha \otimes \beta\|' \leq C\|\alpha \otimes \beta\|_{\H \otimes \H}
		= \|\alpha\|_{\H} \|\beta\|_{\H}
	\end{align*}
	for all $\alpha,\beta \in \H_{|\Delta-\lambda| \leq \lambda^{-5}}^K$.
	In practice, $\|\alpha \otimes \beta\|'$ will depend only on $\alpha\beta$, so an upper bound on $C$ will mean quantitative control on the multiplication map $\H^{\fin} \times \H^{\fin} \to \H^{\infty}$.
	Suppose we wish to prove $C \lesssim 1$.
	Before using discrete spectrum, we will typically be able to prove something like
	\begin{align} \label{eqn:C_bd_before_disc_spectrum}
		C \lesssim 1 + \lambda^{-1}C,
	\end{align}
	where the term $\lambda^{-1} C$ arises when estimating the correction terms from \eqref{eqn:outline:correction_terms} in $\|\cdot\|'$, e.g., writing
	\begin{align} \label{eqn:correction_terms_bd}
		\|\text{correction terms}\|'
		\leq C\|\text{correction terms}\|_{\H \otimes \H}
		\leq \lambda^{-1}C
	\end{align}
	with the second inequality capturing the fact that $\H_{|\Delta-\lambda| \leq \lambda^{-5}}^K$ is an approximate eigenspace and hence the correction terms are small.
	Technically, \eqref{eqn:correction_terms_bd} is ill-defined as written because the correction terms live in $\H$ rather than $\H \otimes \H$, but they are $\R[\lambda,\Delta]$-linear combinations of images of elements of $\H \otimes \H$ under multiplication, and it is these elements which we estimate in $\|\cdot\|'$. 
	The fact that the term $\lambda^{-1}C$ comes from the correction terms in \eqref{eqn:outline:correction_terms} explains why this term does not appear in Section~\ref{sec:bulk_tail_poly}.
	Now, since $\H$ has discrete spectrum, $\H_{|\Delta-\lambda| \leq \lambda^{-5}}^K$ is finite-dimensional, and any two norms on a finite-dimensional space are equivalent, so $C<\infty$. Thus when $\lambda \gg 1$, we can move the term $\lambda^{-1}C$ to the left hand side in \eqref{eqn:C_bd_before_disc_spectrum} to get $C \lesssim 1$, as desired.
	
	\begin{rem} \label{rem:discrete_spectrum_first_use}
		The first place where we make this sort of argument is in the proof of Theorem~\ref{thm:L^4_quasi-Sobolev} in Subsection~\ref{subsec:bulk_general}.
		There \eqref{eqn:C_bd_before_disc_spectrum_true} plays the role of \eqref{eqn:C_bd_before_disc_spectrum} (c.f. the derivation of \eqref{eqn:C_bd_after_disc_spectrum_true} from \eqref{eqn:C_bd_before_disc_spectrum_true}).
		It is crucial that discrete spectrum is used there, because Theorem~\ref{thm:L^4_quasi-Sobolev} fails for the example described in Remark~\ref{rem:discrete_vs_pure_pt} with countable spectrum.
	\end{rem}
	
	In order to use \eqref{eqn:outline:correction_terms} to prove Theorem~\ref{thm:exp_decay_quasimode}, we need to make sure the main term $p_n(\lambda,\Delta)(\alpha\beta)$ in \eqref{eqn:outline:correction_terms} dominates the correction terms uniformly in the relevant ranges of parameters. In particular, we need a lower bound on $|p_n(\lambda,\mu)|$ for $\mu \gg \lambda+n^2$ which is sharp up to a multiplicative constant independent of $n,\lambda,\mu$. For now, let us suppress the dependence on $\lambda,\mu$, and focus on getting a lower bound which is sharp uniformly in $n$. Uniformity in $\lambda,\mu$ will come for free. The polynomials $p_n$ are defined by a second order linear recurrence in $n$ with non-constant coefficients (see Proposition~\ref{prop:p_n_def}). Equivalently,
	\begin{align*}
		\begin{pmatrix}
			p_{n+1} \\
			p_n
		\end{pmatrix}
		= A_n
		\begin{pmatrix}
			p_n \\
			p_{n-1}
		\end{pmatrix}
	\end{align*}
	for some explicit $2\times 2$ matrix $A_n$ depending on $n$ (and also on $\lambda,\mu$). Iterating this,
	\begin{align*}
		\begin{pmatrix}
			p_{n+1} \\
			p_n
		\end{pmatrix}
		= A_n \cdots A_1
		\begin{pmatrix}
			p_1 \\
			p_0
		\end{pmatrix}.
	\end{align*}
	Thus to estimate $p_n$, it suffices to estimate $A_n \cdots A_1$. When $\mu \gg \lambda+n^2$, the matrices $A_m$ for $m=1,\dots,n$ are diagonalizable, so we can write $A_m = Q_m D_m Q_m^{-1}$ with $D_m$ diagonal. 
	If $A_1,\dots,A_n$ were commuting matrices, then they would be simultaneously diagonalizable, so we could take $Q_m = Q$ independent of $m$ and write
	\begin{align*}
		A_n \cdots A_1
		= (QD_nQ^{-1}) \cdots (QD_1Q^{-1})
		= Q D_n \cdots D_1 Q^{-1}.
	\end{align*}
	Products of diagonal matrices are easily understood, so this would allow us to get a sharp estimate for $A_n \cdots A_1$. Unfortunately, $A_1,\dots,A_n$ do not commute. Remarkably, we can still approximate
	\begin{align*}
		A_n \cdots A_1 \approx Q_n D_n \cdots D_1 Q_1^{-1},
	\end{align*}
	and this is good enough. Conditions under which this approximation holds are given in Lemma~\ref{lem:matrix_product}, which is the key technical tool in Subsection~\ref{subsec:p_n,i,j_asymptotics}. The resulting lower bound for $|p_n|$ is Proposition~\ref{prop:p_n_lower_bd_by_Ds}.
	Both Lemma~\ref{lem:matrix_product} and Proposition~\ref{prop:p_n_lower_bd_by_Ds} are steps toward the main result of Subsection~\ref{subsec:p_n,i,j_asymptotics}, which is Proposition~\ref{prop:p_n,i,j_bds}. It is this last proposition which shows that the main term in \eqref{eqn:outline:correction_terms} dominates the correction terms in the relevant regimes.
	
	This concludes our discussion of Section~\ref{sec:bulk_tail_general}, and thus finishes our outline of Theorem~\ref{thm:eq_Gelfand_duality}.
	
	\section{Weak converse theorem $\implies$ Converse theorem} \label{sec:embedding_homog}
	
	In Sections~\ref{sec:reduce_to_L^infty}--\ref{sec:bulk_tail_general}, we will prove Theorem~\ref{thm:weak_structure_thm}, the weak converse theorem for multiplicative representations. In this section, we prove Theorem~\ref{thm:eq_Gelfand_duality} given Theorem~\ref{thm:weak_structure_thm}. As discussed in Subsection~\ref{subsec:outline:roadmap}, the main result needed to deduce Theorem~\ref{thm:eq_Gelfand_duality} from Theorem~\ref{thm:weak_structure_thm} is Theorem~\ref{thm:embedding_G/Gamma}. Theorem~\ref{thm:embedding_G/Gamma} is in turn an immediate consequence of Theorem~\ref{thm:embedding_homog} and Proposition~\ref{prop:discrete_stabilizers}.
	We prove Theorem~\ref{thm:embedding_homog} in Subsections~\ref{subsec:embedding_homog:reduction} and \ref{subsec:embedding_homog:proof}, and Proposition~\ref{prop:discrete_stabilizers} in Subsection~\ref{subsec:embedding_homog:discrete}. Finally, in Subsection~\ref{subsec:embedding_homog:finishing}, we finish the reduction of Theorem~\ref{thm:eq_Gelfand_duality} to Theorem~\ref{thm:weak_structure_thm}.
	
	\subsection{Reduction to a topological criterion for homogeneity} \label{subsec:embedding_homog:reduction}
	
	In the contrapositive, the following proposition gives a criterion for a $G$-space $X$ to be compact and homogeneous.
	
	\begin{prop}[Topological criterion for homogeneity] \label{prop:top_characterization}
		Let $G$ be a connected Lie group and $X$ a connected Hausdorff $G$-space. 
		Suppose $X$ is not of the form $H \backslash G$ for any closed cocompact subgroup $H \subseteq G$. Then there is a nonempty open set $U \subseteq G$, and there are infinitely many nonempty open sets $V_i \subseteq X$, such that the product sets $UV_i$ are disjoint.
	\end{prop}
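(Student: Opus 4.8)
The plan is to prove the contrapositive of Theorem~\ref{thm:embedding_homog}: assuming $(X,\mu)$ has the qualitative Sobolev embedding property, I will show $X \simeq H \backslash G$ for some closed cocompact $H$. The bridge is Proposition~\ref{prop:top_characterization}: if $X$ were \emph{not} of the form $H \backslash G$, then there would be a nonempty open $U \subseteq G$ and infinitely many nonempty open $V_i \subseteq X$ with the products $UV_i$ pairwise disjoint. Given this, I would run the construction sketched in Subsection~\ref{subsec:outline:embedding_homog}: pick nonnegative coefficients $c_i$ so that $f := \sum_i c_i \1_U \ast \1_{V_i}$ lies in $L^p(X,\mu)$ but not $L^q(X,\mu)$ (possible because the $UV_i$, hence the supports of the summands $\1_U \ast \1_{V_i}$, are disjoint, so the $L^p$ norms simply add in $\ell^p$), and then show $\varphi \ast f \not\in L^q$. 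The point is that $\varphi$ is nonnegative and continuous, hence bounded below by $\varepsilon \1_{\{g : \dist(g,g_0) \le \varepsilon\}}$ near some point $g_0$ where $\varphi(g_0) > 0$; after translating by $g_0$ we may assume $g_0 = e$ and $U$ is a small neighborhood of $e$, so $\varphi \ast (\1_U \ast \1_{V_i}) \gtrsim \varepsilon' \, \1_U \ast \1_{V_i}$ pointwise, uniformly in $i$. Summing, $\varphi \ast f \gtrsim \varepsilon' f$, which forces $\varphi \ast f \not\in L^q(X,\mu)$, contradicting qualitative Sobolev embedding. Then Theorem~\ref{thm:embedding_G/Gamma} follows by combining Theorem~\ref{thm:embedding_homog} with Proposition~\ref{prop:discrete_stabilizers}, since $G = \PSL_2(\R)$ is a noncompact simple Lie group and $\mu$ furnishes a $G$-invariant probability measure on $H \backslash G$: either $H$ is discrete (giving a cocompact lattice $\Gamma = H$ and $X \simeq \Gamma \backslash G$ with $\mu$ forced to be Haar measure by uniqueness of invariant probability measure on $\Gamma \backslash G$), or $H = G$ and $X$ is a point.

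The real content, therefore, is Proposition~\ref{prop:top_characterization}, and that is where I expect the main obstacle to lie. Here is the plan for it. Working in the contrapositive, I assume $X$ is not of the form $H \backslash G$. The first key step, matching ingredient (1) from Subsection~\ref{subsec:outline:embedding_homog}, is to produce a nonempty open set $W \subseteq X$ that is \emph{foliated by local $G$-orbits} with Hausdorff leaf space --- i.e., a chart in which the $G$-action looks like translation in some coordinate directions and the complementary ``transverse'' directions form a genuine Hausdorff quotient. I would extract this from a lemma (the ``$\mathrm{Lemma~\ref{lem:partial_orbits}}$'' referred to in the outline): pick any point $x_0 \in X$ whose stabilizer $H_0 = \Stab_G(x_0)$ is a closed subgroup; the orbit map $G/H_0 \to X$ is continuous and injective, and by a local-section/slice argument one obtains a neighborhood $W$ of $x_0$ together with a local product structure $W \cong O \times T$, where $O$ is (an open piece of) a local orbit and $T$ parametrizes transversals, with $T$ Hausdorff because $X$ is Hausdorff. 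If $X$ \emph{were} a single orbit $H_0 \backslash G$ and that orbit were compact, we would be done, so the failure of homogeneity-or-compactness is exactly what we exploit to keep the transverse direction $T$ genuinely nontrivial, i.e., infinite.

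The second key step, ingredient (2), is to manufacture inside $X$ infinitely many disjoint open sets $R_i$ each having ``uniform length in the direction of the $G$-action.'' Concretely: fix a small nonempty open $U \subseteq G$ (a neighborhood of $e$, to be used later as the convolution window); inside the chart $W \cong O \times T$, shrink so that $U \cdot (\text{slice at } t)$ stays inside $W$ and traces out a set of uniform ``$U$-width.'' Then choose infinitely many distinct transverse parameters $t_i \in T$ --- possible because $T$ is infinite, which in turn follows because $X$ is neither a point nor a single compact orbit (if $T$ were finite, a connectedness argument together with $X$ Hausdorff would force $X$ to be a finite union of orbits, hence a single orbit since $X$ is connected and orbits are open in this local model, contradicting the hypothesis) --- and let $V_i$ be a small open transversal neighborhood of the slice over $t_i$, chosen so small that the sets $UV_i$ lie in $W$ and are pairwise disjoint (here we use the Hausdorff leaf space: each local orbit meets at most one $V_i$, so the saturations $UV_i$ cannot overlap). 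This $U$ and these $V_i$ are exactly what Proposition~\ref{prop:top_characterization} asserts.

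The step I expect to be genuinely delicate is constructing the chart $W$ with Hausdorff leaf space and verifying that one can shrink $V_i$ to make the $UV_i$ disjoint while keeping the $V_i$ nonempty and open --- this is where the failure of homogeneity has to be converted into a concrete transverse direction, and where one must be careful that the leaf space is not pathological (the $G$-action on a merely Hausdorff space can have non-locally-closed orbits, so the local product structure requires either a slice theorem or a hands-on argument using properness-type properties inherited from $X$ being Hausdorff with an invariant probability measure). Once $W$ and the $V_i$ are in hand, the rest --- choosing the $c_i$, forming $f$, and running the $\varphi \ast f \gtrsim f$ argument --- is routine and follows the torus example in Subsection~\ref{subsec:outline:embedding_homog} essentially verbatim.
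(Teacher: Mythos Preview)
Your overall architecture matches the paper's, and you have correctly isolated the hard step (constructing the local foliation with Hausdorff leaf space). But your proposal for Proposition~\ref{prop:top_characterization} has a genuine gap at exactly that step.

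You propose to pick \emph{any} $x_0 \in X$ and invoke a ``local-section/slice argument'' to get $W \cong O \times T$. This does not work at an arbitrary point: stabilizer dimension can jump, and there is no slice theorem available here (the action is neither proper nor by a compact group --- think of the irrational flow). The paper's key idea, which your proposal is missing, is to work at a point $x$ where $\dim \Stab_x$ is \emph{minimal}. Upper semicontinuity of $x \mapsto \dim \Stab_x$ (Proposition~\ref{prop:dim_stab_usc}) guarantees such points form a nonempty open set $X^{\min}$, and on $X^{\min}$ the map $x \mapsto \Lie \Stab_x$ is continuous (Proposition~\ref{prop:x_to_LieStab_x_cts}). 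This continuity is what makes the hands-on construction of the local foliation (Lemma~\ref{lem:partial_orbits}) go through: one chooses a small compact symmetric neighborhood $K$ of $e$ and a small $W \ni x$, and defines $y \sim z$ iff $z \in Ky$; transitivity of $\sim$ and Hausdorffness of $W/\!\sim$ both require knowing that nearby stabilizers are close to $\Stab_x$ in the Grassmannian.

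Two further points. First, your claim ``$T$ is Hausdorff because $X$ is Hausdorff'' is false as stated --- quotients of Hausdorff spaces need not be Hausdorff. The paper proves $W/\!\sim$ is Hausdorff by showing $\sim$ is closed (using compactness of $K$) and the quotient map is open, then invoking the standard criterion (Lemma~\ref{lem:hsdf_criterion}). Second, your dichotomy ``$T$ infinite vs.\ $X$ a single compact orbit'' is not quite right. The paper's case split is: either $\overline{x}$ is a limit point in $W/\!\sim$ (then find the $V_i$ transversally, as you describe), or $\overline{x}$ is isolated. In the isolated case the orbit $Gx$ is open and locally homeomorphic to $G/\Stab_x$; if $Gx$ is noncompact one finds the $V_i$ accumulating at infinity in this LCH orbit, and only if $Gx$ is compact does connectedness force $X = Gx \simeq \Stab_x \backslash G$, contradicting the hypothesis. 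Your argument that ``$T$ finite'' forces $X$ to be a finite union of orbits does not follow from a local chart.
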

	
	We prove this proposition in the next subsection. Here, we show how it implies
	Theorem~\ref{thm:embedding_homog}, which is restated below for convenience.
	
	\begin{thm*}[Restatement of Theorem~\ref{thm:embedding_homog}]
		Let $G$ be a connected Lie group, and let $X$ be a connected Hausdorff $G$-space equipped with a $G$-invariant probability measure $\mu$ of full support. Assume $(X,\mu)$ has the qualitative Sobolev embedding property. Then $X \simeq H \backslash G$ as $G$-spaces for some closed cocompact subgroup $H \subseteq G$.
	\end{thm*}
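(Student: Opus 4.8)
The plan is to prove the contrapositive using Proposition~\ref{prop:top_characterization}, following the model of Example~\ref{exmp:irrational_flow}. So suppose $X$ is not of the form $H\backslash G$ for any closed cocompact subgroup $H\subseteq G$; I will produce, for every $1\le p<q\le\infty$ and every nonnegative continuous $\varphi\in L^1(G)$ not identically zero, a function $f\in L^p(X,\mu)$ with $\varphi\ast f\notin L^q(X,\mu)$, where $\varphi\ast f$ is as in \eqref{eqn:phi_star_f_def}. By Proposition~\ref{prop:top_characterization} there is a nonempty open $U\subseteq G$ and infinitely many nonempty open $V_i\subseteq X$ with the product sets $UV_i$ pairwise disjoint; shrinking $U$ preserves this, so we may take $U$ precompact, and fixing $u_0\in U$ we have $u_0V_i\subseteq UV_i$, so the $V_i$ are pairwise disjoint and $0<\mu(V_i)\le\mu(UV_i)$ (using that $\mu$ is $G$-invariant of full support). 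Since the $UV_i$ are disjoint in a probability space, $\sum_i\mu(UV_i)\le 1$, so $\mu(UV_i)\to 0$; passing to a subsequence we may assume $\mu(UV_i)$ tends to $0$ as rapidly as we wish.

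The building blocks are $f_i:=\1_U\ast\1_{V_i}$ (the $G$-convolution of \eqref{eqn:phi_star_f_def}), which is a nonnegative bounded measurable function with $\{f_i>0\}\subseteq UV_i$, so the $f_i$ have pairwise disjoint supports, and with $\int_X f_i\,d\mu=|U|\,\mu(V_i)$ by invariance of $\mu$; in particular $\|f_i\|_{L^p(X)}\le |U|\,\mu(V_i)^{1/p}$. I take $f:=\sum_i c_i f_i$ with nonnegative coefficients $c_i$ chosen below, so that disjointness of supports gives $\|f\|_{L^p(X)}^p=\sum_i c_i^p\|f_i\|_{L^p(X)}^p$, and $f\in L^p(X)$ once $\sum_i c_i^p\mu(V_i)<\infty$. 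The use of $\1_U\ast\1_{V_i}$ rather than $\1_{UV_i}$ (as flagged in the outline of Section~\ref{sec:embedding_homog}) is so that $\varphi$-convolution factors cleanly: $\varphi\ast f_i=(\varphi\ast\1_U)\ast\1_{V_i}$, and $\Phi:=\varphi\ast\1_U$ is a nonnegative continuous function on $G$ with $\int_G\Phi\,dg=\|\varphi\|_{L^1(G)}\,|U|>0$, hence $\Phi\ge\delta$ on some nonempty open $W\subseteq G$. Therefore $\varphi\ast f_i\ge\delta\,(\1_W\ast\1_{V_i})$ pointwise.

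The crux is a lower bound for $\|\varphi\ast f_i\|_{L^q(X)}$ that is uniform in $i$ up to the natural scaling, namely $\|\varphi\ast f_i\|_{L^q(X)}\gtrsim\mu(UV_i)^{1/q}$ with implicit constant depending only on $U$ and $\varphi$. This is the generalization of the inequality $\varphi\ast\1_{R_i}\ge\varepsilon^2\1_{R_i}$ in Example~\ref{exmp:irrational_flow}, and it is where the geometry enters: $f_i=\1_U\ast\1_{V_i}$ is bounded below by a fixed positive constant on a subset $R_i'\subseteq UV_i$ with $\mu(R_i')\gtrsim\mu(UV_i)$ — this is the ``uniform length in the direction of the $G$-action'' of the sets $UV_i$ furnished by Proposition~\ref{prop:top_characterization} — and convolving with $\1_W$ (a fixed open set) retains this thickness up to a left translation, which is harmless because $\mu$ is $G$-invariant. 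Granting this, and using that the $\varphi\ast f_i$ are nonnegative so that $\|\varphi\ast f\|_{L^q(X)}\ge\sup_i c_i\,\|\varphi\ast f_i\|_{L^q(X)}$, it remains to choose the $c_i$. Writing $m_i:=\mu(UV_i)$, we need $\sum_i c_i^p m_i<\infty$ and $\sup_i c_i\,m_i^{1/q}=\infty$; since $p<q$ this is possible — e.g.\ $c_i:=m_i^{-1/p}2^{-i/p}$ gives $\sum_i c_i^p m_i=\sum_i 2^{-i}<\infty$ while $c_i\,m_i^{1/q}=m_i^{1/q-1/p}\,2^{-i/p}\to\infty$ provided the $m_i$ were arranged to decay fast enough. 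Then $\varphi\ast f\notin L^q(X,\mu)$, contradicting the qualitative Sobolev embedding property; hence $X\simeq H\backslash G$ for some closed cocompact $H\subseteq G$. The main obstacle is exactly the uniform-in-$i$ estimate $\|\varphi\ast f_i\|_{L^q(X)}\gtrsim\mu(UV_i)^{1/q}$: it rests entirely on the fact that $UV_i$ is thick in the $G$-orbit directions by an $i$-independent amount, which is the content of Proposition~\ref{prop:top_characterization} and is precisely the formalization of the slogan that, when $X$ is not locally homogeneous, smoothing along $G$-orbits cannot improve integrability.
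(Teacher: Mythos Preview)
Your overall strategy matches the paper's exactly: apply Proposition~\ref{prop:top_characterization}, build $f=\sum_i c_i\,\1_U\ast\1_{V_i}$, and exploit the disjointness of the $UV_i$. But you leave the key step unproven. You assert that $f_i=\1_U\ast\1_{V_i}$ is bounded below by a fixed constant on a set $R_i'\subseteq UV_i$ with $\mu(R_i')\gtrsim\mu(UV_i)$, and that this ``thickness'' survives convolution by $\1_W$. Neither claim follows from Proposition~\ref{prop:top_characterization}, which only gives disjointness of the $UV_i$. Indeed $\int f_i\,d\mu=|U|\,\mu(V_i)$, and nothing prevents $\mu(V_i)\ll\mu(UV_i)$, in which case $f_i$ is small on average over $UV_i$ and the claimed lower bound fails. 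The same objection applies to $\1_W\ast\1_{V_i}$ for your set $W$.

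The paper closes this gap with one extra move: since replacing $\varphi$ by a left translate only composes $\varphi\ast f$ with a $G$-translation (harmless by $G$-invariance of $\mu$), one may assume $\varphi(e)>0$. Continuity and compactness of $\overline{U}$ then give $\varphi\ast\1_U\ge\delta\,\1_U$ for some $\delta>0$, so in your notation one can take $W=U$. This yields the pointwise inequality
\[
\varphi\ast f=\sum_i c_i\,(\varphi\ast\1_U)\ast\1_{V_i}\ \ge\ \delta\sum_i c_i\,\1_U\ast\1_{V_i}=\delta f,
\]
which bypasses the thickness estimate entirely: one only needs to choose $c_i$ so that $f\in L^p\setminus L^q$, and this is possible because the $f_i$ are disjointly supported with $\mu(\supp f_i)\le\mu(UV_i)\to 0$, forcing $\|f_i\|_{L^q}/\|f_i\|_{L^p}\to\infty$. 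Your explicit choice of $c_i$ is then unnecessary, and the bound $\|\varphi\ast f_i\|_{L^q}\gtrsim\mu(UV_i)^{1/q}$ is never needed.
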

	
	\begin{proof}[Proof of Theorem~\ref{thm:embedding_homog} assuming Proposition~\ref{prop:top_characterization}]
		Suppose for a contradiction that $X$ is not of the form $H \backslash G$ for any closed cocompact subgroup $H \subseteq G$. Then let $U,V_i$ be as in Proposition~\ref{prop:top_characterization}. By shrinking $U$, take $U$ to be precompact.	
		Let $p,q,\varphi$ be as in the definition of the qualitative Sobolev embedding property (Definition~\ref{defn:qualitative_Sob_emb}). Translate $\varphi$ so that it is nonzero at the identity. Then $\varphi \ast \1_U$ (the convolution of two functions on $G$) is positive on $\overline{U}$, so by compactness, it is bounded below on $U$ by some $\delta > 0$. It is also trivially nonnegative everywhere, so we have $\varphi \ast \1_U \geq \delta \1_U$.
		
		The convolution $\1_U \ast \1_{V_i}$ is strictly positive in $UV_i$ and vanishes outside $UV_i$.
		The $UV_i$ have positive measure because $\mu$ has full support.
		At the same time, the $UV_i$ are disjoint subsets of a probability space, so their measures go to zero. Because of this, there exist coefficients $c_i \geq 0$ such that $f := \sum c_i \1_U \ast \1_{V_i}$ is in $L^p(X,\mu)$ but not in $L^q(X,\mu)$. By the qualitative Sobolev embedding property, we have $\varphi \ast f \in L^q(X,\mu)$. But
		\begin{align*}
			\varphi \ast f
			= \sum_i c_i \varphi \ast (\1_U \ast \1_{V_i})
			= \sum_i c_i (\varphi \ast \1_U) \ast \1_{V_i}
			\geq \delta \sum_i c_i\1_U \ast \1_{V_i}
			= \delta f
		\end{align*}
		pointwise, and the nonnegative function $f$ is not in $L^q(X,\mu)$ by construction. This is a contradiction.
	\end{proof}
	
	\subsection{Proof of the topological criterion for homogeneity} \label{subsec:embedding_homog:proof}
	
	Let the notation be as in Proposition~\ref{prop:top_characterization}.
	The proofs in this subsection use nets instead of sequences because we are assuming only that $X$ is Hausdorff (and connected). In practice, however, $X$ will be metrizable, so little would be lost by replacing the word ``net" with ``sequence" everywhere. Assuming metrizability would not shorten the proofs.
	
	For each $x \in X$, the stabilizer $\Stab_x$ is a closed subgroup of $G$, hence a Lie subgroup. 
	
	\begin{prop} \label{prop:dim_stab_usc}
		The function $x \mapsto \dim\Stab_x$ from $X$ to $\Z_{\geq 0}$ is upper semicontinuous.
	\end{prop}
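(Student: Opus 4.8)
The plan is to reduce the statement to showing that for every integer $d \geq 1$ the set
\[
S_d = \{x \in X : \dim \Stab_x \geq d\}
\]
is closed (the case $d = 0$ is trivial since $S_0 = X$). Because $x \mapsto \dim \Stab_x$ is $\Z_{\geq 0}$-valued, this is precisely upper semicontinuity. The engine of the proof is the standard identification $\dim \Stab_x = \dim \mathfrak{s}_x$, where
\[
\mathfrak{s}_x = \{X \in \g_{\R} : \exp(tX)x = x \text{ for all } t \in \R\} = \Lie(\Stab_x)
\]
is the Lie algebra of the closed subgroup $\Stab_x$, combined with a compactness argument in the Grassmannian $\Gr(d,\g_{\R})$.

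First I would fix an inner product on $\g_{\R}$ and record two elementary facts: $\Gr(d,\g_{\R})$ is compact, and the orthogonal projection $\pi_W \colon \g_{\R} \to W$ onto a $d$-dimensional subspace $W$ depends continuously on $W \in \Gr(d,\g_{\R})$. Then, to prove $S_d$ is closed, I would take $x_0 \in \overline{S_d}$ and a net $(x_\nu)$ in $S_d$ with $x_\nu \to x_0$ (nets rather than sequences since $X$ is only assumed Hausdorff; in the metrizable case of interest one could use sequences). For each $\nu$ choose a $d$-dimensional subspace $W_\nu \subseteq \mathfrak{s}_{x_\nu}$, which exists since $\dim \mathfrak{s}_{x_\nu} = \dim \Stab_{x_\nu} \geq d$. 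By compactness of $\Gr(d,\g_{\R})$, pass to a subnet along which $W_\nu \to W_0$ for some $W_0 \in \Gr(d,\g_{\R})$; the corresponding subnet of $(x_\nu)$ still converges to $x_0$.

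The crux is then to verify $W_0 \subseteq \mathfrak{s}_{x_0}$, which yields $\dim \Stab_{x_0} = \dim \mathfrak{s}_{x_0} \geq d$, i.e. $x_0 \in S_d$, finishing the argument. Given $X_0 \in W_0$, put $X_\nu = \pi_{W_\nu}(X_0) \in W_\nu$; by continuity of $W \mapsto \pi_W$ we get $X_\nu \to \pi_{W_0}(X_0) = X_0$. For each fixed $t \in \R$, joint continuity of the action map $G \times X \to X$ and continuity of $\exp$ give $\exp(tX_\nu)x_\nu \to \exp(tX_0)x_0$; on the other hand $X_\nu \in W_\nu \subseteq \mathfrak{s}_{x_\nu}$ forces $\exp(tX_\nu)x_\nu = x_\nu \to x_0$. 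Hence $\exp(tX_0)x_0 = x_0$ for all $t \in \R$, so $X_0 \in \mathfrak{s}_{x_0}$, and therefore $W_0 \subseteq \mathfrak{s}_{x_0}$.

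I expect the only delicate bookkeeping to be the handling of subnets — making sure that refining the net $(W_\nu)$ in the Grassmannian does not disturb the convergence $x_\nu \to x_0$, and that convergence $W_\nu \to W_0$ genuinely lets one approximate an arbitrary vector of $W_0$ by vectors of the $W_\nu$ (this is exactly what continuity of $\pi_W$ provides). Everything else is a direct use of continuity of the $G$-action together with the Lie-theoretic fact $\dim \Stab_x = \dim \mathfrak{s}_x$ for the closed subgroup $\Stab_x$.
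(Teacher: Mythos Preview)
Your proof is correct and follows essentially the same approach as the paper: both use compactness of the Grassmannian $\Gr(d,\g_{\R})$ to extract a limiting subspace from (a subnet of) the $\Lie\Stab_{x_\nu}$, then verify that this limit lies in $\Lie\Stab_{x_0}$ by approximating each of its vectors and invoking continuity of the $G$-action. The only cosmetic differences are that the paper phrases upper semicontinuity via the $\limsup$ inequality rather than closedness of $S_d$, and passes to a subnet where $\dim\Stab_{x_n}$ equals the limsup (so that $\Lie\Stab_{x_n}$ itself lies in $\Gr_d$) rather than choosing auxiliary $d$-dimensional subspaces $W_\nu$.
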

	
	\begin{proof}
		Let $x_n \to x$ be a convergent net in $X$. We must show that
		\begin{align} \label{eqn:dim_stab_jumps_up}
			\limsup_n \dim\Stab_{x_n}
			\leq \dim\Stab_x.
		\end{align}
		Denote the $\limsup$ on the left by $d$. After passing to a subnet, we may assume $\dim\Stab_{x_n} = d$ for all $n$. Then the Lie algebra $\Lie\Stab_{x_n}$ is a $d$-dimensional subspace of $\g_{\R}$, or equivalently a point in the Grassmannian $\Gr_d(\g_{\R})$. This Grassmannian is compact, so after passing to a further subnet, we may assume $\Lie\Stab_{x_n}$ converges to some $L \in \Gr_d(\g_{\R})$. To prove \eqref{eqn:dim_stab_jumps_up}, it suffices to show that $\exp(L) \subseteq \Stab_x$. So let $X \in L$. Since $\Lie\Stab_{x_n} \to L$, there exist $X_n \in \Lie\Stab_{x_n}$ with $X_n \to X$ in $\g_{\R}$. Then $\exp(X_n) \in \Stab_{x_n}$, and $\exp(X_n) \to \exp(X)$ in $G$. It follows by continuity that $\exp(X) \in \Stab_x$. Thus indeed $\exp(L) \subseteq \Stab_x$, and \eqref{eqn:dim_stab_jumps_up} holds.
	\end{proof}
	
	Let $d$ be the minimal possible dimension of the stabilizer of a point in $X$, and let
	\begin{align*}
		X^{\min}
		= \{x \in X : \dim\Stab_x = d\}.
	\end{align*}
	Then $X^{\min}$ is nonempty by the definition of $d$, and by Proposition~\ref{prop:dim_stab_usc}, $X^{\min}$ is open in $X$.
	
	\begin{prop} \label{prop:x_to_LieStab_x_cts}
		The function $x \mapsto \Lie\Stab_x$ from $X^{\min}$ to $\Gr_d(\g_{\R})$ is continuous.
	\end{prop}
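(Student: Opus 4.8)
The plan is to prove continuity by a subnet argument, combining compactness of the Grassmannian $\Gr_d(\g_{\R})$ with the dimension rigidity that holds on $X^{\min}$. Let $x_n \to x$ be a convergent net in $X^{\min}$. Since $\Gr_d(\g_{\R})$ is compact, to show $\Lie\Stab_{x_n} \to \Lie\Stab_x$ it suffices to show that every subnet of $(\Lie\Stab_{x_n})$ admits a further subnet converging to $\Lie\Stab_x$; equivalently, that any limit point $L \in \Gr_d(\g_{\R})$ of the net $(\Lie\Stab_{x_n})$ equals $\Lie\Stab_x$. So I would pass to a subnet along which $\Lie\Stab_{x_n} \to L$ and aim to prove $L = \Lie\Stab_x$.

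The key step reuses the computation from the proof of Proposition~\ref{prop:dim_stab_usc} essentially verbatim. Given $X \in L$, use that $\Lie\Stab_{x_n} \to L$ in $\Gr_d(\g_{\R})$ to pick $X_n \in \Lie\Stab_{x_n}$ with $X_n \to X$ in $\g_{\R}$. Then $\exp(tX_n) \in \Stab_{x_n}$ for every $t \in \R$, and since $x_n \to x$ and the $G$-action is continuous, passing to the limit along the net gives $\exp(tX) x = x$ for every $t$, i.e. $X \in \Lie\Stab_x$. Hence $L \subseteq \Lie\Stab_x$. Now the defining property of $X^{\min}$ enters: $\dim\Lie\Stab_x = \dim\Stab_x = d = \dim L$, so the inclusion $L \subseteq \Lie\Stab_x$ is an equality. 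This identifies the unique limit point, so $\Lie\Stab_{x_n} \to \Lie\Stab_x$, proving continuity.

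The only point needing a little care — and the closest thing here to an obstacle — is the approximation step: that convergence $\Lie\Stab_{x_n} \to L$ in $\Gr_d(\g_{\R})$ allows an arbitrary $X \in L$ to be approximated by vectors $X_n \in \Lie\Stab_{x_n}$. This is a standard property of the Grassmannian, provable for instance by fixing an inner product on $\g_{\R}$, noting that the orthogonal projections onto subspaces depend continuously on the subspace, and setting $X_n$ to be the projection of $X$ onto $\Lie\Stab_{x_n}$ (which converges to the projection of $X$ onto $L$, namely $X$ itself). Everything else is a direct transplant of the argument already given for upper semicontinuity, so I expect the proof to be short.
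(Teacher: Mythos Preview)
Your proof is correct and is essentially the same as the paper's: both pass to a subnet using compactness of $\Gr_d(\g_{\R})$, reuse the argument of Proposition~\ref{prop:dim_stab_usc} to obtain $L \subseteq \Lie\Stab_x$, and then invoke $\dim\Stab_x = d$ on $X^{\min}$ to upgrade the inclusion to equality. Your extra remark justifying the approximation $X_n \to X$ via orthogonal projections is a helpful elaboration of a step the paper leaves implicit.
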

	
	\begin{proof}
		It suffices to show that every convergent net $x_n \to x$ in $X^{\min}$ has a subnet along which $\Lie\Stab_{x_n} \to \Lie\Stab_x$. Since $\Gr_d(\g_{\R})$ is compact, we may assume after passing to a subnet that $\Lie\Stab_{x_n}$ converges to some $L \in \Gr_d(\g_{\R})$. By the same argument as in the proof of Proposition~\ref{prop:dim_stab_usc}, we have $\exp(L) \subseteq \Stab_x$, and so $L \subseteq \Lie\Stab_x$. Since $x \in X^{\min}$, we also have $\dim\Stab_x = d$. Therefore $L$ and $\Lie\Stab_x$ have the same dimension, and hence $L = \Lie\Stab_x$. Thus $\Lie\Stab_{x_n} \to \Lie\Stab_x$ as desired.
	\end{proof}
	
	An example where $X^{\min} \neq X$ and where $x \mapsto \Lie\Stab_x$ is nowhere locally constant is $G = \PSL_2(\mathbf{R})$ acting on the one-point compactification of the upper half plane.
	
	\begin{lem}[Local foliation by local orbits] \label{lem:partial_orbits}
		Fix $x \in X^{\min}$. Let $K \subseteq G$ be a symmetric compact neighborhood of the identity which is sufficiently small depending on $x$. Let $W$ be an open neighborhood of $x$ which is sufficiently small depending on $K$. Define a relation $\sim$ on $W$ by $y \sim z$ when $z = ky$ for some $k \in K$. Then $\sim$ is an equivalence relation, the quotient map $W \to W/\sim$ is open, and the quotient space $W/\sim$ is Hausdorff.
	\end{lem}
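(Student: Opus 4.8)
The plan is to verify the four assertions in turn: reflexivity and symmetry of $\sim$ are trivial, transitivity is where the real content lies, and once $\sim$ is known to be an equivalence relation, openness of the quotient map and Hausdorffness of $W/\sim$ follow from soft point-set topology.

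Reflexivity holds because $1 \in K$, and symmetry because $K = K^{-1}$. For transitivity the crucial input is a \emph{small subgroup principle}: if $K$ is chosen small enough that $K^2$ lies in a neighborhood $N$ of $1$ on which the (Lie) subgroup $\Stab_x$ reduces to a single plaque, i.e.\ $\Stab_x \cap N = \exp(\Lie\Stab_x) \cap N$ (possible since $\Stab_x$ is a Lie subgroup), and if $W$ is then shrunk sufficiently relative to $K$, then for all $y, w \in W$ and all $\kappa \in K^2$ with $\kappa y = w$ one has $\kappa \in K \cdot \Stab_y$. Granting this, transitivity is immediate: if $z = k_1 y$ and $w = k_2 z$ with $y, z, w \in W$ and $k_1, k_2 \in K$, then $w = (k_2 k_1) y$ with $k_2 k_1 \in K^2$, so $k_2 k_1 = k\ell$ with $k \in K$ and $\ell \in \Stab_y$, and therefore $w = k y$, i.e.\ $y \sim w$.

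To prove the small subgroup principle I would argue by contradiction and compactness. If it fails, then running over a neighborhood basis $\{W\}$ of $x$ one obtains nets $y_W, w_W \to x$ in $X$ and $\kappa_W \in K^2$ with $\kappa_W y_W = w_W$ but $\kappa_W \notin K \Stab_{y_W}$. Passing to a subnet, $\kappa_W \to \kappa$ in the compact set $K^2$; continuity of the action and Hausdorffness of $X$ force $\kappa x = x$, so $\kappa \in \Stab_x$, and since $\kappa \in K^2 \subseteq N$ this gives $\kappa = \exp(\zeta)$ with $\zeta \in \Lie\Stab_x$ small. Because $X^{\min}$ is open and $x \in X^{\min}$, the $y_W$ eventually lie in $X^{\min}$, so Proposition~\ref{prop:x_to_LieStab_x_cts} gives $\Lie\Stab_{y_W} \to \Lie\Stab_x$ in the Grassmannian; hence one can pick $\zeta_W \in \Lie\Stab_{y_W}$ with $\zeta_W \to \zeta$, so $\exp(\zeta_W) \in \Stab_{y_W}$ and $\kappa_W \exp(-\zeta_W) \to \kappa\exp(-\zeta) = 1$. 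Thus $\kappa_W \exp(-\zeta_W) \in K$ for $W$ small, i.e.\ $\kappa_W \in K\Stab_{y_W}$ eventually, contradicting the choice of $\kappa_W$. (Making ``$\kappa_W$ is close to $\exp(\zeta_W)$'' quantitatively precise is cleanest in exponential coordinates adapted to a linear complement $\mathfrak{m}$ of $\Lie\Stab_x$ in $\g_{\R}$, but this is routine.)

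With transitivity established, the quotient map $q \colon W \to W/\sim$ exists. It is open: the $\sim$-saturation of an open set $\Omega \subseteq W$ is $(K\Omega) \cap W$, which is open since $K\Omega = \bigcup_{k \in K} k\Omega$ is a union of open sets. Finally $W/\sim$ is Hausdorff: since $q$ is an open continuous surjection, $W/\sim$ is Hausdorff if and only if the graph $R = \{(y,z) \in W \times W : z \in Ky\}$ is closed in $W \times W$ (a standard fact for open quotient maps); and if $(y_n, z_n) \to (y,z)$ is a convergent net in $W \times W$ with $z_n = k_n y_n$, $k_n \in K$, then a subnet has $k_n \to k \in K$ by compactness, whence $z = ky \in Ky$ by continuity of the action, so $(y,z) \in R$. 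The one genuine obstacle is the small subgroup principle underlying transitivity, where one must exploit the openness of $X^{\min}$ together with the continuity of $y \mapsto \Lie\Stab_y$ supplied by Proposition~\ref{prop:x_to_LieStab_x_cts}.
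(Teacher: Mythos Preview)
Your proof is correct and follows essentially the same approach as the paper. Both arguments reduce transitivity to showing that any $\kappa \in K^2$ sending one point of $W$ to another can be factored as an element of $K$ times an element of the stabilizer of the first point, and both establish this by combining (a) a compactness/limit argument showing such $\kappa$ must be close to $\Stab_x$, (b) the fact that $\Stab_x$ near the identity is $\exp(\Lie\Stab_x)$, and (c) continuity of $y \mapsto \Lie\Stab_y$ from Proposition~\ref{prop:x_to_LieStab_x_cts} to transfer this to $\Stab_y$; the openness and Hausdorffness arguments are identical. The only difference is organizational: the paper fixes quantitative parameters $(B, K, \varepsilon, W)$ upfront and argues transitivity directly from properties (i) and (ii) of $W$ (proving (ii) by contradiction), whereas you package everything into a single ``small subgroup principle'' and prove it by a single net-compactness contradiction --- but the underlying logic is the same. (Your parenthetical about exponential coordinates adapted to a complement $\mathfrak{m}$ is unnecessary: you already have $\kappa_W \to \kappa$ and $\exp(\zeta_W) \to \exp(\zeta) = \kappa$, so $\kappa_W \exp(-\zeta_W) \to 1$ follows immediately from continuity of the group operations.)
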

	
	Note that the notation here is different from other sections: here $K$ is a symmetric compact neighborhood of the identity in $G$, rather than a maximal compact subgroup of $G$.
	
	In general, the space $X/G$ of $G$-orbits need not be Hausdorff. The point of Lemma~\ref{lem:partial_orbits} is that we can work with the ``local orbits" determined by $K$ instead, and that the ``local orbit space" $W/\sim$ is guaranteed to be Hausdorff.
	
	\begin{proof}[Proof of Lemma~\ref{lem:partial_orbits}]
		For notational convenience, fix a metric on $G$ inducing the topology on $G$. Then choose parameters, including $K$ and $W$, in the following order.
		\begin{enumerate} \itemsep = 0.5em
			\item Let $B$ be a bounded neighborhood of the origin in $\Lie\Stab_x$.
			
			\item Let $K$ as in the statement of Lemma~\ref{lem:partial_orbits} be small enough that $K^2 \cap \Stab_x \subseteq \exp(B)$.
			
			\item Choose $\varepsilon>0$ in terms of $K$, such that whenever $g \in K^2$ and $h \in G$ with $\dist(g,h) < 2\varepsilon$, one has $gh^{-1} \in K$.
			
			\item Let $W$ as in the statement of Lemma~\ref{lem:partial_orbits} be small enough that the following properties (i),(ii) hold. After stating (i),(ii), we will explain why they hold for $W$ sufficiently small.
			\\[-0.5em]
			\begin{enumerate} \itemsep = 0.5em
				\item[(i)] For each $Z \in B$ and $y \in W$, there exists $Y \in \Lie\Stab_y$ with $\dist(\exp(Y),\exp(Z)) < \varepsilon$.
				
				\item[(ii)] Whenever $g \in K^2$ with $gW \cap W$ nonempty, one has $\dist(g,K^2 \cap \Stab_x) < \varepsilon$.
			\end{enumerate}
		\end{enumerate}
		
		Let us first explain why (i) holds for $W$ small.
		From Proposition~\ref{prop:x_to_LieStab_x_cts}, we see that by taking $W$ sufficiently small, $\Lie\Stab_y$ can be made arbitrarily close to $\Lie\Stab_x$ uniformly for $y \in W$. Then since $B$ is bounded, one can find $Y \in \Lie\Stab_y$ arbitrarily close to any $Z \in B$, uniformly in $Z$. In particular, one can arrange $\dist(\exp(Y),\exp(Z)) < \varepsilon$, giving (i).
		
		We now explain why (ii) holds for $W$ small. Suppose for a contradiction that (ii) fails for $W$ arbitrarily small. Then there exists a net $W_n$ of neighborhoods of $x$ eventually contained in any given neighborhood of $x$, together with elements $g_n \in K^2$ for each $n$, such that $g_nW_n \cap W_n \neq \emptyset$ and $\dist(g_n, K^2 \cap \Stab_x) \geq \varepsilon$. Since $K^2$ is compact, we may assume after passing to a subnet that $g_n$ converges to some $g \in K^2$. This $g$ must satisfy $\dist(g,K^2 \cap \Stab_x) \geq \varepsilon$. Since $g \in K^2$, it follows that $g \not\in \Stab_x$. On the other hand, by continuity, the condition $g_nW_n \cap W_n \neq \emptyset$ yields $gx = x$ in the limit. This is a contradiction. Thus (ii) must hold for $W$ sufficiently small.
		
		With $B,K,\varepsilon,W$ as in (1)-(4), we finally verify the claimed properties of $\sim$. The relation $\sim$ is reflexive because $K$ contains the identity, and it is symmetric because $K$ is symmetric. For transitivity, suppose $y,z,w \in W$ satisfy $y \sim z$ and $z \sim w$. Then $w = gy$ for some $g \in K^2$. In particular, $gW \cap W$ is nonempty. Therefore by (ii), we have $\dist(g,K^2 \cap \Stab_x) < \varepsilon$. It follows from (2) that there exists $Z \in B$ with $\dist(g,\exp(Z)) < \varepsilon$. By (i), there exists $Y \in \Lie\Stab_y$ with $\dist(\exp(Y), \exp(Z)) < \varepsilon$. Let $h = \exp(Y) \in \Stab_y$. Then by the triangle inequality,
		\begin{align*}
			\dist(g,h)
			\leq \dist(g,\exp(Z)) + \dist(\exp(Z),h)
			< 2\varepsilon.
		\end{align*}
		Thus by (3), we have $gh^{-1} \in K$. Since $h \in \Stab_y$, we can write $w = gy = gh^{-1}y \in Ky$. Hence $y \sim w$, and $\sim$ is transitive. We conclude that $\sim$ is an equivalence relation.
		
		The quotient map $W \to W/\sim$ is open because the preimage of the image of an open set $V \subseteq W$ is $KV \cap W$, which is open in $W$. It follows from the compactness of $K$ that $\sim$ is closed (i.e.,
		\begin{align*}
			\{(y,z) \in W \times W : y \sim z\}
		\end{align*}
		is a closed subset of $W \times W$). Therefore Lemma~\ref{lem:hsdf_criterion} below tells us that $W/\sim$ is Hausdorff.
	\end{proof}
	
	\begin{lem}[Criterion for Hausdorff quotients] \label{lem:hsdf_criterion}
		Let $Y$ be a topological space, and $\sim$ a closed equivalence relation on $Y$ such that the quotient map $Y \to Y/\sim$ is open. Then $Y/\sim$ is Hausdorff.
	\end{lem}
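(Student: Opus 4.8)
The plan is to verify the two defining conditions of a Hausdorff space for $Y/\sim$ directly, using the two hypotheses (closedness of $\sim$ and openness of the quotient map) in tandem. Write $\pi\colon Y \to Y/\sim$ for the quotient map, and let $[y]$ denote $\pi(y)$. Suppose $[y] \neq [z]$, so $(y,z)$ is \emph{not} in the relation $R = \{(y',z') \in Y \times Y : y' \sim z'\}$. Since $R$ is closed in $Y \times Y$, its complement is open, so there exist open neighborhoods $A \ni y$ and $B \ni z$ in $Y$ with $(A \times B) \cap R = \emptyset$; that is, no point of $A$ is equivalent to any point of $B$.

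Now I would push these neighborhoods down to $Y/\sim$. Since $\pi$ is an open map, $\pi(A)$ and $\pi(B)$ are open subsets of $Y/\sim$ containing $[y]$ and $[z]$ respectively. The key claim is that $\pi(A) \cap \pi(B) = \emptyset$. Indeed, if some class $c$ lay in both images, there would be $a \in A$ and $b \in B$ with $\pi(a) = c = \pi(b)$, hence $a \sim b$, contradicting $(A \times B) \cap R = \emptyset$. Therefore $\pi(A)$ and $\pi(B)$ are disjoint open neighborhoods separating $[y]$ and $[z]$, and $Y/\sim$ is Hausdorff.

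There is essentially no obstacle here — the argument is a standard two-line diagram chase once one observes that openness of $\pi$ is exactly what allows the separating open sets upstairs to descend, and closedness of $\sim$ is exactly what guarantees the separating box $A \times B$ exists in the first place. The only point requiring a modicum of care is the claim $\pi(A) \cap \pi(B) = \emptyset$: one must remember that $\pi(a) = \pi(b)$ means $a \sim b$ by definition of the quotient, not merely that $a$ and $b$ map to nearby points. I would state the proof in the few sentences above without further elaboration, as it is entirely routine point-set topology.
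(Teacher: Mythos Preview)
Your proof is correct and is essentially identical to the paper's own proof: both take a basic open box $A \times B$ (the paper calls it $U \times V$) in the complement of the relation and push $A$ and $B$ down via the open quotient map to get disjoint open neighborhoods. The only difference is cosmetic --- you spell out the disjointness verification in one extra sentence, whereas the paper compresses it to a single clause.
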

	
	This is a standard fact in point-set topology.
	Taking $\sim$ to be $=$, it specializes to the more famous fact that $Y$ is Hausdorff if (and only if) the diagonal is closed in $Y \times Y$.
	
	We write the quotient map $Y \to Y/\sim$ as $y \mapsto \overline{y}$.
	
	\begin{proof}[Proof of Lemma~\ref{lem:hsdf_criterion}]
		Denote $S = \{(x,y) : x \sim y\}$. This is closed in $Y \times Y$ by assumption. Let $x,y \in Y$ with $\overline{x} \neq \overline{y}$ in $Y/\sim$. Our goal is to find disjoint neighborhoods of $\overline{x}$ and $\overline{y}$. Since $(x,y)$ lies in the open set $Y \times Y \setminus S$, it must be contained in an open subset of $Y \times Y \setminus S$ of the form $U \times V$ (by the definition of the product topology on $Y \times Y$). The images of $U$ and $V$ in $Y/\sim$ are open neighborhoods of $\overline{x}$ and $\overline{y}$, respectively, because the quotient map is open. These neighborhoods are disjoint because $U \times V$ does not intersect $S$.
	\end{proof}
	
	We now use Lemma~\ref{lem:partial_orbits} to prove Proposition~\ref{prop:top_characterization}.
	
	\begin{proof}[Proof of Proposition~\ref{prop:top_characterization}]
		Choose $x,K,W$ as in Lemma~\ref{lem:partial_orbits}.
		Then there are two cases.
		
		\noindent
		\emph{Case 1.} $\overline{x}$ is a limit point in $W/\sim$.
		
		Let $U \subseteq K$ be a small open neighborhood of the identity and $V \subseteq W$ a small open neighborhood of $x$ such that $UV \subseteq W$. Write $\overline{V}$ for the image of $V$ in $W/\sim$ (so $\overline{V}$ does not mean the closure of $V$). Since $W \to W/\sim$ is open, $\overline{V}$ is open. Using that the Hausdorff space $\overline{V}$ contains a limit point (namely $\overline{x}$), we can find an infinite sequence of disjoint nonempty open sets $\overline{V_i} \subseteq \overline{V}$. Let $V_i$ be the intersection of $V$ with the preimage of $\overline{V_i}$. Then the $V_i$ are open and nonempty in $X$, and the $UV_i$ are disjoint because their projections to $W/\sim$ are the $\overline{V_i}$, which are disjoint.
		
		\noindent
		\emph{Case 2.} $\overline{x}$ is isolated in $W/\sim$.
		
		That $\overline{x}$ is isolated means $Kx \cap W$ is open in $X$. Let $K/\Stab_x$ denote the image of $K$ in $G/\Stab_x$. Then the natural map $K/\Stab_x \to Kx$ is a continuous bijection from a compact space to a Hausdorff space, so it is a homeomorphism. In particular, a neighborhood of the identity in $K/\Stab_x$ maps homeomorphically to $Kx \cap W$, which is open in $X$. This means that the map $G/\Stab_x \to X$ given by acting on $x$ is a homeomorphism from a neighborhood of the identity to a neighborhood of $x$. By translation symmetry, it follows that $Gx$ is open in $X$, and $G/\Stab_x \to Gx$ is a local homeomorphism. Since $G/\Stab_x \to Gx$ is bijective, we conclude that it is in fact a (global) homeomorphism. Note that $G/\Stab_x$ is a smooth manifold, so this shows that $Gx$ is about as nice a topological space as possible --- in particular, $Gx$ is locally compact Hausdorff (LCH).
		
		If $Gx$ is non-compact, then it is easy to find nonempty opens $U \subseteq G$ and $V_i \subseteq Gx$ with the $UV_i$ disjoint --- just take the $V_i$ to accumulate rapidly at infinity in the LCH space $Gx$. Thus we may assume $Gx$ is compact. Then $Gx$ is both open and closed in $X$, so $Gx = X$ because $X$ is connected. Hence $X \simeq G/\Stab_x \simeq \Stab_x \backslash G$, and $\Stab_x$ is cocompact because $X = Gx$ is compact. This contradicts the assumption in the statement of Proposition~\ref{prop:top_characterization} that $X$ is not of the form $H \backslash G$ for any closed cocompact subgroup $H \subseteq G$.
	\end{proof}
	
	Now that Proposition~\ref{prop:top_characterization} is proved, Theorem~\ref{thm:embedding_homog} is fully proved as well.
	
	\subsection{Discreteness of the stabilizer subgroup} \label{subsec:embedding_homog:discrete}
	
	In this subsection, we prove Proposition~\ref{prop:discrete_stabilizers}, restated below.
	The proof given here is an adaptation of the proof of Theorem 4.5.1 in \cite{Morris}.
	
	\begin{prop*}[Restatement of Proposition~\ref{prop:discrete_stabilizers}]
		Let $G$ be a noncompact simple Lie group (e.g., $G = \PSL_2(\R)$). Let $H \subseteq G$ be a closed cocompact subgroup such that $H \backslash G$ admits a $G$-invariant probability measure. Then either $H$ is discrete or $H = G$.
	\end{prop*}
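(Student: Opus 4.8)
The plan is to show that $\Lie(H)$ is an ideal in $\g$, and then invoke simplicity. Let $\h = \Lie(H)$. Since $H \backslash G$ carries a $G$-invariant probability measure and is compact, $H$ is unimodular (this is the standard fact that a closed subgroup $H$ of a unimodular group $G$ has a $G$-invariant measure on $H \backslash G$ iff $H$ is unimodular, and here we are given such a measure). The key object is the adjoint representation: the map $h \mapsto \Ad(h)|_{\g/\h}$ gives a representation of $H$ on the quotient $\g/\h$, and I would compare the associated determinant characters. Specifically, for $h \in H$ one has
\begin{align*}
	\det\big(\Ad_G(h)\big)
	= \det\big(\Ad_H(h)\big) \cdot \det\big(\Ad(h)|_{\g/\h}\big).
\end{align*}
Because $G$ is semisimple, $\det \Ad_G \equiv 1$, and because $H$ is unimodular, $\det \Ad_H \equiv 1$; hence $\det(\Ad(h)|_{\g/\h}) = 1$ for all $h \in H$. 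This says the $H$-action on $\g/\h$ is by volume-preserving linear maps.

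Next I would use compactness of $H \backslash G$ together with this volume-preservation to control the closure of $\Ad_G(H)$ in $\GL(\g)$. The idea (following Morris, Theorem 4.5.1) is: let $\overline{\Ad_G(H)}$ be the closure; I claim it is compact, equivalently bounded in $\GL(\g)$. If not, there is a sequence $h_n \in H$ with $\Ad_G(h_n) \to \infty$. Using the Cartan/$KAK$ decomposition of $G$, write $h_n = k_n a_n k_n'$ with $a_n$ in a positive Weyl chamber of a maximal split torus; unboundedness of $\Ad_G(h_n)$ forces $a_n \to \infty$. Now the cocompactness of $H$ means $G = HC$ for some compact $C$, so every element of $G$, in particular the large elements $a_n$, lies within bounded distance of $H$; combined with the structure of root spaces this would force $H$ to contain, in the limit, a full root $\SL_2$ or at least a nontrivial piece of a root space transverse to a given direction, contradicting that $\Ad(H)|_{\g/\h}$ preserves volume (an unbounded one-parameter family of volume-preserving maps on a transverse space still has to degenerate, and tracking the weights shows the complementary directions get stretched). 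More cleanly: volume-preservation of the $H$-action on $\g/\h$ plus the fact that $\Ad_G$ restricted to $\h$ is unbounded-or-not in a controlled way lets one conclude $\overline{\Ad_G(H)}$ is compact. Once $\overline{\Ad_G(H)}$ is a compact subgroup $L$ of $\GL(\g)$, it preserves an inner product on $\g$, and $\h$ (being $L$-invariant, as $H$ is) has an $L$-invariant, hence $\Ad_G(H)$-invariant, complement $\mathfrak{m}$. Then $[\h, \mathfrak{m}] \subseteq \mathfrak{m}$; since also $[\h,\h] \subseteq \h$, we get $[\h, \g] \subseteq \h + \mathfrak{m} = \g$ — wait, that is automatic — the point is rather that $\mathfrak{m}$ is $\ad_\h$-invariant, so $\h$ is an ideal of the subalgebra $\h + $ (anything centralizing appropriately); more carefully, $L$-invariance of the decomposition $\g = \h \oplus \mathfrak{m}$ gives $[\h,\h]\subseteq\h$ and $[\h,\mathfrak m]\subseteq\mathfrak m$, and then one checks $\h$ is normalized by $\mathfrak m$ as well using the Killing form (the Killing-orthogonal complement of an $\ad_\h$-invariant ideal is again $\ad_\h$-invariant), upgrading $\h$ to a genuine ideal of $\g$.

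Finally, since $\g$ is simple, $\h = 0$ or $\h = \g$. If $\h = \g$ then $H$ is open, hence (being closed) a union of connected components; as $G$ is connected, $H = G$. If $\h = 0$ then $H$ is discrete, which is the other alternative. This completes the proof. The main obstacle I expect is the compactness argument for $\overline{\Ad_G(H)}$ — making rigorous the claim that an unbounded family inside $\Ad_G(H)$ is incompatible with both the volume-preservation on $\g/\h$ and the cocompactness of $H$; the cleanest route is probably to quote or adapt the relevant step of \cite{Morris} rather than reprove it, since the paper already signals that the argument follows Morris's Theorem 4.5.1. A secondary subtlety is verifying that the $\Ad_G(H)$-invariant complement $\mathfrak m$ can be chosen so that $\h$ becomes a two-sided ideal and not merely $\ad_\h$-invariant; here the standard trick is that for a semisimple $\g$ the Killing form is nondegenerate and $\ad$-invariant, so the Killing-orthogonal complement of an $\ad_\h$-stable subspace is again $\ad_\h$-stable, and one bootstraps from $\ad_\h$-invariance (which compactness of $L$ provides) to full $\ad_\g$-invariance by a short computation with the bracket and the form.
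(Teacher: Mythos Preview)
Your central claim---that $\overline{\Ad_G(H)}$ is compact in $\GL(\g)$---is false, not merely hard to prove. If $H = \Gamma$ is a cocompact lattice in $G = \PSL_2(\R)$, then $\Gamma$ contains a hyperbolic element $\gamma$, and $\Ad_G(\gamma^n)$ is unbounded as $n \to \infty$. The claim also fails for $H = G$. So it fails in precisely the two cases the proposition allows, and the sketch you give (Cartan decomposition, root-space stretching versus volume preservation on $\g/\mathfrak{h}$) cannot be completed to a proof of a false statement. Your second step is also incomplete even granting compactness: an $\Ad(H)$-invariant complement $\mathfrak{m}$ gives only $[\mathfrak{h},\mathfrak{m}] \subseteq \mathfrak{m}$, whereas $\mathfrak{h}$ being an ideal requires $[\mathfrak{m},\mathfrak{h}] \subseteq \mathfrak{h}$, hence $[\mathfrak{h},\mathfrak{m}] \subseteq \mathfrak{h} \cap \mathfrak{m} = 0$. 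The diagonal Cartan in $\Sl_2(\R)$, with $\mathfrak{m}$ the sum of the two root spaces, shows this inference fails; the Killing-form remark you make does not bridge the gap.

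The paper's argument (adapted from the same theorem of Morris) sidesteps both issues by working with a single vector rather than all of $\Ad_G(H)$. Pick nonzero $\omega \in \wedge^d \mathfrak{h}$ where $d = \dim \mathfrak{h}$. Your unimodularity observation gives $\Ad(H)\omega = \omega$, so the \emph{orbit} $\Ad(G)\omega \subseteq \wedge^d \g$ is a continuous image of the compact space $G/H$ and hence bounded---this is where cocompactness is actually used. Noncompact simplicity implies $G$ is generated by unipotents; for each unipotent $u$, the map $n \mapsto \Ad(u^n)\omega$ is polynomial in $n$ with values in this bounded orbit, hence constant. Thus $\Ad(G)$ fixes $\omega$, which forces $\Ad(G)$ to preserve $\mathfrak{h}$, making $\mathfrak{h}$ an ideal directly. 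Simplicity then gives $\mathfrak{h} = 0$ or $\mathfrak{h} = \g$.
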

	
	\begin{proof}
		Since $G$ is non-compact and simple, it contains a unipotent element (this is the only place where non-compactness is used). The closure of the subgroup generated by unipotent elements is a nonempty, closed, connected, normal subgroup of $G$, so it is equal to $G$ because $G$ is simple.
		
		Since $G$ is simple, it is unimodular. The condition that $H \backslash G$ admits an invariant probability measure then implies that $H$ is unimodular. This means that $\Ad(H)$ acts trivially on $\wedge^d \mathfrak{h}$, where $d = \dim\mathfrak{h}$. Pick some nonzero element $\omega \in \wedge^d\mathfrak{h}$. We claim that $\Ad(G)$ fixes $\omega$ inside $\wedge^d\mathfrak{g}$. By the previous paragraph, it suffices to check that $\Ad(u)\omega = \omega$ for all unipotents $u \in G$. Because $u$ is unipotent, $n \mapsto \Ad(u^n)\omega$ is a polynomial function from the integers to $\wedge^d\mathfrak{g}$. This function lands in $\Ad(G)\omega$, which is compact because $\omega$ is stabilized by $\Ad(H)$, and $H$ is cocompact. But bounded polynomials are constant, so $\Ad(u^n)\omega$ must be independent of $n$. Equating $n=1$ with $n=0$, we get $\Ad(u)\omega = \omega$, proving the claim.
		
		We now know that $\Ad(G)$ acts trivially on $\wedge^d\mathfrak{h} \subseteq \wedge^d\mathfrak{g}$. Consequently $\Ad(G)$ preserves $\mathfrak{h}$. It follows by simplicity that either $\mathfrak{h} = 0$ or $\mathfrak{h} = \mathfrak{g}$, and hence either $H$ is discrete or $H=G$.
	\end{proof}
	
	Having proved Theorem~\ref{thm:embedding_homog} and Proposition~\ref{prop:discrete_stabilizers}, we immediately obtain Theorem~\ref{thm:embedding_G/Gamma}.
	
	\subsection{Finishing the reduction to the weak converse theorem} \label{subsec:embedding_homog:finishing}
	
	Finally, we prove Theorem~\ref{thm:eq_Gelfand_duality} conditioned on Theorem~\ref{thm:weak_structure_thm}.
	So let $G = \PSL_2(\R)$ and $K = \PSO_2(\R)$ once more.
	Recall the statement of Theorem~\ref{thm:eq_Gelfand_duality}:
	
	\begin{thm*}[Restatement of Theorem~\ref{thm:eq_Gelfand_duality}]
		Let $\H$ be a nontrivial multiplicative representation. Then $\H \simeq L^2(\Gamma \backslash G)$, as multiplicative representations, for some cocompact lattice $\Gamma$ in $G$.
	\end{thm*}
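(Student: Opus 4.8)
The plan is to combine Theorem~\ref{thm:weak_structure_thm} with Theorem~\ref{thm:embedding_G/Gamma}; essentially no new analysis is required, since the substance has been packaged into those two results (the former proved in Sections~\ref{sec:reduce_to_L^infty}--\ref{sec:bulk_tail_general}, the latter already established above via Theorem~\ref{thm:embedding_homog} and Proposition~\ref{prop:discrete_stabilizers}).

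First I would apply Theorem~\ref{thm:weak_structure_thm} to the given nontrivial multiplicative representation $\H$. This produces a connected compact metrizable $G$-space $X$, a $G$-invariant probability measure $\mu$ on $X$ of full support with the qualitative Sobolev embedding property, and a $G$-equivariant unitary isomorphism $\Psi \colon \H \xrightarrow{\ \sim\ } L^2(X,\mu)$ carrying the multiplication $\H^{\fin} \times \H^{\fin} \to \H^{\infty}$ to pointwise multiplication of functions on $X$. Since $\H$ has discrete spectrum, so does $L^2(X,\mu)$, and a $G$-equivariant unitary isomorphism automatically sends smooth vectors to smooth vectors and $K$-finite vectors to $K$-finite vectors; hence $\Psi$ restricts to isomorphisms $\H^{\infty} \cong L^2(X,\mu)^{\infty}$ and $\H^{\fin} \cong L^2(X,\mu)^{\fin}$, so the statement ``$\Psi$ carries multiplication to pointwise multiplication'' is literally well-posed.

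Next, $(X,\mu)$ satisfies the hypotheses of Theorem~\ref{thm:embedding_G/Gamma} (recall $G = \PSL_2(\R)$ is a noncompact simple Lie group), so either $X$ is a point or $X \simeq \Gamma \backslash G$ as $G$-spaces for some cocompact lattice $\Gamma$ in $G$. If $X$ were a point, then $L^2(X,\mu) \cong \C$ and hence $\dim\H = 1$, contradicting the hypothesis $\dim\H > 1$. Therefore there is a $G$-equivariant homeomorphism $T \colon X \to \Gamma \backslash G$ for some cocompact lattice $\Gamma$. The pushforward $T_*\mu$ is a $G$-invariant Borel probability measure on the transitive $G$-space $\Gamma \backslash G$; by uniqueness of the invariant probability measure on a homogeneous space, $T_*\mu$ is the Haar probability measure, which is the normalization used throughout for $L^2(\Gamma \backslash G)$. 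Consequently $f \mapsto f \circ T$ is a $G$-equivariant unitary isomorphism $L^2(\Gamma \backslash G) \xrightarrow{\ \sim\ } L^2(X,\mu)$ intertwining pointwise multiplication on the two sides. Composing, $\Phi := (f \mapsto f\circ T)^{-1} \circ \Psi \colon \H \to L^2(\Gamma \backslash G)$ is a $G$-equivariant unitary isomorphism taking the multiplication on $\H^{\fin}$ to pointwise multiplication on $L^2(\Gamma \backslash G)^{\fin}$, i.e., an isomorphism of multiplicative representations, which is exactly the assertion of Theorem~\ref{thm:eq_Gelfand_duality}.

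Within this final section there is no real obstacle: the only points deserving a sentence of care are (i) that an abstract $G$-equivariant unitary isomorphism automatically respects the $\H^{\fin}$ and $\H^{\infty}$ structures (immediate from $G$-equivariance together with the intrinsic, decomposition-independent characterizations of these subspaces for representations with discrete spectrum), and (ii) that the $G$-invariant probability measure transported onto $\Gamma \backslash G$ is forced to be Haar. All the genuine difficulty lies upstream, in the proof of Theorem~\ref{thm:weak_structure_thm}.
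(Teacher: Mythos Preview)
Your proposal is correct and follows essentially the same route as the paper: apply Theorem~\ref{thm:weak_structure_thm} to obtain $(X,\mu)$, invoke Theorem~\ref{thm:embedding_G/Gamma} to force $X$ to be a point or $\Gamma\backslash G$, rule out the point case by nontriviality, and use uniqueness of the $G$-invariant probability measure to identify $\mu$ with Haar. The extra remarks you make about $\Psi$ preserving $\H^{\fin}$ and $\H^{\infty}$ are correct and slightly more detailed than the paper's presentation, but the argument is the same.
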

	
	\begin{proof}[Proof of Theorem~\ref{thm:eq_Gelfand_duality} assuming Theorem~\ref{thm:weak_structure_thm}]
		Let $X$ and $\mu$ be as in Theorem~\ref{thm:weak_structure_thm}, so in particular the multiplicative representation $\H$ identifies with $L^2(X,\mu)$ as unitary representations of $G$. Then by Theorem~\ref{thm:embedding_G/Gamma}, either $X$ is a point, or $X \simeq \Gamma \backslash G$ as $G$-spaces for some cocompact lattice $\Gamma$. If $X$ is a point, then $\H \simeq L^2(X,\mu)$ is one-dimensional, contradicting that $\H$ is nontrivial. Therefore $X \simeq \Gamma \backslash G$. The only $G$-invariant probability measure on $\Gamma \backslash G$ is Haar, so $\mu$ is the Haar probability measure. Thus $\H \simeq L^2(\Gamma \backslash G)$ as unitary representations. By Theorem~\ref{thm:weak_structure_thm}, this isomorphism can be chosen so that multiplication on $\H^{\fin}$ is given by pointwise multiplication on $\Gamma \backslash G$. We conclude that $\H \simeq L^2(\Gamma \backslash G)$ as multiplicative representations.
	\end{proof}
	
	\section{$L^{\infty}$ bounds $\implies$ Weak converse theorem} \label{sec:reduce_to_L^infty}
	
	Returning to our usual conventions, let $G = \PSL_2(\R)$, let $K = \PSO_2(\R)$, and let $\H$ be a multiplicative representation.
	
	Section~\ref{sec:embedding_homog} reduced Theorem~\ref{thm:eq_Gelfand_duality} to Theorem~\ref{thm:weak_structure_thm}.
	In this section we reduce Theorem~\ref{thm:weak_structure_thm} to Theorem~\ref{thm:high_deriv_L^infty_bd} and Corollary~\ref{cor:qual_Sob_emb} (a consequence of Theorem~\ref{thm:L^infty_quasi-Sobolev}).
	Theorems~\ref{thm:high_deriv_L^infty_bd} and \ref{thm:L^infty_quasi-Sobolev} will be proved in Sections~\ref{sec:reduce_to_bulk_tail}, \ref{sec:bulk_tail_poly}, and \ref{sec:bulk_tail_general}. For now, we take Theorem~\ref{thm:high_deriv_L^infty_bd} and Corollary~\ref{cor:qual_Sob_emb} for granted.
	
	\subsection{The commutative C*-algebra generated by $\H^{\fin}$} \label{subsec:reduce_to_L^infty:C*-alg}
	
	For all $\alpha \in \H^{\fin}$, by the $n=0$ case of Theorem~\ref{thm:high_deriv_L^infty_bd}, $M_{\alpha}$ extends from $\H^{\fin}$ to a bounded linear operator on $\H$. Thereby view $M_{\alpha} \in \B(\H)$.
	
	
	\begin{prop} \label{prop:self-adjoint}
		Let $\alpha \in \H^{\fin}$. Then $M_{\alpha}^{\ast} = M_{\overline{\alpha}}$. In particular, if $\alpha \in \H_{\R}^{\fin}$, then $M_{\alpha}$ is self-adjoint.
	\end{prop}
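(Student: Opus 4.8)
The plan is to exploit the ``existence of complex conjugates'' axiom from Definition~\ref{def:mult_rep}, which already encodes exactly the adjointness relation we want, and then to upgrade it from $\H^{\fin}$ to all of $\H$ by boundedness and density. First I would note that by the $n=0$ case of Theorem~\ref{thm:high_deriv_L^infty_bd}, both $\|\alpha\|_{L^{\infty}}$ and $\|\overline{\alpha}\|_{L^{\infty}}$ are finite, so both $M_{\alpha}$ and $M_{\overline{\alpha}}$ extend from $\H^{\fin}$ to bounded linear operators on $\H$; this is what makes the statement even well-posed.

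Next, for $\beta,\gamma \in \H^{\fin}$, I would write
\begin{align*}
	\langle M_{\alpha}\beta, \gamma \rangle_{\H}
	= \langle \alpha\beta, \gamma \rangle_{\H}
	= \langle \beta, \overline{\alpha}\gamma \rangle_{\H}
	= \langle \beta, M_{\overline{\alpha}}\gamma \rangle_{\H},
\end{align*}
where the middle equality is precisely \eqref{eqn:3_term_crossing}. Since $M_{\alpha}$ and $M_{\overline{\alpha}}$ are bounded and $\H^{\fin}$ is dense in $\H$, both sides are continuous in $\beta$ and in $\gamma$, so the identity $\langle M_{\alpha}v, w \rangle_{\H} = \langle v, M_{\overline{\alpha}}w \rangle_{\H}$ persists for all $v,w \in \H$. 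By the definition of the Hilbert space adjoint, this says exactly $M_{\alpha}^{\ast} = M_{\overline{\alpha}}$.

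Finally, for the ``in particular'' clause, if $\alpha \in \H_{\R}^{\fin}$ then $\overline{\alpha} = \alpha$ by the definition of $\H_{\R}$, so $M_{\alpha}^{\ast} = M_{\alpha}$. There is no real obstacle here; the only point requiring care is that the extension of \eqref{eqn:3_term_crossing} from $\H^{\fin}$ to $\H$ relies on boundedness of the two multiplication operators, which is why Theorem~\ref{thm:high_deriv_L^infty_bd} is invoked first.
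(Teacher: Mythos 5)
Your proof is correct and follows essentially the same route as the paper: invoke the ``existence of complex conjugates'' axiom \eqref{eqn:3_term_crossing} to get $\langle M_{\alpha}\beta,\gamma\rangle_{\H} = \langle\beta, M_{\overline{\alpha}}\gamma\rangle_{\H}$ on $\H^{\fin}$, then extend by boundedness (from Theorem~\ref{thm:high_deriv_L^infty_bd}) and density. The paper sets up the boundedness of $M_{\alpha}$ in the sentence immediately preceding the proposition rather than inside the proof, but the substance is identical.
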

	
	\begin{proof}
		Since $\H^{\fin}$ is dense in $\H$, it suffices to check that
		\begin{align*}
			\langle M_{\alpha}\beta, \gamma \rangle_{\H}
			= \langle \beta, M_{\overline{\alpha}}\gamma \rangle_{\H}
		\end{align*}
		for all $\beta,\gamma \in \H^{\fin}$. By definition, $M_{\alpha}\beta = \alpha\beta$ and $M_{\overline{\alpha}}\gamma = \overline{\alpha}\gamma$, so this is immediate from the definition of $\overline{\alpha}$.
	\end{proof}
	
	\begin{prop} \label{prop:commute}
		Let $\alpha,\beta \in \H^{\fin}$. Then $M_{\alpha}, M_{\beta}$ commute.
	\end{prop}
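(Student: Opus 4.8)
The plan is to reduce everything to a pairing identity on the dense subspace $\H^{\fin}$ and then feed crossing symmetry into it as a surrogate for associativity. By the $n=0$ case of Theorem~\ref{thm:high_deriv_L^infty_bd}, each $M_\alpha$ extends to a bounded operator on all of $\H$, so the compositions $M_\alpha M_\beta$ and $M_\beta M_\alpha$ are well-defined elements of $\B(\H)$. Since $\H^{\fin}$ is dense in $\H$ and both operators are bounded, it suffices to show $\langle M_\alpha M_\beta\gamma,\delta\rangle_{\H} = \langle M_\beta M_\alpha\gamma,\delta\rangle_{\H}$ for all $\gamma,\delta\in\H^{\fin}$.

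First I would move the outermost operator across the inner product using Proposition~\ref{prop:self-adjoint}, writing $\langle M_\alpha M_\beta\gamma,\delta\rangle_{\H} = \langle M_\beta\gamma, M_{\overline\alpha}\delta\rangle_{\H} = \langle \beta\gamma, \overline\alpha\delta\rangle_{\H}$, where on the right the two factors are now honest products of elements of $\H^{\fin}$ (living in $\H^{\infty}\subseteq\H$). Using $\overline{\overline\delta}=\delta$ (Proposition~\ref{prop:alpha_bar_bar=alpha}) to rewrite $\overline\alpha\delta = \overline\alpha\,\overline{\overline\delta}$, this pairing is exactly an instance of the left-hand side of the crossing equation \eqref{eqn:crossing} with $(\alpha_1,\alpha_2,\alpha_3,\alpha_4)=(\beta,\gamma,\alpha,\overline\delta)$. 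Applying crossing symmetry with the transposition $\sigma=(1\,3)$ turns it into $\langle \alpha\gamma, \overline\beta\,\overline{\overline\delta}\rangle_{\H} = \langle \alpha\gamma, \overline\beta\delta\rangle_{\H}$. Now running the first step in reverse, with the roles of $\alpha$ and $\beta$ interchanged, gives $\langle \alpha\gamma,\overline\beta\delta\rangle_{\H} = \langle M_\alpha\gamma, M_{\overline\beta}\delta\rangle_{\H} = \langle M_\beta M_\alpha\gamma,\delta\rangle_{\H}$, again by Proposition~\ref{prop:self-adjoint}. Chaining these equalities yields $\langle M_\alpha M_\beta\gamma,\delta\rangle_{\H} = \langle M_\beta M_\alpha\gamma,\delta\rangle_{\H}$, and density finishes the proof.

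I do not expect a serious obstacle here; the one point requiring care is that intermediate products such as $\beta\gamma$ lie in $\H^{\infty}$ rather than $\H^{\fin}$, so $\H^{\fin}$ is not a subalgebra and one cannot appeal to associativity directly. Every manipulation must therefore be phrased either in terms of the bounded operators $M_\bullet\in\B(\H)$ or in terms of the bilinear map $\H^{\fin}\times\H^{\fin}\to\H^{\infty}$, with Proposition~\ref{prop:self-adjoint} serving as the bridge between the two viewpoints, and the genuinely nontrivial input being the crossing symmetry axiom (as anticipated in Remark~\ref{rem:on_mult_rep_def}, it is precisely what replaces associativity). Alternatively, the same conclusion can be reached using only the permutation $\sigma=(2\,3)$ together with commutativity, since that permutation exhibits $\langle\beta\gamma,\overline\alpha\delta\rangle_{\H}$ and $\langle\alpha\gamma,\overline\beta\delta\rangle_{\H}$ as both equal to $\langle\alpha\beta,\overline\gamma\delta\rangle_{\H}$.
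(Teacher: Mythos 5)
Your argument matches the paper's proof essentially verbatim: test $\langle M_\alpha M_\beta\gamma,\delta\rangle_{\H}$ against $\gamma,\delta\in\H^{\fin}$, use Proposition~\ref{prop:self-adjoint} to reduce to $\langle\beta\gamma,\overline\alpha\delta\rangle_{\H}$, apply crossing symmetry to swap $\alpha$ and $\beta$, and move back. The extra bookkeeping you include (identifying the specific permutation via $\delta=\overline{\overline\delta}$, the alternative through $\sigma=(2\,3)$, the remark on $\beta\gamma\in\H^{\infty}\setminus\H^{\fin}$) is all correct but not different in substance from the paper's one-line chain.
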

	
	\begin{proof}
		Since $\H^{\fin}$ is dense in $\H$, it suffices to check that
		\begin{align*}
			\langle M_{\alpha} M_{\beta} \gamma, \delta \rangle_{\H}
			= \langle M_{\beta} M_{\alpha} \gamma, \delta \rangle_{\H}
		\end{align*}
		for all $\gamma,\delta \in \H^{\fin}$. Indeed, by Proposition~\ref{prop:self-adjoint} together with crossing symmetry,
		\begin{align*}
			&\langle M_{\alpha} M_{\beta} \gamma, \delta \rangle_{\H}
			= \langle M_{\beta} \gamma, M_{\overline{\alpha}} \delta \rangle_{\H}
			= \langle \beta\gamma, \overline{\alpha}\delta \rangle_{\H}
			= \langle \alpha\gamma, \overline{\beta}\delta \rangle_{\H}
			= \langle M_{\alpha} \gamma, M_{\overline{\beta}} \delta \rangle_{\H}
			= \langle M_{\beta} M_{\alpha} \gamma, \delta \rangle_{\H}.
			\qedhere
		\end{align*}
	\end{proof}
	
	Let $\A \subseteq \B(\H)$ be the closure (with respect to the operator norm topology) of the subalgebra generated by $\{M_{\alpha} : \alpha \in \H^{\fin}\}$. Propositions~\ref{prop:self-adjoint} and \ref{prop:commute} imply that $\A$ is a commutative C*-subalgebra of $\B(\H)$. Since $M_{\mathbf{1}}$ is the identity operator, $\A$ is unital, and since $\H^{\fin}$ has countable dimension (Proposition~\ref{prop:Maass_span_H^fin}), $\A$ is separable.
	
	The group $U(\H)$ of unitary operators on $\H$ acts by conjugation on $\B(\H)$, and for each element of $U(\H)$, its conjugation action is a C*-automorphism of $\B(\H)$. The representation of $G$ on $\H$ is a map $G \to U(\H)$. Pulling the $U(\H)$-action back to $G$, we obtain an action of $G$ on $\B(\H)$ by C*-automorphisms. Our next goal is to show that this $G$-action preserves $\A$ (Theorem~\ref{thm:A_G-inv}), and that the action map is continuous (Theorem~\ref{thm:A_G-action_cts}).
	
	\begin{thm} \label{thm:A_G-inv}
		The C*-subalgebra $\A \subseteq \B(\H)$ is $G$-invariant.
	\end{thm}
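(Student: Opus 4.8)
The plan is to show that for each $g$ in a neighborhood of the identity in $G$ and each $\alpha \in \H^{\fin}$, the conjugate $g M_{\alpha} g^{-1}$ lies in $\A$, since $\A$ is the closed algebra generated by the $M_{\alpha}$ and $G$ is generated by any identity neighborhood (and each $g$ acts as a C*-automorphism, hence sends the closed subalgebra generated by a set to the closed subalgebra generated by its image). Writing $g = \exp(X)$ for $X \in \g_{\R}$ small, the natural candidate is the identity
\begin{align*}
	g M_{\alpha} g^{-1}
	= \sum_{n=0}^{\infty} \frac{1}{n!} M_{X^n\alpha},
\end{align*}
which formally expresses conjugation by $\exp(X)$ as the exponential of the adjoint action of $X$, and which in the model case $\H = L^2(\Gamma\backslash G)$ just says $g M_{\alpha} g^{-1} = M_{g\alpha}$ with $g\alpha$ Taylor-expanded. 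Once this identity is established with the right-hand side converging absolutely in operator norm, each partial sum lies in $\A$ and hence so does the limit, giving $g M_{\alpha} g^{-1} \in \A$.

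First I would record the convergence: by Theorem~\ref{thm:high_deriv_L^infty_bd}, $\|M_{X^n\alpha}\|_{\B(\H)} = \|X^n\alpha\|_{L^{\infty}} \lesssim_{\alpha} O(n\|X\|_{\g})^n$, so $\sum_n \frac{1}{n!} \|M_{X^n\alpha}\|_{\B(\H)} \lesssim_{\alpha} \sum_n \frac{O(n\|X\|_{\g})^n}{n!}$, which by Stirling converges provided $\|X\|_{\g}$ is smaller than an absolute constant. This both makes the right-hand side a well-defined element of $\B(\H)$ (indeed of $\A$, being a norm limit of finite sums of the $M_{X^n\alpha} \in \A$) and restricts us to a fixed identity neighborhood $\{\exp(X) : \|X\|_{\g} \ll 1\}$, which still generates $G$ since $G$ is connected. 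Next I would prove the operator identity itself. Both sides are bounded operators, so it suffices to check equality after pairing with $\beta, \gamma \in \H^{\fin}$, i.e.
\begin{align*}
	\langle g M_{\alpha} g^{-1} \beta, \gamma \rangle_{\H}
	= \sum_{n=0}^{\infty} \frac{1}{n!} \langle (X^n\alpha)\beta, \gamma \rangle_{\H}.
\end{align*}
The left-hand side equals $\langle \alpha (g^{-1}\beta), g^{-1}\gamma \rangle_{\H}$, and the idea is to expand $g^{-1}\beta$ and $g^{-1}\gamma$ as their ($\H$-valued, absolutely convergent) Taylor series $\sum_m \frac{(-1)^m}{m!} X^m\beta$, $\sum_k \frac{(-1)^k}{k!} X^k\gamma$ — valid because $\beta,\gamma \in \H^{\fin} \subseteq \H^{\infty}$ are in fact analytic vectors for $\g$, or more elementarily because $\exp(X)$ is close to the identity and $\beta,\gamma$ lie in a finite direct sum of irreducibles on which the $\g$-action is well understood — then use the product rule \eqref{eqn:prod_rule} repeatedly to move all the $X$'s onto $\alpha$. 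Concretely, $\langle \alpha(X^m\beta), X^k\gamma\rangle_{\H}$ can be rewritten using \eqref{eqn:Lie_alg_unitarity}, which turns $\langle \cdot, X^k\gamma\rangle$ into $(-1)^k\langle \overline{X}^k(\cdot),\gamma\rangle$, and then the product rule applied $k$ times distributes $\overline{X}^k$ over $\alpha(X^m\beta)$; collecting terms and using the binomial theorem reassembles the sum as $\sum_n \frac{1}{n!}\langle (X^n\alpha)\beta,\gamma\rangle$. All rearrangements are justified by the absolute convergence coming from Theorem~\ref{thm:high_deriv_L^infty_bd} together with the Taylor estimates for $\beta,\gamma$.

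The main obstacle is the bookkeeping in this last rearrangement — making rigorous the passage from the double (really triple) series in $m,k$ (and the internal product-rule expansion) to the single series in $n$, and ensuring every interchange of summation is licensed. The cleanest route is probably to avoid expanding $\beta,\gamma$ separately and instead argue directly: define $F(t) = \langle \exp(tX) M_{\alpha} \exp(-tX)\beta, \gamma\rangle_{\H}$ for $t$ real near $0$, show $F$ extends to a holomorphic function of $t$ in a complex neighborhood of $0$ (using that $\beta,\gamma$ are analytic vectors and the operator-norm bounds), compute $F^{(n)}(0) = \langle (\mathrm{ad}_X)^n(M_{\alpha})\beta,\gamma\rangle = \langle M_{X^n\alpha}\beta,\gamma\rangle$ by the product rule, and invoke the convergence of the Taylor series of $F$ from the derivative bounds. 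One should also note the harmless points: $\A$ is $G$-invariant iff $g\A \subseteq \A$ for all $g$ in a generating neighborhood (the reverse inclusion $g^{-1}\A \subseteq \A$ gives equality), and the $n=0$ case of Theorem~\ref{thm:high_deriv_L^infty_bd} is what makes $M_{\alpha} \in \B(\H)$ in the first place so that the statement even makes sense. Continuity of $G \times \A \to \A$ (Theorem~\ref{thm:A_G-action_cts}), which is the companion result, follows from the same series representation since the tail bounds are locally uniform in $X$.
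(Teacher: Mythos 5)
Your proposal is correct and, in its cleaned-up form, follows essentially the same route as the paper: the paper first isolates the expansion $gM_\alpha g^{-1} = \sum_n \frac{1}{n!}M_{X^n\alpha}$ (its Proposition~\ref{prop:conj_M_alpha}), proves it by testing against $v,w \in \H^\infty$, defining $F_\alpha(t) = \langle g_t M_\alpha g_t^{-1}v, w\rangle$, computing $F_\alpha^{(n)} = F_{X^n\alpha}$ via a weak form of the product rule, and invoking Theorem~\ref{thm:high_deriv_L^infty_bd} for the derivative bounds that guarantee the Taylor series converges to $F_\alpha$ on $[0,1]$; it then derives $G$-invariance exactly as you describe, using that conjugation is a C*-automorphism and that $G$ is generated by any identity neighborhood. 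Your first pass (separately Taylor-expanding $g^{-1}\beta$, $g^{-1}\gamma$ and reassembling via the binomial theorem) is valid in principle but, as you yourself note, messier; your ``cleanest route'' is the paper's argument. One small step that you elide: the product rule in the axioms is only stated on $\H^{\fin} \times \H^{\fin}$, but the translates $g_t^{-1}\beta, g_t^{-1}\gamma$ are merely in $\H^\infty$, so one must first extend the relevant identity $\langle M_{X\alpha}v, w\rangle = -\langle M_\alpha Xv, w\rangle - \langle M_\alpha v, Xw\rangle$ to all $v,w \in \H^\infty$ by density (this is the paper's Lemma~\ref{lem:weak_prod_rule}). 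Also, the holomorphic extension in $t$ is unnecessary --- real-analytic convergence of the Taylor series from the derivative bounds $|F^{(n)}| \lesssim_\alpha O(n\|X\|_\g)^n$ already suffices.
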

	
	\begin{thm} \label{thm:A_G-action_cts}
		The action map $G \times \A \to \A$, given by $(g,A) \mapsto gAg^{-1}$, is continuous (where $G$ has the Euclidean topology, $\A$ has the operator norm topology, and $G \times \A$ has the product topology).
	\end{thm}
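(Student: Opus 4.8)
The plan is to deduce Theorem~\ref{thm:A_G-action_cts} from Theorem~\ref{thm:A_G-inv} together with Theorem~\ref{thm:high_deriv_L^infty_bd}, the latter furnishing the quantitative derivative bounds needed to exponentiate the $\g$-action. Since $G \times \A \to \A$ is a group action by isometries (each $g$ acts as a C*-automorphism, hence operator-norm-preserving), and since $A \mapsto gAg^{-1}$ depends continuously on $A$ for fixed $g$, a standard reduction shows that it suffices to prove continuity at points of the form $(e,A)$; more precisely, it is enough to show that for each fixed $A \in \A$, one has $\|gAg^{-1} - A\|_{\B(\H)} \to 0$ as $g \to e$ in $G$. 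Indeed, given this, for $(g_0,A_0)$ and $(g,A)$ near $(g_0,A_0)$, write $gAg^{-1} - g_0A_0g_0^{-1} = g_0\big((g_0^{-1}g)A(g_0^{-1}g)^{-1} - A\big)g_0^{-1} + g_0(A - A_0)g_0^{-1}$, and both terms are small: the second by isometry of conjugation, and the first by the special case applied with $A$ close to $A_0$ (using the uniform statement below and $g_0^{-1}g \to e$).

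First I would reduce to $A = M_{\alpha}$ for $\alpha \in \H^{\fin}$. Since such $M_{\alpha}$ generate a dense subalgebra of $\A$, and since conjugation by $g$ is an algebra homomorphism of norm $1$, an $\varepsilon/3$ argument reduces the general case to showing $\|gM_{\alpha}g^{-1} - M_{\alpha}\|_{\B(\H)} \to 0$ as $g \to e$, for each fixed $\alpha \in \H^{\fin}$. (One uses that for a product $M_{\alpha_1}\cdots M_{\alpha_m}$, the conjugate is the product of the conjugates, and a telescoping estimate bounds the difference by $\sum_j \|gM_{\alpha_j}g^{-1} - M_{\alpha_j}\|$ times a constant depending on the $\|M_{\alpha_j}\|$.) Next I would use the key identity, already essentially recorded in the outline (Subsection~\ref{subsec:outline:reduce_to_L^infty}, Proposition~\ref{prop:conj_M_alpha}): writing $g = \exp(X)$ with $X \in \g_{\R}$, one has
\begin{align*}
	gM_{\alpha}g^{-1}
	= \sum_{n=0}^{\infty} \frac{1}{n!} M_{X^n\alpha}
\end{align*}
with absolute convergence in operator norm when $X$ is small. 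This convergence is exactly Theorem~\ref{thm:high_deriv_L^infty_bd}: $\|M_{X^n\alpha}\|_{\B(\H)} = \|X^n\alpha\|_{L^{\infty}} \lesssim_{\alpha} O(n\|X\|_{\g})^n$, and $\sum_n \frac{1}{n!} O(n\|X\|_{\g})^n < \infty$ provided $\|X\|_{\g}$ is smaller than the reciprocal of the $O$-constant (using $n^n/n! \sim e^n/\sqrt{2\pi n}$).

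From this series representation, continuity at $e$ is immediate: $gM_{\alpha}g^{-1} - M_{\alpha} = \sum_{n=1}^{\infty} \frac{1}{n!} M_{X^n\alpha}$, and the operator norm of the right side is bounded by $\sum_{n=1}^{\infty} \frac{1}{n!} C_{\alpha} O(n\|X\|_{\g})^n$, which tends to $0$ as $\|X\|_{\g} \to 0$ (dominated convergence, or simply factoring out $\|X\|_{\g}$ from the $n=1$ term and bounding the tail uniformly for $\|X\|_{\g}$ bounded). Moreover this bound is \emph{uniform in a neighborhood of $e$ with explicit modulus}, which is what the telescoping reductions above require. I would also need to record that $gM_{\alpha}g^{-1} \in \A$, but this is precisely Theorem~\ref{thm:A_G-inv} (or rather is how that theorem is proved), so it may be cited. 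The main obstacle is establishing the series identity $gM_{\alpha}g^{-1} = \sum_n \frac{1}{n!}M_{X^n\alpha}$ rigorously: one must check that the bounded operator $gM_{\alpha}g^{-1}$ and the operator defined by the convergent series agree, which is done by testing against vectors in $\H^{\fin}$ (where $g$ acts by the analytic exponential of the $\g$-action on smooth vectors, and $\H^{\fin} \subseteq \H^{\infty}$) and using density; the interchange of the $G$-action with the infinite sum requires the operator-norm convergence just established, so Theorem~\ref{thm:high_deriv_L^infty_bd} is doing all the real work. Everything else is the soft $\varepsilon/3$ and telescoping bookkeeping described above.
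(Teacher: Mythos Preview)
Your proposal is correct and follows essentially the same route as the paper: both reduce joint continuity to the statement that $g \mapsto gAg^{-1}$ is continuous for each fixed $A$ (the paper isolates this as Proposition~\ref{prop:A_G-action_cts}), then reduce to $A = M_{\alpha}$ by density and the isometric nature of conjugation, and finally invoke the Taylor series identity of Proposition~\ref{prop:conj_M_alpha} together with Theorem~\ref{thm:high_deriv_L^infty_bd} to get the required smallness of $gM_{\alpha}g^{-1} - M_{\alpha}$. Your bookkeeping for the reductions (the decomposition $gAg^{-1} - g_0A_0g_0^{-1} = g_0((g_0^{-1}g)A(g_0^{-1}g)^{-1} - A)g_0^{-1} + g_0(A-A_0)g_0^{-1}$, and the telescoping over products of $M_{\alpha}$'s) matches the paper's uniform-limit-of-continuous-functions argument in substance if not in phrasing.
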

	
	To prove these theorems, we will need a few preliminary results.
	
	\begin{lem} \label{lem:weak_prod_rule}
		Let $X \in \g_{\R}$, let $\alpha \in \H^{\fin}$, and let $v,w \in \H^{\infty}$. Then
		\begin{align*}
			\langle M_{X\alpha} v, w \rangle_{\H}
			= -\langle M_{\alpha} Xv, w \rangle_{\H} - \langle M_{\alpha} v, Xw \rangle_{\H}.
		\end{align*}
	\end{lem}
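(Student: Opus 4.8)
The plan is to verify the identity first for $v \in \H^{\fin}$ (with $w \in \H^{\infty}$), where every product that appears is a product of two elements of $\H^{\fin}$ and is therefore defined, and then to upgrade to arbitrary $v \in \H^{\infty}$ by a density-and-continuity argument. Throughout I would use that $X \in \g_{\R}$ forces $\overline{X} = X$, so that the formal adjoint relation \eqref{eqn:Lie_alg_unitarity} simply says $\langle Xu, u' \rangle_{\H} = -\langle u, Xu' \rangle_{\H}$ for all $u,u' \in \H^{\infty}$.

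So first I would take $v \in \H^{\fin}$. Then $\alpha$, $v$, and $Xv$ all lie in the $\g$-invariant subspace $\H^{\fin}$, the product $\alpha v$ lies in $\H^{\infty}$, and the product rule \eqref{eqn:prod_rule} gives $(X\alpha)v = X(\alpha v) - \alpha(Xv)$ in $\H^{\infty}$. Pairing against $w \in \H^{\infty}$ and applying \eqref{eqn:Lie_alg_unitarity} (with $\overline{X}=X$) to the smooth vectors $\alpha v$ and $w$,
\begin{align*}
	\langle M_{X\alpha} v, w \rangle_{\H}
	= \langle (X\alpha)v, w \rangle_{\H}
	= \langle X(\alpha v), w \rangle_{\H} - \langle \alpha(Xv), w \rangle_{\H}
	= -\langle \alpha v, Xw \rangle_{\H} - \langle \alpha(Xv), w \rangle_{\H}.
\end{align*}
Since $\alpha v = M_{\alpha} v$ and $\alpha(Xv) = M_{\alpha}(Xv)$, this is exactly the asserted identity for $v \in \H^{\fin}$.

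Next I would extend to arbitrary $v \in \H^{\infty}$. Fix $w \in \H^{\infty}$ and regard both sides as functionals of $v$. By the $n=0$ case of Theorem~\ref{thm:high_deriv_L^infty_bd} the operators $M_{X\alpha}$ and $M_{\alpha}$ are bounded on $\H$, so $v \mapsto \langle M_{X\alpha} v, w \rangle_{\H}$ and $v \mapsto \langle M_{\alpha} v, Xw \rangle_{\H}$ are continuous in the $\H$-norm, hence continuous in the Fréchet topology of $\H^{\infty}$. The remaining term $v \mapsto \langle M_{\alpha}(Xv), w \rangle_{\H}$ is continuous in the Fréchet topology as well, since $v \mapsto Xv$ is continuous from $\H^{\infty}$ into $\H$ and $M_{\alpha} \in \B(\H)$. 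Thus both sides of the identity are Fréchet-continuous functionals of $v$ that agree on $\H^{\fin}$, which is dense in $\H^{\infty}$; therefore they agree on all of $\H^{\infty}$.

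I do not expect a genuine obstacle here — the lemma is an elementary consequence of the product rule and skew-adjointness of $\g_{\R}$. The single point that needs care is that multiplication is only defined on $\H^{\fin} \times \H^{\fin}$, which is precisely why the computation is carried out first for $v \in \H^{\fin}$ and then transported to $\H^{\infty}$ by continuity, rather than attempting to differentiate a product of two general smooth vectors.
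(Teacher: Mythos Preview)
Your proof is correct and follows essentially the same route as the paper: reduce to the case where the product rule on $\H^{\fin}$ applies, use skew-adjointness $\overline{X}=X$, and pass to general smooth vectors by density in the Fr\'echet topology. The only organizational difference is that the paper reduces \emph{both} $v$ and $w$ to $\H^{\fin}$ at once, whereas you reduce only $v$ (your Step~1 already works for arbitrary $w\in\H^{\infty}$); this makes your continuity argument slightly leaner but is otherwise the same idea.
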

	
	\begin{proof}
		Since $\H^{\fin}$ is dense in $\H^{\infty}$, it suffices to check that
		\begin{align*}
			\langle M_{X\alpha} \beta, \gamma \rangle_{\H}
			= -\langle M_{\alpha} X\beta, \gamma \rangle_{\H} - \langle M_{\alpha} \beta, X\gamma \rangle_{\H}
		\end{align*}
		for all $\beta,\gamma \in \H^{\fin}$. Rearranging, we want to show that
		\begin{align*}
			-\langle \alpha\beta, X\gamma \rangle_{\H}
			= \langle (X\alpha)\beta, \gamma \rangle_{\H} + \langle \alpha X\beta, \gamma \rangle_{\H}.
		\end{align*}
		Using \eqref{eqn:Lie_alg_unitarity} to move the $X$ on the left hand side to the first argument in the inner product, we see that this is equivalent to
		\begin{align*}
			\langle \overline{X}(\alpha\beta), \gamma \rangle_{\H}
			= \langle (X\alpha)\beta, \gamma \rangle_{\H} + \langle \alpha X\beta, \gamma \rangle_{\H}.
		\end{align*}
		Now $X \in \g_{\R}$ by assumption, so $\overline{X} = X$, and the above is immediate from the product rule.
	\end{proof}
	
	\begin{prop} \label{prop:conj_M_alpha}
		Let $X \in \g_{\R}$ with $\|X\|_{\g} \ll 1$, and set $g = \exp X$. Let $\alpha \in \H^{\fin}$. Then
		\begin{align*}
			gM_{\alpha}g^{-1}
			= \sum_{n=0}^{\infty} \frac{1}{n!} M_{X^n\alpha}
		\end{align*}
		(because $\|X\|_{\g} \ll 1$, Theorem~\ref{thm:high_deriv_L^infty_bd} implies that the sum converges absolutely in operator norm). Hence $gM_{\alpha}g^{-1} \in \A$.
	\end{prop}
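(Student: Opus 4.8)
The plan is to prove the identity by testing against smooth vectors and using the factorial-type derivative bounds of Theorem~\ref{thm:high_deriv_L^infty_bd} to kill a Taylor remainder. First I would check that the right-hand side makes sense: since $\|M_{X^n\alpha}\|_{\B(\H)} = \|X^n\alpha\|_{L^\infty} \lesssim_\alpha O(n\|X\|_{\g})^n$ by Theorem~\ref{thm:high_deriv_L^infty_bd}, and $\sum_n \tfrac{1}{n!} O(n\|X\|_{\g})^n$ converges by Stirling's formula provided $\|X\|_{\g} \ll 1$ — with the smallness threshold depending only on the absolute $O$-constant of Theorem~\ref{thm:high_deriv_L^infty_bd}, not on $\alpha$ — the series $\sum_n \tfrac{1}{n!}M_{X^n\alpha}$ converges absolutely in operator norm. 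Note that $X^n\alpha \in \H^{\fin}$ because $\H^{\fin}$ is $\g$-invariant, so every partial sum lies in $\A$; since $\A$ is norm-closed, it will suffice to identify the limit with $gM_{\alpha}g^{-1}$ to conclude $gM_{\alpha}g^{-1} \in \A$.

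For each $n \geq 0$ set $G_n(t) = \exp(tX) M_{X^n\alpha} \exp(-tX) \in \B(\H)$, fix $v,w \in \H^{\infty}$, and put $f(t) = \langle G_0(t) v, w \rangle_{\H}$. Using that $\exp(tX)$ is unitary (so $\exp(tX)^* = \exp(-tX)$) and preserves $\H^{\infty}$, one rewrites $\langle G_n(t) v, w \rangle_{\H} = \langle M_{X^n\alpha} \exp(-tX) v, \exp(-tX) w \rangle_{\H}$. The curves $t \mapsto \exp(-tX)v$ and $t \mapsto \exp(-tX)w$ are smooth $\H$-valued functions with derivative $-X\exp(-tX)(\cdot)$, and $M_{X^n\alpha}$ is bounded, so differentiating under the inner product and applying Lemma~\ref{lem:weak_prod_rule} with $X^n\alpha$ in place of $\alpha$ gives the recursion $\tfrac{d}{dt}\langle G_n(t) v, w \rangle_{\H} = \langle G_{n+1}(t) v, w \rangle_{\H}$. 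By induction $f \in C^{\infty}$ with $f^{(n)}(t) = \langle G_n(t) v, w \rangle_{\H}$, and in particular $f^{(n)}(0) = \langle M_{X^n\alpha} v, w \rangle_{\H}$ since $G_n(0) = M_{X^n\alpha}$.

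Next I would apply Taylor's theorem with integral remainder on $[0,1]$: $f(1) = \sum_{n=0}^{N-1} \tfrac{1}{n!}\langle M_{X^n\alpha} v, w \rangle_{\H} + \tfrac{1}{(N-1)!}\int_0^1 (1-s)^{N-1} \langle G_N(s) v, w \rangle_{\H}\, ds$. Because $G_N(s)$ is a unitary conjugate of $M_{X^N\alpha}$ and hence has the same operator norm, the remainder is at most $\tfrac{1}{N!}\|X^N\alpha\|_{L^\infty}\|v\|_{\H}\|w\|_{\H} \lesssim_\alpha \tfrac{O(N\|X\|_{\g})^N}{N!}\|v\|_{\H}\|w\|_{\H}$, which tends to $0$ as $N \to \infty$ when $\|X\|_{\g} \ll 1$, by the same Stirling estimate as above. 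Letting $N \to \infty$ yields $\langle g M_{\alpha} g^{-1} v, w \rangle_{\H} = f(1) = \big\langle \big(\sum_{n} \tfrac{1}{n!} M_{X^n\alpha}\big) v, w \big\rangle_{\H}$ for all $v,w \in \H^{\infty}$, where $G_0(1) = \exp(X)M_\alpha\exp(-X) = gM_\alpha g^{-1}$. Since $\H^{\infty}$ is dense in $\H$ and both sides are bounded operators, this is an equality in $\B(\H)$; and as noted, $\A$ being closed then gives $gM_{\alpha}g^{-1} \in \A$.

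The main obstacle is already behind us: it is Theorem~\ref{thm:high_deriv_L^infty_bd}, whose $O(n\|X\|_{\g})^n$ bound is exactly strong enough to make the exponential series converge and the Taylor remainder vanish. Given that theorem, the only points requiring care in the present proof are the routine justification of differentiation under the inner product (which uses smoothness of $v,w$ and boundedness of $M_{X^n\alpha}$) and the observation that the smallness threshold for $\|X\|_{\g}$ is independent of $\alpha$, which holds because the exponent base in Theorem~\ref{thm:high_deriv_L^infty_bd} is absolute.
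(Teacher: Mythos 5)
Your proof is correct and follows essentially the same approach as the paper's: both test the claimed operator identity against smooth vectors $v,w$, define the scalar function $f(t) = \langle \exp(tX) M_\alpha \exp(-tX) v, w\rangle_\H$, establish the recursion $f^{(n)}(t) = \langle \exp(tX) M_{X^n\alpha}\exp(-tX)v, w\rangle_\H$ via Lemma~\ref{lem:weak_prod_rule}, and then conclude from the $O(n\|X\|_\g)^n$ bound of Theorem~\ref{thm:high_deriv_L^infty_bd} that the Taylor series of $f$ at $0$ converges to $f(1)$. The only cosmetic difference is that you invoke Taylor's theorem with integral remainder explicitly, whereas the paper states directly that the derivative bounds force the Taylor series to converge absolutely to $F_\alpha$ on $[0,1]$.
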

	
	\begin{proof}
		Since $\H^{\infty}$ is dense in $\H$, it suffices to check that
		\begin{align} \label{eqn:conj_M_alpha_test_v,w}
			\langle gM_{\alpha}g^{-1} v, w \rangle_{\H}
			= \sum_{n=0}^{\infty} \frac{1}{n!} \langle M_{X^n\alpha} v, w \rangle_{\H}
		\end{align}
		for all $v,w \in \H^{\infty}$. For $t \in [0,1]$, denote $g_t = \exp(tX)$. For fixed $v,w \in \H^{\infty}$ and for each $\beta \in \H^{\fin}$, let $F_{\beta} \colon [0,1] \to \C$ be the function
		\begin{align*}
			F_{\beta}(t)
			= \langle g_t M_{\beta} g_t^{-1} v, w \rangle_{\H}.
		\end{align*}
		With this notation, \eqref{eqn:conj_M_alpha_test_v,w} can be rewritten as
		\begin{align} \label{eqn:F_alpha_Taylor_goal}
			F_{\alpha}(1)
			= \sum_{n=0}^{\infty} \frac{1}{n!} F_{X^n\alpha}(0).
		\end{align}
		In general, by unitarity,
		\begin{align*}
			F_{\beta}(t)
			= \langle M_{\beta} g_t^{-1} v, g_t^{-1} w \rangle_{\H}.
		\end{align*}
		Since $v,w \in \H^{\infty}$, it follows from this formula that $F_{\beta} \in C^{\infty}([0,1])$, and furthermore its derivatives can be computed by the product rule. The first derivative is
		\begin{align*}
			F_{\beta}'(t)
			= -\langle M_{\beta} X g_t^{-1} v, g_t^{-1} w \rangle_{\H} - \langle M_{\beta} g_t^{-1} v, X g_t^{-1} w \rangle_{\H}.
		\end{align*}
		Since $g_t^{-1}v, g_t^{-1}w \in \H^{\infty}$, Lemma~\ref{lem:weak_prod_rule} gives
		\begin{align*}
			F_{\beta}'(t) = \langle M_{X\beta} g_t^{-1} v, g_t^{-1} w \rangle_{\H}
			= F_{X\beta}(t).
		\end{align*}
		This is valid for all $\beta \in \H^{\fin}$, so we can iterate it to see that the $n$th derivative of $F_{\alpha}$ is $F_{\alpha}^{(n)} = F_{X^n\alpha}$. By the trivial bound
		\begin{align*}
			|F_{\beta}(t)|
			\leq \|\beta\|_{L^{\infty}} \|v\|_{\H} \|w\|_{\H}
		\end{align*}
		and by Theorem~\ref{thm:high_deriv_L^infty_bd},
		\begin{align*}
			|F_{\alpha}^{(n)}(t)|
			\lesssim_{\alpha} O(n\|X\|_{\g})^n \|v\|_{\H} \|w\|_{\H}
		\end{align*}
		for all $n \in \Z_{\geq 0}$ and $t \in [0,1]$. Since $\|X\|_{\g} \ll 1$, it follows that the Taylor series for $F_{\alpha}$ at $t=0$ converges absolutely to $F_{\alpha}$ on all of $[0,1]$. Therefore
		\begin{align*}
			F_{\alpha}(1)
			= \sum_{n=0}^{\infty} \frac{1}{n!} F_{\alpha}^{(n)}(0)
			= \sum_{n=0}^{\infty} \frac{1}{n!} F_{X^n\alpha}(0).
		\end{align*}
		This is the desired equation \eqref{eqn:F_alpha_Taylor_goal}.
	\end{proof}
	
	From Proposition~\ref{prop:conj_M_alpha}, we deduce Theorem~\ref{thm:A_G-inv} with little additional effort. Theorem~\ref{thm:A_G-action_cts} will require one more intermediate result.
	
	\begin{proof}[Proof of Theorem~\ref{thm:A_G-inv}]
		By Proposition~\ref{prop:conj_M_alpha}, there is a neighborhood $U$ of the identity in $G$ such that for all $g \in U$ and $\alpha \in \H^{\fin}$, one has $gM_{\alpha}g^{-1} \in \A$. Since conjugation by $g$ is a C*-automorphism, the closed subalgebra $\A^g$ of $\B(\H)$ generated by $\{gM_{\alpha}g^{-1} : \alpha \in \H^{\fin}\}$ is the $g$-conjugate of the closed subalgebra generated by $\{M_{\alpha} : \alpha \in \H^{\fin}\}$, i.e., $\A^g = g\A g^{-1}$. From the definition of $U$, we know that $\A^g \subseteq \A$ for $g \in U$, and hence $g\A g^{-1} \subseteq \A$ for $g \in U$. Since $G$ is connected, it is generated by any neighborhood of the identity. Thus the $g$-invariance of $\A$ for $g \in U$ implies that $\A$ is $g$-invariant for all $g \in G$.
	\end{proof}
	
	\begin{prop} \label{prop:A_G-action_cts}
		For each $A \in \A$, the map $G \to \A$ given by $g \mapsto gAg^{-1}$ is continuous.
	\end{prop}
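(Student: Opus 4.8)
\textbf{Proof proposal for Proposition~\ref{prop:A_G-action_cts}.}

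The plan is to prove continuity of $g \mapsto gAg^{-1}$ in two stages: first for $A = M_\alpha$ with $\alpha \in \H^{\fin}$, then for general $A \in \A$ by an approximation argument. For the first stage, fix $\alpha \in \H^{\fin}$ and $g_0 \in G$. I would write $g = g_0 \exp(X)$ for $X \in \g_\R$ small and use
\begin{align*}
	gM_\alpha g^{-1}
	= g_0 \big(\exp(X) M_\alpha \exp(-X)\big) g_0^{-1},
\end{align*}
so that since conjugation by the fixed unitary $g_0$ is an isometry of $\B(\H)$, it suffices to prove continuity at the identity. At the identity, Proposition~\ref{prop:conj_M_alpha} gives, for $\|X\|_\g \ll 1$, the absolutely convergent expansion $\exp(X) M_\alpha \exp(-X) = \sum_{n \geq 0} \frac{1}{n!} M_{X^n\alpha}$, and Theorem~\ref{thm:high_deriv_L^infty_bd} bounds $\|M_{X^n\alpha}\| = \|X^n\alpha\|_{L^\infty} \lesssim_\alpha O(n\|X\|_\g)^n$. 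Subtracting off the $n=0$ term $M_\alpha$ and estimating the tail, one gets
\begin{align*}
	\|\exp(X) M_\alpha \exp(-X) - M_\alpha\|
	\leq \sum_{n=1}^{\infty} \frac{1}{n!} \|X^n\alpha\|_{L^\infty}
	\lesssim_\alpha \|X\|_\g
\end{align*}
for $\|X\|_\g$ sufficiently small (the $n=1$ term dominates once $\|X\|_\g \ll 1$), which tends to $0$ as $X \to 0$. This proves continuity of $g \mapsto gM_\alpha g^{-1}$.

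For general $A \in \A$, recall that $\A$ is the operator-norm closure of the $*$-subalgebra generated by $\{M_\alpha : \alpha \in \H^{\fin}\}$. By the just-established case together with the fact that products and sums of continuous $\B(\H)$-valued functions are continuous (and conjugation is multiplicative: $g(BC)g^{-1} = (gBg^{-1})(gCg^{-1})$), the map $g \mapsto gBg^{-1}$ is continuous for every $B$ in this dense subalgebra. Now given $A \in \A$ and $\varepsilon > 0$, pick such a $B$ with $\|A - B\| < \varepsilon$. Since conjugation by any $g \in G$ is an isometry of $\B(\H)$ (it is a $*$-automorphism of $\B(\H)$ induced by a unitary), we have $\|gAg^{-1} - gBg^{-1}\| = \|A - B\| < \varepsilon$ for all $g$. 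Hence for $g$ near $g_0$,
\begin{align*}
	\|gAg^{-1} - g_0 A g_0^{-1}\|
	\leq \|gAg^{-1} - gBg^{-1}\| + \|gBg^{-1} - g_0Bg_0^{-1}\| + \|g_0Bg_0^{-1} - g_0Ag_0^{-1}\|
	< 2\varepsilon + \|gBg^{-1} - g_0Bg_0^{-1}\|,
\end{align*}
and the last term is $< \varepsilon$ once $g$ is close enough to $g_0$ by continuity for $B$. Since $\varepsilon$ was arbitrary, $g \mapsto gAg^{-1}$ is continuous at $g_0$.

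The main obstacle is really the input Theorem~\ref{thm:high_deriv_L^infty_bd}, which is proved later; given that theorem, the argument here is soft. The one point requiring a little care is making the reduction to the identity and the absorption of the fixed translation $g_0$: this works precisely because conjugation by a unitary is an isometry on $\B(\H)$, so no quantitative control on how constants depend on $g_0$ is needed — only the local estimate near the identity, which is uniform. I would also remark that this proposition is what feeds into Theorem~\ref{thm:A_G-action_cts} (joint continuity of $G \times \A \to \A$): combined with the fact that $g \mapsto gAg^{-1}$ is uniformly bounded (indeed isometric) in $A$ for each fixed $g$, separate continuity plus this uniformity upgrades to joint continuity by a standard $\varepsilon/2$ argument.
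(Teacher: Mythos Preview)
Your proposal is correct and follows essentially the same approach as the paper: reduce to continuity near the identity via translation symmetry (conjugation by $g_0$ is an isometry), use Proposition~\ref{prop:conj_M_alpha} and Theorem~\ref{thm:high_deriv_L^infty_bd} to handle $A = M_\alpha$, extend to the dense subalgebra by multiplicativity of conjugation, and pass to the closure using that conjugation is isometric. The paper phrases the last step as ``$\Phi_A$ is a uniform limit of continuous functions,'' which is exactly your $\varepsilon/3$ argument repackaged.
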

	
	\begin{proof}
		By translation symmetry, it suffices to show that there is a neighborhood $U$ of the identity in $G$ such that for each $A \in \A$, the map $\Phi_A \colon U \to \A$ given by $\Phi_A(g) = gAg^{-1}$ for $g \in U$ is continuous. Let
		\begin{align*}
			U = \{\exp X : X \in \g_{\R} \text{ and } \|X\|_{\g} \ll 1\},
		\end{align*}
		where the condition $\|X\|_{\g} \ll 1$ ensures that $X$ satisfies the hypothesis of Proposition~\ref{prop:conj_M_alpha}. Then the Taylor expansion in Proposition~\ref{prop:conj_M_alpha} implies that $\Phi_A$ is continuous when $A = M_{\alpha}$ for some $\alpha \in \H^{\fin}$. By the definition of $\A$, a general $A \in \A$ can be approximated by sums of products of $M_{\alpha}$'s. Since conjugation by $g$ is a C*-automorphism, such an approximation translates to an approximation of $\Phi_A$ by sums of products of $\Phi_{M_{\alpha}}$'s, where the latter approximation takes place in the Banach space of functions $\Psi \colon U \to \A$ with the uniform norm $\|\Psi\| = \sup_{g \in U} \|\Psi(g)\|_{\op}$. Therefore $\Phi_A$ is a uniform limit of continuous functions, and hence is itself continuous.
	\end{proof}
	
	We will only ever need Proposition~\ref{prop:A_G-action_cts} as opposed to the full strength of Theorem~\ref{thm:A_G-action_cts}, but Theorem~\ref{thm:A_G-action_cts} quickly follows from Proposition~\ref{prop:A_G-action_cts}, so we give the proof just to make clear that the $G$-action on $\A$ is well-behaved.
	
	\begin{proof}[Proof of Theorem~\ref{thm:A_G-action_cts}]
		Let $g \in G$ and $A \in \A$. Given sequences $g_n \to g$ and $A_n \to A$, we must show that $g_n A_n g_n^{-1} \to gAg^{-1}$. By the triangle inequality,
		\begin{align*}
			\|g_nA_ng_n^{-1} - gAg^{-1}\|_{\op}
			\leq \|g_nA_ng_n^{-1} - g_nAg_n^{-1}\|_{\op} + \|g_nAg_n^{-1} - gAg^{-1}\|_{\op}.
		\end{align*}
		The first term on the right hand side is equal to $\|A_n - A\|_{\op}$, so it goes to zero. The second term goes to zero by Proposition~\ref{prop:A_G-action_cts}.
	\end{proof}
	
	
	Now, let $\ev_{\mathbf{1}} \colon \A \to \H$ be given by evaluation at $\mathbf{1}$. This is a bounded linear map, and it is $G$-equivariant because
	\begin{align*}
		\ev_{\mathbf{1}}(gAg^{-1})
		= gAg^{-1}\mathbf{1}
		= gA\mathbf{1}
		= g\ev_{\mathbf{1}}(A)
	\end{align*}
	for all $g \in G$ and $A \in \A$.
	
	\begin{prop} \label{prop:ev_1_injective}
		The map $\ev_{\mathbf{1}} \colon \A \to \H$ is injective.
	\end{prop}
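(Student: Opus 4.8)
The plan is to exploit commutativity of $\A$ together with density of $\H^{\fin}$. Suppose $A \in \A$ satisfies $\ev_{\mathbf{1}}(A) = A\mathbf{1} = 0$; I want to conclude $A = 0$. First I would note that $\A$ is commutative (Propositions~\ref{prop:self-adjoint} and \ref{prop:commute}) and contains every multiplication operator $M_\alpha$ for $\alpha \in \H^{\fin}$, so $A$ commutes with each such $M_\alpha$ as an identity of bounded operators on $\H$. Then, using the unit axiom in the form $M_\alpha \mathbf{1} = \alpha\mathbf{1} = \alpha$, for every $\alpha \in \H^{\fin}$ I compute
\begin{align*}
A\alpha
= A M_\alpha \mathbf{1}
= M_\alpha A \mathbf{1}
= M_\alpha \cdot 0
= 0.
\end{align*}
Thus $A$ vanishes on all of $\H^{\fin}$.

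Since $\H^{\fin}$ is dense in $\H$ and $A \in \B(\H)$ is bounded, it follows that $A = 0$ on $\H$. This proves that $\ker \ev_{\mathbf{1}} = 0$, i.e., $\ev_{\mathbf{1}}$ is injective. There is no real obstacle here; the only point requiring care is that $M_\alpha$ is to be read as the bounded extension of multiplication-by-$\alpha$ to $\H$ (available by the $n=0$ case of Theorem~\ref{thm:high_deriv_L^infty_bd}), so that the commutation $AM_\alpha = M_\alpha A$ makes sense and holds for the bounded operator $A$ — which it does, since $M_\alpha \in \A$ and $\A$ is a commutative subalgebra of $\B(\H)$.
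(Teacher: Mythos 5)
Your proof is correct and is essentially the contrapositive of the paper's: both rely on the identity $A\alpha = AM_\alpha\mathbf{1} = M_\alpha A\mathbf{1}$ for $\alpha \in \H^{\fin}$, using commutativity of $\A$ and density of $\H^{\fin}$ in $\H$. The only cosmetic difference is that the paper assumes $A \neq 0$ and deduces $A\mathbf{1} \neq 0$, whereas you assume $A\mathbf{1} = 0$ and deduce $A = 0$.
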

	
	\begin{proof}
		Let $A \in \A$ be nonzero. We wish to show that $\ev_{\mathbf{1}}(A)$ is nonzero. Since $\A \subseteq \B(\H)$, nonzeroness of $A$ means that there exists $\alpha \in \H$ with $A\alpha \neq 0$. By density, $\alpha$ may be taken to be in $\H^{\fin}$. Then using that $\A$ is commutative, we can write
		\begin{align*}
			0 \neq A\alpha
			= AM_{\alpha} \mathbf{1}
			= M_{\alpha} A\mathbf{1}
			= M_{\alpha}\ev_{\mathbf{1}}(A).
		\end{align*}
		This forces $\ev_{\mathbf{1}}(A) \neq 0$, as desired.
	\end{proof}
	
	Finally, we note that Corollary~\ref{cor:qual_Sob_emb} has the following easy consequence.
	
	\begin{cor} \label{cor:qual_Sob_emb_into_A}
		There exists a nonnegative continuous function $\varphi \in L^1(G)$, not identically zero, such that for all $v \in \H$, the convolution $\varphi \ast v$ lies in the image of $\ev_{\mathbf{1}}$.
	\end{cor}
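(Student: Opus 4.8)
\textbf{Proof plan for Corollary~\ref{cor:qual_Sob_emb_into_A}.}
The plan is to use the \emph{same} function $\varphi$ furnished by Corollary~\ref{cor:qual_Sob_emb}, and to observe that membership of $\varphi \ast v$ in the space $\mathcal{C}$ is exactly the data needed to exhibit $\varphi \ast v$ as $\ev_{\mathbf{1}}$ of an element of $\A$. Recall that by definition, $\mathcal{C}$ consists of those $w \in \H$ which are limits in $\H$ of sequences $\alpha_n \in \H^{\fin}$ that are Cauchy with respect to the $L^{\infty}$ norm on $\H^{\fin}$.

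So let $\varphi$ be as in Corollary~\ref{cor:qual_Sob_emb}, fix $v \in \H$, and set $w = \varphi \ast v$, which lies in $\mathcal{C}$ by that corollary. Choose $\alpha_n \in \H^{\fin}$ with $\alpha_n \to w$ in $\H$ and $(\alpha_n)$ $L^{\infty}$-Cauchy. The first step is to note that $M_{\alpha_n} - M_{\alpha_m} = M_{\alpha_n - \alpha_m}$, so by the very definition of $\|\cdot\|_{L^{\infty}}$ as the operator norm of the multiplication operator, $\|M_{\alpha_n} - M_{\alpha_m}\|_{\op} = \|\alpha_n - \alpha_m\|_{L^{\infty}}$; hence $(M_{\alpha_n})$ is a Cauchy sequence in $\A$ with respect to the operator norm. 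Since $\A$ is norm-closed in $\B(\H)$, it converges to some $A \in \A$. The second step is to evaluate at $\mathbf{1}$: using the unit axiom, $\ev_{\mathbf{1}}(M_{\alpha_n}) = M_{\alpha_n}\mathbf{1} = \alpha_n \mathbf{1} = \alpha_n$, which converges to $w$ in $\H$; on the other hand $\ev_{\mathbf{1}}$ is a bounded linear map and $M_{\alpha_n} \to A$ in $\A$, so $\ev_{\mathbf{1}}(M_{\alpha_n}) \to \ev_{\mathbf{1}}(A)$. Comparing the two limits gives $\ev_{\mathbf{1}}(A) = w$, i.e., $\varphi \ast v = w$ lies in the image of $\ev_{\mathbf{1}}$, as required.

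There is no real obstacle here beyond correctly invoking Corollary~\ref{cor:qual_Sob_emb} and the fact (from the $n=0$ case of Theorem~\ref{thm:high_deriv_L^infty_bd}) that each $M_{\alpha}$ is bounded and hence lies in $\A$; the argument is a short soft completeness argument. The only point to be slightly careful about is that the sequence $\alpha_n$ is $L^{\infty}$-Cauchy but need \emph{not} converge in $L^{\infty}$ to an element of $\H^{\fin}$ — this is precisely why we pass to the operator-norm limit $A$ in the closed algebra $\A$ rather than trying to name a multiplier directly.
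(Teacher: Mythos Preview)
Your proposal is correct and follows essentially the same approach as the paper: take $\varphi$ from Corollary~\ref{cor:qual_Sob_emb}, use the $L^{\infty}$-Cauchy sequence $\alpha_n$ witnessing $\varphi\ast v\in\mathcal{C}$ to produce a Cauchy sequence $M_{\alpha_n}$ in $\A$, and apply boundedness of $\ev_{\mathbf{1}}$ to identify the limit. Your write-up is slightly more explicit (spelling out $M_{\alpha_n}-M_{\alpha_m}=M_{\alpha_n-\alpha_m}$ and $\ev_{\mathbf{1}}(M_{\alpha_n})=\alpha_n$ via the unit axiom), but the argument is the same.
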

	
	\begin{proof}
		Let $\varphi$ be as in Corollary~\ref{cor:qual_Sob_emb}, and let $v \in \H$. Then $\varphi \ast v \in \mathcal{C}$, which means that there is an $L^{\infty}$-Cauchy sequence $\alpha_n \in \H^{\fin}$ with $\alpha_n \to \varphi \ast v$ in $\H$. The fact that $\alpha_n$ is a Cauchy sequence in $L^{\infty}$ is equivalent to $M_{\alpha_n}$ being a Cauchy sequence in $\A$. Thus the $M_{\alpha_n}$ converge to a limit $A \in \A$. Since $\ev_{\mathbf{1}}$ is bounded,
		\begin{align*}
			\ev_{\mathbf{1}}(A)
			= \lim_{n \to \infty} \ev_{\mathbf{1}}(M_{\alpha_n})
			= \lim_{n \to \infty} \alpha_n
			= \varphi \ast v.
		\end{align*}
		Thus $\varphi \ast v$ is indeed in the image of $\ev_{\mathbf{1}}$.
	\end{proof}
	
	\subsection{The spectrum of the C*-algebra} \label{subsec:reduce_to_L^infty:spectrum}
	
	By Gelfand duality (Theorem~\ref{thm:Gelfand_duality}), $\A$ is isomorphic as a C*-algebra to $C(X)$ for a compact Hausdorff space $X$ unique up to homeomorphism. Fix an isomorphism $\Phi \colon C(X) \to \A$. Since $\A$ is separable, $X$ is metrizable. The linear functional
	\begin{align*}
		f \mapsto \langle \Phi(f)\mathbf{1},\mathbf{1} \rangle_{\H}
	\end{align*}
	on $C(X)$ is positive because for $f \in C(X)$,
	\begin{align} \label{eqn:fnl_positive}
		|f|^2 \mapsto
		\langle \Phi(|f|^2) \mathbf{1}, \mathbf{1} \rangle_{\H}
		= \langle \Phi(f)^{\ast} \Phi(f) \mathbf{1}, \mathbf{1} \rangle_{\H}
		= \|\Phi(f)\mathbf{1}\|_{\H}^2
		\geq 0
	\end{align}
	(indeed, this functional is the pullback to $C(X)$ of the GNS state on $\A$ associated to $\mathbf{1} \in \H$).
	Thus by the Riesz representation theorem, this functional is given by integration against a unique positive measure $\mu$ on $X$. The measure $\mu$ is a probability measure because
	\begin{align*}
		\mu(X)
		= \int_X 1 \, d\mu
		= \langle \Phi(1)\mathbf{1}, \mathbf{1} \rangle_{\H}
		= \langle \mathbf{1}, \mathbf{1} \rangle_{\H}
		= 1,
	\end{align*}
	where the last equality is the normalization axiom in the definition of a multiplicative representation.
	By \eqref{eqn:fnl_positive},
	\begin{align*}
		\|f\|_{L^2(X,\mu)}
		= \|\Phi(f)\mathbf{1}\|_{\H}
		= \|\ev_{\mathbf{1}} \circ \Phi(f)\|_{\H}
	\end{align*}
	for all $f \in C(X)$, so $\ev_{\mathbf{1}} \circ \Phi$ extends to $L^2(X,\mu)$ and gives an isometry $\Psi \colon L^2(X,\mu) \to \H$.
	Since $\Phi$ is an isomorphism onto $\A$, the image of $\ev_{\mathbf{1}} \circ \Phi$ is the same as the image of $\ev_{\mathbf{1}}$, which contains $\H^{\fin}$ and hence is dense in $\H$. Thus the image of $\Psi$ is dense in $\H$, and since $\Psi$ is an isometry, it follows that $\Psi$ is an isomorphism. In summary, we have a diagram
	$$
	\begin{tikzcd}
		C(X) \arrow[hookrightarrow]{r} \arrow{d}{\Phi} & L^2(X,\mu) \arrow{d}{\Psi} \\
		\A \arrow[hookrightarrow]{r}{\ev_{\mathbf{1}}} & \H
	\end{tikzcd}
	$$
	where the vertical maps are isomorphisms. The top arrow is the natural map taking a continuous function in $C(X)$ to its $\mu$-a.e. equivalence class in $L^2(X,\mu)$. It is not obvious \textit{a priori} that this top arrow is injective, but in fact it is because the bottom arrow is injective by Proposition~\ref{prop:ev_1_injective}.
	
	\begin{prop} \label{prop:mu_full_support}
		The measure $\mu$ has full support on $X$.
	\end{prop}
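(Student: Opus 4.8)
The plan is to argue by contradiction, leveraging the injectivity of $\ev_{\mathbf{1}}$ from Proposition~\ref{prop:ev_1_injective}. Suppose $\mu$ does not have full support; then there is a nonempty open set $V \subseteq X$ with $\mu(V) = 0$. I would fix a point $x_0 \in V$ and use that $X$, being compact Hausdorff, is normal: Urysohn's lemma then produces a continuous function $f \colon X \to [0,1]$ with $f(x_0) = 1$ and $f \equiv 0$ on $X \setminus V$. The normalization at $x_0$ (rather than merely requiring $f$ to be supported in $V$) is what guarantees $f \neq 0$ in $C(X)$, and this is the only point where a little care is needed.

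Next I would invoke the identity recorded just before the statement of the proposition, namely $\|f\|_{L^2(X,\mu)} = \|\Phi(f)\mathbf{1}\|_{\H}$ for all $f \in C(X)$, which comes from \eqref{eqn:fnl_positive} together with the fact that $\Phi$ is a C*-isomorphism, so $\Phi(|f|^2) = \Phi(f)^{\ast}\Phi(f)$. Concretely, I would compute
\begin{align*}
	\|\Phi(f)\mathbf{1}\|_{\H}^2
	= \langle \Phi(f)^{\ast}\Phi(f)\mathbf{1}, \mathbf{1} \rangle_{\H}
	= \int_X |f|^2 \, d\mu
	\leq \|f\|_{\infty}^2 \, \mu(V)
	= 0,
\end{align*}
using that $f$ vanishes outside the $\mu$-null set $V$. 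Hence $\Phi(f)\mathbf{1} = 0$, i.e.\ $\ev_{\mathbf{1}}(\Phi(f)) = 0$.

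Finally, Proposition~\ref{prop:ev_1_injective} gives that $\ev_{\mathbf{1}} \colon \A \to \H$ is injective, so $\Phi(f) = 0$ in $\A$; since $\Phi \colon C(X) \to \A$ is an isomorphism, this forces $f = 0$, contradicting $f(x_0) = 1$. Therefore every nonempty open subset of $X$ has positive $\mu$-measure, which is exactly the statement that $\mu$ has full support. I do not anticipate any genuine obstacle: the substantive content (injectivity of $\ev_{\mathbf{1}}$, resting on commutativity of $\A$ and density of $\H^{\fin}$) and the isometry identity have both already been established, so what remains is the routine Urysohn reduction and the trivial $L^2$-estimate above.
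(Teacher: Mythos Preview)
Your proof is correct and takes essentially the same approach as the paper: contradiction via Urysohn's lemma, producing a nonzero $f \in C(X)$ supported in a $\mu$-null open set, then deducing $f = 0$ from the injectivity of $\ev_{\mathbf{1}}$. The paper phrases the last step as ``$C(X) \to L^2(X,\mu)$ is injective'' (established just before the proposition via the commutative diagram and Proposition~\ref{prop:ev_1_injective}), while you unpack this directly; the content is identical.
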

	
	\begin{proof}
		Suppose not. Then there is a nonempty open subset $U \subseteq X$ with $\mu(U) = 0$. By Urysohn's lemma, there exists a nonzero function $f \in C(X)$ which vanishes outside $U$. Then $f$ vanishes $\mu$-a.e., and hence vanishes as an element of $L^2(X,\mu)$. This contradicts that $C(X) \to L^2(X,\mu)$ is injective.
	\end{proof}
	
	Because $C(X) \to L^2(X,\mu)$ is injective, we view $C(X)$ as a subspace of $L^2(X,\mu)$ from now on.
	
	\begin{lem} \label{lem:Im_ev_1_in_C(X)}
		Let $f \in L^2(X,\mu)$ with $\Psi(f)$ in the image of $\ev_{\mathbf{1}}$. Then $f \in C(X)$.
	\end{lem}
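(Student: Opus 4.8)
The plan is to unwind the two isomorphisms $\Phi$ and $\Psi$ and use the fact that the image of $\ev_{\mathbf{1}}$ is exactly $\ev_{\mathbf{1}}(\A) = \ev_{\mathbf{1}} \circ \Phi(C(X))$. Suppose $f \in L^2(X,\mu)$ with $\Psi(f)$ in the image of $\ev_{\mathbf{1}}$. Then $\Psi(f) = \ev_{\mathbf{1}}(A)$ for some $A \in \A$, and since $\Phi \colon C(X) \to \A$ is surjective, we can write $A = \Phi(h)$ for some $h \in C(X)$. Thus $\Psi(f) = \ev_{\mathbf{1}} \circ \Phi(h)$. On the other hand, by the commutativity of the square diagram (i.e., the definition of $\Psi$ as the continuous extension of $\ev_{\mathbf{1}} \circ \Phi$ along the inclusion $C(X) \hookrightarrow L^2(X,\mu)$), we have $\Psi(h) = \ev_{\mathbf{1}} \circ \Phi(h)$, where on the left $h$ is viewed as an element of $L^2(X,\mu)$ via the injection $C(X) \hookrightarrow L^2(X,\mu)$. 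Hence $\Psi(f) = \Psi(h)$, and since $\Psi$ is an isomorphism (in particular injective), $f = h$ as elements of $L^2(X,\mu)$. Therefore $f$ lies in the image of $C(X)$ inside $L^2(X,\mu)$; since we have agreed to view $C(X)$ as a subspace of $L^2(X,\mu)$, this says precisely $f \in C(X)$, as desired.

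There is essentially no obstacle here: the statement is a formal consequence of surjectivity of $\Phi$, injectivity of $\Psi$, and the commutativity of the diagram established just before Proposition~\ref{prop:mu_full_support}. The one point requiring a sentence of care is the identification $\Psi(h) = \ev_{\mathbf{1}} \circ \Phi(h)$ for $h \in C(X)$: this holds because $\Psi$ was \emph{defined} to be the unique continuous (isometric) extension of $\ev_{\mathbf{1}} \circ \Phi|_{C(X)}$ to all of $L^2(X,\mu)$, and on the dense subspace $C(X)$ the extension agrees with the original map. I would state this explicitly to avoid any ambiguity about which inclusion of $C(X)$ into $L^2(X,\mu)$ is meant; as noted in the excerpt, that inclusion is injective precisely because the bottom arrow $\ev_{\mathbf{1}}$ is injective (Proposition~\ref{prop:ev_1_injective}), so there is no circularity.

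\begin{proof}
	By hypothesis, $\Psi(f) = \ev_{\mathbf{1}}(A)$ for some $A \in \A$. Since $\Phi \colon C(X) \to \A$ is surjective, write $A = \Phi(h)$ with $h \in C(X)$, so that $\Psi(f) = \ev_{\mathbf{1}} \circ \Phi(h)$. By construction, $\Psi$ is the continuous extension of $\ev_{\mathbf{1}} \circ \Phi$ along $C(X) \hookrightarrow L^2(X,\mu)$; in particular, viewing $h$ as an element of $L^2(X,\mu)$ via this inclusion, we have $\Psi(h) = \ev_{\mathbf{1}} \circ \Phi(h) = \Psi(f)$. As $\Psi$ is an isomorphism, it is injective, so $f = h$ in $L^2(X,\mu)$. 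Since we view $C(X)$ as a subspace of $L^2(X,\mu)$, this shows $f \in C(X)$.
\end{proof}
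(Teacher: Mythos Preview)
Your proof is correct and is exactly the unwinding of the paper's one-line argument, which simply says ``This is because the vertical arrows in the diagram above are isomorphisms.'' You have just made explicit what those isomorphisms buy: surjectivity of $\Phi$ to produce $h$, commutativity of the square to get $\Psi(h)=\Psi(f)$, and injectivity of $\Psi$ to conclude $f=h$.
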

	
	\begin{proof}
		This is because the vertical arrows in the diagram above are isomorphisms.
	\end{proof}

	By
	Gelfand duality, the $G$-action on $\A$ by conjugation induces a $G$-action on $X$ by homeomorphisms. \textit{A priori}, the action map $G \times X \to X$ is only continuous when $G$ is given the discrete topology, but Proposition~\ref{prop:joint_cty} below says that it is actually continuous when $G$ has the usual topology.
	The $G$-action on $X$ induces a $G$-action on $C(X)$ by $gf(x) = f(g^{-1}x)$. The isomorphism $\Phi \colon C(X) \to \A$ is tautologically $G$-equivariant --- the $G$-action on $X$ is defined to make $\Phi$ equivariant.
	
	\begin{prop} \label{prop:joint_cty}
		The action map $G \times X \to X$ is continuous.
	\end{prop}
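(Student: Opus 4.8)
The plan is to deduce joint continuity of $G\times X\to X$ from the separate continuity statements we already have, namely Proposition~\ref{prop:A_G-action_cts} (for each fixed $A\in\A$, the orbit map $g\mapsto gAg^{-1}$ is continuous) together with metrizability of $X$. First I would recast the $G$-action on $X$ dually: a point $x\in X$ is a character $\chi_x\colon C(X)\to\C$, and under $\Phi$ this corresponds to a character of $\A$; the action of $g$ sends $x$ to the point whose character is $A\mapsto \chi_x(g^{-1}Ag)$, equivalently $f\mapsto \chi_{gx}(f)=\chi_x(g^{-1}f)$ for $f\in C(X)$. So to show $(g_n,x_n)\to(g,x)$ implies $g_nx_n\to gx$ in $X$, it suffices, since $X$ is compact metrizable (hence sequential) and $C(X)$ separates points of $X$, to show that $f(g_n x_n)\to f(gx)$ for a countable dense-in-$C(X)$ family of $f$ — or even just for all $f$ in a set of $f$'s that separates points. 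In fact because $X$ is compact metrizable it is enough to check $f(g_nx_n)\to f(gx)$ for all $f\in C(X)$: this forces every subsequential limit of $g_nx_n$ to have the same image under every $f$, hence to equal $gx$, and compactness does the rest.

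Next I would estimate $|f(g_nx_n)-f(gx)|$ by splitting it as $|f(g_nx_n)-f(gx_n)|+|f(gx_n)-f(gx)|$. The second term: $f(gx_n)=(g^{-1}f)(x_n)$ and $g^{-1}f\in C(X)$, so this tends to $0$ because $x_n\to x$ in $X$. The first term: $f(g_nx_n)-f(gx_n)=(g_n^{-1}f-g^{-1}f)(x_n)$, which is bounded in absolute value by $\|g_n^{-1}f-g^{-1}f\|_{L^\infty(X)}$, and this is exactly the norm, in $C(X)\simeq\A$, of $\Phi(g_n^{-1}f)-\Phi(g^{-1}f)=g_n^{-1}\Phi(f)g_n-g^{-1}\Phi(f)g$. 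By Proposition~\ref{prop:A_G-action_cts} applied to $A=\Phi(f)$ — more precisely, by the continuity of $g\mapsto gAg^{-1}$ which gives continuity of $g\mapsto g^{-1}Ag$ by replacing $g$ with $g^{-1}$ — this goes to $0$ as $g_n\to g$. Hence $f(g_nx_n)\to f(gx)$ for every $f\in C(X)$, and we conclude $g_nx_n\to gx$.

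Strictly speaking one should phrase the argument with nets rather than sequences if one does not wish to invoke metrizability, but here $X$ is metrizable (as noted, $\A$ is separable), so sequences suffice; I would add a sentence to that effect. The only mild subtlety — and the step I'd be most careful about — is the reduction ``$f(g_nx_n)\to f(gx)$ for all $f\in C(X)$ implies $g_nx_n\to gx$'': this uses that $C(X)$ separates points of $X$ (true since $X$ is compact Hausdorff, by Urysohn) plus compactness of $X$ to promote pointwise-under-all-$f$ convergence to convergence in $X$. Concretely: pass to an arbitrary subsequence; by compactness extract a further subsequence $g_{n_k}x_{n_k}\to y$; then $f(y)=\lim f(g_{n_k}x_{n_k})=f(gx)$ for all $f\in C(X)$, so $y=gx$; since every subsequence has a sub-subsequence converging to $gx$, the whole sequence converges to $gx$. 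This completes the proof of Proposition~\ref{prop:joint_cty}. No heavy computation is involved; the content is entirely in packaging Proposition~\ref{prop:A_G-action_cts} through Gelfand duality.
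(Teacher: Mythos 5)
Your proof is correct and uses the same key decomposition as the paper: split $|f(g_nx_n)-f(gx)|$ into $|f(g_nx_n)-f(gx_n)|+|f(gx_n)-f(gx)|$, control the first term by $\|g_n^{-1}\Phi(f)g_n-g^{-1}\Phi(f)g\|_{\op}$ via Proposition~\ref{prop:A_G-action_cts}, and the second by continuity of $g^{-1}f$. The only difference is cosmetic: the paper argues by contradiction, using Urysohn's lemma to produce a single witness $f$ with $|f(g_nx_n)-f(gx)|=1$, which avoids your final compactness/separation-of-points step; both routes are fine.
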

	
	\begin{proof}
		Suppose for a contradiction that $G \times X \to X$ is not continuous. Then there are convergent sequences $g_n \to g$ and $x_n \to x$, and a neighborhood $V$ of $gx$, such that $g_nx_n \not\in V$ for all $n$. By Urysohn's lemma, there exists $f \in C(X)$ with $f(gx) = 1$ and $f|_{X \setminus V} = 0$. Then
		\begin{align*}
			|f(g_nx_n) - f(gx)| = 1
		\end{align*}
		for all $n$, so
		\begin{align*}
			1 = \lim_{n \to \infty} |f(g_nx_n) - f(gx)|
			&\leq \limsup_{n \to \infty} |f(g_nx_n) - f(gx_n)| + \limsup_{n \to \infty} |f(gx_n) - f(gx)|
			\\&\leq \limsup_{n \to \infty} \|g_n^{-1}f - g^{-1}f\|_{C(X)} + \limsup_{n \to \infty} |g^{-1}f(x_n) - g^{-1}f(x)|
			\\&= \limsup_{n \to \infty} \|g_n^{-1} \Phi(f) g_n - g^{-1} \Phi(f) g\|_{\op} + \limsup_{n \to \infty} |g^{-1}f(x_n) - g^{-1}f(x)|.
		\end{align*}
		The first $\limsup$ vanishes by Theorem~\ref{thm:A_G-action_cts} (in fact Proposition~\ref{prop:A_G-action_cts} suffices), and the second $\limsup$ vanishes because $g^{-1}f$ is a continuous function on $X$. This is a contradiction.
	\end{proof}
	
	\begin{prop} \label{prop:mu_G-invt}
		The measure $\mu$ is preserved by the $G$-action on $X$.
	\end{prop}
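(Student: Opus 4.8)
The plan is to show that the functional $f \mapsto \int_X f \, d\mu = \langle \Phi(f)\mathbf{1}, \mathbf{1} \rangle_{\H}$ is $G$-invariant, and then to conclude $G$-invariance of $\mu$ from uniqueness in the Riesz representation theorem. Since the pushforward measure $g_*\mu$ also represents the functional $f \mapsto \int_X f(g^{-1}x) \, d\mu(x)$, it suffices to prove that for every $g \in G$ and every $f \in C(X)$,
\begin{align*}
	\langle \Phi(g^{-1}f)\mathbf{1}, \mathbf{1} \rangle_{\H}
	= \langle \Phi(f)\mathbf{1}, \mathbf{1} \rangle_{\H},
\end{align*}
where $gf(x) = f(g^{-1}x)$ as above.

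First I would use that $\Phi$ is $G$-equivariant, which is built into the definition of the $G$-action on $X$: by construction, $\Phi(g^{-1}f) = g^{-1}\Phi(f)g$ as operators on $\H$. Substituting this in, the left-hand side becomes $\langle g^{-1}\Phi(f)g\mathbf{1}, \mathbf{1} \rangle_{\H}$. Now I would invoke the ergodicity axiom in the form that $\mathbf{1}$ is $G$-invariant: since $\mathbf{1} \in \H^G$ (it spans $\H^G$), we have $g\mathbf{1} = \mathbf{1}$ and $g^{-1}\mathbf{1} = \mathbf{1}$ for all $g \in G$. Therefore
\begin{align*}
	\langle g^{-1}\Phi(f)g\mathbf{1}, \mathbf{1} \rangle_{\H}
	= \langle g^{-1}\Phi(f)\mathbf{1}, \mathbf{1} \rangle_{\H}
	= \langle \Phi(f)\mathbf{1}, g\mathbf{1} \rangle_{\H}
	= \langle \Phi(f)\mathbf{1}, \mathbf{1} \rangle_{\H},
\end{align*}
where the middle step uses that the $G$-action on $\H$ is unitary so $g^{-1}$ on the left of the inner product moves to $g$ on the right. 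This is exactly the desired equality.

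Having established that $g_*\mu$ and $\mu$ induce the same positive linear functional on $C(X)$, the uniqueness clause of the Riesz representation theorem (applied on the compact Hausdorff space $X$) gives $g_*\mu = \mu$ for all $g \in G$, which is the assertion of the proposition. I expect no serious obstacle here: the proof is essentially a two-line computation once one recalls that $\Phi$ is equivariant by fiat and that $\mathbf{1}$ is $G$-fixed by ergodicity. The only point requiring a moment's care is the bookkeeping of how unitarity of the $G$-representation on $\H$ lets one transfer $g^{-1}$ across the inner product, and the implicit use of Proposition~\ref{prop:joint_cty} to know that $g_*\mu$ is a genuine Borel measure on $X$ (so that Riesz uniqueness applies) — but both are routine.
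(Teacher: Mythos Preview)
Your proof is correct and essentially identical to the paper's: both compute $\int_X gf\,d\mu = \langle \Phi(gf)\mathbf{1},\mathbf{1}\rangle_{\H}$, use equivariance of $\Phi$ to rewrite $\Phi(gf)$ as a conjugate, and then use unitarity together with $G$-invariance of $\mathbf{1}$ to conclude. The only cosmetic differences are that the paper applies unitarity before invoking $g\mathbf{1}=\mathbf{1}$ rather than after, and it does not bother to invoke Riesz uniqueness explicitly (equality of integrals against all $f\in C(X)$ is already the statement that $g_*\mu=\mu$); also, you do not need Proposition~\ref{prop:joint_cty} for $g_*\mu$ to be Borel, since each $g$ already acts by a homeomorphism of $X$.
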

	
	\begin{proof}
		We must show that for all $f \in C(X)$ and $g \in G$, both $f$ and $gf$ have the same integral with respect to $\mu$. By the definition of $\mu$, equivariance of $\Phi$, and unitarity,
		\begin{align*}
			\int_X gf \, d\mu
			= \langle \Phi(gf) \mathbf{1}, \mathbf{1} \rangle_{\H}
			= \langle g\Phi(f)g^{-1} \mathbf{1}, \mathbf{1} \rangle_{\H}
			= \langle \Phi(f) g^{-1} \mathbf{1}, g^{-1} \mathbf{1} \rangle_{\H}
			= \langle \Phi(f) \mathbf{1}, \mathbf{1} \rangle_{\H}
			= \int_X f \, d\mu,
		\end{align*}
		as desired.
	\end{proof}
	
	By Proposition~\ref{prop:mu_G-invt}, the group $G$ acts unitarily on $L^2(X,\mu)$ by $gf(x) = f(g^{-1}x)$.
	
	\begin{prop} \label{prop:Psi_equivariant}
		The Hilbert space isomorphism $\Psi \colon L^2(X,\mu) \to \H$ is $G$-equivariant.
	\end{prop}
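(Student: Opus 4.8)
The plan is to reduce $G$-equivariance of $\Psi$ to the dense subspace $C(X) \subseteq L^2(X,\mu)$, and then to read it off from the tautological equivariance of $\Phi$ together with the equivariance of $\ev_{\mathbf{1}}$ established above.

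First I would record the two soft ingredients. Since $\mu$ is a Borel probability measure on the compact metrizable space $X$, the subspace $C(X)$ is dense in $L^2(X,\mu)$; and since $\mu$ is $G$-invariant by Proposition~\ref{prop:mu_G-invt}, the $G$-action on $L^2(X,\mu)$ is by unitary operators, while the $G$-action on $\H$ is unitary by hypothesis. Consequently both $f \mapsto \Psi(gf)$ and $f \mapsto g\Psi(f)$ are bounded linear maps $L^2(X,\mu) \to \H$ for each fixed $g \in G$, so to prove they coincide it suffices to check agreement on $C(X)$.

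Next, for $f \in C(X)$ I would compute directly. Here $gf$ denotes the continuous function $x \mapsto f(g^{-1}x)$, which is exactly the image of $g \cdot [f]$ in $L^2(X,\mu)$ because $g^{-1}$ acts on $X$ by a homeomorphism. By construction the $G$-action on $X$ — hence on $C(X)$ — is defined so that $\Phi$ is $G$-equivariant, so $\Phi(gf) = g\Phi(f)g^{-1}$. Using that $\ev_{\mathbf{1}}$ is $G$-equivariant, we then get $\Psi(gf) = \ev_{\mathbf{1}}(\Phi(gf)) = \ev_{\mathbf{1}}(g\Phi(f)g^{-1}) = g\,\ev_{\mathbf{1}}(\Phi(f)) = g\Psi(f)$. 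Chaining these equalities gives $\Psi(gf) = g\Psi(f)$ on $C(X)$, and by the density/boundedness argument of the previous paragraph the identity $\Psi(gf) = g\Psi(f)$ extends to all $f \in L^2(X,\mu)$ and all $g \in G$, which is the assertion.

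I do not expect any genuine obstacle: the only point needing a moment's care is the identification, for continuous $f$, of the pullback function $g f$ with the $L^2$-class $g\cdot[f]$, and this is immediate from the definitions. All the real content of the statement was already absorbed into Proposition~\ref{prop:mu_G-invt} and into the equivariance of $\ev_{\mathbf{1}}$; the present proposition is essentially bookkeeping that assembles those facts.
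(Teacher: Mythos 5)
Your proposal is correct and follows essentially the same route as the paper: reduce by density (and unitarity of both actions) to checking equivariance on $C(X)$, where it follows from the tautological equivariance of $\Phi$ and the previously established equivariance of $\ev_{\mathbf{1}}$. You simply spell out the computation and the density reduction in more detail than the paper does.
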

	
	\begin{proof}
		By density, it suffices to show that $\Psi|_{C(X)} = \ev_{\mathbf{1}} \circ \Phi$ is equivariant. This is clear, because both $\ev_{\mathbf{1}}$ and $\Phi$ are equivariant.
	\end{proof}
	
	\begin{prop} \label{prop:X_connected}
		The space $X$ is connected.
	\end{prop}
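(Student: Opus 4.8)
The plan is to prove that the commutative C*-algebra $\A$ has no projections besides $0$ and its unit $M_{\mathbf{1}}$; by Gelfand duality (under the isomorphism $\Phi \colon C(X) \to \A$) this is exactly the statement that $X$ is connected, because a clopen set $E$ with $\emptyset \subsetneq E \subsetneq X$ would give the nontrivial projection $\Phi(\1_E)$, and conversely a nontrivial projection corresponds to such an $E$. The key soft inputs are the continuity of the $G$-action on $X$ (Proposition~\ref{prop:joint_cty}), the connectedness of $G = \PSL_2(\R)$, and the ergodicity axiom $\H^G = \C\mathbf{1}$.

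First I would record two elementary topological facts. (a) Every clopen subset $E \subseteq X$ is a union of connected components: if $x \in E$ and $C_x$ is its component, then $E \cap C_x$ is relatively clopen and nonempty in the connected set $C_x$, hence equals $C_x$, so $C_x \subseteq E$. (b) The $G$-action preserves every connected component of $X$: since $G$ is connected and $G \times X \to X$ is continuous, each orbit $Gx$ is connected and contains $x$, so $Gx \subseteq C_x$; then for $g \in G$ the connected set $gC_x$ meets $C_x$ (in the point $gx \in Gx$), so $gC_x \cup C_x$ is connected and contains $x$, forcing $gC_x \subseteq C_x$, and applying this to $g^{-1}$ gives $gC_x = C_x$. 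Combining (a) and (b), $G$ preserves every clopen subset of $X$. Now suppose for contradiction that $X$ is disconnected and pick a clopen $E$ with $\emptyset \subsetneq E \subsetneq X$, and set $P = \Phi(\1_E) \in \A$. Since $\1_E = \overline{\1_E} = \1_E^2$ and $\Phi$ is a C*-isomorphism, $P$ is a projection with $P \neq 0$ and $P \neq M_{\mathbf{1}}$. By the two facts above, $gE = E$ for all $g \in G$, so by $G$-equivariance of $\Phi$ we get $gPg^{-1} = \Phi(g\1_E) = \Phi(\1_E) = P$; that is, $P$ commutes with the $G$-action on $\H$. Because $\mathbf{1} \in \H^G$, the vector $v := P\mathbf{1}$ satisfies $gv = gP\mathbf{1} = Pg\mathbf{1} = P\mathbf{1} = v$, so $v \in \H^G$, and by ergodicity $v = c\mathbf{1}$ for some $c \in \C$. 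Taking the inner product with $\mathbf{1}$ and using $P = P^2 = P^\ast$ together with $\|\mathbf{1}\|_\H = 1$ gives $c = \langle P\mathbf{1}, \mathbf{1}\rangle_\H = \|P\mathbf{1}\|_\H^2 = |c|^2$, so $c \in \{0,1\}$.

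Finally I would close the argument using that $\A$ is commutative (Proposition~\ref{prop:commute}): for every $\alpha \in \H^{\fin}$ one has $P\alpha = PM_\alpha\mathbf{1} = M_\alpha P\mathbf{1} = cM_\alpha\mathbf{1} = c\alpha$, and since $\H^{\fin}$ is dense in $\H$ and $P$ is bounded this forces $P = c\,M_{\mathbf{1}}$, which is either $0$ or the unit of $\A$ — contradicting the choice of $E$. Hence $X$ is connected. The whole argument is formal; I do not expect a genuine obstacle, only the need to be careful that clopen sets really are $G$-invariant (the point where connectedness of $G$ and continuity of the action are used) and that the ergodicity axiom is applied to the genuine $G$-fixed vector $P\mathbf{1}$.
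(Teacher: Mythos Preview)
Your proof is correct. Both your argument and the paper's hinge on the same two facts: $G$ preserves connected components of $X$ (because $G$ is connected and the action map is continuous), and ergodicity forces $\H^G = \C\mathbf{1}$. The difference is in packaging. The paper works in $L^2(X,\mu)$: indicator functions of distinct components are $G$-invariant and linearly independent (using that $\mu$ has full support, Proposition~\ref{prop:mu_full_support}), so the number of components is bounded by $\dim L^2(X,\mu)^G = \dim \H^G = 1$. You instead work in the C*-algebra $\A$: a nontrivial clopen $E$ gives a nontrivial projection $P \in \A$ commuting with $G$, and you use commutativity of $\A$ (rather than full support of $\mu$) to pass from $P\mathbf{1} = c\mathbf{1}$ to $P = c M_{\mathbf{1}}$. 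These are dual viewpoints --- the commutativity argument you give is essentially the content of Proposition~\ref{prop:ev_1_injective}, which in turn is what underlies Proposition~\ref{prop:mu_full_support} --- so neither approach buys anything the other does not; the paper's version is just a couple of lines shorter.
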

	
	\begin{proof}
		Since $G$ is connected and $G \times X \to X$ is continuous (Proposition~\ref{prop:joint_cty}), $G$ preserves each connected component of $X$. The indicator functions of the components are therefore $G$-invariant elements of $L^2(X,\mu)$, and they are linearly independent (in particular nonzero) in $L^2(X,\mu)$ because $\mu$ has full support (Proposition~\ref{prop:mu_full_support}). Thus the number of components is at most $\dim L^2(X,\mu)^G = \dim \H^G$. By the ergodicity assumption in the definition of a multiplicative representation, $\H^G = \C\mathbf{1}$ is one-dimensional, so $X$ must be connected.
	\end{proof}
	
	Combining Corollary~\ref{cor:qual_Sob_emb_into_A} with what we now know about $X,\mu$ gives
	
	\begin{cor} \label{cor:qual_Sob_emb_into_C(X)}
		The $G$-space $X$ with invariant measure $\mu$ has the qualitative Sobolev embedding property (with $p=2$ and $q=\infty$ in Definition~\ref{defn:qualitative_Sob_emb}).
	\end{cor}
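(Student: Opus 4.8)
The plan is to obtain Corollary~\ref{cor:qual_Sob_emb_into_C(X)} directly from Corollary~\ref{cor:qual_Sob_emb_into_A}, transporting the latter across the Hilbert space isomorphism $\Psi \colon L^2(X,\mu) \to \H$. Fix the nonnegative continuous function $\varphi \in L^1(G)$, not identically zero, provided by Corollary~\ref{cor:qual_Sob_emb_into_A}; I claim this same $\varphi$ witnesses the qualitative Sobolev embedding property for $(X,\mu)$ with $p = 2$ and $q = \infty$. The hypotheses of Definition~\ref{defn:qualitative_Sob_emb} are satisfied: $X$ is a $G$-space by Proposition~\ref{prop:joint_cty}, and $\mu$ is a $G$-invariant probability measure by Proposition~\ref{prop:mu_G-invt} together with the computation $\mu(X) = 1$ above. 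Since $X$ is compact, $C(X) \subseteq L^{\infty}(X,\mu)$, so it is enough to show that $\varphi \ast f \in C(X)$ for every $f \in L^2(X,\mu)$.

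The key step is to check that $\Psi$ intertwines convolution by $\varphi$, i.e.
\[
\Psi(\varphi \ast f) = \varphi \ast \Psi(f)
\qquad \text{for all } f \in L^2(X,\mu).
\]
Here the left-hand convolution denotes the element of $L^2(X,\mu)$ given by the Bochner integral $\int_G \varphi(g)\, gf \, dg$, which agrees $\mu$-almost everywhere with the pointwise formula $\varphi \ast f(x) = \int_G \varphi(g) f(g^{-1}x)\, dg$ of Definition~\ref{defn:qualitative_Sob_emb} by Fubini's theorem; likewise on the right-hand side inside $\H$. Because $\Psi$ is a bounded $G$-equivariant operator (Proposition~\ref{prop:Psi_equivariant}), it commutes with the Bochner integral, which gives the displayed identity.

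Granting this, the conclusion is immediate. Put $v = \Psi(f) \in \H$. By Corollary~\ref{cor:qual_Sob_emb_into_A}, $\varphi \ast v$ lies in the image of $\ev_{\mathbf{1}}$. By the intertwining identity, $\Psi(\varphi \ast f) = \varphi \ast v$ is in the image of $\ev_{\mathbf{1}}$, so Lemma~\ref{lem:Im_ev_1_in_C(X)} yields $\varphi \ast f \in C(X)$, hence $\varphi \ast f \in L^{\infty}(X,\mu)$. This is exactly the qualitative Sobolev embedding property with $p = 2$ and $q = \infty$, completing the proof.

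The only non-formal ingredient is the compatibility between the pointwise convolution appearing in Definition~\ref{defn:qualitative_Sob_emb} and the Bochner integral defining an element of $L^2(X,\mu)$, together with the commutation of $\Psi$ with that Bochner integral; both are routine measure-theoretic verifications using $\varphi \in L^1(G)$, boundedness of $\Psi$, and Fubini's theorem, so I do not anticipate any real obstacle. Everything else is a direct assembly of results already established in this section.
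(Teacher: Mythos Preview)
Your proof is correct and follows essentially the same approach as the paper: take $\varphi$ from Corollary~\ref{cor:qual_Sob_emb_into_A}, use $G$-equivariance of $\Psi$ (Proposition~\ref{prop:Psi_equivariant}) to get $\Psi(\varphi \ast f) = \varphi \ast \Psi(f)$, then apply Lemma~\ref{lem:Im_ev_1_in_C(X)} to conclude $\varphi \ast f \in C(X) \subseteq L^\infty(X,\mu)$. The paper's version is more terse but the logic is identical; your extra remarks on Bochner integrals and verification of the hypotheses of Definition~\ref{defn:qualitative_Sob_emb} are fine but not strictly needed.
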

	
	\begin{proof}
		Let $\varphi$ be as in Corollary~\ref{cor:qual_Sob_emb_into_A}, and let $f \in L^2(X,\mu)$. We wish to show that the convolution $\varphi \ast f$ is in $L^{\infty}(X,\mu)$. Since $X$ is compact, it suffices to show that $\varphi \ast f \in C(X)$.
		Corollary~\ref{cor:qual_Sob_emb_into_A} says that $\varphi \ast \Psi(f)$ is in the image of $\ev_{\mathbf{1}}$. By Proposition~\ref{prop:Psi_equivariant}, $\Psi$ is equivariant, so $\varphi \ast \Psi(f) = \Psi(\varphi \ast f)$. Thus by Lemma~\ref{lem:Im_ev_1_in_C(X)}, we indeed have $\varphi \ast f \in C(X)$.
	\end{proof}
	
	Proposition~\ref{prop:Psi_equivariant} says that $\Psi \colon L^2(X,\mu) \to \H$ is an isomorphism of unitary representations of $G$. Therefore $L^2(X,\mu)$ has discrete spectrum, so it makes sense to consider $L^2(X,\mu)^{\fin}$, and $\Psi$ restricts to an isomorphism $L^2(X,\mu)^{\fin} \to \H^{\fin}$. We can now extend the diagram above to
	$$
	\begin{tikzcd}
		C(X) \arrow[hookrightarrow]{r} \arrow{d}{\Phi} & L^2(X,\mu) \arrow{d}{\Psi} \arrow[hookleftarrow]{r} & L^2(X,\mu)^{\fin} \arrow{d}{\Psi} \\
		\A \arrow[hookrightarrow]{r}{\ev_{\mathbf{1}}} & \H \arrow[hookleftarrow]{r} & \H^{\fin}
	\end{tikzcd}
	$$
	again with all vertical arrows isomorphisms. The inclusion $\H^{\fin} \hookrightarrow \H$ in the bottom row factors as
	$$
	\begin{tikzcd}
		\H^{\fin} \arrow[hookrightarrow]{r}{\alpha \mapsto M_{\alpha}} & \A \arrow[hookrightarrow]{r}{\ev_{\mathbf{1}}} & \H.
	\end{tikzcd}
	$$
	Since the top row is isomorphic to the bottom row, there must be a matching factorization of the inclusion $L^2(X,\mu)^{\fin} \hookrightarrow L^2(X,\mu)$. This means that there is a map $L^2(X,\mu)^{\fin} \hookrightarrow C(X)$ making the diagram
	\begin{equation} \label{eqn:embedding_homog:full_diagram}
		\begin{tikzcd}
			L^2(X,\mu)^{\fin} \arrow[hookrightarrow]{r} \arrow{d}{\Psi} & C(X) \arrow[hookrightarrow]{r} \arrow{d}{\Phi} & L^2(X,\mu) \arrow{d}{\Psi} \\
			\H^{\fin} \arrow[hookrightarrow]{r}{\alpha \mapsto M_{\alpha}} & \A \arrow[hookrightarrow]{r}{\ev_{\mathbf{1}}} & \H
		\end{tikzcd}
	\end{equation}
	commute, with the compositions of the top and bottom rows being the natural inclusions $L^2(X,\mu)^{\fin} \hookrightarrow L^2(X,\mu)$ and $\H^{\fin} \hookrightarrow \H$. Using that both the second map in the top row and the composition of the two maps in the top row are the natural inclusions, we immediately obtain
	
	\begin{prop} \label{prop:L^2^fin_in_C(X)}
		One has $L^2(X,\mu)^{\fin} \subseteq C(X)$, and the map $L^2(X,\mu)^{\fin} \hookrightarrow C(X)$ in the diagram \eqref{eqn:embedding_homog:full_diagram} is the inclusion.
	\end{prop}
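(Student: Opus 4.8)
The plan is a short diagram chase in \eqref{eqn:embedding_homog:full_diagram}, using only that the natural inclusion $C(X)\hookrightarrow L^2(X,\mu)$ is injective. This injectivity is already in hand: $\ev_{\mathbf{1}}$ is injective by Proposition~\ref{prop:ev_1_injective}, and the vertical maps $\Phi$ and $\Psi$ in \eqref{eqn:embedding_homog:full_diagram} are isomorphisms, so the top middle map $C(X)\hookrightarrow L^2(X,\mu)$ inherits injectivity from the bottom middle map $\ev_{\mathbf{1}}\colon\A\hookrightarrow\H$. Write $j\colon C(X)\hookrightarrow L^2(X,\mu)$ for this injective inclusion, $k\colon L^2(X,\mu)^{\fin}\hookrightarrow L^2(X,\mu)$ for the natural inclusion of the $K$-finite subspace, and $\iota\colon L^2(X,\mu)^{\fin}\to C(X)$ for the first map in the top row of \eqref{eqn:embedding_homog:full_diagram}.

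The one structural input is the identity $j\circ\iota=k$, i.e.\ the statement that the top row of \eqref{eqn:embedding_homog:full_diagram} composes to the natural inclusion. This was established in the paragraph preceding the proposition: the bottom row composes to the natural inclusion $\H^{\fin}\hookrightarrow\H$ (namely $\alpha\mapsto M_\alpha$ followed by $\ev_{\mathbf{1}}$), and transporting this factorization along the vertical isomorphisms forces the same for the top row. Granting it, fix $f\in L^2(X,\mu)^{\fin}$ and compute $j(\iota(f))=k(f)=f$, the last equality because $k$ is literally the inclusion. Since $j$ is injective we may identify $C(X)$ with its image in $L^2(X,\mu)$; the displayed equality then says $\iota(f)=f$ inside $L^2(X,\mu)$, so in particular $f\in C(X)$. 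As $f$ was arbitrary this gives $L^2(X,\mu)^{\fin}\subseteq C(X)$, and simultaneously identifies $\iota$ with the inclusion $L^2(X,\mu)^{\fin}\hookrightarrow C(X)$.

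I do not expect a genuine obstacle: once \eqref{eqn:embedding_homog:full_diagram} is assembled the argument is purely formal. The only point requiring attention — and, as noted, it is already available — is the injectivity of $C(X)\hookrightarrow L^2(X,\mu)$; without it one could conclude only that $\iota$ is a right inverse to $j$ on $L^2(X,\mu)^{\fin}$, rather than an honest inclusion of subspaces.
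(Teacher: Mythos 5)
Your proposal is correct and takes essentially the same approach as the paper: the paper obtains the proposition ``immediately'' from the two observations that the second map in the top row of \eqref{eqn:embedding_homog:full_diagram} is the natural (injective) inclusion $C(X) \hookrightarrow L^2(X,\mu)$ and that the composition of the top row is the natural inclusion $L^2(X,\mu)^{\fin} \hookrightarrow L^2(X,\mu)$, which is exactly the diagram chase you carried out.
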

	

	
	
	
	The next proposition says that the pointwise multiplication map $L^2(X,\mu)^{\fin} \times L^2(X,\mu)^{\fin} \to L^2(X,\mu)$ identifies via $\Psi$ with the multiplication map $\H^{\fin} \times \H^{\fin} \to \H$.
	
	\begin{prop} \label{prop:mult_is_ptwise}
		Let $f,h \in L^2(X,\mu)^{\fin}$. Then $f,h \in C(X)$, and $\Psi(fh) = \Psi(f) \Psi(h)$.
	\end{prop}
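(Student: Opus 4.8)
The plan is a short diagram chase using the commutative square \eqref{eqn:embedding_homog:full_diagram} together with the fact that $\Phi$ is a C*-algebra isomorphism (hence multiplicative). First I would dispose of the statement $f,h \in C(X)$: since $f,h \in L^2(X,\mu)^{\fin}$, Proposition~\ref{prop:L^2^fin_in_C(X)} gives $f,h \in C(X)$, and because $C(X)$ is closed under pointwise multiplication, $fh \in C(X)$ as well. Moreover the inclusion $C(X) \hookrightarrow L^2(X,\mu)$ is an injective algebra homomorphism, so the pointwise product $fh$ in $C(X)$ coincides with the product of the corresponding classes in $L^2(X,\mu)$. In particular $\Psi(fh)$ makes sense, and since by construction $\Psi$ restricts to $\ev_{\mathbf{1}} \circ \Phi$ on $C(X)$, we have $\Psi(fh) = \ev_{\mathbf{1}}(\Phi(fh))$.

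Next, set $\alpha = \Psi(f)$ and $\beta = \Psi(h)$; these lie in $\H^{\fin}$ because $\Psi$ restricts to an isomorphism $L^2(X,\mu)^{\fin} \to \H^{\fin}$. Reading off the left-hand square of \eqref{eqn:embedding_homog:full_diagram}, we get $\Phi(f) = M_{\alpha}$ and $\Phi(h) = M_{\beta}$. Since $\Phi$ is a C*-homomorphism it preserves products, so $\Phi(fh) = \Phi(f)\Phi(h) = M_{\alpha} M_{\beta}$. Therefore
\[
\Psi(fh) = \ev_{\mathbf{1}}(M_{\alpha} M_{\beta}) = M_{\alpha} M_{\beta} \mathbf{1} = M_{\alpha}(\beta\mathbf{1}) = M_{\alpha}\beta = \alpha\beta = \Psi(f)\Psi(h),
\]
where I used that $\mathbf{1} \in \H^{\fin}$ is the unit of the multiplicative representation (so $\beta\mathbf{1} = \beta$) and the definition of $M_{\alpha}$ on $\H^{\fin}$. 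This is exactly the claimed identity.

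There is no substantive obstacle: the argument is pure bookkeeping once $\Phi$, $\Psi$, $\ev_{\mathbf{1}}$ and the square \eqref{eqn:embedding_homog:full_diagram} are in hand. The only point needing a word of care is the identity $\Phi(f) = M_{\Psi(f)}$ for $f$ in the subspace $L^2(X,\mu)^{\fin} \subseteq C(X)$, which is precisely the commutativity of the left-hand square of \eqref{eqn:embedding_homog:full_diagram}. As a byproduct, the computation shows $\alpha\beta = \ev_{\mathbf{1}}(M_{\alpha} M_{\beta}) \in \ev_{\mathbf{1}}(\A) = \Psi(C(X))$, i.e., the multiplication map $\H^{\fin} \times \H^{\fin} \to \H^{\infty}$ always lands in the image of $C(X)$ under $\Psi$; this is what lets one identify multiplication on $\H$ with pointwise multiplication of functions on $X$ and so finish the proof of Theorem~\ref{thm:weak_structure_thm}.
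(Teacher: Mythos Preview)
Your proof is correct and follows essentially the same diagram chase as the paper: use Proposition~\ref{prop:L^2^fin_in_C(X)} to get $f,h,fh \in C(X)$, then combine the identity $\Psi|_{C(X)} = \ev_{\mathbf{1}} \circ \Phi$, multiplicativity of $\Phi$, and the left square of \eqref{eqn:embedding_homog:full_diagram} (giving $\Phi(f) = M_{\Psi(f)}$) to reduce to $M_{\alpha}M_{\beta}\mathbf{1} = \alpha\beta$. The only cosmetic difference is that the paper computes $\Phi(h)\mathbf{1} = \ev_{\mathbf{1}}\Phi(h) = \Psi(h)$ via the right square rather than writing $M_{\beta}\mathbf{1} = \beta\mathbf{1} = \beta$ directly, but this is the same step.
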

	
	\begin{proof}
		By Proposition~\ref{prop:L^2^fin_in_C(X)}, we have $f,h \in C(X)$ and hence $fh \in C(X)$. Since the right square in the diagram \eqref{eqn:embedding_homog:full_diagram} commutes, we can write $\Psi(fh) = \ev_{\mathbf{1}}\Phi(fh)$ and $\Psi(h) = \ev_{\mathbf{1}}\Phi(h)$. Therefore
		\begin{align*}
			\Psi(fh)
			= \ev_{\mathbf{1}}\Phi(fh)
			= \ev_{\mathbf{1}} \Phi(f) \Phi(h)
			= \Phi(f) \Phi(h) \mathbf{1}
			= \Phi(f) \ev_{\mathbf{1}} \Phi(h)
			= \Phi(f) \Psi(h).
		\end{align*}
		Since the left square in \eqref{eqn:embedding_homog:full_diagram} commutes, $\Phi(f) = M_{\Psi(f)}$. Continuing the above calculation,
		\begin{gather*}
			\Psi(fh)
			= \Phi(f)\Psi(h)
			= M_{\Psi(f)} \Psi(h)
			= \Psi(f) \Psi(h).
			\qedhere
		\end{gather*}
	\end{proof}
	
	We end this section by observing that we have now proved Theorem~\ref{thm:weak_structure_thm} (assuming Theorem~\ref{thm:high_deriv_L^infty_bd} and Corollary~\ref{cor:qual_Sob_emb} of course). Recall the statement:
	
	\begin{thm*}[Restatement of Theorem~\ref{thm:weak_structure_thm}]
		There exists a connected compact metrizable $G$-space $X$ together with a $G$-invariant probability measure $\mu$ on $X$ of full support, such that $(X,\mu)$ has the qualitative Sobolev embedding property, $\H$ identifies with $L^2(X,\mu)$ as unitary representations of $G$, and the multiplication $\H^{\fin} \times \H^{\fin} \to \H^{\infty}$ is given by pointwise multiplication of functions on $X$.
	\end{thm*}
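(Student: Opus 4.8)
The plan is to assemble Theorem~\ref{thm:weak_structure_thm} from the package of results in Section~\ref{sec:reduce_to_L^infty}, taking Theorem~\ref{thm:high_deriv_L^infty_bd} and Corollary~\ref{cor:qual_Sob_emb} as given. First, the $n=0$ case of Theorem~\ref{thm:high_deriv_L^infty_bd} shows that each multiplication operator $M_{\alpha}$ (for $\alpha \in \H^{\fin}$) extends to a bounded operator on $\H$; crossing symmetry and the existence-of-complex-conjugates axiom then give $M_{\alpha}^{\ast} = M_{\overline{\alpha}}$ and that the $M_{\alpha}$ pairwise commute, so the closed subalgebra $\A \subseteq \B(\H)$ they generate is a commutative unital C*-algebra, separable because $\H^{\fin}$ has countable dimension (Proposition~\ref{prop:Maass_span_H^fin}).

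Second, and this is the crucial step, I would show that $\A$ is preserved by the conjugation action of $G$ on $\B(\H)$. Since $G$ is connected it is generated by any neighborhood of the identity, so it suffices to treat $g = \exp(X)$ with $\|X\|_{\g} \ll 1$ and a generator $M_{\alpha}$. The key identity is $g M_{\alpha} g^{-1} = \sum_{n \geq 0} \frac{1}{n!} M_{X^n\alpha}$, proved by testing against smooth vectors $v,w$: the function $t \mapsto \langle \exp(tX) M_{\alpha} \exp(-tX) v, w \rangle_{\H}$ is smooth on $[0,1]$ with $n$th derivative $\langle M_{X^n\alpha} v, w \rangle_{\H}$ by the product rule (Lemma~\ref{lem:weak_prod_rule}), and Theorem~\ref{thm:high_deriv_L^infty_bd} bounds this derivative by $\lesssim_{\alpha} O(n\|X\|_{\g})^n \|v\|_{\H}\|w\|_{\H}$, forcing the Taylor series to converge. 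This exhibits $g M_{\alpha} g^{-1}$ as an operator-norm limit of elements of $\A$, so $g\A g^{-1} \subseteq \A$ near the identity, hence everywhere. The same Taylor expansion shows $g \mapsto gAg^{-1}$ is continuous, first for $A = M_{\alpha}$ and then, via C*-automorphism approximation, for arbitrary $A \in \A$, upgrading the a priori discrete $G$-action on $\A$ to a genuinely continuous one.

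Third, apply Gelfand duality (Theorem~\ref{thm:Gelfand_duality}): $\A \simeq C(X)$ via some $\Phi$ for a unique compact Hausdorff $X$, metrizable by separability, and the $G$-action on $\A$ induces a $G$-action on $X$ by homeomorphisms; continuity of $G \times \A \to \A$ transfers through Urysohn's lemma to continuity of $G \times X \to X$. The GNS state on $\A$ associated to $\mathbf{1}$, pulled back to $C(X)$, is positive because $\langle \Phi(|f|^2)\mathbf{1},\mathbf{1}\rangle_{\H} = \|\Phi(f)\mathbf{1}\|_{\H}^2 \geq 0$, so by Riesz representation it is integration against a probability measure $\mu$ (probability by the normalization axiom). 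Then $\|f\|_{L^2(X,\mu)} = \|\ev_{\mathbf{1}}\Phi(f)\|_{\H}$, so $\ev_{\mathbf{1}} \circ \Phi$ extends to an isometry $\Psi \colon L^2(X,\mu) \to \H$; it is onto since its image contains $\H^{\fin}$, hence an isomorphism, and $G$-equivariant since both $\ev_{\mathbf{1}}$ and $\Phi$ are. Injectivity of $\ev_{\mathbf{1}}$ (using commutativity of $\A$, since $A\alpha = M_{\alpha}\ev_{\mathbf{1}}(A)$) forces $C(X) \hookrightarrow L^2(X,\mu)$ to be injective, hence $\mu$ has full support; $G$-invariance of $\mu$ is a one-line unitarity computation; and $X$ is connected because $G$-invariant indicators of components would be independent vectors in $\H^G = \C\mathbf{1}$, which is one-dimensional by ergodicity.

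Finally, chase the diagram relating the two factorizations of $\H^{\fin} \hookrightarrow \H$ through the Gelfand isomorphisms: this shows $L^2(X,\mu)^{\fin} \subseteq C(X)$, and, using $\Phi(f) = M_{\Psi(f)}$ and $\Psi = \ev_{\mathbf{1}} \circ \Phi$ on $C(X)$, that pointwise multiplication on $C(X)$ corresponds under $\Psi$ to the multiplication $\H^{\fin} \times \H^{\fin} \to \H^{\infty}$. The qualitative Sobolev embedding property for $(X,\mu)$ with $p=2,q=\infty$ comes from Corollary~\ref{cor:qual_Sob_emb}: the smoothing function $\varphi$ there satisfies $\varphi \ast \Psi(f) = \Psi(\varphi \ast f)$, which lies in the image of $\ev_{\mathbf{1}}$, hence $\varphi \ast f \in C(X) \subseteq L^{\infty}(X,\mu)$ since $X$ is compact. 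Assembling these gives Theorem~\ref{thm:weak_structure_thm}. I expect the only genuine obstacle to be the second step: everything else is soft C*-algebra theory and point-set topology, whereas the exponentiation $g M_{\alpha} g^{-1} = \sum \frac{1}{n!} M_{X^n\alpha}$ is exactly where the analytic input of Theorem~\ref{thm:high_deriv_L^infty_bd} is consumed, and where the mismatch between the $\g$-invariance and the $G$-non-invariance of $\H^{\fin}$ is finally resolved.
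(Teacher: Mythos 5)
Your proposal is correct and reconstructs the paper's own argument in Section~\ref{sec:reduce_to_L^infty} step by step: Propositions~\ref{prop:self-adjoint}--\ref{prop:conj_M_alpha}, Theorems~\ref{thm:A_G-inv} and \ref{thm:A_G-action_cts} for the C*-algebra and its $G$-action, then the Gelfand-duality package of Subsection~\ref{subsec:reduce_to_L^infty:spectrum} together with Corollary~\ref{cor:qual_Sob_emb_into_C(X)}. The only cosmetic slip is that the $n$th derivative of $t\mapsto\langle g_tM_\alpha g_t^{-1}v,w\rangle$ equals $\langle M_{X^n\alpha}g_t^{-1}v,g_t^{-1}w\rangle$ (not $\langle M_{X^n\alpha}v,w\rangle$) for general $t$, but your uniform bound on it is exactly the one the paper uses, so the Taylor-series argument goes through unchanged.
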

	
	\begin{proof}
		Let $X,\mu$ be as above. Then all required properties are explicitly stated and proved above.
	\end{proof}
	
	\section{Bulk and tail bounds $\implies$ $L^{\infty}$ bounds} \label{sec:reduce_to_bulk_tail}
	
	
	By the results of Sections~\ref{sec:embedding_homog} and \ref{sec:reduce_to_L^infty}, Theorem~\ref{thm:eq_Gelfand_duality} follows from Theorem~\ref{thm:high_deriv_L^infty_bd} and Corollary~\ref{cor:qual_Sob_emb}.
	We saw in Section~\ref{sec:outline_existence} that Corollary~\ref{cor:qual_Sob_emb} is a consequence of Theorem~\ref{thm:L^infty_quasi-Sobolev}. It therefore remains to prove Theorems~\ref{thm:high_deriv_L^infty_bd} and \ref{thm:L^infty_quasi-Sobolev}. In this section, we prove these two theorems assuming Theorems~\ref{thm:L^4_quasi-Sobolev}, \ref{thm:exp_decay_form}, and \ref{thm:exp_decay_quasimode}. These last three theorems are proved in Sections~\ref{sec:bulk_tail_poly} and \ref{sec:bulk_tail_general}.
	
	\subsection{Sobolev embedding into $L^{\infty}$}
	\label{subsec:L^infty_quasi-Sobolev}
	
	We will deduce Theorem~\ref{thm:L^infty_quasi-Sobolev} from the following proposition. Let $I_i,\lambda_i$ be as in Theorem~\ref{thm:exp_decay_quasimode}.
	
	\begin{prop} \label{prop:L^infty_quasi-Sobolev}
		Let $\alpha \in \H_{\Delta \in I_i}^K$ for some $i$. Then
		\begin{align} \label{eqn:L^infty_quasimode_bd}
			\|\alpha\|_{L^{\infty}}
			\lesssim \exp(O(\log_+^2\lambda_i)) \|\alpha\|_{\H}.
		\end{align}
	\end{prop}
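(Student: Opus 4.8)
\textbf{Proof plan for Proposition~\ref{prop:L^infty_quasi-Sobolev}.} The plan is to bootstrap from the $L^4$ control provided by Theorem~\ref{thm:L^4_quasi-Sobolev} to $L^\infty$ control by iteratively doubling the integrability exponent, exactly along the lines sketched in Subsection~\ref{subsec:outline:reduce_to_L^infty}. The main difficulty is that higher $L^p$ norms (for $p>4$ a power of $2$) are not a priori defined on $\H^{\fin}$, since $\alpha^{p/2}$ is a product of three or more elements of $\H^{\fin}$ and $\H^{\fin}$ is not closed under multiplication. I would therefore begin by introducing \emph{modified $L^p$ norms} on $\H^{\fin}$ for $p = 2^m$, $m \geq 2$, defined recursively. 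The base case $p=4$ is the existing $L^4$ norm. For the inductive step, given $\alpha \in \H^{\fin}$, note that $|\alpha|^2 = \alpha\overline{\alpha} \in \H^{\infty}$ and its band-limited truncations $\1_{|\Delta| \leq \Lambda}(|\alpha|^2)$ lie in $\H^{\fin}$; I would set
\begin{align*}
	\|\alpha\|_{L^{2p}}^2
	= \liminf_{\Lambda \to \infty} \|\1_{|\Delta| \leq \Lambda}(|\alpha|^2)\|_{L^p}.
\end{align*}
The first task is then to check that these modified norms satisfy the handful of properties needed downstream: a triangle inequality (from the $L^4$ triangle inequality, Proposition~\ref{prop:L^4_triangle_ineq}, propagated through the recursion), monotonicity $\|\alpha\|_{L^p} \leq \|\alpha\|_{L^q}$ for $p \leq q$, and the fact that $\sup_p \|\alpha\|_{L^p}$ controls $\|\alpha\|_{L^\infty}$ from above (the last coming from $\|\beta\|_{L^\infty}^2 \leq \||\beta|^2\|_{L^\infty}$ applied at each stage, interpreting $\||\beta|^2\|_{L^\infty}$ as $\liminf_\Lambda \|\1_{|\Delta|\leq\Lambda}(|\beta|^2)\|_{L^\infty}$, and from the comparison $\|\alpha\|_{L^4} \leq \|\alpha\|_{L^\infty}$ in Proposition~\ref{prop:L^2_leq_L^4_leq_L^infty} which gives the reverse inequality up to the limit).

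\textbf{The inductive estimate.} With the modified norms in hand, I would prove by induction on $m$ that for all $i$ and all $\alpha \in \H_{\Delta \in I_i}^K$,
\begin{align} \label{eqn:modified_Lp_bound}
	\|\alpha\|_{L^{2^m}}
	\leq \exp(O(\log_+^2\lambda_i)) \|\alpha\|_{\H},
\end{align}
with the implicit constant and $O$-constant \emph{independent of $m$}. The case $m=2$ is Theorem~\ref{thm:L^4_quasi-Sobolev} combined with the observation that $\alpha \in \H_{\Delta \in I_i}^K$ forces $\alpha \in \H_{\Delta \leq \lambda_i + O(1)}^K$, so $\|\exp(O(\log_+^2\Delta))\alpha\|_{\H} \lesssim \exp(O(\log_+^2\lambda_i))\|\alpha\|_{\H}$. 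For the inductive step from $p = 2^m$ to $2p$: since $\alpha \in \H^K$, Proposition~\ref{prop:bar_equivariant} and the product rule give $|\alpha|^2 \in \H^K$, and I would decompose (using the partition $\{I_j\}$ from Theorem~\ref{thm:exp_decay_quasimode})
\begin{align*}
	\|\alpha\|_{L^{2p}}^2
	= \||\alpha|^2\|_{L^p}
	\leq \|\1_{\Delta \geq C\lambda_i}(|\alpha|^2)\|_{L^p}
	+ \sum_{j \,:\, \lambda_j < C\lambda_i} \|\1_{\Delta \in I_j}(|\alpha|^2)\|_{L^p},
\end{align*}
by the triangle inequality for the modified $L^p$ norm. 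The tail term $\|\1_{\Delta \geq C\lambda_i}(|\alpha|^2)\|_{L^p}$ I would estimate by the inductive hypothesis \eqref{eqn:modified_Lp_bound} applied band-by-band to $\1_{\Delta \in I_j}(\1_{\Delta \geq C\lambda_i}|\alpha|^2)$ together with the exponential decay \eqref{eqn:exp_decay_quasimodes} of Theorem~\ref{thm:exp_decay_quasimode} (which makes the contribution of frequencies $\gg \lambda_i$ geometrically summable against the polynomial growth \eqref{eqn:partition_growth} of the partition, with a net factor absorbed into $\exp(O(\log_+^2\lambda_i))$). Each bulk term with $\lambda_j < C\lambda_i$ I would bound by the inductive hypothesis applied to the approximate eigenvector $\1_{\Delta \in I_j}(|\alpha|^2) \in \H_{\Delta \in I_j}^K$, giving $\exp(O(\log_+^2\lambda_j))\|\1_{\Delta \in I_j}(|\alpha|^2)\|_{\H} \leq \exp(O(\log_+^2\lambda_i))\|\1_{\Delta \in I_j}(|\alpha|^2)\|_{\H}$; summing over the $\lesssim \lambda_i^{O(1)}$ relevant indices $j$ (polynomial growth again) and applying Cauchy--Schwarz over $j$ yields $\exp(O(\log_+^2\lambda_i)) \||\alpha|^2\|_{\H} = \exp(O(\log_+^2\lambda_i)) \|\alpha\|_{L^4}^2 \leq \exp(O(\log_+^2\lambda_i))\|\alpha\|_{\H}^2$ by the $m=2$ case. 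Crucially, the exponential factors produced at this step are $\exp(O(\log_+^2\lambda_i))$ with $O$-constant depending only on $\H$ (via the constants $c, C$ and the polynomial-growth exponent), not on $m$; so taking square roots recovers \eqref{eqn:modified_Lp_bound} at level $2p$ with the same constants.

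\textbf{Conclusion and anticipated obstacle.} Letting $m \to \infty$ in \eqref{eqn:modified_Lp_bound} and invoking $\|\alpha\|_{L^\infty} = \lim_m \|\alpha\|_{L^{2^m}}$ (or more precisely the one-sided comparison that suffices) gives \eqref{eqn:L^infty_quasimode_bd}. I expect the genuinely delicate point to be not the decomposition argument — which is a routine geometric-series bookkeeping once the exponential decay and polynomial growth inputs are granted — but rather the careful setup of the modified $L^p$ norms: one must verify that the recursive $\liminf$-definition behaves additively enough to support a triangle inequality at every level, that it is genuinely comparable to the honest $L^p$ norm when $\H = L^2(\Gamma \backslash G)$ (as a sanity check), and above all that the $\liminf$'s interact correctly with the spectral projections $\1_{\Delta \in I_j}$ and $\1_{\Delta \geq C\lambda_i}$ when one pulls them out of $|\alpha|^2$ — here one uses that these projections commute with each other and are bounded, so that $\1_{\Delta \in I_j}(|\alpha|^2) \in \H^{\fin}$ and the band-limited truncations $\1_{|\Delta| \leq \Lambda}$ can be slipped past everything. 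Keeping the constants manifestly independent of $m$ throughout the induction is the other thing that requires vigilance, since a constant that degrades even by a factor $2$ per step would destroy the $L^\infty$ bound in the limit; this is exactly why the tail is \emph{discarded} (via \eqref{eqn:exp_decay_quasimodes} with its clean constant $1$ on the right-hand side) rather than merely estimated.
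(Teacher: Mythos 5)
Your overall strategy — iteratively doubling the integrability exponent starting from $L^4$, splitting $|\alpha|^2$ into bulk and tail over the partition $\{I_j\}$, and keeping the constants $m$-independent — is exactly the one the paper uses, and the bookkeeping in your inductive step is sound. What differs, in a way that opens a genuine gap, is the \emph{definition} of the modified $L^p$ norm. You define $\|\alpha\|_{L^{2p}}$ \emph{recursively} as $\liminf_\Lambda \|\1_{|\Delta|\leq\Lambda}(|\alpha|^2)\|_{L^p}^{1/2}$. The paper instead defines $\|\alpha\|_{\widetilde{L}^p}$ \emph{directly}, as the smallest $C$ with $\|\alpha\beta\|_{\H} \leq C\|\beta\|_{L^4}^{4/p}\|\beta\|_{\H}^{1-4/p}$ for all $\beta\in\H^{\fin}$, and uses the recursive relation only as a one-sided bound (Lemma~\ref{lem:modified_L^p_vs_L^2p}: $\|\alpha\|_{\widetilde{L}^{2p}}^2 \leq \liminf_\Lambda\|\1_{|\Delta|\leq\Lambda}(|\alpha|^2)\|_{\widetilde{L}^p}$), not as a definition.

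The consequence is that the two properties you flag as ``requiring vigilance'' are free in the paper's setup but genuinely problematic in yours. \textbf{Triangle inequality:} for the paper's $\widetilde{L}^p$ this is trivial, since $M_{\alpha_1+\alpha_2} = M_{\alpha_1}+M_{\alpha_2}$ makes the defining supremum manifestly subadditive in $\alpha$. For your recursive definition, to pass from the $L^p$ triangle inequality to the $L^{2p}$ one you must expand $|\alpha+\beta|^2 = |\alpha|^2 + 2\re(\alpha\overline\beta) + |\beta|^2$ and then control the cross term by something like $\|\re(\alpha\overline\beta)\|_{L^p} \leq \||\alpha|^2\|_{L^p}^{1/2}\||\beta|^2\|_{L^p}^{1/2}$, i.e.\ an $L^{2p}$-Cauchy--Schwarz at every level of the recursion. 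Nothing in the axioms gives you this for $p>4$ without a separate crossing-symmetry argument, and there is also a domain issue: $\re(\alpha\overline\beta)$ lives only in $\H^{\infty}$, so any squaring of it needs the same truncation gymnastics you already perform for $|\alpha|^2$, compounded. This is not ``propagated through the recursion''; it is an independent fact that would need proof, and it is the hard part, not the bookkeeping. \textbf{Limit to $L^\infty$:} similarly, the paper gets $\lim_{p\to\infty}\|\cdot\|_{\widetilde{L}^p} = \|\cdot\|_{L^\infty}$ immediately from the definition (send $4/p\to 0$, and the defining supremum becomes the operator norm of $M_\alpha$), whereas your recursive norms only relate to $\|\cdot\|_{L^\infty}$ through the chain of one-sided estimates you sketch, which again leans on the unproven Cauchy--Schwarz structure. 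The fix is to adopt the paper's direct definition of $\widetilde{L}^p$ and downgrade your recursion to the inequality of Lemma~\ref{lem:modified_L^p_vs_L^2p}; with that swap, the rest of your argument goes through essentially as written.
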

	
	Both to prove Proposition~\ref{prop:L^infty_quasi-Sobolev} and to deduce Theorem~\ref{thm:L^infty_quasi-Sobolev} from Proposition~\ref{prop:L^infty_quasi-Sobolev}, we will need the following elementary estimate.
	
	\begin{lem} \label{lem:sum_i_bdd_by_f(Delta)}
		Let $A \geq 0$ and $v \in \H^K$. Then
		\begin{align*}
			\sum_i \exp(A\log_+^2\lambda_i) \|\1_{\Delta \in I_i} v\|_{\H}
			\lesssim_A \|\exp((A+1) \log_+^2 \Delta) v\|_{\H}.
		\end{align*}
	\end{lem}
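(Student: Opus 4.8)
Proof proposal for Lemma~\ref{lem:sum_i_bdd_by_f(Delta)}.

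The plan is to reduce the left-hand side to a weighted $\ell^2$-type sum over the Casimir spectrum using Cauchy--Schwarz, and then absorb the extra factor coming from Cauchy--Schwarz into the passage from $\log_+^2$ weight with constant $A$ to one with constant $A+1$. Concretely, write $v \in \H^K$ and let $\{I_i\}$, $\{\lambda_i\}$ be the partition and marked points from Theorem~\ref{thm:exp_decay_quasimode}, so that each $I_i$ has length $\lesssim 1$, $\lambda_i \in I_i$, and the partition has polynomial growth: $\#\{i : \lambda_i \leq X\} \lesssim X^{O(1)}$ for $X \geq 1$. First I would bound the sum
\begin{align*}
	\sum_i \exp(A\log_+^2\lambda_i) \|\1_{\Delta \in I_i} v\|_{\H}
	= \sum_i \Big(\exp(A\log_+^2\lambda_i) w_i^{-1/2}\Big)\Big(w_i^{1/2} \|\1_{\Delta \in I_i} v\|_{\H}\Big)
\end{align*}
by Cauchy--Schwarz, where $w_i = (\lambda_i+1)^{-2}$ say (any choice with $\sum_i w_i < \infty$ and $w_i^{-1} \leq \exp(\tfrac12\log_+^2\lambda_i)$ for $\lambda_i \gg 1$ works; such a choice exists by polynomial growth of the partition). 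This gives
\begin{align*}
	\sum_i \exp(A\log_+^2\lambda_i) \|\1_{\Delta \in I_i} v\|_{\H}
	\leq \Big(\sum_i \exp(2A\log_+^2\lambda_i)\, w_i^{-1}\Big)^{1/2} \Big(\sum_i w_i \|\1_{\Delta \in I_i} v\|_{\H}^2\Big)^{1/2}.
\end{align*}
The first factor is finite by polynomial growth of $\{\lambda_i\}$ but grows with $\lambda$; this is where the gain from $A$ to $A+1$ is used. More precisely, for the second factor I would instead keep a $\lambda_i$-dependent weight: replacing the naive $w_i$ by $w_i(\lambda) = \exp(-\tfrac12\log_+^2\lambda_i)(\lambda_i+1)^{-2}$ in the Cauchy--Schwarz split, the first factor becomes a convergent constant $\lesssim_A 1$ (since $2A\log_+^2\lambda_i - (\tfrac12\log_+^2\lambda_i + A'\log_+^2\lambda_i) \to -\infty$ once we also pay an extra $\log_+^2$ from $w_i^{-1}$; one just needs the total weight on the first sum to be summable, which it is because $\exp(c\log_+^2\lambda_i)(\lambda_i+1)^{-N}$ is summable for any $c,N>0$ by polynomial growth), while the second factor becomes
\begin{align*}
	\Big(\sum_i \exp\big((2A+1)\log_+^2\lambda_i\big) (\lambda_i+1)^{-2} \|\1_{\Delta \in I_i} v\|_{\H}^2\Big)^{1/2}.
\end{align*}

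Next I would compare this weighted sum over the partition to the functional-calculus norm $\|\exp((A+1)\log_+^2\Delta)v\|_{\H}^2 = \sum_{\mu \geq 0} \exp(2(A+1)\log_+^2\mu)\|\1_{\Delta=\mu}v\|_{\H}^2$. Since $\1_{\Delta \in I_i} v = \sum_{\mu \in I_i} \1_{\Delta=\mu}v$ with the sum orthogonal, we have $\|\1_{\Delta \in I_i}v\|_{\H}^2 = \sum_{\mu \in I_i}\|\1_{\Delta=\mu}v\|_{\H}^2$, and each $\mu \in I_i$ lies within distance $\lesssim 1$ of $\lambda_i$, so $\log_+^2\lambda_i \leq \log_+^2\mu + O(1)$ (using $\log_+$ is uniformly bounded below and Lipschitz in $\log$ scale away from a bounded set). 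Hence $\exp((2A+1)\log_+^2\lambda_i)(\lambda_i+1)^{-2} \lesssim_A \exp((2A+1)\log_+^2\mu) \lesssim \exp(2(A+1)\log_+^2\mu)$, the last step being a crude bound since $2A+1 \leq 2(A+1)$. Summing over $\mu \in I_i$ and then over $i$ reassembles the full spectral sum, giving
\begin{align*}
	\sum_i \exp\big((2A+1)\log_+^2\lambda_i\big)(\lambda_i+1)^{-2}\|\1_{\Delta \in I_i}v\|_{\H}^2
	\lesssim_A \sum_{\mu \geq 0} \exp\big(2(A+1)\log_+^2\mu\big)\|\1_{\Delta=\mu}v\|_{\H}^2
	= \|\exp((A+1)\log_+^2\Delta)v\|_{\H}^2.
\end{align*}
Combining the two displays and taking square roots yields the claim.

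I do not expect a genuine obstacle here; the lemma is elementary. The only point requiring a little care is bookkeeping the weights in the Cauchy--Schwarz split so that (i) the ``dual'' sum $\sum_i \exp(cA\log_+^2\lambda_i)(\lambda_i+1)^{-N}$ is summable with an $A$-dependent but $v$-independent constant --- which is immediate from the polynomial growth bound \eqref{eqn:partition_growth}, since $\exp(c\log_+^2 X)$ is of slower-than-polynomial... no, faster-than-polynomial growth, so one instead notes $\exp(c\log_+^2\lambda_i)(\lambda_i+1)^{-N}$ is bounded uniformly in $i$ and decays once $N$ is taken large relative to the effective polynomial degree, wait --- more carefully, $\exp(c\log_+^2 X) = X^{c\log_+ X}$ beats any fixed power of $X$, so I would instead split so that the dual sum carries weight $\exp(-c\log_+^2\lambda_i)$ for large $c$, which dominates the polynomial number of terms in each dyadic block $\lambda_i \in [2^k, 2^{k+1})$, namely $\lesssim 2^{O(k)}$ terms each contributing $\lesssim \exp(-ck^2)$, a summable series; and (ii) the exponent bookkeeping $\log_+^2\lambda_i \leq \log_+^2\mu + O(1)$ for $\mu \in I_i$, which holds because $|I_i| \lesssim 1$. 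Getting these two constants to cooperate so the final constant depends only on $A$ is the entire content of the proof, and it goes through by choosing the splitting exponent large enough depending on the implied polynomial degree in \eqref{eqn:partition_growth}.
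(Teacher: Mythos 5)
Your proposal is correct and follows essentially the same route as the paper: split the weight as a product, apply Cauchy--Schwarz so that the dual factor $\sum_i \exp(-c\log_+^2\lambda_i)\cdots$ is summable by \eqref{eqn:partition_growth}, and then compare $\lambda_i$ to $\mu \in I_i$ (using $|I_i| \lesssim 1$) to reassemble the functional-calculus norm. The extra polynomial weights $(\lambda_i+1)^{\pm 2}$ you introduce are unnecessary --- the paper uses only the single factor $\exp(-\log_+^2\lambda_i)$ in the dual sum and keeps the full $\exp(2(A+1)\log_+^2\lambda_i)$ in the quadratic sum, which streamlines the bookkeeping you found awkward --- but your argument does close.
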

	
	\begin{proof}
		On the left hand side, write
		\begin{align*}
			\exp(A\log_+^2\lambda_i) = \exp(-\log_+^2\lambda_i)\exp((A+1)\log_+^2\lambda_i),
		\end{align*}
		and then apply Cauchy--Schwarz to get
		\begin{align*}
			\sum_i \exp(A\log_+^2\lambda_i) \|\1_{\Delta \in I_i} v\|_{\H}
			\leq \Big(\sum_i \exp(-2\log_+^2\lambda_i)\Big)^{\frac{1}{2}} \Big(\sum_i \exp(2(A+1)\log_+^2\lambda_i) \|\1_{\Delta \in I_i} v\|_{\H}^2\Big)^{\frac{1}{2}}.
		\end{align*}
		The first sum on the right is finite because of the polynomial growth property \eqref{eqn:partition_growth}. Therefore,
		\begin{align*}
			\sum_i \exp(A\log_+^2\lambda_i) \|\1_{\Delta \in I_i} v\|_{\H}
			\lesssim \Big(\sum_i \exp(2(A+1)\log_+^2\lambda_i) \|\1_{\Delta \in I_i} v\|_{\H}^2\Big)^{\frac{1}{2}}.
		\end{align*}
		Since $I_i$ has length $\lesssim 1$, we can replace $\lambda_i$ with $\min I_i$ at the cost of a factor of $O_A(1)$, so
		\begin{align*}
			\sum_i \exp(A\log_+^2\lambda_i) \|\1_{\Delta \in I_i} v\|_{\H}
			\lesssim_A \Big(\sum_i \exp(2(A+1)\log_+^2\min I_i) \|\1_{\Delta \in I_i} v\|_{\H}^2\Big)^{\frac{1}{2}}.
		\end{align*}
		Rewriting this using orthogonality,
		\begin{align*}
			\sum_i \exp(A\log_+^2\lambda_i) \|\1_{\Delta \in I_i} v\|_{\H}
			\lesssim_A \Big\| \sum_i \exp((A+1)\log_+^2 \min I_i) \1_{\Delta \in I_i}(v) \Big\|_{\H}.
		\end{align*}
		For $\mu \geq 0$, we have the inequality
		\begin{align*}
			\sum_i \exp((A+1) \log_+^2\min I_i) \1_{\mu \in I_i}
			\leq \exp((A+1) \log_+^2 \mu).
		\end{align*}
		Inserting this above yields the desired estimate.
	\end{proof}
	
	Let us now see how Proposition~\ref{prop:L^infty_quasi-Sobolev} implies Theorem~\ref{thm:L^infty_quasi-Sobolev}. Recall the statement of the theorem:
	
	\begin{thm*}[Restatement of Theorem~\ref{thm:L^infty_quasi-Sobolev}]
		Let $\alpha \in \H^K \cap \H^{\fin}$. Then
		\begin{align*}
			\|\alpha\|_{L^{\infty}}
			\leq \|\exp(O(\log_+^2\Delta))\alpha\|_{\H}.
		\end{align*}
	\end{thm*}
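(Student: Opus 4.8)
The plan is to reduce Theorem~\ref{thm:L^infty_quasi-Sobolev} to Proposition~\ref{prop:L^infty_quasi-Sobolev} by decomposing $\alpha$ along the partition $\{I_i\}$ furnished by Theorem~\ref{thm:exp_decay_quasimode}, applying the triangle inequality for the $L^\infty$ norm (Section~\ref{sec:L^infty_L^4}), and then summing with Lemma~\ref{lem:sum_i_bdd_by_f(Delta)}. Let $\{I_i\}$ and the points $\lambda_i \in I_i$ be as in Theorem~\ref{thm:exp_decay_quasimode}, and given $\alpha \in \H^K \cap \H^{\fin}$ put $\alpha_i = \1_{\Delta \in I_i}\alpha$. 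Since $\H$ has discrete spectrum, $\alpha$ is a finite linear combination of automorphic vectors (Proposition~\ref{prop:Maass_span_H^fin}), hence is supported on finitely many Casimir eigenvalues; so $\alpha_i = 0$ for all but finitely many $i$ and $\alpha = \sum_i \alpha_i$ is a \emph{finite} sum. Each $\alpha_i$ lies in $\H_{\Delta \in I_i}^K$, which (again using discreteness of the spectrum and boundedness of $I_i$) is finite-dimensional and contained in $\H^{\fin}$, as recorded in Subsection~\ref{subsec:sl_2:func_calc}; thus Proposition~\ref{prop:L^infty_quasi-Sobolev} applies to each $\alpha_i$.

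Next I would chain the estimates. Using that $M_{\alpha} = \sum_i M_{\alpha_i}$, the triangle inequality for the $L^\infty$ norm and then Proposition~\ref{prop:L^infty_quasi-Sobolev} give
\begin{align*}
	\|\alpha\|_{L^\infty}
	\leq \sum_i \|\alpha_i\|_{L^\infty}
	\lesssim \sum_i \exp(O(\log_+^2\lambda_i)) \|\alpha_i\|_{\H}
	= \sum_i \exp(O(\log_+^2\lambda_i)) \|\1_{\Delta \in I_i}\alpha\|_{\H},
\end{align*}
where the $O$-constant is the one from Proposition~\ref{prop:L^infty_quasi-Sobolev} and in particular is independent of $i$ and of $\alpha$. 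Applying Lemma~\ref{lem:sum_i_bdd_by_f(Delta)} with $A$ equal to this $O$-constant and $v = \alpha$ bounds the right-hand side by $\lesssim \|\exp((A+1)\log_+^2\Delta)\alpha\|_{\H}$, so altogether $\|\alpha\|_{L^\infty} \lesssim \|\exp(O(\log_+^2\Delta))\alpha\|_{\H}$.

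To upgrade $\lesssim$ to $\leq$: suppose the previous step reads $\|\alpha\|_{L^\infty} \leq C_0\,\|\exp(A_0\log_+^2\Delta)\alpha\|_{\H}$, and choose $A_1 \geq A_0 + (\log 2)^{-2}\log C_0$. Since $\log_+ \geq \log 2$ everywhere, one has $C_0\exp(A_0\log_+^2\mu) \leq \exp(A_1\log_+^2\mu)$ for all $\mu \in \R$; by monotonicity of $f \mapsto \|f(\Delta)\alpha\|_{\H}$ in $|f|$ (Subsection~\ref{subsec:sl_2:func_calc}), this yields $\|\alpha\|_{L^\infty} \leq \|\exp(A_1\log_+^2\Delta)\alpha\|_{\H}$, which is the assertion. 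The substance of the theorem is entirely in Proposition~\ref{prop:L^infty_quasi-Sobolev}, i.e.\ in the $L^p \to L^{2p}$ bootstrap and its inputs Theorems~\ref{thm:L^4_quasi-Sobolev}, \ref{thm:exp_decay_form}, and \ref{thm:exp_decay_quasimode}; the deduction above is routine, and the only points needing care are verifying that the decomposition $\alpha = \sum_i\alpha_i$ is finite with each piece in $\H^{\fin}$ (so the $L^\infty$ norms are finite and Proposition~\ref{prop:L^infty_quasi-Sobolev} applies) and the harmless $\lesssim \to \leq$ step made possible by $\log_+$ being bounded below.
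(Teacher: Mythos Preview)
Your proposal is correct and follows essentially the same argument as the paper: decompose $\alpha$ along the partition $\{I_i\}$, apply the triangle inequality and Proposition~\ref{prop:L^infty_quasi-Sobolev} to each piece, sum using Lemma~\ref{lem:sum_i_bdd_by_f(Delta)}, and then absorb the implicit constant by enlarging the $O$-constant. You give slightly more detail than the paper on why each $\alpha_i \in \H^{\fin}$ and on the $\lesssim \to \leq$ step, but the substance is identical.
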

	
	\begin{proof}[Proof of Theorem~\ref{thm:L^infty_quasi-Sobolev} assuming Proposition~\ref{prop:L^infty_quasi-Sobolev}]
		By the triangle inequality and Proposition~\ref{prop:L^infty_quasi-Sobolev},
		\begin{align*}
			\|\alpha\|_{L^{\infty}}
			\leq \sum_i \|\1_{\Delta \in I_i} \alpha\|_{L^{\infty}}
			\lesssim \sum_i \exp(O(\log_+^2\lambda_i)) \|\1_{\Delta \in I_i}\alpha\|_{\H}
		\end{align*}
		(note the triangle inequality was valid because $\alpha \in \H^{\fin}$, so $\1_{\Delta \in I_i}\alpha = 0$ for all but finitely many $i$).
		Thus by Lemma~\ref{lem:sum_i_bdd_by_f(Delta)},
		\begin{align*}
			\|\alpha\|_{L^{\infty}}
			\lesssim \|\exp(O(\log_+^2\Delta)) \alpha\|_{\H}.
		\end{align*}
		By increasing the $O$-constant, $\lesssim$ can be replaced with $\leq$.
	\end{proof}
	
	The remainder of this subsection is dedicated to the proof of Proposition~\ref{prop:L^infty_quasi-Sobolev}. The main tool is the ``modified $L^p$ norm" $\|\cdot\|_{\widetilde{L}^p}$ on $\H^{\fin}$, defined as follows for $4 \leq p \leq \infty$: let $\|\alpha\|_{\widetilde{L}^p}$ be the smallest constant $C \in [0,\infty]$ such that
	\begin{align*}
		\|\alpha\beta\|_{\H}
		\leq C\|\beta\|_{L^4}^{\frac{4}{p}} \|\beta\|_{\H}^{1-\frac{4}{p}}
		\qquad \text{for all} \qquad
		\beta \in \H^{\fin}.
	\end{align*}
	To motivate this definition, note that whenever $f,h$ are functions on a measure space and $4 \leq p \leq \infty$, one has
	\begin{align*}
		\|fh\|_{L^2}
		\leq \|f\|_{L^p} \|h\|_{L^{2p/(p-2)}}
		\leq \|f\|_{L^p} \|h\|_{L^4}^{\frac{4}{p}} \|h\|_{L^2}^{1-\frac{4}{p}}
	\end{align*}
	by H\"older and log-convexity of $L^p$ norms.
	
	The triangle inequality for $\|\cdot\|_{\widetilde{L}^p}$ is clear. For $\beta \in \H^{\fin}$, the first inequality in Proposition~\ref{prop:L^2_leq_L^4_leq_L^infty} says that $\|\beta\|_{\H} \leq \|\beta\|_{L^4}$. It follows that $\|\cdot\|_{\widetilde{L}^p}$ is increasing in $p$. It is trivial to see that the limit as $p \to \infty$ (not including $p=\infty$) is
	\begin{align} \label{eqn:modified_norm_limit}
		\lim_{p \to \infty}\|\cdot\|_{\widetilde{L}^p}
		= \|\cdot\|_{\widetilde{L}^{\infty}}
		= \|\cdot\|_{L^{\infty}}.
	\end{align}
	In addition to $\|\cdot\|_{\widetilde{L}^{\infty}}$ coinciding with the usual $L^{\infty}$ norm, $\|\cdot\|_{{\widetilde{L}}^4}$ coincides with the usual $L^4$ norm:
	
	\begin{prop} \label{prop:modified_L^4_is_L^4}
		Let $\alpha \in \H^{\fin}$. Then $\|\alpha\|_{\widetilde{L}^4} = \|\alpha\|_{L^4}$.
	\end{prop}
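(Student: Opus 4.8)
The plan is to prove the two inequalities $\|\alpha\|_{\widetilde{L}^4} \leq \|\alpha\|_{L^4}$ and $\|\alpha\|_{\widetilde{L}^4} \geq \|\alpha\|_{L^4}$ separately. Recall that $\|\alpha\|_{\widetilde{L}^4}$ is by definition the smallest constant $C \in [0,\infty]$ such that $\|\alpha\beta\|_{\H} \leq C\|\beta\|_{L^4}$ for all $\beta \in \H^{\fin}$; since $\|\beta\|_{L^4} = 0$ forces $\beta = 0$ (hence $\alpha\beta = 0$) by Proposition~\ref{prop:L^2_leq_L^4_leq_L^infty}, this is the same as $\|\alpha\|_{\widetilde{L}^4} = \sup \|\alpha\beta\|_{\H}/\|\beta\|_{L^4}$, the supremum over nonzero $\beta \in \H^{\fin}$.

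The inequality $\|\alpha\|_{\widetilde{L}^4} \leq \|\alpha\|_{L^4}$ is immediate from $L^4$-Cauchy--Schwarz (Proposition~\ref{prop:L^4_C-S}): that proposition gives $\|\alpha\beta\|_{\H} \leq \|\alpha\|_{L^4}\|\beta\|_{L^4}$ for every $\beta \in \H^{\fin}$, so $C = \|\alpha\|_{L^4}$ is an admissible constant in the definition of $\|\alpha\|_{\widetilde{L}^4}$.

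For the reverse inequality, I would test the defining inequality against the single vector $\beta = \overline{\alpha}$. If $\alpha = 0$, then $\|\alpha\|_{L^4} = 0$ and there is nothing to prove, so assume $\alpha \neq 0$; then $\overline{\alpha} \neq 0$ and $\|\overline{\alpha}\|_{L^4} = \|\alpha\|_{L^4} \geq \|\alpha\|_{\H} > 0$ by the norm comparison in Section~\ref{sec:L^infty_L^4} and Proposition~\ref{prop:L^2_leq_L^4_leq_L^infty}, so $\overline{\alpha}$ is a legitimate test vector. By \eqref{eqn:|alpha|^2_H_norm} we have $\|\alpha\overline{\alpha}\|_{\H} = \||\alpha|^2\|_{\H} = \|\alpha\|_{L^4}^2$, and together with $\|\overline{\alpha}\|_{L^4} = \|\alpha\|_{L^4}$ this gives $\|\alpha\|_{\widetilde{L}^4} \geq \|\alpha\overline{\alpha}\|_{\H}/\|\overline{\alpha}\|_{L^4} = \|\alpha\|_{L^4}$. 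Combining with the previous paragraph yields $\|\alpha\|_{\widetilde{L}^4} = \|\alpha\|_{L^4}$.

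There is no genuine obstacle here: the argument is a two-line sandwich using $L^4$-Cauchy--Schwarz in one direction and the equality $\||\alpha|^2\|_\H = \|\alpha\|_{L^4}^2$ in the other, with the extremal test vector $\overline{\alpha}$ playing the role of the function proportional to $|f|$ in the classical proof that $\||f|\,\cdot\,\|$ realizes the $L^4$ norm. The only point needing a word of care is the trivial case $\alpha = 0$, handled by $\|\alpha\|_{\H} \leq \|\alpha\|_{L^4}$.
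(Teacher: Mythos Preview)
Your proof is correct and follows essentially the same approach as the paper: $L^4$-Cauchy--Schwarz for the upper bound, and a single extremal test vector for the lower bound. The only cosmetic difference is that the paper tests against $\beta = \alpha$ (so that $\|\alpha\beta\|_{\H} = \|\alpha^2\|_{\H} = \|\alpha\|_{L^4}^2$ is literally the definition of the $L^4$ norm), whereas you test against $\beta = \overline{\alpha}$ and invoke \eqref{eqn:|alpha|^2_H_norm}; either choice works.
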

	
	\begin{proof}
		For any $\beta \in \H^{\fin}$, we have
		\begin{align} \label{eqn:modified_L^4_vs_L^4}
			\|\alpha\beta\|_{\H}
			\leq \|\alpha\|_{L^4} \|\beta\|_{L^4}
		\end{align}
		by $L^4$-Cauchy--Schwarz (Proposition~\ref{prop:L^4_C-S}), so $\|\alpha\|_{\widetilde{L}^4} \leq \|\alpha\|_{L^4}$. The reverse inequality is due to the fact that equality holds in \eqref{eqn:modified_L^4_vs_L^4} when $\beta = \alpha$.
	\end{proof}
	
	The next lemma will allow us to induct on $p$ when working with $\widetilde{L}^p$ norms. It is analogous to the $\leq$ direction of the equality $\|f\|_{L^{2p}}^2 = \||f|^2\|_{L^p}$, which holds for any function $f$ on a measure space.
	
	\begin{lem} \label{lem:modified_L^p_vs_L^2p}
		Let $\alpha \in \H^{\fin}$ and $4 \leq p \leq \infty$. Then
		\begin{align*}
			\|\alpha\|_{\widetilde{L}^{2p}}^2
			\leq \liminf_{\Lambda \to \infty} \|\1_{|\Delta| \leq \Lambda}(|\alpha|^2)\|_{\widetilde{L}^p}.
		\end{align*}
	\end{lem}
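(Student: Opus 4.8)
The plan is to reduce the inequality to an inner-product estimate against a single test vector $\beta\in\H^{\fin}$, mimicking the classical identity $\|fh\|_{L^2}^2=\langle|f|^2,|h|^2\rangle$ combined with H\"older's inequality. First I would invoke crossing symmetry, exactly as in the proof of Proposition~\ref{prop:L^4_C-S}, to write $\|\alpha\beta\|_{\H}^2=\langle|\alpha|^2,|\beta|^2\rangle_{\H}$ for every $\beta\in\H^{\fin}$, where $|\alpha|^2=\alpha\overline{\alpha}\in\H^{\infty}$. Next I would split $|\alpha|^2=\gamma_\Lambda+\1_{|\Delta|>\Lambda}(|\alpha|^2)$ with $\gamma_\Lambda:=\1_{|\Delta|\le\Lambda}(|\alpha|^2)$. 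Since $|\alpha|^2$ is a finite sum of weight vectors and, for each weight $k$, the weight-$k$ part of $\H_{|\Delta|\le\Lambda}$ is finite-dimensional (this uses that $\H$ has discrete spectrum), each $\gamma_\Lambda$ lies in $\H^{\fin}$; moreover $\gamma_\Lambda\to|\alpha|^2$ in $\H$, so $\langle|\alpha|^2,|\beta|^2\rangle_{\H}=\lim_{\Lambda\to\infty}\langle\gamma_\Lambda,\beta\overline{\beta}\rangle_{\H}$.

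The heart of the argument is then to bound a single term. Using the ``existence of complex conjugates'' axiom (in the form $\langle\beta\overline{\beta},\gamma_\Lambda\rangle_{\H}=\langle\overline{\beta},\overline{\beta}\gamma_\Lambda\rangle_{\H}$) together with $\overline{\overline{\beta}}=\beta$ and conjugate-symmetry of the inner product, one gets $\langle\gamma_\Lambda,\beta\overline{\beta}\rangle_{\H}=\langle\overline{\beta}\gamma_\Lambda,\overline{\beta}\rangle_{\H}$. Cauchy--Schwarz, commutativity, and $\|\overline{\beta}\|_{\H}=\|\beta\|_{\H}$ give $|\langle\gamma_\Lambda,\beta\overline{\beta}\rangle_{\H}|\le\|\gamma_\Lambda\overline{\beta}\|_{\H}\|\beta\|_{\H}$, and the definition of $\|\gamma_\Lambda\|_{\widetilde{L}^p}$ together with $\|\overline{\beta}\|_{L^4}=\|\beta\|_{L^4}$ yields
\begin{align*}
	\|\gamma_\Lambda\overline{\beta}\|_{\H}
	\le\|\gamma_\Lambda\|_{\widetilde{L}^p}\,\|\beta\|_{L^4}^{4/p}\,\|\beta\|_{\H}^{1-4/p}.
\end{align*}
Combining these, $|\langle\gamma_\Lambda,\beta\overline{\beta}\rangle_{\H}|\le\|\gamma_\Lambda\|_{\widetilde{L}^p}\,\|\beta\|_{L^4}^{4/p}\,\|\beta\|_{\H}^{2-4/p}$.

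Finally I would let $\Lambda\to\infty$. Because $\|\alpha\beta\|_{\H}^2$ is a nonnegative real and equals $\lim_{\Lambda}|\langle\gamma_\Lambda,\beta\overline{\beta}\rangle_{\H}|$, and since $\liminf_\Lambda\bigl(c\,\|\gamma_\Lambda\|_{\widetilde{L}^p}\bigr)=c\,\liminf_\Lambda\|\gamma_\Lambda\|_{\widetilde{L}^p}$ for the fixed nonnegative constant $c=\|\beta\|_{L^4}^{4/p}\|\beta\|_{\H}^{2-4/p}$, we obtain
\begin{align*}
	\|\alpha\beta\|_{\H}^2
	\le\Bigl(\liminf_{\Lambda\to\infty}\|\gamma_\Lambda\|_{\widetilde{L}^p}\Bigr)\,\|\beta\|_{L^4}^{4/p}\,\|\beta\|_{\H}^{2-4/p}
\end{align*}
for all $\beta\in\H^{\fin}$, which by the definition of $\|\alpha\|_{\widetilde{L}^{2p}}$ is exactly $\|\alpha\|_{\widetilde{L}^{2p}}^2\le\liminf_{\Lambda\to\infty}\|\gamma_\Lambda\|_{\widetilde{L}^p}$; the endpoint $p=\infty$ is handled identically, reading $4/p$ as $0$ (so $\|\beta\|_{L^4}^{4/p}=1$). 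There is no serious obstacle here: the only points needing care are the verification that $\gamma_\Lambda\in\H^{\fin}$ — which is precisely where discreteness of the spectrum is used, so that $\|\gamma_\Lambda\|_{\widetilde{L}^p}$ and the adjointness axiom both apply to $\gamma_\Lambda$ — and the routine bookkeeping of the $\liminf$ and of the H\"older exponents $4/p$ and $1-4/p$.
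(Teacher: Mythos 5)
Your proof is correct and follows essentially the same route as the paper's: crossing symmetry to reduce $\|\alpha\beta\|_{\H}^2$ to an inner product, truncation $\gamma_\Lambda=\1_{|\Delta|\le\Lambda}(|\alpha|^2)\in\H^{\fin}$, then adjointness, Cauchy--Schwarz, and the definition of $\|\cdot\|_{\widetilde{L}^p}$. The only cosmetic difference is that you move the adjoint onto $\overline{\beta}$ (bounding $\|\gamma_\Lambda\overline{\beta}\|_{\H}$) where the paper keeps $\beta$ (bounding $\|\gamma_\Lambda\beta\|_{\H}$); the two are equivalent since complex conjugation preserves the $\H$ and $L^4$ norms, and your added justification that $\gamma_\Lambda\in\H^{\fin}$ is correct and a point the paper uses silently.
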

	
	As mentioned in Subsection~\ref{subsec:outline:bulk_tail_poly}, the spectral cutoff $\1_{|\Delta| \leq \Lambda}$ is needed because $|\alpha|^2 \not\in \H^{\fin}$ but $\1_{|\Delta| \leq \Lambda}(|\alpha|^2) \in \H^{\fin}$. We will almost always apply Lemma~\ref{lem:modified_L^p_vs_L^2p} with $\alpha$ a weight vector, in which case $|\alpha|^2 \in \H^K$ and $\1_{|\Delta| \leq \Lambda}(|\alpha|^2) = \1_{\Delta \leq \Lambda}(|\alpha|^2)$.
	
	\begin{proof}[Proof of Lemma~\ref{lem:modified_L^p_vs_L^2p}]
		For $\beta \in \H^{\fin}$, crossing symmetry allows us to write
		\begin{align*}
			\|\alpha\beta\|_{\H}^2
			= \langle |\alpha|^2, |\beta|^2 \rangle_{\H}
			= \lim_{\Lambda \to \infty} \langle \1_{|\Delta| \leq \Lambda}(|\alpha|^2), |\beta|^2 \rangle_{\H}
			= \lim_{\Lambda \to \infty} \langle \1_{|\Delta| \leq \Lambda}(|\alpha|^2) \beta, \beta \rangle_{\H}.
		\end{align*}
		By Cauchy--Schwarz and the definition of the $\widetilde{L}^p$ norm, we get
		\begin{align*}
			\|\alpha\beta\|_{\H}^2
			\leq \liminf_{\Lambda \to \infty} \|\1_{|\Delta| \leq \Lambda}(|\alpha|^2)\beta\|_{\H} \|\beta\|_{\H}
			\leq \liminf_{\Lambda \to \infty} \|\1_{|\Delta| \leq \Lambda}(|\alpha|^2)\|_{\widetilde{L}^p} \|\beta\|_{L^4}^{\frac{4}{p}} \|\beta\|_{\H}^{2-\frac{4}{p}}.
		\end{align*}
		Taking square roots,
		\begin{align*}
			\|\alpha\beta\|_{\H}
			\leq \liminf_{\Lambda \to \infty} \|\1_{|\Delta| \leq \Lambda}(|\alpha|^2)\|_{\widetilde{L}^p}^{\frac{1}{2}} \|\beta\|_{L^4}^{\frac{4}{2p}} \|\beta\|_{\H}^{1-\frac{4}{2p}}.
		\end{align*}
		Thus
		\begin{align*}
			\|\alpha\|_{\widetilde{L}^{2p}}
			\leq \liminf_{\Lambda \to \infty} \|\1_{|\Delta| \leq \Lambda}(|\alpha|^2)\|_{\widetilde{L}^p}^{\frac{1}{2}},
		\end{align*}
		and squaring both sides gives the result.
	\end{proof}
	
	We are now ready to prove Proposition~\ref{prop:L^infty_quasi-Sobolev}.
	
	\begin{proof}[Proof of Proposition~\ref{prop:L^infty_quasi-Sobolev}]
		Fix $A \gg 1$ and $B \gg_A 1$. We will show by induction on $p$ that for any $p \geq 4$ a power of $2$, any $i$, and any $\alpha \in \H_{\Delta \in I_i}^K$,
		\begin{align} \label{eqn:modified_L^p_ind_hypo}
			\|\alpha\|_{\widetilde{L}^p}
			\leq B \exp(A\log_+^2\lambda_i) \|\alpha\|_{\H}.
		\end{align}
		Then since $A,B$ are fixed constants independent of $p$, sending $p \to \infty$ and using \eqref{eqn:modified_norm_limit} will give the desired bound \eqref{eqn:L^infty_quasimode_bd}.
		
		To begin the proof of the base case $p = 4$ of \eqref{eqn:modified_L^p_ind_hypo}, we use Proposition~\ref{prop:modified_L^4_is_L^4}, Theorem~\ref{thm:L^4_quasi-Sobolev}, and the fact that $\alpha \in \H_{\Delta \in I_i}^K$ to write
		\begin{align*}
			\|\alpha\|_{\widetilde{L}^4}
			= \|\alpha\|_{L^4}
			\leq \|\exp(O(\log_+^2\Delta))\alpha\|_{\H}
			\leq \exp(O(\log_+^2\max I_i)) \|\alpha\|_{\H}.
		\end{align*}
		Since $I_i$ has length $\lesssim 1$, we can replace $\max I_i$ with $\lambda_i$ at the cost of increasing the $O$-constant, so
		\begin{align*}
			\|\alpha\|_{\widetilde{L}^4}
			\leq \exp(O(\log_+^2\lambda_i)) \|\alpha\|_{\H}.
		\end{align*}
		Taking $A$ to be at least the $O$-constant and $B$ to be at least $1$, we get \eqref{eqn:modified_L^p_ind_hypo} for $p=4$.
		
		Now make the induction hypothesis that \eqref{eqn:modified_L^p_ind_hypo} holds for some $4 \leq p < \infty$, and let us prove it for $2p$. By Lemma~\ref{lem:modified_L^p_vs_L^2p}, the triangle inequality, and the induction hypothesis,
		\begin{align*}
			\|\alpha\|_{\widetilde{L}^{2p}}^2
			\leq \liminf_{\Lambda \to \infty} \|\1_{\Delta\leq\Lambda}(|\alpha|^2)\|_{\widetilde{L}^p}
			\leq \sum_j \|\1_{\Delta \in I_j}(|\alpha|^2)\|_{\widetilde{L}^p}
			\leq B \sum_j \exp(A\log_+^2\lambda_j) \|\1_{\Delta \in I_j}(|\alpha|^2)\|_{\H},
		\end{align*}
		so by Lemma~\ref{lem:sum_i_bdd_by_f(Delta)},
		\begin{align*}
			\|\alpha\|_{\widetilde{L}^{2p}}^2
			\lesssim_A B \|\exp((A+1)\log_+^2\Delta)(|\alpha|^2)\|_{\H}.
		\end{align*}
		Let $C,c$ be as in Theorem~\ref{thm:exp_decay_quasimode}. Then splitting $\1 = \1_{\Delta < C\lambda_i} + \1_{\Delta \geq C\lambda_i}$,
		\begin{align*}
			\|\alpha\|_{\widetilde{L}^{2p}}^2
			\lesssim_A B \|\1_{\Delta < C\lambda_i}\exp((A+1)\log_+^2\Delta)(|\alpha|^2)\|_{\H} + B \|\1_{\Delta \geq C\lambda_i}\exp((A+1)\log_+^2\Delta)(|\alpha|^2)\|_{\H}.
		\end{align*}
		Estimating the first term trivially and the second term using that $(A+1)\log_+^2\mu \leq c\sqrt{\mu} + O_A(1)$ for $\mu \geq 0$ (note the $O$-constant can be taken independent of $c$ because $c \gtrsim 1$),
		\begin{align*}
			\|\alpha\|_{\widetilde{L}^{2p}}^2
			\lesssim_A B \exp((A+1)\log_+^2(C\lambda_i)) \||\alpha|^2\|_{\H} + B \|\1_{\Delta \geq C\lambda_i} \exp(c\sqrt{\Delta})(|\alpha|^2)\|_{\H}.
		\end{align*}
		Rewriting the first term using \eqref{eqn:|alpha|^2_H_norm} and estimating the second term by Theorem~\ref{thm:exp_decay_quasimode},
		\begin{align*}
			\|\alpha\|_{\widetilde{L}^{2p}}^2
			\lesssim_A B \exp((A+1)\log_+^2(C\lambda_i)) \|\alpha\|_{L^4}^2 + B\|\alpha\|_{\H}^2.
		\end{align*}
		By Theorem~\ref{thm:L^4_quasi-Sobolev}, the bound $C \lesssim 1$, and the fact that $I_i$ has length $\lesssim 1$,
		\begin{align*}
			\|\alpha\|_{\widetilde{L}^{2p}}^2
			\lesssim_A B \exp((A+O(1))\log_+^2\lambda_i) \|\alpha\|_{\H}^2.
		\end{align*}
		Since $A \gg 1$ and $B \gg_A 1$, we may assume $A$ is greater than the $O(1)$ in the exponential, and $B$ is greater than the implicit constant (importantly, these conditions are independent of $p$). Then
		\begin{align*}
			\|\alpha\|_{\widetilde{L}^{2p}}^2
			\leq B^2 \exp(2A\log_+^2\lambda_i) \|\alpha\|_{\H}^2.
		\end{align*}
		Taking square roots yields \eqref{eqn:modified_L^p_ind_hypo} and completes the induction.
	\end{proof}
	
	\subsection{Identities from the product rule} \label{subsec:basic_identities}
	
	To proceed, we need two elementary algebraic identities: Propositions~\ref{prop:p_n_def} and \ref{prop:q_k,n_def} below. These identities are crucial, and will be used many times throughout the rest of the paper.
	The proofs of these propositions are reminiscent of the proofs of Corollary~\ref{cor:num_recursion_specialized}, Proposition~\ref{prop:C_+n_-n_l_size}, and Corollary~\ref{cor:C_+n_-n_formula_explicit}.
	
	\begin{prop} \label{prop:p_n_def}
		Let $\varphi \in \H_{\R}^K \cap \H^{\fin}$ be an automorphic vector with Casimir eigenvalue $\lambda \geq 0$. Then for $n \in \Z_{\geq 0}$,
		\begin{align} \label{eqn:|E^n phi|^2_formula}
			|E^n\varphi|^2
			= p_n(\lambda,\Delta)(\varphi^2),
		\end{align}
		where $p_n$ is the polynomial in two variables with real coefficients given by the second order recurrence
		\begin{align} \label{eqn:p_n_recurrence}
			p_{n+1}(\lambda,\mu)
			= (2\lambda-\mu+2n^2) p_n(\lambda,\mu) - (\lambda+n(n-1))^2 p_{n-1}(\lambda,\mu)
		\end{align}
		for $n \geq 1$, with initial conditions
		\begin{align*}
			p_0(\lambda,\mu) = 1
			\qquad \text{and} \qquad
			p_1(\lambda,\mu) = \lambda - \tfrac{1}{2}\mu.
		\end{align*}
	\end{prop}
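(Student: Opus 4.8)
The plan is to prove \eqref{eqn:|E^n phi|^2_formula} by induction on $n$, using that multiplication is a derivation for the $\g$-action (the product rule) together with \eqref{eqn:EEbar}. First I would set up notation: write $q_n = |E^n\varphi|^2 = (E^n\varphi)\,\overline{E^n\varphi}$. Since $\varphi$ is real, Proposition~\ref{prop:bar_equivariant} gives $\overline{E^n\varphi} = \overline{E}^n\varphi$, so $q_n = (E^n\varphi)(\overline{E}^n\varphi)$; this is a product of two elements of $\H^{\fin}$, hence lies in $\H^{\infty}$, and it has weight $0$ by \eqref{eqn:H_product_rule}, so $q_n \in \H^K \cap \H^{\infty}$. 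In particular $\varphi^2 = q_0$ is a $K$-invariant vector in $\H^{\infty}$, so by Proposition~\ref{prop:smooth_iff_spectral_decay} one has $\|\Delta^N(\varphi^2)\|_{\H} < \infty$ for every $N$; consequently $p(\lambda,\Delta)(\varphi^2)$ is a well-defined smooth vector for every polynomial $p$, the operators $p(\lambda,\Delta)$ commute on the vector $\varphi^2$, and $\Delta\bigl(p(\lambda,\Delta)(\varphi^2)\bigr) = \bigl(\mu \mapsto \mu\,p(\lambda,\mu)\bigr)(\Delta)(\varphi^2)$. This makes the right-hand side of \eqref{eqn:|E^n phi|^2_formula} unambiguous and reduces the problem to showing that the sequence $(q_n)$ satisfies the recurrence \eqref{eqn:p_n_recurrence} with the stated initial data.

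Next I would dispatch the base cases. For $n = 0$ this is immediate: $q_0 = \varphi\overline{\varphi} = \varphi^2$, so $p_0 = 1$. For $n = 1$, apply $\overline{E}E$ to $\varphi^2$ using the product rule: $E(\varphi^2) = 2\varphi(E\varphi)$, and then $\overline{E}\bigl(2\varphi(E\varphi)\bigr) = 2(\overline{E}\varphi)(E\varphi) + 2\varphi(\overline{E}E\varphi)$. By \eqref{eqn:Delta|_H^K} applied to the weight-$0$ vector $\varphi$ we have $\overline{E}E\varphi = -\Delta\varphi = -\lambda\varphi$, and $(\overline{E}\varphi)(E\varphi) = (E\varphi)(\overline{E}\varphi) = q_1$; also $\overline{E}E(\varphi^2) = -\Delta(\varphi^2)$ by \eqref{eqn:Delta|_H^K}. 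Hence $-\Delta(\varphi^2) = 2q_1 - 2\lambda\varphi^2$, i.e. $q_1 = \bigl(\lambda - \tfrac{1}{2}\Delta\bigr)(\varphi^2)$, matching $p_1(\lambda,\mu) = \lambda - \tfrac{1}{2}\mu$.

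For the induction step, the key is a three-term recurrence for the $q_n$ themselves. I would first record the $\mathfrak{sl}_2$-action formulas: for $m \geq 1$, \eqref{eqn:EEbar} applied to the weight vector $E^{m-1}\varphi$ (weight $m-1$, Casimir eigenvalue $\lambda$) and to $\overline{E}^{m-1}\varphi$ (weight $-(m-1)$) gives $\overline{E}E^m\varphi = -a_m E^{m-1}\varphi$ and $E\overline{E}^m\varphi = -a_m\overline{E}^{m-1}\varphi$, where $a_m = \lambda + m(m-1)$. Then, fixing $n \geq 1$, I would compute $\overline{E}E(q_n) = -\Delta(q_n)$ (valid by \eqref{eqn:Delta|_H^K} since $q_n \in \H^K \cap \H^{\infty}$) by two applications of the product rule. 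Applying $E$ and simplifying $E\overline{E}^n\varphi = -a_n\overline{E}^{n-1}\varphi$ gives $E(q_n) = (E^{n+1}\varphi)(\overline{E}^n\varphi) - a_n(E^n\varphi)(\overline{E}^{n-1}\varphi)$, which is again a sum of products of elements of $\H^{\fin}$, so the product rule applies once more; applying $\overline{E}$ and simplifying $\overline{E}E^{n+1}\varphi = -a_{n+1}E^n\varphi$, $\overline{E}E^n\varphi = -a_n E^{n-1}\varphi$ yields
\[
-\Delta(q_n) = q_{n+1} - (a_{n+1}+a_n)\,q_n + a_n^2\,q_{n-1}.
\]
Since $a_{n+1} + a_n = 2\lambda + 2n^2$ and $a_n^2 = (\lambda + n(n-1))^2$, rearranging gives $q_{n+1} = (2\lambda - \Delta + 2n^2)\,q_n - (\lambda + n(n-1))^2\,q_{n-1}$, which is precisely \eqref{eqn:p_n_recurrence} evaluated on $\varphi^2$; combined with the base cases and the commutative functional calculus on $\varphi^2$, induction on $n$ establishes \eqref{eqn:|E^n phi|^2_formula}. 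I do not anticipate a genuine obstacle here; the one point needing care is domain bookkeeping — ensuring that every invocation of the product rule is applied to a genuine product of two $\H^{\fin}$-vectors, which forces one to replace $E\overline{E}^n\varphi$ and $\overline{E}E^m\varphi$ by their values from \eqref{eqn:EEbar} before reapplying a raising or lowering operator.
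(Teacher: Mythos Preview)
Your proof is correct and follows essentially the same approach as the paper: both establish the base cases $n=0,1$ by direct computation using $\Delta|_{\H^K}=-\overline{E}E$ and the product rule, and then derive the three-term recurrence for $q_n=|E^n\varphi|^2$ by computing $\Delta(q_n)=-\overline{E}E(q_n)$ via the product rule together with \eqref{eqn:EEbar}. Your introduction of the shorthand $a_m=\lambda+m(m-1)$ streamlines the bookkeeping but the computations are otherwise the same.
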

	
	The cases $n=0$ and $n=1$ were worked out in Subsection~\ref{subsec:outline:reduce_to_bulk_tail} (see \eqref{eqn:outline:|Ephi|^2}, \eqref{eqn:outline:p_n_def}, and the sentence after \eqref{eqn:outline:p_n_def}). These computations are very short, so we redo them here for convenience.
	
	\begin{proof}
		Since $\varphi$ is real, $|\varphi|^2 = \varphi^2$, so \eqref{eqn:|E^n phi|^2_formula} holds for $n=0$. By \eqref{eqn:Delta|_H^K} and the product rule,
		\begin{align*}
			\Delta(\varphi^2)
			= -\overline{E}E(\varphi^2)
			= -2 \overline{E}(\varphi E\varphi)
			= 2\lambda \varphi^2 - 2|E\varphi|^2.
		\end{align*}
		Rearranging,
		\begin{align*}
			|E\varphi|^2
			= \Big(\lambda - \frac{1}{2}\Delta\Big) (\varphi^2).
		\end{align*}
		This is \eqref{eqn:|E^n phi|^2_formula} for $n = 1$. By \eqref{eqn:Delta|_H^K}, the product rule, and \eqref{eqn:EEbar}, we compute for $n \geq 1$
		\begin{align*}
			\Delta(|E^n\varphi|^2)
			&= -\overline{E}E(E^n\varphi \overline{E}^n \varphi)
			\\&= -|E^{n+1}\varphi|^2 - |\overline{E}E^n\varphi|^2 - \overline{E}E^{n+1} \varphi \overline{E}^n \varphi - E^n\varphi \overline{E} E \overline{E}^n \varphi
			\\&= -|E^{n+1}\varphi|^2 - |(\Delta+H^2+H) E^{n-1}\varphi|^2 + (\Delta+H^2+H) E^n \varphi \overline{E}^n \varphi + E^n\varphi (\Delta+H^2+H) \overline{E}^n\varphi
			\\&= -|E^{n+1}\varphi|^2 - (\lambda+n(n-1))^2 |E^{n-1}\varphi|^2 + 2(\lambda+n^2) |E^n\varphi|^2.
		\end{align*}
		Rearranging,
		\begin{align*}
			|E^{n+1}\varphi|^2
			= (2\lambda - \Delta + 2n^2) (|E^n\varphi|^2) - (\lambda+n(n-1))^2 |E^{n-1}\varphi|^2.
		\end{align*}
		Comparing this with \eqref{eqn:p_n_recurrence}, we conclude by induction that \eqref{eqn:|E^n phi|^2_formula} holds for all $n$.
	\end{proof}
	
	\begin{prop} \label{prop:q_k,n_def}
		Let $f \in \H^{\fin}$ be a lowest weight vector of weight $k \geq 1$. Then for $n \in \Z_{\geq 0}$,
		\begin{align} \label{eqn:|E^n f|^2_formula}
			|E^nf|^2
			= q_{k,n}(\Delta) (|f|^2),
		\end{align}
		where $q_{k,n}$ is the polynomial in one variable with real coefficients given by the second order recurrence
		\begin{align} \label{eqn:q_k,n_recurrence}
			q_{k,n+1}(\mu) = (2(n+k)^2 - 2k(k-1) - \mu) q_{k,n}(\mu) - ((n+k)(n+k-1) - k(k-1))^2 q_{k,n-1}(\mu)
		\end{align}
		for $n \geq 1$, with initial conditions
		\begin{align*}
			q_{k,0}(\mu) = 1
			\qquad \text{and} \qquad
			q_{k,1}(\mu) = 2k-\mu.
		\end{align*}
	\end{prop}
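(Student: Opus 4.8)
The plan is to follow the proof of Proposition~\ref{prop:p_n_def} essentially verbatim, replacing the automorphic vector $\varphi$ by the lowest weight vector $f$, and exploiting the one structural simplification that is special to the lowest weight case. First I would record the basic facts: since $f$ has weight $k$, its conjugate $\overline{f}$ has weight $-k$, so $|f|^2 = f\overline{f} \in \H^K$; more generally $E^nf$ has weight $k+n$ and $\overline{E^nf} = \overline{E}^n\overline{f}$ has weight $-(k+n)$, so $|E^nf|^2 \in \H^K$ as well. By Proposition~\ref{prop:lowest_wt_wt}, $f$ has Casimir eigenvalue $\lambda = -k(k-1)$, and since $\Delta$ commutes with $E$, every $E^nf$ has the same Casimir eigenvalue. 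The key point, absent in Proposition~\ref{prop:p_n_def}, is that $\overline{E}f = 0$ because $f$ is lowest weight, and hence $E\overline{f} = \overline{\overline{E}f} = 0$ also.

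For the base cases, $n=0$ is immediate with $q_{k,0}=1$. For $n=1$, I would compute $\Delta(|f|^2) = -\overline{E}E(f\overline{f})$ via \eqref{eqn:Delta|_H^K}; the product rule together with $E\overline{f}=0$ gives $E(f\overline{f}) = (Ef)\overline{f}$, and applying $\overline{E}$ once more, using $\overline{E}Ef = -(\Delta + H^2 + H)f = -2kf$ from \eqref{eqn:EEbar} and $\lambda = -k(k-1)$, yields $\Delta(|f|^2) = 2k|f|^2 - |Ef|^2$, i.e.\ $|Ef|^2 = (2k - \Delta)(|f|^2) = q_{k,1}(\Delta)(|f|^2)$.

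For the induction step I would compute $\Delta(|E^nf|^2) = -\overline{E}E(E^nf \cdot \overline{E}^n\overline{f})$ for $n \geq 1$ by expanding with the product rule exactly as in the proof of Proposition~\ref{prop:p_n_def}. Setting $c_n = (n+k-1)(n+k) - k(k-1)$, one has $\overline{E}(E^nf) = -c_n E^{n-1}f$ by applying \eqref{eqn:EEbar} to $E^{n-1}f$ (which has weight $n+k-1$ and Casimir eigenvalue $-k(k-1)$), and, taking conjugates, $E(\overline{E}^n\overline{f}) = -c_n \overline{E}^{n-1}\overline{f}$. Carrying out the two applications of $E$ and then $\overline{E}$ produces
\begin{align*}
\Delta(|E^nf|^2) = (c_n + c_{n+1})|E^nf|^2 - |E^{n+1}f|^2 - c_n^2\,|E^{n-1}f|^2,
\end{align*}
which rearranges to $|E^{n+1}f|^2 = (c_n + c_{n+1} - \Delta)(|E^nf|^2) - c_n^2(|E^{n-1}f|^2)$. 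A short algebraic check shows $c_n + c_{n+1} = 2(n+k)^2 - 2k(k-1)$ and $c_n^2 = ((n+k)(n+k-1) - k(k-1))^2$, so this is precisely the recurrence \eqref{eqn:q_k,n_recurrence}; induction on $n$ then gives \eqref{eqn:|E^n f|^2_formula}.

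There is no substantive obstacle: the argument is a direct parallel of the preceding proof, and the only thing needing care is the bookkeeping of the coefficients $c_n$ and verifying that they collapse to the stated form of the recurrence. One could in principle extract a single lemma on $\Delta$ acting on products $E^m u \cdot \overline{E}^n \overline{v}$ that would cover both Propositions~\ref{prop:p_n_def} and~\ref{prop:q_k,n_def}, but handling the lowest weight case separately is cleaner precisely because $\overline{E}f = 0$ kills half the terms.
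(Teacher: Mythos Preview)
Your proposal is correct and follows essentially the same approach as the paper: both compute $\Delta(|E^nf|^2) = -\overline{E}E(E^nf\,\overline{E}^n\overline{f})$ by the product rule and \eqref{eqn:EEbar}, then rearrange to obtain the recurrence. Your bookkeeping via $c_n = (n+k-1)(n+k) - k(k-1)$ is a cosmetic difference from the paper's direct expansion using $(\Delta+H^2+H)$, but the computations are the same.
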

	
	This proposition is analogous to Corollary~\ref{cor:C_+n_-n_formula_explicit}.
	
	\begin{proof}
		The case $n=0$ of \eqref{eqn:|E^n f|^2_formula} is trivial. Computing as in the proof of Proposition~\ref{prop:p_n_def} above,
		\begin{align*}
			\Delta(|f|^2)
			= -\overline{E}E(f\overline{f})
			= -|Ef|^2 - |\overline{E}f|^2 - (\overline{E}Ef)\overline{f} - f\overline{E}E\overline{f}.
		\end{align*}
		Since $f$ is a lowest weight vector, $\overline{E}f = 0$ and consequently $E\overline{f} = \overline{\overline{E}f} = 0$. Therefore
		\begin{align*}
			\Delta(|f|^2)
			= -|Ef|^2 - (\overline{E}Ef)\overline{f}
			= -|Ef|^2 + (\Delta+H^2+H)f \, \overline{f}
			= -|Ef|^2 + 2k|f|^2,
		\end{align*}
		where in the last equality we have used that $f$ has Casimir eigenvalue $-k(k-1)$ and weight $k$. Rearranging,
		\begin{align*}
			|Ef|^2
			= (2k-\Delta)(|f|^2).
		\end{align*}
		This is \eqref{eqn:|E^n f|^2_formula} for $n=1$. Now for $n \geq 1$, write
		\begin{align*}
			\Delta(|E^nf|^2)
			&= -\overline{E}E(E^nf \overline{E}^n\overline{f})
			\\&= -|E^{n+1}f|^2 - |\overline{E}E^nf|^2 - \overline{E}E^{n+1} f \overline{E}^n\overline{f} - E^nf \overline{E}E\overline{E}^n\overline{f}
			\\&= -|E^{n+1}f|^2 - |(\Delta+H^2+H)E^{n-1}f|^2 + (\Delta+H^2+H)E^nf \overline{E}^n\overline{f} + E^nf (\Delta+H^2+H) \overline{E}^n\overline{f}
			\\&= -|E^{n+1}f|^2 - ((n+k)(n+k-1) - k(k-1))^2 |E^{n-1}f|^2 + 2((n+k)^2-k(k-1))|E^nf|^2.
		\end{align*}
		Rearranging,
		\begin{align*}
			|E^{n+1}f|^2
			= (2(n+k)^2 - 2k(k-1) - \Delta)(|E^nf|^2) - ((n+k)(n+k-1) - k(k-1))^2 |E^{n-1}f|^2.
		\end{align*}
		Comparing with \eqref{eqn:q_k,n_recurrence} yields \eqref{eqn:|E^n f|^2_formula} by induction.
	\end{proof}
	
	The recurrences \eqref{eqn:p_n_recurrence} and \eqref{eqn:q_k,n_recurrence} are each analogous to a special case of Corollary~\ref{cor:num_recursion_specialized}.
	The coefficients in \eqref{eqn:p_n_recurrence} become the coefficients in \eqref{eqn:q_k,n_recurrence} after replacing $\lambda$ with $-k(k-1)$ and $n$ with $n+k$.
	Comparing the proofs of Propositions~\ref{prop:p_n_def} and \ref{prop:q_k,n_def} makes this clear.
	
	The polynomials $p_n$ and $q_{k,n}$ obey the following trivial bounds.
	
	\begin{lem}[Trivial bound for $p_n$] \label{lem:p_n_triv_bd}
		For $n \in \Z_{\geq 0}$ and $\lambda,\mu \geq 0$,
		\begin{align*}
			|p_n(\lambda,\mu)|
			\leq O(\lambda+\mu+n^2)^n.
		\end{align*}
	\end{lem}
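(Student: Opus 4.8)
The plan is to prove the slightly more explicit statement that there is an absolute constant $C \geq 1$ such that $|p_n(\lambda,\mu)| \leq (C(\lambda+\mu+n^2))^n$ for all $n \in \Z_{\geq 0}$ and all $\lambda,\mu \geq 0$; this is exactly the lemma once the $O$-notation is unwound (and, recalling the convention $0^0 = 1$, it also correctly handles the degenerate case $\lambda = \mu = n = 0$, where $p_0 = 1$). Write $Y_m = \lambda + \mu + m^2$, so that $Y_m$ is nondecreasing in $m \geq 0$ and $Y_m \geq 1$ whenever $m \geq 1$. I would argue by strong induction on $n$ using the three-term recurrence \eqref{eqn:p_n_recurrence}. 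For the base cases, $|p_0(\lambda,\mu)| = 1 = (CY_0)^0$ and $|p_1(\lambda,\mu)| = |\lambda - \tfrac12\mu| \leq \lambda + \mu \leq Y_1 \leq CY_1$, which hold for any $C \geq 1$.

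For the inductive step, fix $n \geq 1$ and suppose $|p_m(\lambda,\mu)| \leq (CY_m)^m$ for all $m \leq n$. From \eqref{eqn:p_n_recurrence},
\begin{align*}
  |p_{n+1}(\lambda,\mu)|
  \leq |2\lambda - \mu + 2n^2| \, |p_n(\lambda,\mu)| + (\lambda+n(n-1))^2 |p_{n-1}(\lambda,\mu)|.
\end{align*}
Bound the coefficients crudely by $|2\lambda-\mu+2n^2| \leq 2\lambda + \mu + 2n^2 \leq 2Y_n$ and $(\lambda+n(n-1))^2 \leq (\lambda+n^2)^2 \leq Y_n^2$, then apply the inductive hypothesis together with the monotonicity $Y_{n-1} \leq Y_n$, and factor out $(CY_n)^n$ (legitimate since $Y_n \geq 1 > 0$ because $n \geq 1$):
\begin{align*}
  |p_{n+1}(\lambda,\mu)|
  \leq 2Y_n (CY_n)^n + Y_n^2 (CY_{n-1})^{n-1}
  \leq (CY_n)^n \Big( 2Y_n + \tfrac{1}{C}Y_n \Big)
  = (CY_n)^n Y_n\Big(2 + \tfrac1C\Big).
\end{align*}
Choosing $C$ so that $2 + \tfrac1C \leq C$ — for instance $C = 3$ — the right side is at most $(CY_n)^n \cdot CY_n = (CY_n)^{n+1} \leq (CY_{n+1})^{n+1}$, which completes the induction.

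I do not expect any genuine obstacle here: this is a routine strong induction driven entirely by the recurrence, with the two coefficient factors absorbed at the cost of a fixed multiplicative slack. The only points that require a moment of care are the degenerate case $\lambda = \mu = n = 0$ (absorbed by the conventions $0^0 = 1$ and $p_0 = 1$) and checking that the constant produced is genuinely absolute, i.e., independent of $n$, $\lambda$, and $\mu$ — which it is, since the inequality $2 + 1/C \leq C$ is a fixed numerical condition on $C$ alone. (An identical argument, replacing $\lambda$ by $-k(k-1)$ and $n$ by $n+k$ as dictated by the parallel between \eqref{eqn:p_n_recurrence} and \eqref{eqn:q_k,n_recurrence}, would give the corresponding trivial bound for $q_{k,n}$, but that is not needed for the present statement.)
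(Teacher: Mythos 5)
Your proof is correct and takes essentially the same approach as the paper, which simply states that the bound follows by induction from the recursive definition of $p_n$. You have spelled out the details cleanly — the monotonicity $Y_{n-1}\le Y_n\le Y_{n+1}$, the absorption of both recurrence coefficients into a single multiplicative slack $2+1/C\le C$, and the degenerate case $\lambda=\mu=n=0$ via $0^0=1$ — all of which are exactly the steps the paper leaves implicit.
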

	
	\begin{lem}[Trivial bound for $q_{k,n}$] \label{lem:q_k,n_triv_bd}
		For $n \in \Z_{\geq 0}$, $k \geq 1$, and $\mu \geq 0$,
		\begin{align*}
			|q_{k,n}(\mu)|
			\leq O(k^2+\mu+n^2)^n.
		\end{align*}
	\end{lem}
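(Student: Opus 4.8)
The plan is to run an induction on $n$ against the ``envelope'' $M_n := C(k^2+\mu+n^2)$, where $C \geq 1$ is a single absolute constant (independent of $n$, $k$, $\mu$) to be fixed at the very end. The claim to be proved by induction is that $|q_{k,n}(\mu)| \leq M_n^n$ for all $n \in \Z_{\geq 0}$, $k \geq 1$, $\mu \geq 0$; this is exactly the assertion of the lemma. The two base cases are immediate: for $n=0$ one has $q_{k,0}(\mu)=1=M_0^0$ (using the convention $0^0=1$), and for $n=1$ one has $|q_{k,1}(\mu)|=|2k-\mu|\leq 2k+\mu\leq k^2+1+\mu\leq M_1$ since $k\geq 1$ and $C\geq 1$.

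For the inductive step, fix $n\geq 1$ and assume the bound for $n$ and $n-1$. Apply the recurrence \eqref{eqn:q_k,n_recurrence} and the triangle inequality, then crudely bound the two coefficients appearing there:
\begin{align*}
|2(n+k)^2-2k(k-1)-\mu| \lesssim k^2+n^2+\mu,
\qquad
\bigl((n+k)(n+k-1)-k(k-1)\bigr)^2 \lesssim (k^2+n^2)^2 \lesssim (k^2+n^2+\mu)^2.
\end{align*}
Since $k^2+\mu+n^2\leq k^2+\mu+(n+1)^2$, the envelope is monotone in $n$, so the induction hypotheses upgrade to $|q_{k,n}(\mu)|\leq M_n^n\leq M_{n+1}^n$ and $|q_{k,n-1}(\mu)|\leq M_{n-1}^{n-1}\leq M_{n+1}^{n-1}$. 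Combining, $|q_{k,n+1}(\mu)| \lesssim (k^2+n^2+\mu)\,M_{n+1}^n + (k^2+n^2+\mu)^2\,M_{n+1}^{n-1}$. Now $k^2+n^2+\mu \leq C_0 C^{-1} M_{n+1}$ and $(k^2+n^2+\mu)^2 \leq C_1 C^{-2} M_{n+1}^2$ for absolute constants $C_0,C_1$, whence
\begin{align*}
|q_{k,n+1}(\mu)| \leq \Bigl(\tfrac{C_0}{C}+\tfrac{C_1}{C^2}\Bigr) M_{n+1}^{n+1}.
\end{align*}
Choosing $C\geq\max(2C_0,\sqrt{2C_1})$ makes the bracket at most $1$, which closes the induction and gives the lemma with $O$-constant $C$.

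There is no real obstacle here; the only point requiring attention is that $C$ must be selected once and for all, uniformly in $n$ (as well as in $k$ and $\mu$), and this is precisely what monotonicity of $M_n$ in $n$ makes possible, since it lets both induction hypotheses feed cleanly into the estimate at step $n+1$ without degrading the constant. I would also remark, as the discussion after Proposition~\ref{prop:q_k,n_def} notes, that the recurrence \eqref{eqn:q_k,n_recurrence} is the recurrence \eqref{eqn:p_n_recurrence} under the substitution $\lambda\mapsto-k(k-1)$, $n\mapsto n+k$; hence this argument is a near-verbatim transcription of the proof of Lemma~\ref{lem:p_n_triv_bd}, and one could alternatively phrase it that way.
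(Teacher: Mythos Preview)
Your proof is correct and is precisely the induction on $n$ via the recurrence \eqref{eqn:q_k,n_recurrence} that the paper has in mind; the paper's own proof simply says ``By induction on $n$, both bounds follow directly from the recursive definitions of $p_n$ and $q_{k,n}$,'' and you have spelled this out carefully with the explicit choice of envelope $M_n$ and uniform constant $C$.
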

	
	\begin{proof}[Proof of Lemmas~\ref{lem:p_n_triv_bd} and \ref{lem:q_k,n_triv_bd}]
		By induction on $n$, both bounds follow directly from the recursive definitions of $p_n$ and $q_{k,n}$.
	\end{proof}
	
	\subsection{Derivative bounds in $L^{\infty}$} \label{subsec:deriv_bds_in_L^infty}
	
	In this subsection, we use Theorem~\ref{thm:L^infty_quasi-Sobolev} to prove Theorem~\ref{thm:high_deriv_L^infty_bd} (still assuming Theorems~\ref{thm:L^4_quasi-Sobolev}, \ref{thm:exp_decay_form}, and \ref{thm:exp_decay_quasimode}).
	
	From Lemma~\ref{lem:modified_L^p_vs_L^2p} and Theorem~\ref{thm:L^infty_quasi-Sobolev} we obtain the following corollary.
	
	\begin{cor} \label{cor:weight_vector_L^infty_bd}
		Let $\alpha \in \H^{\fin}$ be a weight vector. Then
		\begin{align*}
			\|\alpha\|_{L^{\infty}}^2
			\leq \|\exp(O(\log_+^2 \Delta))(|\alpha|^2)\|_{\H}.
		\end{align*}
	\end{cor}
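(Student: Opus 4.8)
The plan is to combine the modified $L^p$ machinery of Subsection~\ref{subsec:L^infty_quasi-Sobolev} with the quasi-Sobolev embedding theorem. The key observation is that the $\widetilde{L}^{\infty}$ norm equals the $L^{\infty}$ norm by \eqref{eqn:modified_norm_limit}, and Lemma~\ref{lem:modified_L^p_vs_L^2p} relates $\|\alpha\|_{\widetilde{L}^{2p}}$ to $\||\alpha|^2\|_{\widetilde{L}^p}$. Specifically, taking $p = \infty$ in Lemma~\ref{lem:modified_L^p_vs_L^2p}, we get
\begin{align*}
	\|\alpha\|_{L^{\infty}}^2
	= \|\alpha\|_{\widetilde{L}^{\infty}}^2
	\leq \liminf_{\Lambda \to \infty} \|\1_{|\Delta| \leq \Lambda}(|\alpha|^2)\|_{L^{\infty}},
\end{align*}
using that $\widetilde{L}^{2\infty} = \widetilde{L}^{\infty} = L^{\infty}$ and $\widetilde{L}^{\infty} = L^{\infty}$. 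Since $\alpha$ is a weight vector, $|\alpha|^2 \in \H^K$, so each $\1_{|\Delta| \leq \Lambda}(|\alpha|^2) = \1_{\Delta \leq \Lambda}(|\alpha|^2)$ lies in $\H^K \cap \H^{\fin}$.

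First I would apply Theorem~\ref{thm:L^infty_quasi-Sobolev} to the $K$-invariant vector $\1_{\Delta \leq \Lambda}(|\alpha|^2) \in \H^K \cap \H^{\fin}$, which gives
\begin{align*}
	\|\1_{\Delta \leq \Lambda}(|\alpha|^2)\|_{L^{\infty}}
	\leq \|\exp(O(\log_+^2\Delta)) \1_{\Delta \leq \Lambda}(|\alpha|^2)\|_{\H}
	\leq \|\exp(O(\log_+^2\Delta))(|\alpha|^2)\|_{\H},
\end{align*}
where the second inequality holds because inserting the spectral cutoff $\1_{\Delta \leq \Lambda}$ only decreases the norm (monotonicity of $\|f(\Delta)v\|_{\H}$ in $|f|$, as noted in Subsection~\ref{subsec:sl_2:func_calc}); crucially the $O$-constant here is independent of $\Lambda$. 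Taking the $\liminf$ over $\Lambda \to \infty$ on the right-hand side changes nothing since the bound is uniform in $\Lambda$. Combining with the displayed inequality above yields
\begin{align*}
	\|\alpha\|_{L^{\infty}}^2
	\leq \|\exp(O(\log_+^2\Delta))(|\alpha|^2)\|_{\H},
\end{align*}
which is exactly the claim.

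This argument is almost entirely routine once Lemma~\ref{lem:modified_L^p_vs_L^2p} and Theorem~\ref{thm:L^infty_quasi-Sobolev} are in hand; the only mild subtlety is ensuring the $O$-constant in the application of Theorem~\ref{thm:L^infty_quasi-Sobolev} does not depend on the cutoff $\Lambda$, which is immediate because Theorem~\ref{thm:L^infty_quasi-Sobolev} is stated with a universal $O$-constant for all $\alpha \in \H^K \cap \H^{\fin}$. There is no real obstacle here — the corollary is a direct packaging of the two preceding results, and its role is to serve as the input to the proof of Theorem~\ref{thm:high_deriv_L^infty_bd} in the remainder of Subsection~\ref{subsec:deriv_bds_in_L^infty}, where it will be applied with $\alpha = E^n\varphi$ and the right-hand side expanded using the identity $|E^n\varphi|^2 = p_n(\lambda,\Delta)(\varphi^2)$ from Proposition~\ref{prop:p_n_def}.
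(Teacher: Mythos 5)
Your proof is correct and follows exactly the paper's approach: apply the $p=\infty$ case of Lemma~\ref{lem:modified_L^p_vs_L^2p}, observe that $|\alpha|^2 \in \H^K$ so the spectral truncation lies in $\H^K \cap \H^{\fin}$, and invoke Theorem~\ref{thm:L^infty_quasi-Sobolev}. Your explicit remarks about monotonicity of the functional calculus and the $\Lambda$-independence of the $O$-constant are points the paper leaves implicit but do not constitute a different route.
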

	
	\begin{proof}
		Since the $L^{\infty}$ and $\widetilde{L}^{\infty}$ norms coincide, the $p=\infty$ case of Lemma~\ref{lem:modified_L^p_vs_L^2p} says that
		\begin{align*}
			\|\alpha\|_{L^{\infty}}^2
			\leq \liminf_{\Lambda \to \infty} \|\1_{\Delta \leq \Lambda}(|\alpha|^2)\|_{L^{\infty}}.
		\end{align*}
		Since $\alpha$ is a weight vector, $|\alpha|^2 \in \H^K$, and so $\1_{\Delta \leq \Lambda}(|\alpha|^2) \in \H^K \cap \H^{\fin}$. Thus we can apply Theorem~\ref{thm:L^infty_quasi-Sobolev} to conclude.
	\end{proof}
	
	\begin{prop} \label{prop:bdd_mult_qual}
		Let $\alpha \in \H^{\fin}$. Then $\|\alpha\|_{L^{\infty}} < \infty$.
	\end{prop}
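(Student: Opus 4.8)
The plan is to reduce the general case to the case of an automorphic vector, and then to the case of a $K$-invariant automorphic vector (handled by Theorem~\ref{thm:L^infty_quasi-Sobolev}) or a vector of the form $E^n\varphi$ or $E^nf$ (handled via Corollary~\ref{cor:weight_vector_L^infty_bd} together with the identities from Subsection~\ref{subsec:basic_identities}). First I would note that by Proposition~\ref{prop:Maass_span_H^fin}, $\alpha$ is a finite $\C$-linear combination of automorphic vectors, so by the triangle inequality for $\|\cdot\|_{L^\infty}$ it suffices to treat the case $\alpha$ an automorphic vector. Splitting into real and imaginary parts (both of which are automorphic vectors lying in $\H_\R$, using that complex conjugation is $G$-equivariant by Proposition~\ref{prop:bar_equivariant} and commutes with $\Delta$ and $H$), we may assume $\alpha \in \H_\R^{\fin}$. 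Next, applying complex conjugation if necessary (which preserves $\|\cdot\|_{L^\infty}$ and negates weights), we may assume $\alpha$ has nonnegative weight. Then Proposition~\ref{prop:Maass_is_raised} gives two alternatives: either $\alpha = E^m\varphi$ for some $K$-invariant automorphic vector $\varphi$ and some $m \in \Z_{\geq 0}$, or $\alpha = E^mf$ for some lowest weight vector $f$ and some $m \in \Z_{\geq 0}$.

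In the first case, again splitting $\varphi$ into real and imaginary parts, we may take $\varphi \in \H_\R^K \cap \H^{\fin}$ with Casimir eigenvalue $\lambda \geq 0$. If $m = 0$ then $\alpha = \varphi \in \H^K \cap \H^{\fin}$ and Theorem~\ref{thm:L^infty_quasi-Sobolev} gives $\|\alpha\|_{L^\infty} \leq \|\exp(O(\log_+^2\Delta))\varphi\|_{\H} < \infty$, since $\varphi \in \H^{\fin}$ has finite spectral support. If $m \geq 1$, I would apply Corollary~\ref{cor:weight_vector_L^infty_bd} to $\alpha = E^m\varphi$ to get $\|E^m\varphi\|_{L^\infty}^2 \leq \|\exp(O(\log_+^2\Delta))(|E^m\varphi|^2)\|_{\H}$, and then use Proposition~\ref{prop:p_n_def} to rewrite $|E^m\varphi|^2 = p_m(\lambda,\Delta)(\varphi^2)$. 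Since $p_m$ is a polynomial in $\mu$, the operator $p_m(\lambda,\Delta)$ can be absorbed into $\exp(O(\log_+^2\Delta))$ (a quasipolynomial dominates any fixed polynomial), so it suffices that $\|\exp(O(\log_+^2\Delta))(\varphi^2)\|_{\H} < \infty$. But $\varphi$ is a $K$-invariant Casimir eigenvector with eigenvalue $\lambda$, so $\varphi^2 = \varphi\cdot\varphi$ and Corollary~\ref{cor:exp_decay_quasimode_qual} gives $\|\exp(c\sqrt{\Delta})(\varphi^2)\|_{\H} < \infty$ for some $c \gtrsim 1$; since $c\sqrt{\mu}$ eventually dominates $O(\log_+^2\mu)$, this yields the required finiteness. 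The second case, $\alpha = E^mf$ with $f$ a lowest weight vector, is entirely parallel: Corollary~\ref{cor:weight_vector_L^infty_bd} and Proposition~\ref{prop:q_k,n_def} reduce us to showing $\|\exp(O(\log_+^2\Delta))(|f|^2)\|_{\H} < \infty$, which follows directly from Theorem~\ref{thm:exp_decay_form} (again using that $\exp(c\sqrt{\mu})$ dominates $\exp(O(\log_+^2\mu))$).

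The only subtlety — and the step I expect to require the most care rather than genuine difficulty — is the interpretation of the expressions $|E^m\varphi|^2$ and $|f|^2$ appearing in Corollary~\ref{cor:weight_vector_L^infty_bd}: these lie in $\H^\infty$ but not in $\H^{\fin}$, so $\exp(O(\log_+^2\Delta))$ applied to them is a priori only formally defined via the functional calculus of Subsection~\ref{subsec:sl_2:func_calc}. The point is that the functional calculus norm $\|f(\Delta)v\|_{\H}$ in \eqref{eqn:norm_f(Delta)v_def} makes sense for any $v \in \H$, finite or not, and the identities of Propositions~\ref{prop:p_n_def} and \ref{prop:q_k,n_def} are identities in $\H^\infty$ (hence in $\H$), so all the manipulations above take place in $\H$ with no issue; the monotonicity of $\|f(\Delta)v\|_{\H}$ in $|f|$ is exactly what licenses absorbing polynomial factors into $\exp(O(\log_+^2\Delta))$ and then into $\exp(c\sqrt{\Delta})$. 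I would also remark that the same argument, read quantitatively, proves the general $n \geq 0$ case of Theorem~\ref{thm:high_deriv_L^infty_bd} by applying Proposition~\ref{prop:bdd_mult_qual} to $X^n\alpha$ and tracking the dependence on $n$; but that is the subject of the next subsection, and here we need only the qualitative finiteness.
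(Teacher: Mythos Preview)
Your approach is the same as the paper's, and the argument is essentially correct, but there is a slip in your second reduction. You claim that the real and imaginary parts of an automorphic vector are again automorphic vectors, because complex conjugation ``commutes with $\Delta$ and $H$.'' This is false: complex conjugation commutes with $\Delta$ (since $\overline{\Delta} = \Delta$) but \emph{anti}commutes with $H$ (since $\overline{H} = -H$; see Subsection~\ref{subsec:lie_alg}). Thus if $\alpha$ has weight $n \neq 0$, then $\overline{\alpha}$ has weight $-n$, and $\re\alpha = \tfrac{1}{2}(\alpha+\overline{\alpha})$ is not a weight vector at all. In fact a nonzero real automorphic vector must have weight $0$, so your reduction to $\alpha \in \H_\R^{\fin}$ followed by ``applying complex conjugation if necessary to get nonnegative weight'' is vacuous as written.

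Fortunately this step is superfluous: simply delete it and proceed directly from ``$\alpha$ automorphic'' to ``nonnegative weight by conjugation'' to Proposition~\ref{prop:Maass_is_raised}, exactly as the paper does. You already perform the correct real/imaginary splitting at the right moment---for $\varphi$, which has weight $0$ and hence \emph{does} have real and imaginary parts that are automorphic---and the lowest-weight case needs no such splitting since Proposition~\ref{prop:q_k,n_def} does not require $f$ to be real (indeed it cannot be, having nonzero weight). With this one deletion, your proof matches the paper's.
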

	
	\begin{proof}
		By Proposition~\ref{prop:Maass_span_H^fin}, we know that $\alpha$ is a linear combination of automorphic vectors, so by the triangle inequality, it suffices to prove the proposition for $\alpha$ itself an automorphic vector. Replacing $\alpha$ by its complex conjugate if necessary, we may assume $\alpha$ has nonnegative weight $n \geq 0$. Then by Proposition~\ref{prop:Maass_is_raised}, either $\alpha = E^n\varphi$ for some automorphic vector $\varphi \in \H^K \cap \H^{\fin}$ with Casimir eigenvalue $\lambda \geq 0$, or $\alpha = E^{n-k} f$ for some lowest weight vector $f \in \H^{\fin}$ of weight $k$ between $1$ and $n$. If $\alpha = E^n\varphi$, then by splitting $\varphi$ into real and imaginary parts and using the triangle inequality, we may assume $\varphi \in \H_{\R}$. It follows from Corollary~\ref{cor:weight_vector_L^infty_bd}, together with Proposition~\ref{prop:p_n_def} if $\alpha = E^n\varphi$ or Proposition~\ref{prop:q_k,n_def} if $\alpha = E^{n-k}f$, that
		\begin{align*}
			\|\alpha\|_{L^{\infty}}^2
			\leq \|\exp(O(\log_+^2\Delta)) p_n(\lambda,\Delta) (\varphi^2)\|_{\H}
			\qquad \text{or} \qquad
			\|\alpha\|_{L^{\infty}}^2
			\leq \|\exp(O(\log_+^2\Delta)) q_{k,n-k}(\Delta) (|f|^2)\|_{\H}.
		\end{align*}
		By Lemma~\ref{lem:p_n_triv_bd} and Corollary~\ref{cor:exp_decay_quasimode_qual} if $\alpha = E^n\varphi$, or by Lemma~\ref{lem:q_k,n_triv_bd} and Theorem~\ref{thm:exp_decay_form} if $\alpha = E^{n-k}f$, we conclude that $\|\alpha\|_{L^{\infty}}$ is finite.
	\end{proof}
	
	We now quantify the second half of the proof of Proposition~\ref{prop:bdd_mult_qual}, keeping track of the dependence on the weight $n$.
	
	\begin{prop} \label{prop:E^n_phi_L^infty_bd}
		Let $\varphi \in \H^K \cap \H^{\fin}$ be an automorphic vector. Then for $n \in \Z_{\geq 0}$,
		\begin{align} \label{eqn:E^n_phi_L^infty_bd}
			\|E^n\varphi\|_{L^{\infty}}
			\lesssim_{\varphi} O(n)^n.
		\end{align}
	\end{prop}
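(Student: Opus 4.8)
The plan is to recognize Proposition~\ref{prop:E^n_phi_L^infty_bd} as the case $X=E$ of Theorem~\ref{thm:high_deriv_L^infty_bd} for a $K$-invariant automorphic vector, and to prove it by quantifying the argument of Proposition~\ref{prop:bdd_mult_qual}. First I would reduce to $\varphi \in \H_{\R}^K \cap \H^{\fin}$: writing $\varphi = \re\varphi + i\im\varphi$ and using the complex-conjugates axiom together with $\overline{\Delta}=\Delta$, both $\re\varphi$ and $\im\varphi$ lie in $\H_{\R}^K\cap\H^{\fin}$ and are automorphic with the same Casimir eigenvalue $\lambda$, so by the triangle inequality for $\|\cdot\|_{L^{\infty}}$ it suffices to treat $\varphi$ real. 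Since $\lambda$ is a fixed constant depending only on $\varphi$, and since $\|E^n\varphi\|_{L^{\infty}}<\infty$ for every $n$ by Proposition~\ref{prop:bdd_mult_qual}, the finitely many $n$ with $n^2<\lambda$ can be absorbed into the implicit constant; hence I may assume $n^2\geq\lambda$, so $\lambda+n^2\leq 2n^2$.

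Next, applying Corollary~\ref{cor:weight_vector_L^infty_bd} to the weight vector $E^n\varphi\in\H^{\fin}$ and then Proposition~\ref{prop:p_n_def},
\[
\|E^n\varphi\|_{L^{\infty}}^2
\;\leq\; \bigl\|\exp(A_0\log_+^2\Delta)\,p_n(\lambda,\Delta)(\varphi^2)\bigr\|_{\H}
\]
for some absolute constant $A_0$. Unfolding the functional calculus and peeling off a factor $\exp(c\sqrt{\Delta})$ by the obvious $\ell^\infty$--$\ell^1$ estimate in the spectral variable (where $c\gtrsim1$ is the constant from Theorem~\ref{thm:exp_decay_quasimode}), the right-hand side is at most
\[
\Bigl(\sup_{\mu\geq0}\exp(A_0\log_+^2\mu)\,|p_n(\lambda,\mu)|\,\exp(-c\sqrt{\mu})\Bigr)\cdot\bigl\|\exp(c\sqrt{\Delta})(\varphi^2)\bigr\|_{\H},
\]
and the last factor is finite by Corollary~\ref{cor:exp_decay_quasimode_qual} (applied with $\alpha=\beta=\varphi$) and depends only on $\varphi$. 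So the whole proof reduces to bounding that supremum by $O(n)^{2n}$ with an absolute $O$-constant.

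For the supremum I would insert the trivial bound $|p_n(\lambda,\mu)|\leq O(\lambda+\mu+n^2)^n\leq O(n^2+\mu)^n$ from Lemma~\ref{lem:p_n_triv_bd}, substitute $\mu=t^2$, and absorb the subexponential weight $\exp(A_0\log_+^2 t^2)$ into $\exp(ct/2)$ at the cost of an absolute constant (legitimate since $A_0$ and $c$ are absolute and $\log_+^2 t=o(t)$). This leaves $\sup_{t\geq0}(n^2+t^2)^n e^{-ct/2}$; using $n^2+t^2\leq(n+t)^2$ and the substitution $s=n+t$ one gets $e^{cn/2}\sup_{s>0}s^{2n}e^{-cs/2}=e^{cn/2}(4n/c)^{2n}e^{-2n}$, which is $O(n)^{2n}$ with absolute constants. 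Combining the displays gives $\|E^n\varphi\|_{L^{\infty}}^2\lesssim_\varphi O(n)^{2n}$, hence $\|E^n\varphi\|_{L^{\infty}}\lesssim_\varphi O(n)^n$.

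The computation is otherwise routine; the one point that requires care — and the reason the naive estimate is not good enough — is the bookkeeping of powers of $n$. Since $p_n(\lambda,\cdot)$ has degree $n$, the trivial bound contributes $\mu^n$, which in the bulk regime $\mu\sim n^2$ is of size $n^{2n}$; bounding $|p_n|$ there by $O(n^2)^n$ and discarding the tail would only yield $\|E^n\varphi\|_{L^{\infty}}\lesssim O(n)^{2n}$. The extra savings come from the Gaussian-type decay $\exp(-c\sqrt{\mu})$ of the spectral tail of $\varphi^2$ (Corollary~\ref{cor:exp_decay_quasimode_qual}): in the variable $t=\sqrt{\mu}$ the function $(n^2+t^2)^n e^{-ct}$ peaks at $t\sim n$ with value only $O(n)^{2n}$, and after taking the square root this is exactly the desired $O(n)^n$. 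The small-$n$ reduction in the first paragraph is what prevents the fixed eigenvalue $\lambda$ from entering the exponent of the bound.
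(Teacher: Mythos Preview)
Your proof is correct and follows essentially the same approach as the paper: reduce to real $\varphi$, handle small $n$ via Proposition~\ref{prop:bdd_mult_qual}, then combine Corollary~\ref{cor:weight_vector_L^infty_bd}, the identity $|E^n\varphi|^2=p_n(\lambda,\Delta)(\varphi^2)$, the trivial bound on $p_n$, and the exponential spectral decay of $\varphi^2$ from Corollary~\ref{cor:exp_decay_quasimode_qual}. The only difference is cosmetic: where the paper splits $\1=\1_{\Delta\leq n^2}+\1_{\Delta>n^2}$ and dyadically decomposes the tail, you factor out $\exp(c\sqrt{\Delta})$ in one step and bound the remaining multiplier by a single supremum over $\mu\geq 0$ --- a slightly cleaner packaging of the same estimate.
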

	
	\begin{proof}
		Splitting $\varphi$ into real and imaginary parts and using the triangle inequality, we may assume without loss of generality that $\varphi \in \H_{\R}$.
		Let $\lambda$ denote the Casimir eigenvalue of $\varphi$. Then by Corollary~\ref{cor:weight_vector_L^infty_bd} and Proposition~\ref{prop:p_n_def} (as in the proof of Proposition~\ref{prop:bdd_mult_qual}),
		\begin{align*} 
			\|E^n\varphi\|_{L^{\infty}}^2
			\leq \|\exp(O(\log_+^2\Delta)) p_n(\lambda,\Delta) (\varphi^2)\|_{\H}.
		\end{align*}
		Inserting the trivial bound from Lemma~\ref{lem:p_n_triv_bd},
		\begin{align*}
			\|E^n\varphi\|_{L^{\infty}}^2
			\leq \|\exp(O(\log_+^2\Delta)) O(\Delta+\lambda+n^2)^n(\varphi^2)\|_{\H}.
		\end{align*}
		For $n \lesssim_{\varphi} 1$, Proposition~\ref{prop:bdd_mult_qual} trivially gives the desired bound \eqref{eqn:E^n_phi_L^infty_bd}, because the implicit constant in \eqref{eqn:E^n_phi_L^infty_bd} is allowed to depend on $\varphi$ (though the $O$-constant is not). So assume $n \gg_{\varphi} 1$.
		Take this to mean $n^2 \geq \lambda+1$.
		Then splitting $\1 = \1_{\Delta \leq n^2} + \1_{\Delta > n^2}$,
		\begin{align*}
			\|E^n\varphi\|_{L^{\infty}}^2
			\leq \|\1_{\Delta \leq n^2} \exp(O(\log_+^2\Delta)) O(n^2)^n(\varphi^2)\|_{\H}
			+ \|\1_{\Delta > n^2} \exp(O(\log_+^2\Delta)) O(\Delta)^n(\varphi^2)\|_{\H}.
		\end{align*}
		Estimating the first term by ignoring $\1_{\Delta \leq n^2}$ and using Corollary~\ref{cor:exp_decay_quasimode_qual},
		\begin{align*}
			\|E^n\varphi\|_{L^{\infty}}^2
			\lesssim_{\varphi} O(n)^{2n}
			+ \|\1_{\Delta > n^2} \exp(O(\log_+^2\Delta)) O(\Delta)^n(\varphi^2)\|_{\H}.
		\end{align*}
		Dyadically decomposing,
		\begin{align*}
			\|E^n\varphi\|_{L^{\infty}}^2
			\lesssim_{\varphi} O(n)^{2n}
			+ \sum_{\substack{M \in 2^{\mathbf{N}} \\ M \geq \frac{1}{2}n^2}} \|\1_{\Delta \in [M,2M]} \exp(O(\log_+^2\Delta)) O(\Delta)^n(\varphi^2)\|_{\H}.
		\end{align*}
		Let $c \gtrsim 1$ be as in Corollary~\ref{cor:exp_decay_quasimode_qual}.
		The summand above is at most
		\begin{align*}
			\exp(O(\log_+^2 M)) O(M)^n \exp(-c\sqrt{M}) \|\1_{\Delta \in [M,2M]} \exp(c\sqrt{\Delta})(\varphi^2)\|_{\H}.
		\end{align*}
		Ignoring $\1_{\Delta \in [M,2M]}$ and applying Corollary~\ref{cor:exp_decay_quasimode_qual}, the norm in this product is $\lesssim_{\varphi} 1$.
		Thus
		\begin{align*}
			\|E^n\varphi\|_{L^{\infty}}^2
			\lesssim_{\varphi} O(n)^{2n}
			+ \sum_{\substack{M \in 2^{\mathbf{N}} \\ M \geq \frac{1}{2}n^2}} \exp(O(\log_+^2 M)) O(M)^n \exp(-c\sqrt{M}).
		\end{align*}
		When $M$ is multiplied by $2$, the summand is multiplied by
		\begin{align*}
			M^{O(1)} O(1)^n \exp(-c'\sqrt{M}),
		\end{align*}
		where $c' = (\sqrt{2}-1)c \gtrsim 1$. So once $M$ is bigger than an absolute constant times $n^2$, the sum decays (faster than) geometrically. Thus the first $O(1)$ many terms in the sum dominate. Each of these $O(1)$ many terms is bounded by $O(n)^{2n}$. Hence
		\begin{align*}
			\|E^n\varphi\|_{L^{\infty}}^2
			\lesssim_{\varphi} O(n)^{2n},
		\end{align*}
		and taking square roots gives \eqref{eqn:E^n_phi_L^infty_bd}, as desired.
	\end{proof}
	
	\begin{prop} \label{prop:E^n_f_L^infty_bd}
		Let $f \in \H^{\fin}$ be a lowest weight vector. Then for $n \in \Z_{\geq 0}$,
		\begin{align*}
			\|E^n f\|_{L^{\infty}}
			\lesssim_f O(n)^n.
		\end{align*}
	\end{prop}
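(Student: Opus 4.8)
The plan is to repeat the proof of Proposition~\ref{prop:E^n_phi_L^infty_bd} verbatim, swapping in the lowest-weight analogs of the ingredients used there. Write $k \geq 1$ for the (fixed) weight of $f$; then $E^nf$ has weight $k+n$, so $|E^nf|^2 = E^nf\,\overline{E^nf}$ is $K$-invariant, and Corollary~\ref{cor:weight_vector_L^infty_bd} applies to the weight vector $E^nf$ to give
\begin{align*}
\|E^nf\|_{L^\infty}^2 \leq \|\exp(O(\log_+^2\Delta))(|E^nf|^2)\|_{\H}.
\end{align*}
By Proposition~\ref{prop:q_k,n_def} the right-hand side equals $\|\exp(O(\log_+^2\Delta))\,q_{k,n}(\Delta)(|f|^2)\|_{\H}$. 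Since $k$ is part of the data of $f$, all dependence on $k$ may be folded into $\lesssim_f$, so the situation is exactly parallel to the case $\alpha = E^n\varphi$ treated before, with $|f|^2$ playing the role of $\varphi^2$ and $q_{k,n}(\Delta)$ the role of $p_n(\lambda,\Delta)$.

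First I would dispose of small $n$: for $n \lesssim_f 1$, Proposition~\ref{prop:bdd_mult_qual} already gives $\|E^nf\|_{L^\infty} < \infty$, which is $\lesssim_f O(n)^n$ because the implicit constant is allowed to depend on $f$. So assume $n \gg_f 1$, say $n^2 \geq k^2+1$. Inserting the trivial bound $|q_{k,n}(\mu)| \leq O(k^2+\mu+n^2)^n$ from Lemma~\ref{lem:q_k,n_triv_bd} and splitting $\1 = \1_{\Delta \leq n^2} + \1_{\Delta > n^2}$, the term supported on $\Delta \leq n^2$ is bounded (after discarding the cutoff and using $k^2+n^2+n^2 \lesssim n^2$) by $O(n^2)^n\,\|\exp(O(\log_+^2\Delta))(|f|^2)\|_{\H} \lesssim_f O(n)^{2n}$, where the finiteness of the norm is supplied by Theorem~\ref{thm:exp_decay_form} (which gives $\|\exp(c\sqrt{\Delta})(|f|^2)\|_{\H} < \infty$ for some $c \gtrsim 1$, hence a fortiori $\|\exp(O(\log_+^2\Delta))(|f|^2)\|_{\H} < \infty$).

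For the term supported on $\Delta > n^2$, I would dyadically decompose into dyadic blocks $\Delta \in [M,2M]$ with $M \geq \tfrac12 n^2$, bound $\exp(O(\log_+^2\Delta))\,O(\Delta)^n$ on each block by $\exp(O(\log_+^2 M))\,O(M)^n$, factor out $\exp(-c\sqrt{M})$ with $c$ as in Theorem~\ref{thm:exp_decay_form}, and use that theorem (discarding the cutoff) to see $\|\1_{\Delta\in[M,2M]}\exp(c\sqrt{\Delta})(|f|^2)\|_{\H} \lesssim_f 1$. The resulting sum $\sum_{M \geq n^2/2}\exp(O(\log_+^2 M))\,O(M)^n\exp(-c\sqrt{M})$ decays geometrically once $M$ exceeds an absolute constant times $n^2$, since doubling $M$ multiplies the summand by $M^{O(1)}O(1)^n\exp(-(\sqrt{2}-1)c\sqrt{M}) < 1$ there; hence the sum is dominated by its first $O(1)$ terms, each of size $O(n)^{2n}$. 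Combining the two pieces yields $\|E^nf\|_{L^\infty}^2 \lesssim_f O(n)^{2n}$, and taking square roots finishes the proof. There is no genuine obstacle here, as the argument is a line-by-line translation of the proof of Proposition~\ref{prop:E^n_phi_L^infty_bd}; the one point to watch is keeping $k$ fixed, so that the factor $O(k^2+\cdots)^n$ from the trivial bound on $q_{k,n}$ contributes only to the $f$-dependent implicit constant and never to the $O$-constant raised to the $n$th power.
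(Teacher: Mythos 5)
Your proof is correct and is exactly what the paper intends: the paper's own proof of this proposition is a one-sentence instruction to repeat the argument of Proposition~\ref{prop:E^n_phi_L^infty_bd} with $q_{k,n}(\Delta)$ in place of $p_n(\lambda,\Delta)$ and Theorem~\ref{thm:exp_decay_form} in place of Corollary~\ref{cor:exp_decay_quasimode_qual}, and you have carried out that substitution line by line, correctly handling the one subtlety (that $k$ is fixed with $f$, so the trivial bound $|q_{k,n}(\mu)| \leq O(k^2+\mu+n^2)^n$ contributes $k$ only to the $f$-dependent implicit constant, not to the $O$-constant raised to the $n$th power).
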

	
	\begin{proof}
		This can be shown in the same way as Proposition~\ref{prop:E^n_phi_L^infty_bd}, using Proposition~\ref{prop:q_k,n_def} in place of Proposition~\ref{prop:p_n_def}, and using Theorem~\ref{thm:exp_decay_form} in place of Corollary~\ref{cor:exp_decay_quasimode_qual}.
	\end{proof}
	
	Finally, we are ready to prove Theorem~\ref{thm:high_deriv_L^infty_bd}. Recall the statement:
	
	\begin{thm*}[Restatement of Theorem~\ref{thm:high_deriv_L^infty_bd}]
		Let $\alpha \in \H^{\fin}$, let $X \in \g$, and let $n \in \Z_{\geq 0}$. Then
		\begin{align} \label{eqn:X^n_alpha_L^infty_bd}
			\|X^n\alpha\|_{L^{\infty}}
			\lesssim_{\alpha} O(n\|X\|_{\g})^n.
		\end{align}
	\end{thm*}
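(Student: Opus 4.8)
The plan is to reduce \eqref{eqn:X^n_alpha_L^infty_bd} to the special cases already handled in Propositions~\ref{prop:E^n_phi_L^infty_bd} and \ref{prop:E^n_f_L^infty_bd}, using the basis $H,E,\overline{E}$ of $\g$ and the weight-space decomposition of $\alpha$. First I would reduce to the case where $X$ is one of the fixed generators $H,E,\overline{E}$: writing $X = aH + bE + c\overline{E}$ with $|a|,|b|,|c| \lesssim \|X\|_{\g}$, we expand $X^n$ as a sum of $3^n$ noncommutative monomials in $H,E,\overline{E}$, each of which is (after applying the commutation relations \eqref{eqn:comm_rlns}, which only produce lower-degree terms) a sum of ordered monomials of the form $\overline{E}^j H^i E^m$ with $i+j+m \leq n$. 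Since $\|X\|_{\g}^n$ absorbs the coefficients and $3^n = O(1)^n$ absorbs the combinatorial factors, it suffices to bound $\|\overline{E}^j H^i E^m \alpha\|_{L^{\infty}}$ by $O(i+j+m)^{i+j+m}$, uniformly over $\alpha$ up to an $\alpha$-dependent constant. By Proposition~\ref{prop:Maass_span_H^fin} and the $L^{\infty}$ triangle inequality we may assume $\alpha$ is a single automorphic vector, and by complex-conjugation symmetry (which preserves $L^{\infty}$ norms and swaps $E \leftrightarrow \overline{E}$) we may assume $\alpha$ has nonnegative weight.

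Next I would dispose of $H$ and $\overline{E}$ cheaply. Since $\alpha$ is a weight vector of some weight $w$, $H^i\alpha = w^i\alpha$, and $|w| \lesssim_\alpha 1$ is fixed, so applying powers of $H$ only costs an $\alpha$-dependent constant — it does not contribute to the $O(n)^n$ growth at all. For $\overline{E}$: by Proposition~\ref{prop:Maass_is_raised}, $\alpha = E^m\varphi$ for a $K$-invariant automorphic vector $\varphi$ or $\alpha = E^m f$ for a lowest weight vector $f$; in either case applying $\overline{E}$ repeatedly, using \eqref{eqn:EEbar} together with the fact that $\varphi$ (resp. $f$) has fixed Casimir eigenvalue, shows that $\overline{E}^j\alpha$ is, up to scalars bounded by $O(j^2 + O_\alpha(1))^j = O(j)^j \cdot O_\alpha(1)$, again of the form $E^{m-j}\varphi$ or $E^{m-j}f$ (interpreted as $0$ if the exponent goes negative, or dropping below the lowest weight $k$ in the discrete series case). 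So after reordering and applying the $H$ and $\overline{E}$ powers, we are reduced to bounding $\|E^{p}\varphi\|_{L^{\infty}}$ or $\|E^{p}f\|_{L^{\infty}}$ for $p \leq n + O_\alpha(1)$, and that is exactly the content of Propositions~\ref{prop:E^n_phi_L^infty_bd} and \ref{prop:E^n_f_L^infty_bd}, which give the bound $O(p)^p \lesssim_\alpha O(n)^n$. Assembling the $3^n$ monomials and the scalar factors, each of size $O(1)^n \cdot O_\alpha(1)$, yields \eqref{eqn:X^n_alpha_L^infty_bd}.

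The only point requiring care — and the main obstacle — is bookkeeping the constants so that the exponent of the $O(n)^n$ term stays genuinely uniform in $\alpha$, with only a multiplicative $O_\alpha(1)$ in front. The commutator reductions via \eqref{eqn:comm_rlns} produce a number of lower-order terms that grows at most like $O(1)^n$, and each reordering step (moving an $H$ or $\overline{E}$ past an $E$, or converting $\overline{E}E$ via \eqref{eqn:EEbar}) introduces a polynomial-in-degree factor that, summed over all $\leq O(1)^n$ monomials of degree $\leq n$, contributes a further $O(n)^{O(n)}$; one has to check this is still $O(n)^n$ after adjusting the $O$-constant, which it is since $n^{Cn} = (n^C)^n$ and $n^C \leq O(n)^n$ absorbs into a larger base. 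I would organize this as a single induction on $n$: assume the bound for all $\alpha' \in \H^{\fin}$ and all degrees $< n$ with a fixed $O$-constant, then for degree $n$ write $X^n\alpha = X(X^{n-1}\alpha)$, use $X^{n-1}\alpha = \sum_j c_j \alpha_j$ for automorphic vectors $\alpha_j$ with $\sum_j |c_j| \lesssim_\alpha O(n)^{n-1}$ (the inductive hypothesis applied coordinatewise in a weight/Casimir decomposition), and then bound each $\|X\alpha_j\|_{L^{\infty}}$ using the $n=1$ cases of Propositions~\ref{prop:E^n_phi_L^infty_bd} and \ref{prop:E^n_f_L^infty_bd} after decomposing $\alpha_j$ as above — carefully tracking that the $L^{\infty}$ norms of the finitely many automorphic pieces of $X\alpha_j$ are controlled by the preceding propositions rather than re-expanding from scratch. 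The $O$-constant is then chosen at the end, independent of $\alpha$, to close the induction.
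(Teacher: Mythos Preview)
Your proposal has the right shape---reduce to automorphic vectors via Proposition~\ref{prop:Maass_span_H^fin}, use complex conjugation symmetry, and ultimately invoke Propositions~\ref{prop:E^n_phi_L^infty_bd} and \ref{prop:E^n_f_L^infty_bd}---but the PBW reordering step introduces a genuine loss, and your justification for why it is harmless is incorrect. The claim that ``$O(n)^{O(n)}$ is still $O(n)^n$ since $n^{Cn} = (n^C)^n$ and $n^C \leq O(n)^n$ absorbs into a larger base'' is false: for $C > 1$ one has $n^{Cn}/(An)^n = n^{(C-1)n}/A^n \to \infty$ for every fixed $A$, so $n^{Cn}$ is \emph{not} of the form $(An)^n$. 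And the PBW coefficients really can be large: already the word $HEHE\cdots HE$ of length $n$ expands in the basis $\overline{E}^j H^i E^m$ with coefficients that are signed Stirling numbers, of total absolute value $(n/2)!$. Since a single PBW monomial $\overline{E}^j H^i E^m$ applied to $\alpha$ can itself have $L^\infty$ norm of order $O(n)^n$, taking absolute values termwise and summing yields $O(n)^{2n}$, which is too big. Your alternative induction in the last paragraph has a related gap: the hypothesis bounds $\|X^{n-1}\alpha\|_{L^\infty}$, not $\sum_j |c_j|$ in a decomposition $X^{n-1}\alpha = \sum_j c_j\alpha_j$; and even granting such a coefficient bound, the $\alpha_j$ have weights of size $\sim n$, so the $\alpha_j$-dependent constants hidden in $\|X\alpha_j\|_{L^\infty} \lesssim_{\alpha_j} \cdots$ grow with $n$ and the induction does not close.

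The paper avoids PBW entirely by working directly in the representation. Writing $\alpha = E^m\varphi$ (or $E^{m-k}f$), the vectors $\varphi_r$ (resp.\ $f_r$) form a ladder on which $H,E,\overline{E}$ act by the explicit formulas \eqref{eqn:phi_r_g-action}--\eqref{eqn:f_r_g-action}. One then proves by induction on $n$ that $X^n\alpha = \sum_{|r-m|\leq n} a_{n,r}\varphi_r$ with $|a_{n,r}| \leq O(1)^n (n+m+\sqrt{\lambda})^{n+m-|r|}$. The crucial feature is the exponent $n+m-|r|$: the coefficient \emph{decreases} as $|r|$ increases, exactly compensating the growth $\|\varphi_r\|_{L^\infty} \lesssim_\alpha O(|r|)^{|r|}$ from Proposition~\ref{prop:E^n_phi_L^infty_bd}, so that each product $|a_{n,r}|\cdot\|\varphi_r\|_{L^\infty}$ is $\lesssim_\alpha O(n)^n$. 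Your approach is salvageable if you \emph{drop} the PBW step: apply each of the $3^n$ words in the expansion of $X^n$ directly to $\alpha$, observe that each word sends $\varphi_p$ to a scalar times a single $\varphi_r$, and check that (scalar)${}\times\|\varphi_r\|_{L^\infty} \lesssim_\alpha O(n)^n$ by counting ``toward-zero'' versus ``away-from-zero'' steps along the path. But this is then essentially the paper's argument in different clothing.
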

	
	\begin{proof}
		For notational convenience, normalize $\|X\|_{\g} = 1$. Then we want to show $\|X^n\alpha\|_{L^{\infty}} \lesssim_{\alpha} O(n)^n$. As in the proof of Proposition~\ref{prop:bdd_mult_qual}, by Proposition~\ref{prop:Maass_span_H^fin} and symmetry under complex conjugation, we may assume without loss of generality that $\alpha$ is an automorphic vector of nonnegative weight $m \geq 0$. Then by Proposition~\ref{prop:Maass_is_raised}, either $\alpha = E^m\varphi$ for some automorphic vector $\varphi \in \H^K \cap \H^{\fin}$ with Casimir eigenvalue $\lambda \geq 0$, or $\alpha = E^{m-k}f$ for some lowest weight vector $f$ of weight $k$ between $1$ and $m$.
		If $\alpha = E^m\varphi$, then by splitting $\varphi$ into real and imaginary parts as usual, we may further assume $\varphi \in \H_{\R}$.
		For $r \in \Z_{\geq 0}$, denote
		\begin{align*}
			\varphi_r = E^r\varphi
			\text{ and }
			\varphi_{-r} = \overline{E}^r\varphi
			\qquad \text{or} \qquad
			f_r = E^rf.
		\end{align*}
		Then by Proposition~\ref{prop:E^n_phi_L^infty_bd} and complex conjugation symmetry, or by Proposition~\ref{prop:E^n_f_L^infty_bd},
		\begin{align} \label{eqn:phi_r_f_r_L^infty_bds}
			\|\varphi_r\|_{L^{\infty}}
			\lesssim_{\alpha} O(|r|)^{|r|} \text{ for } r \in \Z
			\qquad \text{or} \qquad
			\|f_r\|_{L^{\infty}}
			\lesssim_{\alpha} O(r)^r \text{ for } r \in \Z_{\geq 0}.
		\end{align}
		For $r \in \Z_{>0}$, by the commutation relations \eqref{eqn:comm_rlns} for $H,E,\overline{E}$,
		\begin{align} \label{eqn:phi_r_g-action}
			H\varphi_r = r\varphi_r,
			\qquad
			E\varphi_r = \varphi_{r+1},
			\qquad
			\overline{E} \varphi_r = -(\lambda+r(r-1)) \varphi_{r-1}.
		\end{align}
		Applying complex conjugation, we have again for $r \in \Z_{>0}$ that
		\begin{align} \label{eqn:phi_-r_g-action}
			H\varphi_{-r} = -r\varphi_{-r},
			\qquad
			\overline{E}\varphi_{-r} = \varphi_{-r-1},
			\qquad
			E\varphi_{-r} = -(\lambda+r(r-1)) \varphi_{-r+1}.
		\end{align}
		For $r=0$,
		\begin{align} \label{eqn:phi_0_g-action}
			H\varphi_0 = 0,
			\qquad
			E\varphi_0 = \varphi_1,
			\qquad
			\overline{E}\varphi_0 = \varphi_{-1}.
		\end{align}
		Similarly, for $r \in \Z_{\geq 0}$,
		\begin{align} \label{eqn:f_r_g-action}
			Hf_r = (r+k)f_r,
			\qquad
			Ef_r = f_{r+1},
			\qquad
			\overline{E}f_r = (k(k-1) - (r+k)(r+k-1)) f_{r-1}
		\end{align}
		(when $r=0$, the third equation should be interpreted as $\overline{E}f_r = 0$).
		Write $X$ as a linear combination of $H,E,\overline{E}$. Since $\|X\|_{\g} = 1$, the coefficients in this linear combination are $\lesssim 1$.
		By induction on $n$, using \eqref{eqn:phi_r_g-action}, \eqref{eqn:phi_-r_g-action}, and \eqref{eqn:phi_0_g-action} in case $\alpha = E^m\varphi$, or using $\eqref{eqn:f_r_g-action}$ in case $\alpha = E^{m-k}f$,
		\begin{align} \label{eqn:X^n_alpha_wt_decomp}
			X^n\alpha
			= \sum_{\substack{r \in \Z \\ |r-m| \leq n}} a_{n,r} \varphi_r
			\qquad \text{or} \qquad
			X^n\alpha
			= \sum_{\substack{r \in \Z_{\geq 0} \\ |r-m+k| \leq n}} b_{n,r} f_r
		\end{align}
		for some coefficients $a_{n,r}$ or $b_{n,r}$ with
		\begin{align} \label{eqn:a,b,_inductive_bds}
			|a_{n,r}| \leq O(1)^n (n+m+\sqrt{\lambda})^{n+m-|r|}
			\qquad \text{or} \qquad
			|b_{n,r}| \leq O(1)^n (n+m+k)^{n+m-k-r}.
		\end{align}
		Now, as in the proof of Proposition~\ref{prop:E^n_phi_L^infty_bd}, we may assume $n \gg_{\alpha} 1$, because the desired bound \eqref{eqn:X^n_alpha_L^infty_bd} trivially follows from Proposition~\ref{prop:bdd_mult_qual} for $n \lesssim_{\alpha} 1$.
		In particular, we can assume $n \gg_{m,\lambda} 1$ or $n \gg_{m,k} 1$. Then \eqref{eqn:a,b,_inductive_bds} reduces to
		\begin{align} \label{eqn:a,b_large_n_bds}
			|a_{n,r}| \leq O(1)^n n^{n-|r|}
			\qquad \text{or} \qquad
			|b_{n,r}| \leq O(1)^n n^{n-r}
		\end{align}
		after increasing the $O$-constant.
		Taking $L^{\infty}$ norms in \eqref{eqn:X^n_alpha_wt_decomp} and inserting \eqref{eqn:phi_r_f_r_L^infty_bds} and \eqref{eqn:a,b_large_n_bds}, we obtain \eqref{eqn:X^n_alpha_L^infty_bd}, as desired (recall we normalized $\|X\|_{\g} = 1$).
	\end{proof}
	
	\section{Proof of bulk and tail bounds assuming a polynomial Weyl law} \label{sec:bulk_tail_poly}
	
	At this point, we have reduced Theorem~\ref{thm:eq_Gelfand_duality} to Theorems~\ref{thm:L^4_quasi-Sobolev}, \ref{thm:exp_decay_form}, and \ref{thm:exp_decay_quasimode}. As mentioned at the beginning of Subsection~\ref{subsec:outline:bulk_tail_poly}, Theorems~\ref{thm:L^4_quasi-Sobolev} and \ref{thm:exp_decay_quasimode} are particularly technical. In this section, to warm up, we prove these three theorems under the assumption that $\H$ obeys a polynomial Weyl law (Definition~\ref{def:poly_Weyl_law}). We will be clear about when this assumption is used, and we do not assume it by default. In fact, the proof of Theorem~\ref{thm:exp_decay_form} does not use this assumption.

	\subsection{Bootstrap inequalities}
	\label{subsec:basic_ineqs}
	
	Combining the results of Section~\ref{subsec:basic_identities} with crossing symmetry, we can prove some exact inequalities which will provide the main analytic input for Theorems~\ref{thm:L^4_quasi-Sobolev}, \ref{thm:exp_decay_form}, and \ref{thm:exp_decay_quasimode}. These inequalities are Propositions~\ref{prop:basic_ineq_1}, \ref{prop:basic_ineq_2}, and \ref{prop:basic_ineq_3}.
	
	Let $p_n$ and $q_{k,n}$ be as in Propositions~\ref{prop:p_n_def} and \ref{prop:q_k,n_def}, respectively.
	
	\begin{prop} \label{prop:basic_ineq_1}
		Let $\varphi \in \H_{\R}^K \cap \H^{\fin}$ be an automorphic vector with Casimir eigenvalue $\lambda \geq 0$. For $n \in \Z_{\geq 0}$,
		\begin{align*}
			-\langle p_np_{n+1}(\lambda,\Delta)(\varphi^2), \varphi^2 \rangle_{\H}
			\leq 0.
		\end{align*}
	\end{prop}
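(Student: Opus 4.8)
Looking at this, the statement is that $-\langle p_n p_{n+1}(\lambda,\Delta)(\varphi^2), \varphi^2\rangle_{\H} \leq 0$ for a $K$-invariant real automorphic vector $\varphi$ with Casimir eigenvalue $\lambda$.

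\textbf{The plan.} The key identity is Proposition~\ref{prop:p_n_def}, which says $|E^n\varphi|^2 = p_n(\lambda,\Delta)(\varphi^2)$ for each $n \in \Z_{\geq 0}$. The natural approach is to interpret the quantity $-\langle p_n p_{n+1}(\lambda,\Delta)(\varphi^2), \varphi^2\rangle_{\H}$ as minus the square of the norm of some element of $\H$, so that it is automatically $\leq 0$. The candidate is $E^{n+1}\varphi\,\overline{E^n\varphi}$, whose squared norm should unpack via crossing symmetry into $\langle |E^n\varphi|^2, |E^{n+1}\varphi|^2\rangle_{\H}$, i.e. into $\langle p_n(\lambda,\Delta)(\varphi^2), p_{n+1}(\lambda,\Delta)(\varphi^2)\rangle_{\H}$. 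Then self-adjointness of $\Delta$ and the fact that the $p_n$ have real coefficients let me move one of the polynomial operators across the inner product, giving $\langle p_n p_{n+1}(\lambda,\Delta)(\varphi^2), \varphi^2\rangle_{\H}$. Since this equals $\|E^{n+1}\varphi\,\overline{E^n\varphi}\|_{\H}^2 \geq 0$, negating yields the claim.

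\textbf{Key steps in order.} First I would write $0 \leq \|E^{n+1}\varphi\,\overline{E^n\varphi}\|_{\H}^2 = \langle E^{n+1}\varphi\,\overline{E^n\varphi},\, E^{n+1}\varphi\,\overline{E^n\varphi}\rangle_{\H}$, where the product $E^{n+1}\varphi\,\overline{E^n\varphi}$ makes sense because $E^{n+1}\varphi, \overline{E^n\varphi} \in \H^{\fin}$ (these are automorphic vectors, hence in $\H^{\fin}$ by Proposition~\ref{prop:Maass_span_H^fin}). Second, apply crossing symmetry from Definition~\ref{def:mult_rep} — concretely, the computation behind \eqref{eqn:crossing_to_C's} or directly the crossing axiom \eqref{eqn:crossing} with appropriate vectors — to rewrite this as $\langle |E^n\varphi|^2, |E^{n+1}\varphi|^2\rangle_{\H}$; here one uses $\overline{\overline{E^n\varphi}} = E^n\varphi$ and that $|E^n\varphi|^2 = E^n\varphi\,\overline{E^n\varphi}$. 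Third, substitute the identity \eqref{eqn:|E^n phi|^2_formula} from Proposition~\ref{prop:p_n_def} to get $\langle p_n(\lambda,\Delta)(\varphi^2), p_{n+1}(\lambda,\Delta)(\varphi^2)\rangle_{\H}$. Fourth, since $\Delta$ is formally self-adjoint on $\H^{\infty}$ (and $\varphi^2 \in \H^{\infty}$, with $p_n(\lambda,\Delta)(\varphi^2) \in \H^{\infty}$ as well by smoothness considerations, or one can work with the spectral decomposition directly via \eqref{eqn:norm_f(Delta)v_def}) and $p_n, p_{n+1}$ have real coefficients, move $p_{n+1}(\lambda,\Delta)$ onto the first slot to obtain $\langle p_n p_{n+1}(\lambda,\Delta)(\varphi^2), \varphi^2\rangle_{\H}$. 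Negating, $-\langle p_n p_{n+1}(\lambda,\Delta)(\varphi^2), \varphi^2\rangle_{\H} = -\|E^{n+1}\varphi\,\overline{E^n\varphi}\|_{\H}^2 \leq 0$.

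\textbf{Main obstacle.} The only subtle point is making the manipulation with $p_{n+1}(\lambda,\Delta)$ rigorous: $\Delta$ is unbounded, so I should phrase the self-adjointness step using the functional calculus of Subsection~\ref{subsec:sl_2:func_calc} rather than naively ``integrating by parts.'' Writing $\varphi^2 = \sum_\mu \1_{\Delta=\mu}(\varphi^2)$ and expanding both inner products over the (discrete) Casimir spectrum, both $\langle p_n(\lambda,\Delta)(\varphi^2), p_{n+1}(\lambda,\Delta)(\varphi^2)\rangle_{\H}$ and $\langle p_np_{n+1}(\lambda,\Delta)(\varphi^2),\varphi^2\rangle_{\H}$ become $\sum_\mu p_n(\lambda,\mu)p_{n+1}(\lambda,\mu)\|\1_{\Delta=\mu}(\varphi^2)\|_{\H}^2$, which is manifestly the same real number (the sum converges absolutely since $\varphi^2 \in \H^{\infty}$ forces rapid decay of $\|\1_{\Delta=\mu}(\varphi^2)\|_{\H}$, by Proposition~\ref{prop:smooth_iff_spectral_decay}). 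Apart from that bookkeeping, the argument is a direct chain of the crossing axiom and Proposition~\ref{prop:p_n_def}.
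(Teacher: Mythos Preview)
Your proposal is correct and follows essentially the same route as the paper: start from $0 \leq \|E^{n+1}\varphi\,\overline{E}^n\varphi\|_{\H}^2$, use crossing symmetry to rewrite this as $\langle |E^n\varphi|^2, |E^{n+1}\varphi|^2\rangle_{\H}$, substitute Proposition~\ref{prop:p_n_def}, and move $p_{n+1}(\lambda,\Delta)$ across using self-adjointness of $\Delta$ and realness of the coefficients. Your extra care with the functional calculus justification is fine but not strictly needed here.
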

	
	This inequality is \eqref{eqn:outline:basic_ineq_1}. We recap the proof, which is very short.
	
	\begin{proof}
		By crossing symmetry, Proposition~\ref{prop:p_n_def}, self-adjointness of $\Delta$, and the fact that the $p_n$ have real coefficients,
		\begin{align*}
			0 \leq \|E^{n+1}\varphi \overline{E}^n\varphi\|_{\H}^2
			= \langle |E^n\varphi|^2, |E^{n+1}\varphi|^2 \rangle_{\H}
			&= \langle p_n(\lambda,\Delta)(\varphi^2), p_{n+1}(\lambda,\Delta)(\varphi^2) \rangle_{\H}
			\\&= \langle p_np_{n+1}(\lambda,\Delta)(\varphi^2), \varphi^2 \rangle_{\H}.
		\end{align*}
		Negating both sides gives the result.
	\end{proof}
	
	The analog of Proposition~\ref{prop:basic_ineq_1} for lowest weight vectors is
	
	\begin{prop} \label{prop:basic_ineq_2}
		Let $f \in \H^{\fin}$ be a lowest weight vector of weight $k \geq 1$. Then for $n \in \Z_{\geq 0}$,
		\begin{align*}
			-\langle q_{k,n} q_{k,n+1}(\Delta)(|f|^2), |f|^2 \rangle_{\H} \leq 0.
		\end{align*}
	\end{prop}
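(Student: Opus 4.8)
The plan is to mimic the two-line proof of Proposition~\ref{prop:basic_ineq_1}, replacing the real $K$-invariant automorphic vector $\varphi$ by the lowest weight vector $f$ and the polynomials $p_n$ by $q_{k,n}$. The only point requiring care is that $f$ is generally not real; but this causes no difficulty, since complex conjugates of elements of $\H^{\fin}$ again lie in $\H^{\fin}$ (this is part of the structure of a multiplicative representation, see also Section~\ref{sec:complex_conj}), and $\overline{E^n f} = \overline{E}^n\overline{f}$ by Proposition~\ref{prop:bar_equivariant}. In particular $E^{n+1}f$, $E^n f$, $\overline{E^{n+1}f}$, $\overline{E^n f}$ all lie in $\H^{\fin}$, so all products below land in $\H^{\infty}\subseteq\H$ and every inner product makes sense.

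First I would apply crossing symmetry \eqref{eqn:crossing} with the permutation $\sigma=(23)$ to the quadruple $(\alpha_1,\alpha_2,\alpha_3,\alpha_4)=(E^{n+1}f,\,\overline{E^n f},\,\overline{E^{n+1}f},\,E^n f)$. Using $\overline{\overline{E^{n+1}f}}=E^{n+1}f$ (Proposition~\ref{prop:alpha_bar_bar=alpha}) and the notation $|\alpha|^2=\alpha\overline{\alpha}$ from Definition~\ref{def:mult_rep}, the left-hand side of \eqref{eqn:crossing} is $\|(E^{n+1}f)(\overline{E^n f})\|_{\H}^2$ while the permuted side is $\langle |E^{n+1}f|^2,|E^n f|^2\rangle_{\H}$, so
\[
0 \le \|(E^{n+1}f)(\overline{E^n f})\|_{\H}^2 = \langle |E^{n+1}f|^2,\,|E^n f|^2\rangle_{\H}.
\]

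Next I would invoke Proposition~\ref{prop:q_k,n_def}, which gives $|E^n f|^2=q_{k,n}(\Delta)(|f|^2)$ and $|E^{n+1}f|^2=q_{k,n+1}(\Delta)(|f|^2)$, and note that every $\Delta^m(|f|^2)$ is a finite linear combination of the $|E^j f|^2$ and hence lies in $\H^{\infty}$, so the formal self-adjointness of $\Delta$ on $\H^{\infty}$ (Subsection~\ref{subsec:lie_alg}) may be applied term by term. Since $q_{k,n}$ and $q_{k,n+1}$ have real coefficients, this lets me move one polynomial factor across the inner product:
\[
\langle |E^{n+1}f|^2,\,|E^n f|^2\rangle_{\H} = \langle q_{k,n+1}(\Delta)(|f|^2),\,q_{k,n}(\Delta)(|f|^2)\rangle_{\H} = \langle q_{k,n}q_{k,n+1}(\Delta)(|f|^2),\,|f|^2\rangle_{\H}.
\]
Combining the two displays and negating both sides yields $-\langle q_{k,n}q_{k,n+1}(\Delta)(|f|^2),|f|^2\rangle_{\H}\le 0$. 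I do not anticipate any real obstacle: the content is entirely the crossing equation plus the identities of Subsection~\ref{subsec:basic_identities}, and the only thing worth spelling out carefully — rather than merely citing "crossing symmetry" — is the bookkeeping of complex conjugates in the choice of quadruple, which is why I would write it out explicitly as above.
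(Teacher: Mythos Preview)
Your proof is correct and follows essentially the same route as the paper's: the paper writes the one-line chain $0 \le \|E^{n+1}f\,\overline{E}^n\overline{f}\|_{\H}^2 = \langle |E^nf|^2, |E^{n+1}f|^2 \rangle_{\H} = \langle q_{k,n}q_{k,n+1}(\Delta)(|f|^2), |f|^2 \rangle_{\H}$ citing crossing symmetry, Proposition~\ref{prop:q_k,n_def}, self-adjointness of $\Delta$, and reality of the coefficients, then negates. Your version simply unpacks the crossing step by naming the quadruple and permutation explicitly (and notes $\overline{E^n f}=\overline{E}^n\overline{f}$), which is the same argument with a bit more bookkeeping shown.
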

	
	\begin{proof}
		By crossing symmetry, Proposition~\ref{prop:q_k,n_def}, self-adjointness of $\Delta$, and the fact that the $q_{k,n}$ have real coefficients,
		\begin{align*}
			0 \leq \|E^{n+1}f \overline{E}^n\overline{f}\|_{\H}^2
			= \langle |E^nf|^2, |E^{n+1}f|^2 \rangle_{\H}
			&= \langle q_{k,n}(\Delta)(|f|^2), q_{k,n+1}(\Delta)(|f|^2) \rangle_{\H}
			\\&= \langle q_{k,n}q_{k,n+1}(\Delta)(|f|^2), |f|^2 \rangle_{\H}.
		\end{align*}
		Negating both sides gives the result.
	\end{proof}
	
	Our next inequality, Proposition~\ref{prop:basic_ineq_3}, is sharper but more complicated than Proposition~\ref{prop:basic_ineq_1}. To state it, define
	\begin{align} \label{eqn:s_n_def}
		s_n(\lambda,\mu)
		= p_{n+1}(\lambda,\mu) - (\lambda+n(n+1)) p_n(\lambda,\mu)
	\end{align}
	and
	\begin{align} \label{eqn:r_n_def}
		r_n(\lambda,\mu) = \1_{\mu \geq 1} \mu^{-1} s_n(\lambda,\mu)^2 - p_np_{n+1}(\lambda,\mu).
	\end{align}
	The cutoff $\1_{\mu \geq 1}$ in the first term is somewhat arbitrary; we could use $\1_{\mu \geq \mu_0}$ instead for any fixed $\mu_0 > 0$ without changing much. Note that both $r_n$ and $s_n$ take real values on real inputs.
	
	\begin{prop} \label{prop:basic_ineq_3}
		Let $\varphi \in \H_{\R}^K \cap \H^{\fin}$ be an automorphic vector with Casimir eigenvalue $\lambda \geq 0$. For $n \in \Z_{\geq 0}$,
		\begin{align*}
			\langle r_n(\lambda,\Delta)(\varphi^2), \varphi^2 \rangle_{\H}
			\leq 0.
		\end{align*}
	\end{prop}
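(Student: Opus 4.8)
\textbf{Proof proposal for Proposition~\ref{prop:basic_ineq_3}.}
The plan is to extract a nontrivial lower bound for the quantity $\|E^{n+1}\varphi\,\overline{E}^n\varphi\|_{\H}^2$ --- the quantity which was merely bounded below by $0$ in Proposition~\ref{prop:basic_ineq_1} --- and then feed that improved bound through crossing symmetry. The key observation is that $E^{n+1}\varphi\,\overline{E}^n\varphi$ is a weight-$1$ vector, so its $L^2$ norm controls, via Cauchy--Schwarz against a suitable test vector, the pairing of $|E^n\varphi|^2 = p_n(\lambda,\Delta)(\varphi^2)$ with the \emph{lowering} of $E^{n+1}\varphi\,\overline{E}^n\varphi$. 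Concretely, I would compute $\overline{E}(E^{n+1}\varphi\,\overline{E}^n\varphi)$ using the product rule and the commutation relations \eqref{eqn:comm_rlns}: one term is $(\overline{E}E^{n+1}\varphi)\,\overline{E}^n\varphi = -(\lambda+n(n+1))\,E^n\varphi\,\overline{E}^n\varphi = -(\lambda+n(n+1))|E^n\varphi|^2$ by \eqref{eqn:EEbar} and the fact that $E^{n+1}\varphi$ has weight $n+1$, and the other term is $E^{n+1}\varphi\,\overline{E}^{n+1}\varphi = |E^{n+1}\varphi|^2$. So $\overline{E}(E^{n+1}\varphi\,\overline{E}^n\varphi) = |E^{n+1}\varphi|^2 - (\lambda+n(n+1))|E^n\varphi|^2 = s_n(\lambda,\Delta)(\varphi^2)$, using \eqref{eqn:s_n_def} and Proposition~\ref{prop:p_n_def}. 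This is precisely why $s_n$ is defined the way it is.

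Next I would turn this into a norm bound. Since $E^{n+1}\varphi\,\overline{E}^n\varphi$ has weight $1$, the operator $\overline{E}$ on it acts (after using \eqref{eqn:EEbar} on its image, which has weight $0$, hence lies in $\H^K$) with $\overline{E}^\star\overline{E} = -E\overline{E} = \Delta + H^2 - H$, evaluated on weight $1$ this is $\Delta + 1 - 1 = \Delta$ --- wait, more carefully: for a weight-$1$ vector $w$, $\|\overline{E}w\|_{\H}^2 = \langle E\overline{E}w,w\rangle_{\H}$ times $-1$, no: $\langle \overline{E}w,\overline{E}w\rangle = -\langle E\overline{E}w,w\rangle = \langle(\Delta+H^2-H)w,w\rangle = \langle \Delta w,w\rangle$ since $H^2w = w$ and $Hw = w$. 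Thus $\|\overline{E}w\|_{\H}^2 = \langle \Delta w,w\rangle_{\H}$ for $w$ of weight $1$. Applying this to $w = E^{n+1}\varphi\,\overline{E}^n\varphi$ and using the identity from the previous paragraph,
\begin{align*}
  \|\,\overline{E}(E^{n+1}\varphi\,\overline{E}^n\varphi)\,\|_{\H}^2
  = \langle s_n(\lambda,\Delta)(\varphi^2),\, \Delta\, s_n(\lambda,\Delta)(\varphi^2)\rangle_{\H}
  = \langle \Delta\, s_n^2(\lambda,\Delta)(\varphi^2),\, \varphi^2\rangle_{\H}.
\end{align*}
On the other hand, by unitarity and $\overline{E}^\star = -E$ we have the operator inequality bounding $\|\overline{E}w\|_{\H}^2 \le \|w\|_{\H}^2 \cdot \|\overline{E}|_{\text{weight }1}\|^2$ --- but that is the wrong direction. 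The right move is: since $\overline{E}$ on a weight-$1$ vector $w$ satisfies $\|\overline{E}w\|_{\H} \le$ (nothing bounded), I instead use Cauchy--Schwarz the other way. We want a \emph{lower} bound on $\|w\|_{\H}^2$ in terms of $\|\overline{E}w\|_{\H}^2$, which requires an \emph{upper} bound on $\overline{E}$ restricted to where it matters. Here is where the cutoff $\1_{\mu\ge 1}$ enters: on the subspace $\H^K_{\Delta\ge 1}$ the operator $\sqrt{\Delta}$ is bounded below by $1$, and $\overline{E}\colon \H^{\text{wt }1} \to \H^K$ has the property that, restricted to the weight-$1$ band-limited space, $\|\overline{E}w\|_{\H}^2 = \langle \Delta w,w\rangle_{\H} \le \|\Delta^{1/2}\text{(projection)}\|\cdots$. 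Rather than fight this, I would instead directly Cauchy--Schwarz: $\|\overline{E}w\|_{\H}^2 = \langle \Delta w, w\rangle_{\H} \le \langle \Delta w,w\rangle_{\H}$, and use $\|w\|_{\H}^2 \ge \langle \Delta w,w\rangle_{\H} \cdot (\sup \Delta \text{ on }\supp)^{-1}$ --- no. The clean formulation: apply Cauchy--Schwarz as $|\langle \overline{E}w, u\rangle_{\H}|^2 \le \|\overline{E}w\|_{\H}^2\|u\|_{\H}^2$ with $u = \1_{\Delta\ge 1}\Delta^{-1}(\overline{E}w)$; then $\langle \overline{E}w,u\rangle = \langle \1_{\Delta\ge 1}\Delta^{-1}\overline{E}w, \overline{E}w\rangle = \langle \1_{\Delta\ge 1}\Delta^{-1}s_n^2(\lambda,\Delta)(\varphi^2),\varphi^2\rangle$, while $\|u\|_{\H}^2 = \langle \1_{\Delta\ge 1}\Delta^{-2}s_n^2(\lambda,\Delta)(\varphi^2),\varphi^2\rangle \le \langle \1_{\Delta\ge 1}\Delta^{-1}s_n^2(\lambda,\Delta)(\varphi^2),\varphi^2\rangle$. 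Combining,
\begin{align*}
  \langle \1_{\Delta\ge 1}\Delta^{-1} s_n^2(\lambda,\Delta)(\varphi^2),\varphi^2\rangle_{\H}
  \le \|\overline{E}w\|_{\H}^2 \cdot 1
  = \langle \Delta\, s_n^2(\lambda,\Delta)(\varphi^2),\varphi^2\rangle_{\H}
\end{align*}
is the wrong direction again; I would instead want $\|w\|_{\H}^2 \ge$ LHS. Since $\|\overline{E}w\|_{\H}^2 \le \|w\|_{\H}^2 \sup_{\mu\in\supp}\mu$ is false in general, the correct and intended route is: $w$ has weight $1$, and for weight-$1$ vectors one has the a priori bound $\|\overline{E}w\|_{\H}^2 = \langle \Delta w,w\rangle_{\H}$, hence for \emph{any} $w$ of weight $1$ supported on $\Delta \le \Lambda$ (which holds since $\varphi^2 \in \H^{\fin}$ gives band-limitedness of $w$ after any fixed spectral cutoff) we get the two-sided relation, and then Cauchy--Schwarz against $\1_{\Delta\ge 1}\Delta^{-1/2}$ gives
\begin{align*}
  \langle \1_{\Delta\ge 1}\,\mu^{-1}s_n^2(\lambda,\mu)(\varphi^2),\varphi^2\rangle_{\H}
  = \|\1_{\Delta\ge 1}\Delta^{-1/2}s_n(\lambda,\Delta)(\varphi^2)\|_{\H}^2
  \le \|\overline{E}w\|_{\H}^2
  \le \|w\|_{\H}^2,
\end{align*}
where the last step uses $\|\overline{E}w\|_{\H} \le \|w\|_{\H}$ for $w$ a \emph{lowest-weight-or-higher} weight-$1$ vector --- indeed $\overline{E}$ lowers weight $1$ to weight $0$ and by the explicit principal/discrete series structure in Subsection~\ref{subsec:irrep_classification} the map $\overline{E}$ on weight-$1$ vectors has operator norm $\le 1$ up to normalization (or more robustly, $\|\overline{E}w\|^2 = \langle\Delta w,w\rangle$ and we only need this when $\Delta$ is bounded, absorbing constants into the $\1_{\mu\ge 1}$ normalization). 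Combining this lower bound on $\|w\|_{\H}^2 = \|E^{n+1}\varphi\,\overline{E}^n\varphi\|_{\H}^2$ with the crossing-symmetry identity $\|E^{n+1}\varphi\,\overline{E}^n\varphi\|_{\H}^2 = \langle p_np_{n+1}(\lambda,\Delta)(\varphi^2),\varphi^2\rangle_{\H}$ from the proof of Proposition~\ref{prop:basic_ineq_1}, we obtain
\begin{align*}
  \langle \1_{\Delta\ge 1}\,\Delta^{-1}s_n^2(\lambda,\Delta)(\varphi^2),\varphi^2\rangle_{\H}
  \le \langle p_np_{n+1}(\lambda,\Delta)(\varphi^2),\varphi^2\rangle_{\H},
\end{align*}
which rearranges, via the definition \eqref{eqn:r_n_def} of $r_n$, to exactly $\langle r_n(\lambda,\Delta)(\varphi^2),\varphi^2\rangle_{\H}\le 0$.

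\textbf{Main obstacle.} The delicate point is pinning down the exact constant in the inequality $\|\overline{E}w\|_{\H}^2 \le (\text{const})\cdot\|w\|_{\H}^2$ for weight-$1$ vectors $w$, equivalently controlling the operator $\Delta$ from above where it is multiplied against $w$; this is what forces the somewhat ad hoc cutoff $\1_{\mu\ge 1}$ and the specific shape of $r_n$. I expect the honest argument to avoid this by keeping everything as the exact identity $\|\overline{E}w\|_{\H}^2 = \langle \Delta w,w\rangle_{\H}$ for weight-$1$ $w$ (no inequality needed there at all), then applying Cauchy--Schwarz only in the form $\langle \1_{\Delta\ge1}\Delta^{-1/2}s_n(\lambda,\Delta)(\varphi^2),\,\Delta^{1/2}(\text{something})\rangle$ so that the $\Delta$ from $\|\overline{E}w\|^2$ cancels the $\Delta^{-1}$ in $r_n$; the cutoff $\1_{\mu\ge1}$ is then just what makes $\Delta^{-1}$ a bounded operator. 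Once the cancellation is set up correctly, the rest is the bookkeeping already rehearsed in the proof of Proposition~\ref{prop:basic_ineq_1}, and all polynomials involved have real coefficients so $\Delta$ self-adjointness lets us move operators freely between the two slots of the inner product.
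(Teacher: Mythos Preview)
Your overall plan is correct and matches the paper's: compute $\overline{E}(E^{n+1}\varphi\,\overline{E}^n\varphi) = s_n(\lambda,\Delta)(\varphi^2)$, obtain a lower bound for $\|E^{n+1}\varphi\,\overline{E}^n\varphi\|_{\H}^2$ in terms of $\langle \1_{\Delta\geq 1}\Delta^{-1}s_n^2(\lambda,\Delta)(\varphi^2),\varphi^2\rangle_{\H}$, and combine with the identity $\|E^{n+1}\varphi\,\overline{E}^n\varphi\|_{\H}^2 = \langle p_np_{n+1}(\lambda,\Delta)(\varphi^2),\varphi^2\rangle_{\H}$. Your computation of $\overline{E}w$ is right, and your identity $\|\overline{E}w\|_{\H}^2 = \langle \Delta w,w\rangle_{\H}$ for weight-$1$ $w$ is also right.

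The gap is the inequality you actually invoke. You write the chain
\[
\|\1_{\Delta\geq 1}\Delta^{-1/2}\overline{E}w\|_{\H}^2 \;\leq\; \|\overline{E}w\|_{\H}^2 \;\leq\; \|w\|_{\H}^2,
\]
and justify the second step by claiming $\overline{E}$ has operator norm $\leq 1$ on weight-$1$ vectors. This is false: your own identity gives $\|\overline{E}w\|_{\H}^2 = \langle \Delta w,w\rangle_{\H}$, which for $w$ in a principal series of eigenvalue $\lambda$ equals $\lambda\|w\|_{\H}^2$, and $\lambda$ can be arbitrarily large. So the second inequality fails, and the first inequality (which is true) is too wasteful to recover from.

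The fix --- which you nearly articulate in your ``Main obstacle'' paragraph --- is to skip the intermediate term entirely and go directly from $\|\1_{\Delta\geq 1}\Delta^{-1/2}\overline{E}w\|_{\H}^2$ to $\|w\|_{\H}^2$. Move $\overline{E}$ across the inner product as $-E$, use that $\Delta$ commutes with $E$, and apply \eqref{eqn:EEbar}: since $E\overline{E} = -(\Delta+H^2-H)$ and $w$ has weight $1$, one gets $E\overline{E}w = -\Delta w$, so
\[
\|\1_{\Delta\geq 1}\Delta^{-1/2}\overline{E}w\|_{\H}^2
= -\langle \1_{\Delta\geq 1}\Delta^{-1}E\overline{E}w, w\rangle_{\H}
= \langle \1_{\Delta\geq 1}\Delta^{-1}\Delta w, w\rangle_{\H}
= \|\1_{\Delta\geq 1}w\|_{\H}^2
\leq \|w\|_{\H}^2.
\]
This is an \emph{exact} cancellation of $\Delta^{-1}$ against the $\Delta$ produced by $E\overline{E}$ on weight $1$ --- no Cauchy--Schwarz, no operator norm bound on $\overline{E}$, no constants to tune. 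This is precisely Lemma~\ref{lem:wt_1_lower_bd} in the paper, and it explains why the cutoff $\1_{\mu\geq 1}$ and the factor $\mu^{-1}$ in the definition of $r_n$ are exactly what they are rather than ``somewhat arbitrary'': together they make the comparison with $\|w\|_{\H}^2$ exact up to the harmless drop of $\1_{\Delta<1}w$. Once you have this lemma, the rest of your argument goes through verbatim.
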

	
	This is stronger than Proposition~\ref{prop:basic_ineq_1} because $r_n \geq -p_np_{n+1}$ pointwise.
	The proof of Proposition~\ref{prop:basic_ineq_3} will use the following simple lemma.
	
	\begin{lem} \label{lem:wt_1_lower_bd}
		Let $v \in \H^{\infty}$ be a vector of weight $1$. Then
		\begin{align*}
			\|v\|_{\H}
			\geq \|\1_{\Delta \geq 1} \Delta^{-\frac{1}{2}} \overline{E}v\|_{\H}.
		\end{align*}
	\end{lem}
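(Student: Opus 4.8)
Lemma~\ref{lem:wt_1_lower_bd} is a purely representation-theoretic statement about a single smooth vector of weight $1$, so the plan is to reduce it to a Bessel-type inequality by spectrally decomposing $v$ along the Casimir. Concretely, write $v = \sum_{\lambda} \1_{\Delta=\lambda} v$, where each $\1_{\Delta=\lambda}v$ is a smooth weight-$1$ vector lying in the $\lambda$-eigenspace of $\Delta$; since $\overline{E}$ commutes with the Casimir (it lies in $\g$ and $\Delta \in \mathfrak{Z}(\g)$), we have $\1_{\Delta=\lambda}\overline{E}v = \overline{E}\1_{\Delta=\lambda}v$, and $\overline{E}v$ has weight $0$. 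Thus, using the definition \eqref{eqn:norm_f(Delta)v_def} of the functional calculus, it suffices to show that for each $\lambda \geq 1$,
\begin{align*}
	\|\overline{E}\1_{\Delta=\lambda}v\|_{\H}^2
	\leq \lambda \|\1_{\Delta=\lambda}v\|_{\H}^2.
\end{align*}
(The contribution of eigenvalues $\lambda < 1$ is simply discarded by the cutoff $\1_{\Delta \geq 1}$, which is why that cutoff appears.)

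To prove this eigenspace-level inequality, fix $\lambda \geq 0$ and let $w = \1_{\Delta=\lambda}v$, a smooth weight-$1$ Casimir eigenvector. I would compute $\|\overline{E}w\|_{\H}^2$ exactly as in the warmup proof of Proposition~\ref{prop:lowest_wt_wt}: by \eqref{eqn:Lie_alg_unitarity}, $\|\overline{E}w\|_{\H}^2 = -\langle E\overline{E}w, w \rangle_{\H}$, and then by the identity \eqref{eqn:EEbar} for $E\overline{E}$ together with $Hw = w$ and $\Delta w = \lambda w$,
\begin{align*}
	\|\overline{E}w\|_{\H}^2
	= -\langle E\overline{E}w, w \rangle_{\H}
	= \langle (\Delta + H^2 - H)w, w \rangle_{\H}
	= (\lambda + 1 - 1)\|w\|_{\H}^2
	= \lambda \|w\|_{\H}^2.
\end{align*}
So in fact equality holds on each eigenspace, and the inequality in the lemma comes only from the truncation to $\Delta \geq 1$.

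Summing over $\lambda$ and applying the cutoff, $\|\1_{\Delta \geq 1}\Delta^{-1/2}\overline{E}v\|_{\H}^2 = \sum_{\lambda \geq 1} \lambda^{-1} \|\overline{E}\1_{\Delta=\lambda}v\|_{\H}^2 = \sum_{\lambda \geq 1} \|\1_{\Delta=\lambda}v\|_{\H}^2 \leq \|v\|_{\H}^2$, which is the claim. There is no real obstacle here; the only point requiring a little care is to justify that the spectral decomposition of $v$ is compatible with applying $\overline{E}$ term by term and that $\overline{E}v$ is genuinely in the domain of $\Delta^{-1/2}$ restricted to $\Delta \geq 1$ — but that follows from $v \in \H^{\infty}$ (so $\overline{E}v \in \H^{\infty}$ and all the relevant sums converge, using Proposition~\ref{prop:smooth_iff_spectral_decay}) and from the monotonicity of the functional calculus norm noted after \eqref{eqn:norm_f(Delta)v_def}. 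So the entire proof is a one-line eigenspace computation wrapped in a spectral-decomposition bookkeeping argument.
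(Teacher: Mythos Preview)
Your proof is correct and uses the same key identity as the paper: applying $E\overline{E} = -(\Delta + H^2 - H)$ to a weight-$1$ vector collapses to $-\Delta$, giving $\|\1_{\Delta \geq 1}\Delta^{-1/2}\overline{E}v\|_{\H}^2 = \|\1_{\Delta \geq 1}v\|_{\H}^2 \leq \|v\|_{\H}^2$. The only difference is organizational: you decompose $v$ into Casimir eigenspaces first and compute on each piece, whereas the paper performs the identical computation globally in one line via the functional calculus, without ever invoking the eigenspace decomposition; this avoids your bookkeeping about commuting $\overline{E}$ past spectral projectors and term-by-term convergence, but the content is the same.
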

	
	\begin{proof}
		Rewrite the square of the right hand side using self-adjointness of $\Delta$, \eqref{eqn:Lie_alg_unitarity}, \eqref{eqn:EEbar}, and the fact that $v$ has weight $1$:
		\begin{align*}
			\|\1_{\Delta \geq 1} \Delta^{-\frac{1}{2}} \overline{E}v\|_{\H}^2
			&= -\langle \1_{\Delta \geq 1} \Delta^{-1} E\overline{E}v, v \rangle_{\H}
			\\&= \langle \1_{\Delta \geq 1} \Delta^{-1} (\Delta+H^2-H) v,v \rangle_{\H}
			= \langle \1_{\Delta \geq 1}v, v \rangle_{\H}
			= \|\1_{\Delta \geq 1}v\|_{\H}^2
			\leq \|v\|_{\H}^2.
			\qedhere
		\end{align*}
	\end{proof}
	
	\begin{proof}[Proof of Proposition~\ref{prop:basic_ineq_3}]
		By Lemma~\ref{lem:wt_1_lower_bd}, we can lower bound
		\begin{align*}
			\|E^{n+1}\varphi \overline{E}^n\varphi\|_{\H}^2
			\geq \|\1_{\Delta\geq 1} \Delta^{-\frac{1}{2}} \overline{E}(E^{n+1}\varphi \overline{E}^n\varphi)\|_{\H}^2. 
		\end{align*}
		By the product rule and \eqref{eqn:EEbar},
		\begin{align*}
			\|E^{n+1}\varphi \overline{E}^n\varphi\|_{\H}^2
			&\geq \|\1_{\Delta\geq 1} \Delta^{-\frac{1}{2}} (|E^{n+1}\varphi|^2 -(\Delta+H^2+H)E^n\varphi \overline{E}^n\varphi)\|_{\H}^2
			\\&= \|\1_{\Delta\geq 1} \Delta^{-\frac{1}{2}} (|E^{n+1}\varphi|^2 -(\lambda+n(n+1))|E^n\varphi|^2)\|_{\H}^2.
		\end{align*}
		Plugging in Proposition~\ref{prop:p_n_def}, and then writing the result in terms of $s_n$ to simplify notation,
		\begin{align*}
			\|E^{n+1}\varphi \overline{E}^n\varphi\|_{\H}^2
			\geq \|\1_{\Delta \geq 1} \Delta^{-\frac{1}{2}} s_n(\lambda,\Delta)(\varphi^2)\|_{\H}^2.
		\end{align*}
		By self-adjointness of $\Delta$ and real-valuedness of $s_n$, we can rewrite this as
		\begin{align*}
			\|E^{n+1}\varphi \overline{E}^n\varphi\|_{\H}^2
			\geq \langle \1_{\Delta \geq 1} \Delta^{-1} s_n(\lambda,\Delta)^2(\varphi^2), \varphi^2 \rangle_{\H}.
		\end{align*}
		On the other hand, by the proof of Proposition~\ref{prop:basic_ineq_1},
		\begin{align*}
			\|E^{n+1}\varphi \overline{E}^n\varphi\|_{\H}^2
			= \langle p_np_{n+1}(\lambda,\Delta)(\varphi^2), \varphi^2 \rangle_{\H}.
		\end{align*}
		Thus
		\begin{align*}
			\langle p_np_{n+1}(\lambda,\Delta)(\varphi^2), \varphi^2 \rangle_{\H}
			\geq \langle \1_{\Delta \geq 1} \Delta^{-1} s_n(\lambda,\Delta)^2(\varphi^2), \varphi^2 \rangle_{\H}.
		\end{align*}
		Subtracting the left hand side from the right hand side and writing the result in terms of $r_n$,
		\begin{align*}
			\langle r_n(\lambda,\Delta)(\varphi^2), \varphi^2 \rangle_{\H}
			\leq 0,
		\end{align*}
		as desired.
	\end{proof}
	
	\subsection{Bulk bound} \label{subsec:bulk_poly} Our aim in this subsection is to prove Theorem~\ref{thm:L^4_quasi-Sobolev} under the assumption that $\H$ obeys a polynomial Weyl law, i.e., to prove
	
	\begin{thm} \label{thm:L^4_quasi-Sobolev_poly_Weyl}
		Assume $\H$ obeys a polynomial Weyl law. Let $\alpha \in \H^K \cap \H^{\fin}$. Then
		\begin{align} \label{eqn:L^4_quasi-Sobolev_poly_Weyl}
			\|\alpha\|_{L^4}
			\leq \|\exp(O(\log_+^2\Delta))\alpha\|_{\H}.
		\end{align}
	\end{thm}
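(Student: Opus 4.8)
The plan is to reduce the general statement to the case where $\alpha = \varphi$ is a single $K$-invariant automorphic vector with Casimir eigenvalue $\lambda$, and then to prove the eigenvector bound
\begin{align*}
	\|\varphi\|_{L^4} \leq \exp(O(\log_+^2\lambda)) \|\varphi\|_{\H}
\end{align*}
by a ``dyadic induction on $\lambda$.'' For the reduction, first split $\varphi$ into real and imaginary parts (using Proposition~\ref{prop:norm_re_im_pythagoras} and the triangle inequality for the $L^4$ norm, Proposition~\ref{prop:L^4_triangle_ineq}), so we may assume $\varphi \in \H_{\R}^K \cap \H^{\fin}$. Then, given the eigenvector bound, recover \eqref{eqn:L^4_quasi-Sobolev_poly_Weyl} for general $\alpha \in \H^K \cap \H^{\fin}$ by writing $\alpha = \sum_{\lambda} \1_{\Delta=\lambda}\alpha$ and applying the triangle inequality for $\|\cdot\|_{L^4}$ term by term. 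It is exactly here that the polynomial Weyl law is used: since $\LHS\eqref{eqn:poly_Weyl_law}$ is polynomially bounded, a Cauchy--Schwarz step (inserting a factor $\exp(\pm\log_+^2\lambda)$, exactly as in the proof of Lemma~\ref{lem:sum_i_bdd_by_f(Delta)}) converts $\sum_\lambda \exp(O(\log_+^2\lambda)) \|\1_{\Delta=\lambda}\alpha\|_{\H}$ into $\|\exp(O(\log_+^2\Delta))\alpha\|_{\H}$ with only an absolute loss.

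For the eigenvector bound itself, set $C(\Lambda) = \sup \|\varphi\|_{L^4} / \|\exp(O(\log_+^2\Delta))\varphi\|_{\H}$ over real automorphic $\varphi \in \H_{\R}^K \cap \H^{\fin}$ with Casimir eigenvalue $\leq \Lambda$ (for a suitable fixed $O$-constant; some care with the constant is needed so the induction closes). The base case $\Lambda \lesssim 1$ is handled directly: band-limited $K$-invariant subspaces are finite-dimensional by discreteness of the spectrum, so on such a space all norms are equivalent, giving $C(\Lambda) < \infty$ for $\Lambda \lesssim 1$. For the inductive step, fix $\varphi$ with eigenvalue $\lambda \gg 1$ and split
\begin{align*}
	\|\varphi\|_{L^4}^2 = \|\varphi^2\|_{\H} \leq \|\1_{\Delta \leq \frac{1}{2}\lambda}(\varphi^2)\|_{\H} + \|\1_{\Delta > \frac{1}{2}\lambda}(\varphi^2)\|_{\H}.
\end{align*}
The tail term is controlled using the sharpened bootstrap inequality Proposition~\ref{prop:basic_ineq_3}: form a positive linear combination $R(\lambda,\mu) = \sum_n c_n r_n(\lambda,\mu)$ of the functions $r_n$ from \eqref{eqn:r_n_def}, so that $\langle R(\lambda,\Delta)(\varphi^2),\varphi^2\rangle_{\H} \leq 0$. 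The coefficients $c_n$ (and the number of terms) are chosen so that $R(\lambda,\mu) \gtrsim 1$ for $\mu > \frac{1}{2}\lambda$ while $|R(\lambda,\mu)| \lesssim \lambda^{O(1)}$ for $\mu \leq \frac{1}{2}\lambda$; this is the technical heart, and it relies on the degree/leading-coefficient structure of $p_n$ (hence of $r_n$) recorded via Proposition~\ref{prop:p_n_def} and Lemma~\ref{lem:p_n_triv_bd}. Rearranging $\langle R(\lambda,\Delta)(\varphi^2),\varphi^2\rangle_{\H} \leq 0$ to isolate the tail then yields
\begin{align*}
	\|\1_{\Delta > \frac{1}{2}\lambda}(\varphi^2)\|_{\H} \lesssim \lambda^{O(1)} \|\1_{\Delta \leq \frac{1}{2}\lambda}(\varphi^2)\|_{\H},
\end{align*}
so both terms in the split are bounded by $\lambda^{O(1)}$ times the bulk term $\|\1_{\Delta \leq \frac{1}{2}\lambda}(\varphi^2)\|_{\H}$.

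Finally, the bulk term is fed back into the induction: one argues that $\|\1_{\Delta \leq \frac{1}{2}\lambda}(\varphi^2)\|_{\H}$ is, after a little massaging, bounded by $C(\frac{1}{2}\lambda)^2 \|\varphi\|_{\H}^2$ (the point being that $\1_{\Delta \leq \frac{1}{2}\lambda}(\varphi^2)$ is related to products of vectors band-limited by $\frac{1}{2}\lambda$, and $L^4$-Cauchy--Schwarz together with the inductive hypothesis applies). Combining, $C(\lambda) \lesssim \lambda^{O(1)} C(\tfrac{1}{2}\lambda)$ for $\lambda \gg 1$, with $C(\lambda) \lesssim 1$ for $\lambda \lesssim 1$. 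Iterating this recursion $O(\log_+\lambda)$ times and multiplying the $\lambda^{O(1)} = \exp(O(\log_+\lambda))$ losses gives $C(\lambda) \leq \exp(O(\log_+^2\lambda))$, which is precisely the quasipolynomial dependence claimed. The main obstacle is the construction and analysis of $R(\lambda,\mu)$: one must produce an explicit positive combination of the $r_n$ that is uniformly bounded below past $\frac{1}{2}\lambda$ and polynomially bounded below it, and keep track of constants carefully enough that the $\lambda^{O(1)}$ loss per step — and not something worse — is what enters the recursion.
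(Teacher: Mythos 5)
Your proposal matches the paper's proof in all essentials: the same reduction to real $K$-invariant automorphic vectors followed by a Cauchy--Schwarz reassembly using the polynomial Weyl law; the same recursive constant $C(\Lambda)$ (finite by discreteness of the spectrum); the same use of the sharpened bootstrap inequality (Proposition~\ref{prop:basic_ineq_3}) via a nonnegative combination $R(\lambda,\mu)$ of the $r_n$; and the same closure of the induction via $\eqref{eqn:3_term_crossing}$, $L^4$-Cauchy--Schwarz, and the inductive hypothesis applied to the band-limited vector $\1_{\Delta\leq c\lambda}(\varphi^2)$ (which is where the Weyl law is used a second time, through Lemma~\ref{lem:L^4_bdd_by_C_s(Lambda)}). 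The only cosmetic difference is that you fold the target quasipolynomial factor into the definition of $C(\Lambda)$, whereas the paper keeps $C(\Lambda)=\sup\|\varphi\|_{L^4}/\|\varphi\|_{\H}$ and derives $C(\Lambda)\leq\exp(O(\log_+^2\Lambda))$ from the recursion $C(\Lambda)\lesssim\Lambda^{O(1)}C(c\Lambda)$.
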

	
	For $\Lambda \geq 0$, let $C(\Lambda) \in [0,\infty]$ be the smallest nonnegative constant such that for all automorphic vectors $\varphi \in \H_{\R}^K \cap \H^{\fin}$ with Casimir eigenvalue $\leq \Lambda$,
	\begin{align*}
		\|\varphi\|_{L^4}
		\leq C(\Lambda) \|\varphi\|_{\H}.
	\end{align*}
	It is clear from the definition that $C(\Lambda)$ is increasing in $\Lambda$. We will reduce Theorem~\ref{thm:L^4_quasi-Sobolev_poly_Weyl} to Proposition~\ref{prop:C_s(Lambda)_inductive_bd} below, which is stated purely in terms of $C(\Lambda)$.
	First, we need a couple easy lemmas.
	
	
	
	
	
	\begin{lem} \label{lem:C(Lambda)_finite}
		For every $\Lambda \geq 0$, the constant $C(\Lambda)$ is finite.
	\end{lem}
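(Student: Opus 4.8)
The plan is to show that $C(\Lambda) < \infty$ by a straightforward compactness/finite-dimensionality argument that uses the discrete spectrum hypothesis, together with the fact already established that the $L^4$ norm is finite-valued on $\H^{\fin}$.

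First I would recall that by definition of discrete spectrum (Subsection~\ref{subsec:sl_2:func_calc}), the space $\H_{\Delta \leq \Lambda}^K$ of $K$-invariant vectors with Casimir eigenvalue at most $\Lambda$ is finite-dimensional, and moreover $\H_{\Delta \leq \Lambda}^K \subseteq \H^{\fin}$. An automorphic vector $\varphi \in \H_{\R}^K \cap \H^{\fin}$ with Casimir eigenvalue $\leq \Lambda$ is in particular an element of $\H_{\Delta \leq \Lambda}^K$. On this finite-dimensional space, consider the two functions $\varphi \mapsto \|\varphi\|_{L^4}$ and $\varphi \mapsto \|\varphi\|_{\H}$. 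The second is the restriction of the Hilbert space norm, hence a genuine norm. The first is finite-valued (since $\|\varphi\|_{L^4} = \|\varphi^2\|_{\H}^{1/2} < \infty$ because $\varphi^2 \in \H^{\infty}$, as noted at the start of Section~\ref{sec:L^infty_L^4}) and satisfies the triangle inequality and homogeneity by Propositions~\ref{prop:L^4_triangle_ineq} and the definition; so it is a seminorm on $\H_{\Delta \leq \Lambda}^K$. Actually it is a norm, since $\|\varphi\|_{\H} \leq \|\varphi\|_{L^4}$ by Proposition~\ref{prop:L^2_leq_L^4_leq_L^infty}, so $\|\varphi\|_{L^4} = 0$ forces $\varphi = 0$; but I only need it to be a finite seminorm.

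Next I would invoke the elementary fact that on a finite-dimensional vector space, any seminorm is bounded by a constant multiple of any norm. Applying this to $\|\cdot\|_{L^4}$ and $\|\cdot\|_{\H}$ on the finite-dimensional space $\H_{\Delta \leq \Lambda}^K$, there is a finite constant $C = C(\Lambda)$ with $\|\varphi\|_{L^4} \leq C\|\varphi\|_{\H}$ for all $\varphi$ in this space, in particular for all automorphic vectors in $\H_{\R}^K \cap \H^{\fin}$ with Casimir eigenvalue $\leq \Lambda$. This is exactly the statement that the constant $C(\Lambda)$ defined above is finite. (There is a small subtlety: the definition of $C(\Lambda)$ quantifies only over \emph{automorphic} vectors, i.e. Casimir eigenvectors, rather than over all of $\H_{\Delta \leq \Lambda}^K$, but since the former is a subset of the latter, the bound on the larger set gives a bound on the smaller, so this only helps.)

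I do not expect any genuine obstacle here — this lemma is purely a soft consequence of discrete spectrum (finite-dimensionality) plus the already-proven finiteness of $\|\cdot\|_{L^4}$ on $\H^{\fin}$. The only point requiring a word of care is confirming that $\|\cdot\|_{L^4}$ really is a seminorm (not merely a function) on $\H_{\Delta \leq \Lambda}^K$, which is precisely Proposition~\ref{prop:L^4_triangle_ineq} together with the evident homogeneity $\|c\varphi\|_{L^4} = |c|\,\|\varphi\|_{L^4}$. Everything else is the standard comparison-of-norms-in-finite-dimensions fact. This is the kind of ``qualitative $C < \infty$'' input flagged in Remark~\ref{rem:using_lambda_r_to_infty}, which will later be fed into a self-improving estimate (Proposition~\ref{prop:C_s(Lambda)_inductive_bd}) to obtain the quantitative quasipolynomial bound.
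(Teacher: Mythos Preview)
Your proposal is correct and follows essentially the same approach as the paper: use discrete spectrum to conclude $\H_{\Delta \leq \Lambda}^K$ is finite-dimensional, then invoke equivalence of norms on a finite-dimensional space to bound $\|\cdot\|_{L^4}$ by a constant times $\|\cdot\|_{\H}$. The paper's version is terser (it does not spell out that $\|\cdot\|_{L^4}$ is a seminorm), but the argument is the same.
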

	
	\begin{proof}
		Since $\H$ has discrete spectrum, $\H_{\Delta \leq \Lambda}^K$ is finite-dimensional. Any two norms on a finite-dimensional space are equivalent, so $\|\cdot\|_{L^4} \lesssim_{\Lambda} \|\cdot\|_{\H}$ on $\H_{\Delta \leq \Lambda}^K$. The implicit constant here is an upper bound for $C(\Lambda)$.
	\end{proof}
	
	\begin{lem} \label{lem:L^4_bdd_by_C_s(Lambda)}
		Assume $\H$ obeys a polynomial Weyl law. Let $\Lambda \geq 1$. Then for $\alpha \in \H_{\Delta \leq \Lambda}^K$,
		\begin{align*}
			\|\alpha\|_{L^4}
			\lesssim \Lambda^{O(1)} C(\Lambda) \|\alpha\|_{\H}.
		\end{align*}
	\end{lem}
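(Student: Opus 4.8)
\textbf{Proof proposal for Lemma~\ref{lem:L^4_bdd_by_C_s(Lambda)}.}

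The plan is to decompose $\alpha \in \H_{\Delta \leq \Lambda}^K$ into its Casimir eigenspace components and apply the definition of $C(\Lambda)$ to each component, using the triangle inequality for the $L^4$ norm (Proposition~\ref{prop:L^4_triangle_ineq}) together with the polynomial Weyl law to control the number of terms. Concretely, write
\begin{align*}
	\alpha = \sum_{\lambda} \1_{\Delta=\lambda}\alpha,
\end{align*}
where the sum is over the (finitely many, since $\H$ has discrete spectrum) eigenvalues $\lambda \in [0,\Lambda]$ with $\1_{\Delta=\lambda}\alpha \neq 0$. Each component $\1_{\Delta=\lambda}\alpha$ lies in $\H_{\Delta=\lambda}^K \subseteq \H_{\Delta \leq \Lambda}^K \subseteq \H^{\fin}$, and since $\H_{\Delta=\lambda}^K$ is finite-dimensional it is in particular a (sum of) automorphic vector(s) of Casimir eigenvalue $\lambda \leq \Lambda$; after splitting into real and imaginary parts and noting that the $L^4$ norm and the $\H$-norm are unchanged under complex conjugation, the definition of $C(\Lambda)$ (via the triangle inequality applied to the real and imaginary parts) gives $\|\1_{\Delta=\lambda}\alpha\|_{L^4} \lesssim C(\Lambda)\|\1_{\Delta=\lambda}\alpha\|_{\H}$.

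First I would apply Proposition~\ref{prop:L^4_triangle_ineq} to get
\begin{align*}
	\|\alpha\|_{L^4} \leq \sum_{\lambda} \|\1_{\Delta=\lambda}\alpha\|_{L^4} \lesssim C(\Lambda) \sum_{\lambda} \|\1_{\Delta=\lambda}\alpha\|_{\H}.
\end{align*}
Then, by Cauchy--Schwarz on the right-hand sum,
\begin{align*}
	\sum_{\lambda} \|\1_{\Delta=\lambda}\alpha\|_{\H} \leq N^{1/2} \Big(\sum_{\lambda} \|\1_{\Delta=\lambda}\alpha\|_{\H}^2\Big)^{1/2} = N^{1/2} \|\alpha\|_{\H},
\end{align*}
where $N = \#\{\lambda \in [0,\Lambda] : \1_{\Delta=\lambda}\alpha \neq 0\}$ is the number of distinct Casimir eigenvalues appearing, not counted with multiplicity. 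By the polynomial Weyl law \eqref{eqn:poly_Weyl_law} (which bounds exactly this count), $N \lesssim \Lambda^{O(1)}$, so $N^{1/2} \lesssim \Lambda^{O(1)}$. Combining the displays yields $\|\alpha\|_{L^4} \lesssim \Lambda^{O(1)} C(\Lambda)\|\alpha\|_{\H}$, as desired.

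There is essentially no obstacle here: the only subtle point is that the triangle inequality in the first display must be applied to a \emph{finite} sum, which is guaranteed by discreteness of the spectrum (so $\alpha \in \H^{\fin}$ has only finitely many nonzero spectral components), and that the Cauchy--Schwarz step loses only $N^{1/2}$ rather than $N$, which is precisely why the polynomial Weyl law — rather than a bound on the number of eigenvalues with multiplicity — suffices. The reduction of the full Theorem~\ref{thm:L^4_quasi-Sobolev_poly_Weyl} to a bound on $C(\Lambda)$ alone will then be carried out after Proposition~\ref{prop:C_s(Lambda)_inductive_bd} using this lemma to pass from the eigenfunction bound to the general statement.
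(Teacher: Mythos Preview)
Your proof is correct and follows essentially the same approach as the paper: spectrally decompose $\alpha$, apply the triangle inequality for the $L^4$ norm, use the definition of $C(\Lambda)$ on each eigenspace component (after reducing to the real case), and then invoke the polynomial Weyl law to control the number of terms. The only cosmetic difference is that the paper splits $\alpha$ into real and imaginary parts once at the outset rather than component-by-component, and it absorbs the Cauchy--Schwarz step into a single ``$\lesssim \Lambda^{O(1)} C(\Lambda)\|\alpha\|_{\H}$'' without writing out the $N^{1/2}$ explicitly.
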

	
	\begin{proof}
		Splitting $\alpha$ into real and imaginary parts, we may assume $\alpha \in \H_{\R}$. Then by the triangle inequality, the definition of $C(\Lambda)$, and the polynomial Weyl law,
		\begin{align*}
			\|\alpha\|_{L^4}
			\leq \sum_{\lambda \leq \Lambda} \|\1_{\Delta=\lambda}\alpha\|_{L^4}
			\leq C(\Lambda) \sum_{\lambda \leq \Lambda} \|\1_{\Delta=\lambda}\alpha\|_{\H}
			\lesssim
			\Lambda^{O(1)} C(\Lambda) \|\alpha\|_{\H}.
		\end{align*}
		Note the second inequality was valid because $\1_{\Delta=\lambda}\alpha$ is a real $K$-invariant automorphic vector with Casimir eigenvalue $\leq\Lambda$.
	\end{proof}
	
	\begin{prop} \label{prop:C_s(Lambda)_inductive_bd}
		Assume $\H$ obeys a polynomial Weyl law. Then there is an absolute constant $c \in (0,1)$ such that for $\Lambda \gg 1$,
		\begin{align} \label{eqn:C_s(Lambda)_inductive_bd}
			C(\Lambda)
			\lesssim \Lambda^{O(1)} C(c\Lambda).
		\end{align}
	\end{prop}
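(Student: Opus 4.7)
The plan is to prove the uniform bound $\|\varphi\|_{L^4} \lesssim \Lambda^{O(1)} C(c\Lambda)\,\|\varphi\|_{\H}$ over real automorphic vectors $\varphi \in \H_{\R}^K \cap \H^{\fin}$ with Casimir eigenvalue $\lambda \leq \Lambda$, then take the supremum over such $\varphi$. After normalizing $\|\varphi\|_{\H} = 1$ and writing $\|\varphi\|_{L^4}^2 = \|\varphi^2\|_{\H}$, the core move is to split this norm at a spectral threshold $c\lambda$ into
\begin{align*}
\|\varphi\|_{L^4}^2 \leq B_1 + B_2, \qquad B_1 := \|\1_{\Delta \leq c\lambda}(\varphi^2)\|_{\H}, \qquad B_2 := \|\1_{\Delta > c\lambda}(\varphi^2)\|_{\H},
\end{align*}
for a small absolute $c \in (0,1)$. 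The case $c\lambda < 1$ is trivial: then $\varphi \in \H_{\Delta \leq 1}^K$ and Lemma~\ref{lem:L^4_bdd_by_C_s(Lambda)} gives $\|\varphi\|_{L^4} \lesssim C(1) \leq C(c\Lambda)$ directly, so assume $c\lambda \geq 1$. The bulk $B_1$ is controlled by the inductive hypothesis packaged in Lemma~\ref{lem:L^4_bdd_by_C_s(Lambda)}, and the tail $B_2$ is related to $B_1$ through the bootstrap inequality Proposition~\ref{prop:basic_ineq_3}.

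For the bulk, define $\psi := B_1^{-1}\,\1_{\Delta \leq c\lambda}(\varphi^2)$. Since $\varphi^2 \in \H_{\R}$ and $\1_{\Delta \leq c\lambda}$ commutes with complex conjugation, $\psi \in \H_{\R}^K$; since $\H_{\Delta \leq c\Lambda}^K$ is finite-dimensional by discreteness of the spectrum, $\psi \in \H^{\fin}$. Crossing symmetry, Cauchy--Schwarz in $\H$, $L^4$-Cauchy--Schwarz (Proposition~\ref{prop:L^4_C-S}), and Lemma~\ref{lem:L^4_bdd_by_C_s(Lambda)} then give
\begin{align*}
B_1 = \langle \varphi^2, \psi\rangle_{\H} = \langle \varphi, \varphi\psi\rangle_{\H} \leq \|\varphi\psi\|_{\H} \leq \|\varphi\|_{L^4}\|\psi\|_{L^4} \lesssim \Lambda^{O(1)} C(c\Lambda)\,\|\varphi\|_{L^4}.
\end{align*}
For the tail, construct a two-variable function $R(\lambda,\mu) = \sum_{n \geq 0} a_n(\lambda)\, r_n(\lambda,\mu)$, with $a_n(\lambda) \geq 0$ and $r_n$ as in \eqref{eqn:r_n_def}, satisfying \emph{(a)} $R(\lambda,\mu) \geq 1$ for $\mu > c\lambda$, and \emph{(b)} $|R(\lambda,\mu)| \lesssim \lambda^{O(1)}$ on $\mu \in [0, c\lambda]$. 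Granting such an $R$, nonnegativity of the $a_n$ together with Proposition~\ref{prop:basic_ineq_3} yields $\langle R(\lambda,\Delta)(\varphi^2), \varphi^2\rangle_{\H} \leq 0$. Splitting at $\Delta = c\lambda$, applying (a) on the tail piece, and noting that $R(\lambda,\Delta)$ commutes with $\1_{\Delta \leq c\lambda}$ so the bulk piece collapses to $\langle R(\lambda,\Delta)u, u\rangle_{\H}$ with $u := \1_{\Delta \leq c\lambda}(\varphi^2)$, one obtains
\begin{align*}
B_2^2 \leq -\langle R(\lambda,\Delta) u, u\rangle_{\H} \leq \sup_{\mu \in [0, c\lambda]}|R(\lambda,\mu)|\,\|u\|_{\H}^2 \lesssim \lambda^{O(1)} B_1^2.
\end{align*}

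Combining the bulk and tail estimates, $\|\varphi\|_{L^4}^2 \leq B_1 + B_2 \lesssim \Lambda^{O(1)} B_1 \lesssim \Lambda^{O(1)} C(c\Lambda)\,\|\varphi\|_{L^4}$; dividing by $\|\varphi\|_{L^4}$ and taking the supremum over $\varphi$ gives the claim (finiteness of $C(c\Lambda)$, which justifies the division when $\|\varphi\|_{L^4}$ could \emph{a priori} be infinite, comes from Lemma~\ref{lem:C(Lambda)_finite}). The main obstacle is the construction of $R$: the individual $r_n$ are not sign-definite (indeed $r_n \equiv -p_n p_{n+1}$ on $\{\mu < 1\}$), so a naive nonnegative combination will not achieve (a) and (b) simultaneously. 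The plan for building $R$ will exploit that $\mu \mapsto p_n(\lambda,\mu)$, governed by the three-term recurrence \eqref{eqn:p_n_recurrence}, behaves in Chebyshev fashion---oscillatory in a $\mu$-window of size $O(\lambda + n^2)$ above $\mu = c\lambda$ and of eventually definite sign beyond---so weights $a_n(\lambda) \geq 0$ supported in a range $n \lesssim \lambda^{O(1)}$ can be chosen so that the manifestly nonnegative contribution $\sum a_n(\lambda)\,\mu^{-1} s_n(\lambda,\mu)^2$ dominates the signed contribution $\sum a_n(\lambda)\, p_n p_{n+1}(\lambda,\mu)$ by at least $1$ on the tail while both remain of polynomial size on the bulk. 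This construction is what the outline of Subsection~\ref{subsec:outline:bulk_tail_poly} refers to as Lemma~\ref{lem:R_def}.
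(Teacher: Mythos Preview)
Your argument is correct and matches the paper's approach: split $\varphi^2$ spectrally into bulk and tail, control the tail by the bulk using Proposition~\ref{prop:basic_ineq_3} and the function $R$ from Lemma~\ref{lem:R_def}, then control the bulk via \eqref{eqn:3_term_crossing}, $L^4$-Cauchy--Schwarz, and Lemma~\ref{lem:L^4_bdd_by_C_s(Lambda)}. Two minor cleanups: the trivial case is most cleanly handled by disposing of $\lambda \leq c\Lambda$ directly (your ``$c\lambda<1 \Rightarrow \varphi\in\H_{\Delta\leq 1}^K$'' is off by a factor of $c^{-1}$), and the $R$ of Lemma~\ref{lem:R_def} actually uses a \emph{fixed} number $N+1$ of terms with weights $a_n\lambda^{2(N-n)}$, not $\lambda^{O(1)}$ many.
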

	
	This proposition is the technical heart of the proof of Theorem~\ref{thm:L^4_quasi-Sobolev_poly_Weyl}. We defer the proof of the proposition to the end of this subsection.
	
	\begin{proof}[Proof of Theorem~\ref{thm:L^4_quasi-Sobolev_poly_Weyl} assuming Proposition~\ref{prop:C_s(Lambda)_inductive_bd}]
		Let $c \in (0,1)$ be an absolute constant as in Proposition~\ref{prop:C_s(Lambda)_inductive_bd}. Then by induction on $\Lambda$ (using Lemma~\ref{lem:C(Lambda)_finite} for the base case), for $\Lambda \geq 0$ we have
		\begin{align} \label{eqn:C_s(Lambda)_quasipolynomial_bd}
			C(\Lambda)
			\lesssim \exp(O(\log_+^2\Lambda)).
		\end{align}
		Now let $\alpha \in \H^K \cap \H^{\fin}$. We wish to prove \eqref{eqn:L^4_quasi-Sobolev_poly_Weyl}.
		Denote $\alpha_m = \1_{\Delta \in [m-1,m)}\alpha$, so that $\alpha = \sum_{m=1}^{\infty} \alpha_m$. Since $\alpha \in \H^{\fin}$, this is a finite sum. Thus by the triangle inequality, Lemma~\ref{lem:L^4_bdd_by_C_s(Lambda)}, and \eqref{eqn:C_s(Lambda)_quasipolynomial_bd},
		\begin{align*}
			\|\alpha\|_{L^4}
			\leq \sum_{m=1}^{\infty} \|\alpha_m\|_{L^4}
			\lesssim \sum_{m=1}^{\infty} m^{O(1)} C(m) \|\alpha_m\|_{\H}
			\lesssim \sum_{m=1}^{\infty} \exp(O(\log_+^2 m)) \|\alpha_m\|_{\H}.
		\end{align*}
		Applying Cauchy--Schwarz similarly to how it is applied in Lemma~\ref{lem:sum_i_bdd_by_f(Delta)},
		\begin{align*}
			\|\alpha\|_{L^4}
			\lesssim \Big(\sum_{m=1}^{\infty} \exp(O(\log_+^2 m)) \|\alpha_m\|_{\H}^2\Big)^{\frac{1}{2}}
			\lesssim \|\exp(O(\log_+^2\Delta))\alpha\|_{\H}.
		\end{align*}
		We have almost proved \eqref{eqn:L^4_quasi-Sobolev_poly_Weyl}, except we have $\lesssim$ instead of $\leq$. This is easily remedied by increasing the $O$-constant.
	\end{proof}
	
	Recall the functions $p_n,q_{k,n},r_n,s_n$ defined in Proposition~\ref{prop:p_n_def}, Proposition~\ref{prop:q_k,n_def}, \eqref{eqn:r_n_def}, and \eqref{eqn:s_n_def}, respectively.
	
	\begin{lem} \label{lem:R_def}
		There exist absolute constants $N \in \Z_{\geq 0}$, $a_0,\dots,a_N \geq 0$, and $c \in (0,1)$, such that if we let
		\begin{align*}
			R(\lambda,\mu)
			= \sum_{n=0}^{N} a_n \lambda^{2(N-n)} r_n(\lambda,\mu),
		\end{align*}
		then for $\mu \geq c\lambda \gg 1$, we have $R(\lambda,\mu) \geq 0$ and in fact
		\begin{align} \label{eqn:R_asymp}
			R(\lambda,\mu) \sim \mu^{2N+1}.
		\end{align}
	\end{lem}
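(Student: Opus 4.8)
The plan is to construct $R(\lambda,\mu)$ by taking a suitable nonnegative combination of the $r_n$, chosen so that the leading-order behavior in $\mu$ is a fixed power times a positive constant, while the lower-order terms are controlled on the range $\mu \geq c\lambda$. First I would record the structure of $p_n(\lambda,\mu)$ as a polynomial: from the recurrence \eqref{eqn:p_n_recurrence} and the initial conditions $p_0 = 1$, $p_1 = \lambda - \tfrac12\mu$, an easy induction shows that $p_n(\lambda,\mu)$ has total degree $n$ in $(\lambda,\mu)$, and that as a polynomial in $\mu$ it has degree exactly $n$ with leading coefficient $(-1)^n 2^{-n}$ (this is already noted in the excerpt for the special product $-p_np_{n+1}$). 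Consequently $s_n(\lambda,\mu) = p_{n+1} - (\lambda + n(n+1))p_n$ has $\mu$-degree $n+1$ with leading coefficient $(-1)^{n+1}2^{-(n+1)}$, and $\mu^{-1} s_n^2$ contributes a term of $\mu$-degree $2n+1$ with positive leading coefficient $4^{-(n+1)}$, while $-p_np_{n+1}$ also has $\mu$-degree $2n+1$ with leading coefficient $4^{-1}$ (positive). Thus for each fixed $n$, $r_n(\lambda,\mu)$ is, in the regime $\mu \geq 1$, a polynomial in $\mu$ (with $\lambda$-dependent coefficients) of degree $2n+1$ with positive leading coefficient, but the subleading coefficients can be negative and of size $O(\lambda^{O(1)})$ or $O(n^{O(1)})$.

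The key idea is the standard ``telescoping in $n$'' trick: one fixes $N$ and sets $R(\lambda,\mu) = \sum_{n=0}^N a_n \lambda^{2(N-n)} r_n(\lambda,\mu)$ so that every summand has $\mu$-degree $2N+1$ (the factor $\lambda^{2(N-n)}$ compensates the lower degree $2n+1$, contributing $2(N-n)$ to the total). With all $a_n \geq 0$, Proposition~\ref{prop:basic_ineq_3} gives $\langle R(\lambda,\Delta)(\varphi^2),\varphi^2\rangle_{\H} \leq 0$, which is what will be used downstream. To get the lower bound \eqref{eqn:R_asymp}, I would argue that for $\mu$ large relative to $\lambda$ (say $\mu \geq \mu_0(\lambda)$ for a polynomially-bounded threshold), the leading terms of the $r_n$ dominate and $R(\lambda,\mu) \gtrsim \mu^{2N+1}$; the content of the lemma is to push the threshold down to $\mu \geq c\lambda$ for an absolute constant $c$, by choosing $N$ and the weights $a_n$ appropriately. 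The mechanism is: the term $a_0 \lambda^{2N} r_0$ already gives a contribution $\sim a_0 \lambda^{2N}(\tfrac14\mu - p_0 p_1) = a_0\lambda^{2N}(\tfrac14\mu + \tfrac12\mu - \lambda) \sim a_0 \lambda^{2N}\mu$ for $\mu \geq c\lambda$ with $c$ large enough; but this alone only gives degree $2N+1$ with the ``wrong'' scaling, so one needs the higher $r_n$. I would instead analyze the full sum: substituting the recursion, the negative cross-terms between consecutive $p_n p_{n+1}$ and the square terms $\mu^{-1}s_n^2$ should be arranged to cancel or be dominated, leaving a manifestly positive expression $\sim \mu^{2N+1}$ when $\mu \gtrsim \lambda$. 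This is essentially the same computation sketched for $R_k$ in Section~\ref{sec:applications} (cf. \eqref{eqn:R_k_formula_clean}), adapted to the $p_n$ rather than the $B_{k,n}$.

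I expect the main obstacle to be verifying that the threshold can be taken to be $\mu \geq c\lambda$ with $c$ \emph{absolute} (independent of $N$), rather than $\mu \geq C_N \lambda$ or $\mu \geq c\lambda^{1+\varepsilon}$. Concretely, one has to show that there is a choice of $N$ (fixed, absolute) and coefficients $a_0,\dots,a_N \geq 0$ (also absolute) such that the polynomial identity
\begin{align*}
	R(\lambda,\mu) = \sum_{n=0}^N a_n \lambda^{2(N-n)} \Big(\1_{\mu\geq 1}\mu^{-1}s_n(\lambda,\mu)^2 - p_n(\lambda,\mu)p_{n+1}(\lambda,\mu)\Big)
\end{align*}
is bounded below by $c' \mu^{2N+1}$ whenever $\mu \geq c\lambda$ and $\lambda$ is large. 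The natural approach is to pass to the rescaled variable $t = \mu/\lambda$ and write $R(\lambda, t\lambda) = \lambda^{2N+1} \widetilde R(t) + (\text{lower order in }\lambda)$, where $\widetilde R(t)$ is an explicit polynomial in $t$ depending only on $N$ and the $a_n$; then the requirement becomes $\widetilde R(t) > 0$ for $t \geq c$, which is a finite, explicit, $\lambda$-independent condition one can arrange by choosing the $a_n$ to make $\widetilde R$ a positive multiple of $(t-c')^{?}$ or simply by choosing $N$ small (e.g. $N=1$ or $N=2$) and optimizing $a_0, a_1$ directly, exactly in the spirit of Lemma~\ref{lem:cancel_discrete_explicit}. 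Once $\widetilde R(t) \gtrsim t^{2N+1}$ for $t \geq c$ is established for the top-degree part, the lower-order-in-$\lambda$ corrections are absorbed by taking $\lambda \gg 1$, since they carry at most $\lambda^{2N}$ times a bounded polynomial in $t$. I would treat the indicator $\1_{\mu \geq 1}$ as harmless: for $\mu \geq c\lambda \gg 1$ it is identically $1$, so it plays no role in the regime of interest.
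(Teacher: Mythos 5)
Your approach is essentially the paper's: pass to the rescaled variable $t=\mu/\lambda$, reduce to a $\lambda$-independent positivity condition on a polynomial $\widetilde R(t)$ of degree $2N+1$ for $t\ge c$, and check it for a small explicit $N$; the paper does exactly this with $N=1$, $a_0=a_1=1$, $c=\tfrac12$, finding $\widetilde R(t)=\tfrac12 t^3-3t^2+\tfrac{11}{2}t-2$ with largest real root $\approx 0.48<\tfrac12$. One minor correction: by the recurrence, the leading coefficient of $p_n(\lambda,\mu)$ in $\mu$ is $(-1)^n\tfrac12$ for all $n\ge 1$ (not $(-1)^n 2^{-n}$) — the cross-terms $(\lambda+n(n-1))^2 p_{n-1}$ have lower $\mu$-degree, so the coefficient stabilizes rather than geometrically decaying — though this only changes numerical constants and not the structure of your argument.
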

	
	We will see that the $c$ in Proposition~\ref{prop:C_s(Lambda)_inductive_bd} can be taken to be the same $c$ as in Lemma~\ref{lem:R_def}.
	
	\begin{proof}[Proof of Lemma~\ref{lem:R_def}]
		We claim that $N=1$, $a_0 = a_1 = 1$, and $c = \frac{1}{2}$ work. Let $\mu \geq \frac{1}{2}\lambda \gg 1$. Using the recursive definition of $p_n$ from Proposition~\ref{prop:p_n_def}, we compute
		\begin{align*}
			p_0(\lambda,\mu) = 1,
			\qquad
			p_1(\lambda,\mu) = \lambda - \frac{1}{2}\mu,
			\qquad
			p_2(\lambda,\mu) = \lambda^2 - 2\lambda\mu + \frac{1}{2}\mu^2 + 2\lambda - \mu.
		\end{align*}
		Plugging these into the definition of $s_n$ and then into the definition of $r_n$, and using that $\mu \geq 1$,
		\begin{align*}
			r_0(\lambda,\mu)
			= -\lambda + \frac{3}{4}\mu
			\qquad \text{and} \qquad
			r_1(\lambda,\mu) = -\lambda^3 + \frac{19}{4}\lambda^2\mu - 3\lambda\mu^2 + \frac{1}{2}\mu^3 - 2\lambda^2 + 2\lambda\mu - \frac{1}{2}\mu^2.
		\end{align*}
		Thus
		\begin{align*}
			R(\lambda,\mu) = \lambda^2 r_0(\lambda,\mu) + r_1(\lambda,\mu) = -2\lambda^3 + \frac{11}{2}\lambda^2\mu - 3\lambda\mu^2 + \frac{1}{2}\mu^3 - 2\lambda^2 + 2\lambda\mu - \frac{1}{2}\mu^2.
		\end{align*}
		Writing $\mu = t\lambda$ with $t \geq \frac{1}{2}$,
		\begin{align*}
			R(\lambda,\mu)
			= \lambda^3\Big[\frac{1}{2}t^3 - \Big(3+\frac{1}{2\lambda}\Big)t^2 + \Big(\frac{11}{2} + \frac{2}{\lambda}\Big)t - \Big(2 + \frac{2}{\lambda}\Big)\Big]
			= \lambda^3\Big[\frac{1}{2}t^3 - 3t^2 + \frac{11}{2}t - 2 + O(\lambda^{-1}t^2)\Big].
		\end{align*}
		The maximum (in fact the only) real root of the polynomial
		\begin{align*}
			\frac{1}{2}t^3 - 3t^2 + \frac{11}{2}t - 2
		\end{align*}
		is $\approx 0.48$, which is less than $c=\frac{1}{2}$. Since $\lambda \gg 1$ and $t \geq \frac{1}{2}$, it follows that $R(\lambda,\mu) \gtrsim \lambda^3t^3 = \mu^3$.
	\end{proof}
	
	\begin{rem}[Lemma~\ref{lem:R_def} is robust]
		The smaller $c$ is, the harder it is to find $N,a_0,\dots,a_N$ satisfying the property desired in Lemma~\ref{lem:R_def}.
		Nevertheless, crude numerical experiments show that one can take $c$ much smaller than $1$ and still find satisfactory $N,a_0,\dots,a_N$.
		For example, one can check that $c = \frac{1}{37}$ works, with
		\begin{align*}
			N=9,
			\qquad
			a_0=a_2=a_3=a_4=a_5=a_6=a_7=a_9=2,
			\qquad
			a_1 = 1,
			\qquad
			a_8 = 0.
		\end{align*}
	\end{rem}
	
	
	\begin{cor} \label{cor:mu^s_bdd_by_R}
		Let $N,a_0,\dots,a_N,c$ be absolute constants as in Lemma~\ref{lem:R_def}. Define $R(\lambda,\mu)$ as in Lemma~\ref{lem:R_def}. Then for $\lambda \gg 1$ and $\mu \geq 0$,
		\begin{align} \label{eqn:mu^s_bdd_by_R}
			(\mu+1)^{2N+1}
			\lesssim \lambda^{O(1)}\1_{\mu \leq c\lambda} + R(\lambda,\mu).
		\end{align}
		In particular, for $\lambda \gg 1$ and $\mu \geq 0$,
		\begin{align} \label{eqn:1_bdd_by_R}
			1 \lesssim \lambda^{O(1)}\1_{\mu \leq c\lambda} + R(\lambda,\mu).
		\end{align}
	\end{cor}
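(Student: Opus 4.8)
The plan is to split the argument according to whether $\mu$ is small or large compared to $c\lambda$, where $c$ (together with $N, a_0,\dots,a_N$ and the function $R(\lambda,\mu)$) is exactly as produced by Lemma~\ref{lem:R_def}. All implicit and $O$-constants below may depend on these, which is harmless since they are absolute. First I would record the easy reduction at the end: \eqref{eqn:1_bdd_by_R} follows from \eqref{eqn:mu^s_bdd_by_R} simply because $1 \le (\mu+1)^{2N+1}$ for every $\mu \ge 0$. So the whole content is \eqref{eqn:mu^s_bdd_by_R}.

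In the case $\mu > c\lambda$ we have $\1_{\mu \le c\lambda} = 0$, so it suffices to show $(\mu+1)^{2N+1} \lesssim R(\lambda,\mu)$. Since $\lambda \gg 1$ and $c$ is a fixed positive constant, we may take $\lambda$ large enough that $c\lambda \gg 1$; then $\mu \ge c\lambda \gg 1$, so Lemma~\ref{lem:R_def} applies and gives $R(\lambda,\mu) \sim \mu^{2N+1}$, and in particular $R(\lambda,\mu) > 0$, which secures the nonnegativity implicit in the symbol $\lesssim$. Moreover $\mu \gg 1$ forces $(\mu+1)^{2N+1} \lesssim \mu^{2N+1} \sim R(\lambda,\mu)$, as required.

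In the case $\mu \le c\lambda$ we have $\1_{\mu \le c\lambda} = 1$, and I would bound both $(\mu+1)^{2N+1}$ and $|R(\lambda,\mu)|$ by a power of $\lambda$. For the first, $\mu \le c\lambda \le \lambda$ gives $(\mu+1)^{2N+1} \lesssim \lambda^{2N+1}$. For the second, recall from \eqref{eqn:r_n_def} that $r_n(\lambda,\mu) = \1_{\mu\ge1}\mu^{-1}s_n(\lambda,\mu)^2 - p_np_{n+1}(\lambda,\mu)$ with $s_n$ given by \eqref{eqn:s_n_def}; the trivial bound of Lemma~\ref{lem:p_n_triv_bd} together with $\mu \le \lambda$ yields $|p_n(\lambda,\mu)| \lesssim \lambda^{n}$, hence $|s_n(\lambda,\mu)| \lesssim \lambda^{n+1}$, and since $\mu^{-1} \le 1$ on the support of $\1_{\mu\ge1}$, each $|r_n(\lambda,\mu)| \lesssim \lambda^{2n+2}$, so $|R(\lambda,\mu)| \le \sum_{n=0}^{N} a_n \lambda^{2(N-n)}|r_n(\lambda,\mu)| \lesssim \lambda^{2N+2}$. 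Choosing the (flexible) exponent and constant in the summand $\lambda^{O(1)}\1_{\mu \le c\lambda}$ large enough relative to the exponent $2N+2$ controlling $|R(\lambda,\mu)|$, the right-hand side $\lambda^{O(1)} + R(\lambda,\mu)$ is then genuinely nonnegative and dominates $(\mu+1)^{2N+1}$, finishing this case and hence \eqref{eqn:mu^s_bdd_by_R}.

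The only mildly delicate point is precisely this bookkeeping in the second case: one must be careful that the $\lambda^{O(1)}$ appearing in the statement is chosen \emph{after}, and made to dominate, the polynomial-in-$\lambda$ bound for $|R(\lambda,\mu)|$, so that a possibly negative $R(\lambda,\mu)$ does not spoil either the sign condition or the domination. Everything else is a routine invocation of Lemmas~\ref{lem:R_def} and~\ref{lem:p_n_triv_bd}.
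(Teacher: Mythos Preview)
Your proof is correct and follows essentially the same approach as the paper's: split into the cases $\mu \le c\lambda$ and $\mu > c\lambda$, handle the latter via the asymptotic $R(\lambda,\mu) \sim \mu^{2N+1}$ from Lemma~\ref{lem:R_def}, and handle the former by the trivial polynomial bounds $(\mu+1)^{2N+1} \lesssim \lambda^{O(1)}$ and $|R(\lambda,\mu)| \lesssim \lambda^{O(1)}$. You supply more detail than the paper does (unpacking why $|R|$ is polynomially bounded via Lemma~\ref{lem:p_n_triv_bd}, and making the sign bookkeeping explicit), but the structure is identical.
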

	
	In this section, we will only use \eqref{eqn:1_bdd_by_R}, but in the next section, we will need \eqref{eqn:mu^s_bdd_by_R}.
	
	\begin{proof}
		For $\mu \leq c\lambda$, we have the trivial bounds
		\begin{align*}
			(\mu+1)^{2N+1} \lesssim \lambda^{O(1)}
			\qquad \text{and} \qquad
			|R(\lambda,\mu)|
			\lesssim \lambda^{O(1)}.
		\end{align*}
		Thus \eqref{eqn:mu^s_bdd_by_R} holds when $\mu \leq c\lambda$. When $\mu > c\lambda$, \eqref{eqn:mu^s_bdd_by_R} follows from \eqref{eqn:R_asymp}. Lastly, \eqref{eqn:mu^s_bdd_by_R} obviously implies \eqref{eqn:1_bdd_by_R}.
	\end{proof}
	
	We are now ready to prove Proposition~\ref{prop:C_s(Lambda)_inductive_bd}.
	
	\begin{proof}[Proof of Proposition~\ref{prop:C_s(Lambda)_inductive_bd}]
		Fix absolute constants $N,a_0,\dots,a_N,c$ as in Lemma~\ref{lem:R_def}, and define $R(\lambda,\mu)$ as in Lemma~\ref{lem:R_def}. For example, as is shown in the proof of Lemma~\ref{lem:R_def}, we can take $N = 1$, $a_0 = a_1 = 1$, and $c = \frac{1}{2}$. Let $\Lambda \gg 1$ be arbitrary. Then we wish to show \eqref{eqn:C_s(Lambda)_inductive_bd}.
		
		By the definition of $C(\Lambda)$ and the fact that $C(\Lambda)$ is finite (Lemma~\ref{lem:C(Lambda)_finite}), there exists a nonzero automorphic vector $\varphi \in \H_{\R}^K \cap \H^{\fin}$ with Casimir eigenvalue $\lambda \leq \Lambda$, such that
		\begin{align*}
			\|\varphi\|_{L^4}
			\gtrsim C(\Lambda) \|\varphi\|_{\H}.
		\end{align*}
		We may assume $\lambda > c\Lambda$, or else \eqref{eqn:C_s(Lambda)_inductive_bd} follows immediately.
		Thus in particular $\lambda \gg 1$.
		By normalizing $\varphi$, we may additionally assume $\|\varphi\|_{\H} = 1$ and so $\|\varphi\|_{L^4} \gtrsim C(\Lambda)$.
		Then
		\begin{align*}
			C(\Lambda)^4
			\lesssim \|\varphi\|_{L^4}^4
			= \|\varphi^2\|_{\H}^2.
		\end{align*}
		Since $1 \ll \lambda \leq \Lambda$, we deduce from \eqref{eqn:1_bdd_by_R} in Corollary~\ref{cor:mu^s_bdd_by_R} that
		\begin{align*} 
			C(\Lambda)^4
			\lesssim \Lambda^{O(1)} \langle \1_{\Delta \leq c\Lambda}(\varphi^2), \varphi^2 \rangle_{\H}
			+ \langle R(\lambda,\Delta)(\varphi^2), \varphi^2 \rangle_{\H}.
		\end{align*}
		By Proposition~\ref{prop:basic_ineq_3} and the nonnegativity of the coefficients $a_n$ in the definition of $R$ in Lemma~\ref{lem:R_def},
		\begin{align*}
			\langle R(\lambda,\Delta)(\varphi^2), \varphi^2 \rangle_{\H} \leq 0,
		\end{align*}
		so
		\begin{align} \label{eqn:C_s^4_to_bd}
			C(\Lambda)^4
			\lesssim \Lambda^{O(1)} \langle \1_{\Delta \leq c\Lambda}(\varphi^2), \varphi^2 \rangle_{\H}.
		\end{align}
		By \eqref{eqn:3_term_crossing} and the fact that $\varphi \in \H_{\R}$, Cauchy--Schwarz, the normalization $\|\varphi\|_{\H} = 1$, and $L^4$-Cauchy--Schwarz (Proposition~\ref{prop:L^4_C-S}),
		\begin{align*}
			\langle \1_{\Delta \leq c\Lambda}(\varphi^2), \varphi^2 \rangle_{\H}
			= \langle \varphi \1_{\Delta \leq c\Lambda}(\varphi^2), \varphi \rangle_{\H}
			\leq \|\varphi \1_{\Delta \leq c\Lambda}(\varphi^2)\|_{\H}
			\leq \|\1_{\Delta \leq c\Lambda}(\varphi^2)\|_{L^4} \|\varphi\|_{L^4}.
		\end{align*}
		Using Lemma~\ref{lem:L^4_bdd_by_C_s(Lambda)} to control $\|\1_{\Delta \leq c\Lambda}(\varphi^2)\|_{L^4}$,
		\begin{align*}
			\langle \1_{\Delta \leq c\Lambda}(\varphi^2), \varphi^2 \rangle_{\H}
			\lesssim \Lambda^{O(1)} C(c\Lambda) \|\1_{\Delta \leq c\Lambda}(\varphi^2)\|_{\H} \|\varphi\|_{L^4}
			\leq \Lambda^{O(1)} C(c\Lambda) \|\varphi\|_{L^4}^3.
		\end{align*}
		Using the definition of $C(\Lambda)$ and the normalization $\|\varphi\|_{\H} = 1$ to estimate $\|\varphi\|_{L^4}$,
		\begin{align*} 
			\langle \1_{\Delta \leq c\Lambda}(\varphi^2), \varphi^2 \rangle_{\H}
			\lesssim \Lambda^{O(1)} C(c\Lambda) C(\Lambda)^3.
		\end{align*}
		Inserting this into \eqref{eqn:C_s^4_to_bd},
		\begin{align*}
			C(\Lambda)^4
			\lesssim \Lambda^{O(1)} C(c\Lambda) C(\Lambda)^3.
		\end{align*}
		Since $C(\Lambda)$ is finite by Lemma~\ref{lem:C(Lambda)_finite}, we can cancel factors of $C(\Lambda)^3$ on both sides to obtain the desired bound \eqref{eqn:C_s(Lambda)_inductive_bd}.
	\end{proof}
	
	The proof of Theorem~\ref{thm:L^4_quasi-Sobolev_poly_Weyl} is finally complete.
	
	\subsection{First tail bound} \label{subsec:bulk:first_tail_bd}
	
	In this subsection we prove Theorem~\ref{thm:exp_decay_form}. We recall the statement below, but we need some preliminary results first.
	
	\begin{lem} \label{lem:q_k,n_sign}
		Let $k \geq 1$, $n \in \Z_{\geq 0}$, and $\mu \gg k^2+n^2$. Then
		\begin{align} \label{eqn:q_k,n_sign}
			(-1)^n q_{k,n}(\mu)
			\geq (0.9\mu)^n.
		\end{align}
	\end{lem}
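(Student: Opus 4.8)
The plan is to prove the estimate \eqref{eqn:q_k,n_sign} by induction on $n$, using the recurrence \eqref{eqn:q_k,n_recurrence} for $q_{k,n}$ and tracking signs carefully. First I would verify the base cases $n=0$ and $n=1$: for $n=0$ we have $q_{k,0}(\mu) = 1 = (0.9\mu)^0$ (recalling the convention $0^0=1$ if $\mu$ could be small, but here $\mu \gg k^2+n^2 \geq 1$), and for $n=1$ we have $-q_{k,1}(\mu) = \mu - 2k \geq 0.9\mu$ provided $\mu \geq 20k$, which holds when $\mu \gg k^2 + 1$. So the base cases are immediate once the implicit constant in $\mu \gg k^2+n^2$ is taken large enough.

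For the inductive step, suppose \eqref{eqn:q_k,n_sign} holds for $n-1$ and $n$ (with $\mu \gg k^2+n^2$, which is stronger than $\mu \gg k^2+(n-1)^2$); I want to deduce it for $n+1$. Write $a_n = 2(n+k)^2 - 2k(k-1)$ and $b_n = ((n+k)(n+k-1)-k(k-1))^2$, so that the recurrence reads $q_{k,n+1}(\mu) = (a_n - \mu)q_{k,n}(\mu) - b_n q_{k,n-1}(\mu)$. Multiplying by $(-1)^{n+1}$ gives
\begin{align*}
	(-1)^{n+1} q_{k,n+1}(\mu)
	= (\mu - a_n)\,(-1)^n q_{k,n}(\mu) - b_n\,(-1)^{n-1} q_{k,n-1}(\mu).
\end{align*}
The first term is at least $(\mu - a_n)(0.9\mu)^n$ by the inductive hypothesis, and the second term is nonpositive, so we get
\begin{align*}
	(-1)^{n+1} q_{k,n+1}(\mu)
	\geq (\mu - a_n)(0.9\mu)^n - b_n\,(-1)^{n-1} q_{k,n-1}(\mu).
\end{align*}
The point is that $\mu \gg k^2 + n^2$ forces $a_n = O(k^2+n^2) \ll \mu$, so $\mu - a_n \geq 0.99\mu$, say; and we need to control the subtracted term. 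Here I would not use the inductive \emph{lower} bound on $(-1)^{n-1}q_{k,n-1}$ but rather an upper bound, which is available from the trivial bound Lemma~\ref{lem:q_k,n_triv_bd}: $|q_{k,n-1}(\mu)| \leq O(k^2+\mu+n^2)^{n-1} = O(\mu)^{n-1}$ since $\mu$ dominates. Combined with $b_n = O((k^2+n^2+\mu)^2)$... wait, $b_n$ only involves $k,n$, so $b_n = O((n+k)^4) = O((k^2+n^2)^2) \ll \mu^2$ when $\mu \gg k^2+n^2$. Thus the subtracted term is $O(\mu^2 \cdot \mu^{n-1}) = O(\mu^{n+1})$ with an \emph{absolute} implied constant times a small factor coming from $b_n/\mu^2 \to 0$; more precisely it is $\leq C (k^2+n^2)^2 \mu^{n-1} \leq C (k^2+n^2)^2 \mu^{-2} \cdot \mu^{n+1}$, and since $\mu \gg k^2+n^2$ we can make $(k^2+n^2)^2\mu^{-2}$ as small as we like, in particular smaller than $\tfrac{1}{100}\cdot 0.9^{n+1}/C$ once the implicit constant in $\mu \gg k^2+n^2$ is large enough. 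Hence
\begin{align*}
	(-1)^{n+1} q_{k,n+1}(\mu)
	\geq 0.99\,\mu\,(0.9\mu)^n - \tfrac{1}{100}(0.9\mu)^{n+1}
	\geq (0.9\mu)^{n+1},
\end{align*}
completing the induction.

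The main obstacle I anticipate is getting the constants to work out uniformly in $n$: one must ensure that the single condition ``$\mu \gg k^2+n^2$'' (with one fixed absolute implied constant, chosen once and for all before the induction starts) simultaneously handles the base cases and makes both error terms in the inductive step small enough, and in particular that the factor $0.99$ gained from $\mu - a_n \geq 0.99\mu$ together with the loss $\tfrac{1}{100}$ from the $b_n$ term still leaves the clean bound $(0.9\mu)^{n+1}$ (note $0.99 \cdot 0.9^n \cdot \mu^{n+1} - 0.01 \cdot 0.9^{n+1}\mu^{n+1} = (0.99 - 0.009)\cdot 0.9^n \mu^{n+1} > 0.9^{n+1}\mu^{n+1}$ since $0.981 > 0.9$, so there is comfortable room). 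The only subtlety is that the implied constant from Lemma~\ref{lem:q_k,n_triv_bd} and the constant $0.9$ are both absolute, so the whole argument closes with a single choice of threshold; I would state this cleanly by first fixing the constant $C$ from Lemma~\ref{lem:q_k,n_triv_bd} and then choosing the meaning of $\gg$ accordingly.
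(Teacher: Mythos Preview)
Your overall setup is correct (the recurrence, the signs, the base cases), but the inductive step does not close uniformly in $n$. The problem is your appeal to Lemma~\ref{lem:q_k,n_triv_bd}: that bound is $|q_{k,n-1}(\mu)| \leq O(k^2+\mu+n^2)^{n-1}$, i.e., a constant $C_0>1$ raised to the $(n-1)$th power times $\mu^{n-1}$, not a single constant times $\mu^{n-1}$. When you write ``more precisely it is $\leq C(k^2+n^2)^2\mu^{n-1}$'' you have silently replaced $(C_0\mu)^{n-1}$ by $C\mu^{n-1}$; restoring the correct factor, the subtracted term is of size $(k^2+n^2)^2 C_0^{n-1}\mu^{n-1}$, and you need this to be $\leq \tfrac{1}{100}(0.9\mu)^{n+1}$. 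Since $C_0 > 0.9$, the ratio $C_0^{n-1}/0.9^{n+1}$ grows exponentially in $n$, so no fixed choice of the threshold in ``$\mu \gg k^2+n^2$'' can make this work for all $n$. (Even with the ideal bound $|q_{k,n-1}(\mu)| \leq \mu^{n-1}$ you would still need $(k^2+n^2)^2/\mu^2 \lesssim 0.9^{n+1}$, which again forces $\mu$ to be exponentially large in $n$.)

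The paper avoids this by proving a different inductive statement: instead of the absolute bound $(-1)^n q_{k,n}(\mu) \geq (0.9\mu)^n$, it establishes the \emph{ratio} inequality
\[
(-1)^n q_{k,n}(\mu) \;\geq\; 0.9\mu\,(-1)^{n-1} q_{k,n-1}(\mu) \;\geq\; 0,
\]
and then iterates. The point is that this inductive hypothesis already contains the upper bound you need: it gives $(-1)^{n-2}q_{k,n-2}(\mu) \leq (0.9\mu)^{-1}(-1)^{n-1}q_{k,n-1}(\mu)$, so in the recurrence
\[
(-1)^n q_{k,n}(\mu) = (\mu - a_{n-1})(-1)^{n-1}q_{k,n-1}(\mu) - b_{n-1}(-1)^{n-2}q_{k,n-2}(\mu)
\]
the subtracted term is at most $\tfrac{b_{n-1}}{0.9\mu}(-1)^{n-1}q_{k,n-1}(\mu)$, and one only needs $\mu - a_{n-1} - \tfrac{b_{n-1}}{0.9\mu} \geq 0.9\mu$, i.e., $0.1\mu \geq a_{n-1} + b_{n-1}/(0.9\mu)$. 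Since $a_{n-1} = O(k^2+n^2)$ and $b_{n-1} = O((k^2+n^2)^2)$, this holds under a single condition $\mu \geq A(k^2+n^2)$ with $A$ absolute. Keeping consecutive terms coupled in this way is exactly what prevents the exponential loss that your decoupled approach incurs.
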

	
	\begin{proof}
		When $n=0$, we have \eqref{eqn:q_k,n_sign} because $q_{k,0} = 1$. For $n \geq 1$, using the definition of $q_{k,n}$ in Proposition~\ref{prop:q_k,n_def} and the fact that $\mu \gg k^2+n^2$, induction on $n$ gives
		\begin{align*}
			(-1)^n q_{k,n}(\mu)
			\geq 0.9 \mu (-1)^{n-1} q_{k,n-1}(\mu)
			\geq 0.
		\end{align*}
		Iterating this yields \eqref{eqn:q_k,n_sign}.
	\end{proof}
	
	For the remainder of this subsection, fix a positive constant $A \lesssim 1$ such that \eqref{eqn:q_k,n_sign} holds for all $k \geq 1$, $n \in \Z_{\geq 0}$, and $\mu \geq A(k^2+n^2)$.
	
	For $k \geq 1$, $n \in \Z_{\geq 0}$, and $M \geq 0$, let $\|\cdot\|_{k,n,M}$ be the seminorm on $\H^K$ given by
	\begin{align*}
		\|v\|_{k,n,M}
		= \|\1_{\Delta \geq M}|q_{k,n}q_{k,n+1}(\Delta)|^{\frac{1}{2}} v\|_{\H}.
	\end{align*}
	This is evidently decreasing in $M$.
	By Proposition~\ref{prop:smooth_iff_spectral_decay}, it is finite-valued on $\H^K \cap \H^{\infty}$.
	
	\begin{prop} \label{prop:|f|^2_k,n,M_norm}
		Let $f \in \H^{\fin}$ be a lowest weight vector of weight $k \geq 1$. Let $n \in \Z_{\geq 0}$ and $M \geq A(k^2+(n+1)^2)$. Then
		\begin{align*}
			\||f|^2\|_{k,n,M}
			\leq O(k^2+n^2)^{n+\frac{1}{2}} \|f\|_{L^4}^2.
		\end{align*}
	\end{prop}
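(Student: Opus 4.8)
The plan is to bound $\||f|^2\|_{k,n,M}^2$ by using the identity $|E^nf|^2 = q_{k,n}(\Delta)(|f|^2)$ from Proposition~\ref{prop:q_k,n_def} together with the sign/size information from Lemma~\ref{lem:q_k,n_sign}. Concretely, write
\begin{align*}
	\||f|^2\|_{k,n,M}^2
	= \langle \1_{\Delta \geq M} |q_{k,n}q_{k,n+1}(\Delta)|(|f|^2), |f|^2 \rangle_{\H}.
\end{align*}
On the spectral range $\Delta \geq M \geq A(k^2+(n+1)^2)$, Lemma~\ref{lem:q_k,n_sign} tells us that $q_{k,n}(\mu)$ and $q_{k,n+1}(\mu)$ have signs $(-1)^n$ and $(-1)^{n+1}$ respectively, so their product $q_{k,n}q_{k,n+1}(\mu)$ is negative there; hence $|q_{k,n}q_{k,n+1}(\mu)| = -q_{k,n}q_{k,n+1}(\mu)$ for $\mu \geq M$. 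Therefore
\begin{align*}
	\||f|^2\|_{k,n,M}^2
	= -\langle \1_{\Delta \geq M} q_{k,n}q_{k,n+1}(\Delta)(|f|^2), |f|^2 \rangle_{\H}.
\end{align*}

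Next I would split $\1 = \1_{\Delta \geq M} + \1_{\Delta < M}$ inside Proposition~\ref{prop:basic_ineq_2}, which says $-\langle q_{k,n}q_{k,n+1}(\Delta)(|f|^2), |f|^2 \rangle_{\H} \leq 0$. Moving the low-frequency part to the other side gives the ``tail bounded by bulk'' estimate
\begin{align*}
	-\langle \1_{\Delta \geq M} q_{k,n}q_{k,n+1}(\Delta)(|f|^2), |f|^2 \rangle_{\H}
	\leq \langle \1_{\Delta < M} q_{k,n}q_{k,n+1}(\Delta)(|f|^2), |f|^2 \rangle_{\H}.
\end{align*}
The right-hand side is controlled by the trivial bound for $q_{k,n}$ (Lemma~\ref{lem:q_k,n_triv_bd}): for $\mu < M$ one has $|q_{k,n}(\mu) q_{k,n+1}(\mu)| \leq O(k^2 + M + n^2)^{2n+1}$, and since $M \sim k^2 + n^2$ (taking $M$ close to the threshold $A(k^2+(n+1)^2)$), this is $\leq O(k^2+n^2)^{2n+1}$. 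Pulling this scalar bound out of the spectral projection yields
\begin{align*}
	\||f|^2\|_{k,n,M}^2
	\lesssim O(k^2+n^2)^{2n+1} \langle \1_{\Delta < M}(|f|^2), |f|^2 \rangle_{\H}
	\leq O(k^2+n^2)^{2n+1} \||f|^2\|_{\H}^2,
\end{align*}
where the last step just drops the projection (it has norm $\leq 1$). Finally, by crossing symmetry in the form \eqref{eqn:|alpha|^2_H_norm}, $\||f|^2\|_{\H} = \|f\|_{L^4}^2$, so taking square roots gives $\||f|^2\|_{k,n,M} \lesssim O(k^2+n^2)^{n+\frac{1}{2}} \|f\|_{L^4}^2$, as required. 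One subtlety: the statement requires $M \geq A(k^2+(n+1)^2)$ but allows $M$ to be larger; since $\|\cdot\|_{k,n,M}$ is decreasing in $M$, it suffices to prove the bound at the smallest allowed value $M = A(k^2+(n+1)^2)$, and for that value the estimate $|q_{k,n}q_{k,n+1}(\mu)| \lesssim O(k^2+n^2)^{2n+1}$ for $\mu < M$ is exactly what Lemma~\ref{lem:q_k,n_triv_bd} gives after absorbing $M$ into $k^2+n^2$.

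The main obstacle — really the only place requiring care — is making sure the sign analysis is applied on exactly the right range. Lemma~\ref{lem:q_k,n_sign} gives the sign of $q_{k,n+1}$ only for $\mu \gg k^2 + (n+1)^2$, which is why the hypothesis of the proposition is phrased with $n+1$ rather than $n$; one must track that the threshold for $q_{k,n+1}$ (not just $q_{k,n}$) is below $M$, which holds precisely because $M \geq A(k^2+(n+1)^2)$ and $A$ was chosen to work for all indices simultaneously. Everything else is the routine ``split into bulk and tail, bound the tail by the bulk via the bootstrap inequality, bound the bulk trivially'' mechanism that recurs throughout Section~\ref{sec:bulk_tail_poly}. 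No use of the polynomial Weyl law is needed here, consistent with the remark that the proof of Theorem~\ref{thm:exp_decay_form} is unconditional.
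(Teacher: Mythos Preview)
Your proof is correct and follows essentially the same approach as the paper's: reduce to $M = A(k^2+(n+1)^2)$ by monotonicity, use Lemma~\ref{lem:q_k,n_sign} to remove the absolute value on $\{\Delta \geq M\}$, split $-\1_{\Delta \geq M} = \1_{\Delta < M} - \1$ and discard the full-line term via Proposition~\ref{prop:basic_ineq_2}, then bound the bulk by Lemma~\ref{lem:q_k,n_triv_bd} and finish with $\||f|^2\|_{\H} = \|f\|_{L^4}^2$. Your remarks on the $(n+1)$-threshold and the absence of any Weyl-law assumption are also in line with the paper.
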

	
	\begin{proof}
		Since $\|\cdot\|_{k,n,M}$ decreases with $M$, we may assume $M = A(k^2+(n+1)^2)$.
		By the definition of $\|\cdot\|_{k,n,M}$ and self-adjointness of $\Delta$,
		\begin{align*}
			\||f|^2\|_{k,n,M}^2
			= \langle \1_{\Delta \geq M} |q_{k,n}q_{k,n+1}(\Delta)|(|f|^2), |f|^2 \rangle_{\H}.
		\end{align*}
		In view of the definitions of $A$ and $M$, Lemma~\ref{lem:q_k,n_sign} implies that
		\begin{align*}
			-q_{k,n}q_{k,n+1}(\mu) \geq 0
		\end{align*}
		for $\mu \geq M$, so the above becomes
		\begin{align*}
			\||f|^2\|_{k,n,M}^2
			= -\langle \1_{\Delta \geq M} q_{k,n}q_{k,n+1}(\Delta)(|f|^2), |f|^2 \rangle_{\H}.
		\end{align*}
		Writing $-\1_{\Delta \geq M} = \1_{\Delta<M} - \1$,
		\begin{align*}
			\||f|^2\|_{k,n,M}^2
			= \langle \1_{\Delta < M} q_{k,n}q_{k,n+1}(\Delta)(|f|^2), |f|^2 \rangle_{\H}
			- \langle q_{k,n}q_{k,n+1}(\Delta)(|f|^2), |f|^2 \rangle_{\H}.
		\end{align*}
		By Proposition~\ref{prop:basic_ineq_2}, the second term on the right (including the minus sign) is non-positive, so
		\begin{align*}
			\||f|^2\|_{k,n,M}^2
			\leq \langle \1_{\Delta < M} q_{k,n}q_{k,n+1}(\Delta)(|f|^2), |f|^2 \rangle_{\H}.
		\end{align*}
		Estimating this by the trivial bound Lemma~\ref{lem:q_k,n_triv_bd} and using that $M \lesssim k^2+n^2$,
		\begin{align*}
			\||f|^2\|_{k,n,M}^2
			\leq O(k^2+n^2)^{2n+1} \||f|^2\|_{\H}^2.
		\end{align*}
		Since $\||f|^2\|_{\H}^2 = \|f\|_{L^4}^4$ by \eqref{eqn:|alpha|^2_H_norm}, taking square roots gives the desired bound.
	\end{proof}
	
	\begin{prop} \label{prop:exp_decay_form_old}
		There is a positive constant $c \gtrsim 1$, such that whenever $f \in \H^{\fin}$ is a lowest weight vector of weight $k \geq 1$, and whenever $M \gg k^2$,
		\begin{align*} 
			\|\1_{\Delta \geq M}(|f|^2)\|_{\H}
			\leq \exp(-c\sqrt{M}) \|f\|_{L^4}^2.
		\end{align*}
	\end{prop}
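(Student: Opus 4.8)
The plan is to feed the seminorm estimate of Proposition~\ref{prop:|f|^2_k,n,M_norm} into the lower bound on $|q_{k,n}q_{k,n+1}|$ coming from Lemma~\ref{lem:q_k,n_sign}, and then to optimize over the free index $n$, choosing $n \approx c_0\sqrt{M}$.

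First I would fix a lowest weight vector $f$ of weight $k\ge 1$ and an $M\gg k^2$ (the implicit constant chosen large at the very end), and let $n\in\Z_{\geq 0}$ be a parameter. Assume $M\geq A(k^2+(n+1)^2)$. Then, by the choice of $A$ together with Lemma~\ref{lem:q_k,n_sign} applied at indices $n$ and $n+1$, one has $|q_{k,n}q_{k,n+1}(\mu)|\geq (0.9\mu)^{2n+1}$ for all $\mu\geq M$, hence $|q_{k,n}q_{k,n+1}(\mu)|^{1/2}\geq(0.9M)^{n+\frac12}$ for $\mu\geq M$. By monotonicity of the functional calculus norms (Subsection~\ref{subsec:sl_2:func_calc}), this gives $\|\1_{\Delta\geq M}(|f|^2)\|_{\H}\leq (0.9M)^{-(n+\frac12)}\,\||f|^2\|_{k,n,M}$. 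Combining with Proposition~\ref{prop:|f|^2_k,n,M_norm},
\[
\|\1_{\Delta\geq M}(|f|^2)\|_{\H}
\leq \Big(\frac{O(k^2+n^2)}{0.9M}\Big)^{\,n+\frac12}\|f\|_{L^4}^2,
\qquad M\geq A(k^2+(n+1)^2).
\]

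Next I would choose $n+1=\lfloor c_0\sqrt{M}\rfloor$ for a small absolute constant $c_0$, and impose $M\geq K_0k^2$ for a large absolute constant $K_0$. Here $c_0$ and $K_0$ are selected, depending only on $A$ and the $O$-constant above, so that (i) $M\geq A(k^2+(n+1)^2)$, (ii) $k^2+n^2\leq (K_0^{-1}+c_0^2)M$ is small enough that the base $\tfrac{O(k^2+n^2)}{0.9M}$ is $\leq\tfrac12$, and (iii) $n\geq 0$. With these choices the displayed bound becomes
\[
\|\1_{\Delta\geq M}(|f|^2)\|_{\H}
\leq 2^{-(n+\frac12)}\|f\|_{L^4}^2
\leq 4\cdot 2^{-c_0\sqrt{M}}\|f\|_{L^4}^2 .
\]
Finally, after enlarging the implicit constant in $M\gg k^2$ so that $M$ exceeds a fixed absolute threshold, the prefactor $4$ is absorbed and $4\cdot 2^{-c_0\sqrt{M}}\leq\exp(-c\sqrt{M})$ with $c=\tfrac{c_0\log 2}{2}\gtrsim 1$ an absolute constant, which is the assertion.

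There is no deep obstacle here: the substance lies upstream, in Proposition~\ref{prop:|f|^2_k,n,M_norm} (which rests on the crossing-symmetry inequality Proposition~\ref{prop:basic_ineq_2} and the identity Proposition~\ref{prop:q_k,n_def}) and in the sign/size information of Lemma~\ref{lem:q_k,n_sign}. The only point requiring attention is the bookkeeping in the choice of $n$: one must verify that $n+1\approx c_0\sqrt{M}$ is simultaneously small enough to respect the constraint $M\geq A(k^2+(n+1)^2)$ (this is exactly where $M\gg k^2$ enters) and large enough that the geometric factor $(\mathrm{base})^{n+\frac12}$ yields genuine decay of order $e^{-c\sqrt{M}}$; both are arranged by taking $c_0$ small and the implicit constant in $M\gg k^2$ large, all depending only on absolute constants.
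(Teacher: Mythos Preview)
Your proposal is correct and follows essentially the same approach as the paper's proof: both combine the lower bound $|q_{k,n}q_{k,n+1}(\mu)|\geq(0.9\mu)^{2n+1}$ from Lemma~\ref{lem:q_k,n_sign} with the seminorm estimate of Proposition~\ref{prop:|f|^2_k,n,M_norm} to obtain $\|\1_{\Delta\geq M}(|f|^2)\|_{\H}\leq O((k^2+n^2)/M)^{n+\frac12}\|f\|_{L^4}^2$, and then choose $n\sim c_0\sqrt{M}$ to convert the geometric factor into $\exp(-c\sqrt{M})$. The only cosmetic difference is that the paper introduces an auxiliary large constant $A'\geq A$ and picks $n$ implicitly via $n^2\gtrsim_{A'} M\geq A'(k^2+(n+1)^2)$, whereas you fix $A$ and choose $n+1=\lfloor c_0\sqrt{M}\rfloor$ explicitly together with $M\geq K_0k^2$; these are equivalent bookkeeping choices.
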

	
	\begin{proof}
		Let $A' \gg 1$, to be chosen later. Assume in particular $A' \geq A$. Let $n \in \Z_{\geq 0}$ be such that
		\begin{align*}
			n^2 \gtrsim_{A'} M \geq A'(k^2+(n+1)^2)
		\end{align*}
		(such an $n$ exists because $M \gg k^2$). Then by Lemma~\ref{lem:q_k,n_sign} and Proposition~\ref{prop:|f|^2_k,n,M_norm},
		\begin{align*}
			\|\1_{\Delta \geq M}(|f|^2)\|_{\H}
			\leq (0.9M)^{-n-\frac{1}{2}} \||f|^2\|_{k,n,M}
			\leq O\Big(\frac{k^2+n^2}{M}\Big)^{n+\frac{1}{2}} \|f\|_{L^4}^2
			\leq O(1/A')^{n+\frac{1}{2}} \|f\|_{L^4}^2.
		\end{align*}
		Take $A'$ to be twice the $O$-constant. Then
		\begin{align*}
			\|\1_{\Delta \geq M}(|f|^2)\|_{\H}
			\leq 2^{-n} \|f\|_{L^4}^2.
		\end{align*}
		By the definition of $n$, we have $n^2 \gtrsim_{A'} M$, and now that $A'$ is fixed, we can drop the dependence on $A'$.
		So we can let $c' \gtrsim 1$
		be such that $n^2 \geq c'M$. Then
		\begin{align*}
			\|\1_{\Delta \geq M}(|f|^2)\|_{\H}
			\leq \exp(-(\sqrt{c'} \log 2)\sqrt{M}) \|f\|_{L^4}^2.
		\end{align*}
		Thus the desired bound holds with $c = \sqrt{c'} \log 2$.
		%
		%
		%
	\end{proof}
	
	We now obtain Theorem~\ref{thm:exp_decay_form} by dyadic decomposition. Recall the statement of the theorem:
	
	\begin{thm*}[Restatement of Theorem~\ref{thm:exp_decay_form}]
		There is a positive constant $c \gtrsim 1$, such that whenever $f \in \H^{\fin}$ is a lowest weight vector,
		\begin{align*}
			\|\exp(c\sqrt{\Delta})(|f|^2)\|_{\H} < \infty.
		\end{align*}
	\end{thm*}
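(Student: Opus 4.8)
The plan is to deduce Theorem~\ref{thm:exp_decay_form} from Proposition~\ref{prop:exp_decay_form_old} by a dyadic decomposition of the Casimir spectrum. Fix a lowest weight vector $f \in \H^{\fin}$. By Proposition~\ref{prop:lowest_wt_wt} it has weight $k \in \Z_{\geq 1}$, and since $f$ is a weight vector, $|f|^2 = f\overline{f}$ has weight $0$ (by \eqref{eqn:H_product_rule}) and hence lies in $\H^K$; recall also that $\|f\|_{L^4} = \|f^2\|_{\H}^{1/2} < \infty$ because $f^2 \in \H^{\infty} \subseteq \H$. Because $\Delta$ is formally positive semidefinite on $\H^K$, the operator $\exp(c'\sqrt{\Delta})$ is legitimately applied to $|f|^2$ via the functional calculus of Subsection~\ref{subsec:sl_2:func_calc}, and
\begin{align*}
  \|\exp(c'\sqrt{\Delta})(|f|^2)\|_{\H}^2
  = \sum_{\lambda \geq 0} \exp(2c'\sqrt{\lambda}) \, \|\1_{\Delta = \lambda}(|f|^2)\|_{\H}^2 ,
\end{align*}
where $c'$ is a positive constant to be fixed below. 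I will take $c' = c/2$ with $c \gtrsim 1$ as in Proposition~\ref{prop:exp_decay_form_old}, so that $c' \gtrsim 1$, as required.

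First I would dispose of the low-frequency part: choosing a threshold $M_0 \gg k^2$ past which Proposition~\ref{prop:exp_decay_form_old} applies, the contribution of the eigenvalues $\lambda \leq M_0$ to the sum above is at most $\exp(2c'\sqrt{M_0}) \, \||f|^2\|_{\H}^2$, which is finite (trivially, and also because $\H_{\Delta \leq M_0}^K$ is finite-dimensional as $\H$ has discrete spectrum). For the high-frequency part, I would group the eigenvalues $\lambda > M_0$ into dyadic blocks $\lambda \in [M, 2M)$ with $M$ ranging over the powers of $2$ exceeding $M_0$ (so $M \gg k^2$). On each block $\exp(2c'\sqrt{\lambda}) \leq \exp(2c'\sqrt{2M})$, while Proposition~\ref{prop:exp_decay_form_old}, applied with cutoff at $M$ since $[M,2M) \subseteq [M,\infty)$, gives $\|\1_{\Delta \in [M,2M)}(|f|^2)\|_{\H} \leq \|\1_{\Delta \geq M}(|f|^2)\|_{\H} \leq \exp(-c\sqrt{M}) \|f\|_{L^4}^2$. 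Hence the contribution of the block is bounded by $\exp\big((2\sqrt{2}\,c' - 2c)\sqrt{M}\big) \|f\|_{L^4}^4$, and with $c' = c/2$ the exponent equals $(\sqrt{2}-2)c\sqrt{M} < 0$. Since $\sqrt{M}$ multiplies by $\sqrt{2}$ each time $M$ doubles, summing over the dyadic scales gives a convergent (geometric-type) series, bounded by $O(\|f\|_{L^4}^4) < \infty$ (implicit constant allowed to depend on $f$). Combining the two parts yields $\|\exp(c'\sqrt{\Delta})(|f|^2)\|_{\H} < \infty$, which is the assertion of Theorem~\ref{thm:exp_decay_form} with constant $c'$.

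There is essentially no obstacle here: the substantive content is entirely in Proposition~\ref{prop:exp_decay_form_old}, and what remains is bookkeeping of constants in the dyadic sum. The only points requiring a moment's care are (i) verifying that $|f|^2 \in \H^K$ so that $\exp(c'\sqrt{\Delta})$ is defined on it; (ii) shrinking the constant (from $c$ to $c/2$, or any $c' < c/\sqrt{2}$) to absorb the loss incurred in passing from a block $[M,2M)$ to the tail $[M,\infty)$ when invoking Proposition~\ref{prop:exp_decay_form_old}; and (iii) noting that the resulting $c'$ is still $\gtrsim 1$ because $c$ is and the ratio $c'/c$ is an absolute constant.
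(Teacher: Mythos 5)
Your proof is correct and is essentially the paper's own argument: both isolate a low-frequency part (a bounded-operator observation) and dyadically decompose the high-frequency part, then invoke Proposition~\ref{prop:exp_decay_form_old} on each block and halve the constant to ensure geometric decay of the dyadic sum. The only differences are cosmetic — you swap the roles of the symbols $c$ and $c'$ relative to the paper's proof, and you run the bookkeeping on the squared norm $\sum_\lambda \exp(2c'\sqrt{\lambda})\|\1_{\Delta=\lambda}(|f|^2)\|_{\H}^2$ rather than on the norm itself via the triangle inequality over blocks; these are equivalent.
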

	
	
	\begin{proof}
		Let $c' \gtrsim 1$ be such that one can take $c=c'$ in Proposition~\ref{prop:exp_decay_form_old}. Fix $c = \tfrac{1}{2}c'$.
		Let $k \geq 1$ be the weight of $f$.
		Then by dyadic decomposition,
		\begin{align*}
			\|\exp(c\sqrt{\Delta})(|f|^2)\|_{\H}
			\leq \|\1_{\Delta \lesssim k^2} \exp(c\sqrt{\Delta})(|f|^2)\|_{\H}
			+ \sum_{\substack{M \in 2^{\mathbf{N}} \\ M \gg k^2}} \|\1_{\Delta \in [M,2M)} \exp(c\sqrt{\Delta})(|f|^2)\|_{\H}.
		\end{align*}
		The first term on the right hand side is finite because $\1_{\Delta \lesssim k^2} \exp(c\sqrt{\Delta})$ is a bounded operator, so it remains to show that the infinite series is finite. Indeed, by Proposition~\ref{prop:exp_decay_form_old},
		\begin{align}
			\sum_{\substack{M \in 2^{\mathbf{N}} \\ M \gg k^2}} \|\1_{\Delta \in [M,2M)} \exp(c\sqrt{\Delta})(|f|^2)\|_{\H}
			&\leq \sum_{\substack{M \in 2^{\mathbf{N}} \\ M \gg k^2}} \exp(c\sqrt{2M}) \|\1_{\Delta \geq M} (|f|^2)\|_{\H}
			\notag
			\\&\leq \|f\|_{L^4}^2 \sum_{\substack{M \in 2^{\mathbf{N}} \\ M \gg k^2}} \exp(c\sqrt{2M}-c'\sqrt{M}),
			\label{eqn:dyadic_exmp}
		\end{align}
		and this is finite because $c\sqrt{2} < c'$.
	\end{proof}
	
	
	
	\subsection{Second tail bound} Here we prove Theorem~\ref{thm:exp_decay_quasimode} assuming $\H$ obeys a polynomial Weyl law. The structure of the proof is very similar to that of Theorem~\ref{thm:exp_decay_form}, so it would be reasonable to leave some of the intermediate results to the reader. However, this would not save a huge amount of space, so we err on the side of giving full details.
	
	\begin{lem} \label{lem:p_n_sign}
		Let $n \in \Z_{\geq 0}$, let $\lambda \geq 0$, and let $\mu \gg \lambda+n^2$. Then
		\begin{align} \label{eqn:p_n_sign}
			(-1)^n p_n(\lambda,\mu)
			\geq \tfrac{1}{2}(0.9\mu)^n.
		\end{align}
	\end{lem}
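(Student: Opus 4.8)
\emph{Proof proposal.} The plan is to mirror the proof of Lemma~\ref{lem:q_k,n_sign}, with one extra wrinkle forced by the fact that $p_1(\lambda,\mu)=\lambda-\tfrac12\mu$ has leading coefficient $-\tfrac12$ in $\mu$ rather than $-1$. First I would set $P_n=(-1)^np_n(\lambda,\mu)$, so that the recurrence in Proposition~\ref{prop:p_n_def} rewrites as
\begin{align*}
P_{n+1}=(\mu-2\lambda-2n^2)P_n-(\lambda+n(n-1))^2P_{n-1}\qquad(n\geq1),
\end{align*}
with $P_0=1$ and $P_1=\tfrac12\mu-\lambda$. The hypothesis $\mu\gg\lambda+n^2$ means $\mu\geq K(\lambda+m^2)$ for all $0\leq m\leq n$, where $K$ is an absolute constant we may take as large as convenient (I expect $K=40$ to suffice).

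The one genuine subtlety is that one should \emph{not} try to induct directly on $P_n\geq\tfrac12(0.9\mu)^n$: the subtracted term $(\lambda+n(n-1))^2P_{n-1}$ cannot be controlled from a lower bound on $P_{n-1}$ alone, so the naive induction fails to close. Instead I would propagate the multiplicative estimate $P_n\geq0.45\mu\,P_{n-1}$ (together with positivity $P_n>0$). The base case $n=1$ is immediate: $P_1=\tfrac12\mu-\lambda\geq(\tfrac12-\tfrac1K)\mu\geq0.45\mu=0.45\mu\,P_0$. For the inductive step, subtract $0.9\mu\,P_n$ from the recurrence:
\begin{align*}
P_{n+1}-0.9\mu\,P_n=(0.1\mu-2\lambda-2n^2)P_n-(\lambda+n(n-1))^2P_{n-1}.
\end{align*}
Here $0.1\mu-2\lambda-2n^2\geq(0.1-2/K)\mu\geq0.05\mu$, and the inductive hypothesis gives $P_n\geq0.45\mu\,P_{n-1}$, so the first term is $\geq0.0225\mu^2P_{n-1}$; meanwhile $(\lambda+n(n-1))^2\leq(\lambda+n^2)^2\leq\mu^2/K^2$, so for $K$ large the first term dominates the second. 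Hence $P_{n+1}\geq0.9\mu\,P_n>0$, in particular $P_{n+1}\geq0.45\mu\,P_n$, which closes the induction.

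Finally, iterating the step bound $P_m\geq0.9\mu\,P_{m-1}$ for $m\geq2$ down to $P_1\geq0.45\mu=\tfrac12(0.9\mu)$ gives $P_n\geq(0.9\mu)^{n-1}P_1\geq\tfrac12(0.9\mu)^n$ for $n\geq1$, while $P_0=1\geq\tfrac12$ handles $n=0$; this is exactly $(-1)^np_n(\lambda,\mu)\geq\tfrac12(0.9\mu)^n$. The main (and only real) obstacle is the observation just noted: one must carry the ratio bound rather than the power bound through the induction, and absorb the one-time loss of $\tfrac12$ at the step $n=1$, which is where the statement genuinely differs from Lemma~\ref{lem:q_k,n_sign}. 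Everything else is a routine bookkeeping of absolute constants.
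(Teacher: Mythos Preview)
Your proposal is correct and takes essentially the same approach as the paper: both arguments propagate a multiplicative ratio bound $P_m\geq c\mu\,P_{m-1}$ through the recurrence (the paper uses $c=0.9$ starting at $m=2$ after checking $n=0,1$ by hand, you use $c=0.45$ starting at $m=1$ and recover $0.9$ in the step), then iterate down to the base case to pick up the factor $\tfrac12$ from $P_1\approx\tfrac12\mu$. Your observation that one cannot induct directly on the power bound $P_n\geq\tfrac12(0.9\mu)^n$---because the subtracted term requires an \emph{upper} bound on $P_{n-1}$, which the ratio bound supplies---is exactly the point the paper's terse ``induction on $n$ gives $(-1)^np_n\geq 0.9\mu(-1)^{n-1}p_{n-1}\geq 0$'' is encoding.
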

	
	\begin{proof}
		The cases $n=0$ and $n=1$ of \eqref{eqn:p_n_sign} can be checked directly from the definition of $p_n$ in Proposition~\ref{prop:p_n_def}. For $n > 1$, using the recurrence \eqref{eqn:p_n_recurrence} and the fact that $\mu \gg \lambda+n^2$, induction on $n$ gives
		\begin{align*}
			(-1)^n p_n(\lambda,\mu)
			\geq 0.9\mu (-1)^{n-1} p_{n-1}(\lambda,\mu)
			\geq 0.
		\end{align*}
		Iterating this gives \eqref{eqn:p_n_sign}.
	\end{proof}
	
	For the remainder of this subsection, fix a positive constant $A \lesssim 1$ such that \eqref{eqn:p_n_sign} holds for all $n \in \Z_{\geq 0}$, $\lambda \geq 0$, and $\mu \geq A(\lambda+n^2)$.
	
	For $n \in \Z_{\geq 0}$ and $\lambda,M \geq 0$, let $\|\cdot\|_{n,\lambda,M}$ be the seminorm on $\H^K$ given by
	\begin{align} \label{eqn:norm_n,lambda,M_def}
		\|v\|_{n,\lambda,M} 
		= \|\1_{\Delta \geq M}|p_np_{n+1}(\lambda,\Delta)|^{\frac{1}{2}} v\|_{\H}.
	\end{align}
	This is decreasing in $M$ and finite-valued on $\H^K \cap \H^{\infty}$.
	
	\begin{prop} \label{prop:phi^2_n,lambda,M_norm}
		Let $\varphi \in \H_{\R}^K \cap \H^{\fin}$ be an automorphic vector with Casimir eigenvalue $\lambda \geq 0$. Let $n \in \Z_{\geq 0}$ and $M \geq A(\lambda+(n+1)^2)$. Then
		\begin{align*}
			\|\varphi^2\|_{n,\lambda,M}
			\leq O(\lambda+n^2+1)^{n+\frac{1}{2}} \|\varphi\|_{L^4}^2.
		\end{align*}
	\end{prop}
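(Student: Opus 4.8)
The plan is to follow the proof of Proposition~\ref{prop:|f|^2_k,n,M_norm} essentially verbatim, swapping the lowest-weight machinery ($q_{k,n}$, Lemma~\ref{lem:q_k,n_sign}, Lemma~\ref{lem:q_k,n_triv_bd}, Proposition~\ref{prop:basic_ineq_2}) for its principal-series counterpart ($p_n$, Lemma~\ref{lem:p_n_sign}, Lemma~\ref{lem:p_n_triv_bd}, Proposition~\ref{prop:basic_ineq_1}). Since $\|\cdot\|_{n,\lambda,M}$ is decreasing in $M$, I may assume $M = A(\lambda+(n+1)^2)$. Then by the definition \eqref{eqn:norm_n,lambda,M_def} and self-adjointness of $\Delta$,
\[
\|\varphi^2\|_{n,\lambda,M}^2 = \langle \1_{\Delta \geq M} |p_np_{n+1}(\lambda,\Delta)|(\varphi^2), \varphi^2 \rangle_{\H}.
\]
The key point is that for $\mu \geq M$ one has $p_np_{n+1}(\lambda,\mu) \leq 0$: by the choice of $A$, Lemma~\ref{lem:p_n_sign} gives $(-1)^n p_n(\lambda,\mu) \geq 0$ (using $M \geq A(\lambda+n^2)$) and $(-1)^{n+1}p_{n+1}(\lambda,\mu) \geq 0$ (using $M \geq A(\lambda+(n+1)^2)$), whose product has the sign of $(-1)^{2n+1} = -1$. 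Hence $|p_np_{n+1}(\lambda,\Delta)|\1_{\Delta\geq M} = -p_np_{n+1}(\lambda,\Delta)\1_{\Delta\geq M}$.

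Next I would write $-\1_{\Delta \geq M} = \1_{\Delta < M} - \1$ to get
\[
\|\varphi^2\|_{n,\lambda,M}^2 = \langle \1_{\Delta < M} p_np_{n+1}(\lambda,\Delta)(\varphi^2), \varphi^2 \rangle_{\H} - \langle p_np_{n+1}(\lambda,\Delta)(\varphi^2), \varphi^2 \rangle_{\H}.
\]
By Proposition~\ref{prop:basic_ineq_1}, the second term (including its minus sign) is $\leq 0$, so it may be discarded, leaving the bulk term $\langle \1_{\Delta < M} p_np_{n+1}(\lambda,\Delta)(\varphi^2), \varphi^2 \rangle_{\H}$. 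On the range $\mu < M \lesssim \lambda+n^2+1$, the trivial bound Lemma~\ref{lem:p_n_triv_bd} gives $|p_n(\lambda,\mu)| \leq O(\lambda+n^2+1)^n$ and $|p_{n+1}(\lambda,\mu)| \leq O(\lambda+n^2+1)^{n+1}$, so by Cauchy--Schwarz and the resulting operator-norm bound the bulk term is $\leq O(\lambda+n^2+1)^{2n+1}\|\varphi^2\|_{\H}^2$. Since $\|\varphi^2\|_{\H}^2 = \|\varphi\|_{L^4}^4$ by \eqref{eqn:|alpha|^2_H_norm}, taking square roots yields the claimed estimate.

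The argument is formal once Propositions~\ref{prop:p_n_def} and \ref{prop:basic_ineq_1} and Lemmas~\ref{lem:p_n_sign} and \ref{lem:p_n_triv_bd} are available, so there is no serious obstacle. The only point requiring care is the sign bookkeeping in the first step: one must check that \emph{both} $(-1)^n p_n$ and $(-1)^{n+1}p_{n+1}$ are nonnegative on $[M,\infty)$, which is precisely why the hypothesis is phrased with $(n+1)^2$ rather than $n^2$ — that is exactly the threshold needed for Lemma~\ref{lem:p_n_sign} to apply to $p_{n+1}$ as well as to $p_n$.
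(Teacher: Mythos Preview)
Your proposal is correct and follows essentially the same approach as the paper's proof, step for step: reduce to $M = A(\lambda+(n+1)^2)$, use Lemma~\ref{lem:p_n_sign} to remove the absolute value, split $-\1_{\Delta\geq M} = \1_{\Delta<M} - \1$, drop the second term via Proposition~\ref{prop:basic_ineq_1}, and bound the bulk by Lemma~\ref{lem:p_n_triv_bd}. Your closing remark about the role of $(n+1)^2$ is exactly the point.
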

	
	\begin{proof}
		Since $\|\cdot\|_{n,\lambda,M}$ decreases with $M$, we may assume $M = A(\lambda+(n+1)^2)$.
		By the definition of $\|\cdot\|_{n,\lambda,M}$ and self-adjointness of $\Delta$,
		\begin{align*}
			\|\varphi^2\|_{n,\lambda,M}^2
			= \langle \1_{\Delta \geq M} |p_np_{n+1}(\lambda,\Delta)|(\varphi^2), \varphi^2 \rangle_{\H}.
		\end{align*}
		In view of the definitions of $A$ and $M$, Lemma~\ref{lem:p_n_sign} implies that
		\begin{align*}
			-p_np_{n+1}(\lambda,\mu) \geq 0
		\end{align*}
		for $\mu \geq M$, so the above becomes
		\begin{align*}
			\|\varphi^2\|_{n,\lambda,M}^2
			= -\langle \1_{\Delta \geq M} p_np_{n+1}(\lambda,\Delta)(\varphi^2), \varphi^2 \rangle_{\H}.
		\end{align*}
		Writing $-\1_{\Delta \geq M} = \1_{\Delta<M} - \1$,
		\begin{align*}
			\|\varphi^2\|_{n,\lambda,M}^2
			= \langle \1_{\Delta<M} p_np_{n+1}(\lambda,\Delta)(\varphi^2), \varphi^2 \rangle_{\H}
			- \langle p_np_{n+1}(\lambda,\Delta)(\varphi^2), \varphi^2 \rangle_{\H}.
		\end{align*}
		By Proposition~\ref{prop:basic_ineq_1}, the second term on the right (including the minus sign) is non-positive, so
		\begin{align*}
			\|\varphi^2\|_{n,\lambda,M}^2
			\leq \langle \1_{\Delta<M} p_np_{n+1}(\lambda,\Delta)(\varphi^2), \varphi^2 \rangle_{\H}.
		\end{align*}
		Estimating this by the trivial bound Lemma~\ref{lem:p_n_triv_bd} and using that $M \lesssim \lambda+n^2+1$,
		\begin{align*}
			\|\varphi^2\|_{n,\lambda,M}^2
			\leq O(\lambda+n^2+1)^{2n+1} \|\varphi^2\|_{\H}^2.
		\end{align*}
		Writing $\|\varphi^2\|_{\H}^2 = \|\varphi\|_{L^4}^4$ and taking square roots gives the desired bound.
	\end{proof}
	
	\begin{prop} \label{prop:exp_decay_mode_old}
		There is a positive constant $c \gtrsim 1$, such that whenever $\varphi \in \H_{\R}^K \cap \H^{\fin}$ is an automorphic vector with Casimir eigenvalue $\lambda \geq 0$, and whenever $M \gg \lambda+1$,
		\begin{align} \label{eqn:exp_decay_mode_old_L^4}
			\|\1_{\Delta \geq M}(\varphi^2)\|_{\H}
			\leq \exp(-c\sqrt{M}) \|\varphi\|_{L^4}^2.
		\end{align}
		Furthermore, if $\H$ obeys a polynomial Weyl law, then \eqref{eqn:exp_decay_mode_old_L^4} can be upgraded to
		\begin{align} \label{eqn:exp_decay_mode_old_L^2}
			\|\1_{\Delta \geq M}(\varphi^2)\|_{\H}
			\leq \exp(-c\sqrt{M}) \|\varphi\|_{\H}^2.
		\end{align}
	\end{prop}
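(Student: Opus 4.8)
The plan is to run the proof of Proposition~\ref{prop:exp_decay_form_old} essentially verbatim, with the polynomials $p_n$ in the role of the $q_{k,n}$, with Lemma~\ref{lem:p_n_sign} in the role of Lemma~\ref{lem:q_k,n_sign}, and with Proposition~\ref{prop:phi^2_n,lambda,M_norm} in the role of Proposition~\ref{prop:|f|^2_k,n,M_norm}. First I would prove the $L^4$ bound \eqref{eqn:exp_decay_mode_old_L^4}. Fix a large absolute constant $A' \geq A$ (to be pinned down shortly), and given $M \gg \lambda+1$ with the implicit constant large enough depending on $A'$, let $n \in \Z_{\geq 0}$ be the largest integer with $M \geq A'(\lambda+(n+1)^2)$; such an $n$ exists since $M \gg \lambda+1$, and a short computation using maximality shows $n^2 \gtrsim_{A'} M$ after enlarging the implicit constant in $M\gg\lambda+1$ as needed. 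For $\mu \geq M$ we then have $\mu \gg \lambda+(n+1)^2$, so Lemma~\ref{lem:p_n_sign} gives $|p_np_{n+1}(\lambda,\mu)| \geq \tfrac14(0.9M)^{2n+1}$, and hence, from the definition \eqref{eqn:norm_n,lambda,M_def} of the seminorm $\|\cdot\|_{n,\lambda,M}$,
\begin{align*}
\|\1_{\Delta \geq M}(\varphi^2)\|_{\H}
\leq 2(0.9M)^{-n-\frac12}\|\varphi^2\|_{n,\lambda,M}.
\end{align*}

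Now I would invoke Proposition~\ref{prop:phi^2_n,lambda,M_norm} (applicable since $M \geq A(\lambda+(n+1)^2)$) to bound $\|\varphi^2\|_{n,\lambda,M} \leq O(\lambda+n^2+1)^{n+\frac12}\|\varphi\|_{L^4}^2$, and use $\lambda+n^2+1 \lesssim M/A'$ to obtain $\|\1_{\Delta \geq M}(\varphi^2)\|_{\H} \leq O(1/A')^{n+\frac12}\|\varphi\|_{L^4}^2$. Choosing $A'$ to be a suitable multiple of this $O$-constant makes the right-hand side at most $2^{-n}\|\varphi\|_{L^4}^2$; since $A'$ is now fixed and $n \gtrsim_{A'}\sqrt M$, this yields \eqref{eqn:exp_decay_mode_old_L^4} with some $c \gtrsim 1$. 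For the upgrade to \eqref{eqn:exp_decay_mode_old_L^2} under the polynomial Weyl law, note that $\varphi \in \H^K \cap \H^{\fin}$ is a $K$-invariant automorphic vector of Casimir eigenvalue $\lambda$, so Theorem~\ref{thm:L^4_quasi-Sobolev_poly_Weyl} gives $\|\varphi\|_{L^4} \leq \|\exp(O(\log_+^2\Delta))\varphi\|_{\H} = \exp(O(\log_+^2\lambda))\|\varphi\|_{\H}$. Inserting this into \eqref{eqn:exp_decay_mode_old_L^4} and using that $M \gg \lambda+1$ forces $\log_+^2\lambda \lesssim \log_+^2 M \ll \sqrt M$, the factor $\exp(O(\log_+^2\lambda))$ is absorbed into $\exp(\tfrac{c}{2}\sqrt M)$ after once more enlarging the implicit constant in $M \gg \lambda+1$; replacing $c$ by $c/2$ gives \eqref{eqn:exp_decay_mode_old_L^2}.

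This argument involves no new ideas: the substantive input — the bootstrap inequality Proposition~\ref{prop:basic_ineq_1}, the sign/size control of Lemma~\ref{lem:p_n_sign}, and the trivial polynomial bound Lemma~\ref{lem:p_n_triv_bd} feeding Proposition~\ref{prop:phi^2_n,lambda,M_norm} — is already assembled. The only point requiring genuine care, and hence the main (modest) obstacle, is the bookkeeping around the choice of $n$: one needs $n$ simultaneously in the window $A'(\lambda+(n+1)^2) \leq M$ (so that Lemma~\ref{lem:p_n_sign} and Proposition~\ref{prop:phi^2_n,lambda,M_norm} apply) and satisfying $n^2 \gtrsim_{A'} M$ (so that the final exponential decay has rate $\gtrsim 1$), which is why the hypothesis $M \gg \lambda+1$ must have its implicit constant inflated a few times, and one must check that every surviving constant — above all $c$ — stays $\gtrsim 1$ because $A'$ is absolute and $n/\sqrt M$ is bounded below by an absolute constant. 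The polynomial Weyl law is used only through Theorem~\ref{thm:L^4_quasi-Sobolev_poly_Weyl}, exactly as expected since \eqref{eqn:exp_decay_mode_old_L^2} differs from \eqref{eqn:exp_decay_mode_old_L^4} solely in replacing $\|\varphi\|_{L^4}$ by $\|\varphi\|_{\H}$.
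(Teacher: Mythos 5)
Your proof is correct and matches the paper's approach essentially step for step: the same choice of $n$ satisfying both $M \geq A'(\lambda+(n+1)^2)$ and $n^2 \gtrsim_{A'} M$, the same use of Lemma~\ref{lem:p_n_sign} to extract the $(0.9M)^{-n-\frac12}$ prefactor, the same application of Proposition~\ref{prop:phi^2_n,lambda,M_norm}, the same fixing of $A'$ to get $2^{-n}$ decay, and the same use of Theorem~\ref{thm:L^4_quasi-Sobolev_poly_Weyl} to absorb the $\exp(O(\log_+^2\lambda))$ factor into $\exp(-\frac{c}{2}\sqrt{M})$ for the upgrade. The only minor expository difference is that you spell out an explicit choice (largest $n$ with $M \geq A'(\lambda+(n+1)^2)$), whereas the paper just asserts such an $n$ exists.
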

	
	
	\begin{proof}
		Let $A' \gg 1$, to be chosen later. Assume in particular that $A' \geq A$.
		Let $n \in \Z_{\geq 0}$ such that
		\begin{align*}
			n^2 \gtrsim_{A'} M \geq A'(\lambda+(n+1)^2)
		\end{align*}
		(such an $n$ exists because $M \gg \lambda+1$). Then by Lemma~\ref{lem:p_n_sign} and Proposition~\ref{prop:phi^2_n,lambda,M_norm},
		\begin{align*}
			\|\1_{\Delta \geq M}(\varphi^2)\|_{\H}
			\leq 2(0.9M)^{-n-\frac{1}{2}} \|\varphi^2\|_{n,\lambda,M}
			\leq O\Big(\frac{\lambda+n^2+1}{M}\Big)^{n+\frac{1}{2}} \|\varphi\|_{L^4}^2
			\leq O(1/A')^{n+\frac{1}{2}} \|\varphi\|_{L^4}^2.
		\end{align*}
		Take $A'$ to be twice the $O$-constant. Then
		\begin{align*}
			\|\1_{\Delta \geq M}(\varphi^2)\|_{\H}
			\leq 2^{-n} \|\varphi\|_{L^4}^2.
		\end{align*}
		By the definition of $n$, we have $n^2 \gtrsim_{A'} M$, and now that $A'$ is fixed, we can drop the dependence on $A'$.
		So we can let $c' \gtrsim 1$
		be such that $n^2 \geq c'M$. Then
		\begin{align*}
			\|\1_{\Delta \geq M}(\varphi^2)\|_{\H}
			\leq \exp(-(\sqrt{c'} \log 2)\sqrt{M}) \|\varphi\|_{L^4}^2.
		\end{align*}
		Thus \eqref{eqn:exp_decay_mode_old_L^4} holds with $c = \sqrt{c'} \log 2$.
		
		Suppose now that $\H$ obeys a polynomial Weyl law. Then by \eqref{eqn:exp_decay_mode_old_L^4} and Theorem~\ref{thm:L^4_quasi-Sobolev_poly_Weyl},
		\begin{align*}
			\|\1_{\Delta \geq M}(\varphi^2)\|_{\H}
			\leq \exp(-c\sqrt{M}) \|\exp(O(\log_+^2\Delta)) \varphi\|_{\H}^2
			= \exp(-c\sqrt{M}+O(\log_+^2\lambda)) \|\varphi\|_{\H}^2.
		\end{align*}
		Since $c \gtrsim 1$ and $M \gg \lambda+1$, we may assume $M$ is large enough that the quantity inside the exponential satisfies
		\begin{align*}
			-c\sqrt{M} + O(\log_+^2\lambda)
			\leq -\tfrac{1}{2}c \sqrt{M}.
		\end{align*}
		Then \eqref{eqn:exp_decay_mode_old_L^2} holds after replacing $c$ with $\tfrac{1}{2}c$.
	\end{proof}
	
	
	\begin{prop} \label{prop:exp_decay_mode_after_dyadic}
		Assume $\H$ obeys a polynomial Weyl law. Then there is a positive constant $c \gtrsim 1$, such that whenever $\varphi \in \H_{\R}^K \cap \H^{\fin}$ is an automorphic vector with Casimir eigenvalue $\lambda \geq 0$, and whenever $M \gg \lambda+1$,
		\begin{align*}
			\|\1_{\Delta \geq M} \exp(c\sqrt{\Delta})(\varphi^2)\|_{\H}
			\leq \|\varphi\|_{\H}^2.
		\end{align*}
	\end{prop}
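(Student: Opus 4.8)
The plan is to deduce Proposition~\ref{prop:exp_decay_mode_after_dyadic} from the $L^2$ tail estimate \eqref{eqn:exp_decay_mode_old_L^2} in Proposition~\ref{prop:exp_decay_mode_old} by a dyadic decomposition in the spectral parameter, in exactly the way Theorem~\ref{thm:exp_decay_form} was obtained from Proposition~\ref{prop:exp_decay_form_old} (cf.\ the computation \eqref{eqn:dyadic_exmp}). Let $c_0 \gtrsim 1$ be a constant for which \eqref{eqn:exp_decay_mode_old_L^2} holds, so that --- using the polynomial Weyl law --- for every automorphic vector $\varphi \in \H_{\R}^K \cap \H^{\fin}$ with Casimir eigenvalue $\lambda \geq 0$ and every $N \gg \lambda+1$,
\begin{align*}
	\|\1_{\Delta \geq N}(\varphi^2)\|_{\H}
	\leq \exp(-c_0\sqrt{N}) \|\varphi\|_{\H}^2.
\end{align*}
I would then fix $c = \tfrac{1}{2\sqrt{2}}c_0$, so that $c \gtrsim 1$ and $c\sqrt{2} = \tfrac{1}{2}c_0 < c_0$.

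First I would dyadically split the spectral projection: for $M \gg \lambda+1$ write $\1_{\Delta \geq M} = \sum_{N \in 2^{\mathbf{N}},\, N \geq M} \1_{\Delta \in [N,2N)}$, so that by the triangle inequality (applied to the truncations $\1_{\Delta \leq \Lambda}(\varphi^2) \in \H^{\fin}$ and then letting $\Lambda \to \infty$, to sidestep any issue of \textit{a priori} finiteness),
\begin{align*}
	\|\1_{\Delta \geq M} \exp(c\sqrt{\Delta})(\varphi^2)\|_{\H}
	\leq \sum_{\substack{N \in 2^{\mathbf{N}} \\ N \geq M}} \|\1_{\Delta \in [N,2N)} \exp(c\sqrt{\Delta})(\varphi^2)\|_{\H}.
\end{align*}
On the shell $[N,2N)$ the operator $\exp(c\sqrt{\Delta})$ has norm $\leq \exp(c\sqrt{2N})$, and since $N \geq M \gg \lambda+1$ the displayed tail estimate applies with $N$ in place of $M$, giving
\begin{align*}
	\|\1_{\Delta \in [N,2N)} \exp(c\sqrt{\Delta})(\varphi^2)\|_{\H}
	\leq \exp(c\sqrt{2N}) \|\1_{\Delta \geq N}(\varphi^2)\|_{\H}
	\leq \exp\bigl(c\sqrt{2N}-c_0\sqrt{N}\bigr) \|\varphi\|_{\H}^2
	= \exp\bigl(-\tfrac{1}{2}c_0\sqrt{N}\bigr) \|\varphi\|_{\H}^2.
\end{align*}

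Summing over dyadic $N \geq M$, the ratio of consecutive terms is $\exp(-\tfrac{1}{2}c_0(\sqrt{2}-1)\sqrt{N}) \leq \tfrac{1}{2}$ once $M$ (and hence $N$) exceeds a suitable absolute constant, so the series is $\leq 2\exp(-\tfrac{1}{2}c_0\sqrt{M})$, which is $\leq 1$ once $M$ exceeds another absolute constant. Both conditions on $M$ are absorbed into the hypothesis $M \gg \lambda+1$, since that hypothesis forces $M$ to be at least a fixed absolute constant. This yields $\|\1_{\Delta \geq M} \exp(c\sqrt{\Delta})(\varphi^2)\|_{\H} \leq \|\varphi\|_{\H}^2$ with $c \gtrsim 1$, as desired. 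I do not expect any genuine obstacle here: the argument is a routine geometric summation, and the only mild care needed is the bookkeeping of which smallness/largeness conditions on $M$ are absolute (and hence covered by $M \gg \lambda+1$) together with the standard justification that the triangle inequality over the orthogonal dyadic pieces is legitimate before finiteness of the left-hand side is known.
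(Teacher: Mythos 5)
Your proof is correct and follows essentially the same route as the paper: the paper's own proof of this proposition consists of exactly the remark that it "follows from a dyadic decomposition argument along the lines of \eqref{eqn:dyadic_exmp}, using \eqref{eqn:exp_decay_mode_old_L^2} in Proposition~\ref{prop:exp_decay_mode_old} instead of Proposition~\ref{prop:exp_decay_form_old}," and you have spelled out precisely that argument, including the correct constant choice $c\sqrt{2} < c_0$ and the geometric summation. The only nitpick is that the dyadic shells $[N,2N)$ with $N \in 2^{\mathbf N}$, $N \ge M$ do not quite cover $[M,\infty)$ when $M$ is not itself a power of $2$; this is harmless (use shells $[2^jM,\,2^{j+1}M)$, $j\ge 0$, or round $M$ down to a dyadic value, which only changes the implicit constant in $M \gg \lambda+1$), and the rest of your bookkeeping — the $\ge$-orthogonality justification of the infinite triangle inequality, the absorption of absolute lower bounds on $M$ into $M \gg \lambda+1$ — is sound.
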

	
	\begin{proof}
		This follows from a dyadic decomposition argument along the lines of \eqref{eqn:dyadic_exmp}, using \eqref{eqn:exp_decay_mode_old_L^2} in Proposition~\ref{prop:exp_decay_mode_old} instead of Proposition~\ref{prop:exp_decay_form_old}.
	\end{proof}
	
	%
	
	We are finally ready to prove Theorem~\ref{thm:exp_decay_quasimode} assuming $\H$ obeys a polynomial Weyl law. With this assumption, Theorem~\ref{thm:exp_decay_quasimode} becomes
	
	\begin{thm}
		Assume $\H$ obeys a polynomial Weyl law. Then there are positive constants $C \lesssim 1$ and $c \gtrsim 1$, a partition of $[0,\infty)$ into intervals $I_i$ of length $\lesssim 1$, and points $\lambda_i \in I_i$ for each $i$, such that the partition has polynomial growth in the sense that
		\begin{align*} 
			\#\{i : \lambda_i \leq X\} \lesssim X^{O(1)}
			\qquad \text{for} \qquad
			X \geq 1,
		\end{align*}
		and such that for all $i$ and all $\alpha,\beta \in \H_{\Delta \in I_i}^K$,
		\begin{align} \label{eqn:exp_decay_quasimodes_poly_Weyl}
			\|\1_{\Delta \geq C\lambda_i} \exp(c\sqrt{\Delta})(\alpha\beta)\|_{\H}
			\leq \|\alpha\|_{\H} \|\beta\|_{\H}.
		\end{align}
	\end{thm}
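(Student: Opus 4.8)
The plan is to assemble the statement from Proposition~\ref{prop:exp_decay_mode_after_dyadic} together with a soft finite-dimensionality argument covering a bounded initial segment of the spectrum. First I would fix the partition. Since $\H$ obeys a polynomial Weyl law, I list the distinct Casimir eigenvalues $0 = \mu_0 < \mu_1 < \cdots$ and form $\{I_i\}$ by taking each half-open interval $[\mu_j,\mu_{j+1})$ and, whenever its length exceeds $1$, cutting it into consecutive unit-length pieces; I let $\lambda_i$ be the left endpoint of $I_i$. Then each $I_i$ has length $\leq 1$, contains at most one Casimir eigenvalue (namely $\lambda_i$, in the case $\lambda_i$ is an eigenvalue), and \eqref{eqn:partition_growth} holds because the polynomial Weyl law bounds the number of ``eigenvalue'' pieces with $\lambda_i \leq X$ by $X^{O(1)}$ while the remaining pieces are disjoint unit subintervals of $[0,X+1]$. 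For a piece containing no eigenvalue we have $\H_{\Delta\in I_i}^K = 0$ and \eqref{eqn:exp_decay_quasimodes_poly_Weyl} is vacuous; for the others $\H_{\Delta\in I_i}^K = \H_{\Delta=\lambda_i}^K$ consists of exact $\lambda_i$-Casimir eigenvectors, so the theorem reduces to producing absolute constants $C,c$ with $\|\1_{\Delta \geq C\lambda}\exp(c\sqrt{\Delta})(\alpha\beta)\|_\H \leq \|\alpha\|_\H\|\beta\|_\H$ for every Casimir eigenvalue $\lambda$ and all $\alpha,\beta \in \H_{\Delta=\lambda}^K$ (these lie in $\H^{\fin}$ since $I_i$ is bounded).

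For this reduced statement I would split $\alpha,\beta$ into real and imaginary parts, which remain $\lambda$-eigenvectors in $\H^K \cap \H_\R$ because $\overline{\Delta} = \Delta$ and complex conjugation is $G$-equivariant (Proposition~\ref{prop:bar_equivariant}), and then use the polarization identity $fg = \tfrac14((f+g)^2 - (f-g)^2)$ to write $\alpha\beta$ as a fixed $O(1)$-term combination of squares $\varphi^2$ with $\varphi \in \H_{\Delta=\lambda}^K \cap \H_\R$ and $\|\varphi\|_\H \lesssim \|\alpha\|_\H + \|\beta\|_\H$. When $\lambda$ exceeds a fixed threshold $\Lambda_0 \gg 1$, one has $C\lambda \gg \lambda + 1$ once $C$ is chosen large, so Proposition~\ref{prop:exp_decay_mode_after_dyadic} applies and gives $\|\1_{\Delta \geq C\lambda}\exp(c\sqrt{\Delta})(\varphi^2)\|_\H \leq \|\varphi\|_\H^2$; inspecting its dyadic-decomposition proof shows one can even get $\leq \varepsilon\|\varphi\|_\H^2$ for any fixed $\varepsilon$ by enlarging the implied constant in ``$M \gg \lambda+1$'' (hence $C$, which remains $O(1)$). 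Taking $\varepsilon$ to be a suitable reciprocal of the polarization constant and summing the $O(1)$ pieces yields $\|\1_{\Delta\geq C\lambda}\exp(c\sqrt{\Delta})(\alpha\beta)\|_\H \leq \|\alpha\|_\H\|\beta\|_\H$ for all $\lambda \geq \Lambda_0$.

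It remains to handle the finitely many pieces with $\lambda_i \leq \Lambda_0$. Here I would first record that $\|\exp(c\sqrt{\Delta})(\varphi^2)\|_\H < \infty$ for every $\varphi \in \H_{\Delta=\lambda}^K \cap \H_\R$ with $\lambda \leq \Lambda_0$: this follows from the tail bound of Proposition~\ref{prop:exp_decay_mode_old} (valid at a fixed scale $M_* \gg 1$ uniformly in $\lambda \leq \Lambda_0$) by a dyadic-decomposition argument exactly like the one proving Theorem~\ref{thm:exp_decay_form}. By polarization, $\exp(c\sqrt{\Delta})(\alpha\beta) \in \H$ for all $\alpha,\beta \in \H_{\Delta\in I_i}^K$; since $\H_{\Delta\in I_i}^K$ is finite-dimensional (discrete spectrum), the map $(\alpha,\beta) \mapsto \exp(c\sqrt{\Delta})(\alpha\beta)$ is the restriction of a bilinear map between finite-dimensional spaces and hence continuous into $\H$, so $(\alpha,\beta) \mapsto \|\1_{\Delta \geq M}\exp(c\sqrt{\Delta})(\alpha\beta)\|_\H$ is continuous, monotone in $M$, and tends to $0$ pointwise on the compact product of unit spheres, so by Dini's theorem the convergence is uniform. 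Thus there is a finite $C_i$ with $\|\1_{\Delta \geq C_i}\exp(c\sqrt{\Delta})(\alpha\beta)\|_\H \leq \|\alpha\|_\H\|\beta\|_\H$ on the unit spheres, hence by homogeneity for all $\alpha,\beta \in \H_{\Delta\in I_i}^K$. The case $\lambda_i = 0$ is trivial since then $\H_{\Delta\in I_i}^K = \C\mathbf 1$ by ergodicity and the bound holds with equality. Enlarging $C$ so as to dominate $C_i/\lambda_i$ over the finitely many remaining pieces then keeps $C$ a finite absolute constant and completes the argument.

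I expect the genuine content to be already packaged in Proposition~\ref{prop:exp_decay_mode_after_dyadic}, which rests on the crossing-symmetry bootstrap inequality Proposition~\ref{prop:basic_ineq_1}, the sharp sign/lower bound Lemma~\ref{lem:p_n_sign} for $p_n$ used with an optimized choice of $n$, and the $L^4$ quasi-Sobolev embedding Theorem~\ref{thm:L^4_quasi-Sobolev_poly_Weyl} (to pass from $L^4$- to $L^2$-quality). The only subtlety in the assembly is arranging the constant in the conclusion to be exactly $1$ rather than $O(1)$: this is what forces the $\varepsilon$-sharpening of Proposition~\ref{prop:exp_decay_mode_after_dyadic} in the large-$\lambda$ range and the finite-dimensionality/$C$-enlargement argument in the bounded range, so that is the step I would be most careful about.
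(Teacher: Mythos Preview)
Your proof is correct but more elaborate than the paper's. Both reduce to Proposition~\ref{prop:exp_decay_mode_after_dyadic} via the same partition/polarization scheme; the difference is in how the polarization constant is removed to turn $\lesssim$ into $\leq$.

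The paper takes $\lambda_i$ to be the \emph{midpoint} of $I_i$ and arranges that the interval containing $0$ is non-degenerate, so that $\lambda_i \gtrsim 1$ uniformly in $i$. Then a single global trick suffices: if polarization gives $\|\1_{\Delta\geq C\lambda_i}\exp(c\sqrt{\Delta})(\alpha\beta)\|_{\H} \leq A\|\alpha\|_{\H}\|\beta\|_{\H}$ with absolute $A$, one replaces $c$ by $c/2$ and $C$ by a larger $C'$, using
\[
\|\1_{\Delta\geq C'\lambda_i}\exp(\tfrac{c}{2}\sqrt{\Delta})v\|_{\H}
\leq \exp\bigl(-\tfrac{c}{2}\sqrt{C'\lambda_i}\bigr)\,\|\1_{\Delta\geq C\lambda_i}\exp(c\sqrt{\Delta})v\|_{\H};
\]
since $\lambda_i \gtrsim 1$, the prefactor can be made $\leq 1/A$ by choosing $C'$ large but still $\lesssim 1$. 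This handles all $i$ at once with no case split.

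Your choice of left endpoints forces $\lambda_i = 0$ for one piece and allows $\lambda_i$ to be arbitrarily small for finitely many others, which is why you need the separate $\varepsilon$-sharpening, the Dini/compactness argument, and the $\lambda_i=0$ computation. All three ingredients are sound, but the paper's midpoint choice makes them unnecessary.
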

	
	\begin{proof}
		Since $\H$ obeys a polynomial Weyl law, there exists a partition of $[0,\infty)$ into intervals $I_i$ of length $\lesssim 1$, such that the partition has polynomial growth, and such that each interval contains at most one eigenvalue of the Casimir on $\H^K$ (not counting multiplicity). One can ensure also that the interval containing $0$ is not the degenerate interval $\{0\}$. Fix such a partition. Let $\lambda_i$ be the midpoint of $I_i$ (for concreteness). If $I_{i_0}$ is the interval containing $0$, then $\lambda_i \geq \lambda_{i_0}$ for all $i$, and $\lambda_{i_0} > 0$ because $I_{i_0}$ is non-degenerate. Since $\lambda_{i_0}$ depends only on $\H$, this means that $\lambda_{i_0} \gtrsim 1$. Thus $\lambda_i \gtrsim 1$ for all $i$.
		
		Now let $I_i$ be an arbitrary interval in the partition.
		If $I_i$ does not contain a Casimir eigenvalue, then $\H_{\Delta \in I_i}^K = 0$ and \eqref{eqn:exp_decay_quasimodes_poly_Weyl} holds vacuously. The only interesting case, then, is that $I_i$ does contain a Casimir eigenvalue. Call this eigenvalue $\lambda_i'$. By construction, $\lambda_i'$ is unique, so $\H_{\Delta \in I_i}^K = \H_{\Delta=\lambda_i'}^K$.
		Let $c \gtrsim 1$ be as in Proposition~\ref{prop:exp_decay_mode_after_dyadic}.
		Since $\lambda_i \gtrsim 1$ and $I_i$ has length $\lesssim 1$, there exists $C \lesssim 1$ (independent of $i$) such that $C\lambda_i$ is a large enough multiple of $\lambda_i'+1$ for Proposition~\ref{prop:exp_decay_mode_after_dyadic} to give
		\begin{align} \label{eqn:after_exp_decay_mode_after_dyadic}
			\|\1_{\Delta \geq C\lambda_i} \exp(c\sqrt{\Delta})(\varphi^2)\|_{\H}
			\leq \|\varphi\|_{\H}^2
		\end{align}
		for all automorphic vectors $\varphi \in \H_{\R}^K \cap \H^{\fin}$ with Casimir eigenvalue $\lambda_i'$, or equivalently for all $\varphi \in \H_{\Delta \in I_i}^K \cap \H_{\R}$. Let $\alpha,\beta \in \H_{\Delta \in I_i}^K$. We claim that
		\begin{align} \label{eqn:claim_via_polarization}
			\|\1_{\Delta \geq C\lambda_i} \exp(c\sqrt{\Delta})(\alpha\beta)\|_{\H}
			\lesssim \|\alpha\|_{\H} \|\beta\|_{\H}.
		\end{align}
		By splitting $\alpha,\beta$ into real and imaginary parts, we may assume $\alpha,\beta \in \H_{\R}$. By normalizing, we may additionally assume $\|\alpha\|_{\H} = \|\beta\|_{\H} = 1$. Then the polarization identity $\alpha\beta = \frac{1}{2}(\alpha+\beta)^2 - \frac{1}{2}\alpha^2 - \frac{1}{2}\beta^2$ combined with \eqref{eqn:after_exp_decay_mode_after_dyadic} gives the claimed estimate \eqref{eqn:claim_via_polarization}. This is almost the same as \eqref{eqn:exp_decay_quasimodes_poly_Weyl}, except with $\lesssim$ instead of $\leq$. To replace $\lesssim$ with $\leq$, we simply increase $C$ and decrease $c$.
	\end{proof}
	
	\section{Proof of bulk and tail bounds in general} \label{sec:bulk_tail_general}
	
	Recall that Sections~\ref{sec:embedding_homog}, \ref{sec:reduce_to_L^infty}, and \ref{sec:reduce_to_bulk_tail} reduced Theorem~\ref{thm:eq_Gelfand_duality} to Theorems~\ref{thm:L^4_quasi-Sobolev}, \ref{thm:exp_decay_form}, and \ref{thm:exp_decay_quasimode}. Section~\ref{sec:bulk_tail_poly} proved Theorems~\ref{thm:L^4_quasi-Sobolev} and \ref{thm:exp_decay_quasimode} under the assumption that $\H$ obeys a polynomial law, and proved Theorem~\ref{thm:exp_decay_form} unconditionally. In this section, we prove Theorems~\ref{thm:L^4_quasi-Sobolev} and \ref{thm:exp_decay_quasimode} unconditionally as well, completing the proof of Theorem~\ref{thm:eq_Gelfand_duality}.
	
	Throughout this section, given $\gamma \in \H^{\fin}$, $i \in \Z_{\geq 0}$, and $\lambda \in \R$, denote $\gamma_i(\lambda) = (\Delta-\lambda)^i\gamma$. It will usually be clear from context what $\lambda$ is, in which case we just write $\gamma_i$. Note that $\gamma_0 = \gamma$, and if $\gamma$ is a Casimir eigenvector with eigenvalue $\lambda$, then $\gamma_i = 0$ for $i>0$. In practice, $\gamma$ will be an approximate Casimir eigenvector with approximate eigenvalue $\lambda$, so $\gamma_i \approx 0$ when $i>0$.
	
	The above notation is used, for example, to define $\alpha_i$ and $\beta_j$ in the right hand side of \eqref{eqn:Re E^n alpha F^n beta_formula} below, with $\lambda$ taken to be the same $\lambda$ as in the statement of Proposition~\ref{prop:p_n,i,j_def}.
	
	
	\subsection{A refined identity from the product rule} \label{subsec:refined_identity}
	
	The main result of this subsection is Proposition~\ref{prop:p_n,i,j_def}, which generalizes Proposition~\ref{prop:p_n_def}. The terms $(i,j) \neq (0,0)$ on the right hand side of \eqref{eqn:Re E^n alpha F^n beta_formula} are the correction terms referred to in Subsection~\ref{subsec:outline:bulk_tail_general}.
	
	\begin{prop} \label{prop:p_n,i,j_def}
		Let $\alpha,\beta \in \H_{\R}^K \cap \H^{\fin}$. Let $\lambda \in \R$. Then for $n \in \Z_{\geq 0}$,
		\begin{align} \label{eqn:Re E^n alpha F^n beta_formula}
			\re E^n\alpha \overline{E}^n\beta
			= \sum_{i,j} p_{n,i,j}(\lambda,\Delta)(\alpha_i\beta_j),
		\end{align}
		where $p_{n,i,j}$, defined for $n \in \Z_{\geq 0}$ and $i,j \in \Z$, is the polynomial in two variables with real coefficients given by the recurrence
		\begin{align} \label{eqn:p_n,i,j_recurrence}
			p_{n+1,i,j}(\lambda,\mu)
			&= [(2\lambda-\mu+2n^2) p_{n,i,j} - (\lambda+n(n-1))^2 p_{n-1,i,j}](\lambda,\mu)
			\\&\quad+ [p_{n,i-1,j} + p_{n,i,j-1}](\lambda,\mu) \notag
			\\&\quad- (\lambda+n(n-1)) [p_{n-1,i-1,j} + p_{n-1,i,j-1}](\lambda,\mu) \notag
			\\&\quad- p_{n-1,i-1,j-1}(\lambda,\mu) \notag
		\end{align}
		for $n \geq 1$, with initial conditions
		\begin{align} \label{eqn:p_0,i,j_def}
			p_{0,i,j}(\lambda,\mu) =
			\begin{cases}
				1 &\textnormal{if } (i,j) = (0,0), \\
				0 &\textnormal{otherwise},
			\end{cases}
		\end{align}
		and
		\begin{align} \label{eqn:p_1,i,j_def}
			p_{1,i,j}(\lambda,\mu) =
			\begin{cases}
				\lambda - \frac{1}{2}\mu &\textnormal{if } (i,j) = (0,0), \\
				\frac{1}{2} &\textnormal{if } (i,j) = (1,0) \textnormal{ or } (0,1), \\
				0 &\textnormal{otherwise.}
			\end{cases}
		\end{align}
	\end{prop}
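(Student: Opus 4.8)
\emph{Proof plan.} I would argue by induction on $n$, following the template of the proof of Proposition~\ref{prop:p_n_def} but carrying along the ``correction terms'' that appear each time we replace $\Delta$, acting on a single vector $\alpha$ or $\beta$, by the scalar $\lambda$. Throughout, the only inputs are $\Delta|_{\H^K}=-\overline{E}E=-E\overline{E}$, the product rule \eqref{eqn:prod_rule}, the commutation relations \eqref{eqn:comm_rlns}, the identities \eqref{eqn:EEbar} for $\overline{E}E$ and $E\overline{E}$ on weight vectors, the centrality of $\Delta$ in $\mathfrak{U}(\g)$, and the facts that $\overline{\alpha}=\alpha$, $\overline{\beta}=\beta$ and $\Delta$ preserves $\H_{\R}$.

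For the base cases: when $n=0$, $\re(\alpha\beta)=\alpha\beta$ since $\H_{\R}$ is closed under multiplication, matching $p_{0,0,0}(\lambda,\Delta)=1$. When $n=1$, the same two-line computation as in Proposition~\ref{prop:p_n_def} --- using $\Delta|_{\H^K}=-\overline{E}E$, the product rule, and $\overline{E}E\gamma=-\Delta\gamma$ for $\gamma\in\H^K$, together with $\overline{E}\alpha=\overline{E\alpha}$ --- gives
\[
\Delta(\alpha\beta)=(\Delta\alpha)\beta+\alpha(\Delta\beta)-2\re\!\big(E\alpha\,\overline{E}\beta\big).
\]
Writing $\Delta\alpha=\lambda\alpha+\alpha_1$ and $\Delta\beta=\lambda\beta+\beta_1$ and solving for $\re(E\alpha\,\overline{E}\beta)$ yields exactly $(\lambda-\tfrac12\Delta)(\alpha\beta)+\tfrac12\alpha_1\beta+\tfrac12\alpha\beta_1$, which is \eqref{eqn:Re E^n alpha F^n beta_formula} for $n=1$ with the coefficients \eqref{eqn:p_1,i,j_def}.

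For the induction step ($n\ge 1$, assuming the formula for $n$ and $n-1$): put $U_n:=E^n\alpha\,\overline{E}^n\beta\in\H^{\infty}$, so that $\re U_n$ is the left side of \eqref{eqn:Re E^n alpha F^n beta_formula}. I would compute $\Delta U_n=-\overline{E}EU_n$ by applying $E$ and then $\overline{E}$, using the product rule at each step; here one uses that $E^m\alpha$ has weight $m$ and $\overline{E}^m\beta$ has weight $-m$ to evaluate $\overline{E}E(E^m\alpha)=-(\Delta+m(m+1))E^m\alpha$, $E\overline{E}(\overline{E}^m\beta)=-(\Delta+m(m-1))\overline{E}^m\beta$, etc., and one uses centrality of $\Delta$ to commute it past $E$ and $\overline{E}$. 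Solving the resulting identity for $U_{n+1}$ and then, in every place where $\Delta$ acts on a single factor $E^m\alpha$ or $\overline{E}^m\beta$, writing $\Delta=\lambda+(\Delta-\lambda)$ (so that $(\Delta-\lambda)E^m\alpha=E^m\alpha_1$, $(\Delta-\lambda)\overline{E}^m\beta=\overline{E}^m\beta_1$), one should obtain
\[
U_{n+1}=(2\lambda+2n^2-\Delta)U_n-(\lambda+n(n-1))^2U_{n-1}+(\text{correction terms}),
\]
where the corrections are $E^n\alpha_{i_0}\overline{E}^n\beta_{j_0}$ for $(i_0,j_0)\in\{(1,0),(0,1)\}$ with coefficient $1$, together with $E^{n-1}\alpha_{i_0}\overline{E}^{n-1}\beta_{j_0}$ for $(i_0,j_0)\in\{(1,0),(0,1)\}$ with coefficient $-(\lambda+n(n-1))$ and for $(i_0,j_0)=(1,1)$ with coefficient $-1$ --- precisely the last three lines of \eqref{eqn:p_n,i,j_recurrence}. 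Taking real parts (which commutes with multiplication by $\Delta$ and by real scalars, and with passage to each correction term, since $\alpha_{i_0},\beta_{j_0}\in\H_{\R}$) and then applying the induction hypothesis to each pair $(\alpha_{i_0},\beta_{j_0})$ in place of $(\alpha,\beta)$ --- using $(\alpha_{i_0})_i=\alpha_{i+i_0}$ --- and re-indexing the double sums by the exponents $a,b$ of $\alpha_a,\beta_b$, one finds that the coefficient of $\alpha_a\beta_b$ in $\re U_{n+1}$ is exactly the right side of \eqref{eqn:p_n,i,j_recurrence} evaluated at $(\lambda,\Delta)$, namely $p_{n+1,a,b}(\lambda,\Delta)$. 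This closes the induction.

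The main obstacle is purely the bookkeeping in the induction step: because $\Delta$ is \emph{not} a derivation it cannot be pulled through products, so one must carefully distinguish the $\Delta$'s that remain attached to $U_n$ or $U_{n-1}$ as a whole (these stay as $\Delta$) from those that act on individual factors (these get split, producing the shifts $\alpha\mapsto\alpha_1$, $\beta\mapsto\beta_1$), and then match the resulting five summands to the five lines of \eqref{eqn:p_n,i,j_recurrence}. The representation-theoretic identities are routine. One should also check at the outset that all manipulations are legitimate: $E^m\alpha,\overline{E}^m\beta\in\H^{\fin}$, products of $\H^{\fin}$-elements lie in $\H^{\infty}$, and $\Delta,E,\overline{E}$ act on $\H^{\infty}$, so the product rule applies to every product that appears.
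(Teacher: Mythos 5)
Your plan is correct and matches the paper's proof essentially line for line: induction on $n$, base cases $n=0,1$ by direct computation, then for $n\geq 1$ computing $\Delta(E^n\alpha\,\overline{E}^n\beta)=-\overline{E}E(E^n\alpha\,\overline{E}^n\beta)$ via the product rule and \eqref{eqn:EEbar}, splitting the $\Delta$'s acting on single factors as $\lambda+(\Delta-\lambda)$, taking real parts, applying the induction hypothesis to the shifted pairs $(\alpha_{i_0},\beta_{j_0})$, and re-indexing to read off the recurrence \eqref{eqn:p_n,i,j_recurrence}. One minor slip in your illustrative formulas: as written, $E\overline{E}$ applied to a vector of weight $-m$ gives $-(\Delta+m(m+1))$, not $-(\Delta+m(m-1))$; what you presumably meant is $E(\overline{E}^m\beta)=-(\Delta+m(m-1))\overline{E}^{m-1}\beta$, which is the identity actually used, so the rest of the derivation is unaffected.
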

	
	Note by induction on $n$ that
	\begin{align} \label{eqn:p_n,i,j_vanishing}
		p_{n,i,j} = 0
		\qquad \text{if} \qquad
		i < 0 \text{ or } j < 0 \text{ or } i+j>n.
	\end{align}
	Thus the right hand side of \eqref{eqn:Re E^n alpha F^n beta_formula} is a finite sum over terms indexed by nonnegative integers $i,j$; this nonnegativity means that it makes sense to write $\alpha_i,\beta_j$. The right hand side of \eqref{eqn:Re E^n alpha F^n beta_formula} is also real, so taking the real part on the left is necessary.
	
	Using \eqref{eqn:p_n,i,j_vanishing} and comparing the recurrences for $p_{n,i,j}$ and $p_n$ (where as in previous sections, $p_n$ is as in Proposition~\ref{prop:p_n_def}), we see that
	\begin{align*}
		p_{n,0,0} = p_n.
	\end{align*}
	It is then clear that Proposition~\ref{prop:p_n,i,j_def} is a direct generalization of Proposition~\ref{prop:p_n_def}.
	
	\begin{proof}[Proof of Proposition~\ref{prop:p_n,i,j_def}]
		Since $\alpha\beta$ is already real, the case $n=0$ of \eqref{eqn:Re E^n alpha F^n beta_formula} is immediate from the definition of $p_{0,i,j}$. By \eqref{eqn:Delta|_H^K} and the product rule,
		\begin{align*}
			\Delta(\alpha\beta)
			= -\overline{E}E(\alpha\beta)
			= -E\alpha \overline{E}\beta - \overline{E}\alpha E\beta - (\overline{E}E\alpha)\beta - \alpha \overline{E}E\beta
			= -2\re E\alpha \overline{E}\beta + (\Delta\alpha)\beta + \alpha(\Delta\beta).
		\end{align*}
		Rearranging,
		\begin{align*}
			\re E\alpha\overline{E}\beta
			= -\frac{1}{2}\Delta(\alpha\beta) + \frac{1}{2}(\Delta\alpha)\beta + \frac{1}{2}\alpha(\Delta\beta).
		\end{align*}
		In two of the three terms here, $\Delta$ falls on either $\alpha$ or $\beta$. Rewriting these two $\Delta$'s as $\lambda + (\Delta-\lambda)$,
		\begin{align*}
			\re E\alpha \overline{E}\beta
			= \Big(\lambda - \frac{1}{2}\Delta\Big) \alpha\beta + \frac{1}{2}(\alpha_1\beta + \alpha\beta_1).
		\end{align*}
		This is \eqref{eqn:Re E^n alpha F^n beta_formula} for $n=1$. For $n \geq 1$, use \eqref{eqn:Delta|_H^K}, \eqref{eqn:EEbar}, and the product rule to similarly compute
		\begin{align*}
			\Delta(E^n\alpha \overline{E}^n\beta)
			&= -\overline{E}E(E^n\alpha \overline{E}^n\beta)
			\\&= -E^{n+1}\alpha \overline{E}^{n+1}\beta - \overline{E}E^n\alpha E\overline{E}^n\beta - \overline{E}E^{n+1}\alpha \overline{E}^n\beta - E^n\alpha \overline{E}E\overline{E}^n\beta
			\\&= -E^{n+1}\alpha \overline{E}^{n+1}\beta - (\Delta+H^2+H) E^{n-1}\alpha (\Delta+H^2-H) \overline{E}^{n-1}\beta
			\\&\qquad\qquad\qquad\qquad + (\Delta+H^2+H) E^n\alpha\overline{E}^n\beta
			+ E^n\alpha (\Delta+H^2+H) \overline{E}^n\beta
			\\&= -E^{n+1}\alpha \overline{E}^{n+1}\beta - E^{n-1}(\Delta+n(n-1))\alpha \overline{E}^{n-1}(\Delta+n(n-1))\beta
			\\&\qquad\qquad\qquad\qquad + E^n\Delta\alpha \overline{E}^n\beta + E^n\alpha \overline{E}^n\Delta\beta
			+ 2n^2 E^n\alpha \overline{E}^n\beta.
		\end{align*}
		Rearranging,
		\begin{align*}
			E^{n+1}\alpha \overline{E}^{n+1}\beta
			= &-\Delta(E^n\alpha \overline{E}^n\beta) + E^n\Delta\alpha \overline{E}^n\beta + E^n\alpha \overline{E}^n\Delta\beta + 2n^2 E^n\alpha \overline{E}^n\beta
			\\&- E^{n-1}(\Delta + n(n-1))\alpha \overline{E}^{n-1}(\Delta + n(n-1))\beta.
		\end{align*}
		Replacing the $\Delta$'s that fall on $\alpha$ or $\beta$ with $\lambda + (\Delta-\lambda)$,
		\begin{align*}
			E^{n+1}\alpha \overline{E}^{n+1}\beta
			&= (2\lambda-\Delta+2n^2)(E^n\alpha \overline{E}^n\beta) - (\lambda+n(n-1))^2 E^{n-1}\alpha \overline{E}^{n-1}\beta
			\\&\quad+ E^n\alpha_1 \overline{E}^n\beta + E^n\alpha \overline{E}^n \beta_1
			\\&\quad- (\lambda+n(n-1))[E^{n-1}\alpha_1 \overline{E}^{n-1}\beta + E^{n-1}\alpha \overline{E}^{n-1}\beta_1]
			\\&\quad- E^{n-1}\alpha_1 \overline{E}^{n-1}\beta_1.
		\end{align*}
		Taking real parts and comparing with \eqref{eqn:p_n,i,j_recurrence}, we obtain \eqref{eqn:Re E^n alpha F^n beta_formula} by induction on $n$.
	\end{proof}
	
	\subsection{Refined bootstrap inequalities} \label{subsec:refined_ineqs}
	
	Recall $r_n$ and $s_n$, defined by \eqref{eqn:r_n_def} and \eqref{eqn:s_n_def}. Just as $p_{n,i,j}$ generalizes $p_n$, let $s_{n,i,j}$ be the following generalization of $s_n$. For $n \in \Z_{\geq 0}$ and $i,j \in \Z$, define
	\begin{align} \label{eqn:s_n,i,j_def}
		s_{n,i,j}(\lambda,\mu)
		= [p_{n+1,i,j} - (\lambda+n(n+1)) p_{n,i,j} - p_{n,i-1,j}](\lambda,\mu).
	\end{align}
	By \eqref{eqn:p_n,i,j_vanishing}, we have
	\begin{align} \label{eqn:s_n,i,j_vanishing}
		s_{n,i,j} = 0
		\qquad \text{if} \qquad
		i < 0 \text{ or } j < 0 \text{ or } i+j > n+1
	\end{align}
	and
	\begin{align*}
		s_{n,0,0} = s_n.
	\end{align*}
	We can now state the two ``refined bootstrap inequalities" which we will prove in this subsection. They are Propositions~\ref{prop:refined_ineq_1} and \ref{prop:refined_ineq_2}, and they generalize Propositions~\ref{prop:basic_ineq_1} and \ref{prop:basic_ineq_3}, respectively.
	
	\begin{prop} \label{prop:refined_ineq_1}
		Let $\alpha \in \H_{\R}^K \cap \H^{\fin}$. Let $\lambda \in \R$. Then for $n \in \Z_{\geq 0}$,
		\begin{align*}
			-\langle p_np_{n+1}(\lambda,\Delta)(\alpha^2), \alpha^2 \rangle_{\H}
			\leq \sum_{\substack{i,j,i',j' \\ \textnormal{not all }0}} \||p_{n,i,j}p_{n+1,i',j'}(\lambda,\Delta)|^{\frac{1}{2}}(\alpha_i\alpha_j)\|_{\H} \||p_{n,i,j}p_{n+1,i',j'}(\lambda,\Delta)|^{\frac{1}{2}}(\alpha_{i'}\alpha_{j'})\|_{\H}.
		\end{align*}
	\end{prop}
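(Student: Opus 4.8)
The plan is to mimic the proof of Proposition~\ref{prop:basic_ineq_1} verbatim at the level of the main term, using the refined identity of Proposition~\ref{prop:p_n,i,j_def} in place of Proposition~\ref{prop:p_n_def}, and then absorb all the cross terms into the right-hand side by Cauchy--Schwarz. The starting point is the trivial inequality $0 \le \|E^{n+1}\alpha\,\overline{E}^n\alpha\|_{\H}^2$. Since $\alpha \in \H_{\R}$, we have $\overline{E}^m\alpha = \overline{E^m\alpha}$ for all $m$, so exactly as in the proof of Proposition~\ref{prop:basic_ineq_1} (using crossing symmetry to swap the second and third entries), $\|E^{n+1}\alpha\,\overline{E}^n\alpha\|_{\H}^2 = \langle |E^{n+1}\alpha|^2, |E^n\alpha|^2 \rangle_{\H}$. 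Because $\alpha$ (played here by both $\alpha$ and $\beta$ in Proposition~\ref{prop:p_n,i,j_def}) lies in $\H_{\R}^K \cap \H^{\fin}$, Proposition~\ref{prop:p_n,i,j_def} applied at levels $n$ and $n+1$, both with the given $\lambda$, gives the finite spectral expansions $|E^m\alpha|^2 = \re E^m\alpha\,\overline{E}^m\alpha = \sum_{i,j} p_{m,i,j}(\lambda,\Delta)(\alpha_i\alpha_j)$ for $m = n, n+1$; the sums are finite by \eqref{eqn:p_n,i,j_vanishing}, and each $\alpha_i\alpha_j = \big((\Delta-\lambda)^i\alpha\big)\big((\Delta-\lambda)^j\alpha\big)$ is a product of two elements of $\H^{\fin}$, hence lies in $\H^{K} \cap \H^{\infty}$.

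Expanding the inner product bilinearly, using that $\Delta$ is self-adjoint and all $p_{m,i,j}$ have real coefficients, one gets
\[
0 \le \sum_{i,j,i',j'} \big\langle\, p_{n,i,j}p_{n+1,i',j'}(\lambda,\Delta)(\alpha_{i'}\alpha_{j'}),\ \alpha_i\alpha_j \,\big\rangle_{\H}.
\]
The term with $(i,j,i',j') = (0,0,0,0)$ equals $\langle p_np_{n+1}(\lambda,\Delta)(\alpha^2), \alpha^2 \rangle_{\H}$, since $p_{m,0,0} = p_m$ and $\alpha_0 = \alpha$. Isolating this main term and moving the remaining ones across yields
\[
-\langle p_np_{n+1}(\lambda,\Delta)(\alpha^2), \alpha^2 \rangle_{\H} \le \sum_{\substack{i,j,i',j' \\ \text{not all }0}} \big|\big\langle p_{n,i,j}p_{n+1,i',j'}(\lambda,\Delta)(\alpha_{i'}\alpha_{j'}),\ \alpha_i\alpha_j\big\rangle_{\H}\big|.
\]
To bound a single correction term, I would set $g := p_{n,i,j}p_{n+1,i',j'}$, a real polynomial, write $g(\Delta) = \big(\sgn(g)\,|g|^{1/2}\big)(\Delta)\,|g|^{1/2}(\Delta)$ via the functional calculus of Subsection~\ref{subsec:sl_2:func_calc} (legitimate because $|g|^{1/2}$ has polynomial growth and $\alpha_i\alpha_j \in \H^{\infty}$), move one factor $|g|^{1/2}(\Delta)$ onto the other slot by self-adjointness, and apply Cauchy--Schwarz together with the monotonicity property $\|(\sgn(g)\,|g|^{1/2})(\Delta)v\|_{\H} = \||g|^{1/2}(\Delta)v\|_{\H}$. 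This gives $\big|\langle g(\Delta)(\alpha_{i'}\alpha_{j'}), \alpha_i\alpha_j\rangle_{\H}\big| \le \||g|^{1/2}(\Delta)(\alpha_i\alpha_j)\|_{\H}\,\||g|^{1/2}(\Delta)(\alpha_{i'}\alpha_{j'})\|_{\H}$, which is exactly the summand on the right-hand side of the proposition; summing over all $(i,j,i',j')$ not all zero finishes the proof.

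I do not expect a serious obstacle here: the argument is essentially a bookkeeping elaboration of Proposition~\ref{prop:basic_ineq_1}, once Proposition~\ref{prop:p_n,i,j_def} is in hand. The only point needing a little care is the functional-calculus factorization $g(\Delta) = (\sgn(g)|g|^{1/2})(\Delta)\,|g|^{1/2}(\Delta)$ and the verification that every vector involved lies in the relevant domain, but this is routine: $\alpha_i\alpha_j \in \H^{\infty}$, so by Proposition~\ref{prop:smooth_iff_spectral_decay} its spectral measure decays faster than any polynomial, and $|g|^{1/2}$ is of polynomial growth, so $\||g|^{1/2}(\Delta)(\alpha_i\alpha_j)\|_{\H} < \infty$ and all inner products are finite.
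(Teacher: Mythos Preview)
Your proposal is correct and follows essentially the same route as the paper. The paper packages the schematic as $\LHS\eqref{eqn:fund_crossing}=\MT_{\LHS}+\RT_{\LHS}$ with $\RHS\eqref{eqn:fund_crossing}\ge 0$, and isolates the Cauchy--Schwarz step as a separate Lemma~\ref{lem:operator_C-S} (your inline factorization $g(\Delta)=(\sgn(g)|g|^{1/2})(\Delta)\,|g|^{1/2}(\Delta)$ is exactly that lemma specialized to $A=p_{n,i,j}(\lambda,\Delta)$, $B=p_{n+1,i',j'}(\lambda,\Delta)$).
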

	
	\begin{prop} \label{prop:refined_ineq_2}
		Let $\alpha \in \H_{\R}^K \cap \H^{\fin}$. Let $\lambda \in \R$. Then for $n \in \Z_{\geq 0}$,
		\begin{align*}
			\langle r_n(\lambda,\Delta)(\alpha^2), \alpha^2 \rangle_{\H}
			\leq \sum_{\substack{i,j,i',j' \\ \textnormal{not all }0}} \Big(&\||p_{n,i,j}p_{n+1,i',j'}(\lambda,\Delta)|^{\frac{1}{2}}(\alpha_i\alpha_j)\|_{\H} \||p_{n,i,j}p_{n+1,i',j'}(\lambda,\Delta)|^{\frac{1}{2}}(\alpha_{i'}\alpha_{j'})\|_{\H}
			\\&+ \|\1_{\Delta\geq 1} \Delta^{-\frac{1}{2}} s_{n,i,j}(\lambda,\Delta)(\alpha_i\alpha_j)\|_{\H} \|\1_{\Delta\geq 1} \Delta^{-\frac{1}{2}} s_{n,i',j'}(\lambda,\Delta)(\alpha_{i'}\alpha_{j'})\|_{\H}\Big).
		\end{align*}
	\end{prop}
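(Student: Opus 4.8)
The plan is to rerun the argument of Proposition~\ref{prop:basic_ineq_3}, but keeping the correction terms, just as Proposition~\ref{prop:refined_ineq_1} does for Proposition~\ref{prop:basic_ineq_1}. Throughout, write $v = E^{n+1}\alpha \cdot \overline{E}^n\alpha$; since $\alpha \in \H_{\R}^K$ we have $\overline{E}^n\alpha = \overline{E^n\alpha}$, and $v \in \H^{\infty}$ is a weight~$1$ vector. First I would record a ``crossing identity'' for $\|v\|_{\H}^2$. Exactly as in the proof of Proposition~\ref{prop:basic_ineq_1} (with $\varphi$ replaced by $\alpha$), crossing symmetry gives $\|v\|_{\H}^2 = \langle |E^{n+1}\alpha|^2, |E^n\alpha|^2 \rangle_{\H}$. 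Expanding both factors with Proposition~\ref{prop:p_n,i,j_def} (for $n+1$ and for $n$, with $\beta = \alpha$) and using self-adjointness of $\Delta$ isolates the $(0,0)$-diagonal term $\langle p_np_{n+1}(\lambda,\Delta)(\alpha^2), \alpha^2 \rangle_{\H}$; each remaining term $\langle p_{n+1,i,j}(\lambda,\Delta)(\alpha_i\alpha_j), p_{n,i',j'}(\lambda,\Delta)(\alpha_{i'}\alpha_{j'}) \rangle_{\H}$ is, by self-adjointness together with $\langle T(\Delta)a,b\rangle = \langle \sgn(T(\Delta))\,|T(\Delta)|^{1/2}a,\,|T(\Delta)|^{1/2}b\rangle$ and Cauchy--Schwarz, at most $\|\,|p_{n,i',j'}p_{n+1,i,j}(\lambda,\Delta)|^{1/2}(\alpha_i\alpha_j)\|_{\H}\,\|\,|p_{n,i',j'}p_{n+1,i,j}(\lambda,\Delta)|^{1/2}(\alpha_{i'}\alpha_{j'})\|_{\H}$ in absolute value. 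Reindexing $(i,j)\leftrightarrow(i',j')$ in the sum over quadruples not all zero turns this into the first family of terms in the statement, so
\begin{align*}
\|v\|_{\H}^2 \leq \langle p_np_{n+1}(\lambda,\Delta)(\alpha^2),\alpha^2\rangle_{\H} + \sum_{\substack{i,j,i',j' \\ \textnormal{not all }0}} \|\,|p_{n,i,j}p_{n+1,i',j'}(\lambda,\Delta)|^{\frac12}(\alpha_i\alpha_j)\|_{\H}\,\|\,|p_{n,i,j}p_{n+1,i',j'}(\lambda,\Delta)|^{\frac12}(\alpha_{i'}\alpha_{j'})\|_{\H}.
\end{align*}
(Dropping $\|v\|_{\H}^2\ge 0$ from the underlying identity also reproves Proposition~\ref{prop:refined_ineq_1}.)

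Second, I would produce a matching lower bound on $\|v\|_{\H}^2$ via Lemma~\ref{lem:wt_1_lower_bd}. Since $v$ has weight~$1$, $\|v\|_{\H}^2 \geq \|\1_{\Delta\geq1}\Delta^{-1/2}\overline{E}v\|_{\H}^2 \geq \|\1_{\Delta\geq1}\Delta^{-1/2}\re(\overline{E}v)\|_{\H}^2$, the second step because $\1_{\Delta\geq1}\Delta^{-1/2}$ has real symbol (hence commutes with conjugation) and $\|w\|_{\H}^2 = \|\re w\|_{\H}^2 + \|\im w\|_{\H}^2$. The main computation is the identity $\re(\overline{E}v) = \sum_{i,j} s_{n,i,j}(\lambda,\Delta)(\alpha_i\alpha_j)$: apply the product rule to $\overline{E}(E^{n+1}\alpha\cdot\overline{E}^n\alpha)$, rewrite $\overline{E}E^{n+1}\alpha$ by \eqref{eqn:EEbar} and the weight of $E^n\alpha$, redistribute the resulting $\Delta$ onto $\alpha$ by centrality (so $\Delta E^n\alpha = \lambda E^n\alpha + E^n\alpha_1$), invoke Proposition~\ref{prop:p_n,i,j_def} for the pairs $(\alpha,\alpha)$ and $(\alpha_1,\alpha)$ --- using $(\alpha_1)_i = \alpha_{i+1}$, so that $\re(E^n\alpha_1\cdot\overline{E}^n\alpha) = \sum_{i,j} p_{n,i-1,j}(\lambda,\Delta)(\alpha_i\alpha_j)$ --- and compare with the definition \eqref{eqn:s_n,i,j_def} of $s_{n,i,j}$. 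Expanding $\|\1_{\Delta\geq1}\Delta^{-1/2}\sum_{i,j}s_{n,i,j}(\lambda,\Delta)(\alpha_i\alpha_j)\|_{\H}^2$, isolating the $(0,0)$-term $\langle\1_{\Delta\geq1}\Delta^{-1}s_n(\lambda,\Delta)^2(\alpha^2),\alpha^2\rangle_{\H}$ (since $s_{n,0,0}=s_n$), and bounding the remaining terms below by minus the sum of the second family of products in the statement (Cauchy--Schwarz) gives the lower bound.

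Finally, combining the upper and lower bounds on $\|v\|_{\H}^2$ and cancelling, the left side becomes $\langle\1_{\Delta\geq1}\Delta^{-1}s_n(\lambda,\Delta)^2(\alpha^2),\alpha^2\rangle_{\H} - \langle p_np_{n+1}(\lambda,\Delta)(\alpha^2),\alpha^2\rangle_{\H}$, which is precisely $\langle r_n(\lambda,\Delta)(\alpha^2),\alpha^2\rangle_{\H}$ by \eqref{eqn:r_n_def}, while the right side is the sum of the two families of cross terms --- exactly the asserted inequality. I expect the main obstacle to be the bookkeeping in the identity $\re(\overline{E}v) = \sum_{i,j} s_{n,i,j}(\lambda,\Delta)(\alpha_i\alpha_j)$: it is morally the $\overline{E}$-computation from the proof of Proposition~\ref{prop:basic_ineq_3} with the correction terms retained, but one must carefully track how each $\Delta$ produced by the product rule redistributes onto $\alpha$ versus onto the product, so as to reproduce the three-term structure of \eqref{eqn:s_n,i,j_def}. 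Everything else parallels the polynomial-Weyl-law arguments of Section~\ref{sec:bulk_tail_poly} essentially verbatim.
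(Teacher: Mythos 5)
Your argument is correct and mirrors the paper's proof almost step by step: the paper splits the same computation across four auxiliary propositions (Propositions~\ref{prop:LHS=MT+RT}, \ref{prop:RHS_geq_MT+RT}, \ref{prop:LHS_RT_leq_Error}, \ref{prop:RHS_RT_leq_Error}) and then assembles them schematically, while you fold those steps into a single ``upper bound vs.\ lower bound on $\|v\|_{\H}^2$'' argument, but the underlying identities --- the crossing equation, the $\re(\overline E v) = \sum_{i,j} s_{n,i,j}(\lambda,\Delta)(\alpha_i\alpha_j)$ computation via the product rule, \eqref{eqn:EEbar}, centrality, and Proposition~\ref{prop:p_n,i,j_def} with $(\alpha_1)_i = \alpha_{i+1}$, and the two applications of Cauchy--Schwarz / Lemma~\ref{lem:operator_C-S} to isolate the $(0,0)$-diagonal --- are exactly the ones the paper uses. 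This is a correct, somewhat more compact presentation of the same proof.
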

	
	Because of \eqref{eqn:p_n,i,j_vanishing} and \eqref{eqn:s_n,i,j_vanishing}, the right hand sides are finite sums.
	If $\Delta\alpha = \lambda\alpha$, then the right hand sides vanish.
	Thus Propositions~\ref{prop:refined_ineq_1} and \ref{prop:refined_ineq_2} indeed generalize Propositions~\ref{prop:basic_ineq_1} and \ref{prop:basic_ineq_3}.
	A schematic form of the proof of these inequalities is as follows.
	
	\begin{proof}[Proof of Propositions~\ref{prop:refined_ineq_1} and \ref{prop:refined_ineq_2}]
		As in the proofs of Propositions~\ref{prop:basic_ineq_1} and \ref{prop:basic_ineq_3}, consider the crossing equation
		\begin{align} \label{eqn:fund_crossing}
			\langle |E^n\alpha|^2, |E^{n+1}\alpha|^2 \rangle_{\H}
			= \|E^{n+1}\alpha \overline{E}^n\alpha\|_{\H}^2.
		\end{align}
		In Propositions~\ref{prop:LHS=MT+RT} and \ref{prop:RHS_geq_MT+RT} below, we will write
		\begin{align} \label{eqn:MT+RT}
			\LHS\eqref{eqn:fund_crossing}
			= \MT_{\LHS} + \RT_{\LHS}
			\qquad \text{and} \qquad
			\RHS\eqref{eqn:fund_crossing}
			\geq \MT_{\RHS} + \RT_{\RHS}
		\end{align}
		(here MT stands for ``main term" and RT stands for ``remainder term").
		In Propositions~\ref{prop:LHS_RT_leq_Error} and \ref{prop:RHS_RT_leq_Error} below, we will then bound
		\begin{align} \label{eqn:RT_leq_Error}
			|\RT_{\LHS}|
			\leq \Error_{\LHS}
			\qquad \text{and} \qquad
			|\RT_{\RHS}| \leq \Error_{\RHS}.
		\end{align}
		Since $\RHS\eqref{eqn:fund_crossing}$ is a square and hence nonnegative, we have $\LHS\eqref{eqn:fund_crossing} \geq 0$, so
		\begin{align} \label{eqn:schematic_fund_ineq_1}
			-\MT_{\LHS}
			\leq \RT_{\LHS}
			\leq \Error_{\LHS}.
		\end{align}
		Similarly, combining \eqref{eqn:fund_crossing}, \eqref{eqn:MT+RT}, and \eqref{eqn:RT_leq_Error},
		\begin{align} \label{eqn:schematic_fund_ineq_2}
			\MT_{\RHS}-\MT_{\LHS}
			\leq \RT_{\LHS} - \RT_{\RHS}
			\leq \Error_{\LHS} + \Error_{\RHS}.
		\end{align}
		After inserting the definitions of these quantities from the propositions mentioned above, \eqref{eqn:schematic_fund_ineq_1} and \eqref{eqn:schematic_fund_ineq_2} become the inequalities in Propositions~\ref{prop:refined_ineq_1} and \ref{prop:refined_ineq_2}, respectively.
	\end{proof}
	
	\begin{prop} \label{prop:LHS=MT+RT}
		Let $\alpha \in \H_{\R}^K \cap \H^{\fin}$. Let $\lambda \in \R$. Then for $n \in \Z_{\geq 0}$,
		\begin{align*}
			\LHS\eqref{eqn:fund_crossing}
			= \MT_{\LHS} + \RT_{\LHS},
		\end{align*}
		where
		\begin{align*}
			\MT_{\LHS}
			= \langle p_n p_{n+1}(\lambda,\Delta)(\alpha^2), \alpha^2 \rangle_{\H}
		\end{align*}
		and
		\begin{align*}
			\RT_{\LHS}
			= \sum_{\substack{i,j,i',j' \\ \textnormal{not all }0}} \langle p_{n,i,j}(\lambda,\Delta)(\alpha_i\alpha_j), p_{n+1,i',j'}(\lambda,\Delta)(\alpha_{i'} \alpha_{j'}) \rangle_{\H}.
		\end{align*}
	\end{prop}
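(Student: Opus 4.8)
The plan is to apply Proposition~\ref{prop:p_n,i,j_def} twice, with exponents $n$ and $n+1$ and with $\beta = \alpha$ in both cases, and then to expand $\LHS\eqref{eqn:fund_crossing}$ bilinearly.

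First I would record that, since $\alpha \in \H_{\R}$, the vector $E^n\alpha \, \overline{E}^n\alpha$ already lies in $\H_{\R}$, so that $\re E^n\alpha \, \overline{E}^n\alpha = E^n\alpha \, \overline{E}^n\alpha = |E^n\alpha|^2$: indeed $\overline{E^n\alpha} = \overline{E}^n\overline{\alpha} = \overline{E}^n\alpha$ and $\overline{\overline{E}^n\alpha} = E^n\overline{\alpha} = E^n\alpha$ by Proposition~\ref{prop:bar_equivariant}, so Proposition~\ref{prop:conj_commutes_w_mult} together with commutativity gives $\overline{E^n\alpha \, \overline{E}^n\alpha} = (\overline{E}^n\alpha)(E^n\alpha) = E^n\alpha \, \overline{E}^n\alpha$. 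Then Proposition~\ref{prop:p_n,i,j_def} (with $\beta = \alpha$) yields
\[
|E^n\alpha|^2 = \sum_{i,j} p_{n,i,j}(\lambda,\Delta)(\alpha_i\alpha_j),
\qquad
|E^{n+1}\alpha|^2 = \sum_{i',j'} p_{n+1,i',j'}(\lambda,\Delta)(\alpha_{i'}\alpha_{j'}),
\]
both being finite sums by \eqref{eqn:p_n,i,j_vanishing}, with every summand in $\H^{\infty}$.

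Next I would substitute these two expansions into $\LHS\eqref{eqn:fund_crossing} = \langle |E^n\alpha|^2, |E^{n+1}\alpha|^2 \rangle_{\H}$ and use bilinearity of the inner product (legitimate because the sums are finite) to write $\LHS\eqref{eqn:fund_crossing}$ as the finite sum over quadruples $(i,j,i',j')$ of $\langle p_{n,i,j}(\lambda,\Delta)(\alpha_i\alpha_j), p_{n+1,i',j'}(\lambda,\Delta)(\alpha_{i'}\alpha_{j'}) \rangle_{\H}$. Separating off the quadruple $(0,0,0,0)$, the remaining terms are exactly $\RT_{\LHS}$ by definition, so it only remains to identify the $(0,0,0,0)$ term with $\MT_{\LHS}$. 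For this I would use the identities $p_{n,0,0} = p_n$, $p_{n+1,0,0} = p_{n+1}$ (observed in the excerpt just after \eqref{eqn:p_n,i,j_vanishing}) and $\alpha_0 = \alpha$, reducing the term to $\langle p_n(\lambda,\Delta)(\alpha^2), p_{n+1}(\lambda,\Delta)(\alpha^2) \rangle_{\H}$; since $\Delta$ is formally self-adjoint and $p_n, p_{n+1}$ have real coefficients, the self-adjoint operators $p_n(\lambda,\Delta)$ and $p_{n+1}(\lambda,\Delta)$ commute and this equals $\langle (p_np_{n+1})(\lambda,\Delta)(\alpha^2), \alpha^2 \rangle_{\H} = \MT_{\LHS}$.

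I do not expect a genuine obstacle: the proposition is essentially just the bilinear expansion of Proposition~\ref{prop:p_n,i,j_def}, and the only non-formal input is the self-adjointness manipulation for the main term. The two points requiring a little care are (i) the reduction $\re E^n\alpha \, \overline{E}^n\alpha = |E^n\alpha|^2$ so that Proposition~\ref{prop:p_n,i,j_def} applies as written, and (ii) checking that all vectors appearing --- the $\alpha_i$, the products $\alpha_i\alpha_j$, and their images under polynomials in $\Delta$ --- lie in $\H^{\infty}$, so that every inner product is finite and bilinearity is unproblematic; this holds because $\H^{\infty}$ is preserved by the $\g$-action, by multiplication of $\H^{\fin}$-vectors, and by $\Delta$.
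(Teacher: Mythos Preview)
Your proof is correct and follows the same approach as the paper: expand $|E^n\alpha|^2$ and $|E^{n+1}\alpha|^2$ via Proposition~\ref{prop:p_n,i,j_def}, expand the inner product bilinearly, and separate off the $(0,0,0,0)$ term. The paper's proof is terser, leaving implicit both the reality check $\re E^n\alpha\,\overline{E}^n\alpha = |E^n\alpha|^2$ and the self-adjointness manipulation identifying the $(0,0,0,0)$ term with $\MT_{\LHS}$, but your added detail is accurate.
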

	
	\begin{proof}
		Expanding $|E^n\alpha|^2$ and $|E^{n+1}\alpha|^2$ as in Proposition~\ref{prop:p_n,i,j_def},
		\begin{align*}
			\LHS\eqref{eqn:fund_crossing}
			= \sum_{i,j,i',j'} \langle p_{n,i,j}(\lambda,\Delta)(\alpha_i\alpha_j), p_{n+1,i',j'}(\lambda,\Delta)(\alpha_{i'} \alpha_{j'}) \rangle_{\H}.
		\end{align*}
		The term where $i=j=i'=j'=0$ is $\MT_{\LHS}$, and the remainder is $\RT_{\LHS}$.
	\end{proof}
	
	\begin{prop} \label{prop:RHS_geq_MT+RT}
		Let $\alpha \in \H_{\R}^K \cap \H^{\fin}$. Let $\lambda \in \R$. Then for $n \in \Z_{\geq 0}$,
		\begin{align} \label{eqn:RHS_geq_MT+RT}
			\RHS\eqref{eqn:fund_crossing}
			\geq \MT_{\RHS} + \RT_{\RHS},
		\end{align}
		where
		\begin{align*}
			\MT_{\RHS}
			= \langle \1_{\Delta \geq 1} \Delta^{-1} s_n(\lambda,\Delta)^2(\alpha^2), \alpha^2 \rangle_{\H}
		\end{align*}
		and
		\begin{align*}
			\RT_{\RHS}
			= \sum_{\substack{i,j,i',j' \\ \textnormal{not all }0}} \langle \1_{\Delta \geq 1} \Delta^{-\frac{1}{2}} s_{n,i,j}(\lambda,\Delta)(\alpha_i\alpha_j), \1_{\Delta \geq 1} \Delta^{-\frac{1}{2}} s_{n,i',j'}(\lambda,\Delta)(\alpha_{i'}\alpha_{j'}) \rangle_{\H}.
		\end{align*}
	\end{prop}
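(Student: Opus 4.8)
The plan is to mimic the proof of Proposition~\ref{prop:basic_ineq_3}, replacing the identity of Proposition~\ref{prop:p_n_def} by its refinement Proposition~\ref{prop:p_n,i,j_def} and inserting a passage to real parts, since the correction terms that now appear are no longer real. Set $v = E^{n+1}\alpha\,\overline{E}^n\alpha$, so $\RHS\eqref{eqn:fund_crossing} = \|v\|_{\H}^2$. Because $\alpha\in\H^K$ has weight $0$, the vector $v$ has weight $(n+1)+(-n)=1$, and $v\in\H^{\infty}$ as a product of two elements of $\H^{\fin}$ (Definition~\ref{def:mult_rep}). Thus Lemma~\ref{lem:wt_1_lower_bd} applies and gives
\begin{align*}
	\|v\|_{\H}^2
	\geq \|\1_{\Delta\geq 1}\Delta^{-\frac12}\overline{E}v\|_{\H}^2.
\end{align*}
Since $\Delta$ is defined over $\R$, the bounded operator $\1_{\Delta\geq 1}\Delta^{-\frac12}$ commutes with the complex conjugation of Proposition~\ref{prop:bar_extends}; writing $\overline{E}v = x+iy$ with $x=\re(\overline{E}v)$, $y=\im(\overline{E}v)$ in $\H_{\R}^K\cap\H^{\infty}$, the identity $\|z\|_{\H}^2=\|\re z\|_{\H}^2+\|\im z\|_{\H}^2$ then yields $\|\1_{\Delta\geq 1}\Delta^{-\frac12}\overline{E}v\|_{\H}^2\geq\|\1_{\Delta\geq 1}\Delta^{-\frac12}x\|_{\H}^2$, so it suffices to lower bound $\|\1_{\Delta\geq 1}\Delta^{-\frac12}x\|_{\H}^2$ by $\MT_{\RHS}+\RT_{\RHS}$.

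First I would compute $\overline{E}v$. By the product rule and \eqref{eqn:EEbar},
\begin{align*}
	\overline{E}v
	= (\overline{E}E^{n+1}\alpha)(\overline{E}^n\alpha) + |E^{n+1}\alpha|^2
	= -(\Delta+H^2+H)(E^n\alpha)\cdot\overline{E}^n\alpha + |E^{n+1}\alpha|^2;
\end{align*}
as $E^n\alpha$ has weight $n$ and $\Delta$ commutes with $E$, the operator $\Delta+H^2+H$ acts on $E^n\alpha$ as multiplication by $\Delta+n(n+1)$, and writing $\Delta=\lambda+(\Delta-\lambda)$ turns this into $(\lambda+n(n+1))E^n\alpha + E^n\alpha_1$, where $\alpha_1=(\Delta-\lambda)\alpha\in\H_{\R}^K\cap\H^{\fin}$. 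Taking real parts, and using $\overline{E}^m\alpha=\overline{E^m\alpha}$ since $\alpha$ is real, gives
\begin{align*}
	x = |E^{n+1}\alpha|^2 - (\lambda+n(n+1))|E^n\alpha|^2 - \re\!\big(E^n\alpha_1\cdot\overline{E}^n\alpha\big).
\end{align*}
Now I would apply Proposition~\ref{prop:p_n,i,j_def} three times: to the first term with $(\alpha,\alpha)$ at level $n+1$, to the second with $(\alpha,\alpha)$ at level $n$, and to the third with $(\alpha_1,\alpha)$ at level $n$, using $(\alpha_1)_i=(\Delta-\lambda)^i\alpha_1=\alpha_{i+1}$. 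After relabeling the last sum and comparing with the definition \eqref{eqn:s_n,i,j_def} of $s_{n,i,j}$, this collapses to $x=\sum_{i,j}s_{n,i,j}(\lambda,\Delta)(\alpha_i\alpha_j)$, a finite sum by \eqref{eqn:s_n,i,j_vanishing}. Applying $\1_{\Delta\geq 1}\Delta^{-\frac12}$ and expanding the square then writes $\|\1_{\Delta\geq 1}\Delta^{-\frac12}x\|_{\H}^2$ as a double sum over $(i,j),(i',j')$; the term with both pairs equal to $(0,0)$ is $\langle\1_{\Delta\geq 1}\Delta^{-1}s_{n,0,0}(\lambda,\Delta)^2(\alpha^2),\alpha^2\rangle_{\H}=\MT_{\RHS}$ since $s_{n,0,0}=s_n$, and the remaining terms are exactly $\RT_{\RHS}$, giving \eqref{eqn:RHS_geq_MT+RT}.

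The work here is bookkeeping rather than anything substantive, and the main place to be careful is the index shift $(\alpha_1)_i=\alpha_{i+1}$ in invoking Proposition~\ref{prop:p_n,i,j_def} on the pair $(\alpha_1,\alpha)$: one must verify that the coefficient of $\alpha_i\alpha_j$ emerging from the three applications is exactly $p_{n+1,i,j}-(\lambda+n(n+1))p_{n,i,j}-p_{n,i-1,j}$, i.e. $s_{n,i,j}$ as in \eqref{eqn:s_n,i,j_def}. The only other point deserving a sentence is the reduction to $x=\re(\overline{E}v)$, which is justified by the commutation of $\1_{\Delta\geq 1}\Delta^{-\frac12}$ with complex conjugation together with orthogonality of real and imaginary parts in $\H$.
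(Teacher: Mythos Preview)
Your proof is correct and follows essentially the same approach as the paper: apply Lemma~\ref{lem:wt_1_lower_bd} to $v=E^{n+1}\alpha\,\overline{E}^n\alpha$, compute $\overline{E}v$ via the product rule and \eqref{eqn:EEbar}, pass to real parts, expand using Proposition~\ref{prop:p_n,i,j_def} (with the index shift $(\alpha_1)_i=\alpha_{i+1}$ producing the $p_{n,i-1,j}$ term in \eqref{eqn:s_n,i,j_def}), and then expand the square. The only cosmetic difference is that the paper takes the real part after applying $\1_{\Delta\geq 1}\Delta^{-1/2}$ rather than before, which is equivalent since this operator commutes with conjugation.
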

	
	\begin{proof}[Proof of Proposition~\ref{prop:RHS_geq_MT+RT}]
		By Lemma~\ref{lem:wt_1_lower_bd}, we can lower bound
		\begin{align*}
			\RHS\eqref{eqn:fund_crossing}
			\geq \|\1_{\Delta\geq 1} \Delta^{-\frac{1}{2}} \overline{E}(E^{n+1}\alpha \overline{E}^n\alpha)\|_{\H}^2.
		\end{align*}
		Applying the product rule and \eqref{eqn:EEbar},
		\begin{align*}
			\RHS\eqref{eqn:fund_crossing}
			\geq \|\1_{\Delta\geq 1} \Delta^{-\frac{1}{2}}(|E^{n+1}\alpha|^2 - (\Delta+H^2+H)E^n\alpha \overline{E}^n\alpha)\|_{\H}^2.
		\end{align*}
		Splitting $\Delta = \lambda + (\Delta-\lambda)$, and using that $E^n\alpha$ has weight $n$,
		\begin{align*}
			\RHS\eqref{eqn:fund_crossing}
			\geq \|\1_{\Delta \geq 1} \Delta^{-\frac{1}{2}}(|E^{n+1}\alpha|^2 - (\lambda+n(n+1))|E^n\alpha|^2 - E^n\alpha_1 \overline{E}^n\alpha)\|_{\H}^2.
		\end{align*}
		Applying the trivial inequality $\|v\|_{\H} \geq \|\re v\|_{\H}$ which holds for all $v \in \H$ by Proposition~\ref{prop:norm_re_im_pythagoras},
		\begin{align*}
			\RHS\eqref{eqn:fund_crossing}
			\geq \|\1_{\Delta \geq 1} \Delta^{-\frac{1}{2}}(|E^{n+1}\alpha|^2 - (\lambda+n(n+1))|E^n\alpha|^2 - \re E^n\alpha_1 \overline{E}^n\alpha)\|_{\H}^2.
		\end{align*}
		Expanding the three terms on the right hand side as in Proposition~\ref{prop:p_n,i,j_def}, and then writing the result in terms of the polynomials $s_{n,i,j}$ to simplify notation,
		\begin{align*}
			\RHS\eqref{eqn:fund_crossing}
			\geq \Big\|\1_{\Delta \geq 1}\Delta^{-\frac{1}{2}} \sum_{i,j} s_{n,i,j}(\lambda,\Delta)(\alpha_i\alpha_j)\Big\|_{\H}^2.
		\end{align*}
		Expanding the square,
		\begin{align*}
			\RHS\eqref{eqn:fund_crossing}
			\geq \sum_{i,j,i',j'} \langle \1_{\Delta \geq 1} \Delta^{-\frac{1}{2}} s_{n,i,j}(\lambda,\Delta)(\alpha_i\alpha_j), \1_{\Delta \geq 1} \Delta^{-\frac{1}{2}} s_{n,i',j'}(\lambda,\Delta)(\alpha_{i'}\alpha_{j'}) \rangle_{\H}.
		\end{align*}
		The term where $i=j=i'=j'=0$ is $\MT_{\RHS}$, and the remainder is $\RT_{\RHS}$.
	\end{proof}
	
	\begin{prop} \label{prop:LHS_RT_leq_Error}
		With notation as in Proposition~\ref{prop:LHS=MT+RT}, one has
		\begin{align*}
			|\RT_{\LHS}|
			\leq \Error_{\LHS},
		\end{align*}
		where
		\begin{align*}
			\Error_{\LHS}
			= \sum_{\substack{i,j,i',j' \\ \textnormal{not all }0}} \||p_{n,i,j}p_{n+1,i',j'}(\lambda,\Delta)|^{\frac{1}{2}}(\alpha_i\alpha_j)\|_{\H} \||p_{n,i,j}p_{n+1,i',j'}(\lambda,\Delta)|^{\frac{1}{2}}(\alpha_{i'}\alpha_{j'})\|_{\H}.
		\end{align*}
	\end{prop}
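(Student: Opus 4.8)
The plan is to prove the inequality term by term in the finite sums defining $\RT_{\LHS}$ and $\Error_{\LHS}$ — these are finite by the vanishing property \eqref{eqn:p_n,i,j_vanishing} — and then combine the per-tuple bounds with the triangle inequality. So I fix a tuple $(i,j,i',j')$ not all zero, abbreviate $u = \alpha_i\alpha_j$ and $v = \alpha_{i'}\alpha_{j'}$ (both lie in $\H^{\infty}$, being products of elements of $\H^{\fin}$, since $\H^{\fin}$ is $\Delta$-invariant), and write $f = p_{n,i,j}(\lambda,\cdot)$ and $g = p_{n+1,i',j'}(\lambda,\cdot)$ for the relevant real polynomials. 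The goal for this tuple is
\[
|\langle f(\Delta)u, g(\Delta)v \rangle_{\H}|
\leq \||fg|^{1/2}(\Delta) u\|_{\H} \cdot \||fg|^{1/2}(\Delta) v\|_{\H},
\]
with $|fg|^{1/2} = |p_{n,i,j}p_{n+1,i',j'}(\lambda,\cdot)|^{1/2}$; summing over all admissible tuples and using $|\RT_{\LHS}| \leq \sum |\text{term}|$ then yields $|\RT_{\LHS}| \leq \Error_{\LHS}$.

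To establish the per-tuple inequality I would work entirely through the functional calculus for $\Delta$ from Subsection~\ref{subsec:sl_2:func_calc}. Since $\H$ has discrete spectrum, $\Delta$ acts with real eigenvalues $\mu$ and orthogonal spectral projections $\1_{\Delta=\mu}$, and since $f,g$ have real coefficients we get $\langle f(\Delta)u, g(\Delta)v \rangle_{\H} = \sum_{\mu} f(\mu)g(\mu) \langle \1_{\Delta=\mu}u, \1_{\Delta=\mu}v \rangle_{\H}$, the cross-terms for distinct eigenvalues dropping by orthogonality. Taking absolute values and applying Cauchy--Schwarz to each $\langle \1_{\Delta=\mu}u, \1_{\Delta=\mu}v \rangle_{\H}$ gives
\[
|\langle f(\Delta)u, g(\Delta)v \rangle_{\H}|
\leq \sum_{\mu} |f(\mu)g(\mu)| \, \|\1_{\Delta=\mu}u\|_{\H} \, \|\1_{\Delta=\mu}v\|_{\H}.
\]
A second Cauchy--Schwarz, now applied to the sum over $\mu$ after splitting $|f(\mu)g(\mu)| = |f(\mu)g(\mu)|^{1/2}\cdot|f(\mu)g(\mu)|^{1/2}$, bounds the right-hand side by
\[
\Big(\sum_{\mu} |fg|(\mu)\,\|\1_{\Delta=\mu}u\|_{\H}^2\Big)^{1/2}
\Big(\sum_{\mu} |fg|(\mu)\,\|\1_{\Delta=\mu}v\|_{\H}^2\Big)^{1/2},
\]
which by the definition \eqref{eqn:norm_f(Delta)v_def} with $h = |fg|^{1/2}$ is exactly $\||fg|^{1/2}(\Delta)u\|_{\H}\,\||fg|^{1/2}(\Delta)v\|_{\H}$.

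The remaining checks are finiteness and the legitimacy of the rearrangements, neither of which poses a genuine difficulty. Both sums over $\mu$ converge because $u,v \in \H^{\infty}$ and $|fg|^{1/2}$ is dominated by a polynomial ($|fg|^{1/2} \leq 1 + |fg|$), so Proposition~\ref{prop:smooth_iff_spectral_decay} gives $\||fg|^{1/2}(\Delta)u\|_{\H}, \||fg|^{1/2}(\Delta)v\|_{\H} < \infty$; the same domination makes the intermediate double sum over pairs of eigenvalues absolutely convergent, which justifies collapsing it to the diagonal. Thus there is no substantive obstacle here — the statement is essentially a bookkeeping consequence of the spectral calculus — and the one point to keep straight is that the square root $|p_{n,i,j}p_{n+1,i',j'}(\lambda,\cdot)|^{1/2}$, not the product itself, appears, so that the estimate splits symmetrically into two equal-weight $\H$-norms. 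I would likely isolate the per-tuple inequality as a short general fact about $\langle f(\Delta)u, g(\Delta)v\rangle_{\H}$ for arbitrary real polynomials $f,g$ and smooth vectors $u,v$, then apply it in the obvious way, since the analogous bound $|\RT_{\RHS}| \leq \Error_{\RHS}$ in the next proposition will be proved by the identical mechanism (with $f,g$ replaced by $\1_{\Delta\geq 1}\Delta^{-1/2}s_{n,i,j}(\lambda,\cdot)$ and the like).
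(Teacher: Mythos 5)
Your proposal is correct and amounts to the paper's own proof unpacked: the paper isolates your per-tuple bound as the operator Cauchy--Schwarz inequality (Lemma~\ref{lem:operator_C-S}), proved via the polar decomposition $AB = \sgn(AB)|AB|$ and a single Cauchy--Schwarz, and then applies it to $A = p_{n,i,j}(\lambda,\Delta)$, $B = p_{n+1,i',j'}(\lambda,\Delta)$. Your two-step Cauchy--Schwarz through the spectral decomposition of $\Delta$ is the same argument written out in coordinates, and you even anticipate the paper's packaging by proposing to isolate a general fact about $\langle f(\Delta)u, g(\Delta)v\rangle_{\H}$.
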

	
	\begin{proof}
		This is immediate from the definition of $\RT_{\LHS}$ and the general Lemma~\ref{lem:operator_C-S} below.
	\end{proof}
	
	\begin{lem} \label{lem:operator_C-S}
		Let $A,B$ commuting self-adjoint operators on a Hilbert space $\V$. Let $v,w \in \V$. Then
		\begin{align*}
			|\langle Av, Bw \rangle_{\V}|
			\leq \||AB|^{\frac{1}{2}}v\|_{\V} \||AB|^{\frac{1}{2}}w\|_{\V}.
		\end{align*}
	\end{lem}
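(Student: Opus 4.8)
The plan is to recognize the asserted inequality as a Cauchy--Schwarz estimate with respect to the nonnegative self-adjoint ``weight operator'' $|AB|$, using the spectral theorem for the commuting self-adjoint pair $A,B$ (in every application of the lemma in this section, $A$ and $B$ are functions of the single self-adjoint operator $\Delta$, so only the ordinary functional calculus for $\Delta$ from Subsection~\ref{subsec:sl_2:func_calc} is needed). First I would reduce to the form $\langle ABv,w\rangle$: since $B$ is self-adjoint,
\[
\langle Av, Bw \rangle_{\V}
= \langle BAv, w \rangle_{\V},
\]
and since $A$ and $B$ commute, $BA = AB$, so $\langle Av,Bw\rangle_{\V} = \langle ABv, w\rangle_{\V}$.

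Next, $AB$ is itself self-adjoint (it is the product of commuting self-adjoint operators; via a common spectral measure it is multiplication by the real measurable function $ab$). Let $S = \sgn(AB)$ be the bounded self-adjoint operator defined by functional calculus from $t \mapsto \sgn(t)$, so $\|S\|_{\op} \le 1$, and note that the pointwise identity $t = |t|^{\frac{1}{2}}\sgn(t)|t|^{\frac{1}{2}}$ gives the operator factorization $AB = |AB|^{\frac{1}{2}}\, S\, |AB|^{\frac{1}{2}}$. Since $|AB|^{\frac{1}{2}}$ is self-adjoint, I can move one factor onto each side of the inner product:
\[
\langle ABv, w \rangle_{\V}
= \langle S\,|AB|^{\frac{1}{2}}v,\ |AB|^{\frac{1}{2}}w \rangle_{\V}.
\]
Now ordinary Cauchy--Schwarz together with $\|S\|_{\op}\le 1$ finishes it:
\[
|\langle ABv,w \rangle_{\V}|
\le \|S\,|AB|^{\frac{1}{2}}v\|_{\V}\ \||AB|^{\frac{1}{2}}w\|_{\V}
\le \||AB|^{\frac{1}{2}}v\|_{\V}\ \||AB|^{\frac{1}{2}}w\|_{\V}.
\]

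The only thing requiring care is that $A$ and $B$ may be unbounded — they are, in the applications, since $p_{n,i,j}(\lambda,\Delta)$ is a polynomial in $\Delta$ — so the manipulations above must be read on a suitable domain. This is not a genuine obstacle: one may adopt the convention that both sides are allowed to take the value $+\infty$, so that the inequality is vacuous unless its right-hand side is finite, and finiteness of $\||AB|^{\frac{1}{2}}v\|_{\V}$ and $\||AB|^{\frac{1}{2}}w\|_{\V}$ places $v,w$ in the domains of all operators appearing and legitimizes each step by dominated convergence in the spectral integral. In every use of the lemma in this section the vectors $v,w$ have the form $\alpha_i\alpha_j$ with $\alpha\in\H_{\R}^K\cap\H^{\fin}$; these lie in $\H^K\cap\H^{\infty}$, so $\|(1+|\Delta|)^N v\|_{\H}<\infty$ for all $N$ by Proposition~\ref{prop:smooth_iff_spectral_decay}, and since every operator above is either a polynomial in $\Delta$ or a bounded function of $\Delta$, all quantities are finite and the argument is rigorous as stated. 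Thus the real content is the single line of Cauchy--Schwarz following the $|AB|^{\frac{1}{2}}$-factorization; the rest is bookkeeping.
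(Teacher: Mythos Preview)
Your proof is correct and is essentially identical to the paper's: both move $B$ across, write $AB = \sgn(AB)|AB|$ (equivalently $|AB|^{1/2}\sgn(AB)|AB|^{1/2}$), and apply Cauchy--Schwarz. Your treatment is arguably slightly cleaner in two respects: you use $\|\sgn(AB)\|_{\op}\le 1$ rather than calling $\sgn(AB)$ ``unitary'' (which fails if $AB$ has kernel), and you address the domain issues that the paper explicitly waves away.
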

	
	Here $|AB|^{\frac{1}{2}}$ is defined using the functional calculus for $AB$, which is self-adjoint because $A,B$ commute.
	We are not careful with issues of domain, because in practice $v,w$ will always be in the domains of the relevant operators.
	
	\begin{proof}
		Write
		\begin{align*}
			\langle Av, Bw \rangle_{\V}
			= \langle ABv,w \rangle_{\V}
			= \langle \sgn(AB)|AB|v,w \rangle_{\V}
			= \langle \sgn(AB)|AB|^{\frac{1}{2}}v, |AB|^{\frac{1}{2}}w \rangle_{\V}.
		\end{align*}
		The desired bound now follows from Cauchy--Schwarz and the fact that $\sgn(AB)$ is unitary.
	\end{proof}
	
	\begin{prop} \label{prop:RHS_RT_leq_Error}
		With notation as in Proposition~\ref{prop:RHS_geq_MT+RT}, one has
		\begin{align*}
			|\RT_{\RHS}|
			\leq \Error_{\RHS},
		\end{align*}
		where
		\begin{align*}
			\Error_{\RHS}
			= \sum_{\substack{i,j,i',j' \\ \textnormal{not all }0}} \|\1_{\Delta \geq 1} \Delta^{-\frac{1}{2}} s_{n,i,j}(\lambda,\Delta)(\alpha_i\alpha_j)\|_{\H} \|\1_{\Delta \geq 1} \Delta^{-\frac{1}{2}} s_{n,i',j'}(\lambda,\Delta)(\alpha_{i'}\alpha_{j'})\|_{\H}.
		\end{align*}
	\end{prop}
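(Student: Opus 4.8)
The plan is to prove Proposition~\ref{prop:RHS_RT_leq_Error} by a completely routine combination of the triangle inequality and Cauchy--Schwarz, applied termwise to the sum defining $\RT_{\RHS}$ in Proposition~\ref{prop:RHS_geq_MT+RT}. First I would record that by \eqref{eqn:p_n,i,j_vanishing} and \eqref{eqn:s_n,i,j_vanishing}, the polynomials $s_{n,i,j}$ vanish unless $0 \leq i$, $0 \leq j$, and $i+j \leq n+1$, so the sum over quadruples $(i,j,i',j')$ is finite and the triangle inequality applies with no convergence concerns. I would also note that each vector $\1_{\Delta \geq 1}\Delta^{-\frac{1}{2}} s_{n,i,j}(\lambda,\Delta)(\alpha_i\alpha_j)$ is a well-defined element of $\H$ of finite norm: since $\alpha \in \H^{\fin}$ we have $\alpha_i = (\Delta-\lambda)^i\alpha \in \H^{\fin}$, so $\alpha_i\alpha_j \in \H^{\infty}$, and on the spectral range $\Delta \geq 1$ the function $\mu \mapsto \mu^{-1/2} s_{n,i,j}(\lambda,\mu)$ grows at most polynomially, so the functional calculus of Subsection~\ref{subsec:sl_2:func_calc} together with Proposition~\ref{prop:smooth_iff_spectral_decay} produces a genuine vector in $\H$.

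With these observations in place, I would write, directly from the definition of $\RT_{\RHS}$ and the triangle inequality,
\begin{align*}
	|\RT_{\RHS}|
	\leq \sum_{\substack{i,j,i',j' \\ \textnormal{not all }0}} \big|\langle \1_{\Delta \geq 1}\Delta^{-\frac{1}{2}} s_{n,i,j}(\lambda,\Delta)(\alpha_i\alpha_j),\ \1_{\Delta \geq 1}\Delta^{-\frac{1}{2}} s_{n,i',j'}(\lambda,\Delta)(\alpha_{i'}\alpha_{j'}) \rangle_{\H}\big|,
\end{align*}
and then apply Cauchy--Schwarz to each inner product on the right, bounding it by the product of the two Hilbert-space norms. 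The resulting sum is precisely $\Error_{\RHS}$, which gives $|\RT_{\RHS}| \leq \Error_{\RHS}$ as claimed. This is the structural analogue, for $\RT_{\RHS}$, of the bound $|\RT_{\LHS}| \leq \Error_{\LHS}$ in Proposition~\ref{prop:LHS_RT_leq_Error}; the only difference is that there Lemma~\ref{lem:operator_C-S} was needed to compare $\langle Av, Bw\rangle$ with $\||AB|^{1/2}v\|\,\||AB|^{1/2}w\|$, whereas here the two arguments of each inner product are already of the form $\langle u,v\rangle$ with $u,v \in \H$, so plain Cauchy--Schwarz suffices and no operator splitting is required.

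Since the argument is a two-line estimate, I do not expect any genuine obstacle; the only points to verify carefully are bookkeeping in nature — that the sum over $(i,j,i',j')$ is finite (from the vanishing of $s_{n,i,j}$), that the spectral cutoff $\1_{\Delta \geq 1}$ renders the factor $\Delta^{-1/2}$ harmless so that the relevant operators map $\H^{\infty}$ into $\H$, and that every vector appearing lies in $\H$ so that Cauchy--Schwarz is legitimate. None of these presents a difficulty.
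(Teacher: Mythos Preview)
Your proposal is correct and matches the paper's own proof, which is the single line ``This is immediate from the definition of $\RT_{\RHS}$ and Cauchy--Schwarz.'' Your additional bookkeeping about finiteness of the sum and well-definedness of the vectors is accurate and harmless, though the paper does not spell it out.
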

	
	\begin{proof}
		This is immediate from the definition of $\RT_{\RHS}$ and Cauchy--Schwarz.
	\end{proof}
	
	Propositions~\ref{prop:refined_ineq_1} and \ref{prop:refined_ineq_2} are now fully proved.
	
	\subsection{Bulk bound} \label{subsec:bulk_general}
	
	This subsection is somewhat similar to Subsection~\ref{subsec:bulk_poly}.
	Our goal is to prove Theorem~\ref{thm:L^4_quasi-Sobolev}.
	
	%
	
	For $s \in \R$ and $v \in \H^K$, define the Sobolev norm
	\begin{align*} 
		\|v\|_{H^s}
		= \|(\Delta+1)^{\frac{s}{2}}v\|_{\H}.
	\end{align*}
	In general this may be $+\infty$, but it is always finite when $v \in \H^K \cap \H^{\infty}$. The $H^0$ Sobolev norm is just the Hilbert space norm. For $p,s \geq 0$ and $\Lambda \geq 1$, let $C_{p,s}(\Lambda) \in [0,\infty]$ be the smallest nonnegative constant such that
	\begin{align*}
		\|\alpha\beta\|_{H^s}
		\leq C_{p,s}(\Lambda)^2 \|\alpha\|_{\H} \|\beta\|_{\H}
		\qquad \text{for all} \qquad
		\lambda \in [1,\Lambda]
		\text{ and }
		\alpha,\beta \in \H_{|\Delta-\lambda| \leq \lambda^{-p}}^K.
	\end{align*}
	It is clear from the definition that $C_{p,s}(\Lambda)$ is decreasing in $p$ and increasing in $s$ and $\Lambda$.
	We will reduce Theorem~\ref{thm:L^4_quasi-Sobolev} to Proposition~\ref{prop:C_p,s(Lambda)_inductive_bd} below, which is analogous to Proposition~\ref{prop:C_s(Lambda)_inductive_bd}.
	
	\begin{lem} \label{lem:C_p,s(Lambda)_finite}
		For all $p,s \geq 0$ and $\Lambda \geq 1$, the constant $C_{p,s}(\Lambda)$ is finite.
	\end{lem}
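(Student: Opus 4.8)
The plan is to reduce the finiteness of $C_{p,s}(\Lambda)$ to the elementary fact that a bilinear map between finite-dimensional normed spaces is automatically bounded. The crucial observation is that, although $\lambda$ ranges over the continuum $[1,\Lambda]$, all of the approximate eigenspaces entering the definition of $C_{p,s}(\Lambda)$ lie inside a single fixed finite-dimensional subspace of $\H^K$, so the seemingly uniform-in-$\lambda$ estimate is really a statement about one finite-dimensional space.

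First I would observe that for $\lambda \in [1,\Lambda]$ we have $\lambda^{-p} \leq 1$, so the condition $|\Delta - \lambda| \leq \lambda^{-p}$ forces $\Delta \leq \lambda + 1 \leq \Lambda + 1$. Hence $\H_{|\Delta - \lambda| \leq \lambda^{-p}}^K \subseteq V := \H_{\Delta \leq \Lambda+1}^K$ for every such $\lambda$. Since $\H$ has discrete spectrum and each irreducible unitary representation of $G$ has a $\leq 1$-dimensional space of $K$-invariants, $V$ is finite-dimensional and $V \subseteq \H^{\fin}$, as recalled in Subsection~\ref{subsec:sl_2:func_calc}.

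Next, fix a basis $e_1, \dots, e_m$ of $V$. For $\alpha, \beta \in V$ the product $\alpha\beta$ lies in the linear span $W$ of the finitely many vectors $e_i e_j$, which is itself a finite-dimensional subspace of $\H^{\infty}$ (using that multiplication maps $\H^{\fin} \times \H^{\fin}$ into $\H^{\infty}$ by the definition of a multiplicative representation). Moreover each $e_i e_j$ is a $K$-invariant smooth vector, so by Proposition~\ref{prop:smooth_iff_spectral_decay} it satisfies $\|\Delta^N(e_i e_j)\|_{\H} < \infty$ for all $N$; taking $N \geq s/2$ and comparing $(\mu+1)^{s/2}$ with $\mu^N + 1$ on $[0,\infty)$ shows $\|e_i e_j\|_{H^s} < \infty$, so $\|\cdot\|_{H^s}$ restricts to a genuine finite-valued norm on $W$. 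The multiplication map $B \colon V \times V \to W$, $(\alpha,\beta) \mapsto \alpha\beta$, is bilinear, and a bilinear map on a product of finite-dimensional normed spaces is continuous; hence there is a finite constant $C$ with $\|B(\alpha,\beta)\|_{H^s} \leq C \|\alpha\|_{\H} \|\beta\|_{\H}$ for all $\alpha,\beta \in V$. In particular this holds for all $\lambda \in [1,\Lambda]$ and all $\alpha,\beta \in \H_{|\Delta-\lambda|\leq\lambda^{-p}}^K$, so $C_{p,s}(\Lambda)^2 \leq C < \infty$.

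There is no real obstacle here; the only point requiring care is the uniformity over the continuum of approximate eigenvalues $\lambda$, which is handled precisely by the containment in the fixed finite-dimensional space $V$. As in Remark~\ref{rem:using_lambda_r_to_infty}, the constant produced by this argument is purely qualitative — we obtain no \textit{a priori} control on its size — and it is the role of the subsequent self-improving estimate (Proposition~\ref{prop:C_p,s(Lambda)_inductive_bd}) to upgrade this finiteness into a quantitative bound.
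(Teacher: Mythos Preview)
Your proof is correct and takes essentially the same approach as the paper: both use that $\H_{|\Delta-\lambda|\leq\lambda^{-p}}^K \subseteq \H_{\Delta\leq\Lambda+1}^K$ for all $\lambda\in[1,\Lambda]$ and then invoke finite-dimensionality of $\H_{\Delta\leq\Lambda+1}^K$ to conclude that multiplication, viewed as a linear (or bilinear) map into the $H^s$ norm, is bounded. The paper phrases this more tersely by regarding multiplication as a linear map $\H_{\Delta\leq\Lambda+1}^K \otimes \H_{\Delta\leq\Lambda+1}^K \to H^s$ from a finite-dimensional Hilbert space to a Banach space, whereas you factor through an auxiliary finite-dimensional span $W$ of products of basis vectors; both arguments are equivalent.
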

	
	\begin{proof}
		Since $\H$ has discrete spectrum, $\H_{\Delta \leq \Lambda+1}^K$ is finite-dimensional. Thus the multiplication map
		\begin{align*}
			\H_{\Delta \leq \Lambda+1}^K \otimes \H_{\Delta \leq \Lambda+1}^K \to \H^K \cap \H^{\infty} \hookrightarrow H^s
		\end{align*}
		is a linear map from a finite-dimensional Hilbert space to a Banach space, and hence is bounded. The operator norm of this map is an upper bound for $C_{p,s}(\Lambda)^2$.
	\end{proof}
	
	\begin{lem} \label{lem:L^4_bdd_by_C_p,s(Lambda)}
		Let $p,s \geq 0$ and $\Lambda \geq 1$. Then for all $\alpha \in \H_{\Delta\leq\Lambda}^K$,
		\begin{align*}
			\|\alpha\|_{L^4}
			\leq \Lambda^{O_p(1)} C_{p,s}(\Lambda) \|\alpha\|_{\H}.
		\end{align*}
	\end{lem}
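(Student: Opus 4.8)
The plan is to follow the structure of Lemma~\ref{lem:L^4_bdd_by_C_s(Lambda)}, replacing the decomposition over individual Casimir eigenvalues (which was controlled by the polynomial Weyl law) with a decomposition over a partition of $[0,\Lambda]$ into bands of width $\Lambda^{-p}$. There are $\lesssim \Lambda^{p+1}$ such bands, which is the polynomial factor we can afford. First I would reduce to the case $\alpha \in \H^K_{\Delta \le \Lambda} \cap \H_{\R}$: complex conjugation is $G$-equivariant (Proposition~\ref{prop:bar_equivariant}), so $\re\alpha$ and $\im\alpha$ also lie in $\H^K_{\Delta \le \Lambda}$, and the triangle inequality for the $L^4$ norm (Proposition~\ref{prop:L^4_triangle_ineq}) together with $\|\re\alpha\|_{\H},\|\im\alpha\|_{\H}\le\|\alpha\|_{\H}$ (Proposition~\ref{prop:norm_re_im_pythagoras}) lets us pass from the real case to the general case at the cost of an absolute factor, which is absorbed into $\Lambda^{O_p(1)}$ since $\Lambda\ge1$. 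I would then fix a partition of $[0,\Lambda]$ consisting of the single interval $[0,1]$ together with intervals of length $\le\Lambda^{-p}$ covering $[1,\Lambda]$; this has $\le \Lambda^{p+1}+1 \lesssim \Lambda^{p+1}$ pieces. Writing $\alpha_k = \1_{\Delta\in J_k}\alpha$, we get $\alpha=\sum_k\alpha_k$ (a finite orthogonal sum, since $\H$ has discrete spectrum), with each $\alpha_k\in\H^K_{\Delta\le\Lambda}\cap\H_{\R}$.

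Next I would bound each band piece. For a band $J_k\subseteq[1,\Lambda]$ with center $\lambda_k$, the containment $J_k\subseteq[\lambda_k-\lambda_k^{-p},\lambda_k+\lambda_k^{-p}]$ holds because $\lambda_k\le\Lambda$ forces $\lambda_k^{-p}\ge\Lambda^{-p}$, which dominates the half-width $\tfrac12\Lambda^{-p}$ of $J_k$; hence $\alpha_k\in\H^K_{|\Delta-\lambda_k|\le\lambda_k^{-p}}$ with $\lambda_k\in[1,\Lambda]$. Since $\alpha_k$ is real, $\alpha_k^2$ has weight $0$, so it lies in $\H^K$, on which $\Delta\ge0$ and therefore $\|\alpha_k^2\|_{\H}\le\|\alpha_k^2\|_{H^s}$ for $s\ge0$. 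By the definition of $C_{p,s}(\Lambda)$ applied with $\alpha=\beta=\alpha_k$, $\|\alpha_k^2\|_{H^s}=\|\alpha_k\cdot\alpha_k\|_{H^s}\le C_{p,s}(\Lambda)^2\|\alpha_k\|_{\H}^2$, and hence $\|\alpha_k\|_{L^4}=\|\alpha_k^2\|_{\H}^{1/2}\le C_{p,s}(\Lambda)\|\alpha_k\|_{\H}$. For the single low-frequency piece $\alpha_0=\1_{\Delta\le1}\alpha$, the same bound follows from $C_{p,s}(1)$ (valid since $\H^K_{\Delta\le1}\subseteq\H^K_{\Delta\le2}=\H^K_{|\Delta-1|\le1}$) and monotonicity $C_{p,s}(1)\le C_{p,s}(\Lambda)$. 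Summing via the triangle inequality and then Cauchy--Schwarz over the $\lesssim\Lambda^{p+1}$ pieces, using orthogonality of the $\alpha_k$ in $\H$:
\[
\|\alpha\|_{L^4}\le\sum_k\|\alpha_k\|_{L^4}\le C_{p,s}(\Lambda)\sum_k\|\alpha_k\|_{\H}\lesssim \Lambda^{(p+1)/2}\,C_{p,s}(\Lambda)\Big(\sum_k\|\alpha_k\|_{\H}^2\Big)^{1/2}=\Lambda^{(p+1)/2}\,C_{p,s}(\Lambda)\|\alpha\|_{\H},
\]
which is the claimed estimate.

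This argument is essentially bookkeeping, so I do not expect a genuine obstacle; the only points requiring a little care are the two just indicated: first, matching each band to a legitimate approximate eigenvalue $\lambda_k\in[1,\Lambda]$ with the correct width $\lambda_k^{-p}$ (which is exactly where the constraint $\lambda_k\le\Lambda$ is used), and second, disposing of the low-frequency band $[0,1]$ separately, since its center need not lie in $[1,\Lambda]$. Everything else is orthogonality plus the $L^4$ triangle inequality and Cauchy--Schwarz.
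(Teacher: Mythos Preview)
Your proof is correct and follows essentially the same approach as the paper: partition $[0,\Lambda]$ into $\Lambda^{O_p(1)}$ bands each fitting inside some $[\lambda_i-\lambda_i^{-p},\lambda_i+\lambda_i^{-p}]$ with $\lambda_i\in[1,\Lambda]$, apply the definition of $C_{p,s}(\Lambda)$ on each band, and sum via the $L^4$ triangle inequality and Cauchy--Schwarz. The only notable differences are cosmetic: the paper reduces to $s=0$ (using monotonicity of $C_{p,s}$ in $s$) rather than invoking $\|\cdot\|_{\H}\le\|\cdot\|_{H^s}$, and it uses variable-width intervals $[\lambda_i-\lambda_i^{-p},\lambda_i+\lambda_i^{-p}]$ (so that taking $\lambda_1=1$ already covers $[0,2]$, avoiding your separate treatment of $[0,1]$); also, your reduction to real $\alpha$ is unnecessary since $\alpha_k\in\H^K$ already gives $\alpha_k^2$ weight $0$ by \eqref{eqn:H_product_rule}.
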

	
	\begin{proof}
		Since $C_{p,s}(\Lambda)$ increases with $s$, it suffices to prove this for $s=0$. Choose $\Lambda^{O_p(1)}$ many points $\lambda_i \in [1,\Lambda]$ such that
		\begin{align*}
			[0,\Lambda] \subseteq \bigcup_i [\lambda_i - \lambda_i^{-p}, \lambda_i + \lambda_i^{-p}].
		\end{align*}
		Choose intervals $I_i \subseteq [\lambda_i-\lambda_i^{-p}, \lambda_i+\lambda_i^{-p}]$ centered at $\lambda_i$, such that the $I_i$ partition $[0,\Lambda]$. Denote $\alpha_i = \1_{\Delta \in I_i}\alpha$, so that $\alpha = \sum_i \alpha_i$ and $\alpha_i \in \H_{|\Delta-\lambda_i| \leq \lambda_i^{-p}}^K$. Then
		\begin{align*}
			&\|\alpha\|_{L^4}
			\leq \sum_i \|\alpha_i\|_{L^4}
			= \sum_i \|\alpha_i^2\|_{H^0}^{\frac{1}{2}}
			\leq C_{p,0}(\Lambda) \sum_i \|\alpha_i\|_{\H}
			\leq \Lambda^{O_p(1)} C_{p,0}(\Lambda) \|\alpha\|_{\H}.
			\qedhere
		\end{align*}
	\end{proof}
	
	\begin{prop} \label{prop:C_p,s(Lambda)_inductive_bd}
		There are absolute constants $p,s \geq 0$ and $c \in (0,1)$ such that for $\Lambda \gg 1$,
		\begin{align} \label{eqn:C_p,s(Lambda)_inductive_bd}
			C_{p,s}(\Lambda)
			\lesssim \Lambda^{O(1)} C_{p,s}(c\Lambda).
		\end{align}
	\end{prop}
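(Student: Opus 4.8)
The plan is to run the argument for Proposition~\ref{prop:C_s(Lambda)_inductive_bd} of the polynomial Weyl law case, with exact $K$-invariant Casimir eigenvectors replaced by approximate ones and with Proposition~\ref{prop:basic_ineq_3} replaced by its refinement Proposition~\ref{prop:refined_ineq_2}. Fix once and for all the absolute constants $N,a_0,\dots,a_N,c$ from Lemma~\ref{lem:R_def} and set $R(\lambda,\mu)=\sum_{n=0}^N a_n\lambda^{2(N-n)}r_n(\lambda,\mu)$; take $s=2N+1$, and let $p$ be a large absolute constant to be chosen at the end. A routine polarization argument (splitting into real and imaginary parts using Proposition~\ref{prop:norm_re_im_pythagoras}, together with $\alpha\beta=\tfrac12((\alpha+\beta)^2-\alpha^2-\beta^2)$) shows that $C_{p,s}(\Lambda)^2$ is comparable, up to absolute constants, to the smallest constant $M$ with $\|\alpha^2\|_{H^s}\le M\|\alpha\|_{\H}^2$ for all real $\alpha\in\H_{|\Delta-\lambda|\le\lambda^{-p}}^K$ with $\lambda\in[1,\Lambda]$. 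By Lemma~\ref{lem:C_p,s(Lambda)_finite} this is finite, so we may fix such a $\lambda$ and such an $\alpha$ with $\|\alpha\|_{\H}=1$ and $\|\alpha^2\|_{H^s}^2\gtrsim C_{p,s}(\Lambda)^4$. If $\lambda\le c\Lambda$ then $\alpha$ is an approximate $\lambda$-eigenvector with $\lambda\in[1,c\Lambda]$, so $C_{p,s}(\Lambda)\lesssim C_{p,s}(c\Lambda)$ and we are done; hence assume $\lambda>c\Lambda$, and in particular $\lambda\gg 1$.

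\emph{The main term.} Write $C_{p,s}(\Lambda)^4\lesssim\|\alpha^2\|_{H^s}^2=\langle(\Delta+1)^s(\alpha^2),\alpha^2\rangle_{\H}$. Since $\lambda\gg 1$ and $\lambda\le\Lambda$, the pointwise bound \eqref{eqn:mu^s_bdd_by_R} of Corollary~\ref{cor:mu^s_bdd_by_R} gives $(\mu+1)^s\lesssim\Lambda^{O(1)}\1_{\mu\le c\Lambda}+R(\lambda,\mu)$ for all $\mu\ge 0$; as everything here is diagonalized by $\Delta$, testing against $\alpha^2\in\H^K$ yields
\begin{align*}
	C_{p,s}(\Lambda)^4
	\lesssim \Lambda^{O(1)}\langle\1_{\Delta\le c\Lambda}(\alpha^2),\alpha^2\rangle_{\H} + \langle R(\lambda,\Delta)(\alpha^2),\alpha^2\rangle_{\H}.
\end{align*}

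\emph{The remainder term and the self-improving step.} By Proposition~\ref{prop:refined_ineq_2} and the nonnegativity of the $a_n$, the quantity $\langle R(\lambda,\Delta)(\alpha^2),\alpha^2\rangle_{\H}$ is bounded above by an absolutely bounded number of correction terms, each a product of two factors of the form $\||p_{n,i,j}p_{n+1,i',j'}(\lambda,\Delta)|^{1/2}(\alpha_i\alpha_j)\|_{\H}$ or $\|\1_{\Delta\ge 1}\Delta^{-1/2}s_{n,i,j}(\lambda,\Delta)(\alpha_i\alpha_j)\|_{\H}$ with $0\le n\le N$ and $(i,j,i',j')\ne(0,0,0,0)$, where $\alpha_i=(\Delta-\lambda)^i\alpha$. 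Using $\deg_\mu p_{n,i,j}\le n$ and $\deg_\mu s_{n,i,j}\le n+1$ (so the $\Delta^{-1/2}$ reduces the effective $\mu$-degree of the second type of factor to $n+\tfrac12$), and evaluating polynomials of bounded total degree at $\lambda\le\Lambda$, each such factor is $\lesssim\Lambda^{O(1)}\|\alpha_i\alpha_j\|_{H^{2N+1}}=\Lambda^{O(1)}\|\alpha_i\alpha_j\|_{H^s}$. Now $\alpha_i$ lies in the same spectral window $\{|\Delta-\lambda|\le\lambda^{-p}\}$ as $\alpha$, hence is an approximate $\lambda$-eigenvector with $\|\alpha_i\|_{\H}\le\lambda^{-pi}$, so $\|\alpha_i\alpha_j\|_{H^s}\le C_{p,s}(\Lambda)^2\lambda^{-p(i+j)}$. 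Multiplying out, using $i+j+i'+j'\ge 1$ and $\lambda>c\Lambda$, and absorbing the weights $\lambda^{2(N-n)}\le\Lambda^{O(1)}$, we obtain $\langle R(\lambda,\Delta)(\alpha^2),\alpha^2\rangle_{\H}\lesssim\Lambda^{O(1)-p}C_{p,s}(\Lambda)^4$. This is the crucial point (cf.\ Remark~\ref{rem:discrete_spectrum_first_use}): choosing $p$ to be a sufficiently large absolute constant makes the exponent of $\Lambda$ negative, and since $C_{p,s}(\Lambda)<\infty$ by Lemma~\ref{lem:C_p,s(Lambda)_finite}, for $\Lambda\gg 1$ this term can be moved to the left-hand side, giving $C_{p,s}(\Lambda)^4\lesssim\Lambda^{O(1)}\langle\1_{\Delta\le c\Lambda}(\alpha^2),\alpha^2\rangle_{\H}$.

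\emph{Closing the recursion.} Exactly as in the proof of Proposition~\ref{prop:C_s(Lambda)_inductive_bd}: by \eqref{eqn:3_term_crossing} (using that $\alpha$ is real), Cauchy--Schwarz, $\|\alpha\|_{\H}=1$, and $L^4$-Cauchy--Schwarz (Proposition~\ref{prop:L^4_C-S}),
\begin{align*}
	\langle\1_{\Delta\le c\Lambda}(\alpha^2),\alpha^2\rangle_{\H}
	= \langle\alpha\,\1_{\Delta\le c\Lambda}(\alpha^2),\alpha\rangle_{\H}
	\le \|\1_{\Delta\le c\Lambda}(\alpha^2)\|_{L^4}\,\|\alpha\|_{L^4}.
\end{align*}
Estimating $\|\1_{\Delta\le c\Lambda}(\alpha^2)\|_{L^4}$ by Lemma~\ref{lem:L^4_bdd_by_C_p,s(Lambda)} applied with $c\Lambda$ in place of $\Lambda$ (legitimate since $\1_{\Delta\le c\Lambda}(\alpha^2)\in\H_{\Delta\le c\Lambda}^K$) gives $\|\1_{\Delta\le c\Lambda}(\alpha^2)\|_{L^4}\lesssim\Lambda^{O(1)}C_{p,s}(c\Lambda)\|\alpha^2\|_{\H}=\Lambda^{O(1)}C_{p,s}(c\Lambda)\|\alpha\|_{L^4}^2$, while $\|\alpha\|_{L^4}=\|\alpha^2\|_{\H}^{1/2}\le\|\alpha^2\|_{H^s}^{1/2}\le C_{p,s}(\Lambda)$ directly from the definition of $C_{p,s}(\Lambda)$ (taking both arguments equal to $\alpha$). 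Combining, $C_{p,s}(\Lambda)^4\lesssim\Lambda^{O(1)}C_{p,s}(c\Lambda)C_{p,s}(\Lambda)^3$, and cancelling the finite quantity $C_{p,s}(\Lambda)^3$ yields \eqref{eqn:C_p,s(Lambda)_inductive_bd}. I expect the main obstacle to be the bookkeeping in the remainder-term estimate, and in particular the two structural observations that (i) $R$ must be a bounded linear combination of the $r_n$ so that all polynomials appearing have uniformly bounded degree and $s$ can be taken absolute --- otherwise the recursion would relate $C_{p,s}$ to $C_{p,s'}$ with $s'>s$ --- and (ii) that the $\lambda^{-p}$ gain in the correction terms, which carries the very factor $C_{p,s}(\Lambda)^4$ we are bounding, can only be absorbed because $C_{p,s}(\Lambda)$ is a priori finite (the discrete spectrum hypothesis entering essentially for the first time).
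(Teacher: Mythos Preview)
Your proof is correct and follows essentially the same approach as the paper's. You fix $s=2N+1$ with $N$ from Lemma~\ref{lem:R_def}, reduce by polarization to a real approximate eigenvector $\alpha$, split via \eqref{eqn:mu^s_bdd_by_R} into a bulk term and the $R$-term, control the $R$-term using Proposition~\ref{prop:refined_ineq_2} and the degree bounds for $p_{n,i,j},s_{n,i,j}$ (the paper packages these as Lemma~\ref{lem:p,q_crude_bds}) to get $\Lambda^{O(1)-p}C_{p,s}(\Lambda)^4$, absorb this self-improving term for $p$ large enough (the paper takes $p=s+2$), and close with the same $L^4$-Cauchy--Schwarz plus Lemma~\ref{lem:L^4_bdd_by_C_p,s(Lambda)} argument as in the polynomial Weyl law case.
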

	
	
	Before proving Proposition~\ref{prop:C_p,s(Lambda)_inductive_bd}, we first show how it implies Theorem~\ref{thm:L^4_quasi-Sobolev}. This is almost identical to the deduction of Theorem~\ref{thm:L^4_quasi-Sobolev_poly_Weyl} from Proposition~\ref{prop:C_s(Lambda)_inductive_bd}.
	Recall Theorem~\ref{thm:L^4_quasi-Sobolev}:
	
	\begin{thm*}[Restatement of Theorem~\ref{thm:L^4_quasi-Sobolev}]
		Let $\alpha \in \H^K \cap \H^{\fin}$. Then
		\begin{align} \label{eqn:L^4_quasi-Sobolev_restated}
			\|\alpha\|_{L^4}
			\leq \|\exp(O(\log_+^2 \Delta)) \alpha\|_{\H}.
		\end{align}
	\end{thm*}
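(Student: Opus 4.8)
The plan is to mirror, verbatim, the way Theorem~\ref{thm:L^4_quasi-Sobolev_poly_Weyl} was deduced from Proposition~\ref{prop:C_s(Lambda)_inductive_bd}, now feeding in the unconditional self‑improving inequality Proposition~\ref{prop:C_p,s(Lambda)_inductive_bd}. First I would fix absolute constants $p,s \geq 0$ and $c \in (0,1)$ as in Proposition~\ref{prop:C_p,s(Lambda)_inductive_bd}. By Lemma~\ref{lem:C_p,s(Lambda)_finite} the quantity $C_{p,s}(\Lambda)$ is finite for every $\Lambda \geq 1$, so the estimate $C_{p,s}(\Lambda) \lesssim \Lambda^{O(1)} C_{p,s}(c\Lambda)$ valid for $\Lambda \gg 1$ can be iterated; a routine induction over dyadic scales (base case $\Lambda \lesssim 1$ handled by finiteness) yields $C_{p,s}(\Lambda) \lesssim \exp(O(\log_+^2\Lambda))$ for all $\Lambda \geq 1$, exactly as in \eqref{eqn:C_s(Lambda)_quasipolynomial_bd}.

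With this quasipolynomial growth bound in hand, I would finish as follows. Given $\alpha \in \H^K \cap \H^{\fin}$, decompose $\alpha = \sum_{m=1}^{\infty} \alpha_m$ with $\alpha_m = \1_{\Delta \in [m-1,m)}\alpha$; this sum is finite because $\alpha \in \H^{\fin}$. By the triangle inequality for the $L^4$ norm (Proposition~\ref{prop:L^4_triangle_ineq}), Lemma~\ref{lem:L^4_bdd_by_C_p,s(Lambda)} applied at scale $\Lambda = m$, and the bound on $C_{p,s}$ just established,
\begin{align*}
\|\alpha\|_{L^4}
&\leq \sum_{m=1}^{\infty} \|\alpha_m\|_{L^4}
\lesssim \sum_{m=1}^{\infty} m^{O(1)} C_{p,s}(m) \|\alpha_m\|_{\H}
\lesssim \sum_{m=1}^{\infty} \exp(O(\log_+^2 m)) \|\alpha_m\|_{\H}.
\end{align*}
Applying Cauchy--Schwarz as in the proof of Lemma~\ref{lem:sum_i_bdd_by_f(Delta)} (splitting off the convergent factor $\sum_m \exp(-2\log_+^2 m) < \infty$) and then using orthogonality of the $\alpha_m$, one gets $\|\alpha\|_{L^4} \lesssim \|\exp(O(\log_+^2\Delta))\alpha\|_{\H}$; increasing the $O$-constant upgrades $\lesssim$ to $\leq$, which is \eqref{eqn:L^4_quasi-Sobolev_restated}.

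The real content, and where the main obstacle lies, is of course Proposition~\ref{prop:C_p,s(Lambda)_inductive_bd} itself (proved next in the paper). Its proof should follow the shape of the proof of Proposition~\ref{prop:C_s(Lambda)_inductive_bd}: extract a near-extremizing pair $\alpha,\beta \in \H_{|\Delta-\lambda| \leq \lambda^{-p}}^K$ with $\lambda$ close to $\Lambda$, and feed the refined bootstrap inequalities (Propositions~\ref{prop:refined_ineq_1} and~\ref{prop:refined_ineq_2}) into a positive linear combination $R(\lambda,\mu)$ of the $r_n$, chosen as in Lemma~\ref{lem:R_def} so that $R(\lambda,\mu) \gtrsim \mu^{2N+1}$ for $\mu \gtrsim \lambda$. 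The new difficulty, absent from Section~\ref{sec:bulk_tail_poly}, is that the right-hand sides of Propositions~\ref{prop:refined_ineq_1} and~\ref{prop:refined_ineq_2} carry correction terms built from $\alpha_i\beta_j$ with $i+j>0$, which one can only estimate using the \emph{a priori} unknown comparison constant $C_{p,s}$ itself; doing so produces a term of size roughly $\lambda^{-1}C_{p,s}(\Lambda)$, so one arrives at $C_{p,s}(\Lambda) \lesssim \Lambda^{O(1)} C_{p,s}(c\Lambda) + \lambda^{-1}C_{p,s}(\Lambda)$, and it is precisely here that the qualitative finiteness of Lemma~\ref{lem:C_p,s(Lambda)_finite} (which uses discrete spectrum) lets one absorb the last term for $\Lambda \gg 1$. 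Making the main term $p_{n,0,0}(\lambda,\Delta)(\alpha\beta) = p_n(\lambda,\Delta)(\alpha\beta)$ dominate the correction terms uniformly in every relevant range requires two-sided control of the polynomials $p_{n,i,j}$ that is sharp uniformly in $n$ (as well as $\lambda,\mu$); as sketched in Subsection~\ref{subsec:outline:bulk_tail_general}, this is obtained by writing the recurrence \eqref{eqn:p_n,i,j_recurrence} as a product of $2\times 2$ matrices and approximating $A_n\cdots A_1 \approx Q_n D_n \cdots D_1 Q_1^{-1}$ in the regime $\mu \gg \lambda+n^2$. That matrix-product estimate, combined with the choice of weights in $R$, is the technical heart, and the step I expect to be hardest.
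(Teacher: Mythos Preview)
Your deduction of the theorem from Proposition~\ref{prop:C_p,s(Lambda)_inductive_bd} is correct and matches the paper's proof essentially line for line: fix $p,s,c$, iterate the self-improving inequality using Lemma~\ref{lem:C_p,s(Lambda)_finite} to get $C_{p,s}(\Lambda) \lesssim \exp(O(\log_+^2\Lambda))$, decompose $\alpha$ into unit-length spectral bands, apply Lemma~\ref{lem:L^4_bdd_by_C_p,s(Lambda)}, and finish with Cauchy--Schwarz.

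Your closing commentary on Proposition~\ref{prop:C_p,s(Lambda)_inductive_bd}, however, misplaces the hard step. In that proposition the index $n$ ranges only over $0 \le n \le N$ with $N$ a \emph{fixed} absolute constant (one may take $N=1$), so no uniformity in $n$ is needed: the paper's proof uses only the crude bounds of Lemma~\ref{lem:p,q_crude_bds}, whose implicit constants are allowed to depend on $n,i,j$. The correction terms are then controlled simply because $\alpha \in \H_{|\Delta-\lambda|\le \lambda^{-p}}^K$ makes $\|\alpha_i\|_{\H} \lesssim \lambda^{-ip}$, and choosing $p = s+2$ gives the absorbable $\Lambda^{-1}C_{p,s}(\Lambda)^4$ term. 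The matrix-product asymptotics of Subsection~\ref{subsec:p_n,i,j_asymptotics} (Proposition~\ref{prop:p_n,i,j_bds}) are needed not here but for the \emph{tail} bound, specifically Proposition~\ref{prop:C_n,lambda,M_bd} en route to Theorem~\ref{thm:exp_decay_quasimode}, where $n$ is chosen large in terms of $M$ and one genuinely needs estimates on $p_{n,i,j}$ uniform in $n$. So the technical heart you describe is real, but it lives in the next subsection, not in the proof of this theorem or its supporting proposition.
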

	
	\begin{proof}[Proof of Theorem~\ref{thm:L^4_quasi-Sobolev} assuming Proposition~\ref{prop:C_p,s(Lambda)_inductive_bd}]
		Let $p,s,c$ be absolute constants as in Proposition~\ref{prop:C_p,s(Lambda)_inductive_bd}. Then by induction on $\Lambda$ (using Lemma~\ref{lem:C_p,s(Lambda)_finite} for the base case), for $\Lambda \geq 1$ we have
		\begin{align} \label{eqn:C_p,s(Lambda)_quasipolynomial_bd}
			C_{p,s}(\Lambda)
			\lesssim \exp(O(\log_+^2\Lambda)).
		\end{align}
		Now let $\alpha \in \H^K \cap \H^{\fin}$.
		Denote $\alpha_m = \1_{\Delta \in [m-1,m)}\alpha$, so that $\alpha = \sum_{m=1}^{\infty} \alpha_m$. Since $\alpha \in \H^{\fin}$, this is a finite sum. Thus by the triangle inequality, Lemma~\ref{lem:L^4_bdd_by_C_p,s(Lambda)}, and \eqref{eqn:C_p,s(Lambda)_quasipolynomial_bd},
		\begin{align*}
			\|\alpha\|_{L^4}
			\leq \sum_{m=1}^{\infty} \|\alpha_m\|_{L^4}
			\leq \sum_{m=1}^{\infty} m^{O(1)} C_{p,s}(m) \|\alpha_m\|_{\H}
			\lesssim \sum_{m=1}^{\infty} \exp(O(\log_+^2 m)) \|\alpha_m\|_{\H}.
		\end{align*}
		Applying Cauchy--Schwarz as in the proof of Lemma~\ref{lem:sum_i_bdd_by_f(Delta)}, we get
		\begin{align*}
			\|\alpha\|_{L^4}
			\lesssim \Big(\sum_{m=1}^{\infty} \exp(O(\log_+^2 m)) \|\alpha_m\|_{\H}^2\Big)^{\frac{1}{2}}
			\lesssim \|\exp(O(\log_+^2\Delta))\alpha\|_{\H}.
		\end{align*}
		We have almost proved \eqref{eqn:L^4_quasi-Sobolev_restated}, except we have $\lesssim$ instead of $\leq$. This is easily remedied by increasing the $O$-constant.
	\end{proof}
	
	
	In the proof of Proposition~\ref{prop:C_p,s(Lambda)_inductive_bd}, we will use the following crude bounds for $p_{n,i,j}$ and $s_{n,i,j}$.
	
	\begin{lem} \label{lem:p,q_crude_bds}
		Let $n \in \Z_{\geq 0}$ and $i,j \in \Z$. Then for $\lambda \geq 1$ and $\mu \geq 0$, one has
		\begin{align} \label{eqn:p,q_crude_bds}
			|p_{n,i,j}(\lambda,\mu)|
			\lesssim_{n,i,j} \lambda^n(\mu+1)^n
			\qquad \text{and} \qquad
			|s_{n,i,j}(\lambda,\mu)|
			\lesssim_{n,i,j} \lambda^{n+1}(\mu+1)^{n+1}.
		\end{align}
	\end{lem}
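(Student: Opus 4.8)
The plan is to prove both estimates in \eqref{eqn:p,q_crude_bds} by a single induction on $n$, carried out uniformly over all $i,j \in \Z$: first handle $p_{n,i,j}$ via the recurrence \eqref{eqn:p_n,i,j_recurrence}, then deduce the bound for $s_{n,i,j}$ directly from its definition \eqref{eqn:s_n,i,j_def}. Throughout, I will use the hypotheses $\lambda \geq 1$ and $\mu \geq 0$ freely: these let me absorb any fixed power of $\lambda$, any fixed power of $\mu$, and any fixed polynomial in $n$ into a single factor $\lambda^{a}(\mu+1)^{b}$ with $a,b$ as large as convenient, at the cost of an implicit constant depending on $n$ (and, once the $(i,j)$-coupling in the recurrence enters, on $i$ and $j$).

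For the base cases $n=0$ and $n=1$, I would just inspect the explicit formulas \eqref{eqn:p_0,i,j_def} and \eqref{eqn:p_1,i,j_def}: each $p_{n,i,j}(\lambda,\mu)$ is $0$, a constant, or $\lambda-\tfrac12\mu$, so in every case $|p_{n,i,j}(\lambda,\mu)| \lesssim 1+\lambda+\mu \lesssim \lambda^{n}(\mu+1)^{n}$ since $\lambda\geq 1$. For the inductive step, assume $|p_{m,i,j}(\lambda,\mu)| \lesssim_{m,i,j} \lambda^{m}(\mu+1)^{m}$ for all $m\leq n$ and all $i,j$, and apply the triangle inequality to \eqref{eqn:p_n,i,j_recurrence}. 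The coefficients there are $2\lambda-\mu+2n^2$, $(\lambda+n(n-1))^2$, $1$, $\lambda+n(n-1)$, and $1$; on $\{\lambda\geq 1,\ \mu\geq 0\}$ each of the first, third and fourth is $\lesssim_n \lambda(\mu+1)$, and $(\lambda+n(n-1))^2 \lesssim_n \lambda^2 = \lambda\cdot\lambda \lesssim_n \lambda(\mu+1)\cdot\lambda$, which pairs with $p_{n-1,i,j}$ (carrying one fewer power of each variable). Hence every one of the finitely many terms on the right of \eqref{eqn:p_n,i,j_recurrence} is $\lesssim_{n,i,j} \lambda^{n+1}(\mu+1)^{n+1}$, giving the claimed bound for $p_{n+1,i,j}$ and closing the induction.

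For $s_{n,i,j}$, I would then write out \eqref{eqn:s_n,i,j_def}, namely $s_{n,i,j} = p_{n+1,i,j} - (\lambda+n(n+1))p_{n,i,j} - p_{n,i-1,j}$, and insert the bound just proved: using $\lambda\geq 1$ to see $(\lambda+n(n+1))\lambda^{n}(\mu+1)^{n} \lesssim_n \lambda^{n+1}(\mu+1)^{n+1}$ and $\lambda^{n}(\mu+1)^{n}\leq \lambda^{n+1}(\mu+1)^{n+1}$, each of the three terms is $\lesssim_{n,i,j} \lambda^{n+1}(\mu+1)^{n+1}$, as required.

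There is no real obstacle: the lemma is pure bookkeeping. The one point requiring a moment's care is to phrase the induction on $n$ so that it runs simultaneously over all pairs $(i,j)$, since the recurrence \eqref{eqn:p_n,i,j_recurrence} couples $p_{n+1,i,j}$ to its neighbors $p_{n,i-1,j}$, $p_{n,i,j-1}$, and $p_{n-1,i-1,j-1}$. The vanishing statement \eqref{eqn:p_n,i,j_vanishing} is not needed for the crude bound, but it does ensure that each instance of the recurrence involves only finitely many nonzero terms, which keeps the implicit constant finite at every stage.
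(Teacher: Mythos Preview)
Your proof is correct and follows essentially the same approach as the paper: both argue by induction on $n$ using the recurrence \eqref{eqn:p_n,i,j_recurrence}. The paper packages it slightly more concisely by first observing (by the same induction) that $p_{n,i,j}$ has total degree at most $n$ in $(\lambda,\mu)$, and then noting that any such polynomial is $\lesssim_{n,i,j} \lambda^n(\mu+1)^n$ on $\{\lambda\geq 1,\ \mu\geq 0\}$; your term-by-term bookkeeping amounts to the same thing.
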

	
	\begin{proof}[Proof of Lemma~\ref{lem:p,q_crude_bds}]
		By induction on $n$, the polynomial $p_{n,i,j}$ has total degree at most $n$. From this and the definition \eqref{eqn:s_n,i,j_def} of $s_{n,i,j}$, it follows that $s_{n,i,j}$ has total degree at most $n+1$. Therefore, we obtain \eqref{eqn:p,q_crude_bds}.
	\end{proof}
	
	We are now ready to prove Proposition~\ref{prop:C_p,s(Lambda)_inductive_bd}.
	
	\begin{proof}[Proof of Proposition~\ref{prop:C_p,s(Lambda)_inductive_bd}]
		Fix absolute constants $N,a_0,\dots,a_N,c$ as in Lemma~\ref{lem:R_def}. For example, as is shown in the proof of Lemma~\ref{lem:R_def}, we can take $N = 1$, $a_0 = a_1 = 1$, and $c = \frac{1}{2}$. Set $s = 2N+1$. Let $p \geq 0$, to be chosen later. Let $\Lambda \gg 1$ be arbitrary. Then we will show that \eqref{eqn:C_p,s(Lambda)_inductive_bd} holds with these choices of parameters.
		
		By the definition of $C_{p,s}(\Lambda)$ and the fact that $C_{p,s}(\Lambda)$ is finite (Lemma~\ref{lem:C_p,s(Lambda)_finite}), there exist $\lambda \in [1,\Lambda]$ and $\alpha,\beta \in \H_{|\Delta-\lambda| \leq \lambda^{-p}}^K$ nonzero, such that
		\begin{align*}
			\|\alpha\beta\|_{H^s}
			\gtrsim C_{p,s}(\Lambda)^2 \|\alpha\|_{\H} \|\beta\|_{\H}.
		\end{align*}
		We may assume $\lambda > c\Lambda$, or else \eqref{eqn:C_p,s(Lambda)_inductive_bd} follows immediately. Thus in particular $\lambda \gg 1$. Now, after normalizing $\alpha,\beta$, we see that there exist $\alpha,\beta \in \H_{|\Delta-\lambda| \leq \lambda^{-p}}^K$ with
		\begin{align*}
			\|\alpha\|_{\H},\|\beta\|_{\H} \lesssim 1
			\qquad \text{and} \qquad
			\|\alpha\beta\|_{H^s}
			\gtrsim C_{p,s}(\Lambda)^2.
		\end{align*}
		Splitting into real and imaginary parts, we may assume in addition that $\alpha,\beta \in \H_{\R}$. Using the polarization identity $\alpha\beta = \frac{1}{2}(\alpha+\beta)^2 - \frac{1}{2}\alpha^2 - \frac{1}{2}\beta^2$, we may further assume that $\alpha = \beta$. In summary, we conclude that there exists $\alpha \in \H_{|\Delta-\lambda| \leq \lambda^{-p}}^K \cap \H_{\R}$ with
		\begin{align} \label{eqn:alpha_properties}
			\|\alpha\|_{\H} \lesssim 1
			\qquad \text{and} \qquad
			\|\alpha^2\|_{H^s} \gtrsim C_{p,s}(\Lambda)^2.
		\end{align}
		We can thus estimate
		\begin{align*}
			C_{p,s}(\Lambda)^4
			\lesssim \|\alpha^2\|_{H^s}^2
			= \langle (\Delta+1)^s (\alpha^2), \alpha^2 \rangle_{\H}.
		\end{align*}
		Since $s = 2N+1$ and $1 \ll \lambda \leq \Lambda$, we deduce from \eqref{eqn:mu^s_bdd_by_R} in Corollary~\ref{cor:mu^s_bdd_by_R} that
		\begin{align} \label{eqn:C_p,s^4_terms_to_bd}
			C_{p,s}(\Lambda)^4
			\lesssim \Lambda^{O(1)} \langle \1_{\Delta \leq c\Lambda}(\alpha^2), \alpha^2 \rangle_{\H}
			+ \langle R(\lambda,\Delta)(\alpha^2), \alpha^2 \rangle_{\H},
		\end{align}
		where $R$ is as in Lemma~\ref{lem:R_def}.
		We bound each of the two terms on the right hand side separately.
		
		Let us begin with the first term in $\RHS\eqref{eqn:C_p,s^4_terms_to_bd}$. By \eqref{eqn:3_term_crossing} and the fact that $\alpha$ is real, Cauchy--Schwarz, and $L^4$-Cauchy--Schwarz (Proposition~\ref{prop:L^4_C-S}),
		\begin{align*}
			\langle \1_{\Delta \leq c\Lambda}(\alpha^2), \alpha^2 \rangle_{\H}
			= \langle \alpha \1_{\Delta \leq c\Lambda}(\alpha^2), \alpha \rangle_{\H}
			\leq \|\alpha \1_{\Delta \leq c\Lambda}(\alpha^2)\|_{\H} \|\alpha\|_{\H}
			\leq \|\1_{\Delta \leq c\Lambda}(\alpha^2)\|_{L^4} \|\alpha\|_{L^4} \|\alpha\|_{\H}.
		\end{align*}
		Recalling from \eqref{eqn:alpha_properties} that $\|\alpha\|_{\H} \lesssim 1$, we can ignore factors of $\|\alpha\|_{\H}$ if we work up to constants. Then using Lemma~\ref{lem:L^4_bdd_by_C_p,s(Lambda)} to control $\|\1_{\Delta \leq c\Lambda}(\alpha^2)\|_{L^4}$, we get
		\begin{align*}
			\langle \1_{\Delta \leq c\Lambda}(\alpha^2), \alpha^2 \rangle_{\H}
			\lesssim \Lambda^{O_p(1)} C_{p,s}(c\Lambda) \|\1_{\Delta \leq c\Lambda}(\alpha^2)\|_{\H} \|\alpha\|_{L^4}
			\leq \Lambda^{O_p(1)} C_{p,s}(c\Lambda) \|\alpha\|_{L^4}^3.
		\end{align*}
		Writing $\|\alpha\|_{L^4} = \|\alpha^2\|_{\H}^{\frac{1}{2}} \leq \|\alpha^2\|_{H^s}^{\frac{1}{2}}$ and estimating this using the definition of $C_{p,s}(\Lambda)$,
		\begin{align} \label{eqn:C_p,s^4_first_term_bd}
			\langle \1_{\Delta \leq c\Lambda}(\alpha^2), \alpha^2 \rangle_{\H}
			\lesssim \Lambda^{O_p(1)} C_{p,s}(c\Lambda) C_{p,s}(\Lambda)^3.
		\end{align}
		This bound will suffice for the first term in $\RHS\eqref{eqn:C_p,s^4_terms_to_bd}$.
		
		For the second term in $\RHS\eqref{eqn:C_p,s^4_terms_to_bd}$, writing out the definition of $R$ from Lemma~\ref{lem:R_def}, recalling that $a_0,\dots,a_N \geq 0$, and applying Proposition~\ref{prop:refined_ineq_2} yields
		\begin{align*}
			&\langle R(\lambda,\Delta)(\alpha^2), \alpha^2 \rangle_{\H}
			\\&= \sum_{n=0}^{N} a_n \lambda^{2(N-n)} \langle r_n(\lambda,\Delta)(\alpha^2), \alpha^2 \rangle_{\H}
			\\&\leq \sum_{n=0}^{N} a_n \lambda^{2(N-n)} \sum_{\substack{i,j,i',j' \\ \textnormal{not all }0}}
			\Big(\||p_{n,i,j}p_{n+1,i',j'}(\lambda,\Delta)|^{\frac{1}{2}}(\alpha_i\alpha_j)\|_{\H} \||p_{n,i,j}p_{n+1,i',j'}(\lambda,\Delta)|^{\frac{1}{2}}(\alpha_{i'}\alpha_{j'})\|_{\H}
			\\&\qquad\qquad\qquad\qquad+ \|\1_{\Delta\geq 1} \Delta^{-\frac{1}{2}} s_{n,i,j}(\lambda,\Delta)(\alpha_i\alpha_j)\|_{\H} \|\1_{\Delta\geq 1} \Delta^{-\frac{1}{2}} s_{n,i',j'}(\lambda,\Delta)(\alpha_{i'}\alpha_{j'})\|_{\H}\Big),
		\end{align*}
		where we use the notation from Subsection~\ref{subsec:refined_ineqs}, so in particular $\alpha_i$ denotes the expression $(\Delta-\lambda)^i\alpha$.
		Crudely estimating the right hand side by Lemma~\ref{lem:p,q_crude_bds} (keeping in mind the vanishing conditions \eqref{eqn:p_n,i,j_vanishing} and \eqref{eqn:s_n,i,j_vanishing}, and the fact that $N$ is an absolute constant),
		\begin{align*}
			\langle R(\lambda,\Delta)(\alpha^2), \alpha^2 \rangle_{\H}
			\lesssim \Lambda^{2(N+1)} \sum_{\substack{i,j,i',j' \in \Z_{\geq 0} \\ i+j, i'+j' \leq N+1 \\ \text{not all 0}}} \|(\Delta+1)^{N+\frac{1}{2}}(\alpha_i\alpha_j)\|_{\H} \|(\Delta+1)^{N+\frac{1}{2}}(\alpha_{i'}\alpha_{j'})\|_{\H}.
		\end{align*}
		Since $s = 2N+1$, we can rewrite this as
		\begin{align*}
			\langle R(\lambda,\Delta)(\alpha^2), \alpha^2 \rangle_{\H}
			\lesssim \Lambda^{s+1} \sum_{\substack{i,j,i',j' \in \Z_{\geq 0} \\ i+j, i'+j' \leq N+1 \\ \text{not all 0}}} \|\alpha_i\alpha_j\|_{H^s} \|\alpha_{i'}\alpha_{j'}\|_{H^s}.
		\end{align*}
		Since $\alpha \in \H_{|\Delta-\lambda| \leq \lambda^{-p}}^K$ and $\lambda \in [1,\Lambda]$, we can bound the $H^s$ norms in terms of $C_{p,s}(\Lambda)$ to get
		\begin{align*}
			\langle R(\lambda,\Delta)(\alpha^2), \alpha^2 \rangle_{\H}
			\lesssim \Lambda^{s+1} C_{p,s}(\Lambda)^4 \sum_{\substack{i,j,i',j' \in \Z_{\geq 0} \\ i+j, i'+j' \leq N+1 \\ \text{not all 0}}} \|\alpha_i\|_{\H} \|\alpha_j\|_{\H} \|\alpha_{i'}\|_{\H} \|\alpha_{j'}\|_{\H}.
		\end{align*}
		By the definition $\alpha_i = (\Delta-\lambda)^i\alpha$ and the normalization $\|\alpha\|_{\H} \lesssim 1$ from \eqref{eqn:alpha_properties}, we see that $\|\alpha_i\|_{\H} \lesssim \lambda^{-ip}$. Combining this with the fact that $\lambda > c\Lambda$ and that not all $i,j,i',j'$ are zero,
		\begin{align} \label{eqn:C_p,s^4_second_term_bd}
			\langle R(\lambda,\Delta)(\alpha^2), \alpha^2 \rangle_{\H}
			\lesssim_p \Lambda^{s+1-p} C_{p,s}(\Lambda)^4.
		\end{align}
		This is the bound we need for the second term in $\RHS\eqref{eqn:C_p,s^4_terms_to_bd}$.
		
		Inserting \eqref{eqn:C_p,s^4_first_term_bd} and \eqref{eqn:C_p,s^4_second_term_bd} into $\RHS\eqref{eqn:C_p,s^4_terms_to_bd}$,
		\begin{align*}
			C_{p,s}(\Lambda)^4
			\lesssim_p \Lambda^{O_p(1)} C_{p,s}(c\Lambda) C_{p,s}(\Lambda)^3 + \Lambda^{s+1-p} C_{p,s}(\Lambda)^4.
		\end{align*}
		Take $p = s+2$ (anything bigger than $s+1$ would do), and let $A \lesssim 1$ denote the implicit constant. Then the above becomes
		\begin{align} \label{eqn:C_bd_before_disc_spectrum_true}
			C_{p,s}(\Lambda)^4
			\leq  A\Lambda^{O(1)} C_{p,s}(c\Lambda) C_{p,s}(\Lambda)^3 + A\Lambda^{-1} C_{p,s}(\Lambda)^4.
		\end{align}
		Since $\Lambda \gg 1$, we may assume $A\Lambda^{-1} \leq \frac{1}{2}$. Then since $C_{p,s}(\Lambda)$ is finite by Lemma~\ref{lem:C_p,s(Lambda)_finite}, we can subtract the second term on the right hand side from both sides to get
		\begin{align} \label{eqn:C_bd_after_disc_spectrum_true}
			\tfrac{1}{2}C_{p,s}(\Lambda)^4 \leq A\Lambda^{O(1)} C_{p,s}(c\Lambda) C_{p,s}(\Lambda)^3.
		\end{align}
		Using once again that $C_{p,s}(\Lambda)$ is finite, we can cancel factors of $C_{p,s}(\Lambda)^3$ on both sides to conclude the desired bound \eqref{eqn:C_p,s(Lambda)_inductive_bd}.
	\end{proof}
	
	The proof of Theorem~\ref{thm:L^4_quasi-Sobolev} is finally complete.
	
	\subsection{Asymptotics for $p_{n,i,j}$}
	\label{subsec:p_n,i,j_asymptotics}
	
	As above, let $p_n$ and $p_{n,i,j}$ be the polynomials in two variables defined in Propositions~\ref{prop:p_n_def} and \ref{prop:p_n,i,j_def}, respectively. Recall $p_{n,0,0} = p_n$. The main result of this subsection is Proposition~\ref{prop:p_n,i,j_bds}, which gives a good estimate for $p_{n,i,j}(\lambda,\mu)$ with explicit dependence on all parameters (unlike Lemma~\ref{lem:p,q_crude_bds}, where the dependence on $n$ is implicit).
	
	\begin{prop} \label{prop:p_n,i,j_bds}
		Let $n,i,j \in \Z_{\geq 0}$ with $i+j \leq n$. Let $\lambda,\mu \geq 0$. Then one has the trivial bound
		\begin{align} \label{eqn:p_n,i,j_triv_bd}
			|p_{n,i,j}(\lambda,\mu)| \leq O(1)^n (\lambda+\mu+n^2)^{n-i-j}.
		\end{align}
		If in addition $\mu \gg \lambda+n^2$, then
		\begin{align} \label{eqn:p_n,i,j_vs_p_n}
			|p_{n,i,j}(\lambda,\mu)| \leq O(\mu^{-1})^{i+j} \binom{n}{i+j} |p_n(\lambda,\mu)|.
		\end{align}
	\end{prop}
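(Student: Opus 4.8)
The plan is to handle the two bounds separately: \eqref{eqn:p_n,i,j_triv_bd} by a direct induction on $n$, and \eqref{eqn:p_n,i,j_vs_p_n} by treating \eqref{eqn:p_n,i,j_recurrence} as an inhomogeneous recurrence whose homogeneous part is the recurrence \eqref{eqn:p_n_recurrence} defining $p_n$, then using variation of parameters together with the matrix‑product asymptotics developed earlier in this subsection.

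For \eqref{eqn:p_n,i,j_triv_bd}, write $X_n = \lambda+\mu+n^2$, note $X_n$ is increasing in $n$, and prove by induction on $n$ that $|p_{n,i,j}(\lambda,\mu)|\le C^n X_n^{\,n-i-j}$ for a suitable absolute constant $C$. In \eqref{eqn:p_n,i,j_recurrence} one has $|2\lambda-\mu+2n^2|\le 2X_n$, $(\lambda+n(n-1))^2\le X_n^2$ and $\lambda+n(n-1)\le X_n$; inserting the inductive hypothesis for $p_{n,\cdot}$ and $p_{n-1,\cdot}$ (and using $p_{n,i',j'}=0$ whenever $i'<0$ or $j'<0$, by \eqref{eqn:p_n,i,j_vanishing}) bounds $|p_{n+1,i,j}|$ by $(4C^{n}+4C^{n-1})X_{n+1}^{\,n+1-i-j}$, which is $\le C^{n+1}X_{n+1}^{\,n+1-i-j}$ as soon as $C^2\ge 4C+4$; thus $C=5$ works, and the base cases $n=0,1$ are immediate from \eqref{eqn:p_0,i,j_def}--\eqref{eqn:p_1,i,j_def}. (In particular $p_{n,i,j}$ has total degree $\le n-i-j$.)

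For \eqref{eqn:p_n,i,j_vs_p_n}, fix the regime $\mu\ge K_0(\lambda+n^2)$ with $K_0$ a large absolute constant, and regard \eqref{eqn:p_n,i,j_recurrence} as
\begin{align*}
 p_{n+1,i,j} = (2\lambda-\mu+2n^2)p_{n,i,j} - (\lambda+n(n-1))^2 p_{n-1,i,j} + S_{n,i,j},
\end{align*}
where the source
\begin{align*}
 S_{n,i,j} = p_{n,i-1,j}+p_{n,i,j-1} - (\lambda+n(n-1))(p_{n-1,i-1,j}+p_{n-1,i,j-1}) - p_{n-1,i-1,j-1}
\end{align*}
involves only indices whose coordinate sum is $\le i+j-1$. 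Letting $G_{n,m}$ be the Green's function of the homogeneous recurrence ($G_{m,m}=0$, $G_{m+1,m}=1$, homogeneous for $n>m$), variation of parameters writes $p_{n,i,j}$ for $(i,j)\neq(0,0)$ as $\sum_{m}G_{n,m}S_{m,i,j}$ (plus a harmless extra $G_{n,0}$ term when $i+j=1$), the sum effectively running over $k-1\le m\le n-1$ with $k=i+j$, since $S_{m,i,j}=0$ for $m<k-1$ by \eqref{eqn:p_n,i,j_vanishing}. One then inducts on $k$, the base $k=0$ being $p_{n,0,0}=p_n$. The step needs two inputs: (a) the sharp bound $|G_{n,m}|\lesssim |p_n|/(\mu\,|p_m|)$ for $m\le n-1$, and (b) the two‑sided control $0.9\mu\le |p_{m+1}|/|p_m|\le 1.1\mu$ (the lower bound is exactly what the proof of Lemma~\ref{lem:p_n_sign} gives; the upper bound follows from \eqref{eqn:p_n_recurrence} and Lemma~\ref{lem:p_n_triv_bd} once $K_0$ is large). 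Granting these, bound $|S_{m,i,j}|$ using the inductive hypothesis at levels $k-1$ and $k-2$, together with $|p_{m-1}|\lesssim|p_m|/\mu$ and $\lambda+m(m-1)\le\mu/K_0$, to get $|S_{m,i,j}|\lesssim (K/\mu)^{k-1}\binom{m}{k-1}|p_m|$; then
\begin{align*}
 |p_{n,i,j}| \;\lesssim\; \sum_{m=k-1}^{n-1}\frac{|p_n|}{\mu|p_m|}\,(K/\mu)^{k-1}\binom{m}{k-1}|p_m| \;=\; (K/\mu)^{k}|p_n|\sum_{m=k-1}^{n-1}\binom{m}{k-1} \;=\; (K/\mu)^{k}\binom{n}{k}|p_n|
\end{align*}
by the hockey‑stick identity, which is \eqref{eqn:p_n,i,j_vs_p_n}. (The extremal range $i+j=n$, where $p_{n,i,j}$ is a constant of size $O(1)^n$ by \eqref{eqn:p_n,i,j_triv_bd}, is disposed of directly against $|p_n|\gtrsim (0.9\mu)^n$ after enlarging $K$.)

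The main obstacle is input (a). Crude estimates — e.g.\ $|G_{n,m}|\le (1.1\mu)^{n-m-1}$ from the recurrence, set against $|p_n|\ge (0.9\mu)^{n-m}|p_m|$ — leak a factor $c^{\,n-m}$ with $c>1$, and summing those over $m$ would replace the polynomial $\binom{n}{k}$ by an exponential, destroying the bound. What is actually needed is that $G_{\cdot,m}$ and $p_\cdot$ are governed, up to absolute multiplicative constants, by the same product of dominant eigenvalues of the transfer matrices $A_l=\left(\begin{smallmatrix}2\lambda-\mu+2l^2 & -(\lambda+l(l-1))^2\\ 1 & 0\end{smallmatrix}\right)$: this forces $G_{n,m}=c_1(m)\,p_n$ plus a recessive error, with $c_1(m)=v_m/W_m$ (here $v$ is the recessive solution and $W_m=W_0\prod_{l=1}^m(\lambda+l(l-1))^2$ the Casoratian), and $p_{m+1}\approx -\mu\,p_m$, $v_{m+1}\approx (\lambda+m(m-1))^2\mu^{-1}v_m$ give $c_1(m)\asymp 1/(\mu|p_m|)$. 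Making this rigorous in the regime $\mu\gg\lambda+n^2$ is precisely the purpose of the matrix‑product approximation $A_n\cdots A_m\approx Q_n D_n\cdots D_m Q_m^{-1}$ of Lemma~\ref{lem:matrix_product}, together with Proposition~\ref{prop:p_n_lower_bd_by_Ds} and its analogue for $G_{n,m}$; once those are established, inputs (a) and (b) follow and the induction on $k$ above closes.
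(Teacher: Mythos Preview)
Your approach is correct and essentially the same as the paper's. The paper normalizes to $\tilde p_{n,i,j}=(-\mu)^{-(n-i-j)}p_{n,i,j}$, iterates the recurrence in $2\times 2$ matrix form to get exactly your variation-of-parameters formula (so the paper's products $\tilde A_n\cdots\tilde A_{m+1}$ are your Green's function $G_{n,m}$ in disguise), bounds these via Lemma~\ref{lem:matrix_product} and the lower bound Proposition~\ref{prop:p_n_lower_bd_by_Ds} (your inputs (a) and (b)), and then runs the same induction on $k=i+j$ with the same hockey-stick identity; the only cosmetic difference is that the paper tracks $\prod_l\|D_l\|$ as an intermediate benchmark rather than the ratio $|p_n|/|p_m|$.
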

	
	Here $\binom{n}{i+j}$ is a binomial coefficient.
	
	The bound \eqref{eqn:p_n,i,j_triv_bd} follows directly from the recurrence \eqref{eqn:p_n,i,j_recurrence} and induction on $n$, where the base case for the induction comes from the initial conditions \eqref{eqn:p_0,i,j_def} and \eqref{eqn:p_1,i,j_def}. So \eqref{eqn:p_n,i,j_triv_bd} really is ``trivial." Nevertheless, \eqref{eqn:p_n,i,j_triv_bd} is much stronger than the crude bound given in Lemma~\ref{lem:p,q_crude_bds}. Indeed, when $i=j=0$, we see from Lemma~\ref{lem:p_n_sign} that \eqref{eqn:p_n,i,j_triv_bd} is sharp up to a factor of $O(1)^n$ in the regime $\mu \gg \lambda+n^2$. The estimate \eqref{eqn:p_n,i,j_vs_p_n} says that in this regime, each time $i$ or $j$ is increased by $1$, we save at least an extra factor of $n/\mu$.
	
	It remains to establish \eqref{eqn:p_n,i,j_vs_p_n}, so assume $\mu \gg \lambda+n^2$. Writing the recurrence \eqref{eqn:p_n,i,j_recurrence} for $p_{n,i,j}$ in matrix form,
	\begin{align*}
		\begin{pmatrix}
			p_{n+1,i,j} \\
			p_{n,i,j}
		\end{pmatrix}
		= A_n
		\begin{pmatrix}
			p_{n,i,j} \\
			p_{n-1,i,j}
		\end{pmatrix}
		+ B_n \Big[
		\begin{pmatrix}
			p_{n,i-1,j} \\
			p_{n-1,i-1,j}
		\end{pmatrix}
		+
		\begin{pmatrix}
			p_{n,i,j-1} \\
			p_{n-1,i,j-1}
		\end{pmatrix}
		\Big]
		+ C_n
		\begin{pmatrix}
			p_{n,i-1,j-1} \\
			p_{n-1,i-1,j-1}
		\end{pmatrix}
	\end{align*}
	for $n \geq 1$, where
	\begin{align*}
		A_n = \begin{pmatrix}
			-\mu + 2\lambda + 2n^2 & -(\lambda+n(n-1))^2 \\
			1 & 0
		\end{pmatrix},
		\quad
		B_n = \begin{pmatrix}
			1 & -(\lambda+n(n-1)) \\
			0 & 0
		\end{pmatrix},
		\quad
		C_n = \begin{pmatrix}
			0 & -1 \\
			0 & 0
		\end{pmatrix}.
	\end{align*}
	The initial conditions \eqref{eqn:p_0,i,j_def} and \eqref{eqn:p_1,i,j_def} can similarly be written in matrix form as
	\begin{align*}
		\begin{pmatrix}
			p_{1,i,j} \\
			p_{0,i,j}
		\end{pmatrix}
		=
		\begin{cases}
			\begin{pmatrix}
				-\frac{1}{2}\mu + \lambda \\
				1
			\end{pmatrix}
			&\text{if } (i,j) = (0,0),
			\\[1.5em]
			\begin{pmatrix}
				\frac{1}{2} \\
				0
			\end{pmatrix}
			&\text{if } (i,j) = (1,0) \text{ or } (0,1),
			\\[1.5em]
			\begin{pmatrix}
				0 \\
				0
			\end{pmatrix}
			&\text{otherwise}.
		\end{cases}
	\end{align*}
	Denote
	\begin{align} \label{eqn:tilde{p}_n,i,j_def}
		\tilde{p}_{n,i,j}
		= (-\mu)^{-(n-i-j)} p_{n,i,j}
		\qquad \text{and} \qquad
		\tilde{p}_n = \tilde{p}_{n,0,0} = (-\mu)^{-n} p_n.
	\end{align}
	With this normalization, the inequality \eqref{eqn:p_n_sign} in Lemma~\ref{lem:p_n_sign} becomes the lower bound
	\begin{align*}
		\tilde{p}_n(\lambda,\mu)
		\geq \tfrac{1}{2}(0.9)^n,
	\end{align*}
	and our goal, \eqref{eqn:p_n,i,j_vs_p_n}, becomes the slightly simpler estimate
	\begin{align} \label{eqn:normalized_goal}
		|\tilde{p}_{n,i,j}(\lambda,\mu)|
		\leq O(1)^{i+j} \binom{n}{i+j} \tilde{p}_n(\lambda,\mu).
	\end{align}
	The $\tilde{p}_{n,i,j}$ satisfy the modified recurrence
	\begin{align} \label{eqn:tilde{p}_recurrence}
		\begin{pmatrix}
			\tilde{p}_{n+1,i,j} \\
			\tilde{p}_{n,i,j}
		\end{pmatrix}
		= \tilde{A}_n
		\begin{pmatrix}
			\tilde{p}_{n,i,j} \\
			\tilde{p}_{n-1,i,j}
		\end{pmatrix}
		+ \tilde{B}_n \Big[
		\begin{pmatrix}
			\tilde{p}_{n,i-1,j} \\
			\tilde{p}_{n-1,i-1,j}
		\end{pmatrix}
		+
		\begin{pmatrix}
			\tilde{p}_{n,i,j-1} \\
			\tilde{p}_{n-1,i,j-1}
		\end{pmatrix}
		\Big]
		+ \tilde{C}_n
		\begin{pmatrix}
			\tilde{p}_{n,i-1,j-1} \\
			\tilde{p}_{n-1,i-1,j-1}
		\end{pmatrix}
	\end{align}
	for $n \geq 1$, with
	\begin{align*}
		\tilde{A}_n = \begin{pmatrix}
			1 - \frac{2\lambda+2n^2}{\mu} & -(\frac{\lambda+n(n-1)}{\mu})^2 \\[0.5em]
			1 & 0
		\end{pmatrix},
		\qquad
		\tilde{B}_n = \begin{pmatrix}
			1 & \frac{\lambda+n(n-1)}{\mu} \\[0.5em]
			0 & 0
		\end{pmatrix},
		\qquad
		\tilde{C}_n = \begin{pmatrix}
			0 & -1 \\
			0 & 0
		\end{pmatrix},
	\end{align*}
	and with initial conditions
	\begin{align*}
		\begin{pmatrix}
			\tilde{p}_{1,i,j} \\
			\tilde{p}_{0,i,j}
		\end{pmatrix}
		=
		\begin{cases}
			\begin{pmatrix}
				\frac{1}{2} - \frac{\lambda}{\mu} \\
				1
			\end{pmatrix}
			&\text{if } (i,j) = (0,0),
			\\[1.5em]
			\begin{pmatrix}
				\frac{1}{2} \\
				0
			\end{pmatrix}
			&\text{if } (i,j) = (1,0) \text{ or } (0,1),
			\\[1.5em]
			\begin{pmatrix}
				0 \\
				0
			\end{pmatrix}
			&\text{otherwise}.
		\end{cases}
	\end{align*}
	Denote
	\begin{align*}
		\delta_n
		= \frac{2\lambda+2n^2}{\mu}
		\qquad \text{and} \qquad
		\varepsilon_n
		= \frac{\lambda+n(n-1)}{\mu},
	\end{align*}
	so that
	\begin{align} \label{eqn:A,B,C_in_delta,eps}
		\tilde{A}_n
		=
		\begin{pmatrix}
			1 - \delta_n & -\varepsilon_n^2 \\
			1 & 0
		\end{pmatrix},
		\qquad
		\tilde{B}_n
		=
		\begin{pmatrix}
			1 & \varepsilon_n \\
			0 & 0
		\end{pmatrix},
		\qquad
		\tilde{C}_n
		=
		\begin{pmatrix}
			0 & -1 \\
			0 & 0
		\end{pmatrix}.
	\end{align}
	Since $\mu \gg \lambda+n^2$ by assumption, we have $0 \leq \delta_n \ll 1$ and $0 \leq \varepsilon_n \ll 1$. Note also that $\delta_n,\varepsilon_n$ are increasing in $n$, so the entries of $\tilde{A}_n$ are monotonic in $n$. The only properties of $\tilde{B}_n$ and $\tilde{C}_n$ that we will use are the trivial bounds
	\begin{align} \label{eqn:B,C_triv_bds}
		\|\tilde{B}_n\|_{\op} \lesssim 1
		\qquad \text{and} \qquad
		\|\tilde{C}_n\|_{\op} \lesssim 1.
	\end{align}
	
	If we compute
	$
	(
	\begin{smallmatrix}
		\tilde{p}_{n+1,i,j} \\
		\tilde{p}_{n,i,j}
	\end{smallmatrix}
	)
	$
	by iterating the recurrence \eqref{eqn:tilde{p}_recurrence} until either $i$ or $j$ drop, we find that
	\begin{align}
		\begin{pmatrix}
			\tilde{p}_{n+1,i,j} \\
			\tilde{p}_{n,i,j}
		\end{pmatrix}
		= &\,\tilde{A}_n \tilde{A}_{n-1} \cdots \tilde{A}_1
		\begin{pmatrix}
			\tilde{p}_{1,i,j} \\
			\tilde{p}_{0,i,j}
		\end{pmatrix}
		\notag
		\\&+
		\sum_{m=1}^n \tilde{A}_n \tilde{A}_{n-1} \cdots \tilde{A}_{m+1}
		\Big[
		\tilde{B}_m
		\begin{pmatrix}
			\tilde{p}_{m,i-1,j} \\
			\tilde{p}_{m-1,i-1,j}
		\end{pmatrix}
		+
		\tilde{B}_m
		\begin{pmatrix}
			\tilde{p}_{m,i,j-1} \\
			\tilde{p}_{m-1,i,j-1}
		\end{pmatrix}
		+
		\tilde{C}_m
		\begin{pmatrix}
			\tilde{p}_{m,i-1,j-1} \\
			\tilde{p}_{m-1,i-1,j-1}
		\end{pmatrix}
		\Big]
		\label{eqn:recurrence_iterated}
	\end{align}
	(here when $m=n$, the empty product $\tilde{A}_n \tilde{A}_{n-1} \cdots \tilde{A}_{m+1}$ is taken to be the identity matrix). In order to use this to estimate $\tilde{p}_{n,i,j}$, we need sharp bounds on products of consecutive $\tilde{A}$'s. These bounds will come from the following general linear algebra lemma.
	
	\begin{lem} \label{lem:matrix_product}
		Fix a nonzero vector $u \in \C^d$.
		Fix a compact subset $\mathcal{K} \subseteq \Mat_d(\C)$ of $d \times d$ matrices, such that each matrix in $\mathcal{K}$ has $d$ eigenvalues of distinct absolute value, and has eigenvectors all not orthogonal to $u$. Let $M_1,\dots,M_N \in \mathcal{K}$ be such that the real and imaginary parts of the entries of $M_n$ depend monotonically on $n$. Diagonalize
		\begin{align*}
			M_n
			= Q_n D_n Q_n^{-1},
		\end{align*}
		with $D_n$ the diagonal matrix of eigenvalues of $M_n$ in decreasing order of absolute value from top left to bottom right, and $Q_n$ the matrix whose columns are the eigenvectors of $M_n$ in the corresponding order, normalized so that each eigenvector has inner product $1$ with $u$. The assumptions on $\mathcal{K}$ ensure that $D_n$ and $Q_n$ exist and are uniquely defined. Then
		\begin{align} \label{eqn:matrix_product_asymp}
			M_N \cdots M_1
			= Q_N D_N \cdots D_1 Q_1^{-1}
			+ O_{\mathcal{K},u,d}(\|D_1\|_{\op} \cdots \|D_N\|_{\op} \|M_N-M_1\|_{\op}).
		\end{align}
		In particular,
		\begin{align} \label{eqn:matrix_product_norm_bd}
			\|M_N \cdots M_1\|_{\op}
			\lesssim_{\mathcal{K},u,d} \|D_1\|_{\op} \cdots \|D_N\|_{\op}.
		\end{align}
	\end{lem}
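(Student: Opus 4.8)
\textbf{Proof plan for Lemma~\ref{lem:matrix_product}.}

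The plan is to first handle the case $N=1$ (or small $N$) trivially, and then induct on $N$, peeling off one factor at a time. Write $P_N = M_N \cdots M_1$ and $\tilde P_N = Q_N D_N \cdots D_1 Q_1^{-1}$, so that the goal \eqref{eqn:matrix_product_asymp} is $\|P_N - \tilde P_N\|_{\op} \lesssim_{\mathcal{K},u,d} \|D_1\|_{\op} \cdots \|D_N\|_{\op} \|M_N - M_1\|_{\op}$. Since the $M_n$ lie in the compact set $\mathcal{K}$ on which eigenvalues stay separated in absolute value, standard perturbation theory for eigenvalues and eigenvectors of matrices with simple spectrum gives that the maps $M \mapsto D(M)$ and $M \mapsto Q(M)$ are Lipschitz on $\mathcal{K}$ (here the normalization $\langle \text{eigenvector}, u\rangle = 1$, which is possible because no eigenvector is orthogonal to $u$, makes $Q$ well-defined and continuous). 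In particular $\|D_n\|_{\op}, \|Q_n\|_{\op}, \|Q_n^{-1}\|_{\op} \lesssim_{\mathcal{K},u,d} 1$, and $\|D_m - D_n\|_{\op}, \|Q_m - Q_n\|_{\op} \lesssim_{\mathcal{K},u,d} \|M_m - M_n\|_{\op}$.

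For the inductive step, write
\begin{align*}
	P_N
	= M_N P_{N-1}
	= Q_N D_N Q_N^{-1} P_{N-1},
\end{align*}
and compare with $\tilde P_N = Q_N D_N (D_{N-1} \cdots D_1 Q_1^{-1}) = Q_N D_N Q_{N-1}^{-1} \tilde P_{N-1}$. The difference splits, via the triangle inequality, into a term where we replace $P_{N-1}$ by $\tilde P_{N-1}$ (controlled by the inductive hypothesis, at the cost of the extra factor $\|Q_N D_N Q_N^{-1}\|_{\op} \lesssim 1$), and a term involving $Q_N D_N (Q_N^{-1} - Q_{N-1}^{-1}) \tilde P_{N-1}$. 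For the second term, the key point is the \emph{telescoping} control on $\|Q_N^{-1} - Q_{N-1}^{-1}\|_{\op} \lesssim_{\mathcal{K},u,d} \|M_N - M_{N-1}\|_{\op}$, which, because the entries of $M_n$ are monotonic in $n$, sums over the induction to at most $\|M_N - M_1\|_{\op}$ rather than $N\|M_N - M_1\|_{\op}$; this monotonicity hypothesis is precisely what prevents a spurious factor of $N$. One then bounds $\|D_N \cdots D_1\|_{\op} \leq \|D_1\|_{\op} \cdots \|D_N\|_{\op}$ (the $D_n$ are diagonal), and $\|D_{N-1} \cdots D_1 Q_1^{-1}\|_{\op} \lesssim \|D_1\|_{\op} \cdots \|D_{N-1}\|_{\op}$. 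Carefully bookkeeping the accumulated errors across the $N$ steps, and using that each individual step contributes an error $\lesssim \|D_1\|_{\op}\cdots\|D_N\|_{\op}$ times $\|M_n - M_{n-1}\|_{\op}$ with the consecutive differences summing monotonically, yields \eqref{eqn:matrix_product_asymp}. The bound \eqref{eqn:matrix_product_norm_bd} is then immediate: $\|\tilde P_N\|_{\op} \lesssim \|D_1\|_{\op}\cdots\|D_N\|_{\op}$ by the same diagonal estimate, and the error term in \eqref{eqn:matrix_product_asymp} is dominated by $\|D_1\|_{\op}\cdots\|D_N\|_{\op}$ times $\|M_N - M_1\|_{\op} \lesssim_{\mathcal{K}} 1$.

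The main obstacle I expect is making the telescoping argument fully rigorous while keeping the implied constant independent of $N$: one must verify that the induction does not compound multiplicatively, and for this the monotonicity of the entries of $M_n$ is essential. Concretely, when estimating $\|Q_n^{-1} - Q_{n-1}^{-1}\|_{\op}$ and the analogous differences of $D$'s, the total contribution across $n=1,\dots,N$ is $\sum_n \|M_n - M_{n-1}\|_{\op}$, which telescopes to $\|M_N - M_1\|_{\op}$ exactly because each matrix entry moves monotonically and hence the total variation of each entry equals the net change. Without monotonicity the sum could be as large as the number of oscillations, destroying the uniformity. A secondary technical point is that the Lipschitz estimates for $D(\cdot)$ and $Q(\cdot)$ require the eigenvalue-separation hypothesis uniformly on $\mathcal{K}$, which is exactly why $\mathcal{K}$ is assumed compact with all eigenvalues of distinct absolute value; this will be applied later with $\mathcal{K}$ a neighborhood of the limiting family $\{\tilde A_n\}$ in the regime $\mu \gg \lambda + n^2$.
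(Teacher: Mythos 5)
Your approach is genuinely different from the paper's, and as sketched it has a gap in exactly the place you flagged as "bookkeeping." You set up an additive recursion on the error $E_N := M_N\cdots M_1 - Q_N D_N\cdots D_1 Q_1^{-1}$ by writing
\begin{align*}
E_N = M_N\,E_{N-1} + Q_N D_N (Q_N^{-1}-Q_{N-1}^{-1})\, Q_{N-1} D_{N-1}\cdots D_1 Q_1^{-1},
\end{align*}
and plan to close the induction with the hypothesis $\|E_{N-1}\|\lesssim \|D_1\|\cdots\|D_{N-1}\|\,\|M_{N-1}-M_1\|$. The inherited term then needs $\|M_N\|$ to be absorbed against $\|D_N\|$, but $\|M_N\| = \|Q_ND_NQ_N^{-1}\|$ satisfies only $\|M_N\|\leq \|Q_N\|\|Q_N^{-1}\|\,\|D_N\|$ with $\|Q_N\|\|Q_N^{-1}\|\geq 1$, and equality forces $Q_N$ unitary — which fails here (the relevant $Q$ is near $\begin{pmatrix}1&0\\1&1\end{pmatrix}$). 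So each step of the induction picks up a fixed constant $C := \sup_{\mathcal{K}}\|Q\|\|Q^{-1}\|>1$; unrolling the recursion to $E_N=\sum_n M_N\cdots M_{n+1} R_n$ gives coefficients growing like $C^{N-n}$. Monotonicity of the entries controls the sum of increments $\sum_n\|M_n-M_{n-1}\|$, but it does nothing about this geometric blow-up in the coefficients, and the implied constant in your target bound would depend on $N$. (If you tried instead to bound $\|M_N\cdots M_{n+1}\|$ by \eqref{eqn:matrix_product_norm_bd}, you would be assuming one of the two conclusions you are trying to prove, and a simultaneous induction still only closes under a smallness condition like $\mathrm{diam}(\mathcal{K})\cdot\mathrm{Lip}(Q^{-1})\ll 1$, which is not a hypothesis of the lemma.)

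The paper's argument avoids this by conjugating \emph{before} forming the product. Instead of comparing $M_N\cdots M_1$ with $Q_N D_N\cdots D_1 Q_1^{-1}$, it multiplies by $Q_N^{-1}$ on the left and $Q_1$ on the right and then telescopes:
\begin{align*}
Q_N^{-1} M_N\cdots M_1 Q_1 \;=\; \prod_{N\geq n\geq 1}\bigl(Q_{n+1}^{-1}Q_n\bigr)D_n
\;=\; \prod_{N\geq n\geq 1}\bigl(I + O(\|M_{n+1}-M_n\|)\bigr)D_n
\end{align*}
(with $Q_{N+1}:=Q_N$). Here each factor deviates from $D_n$ only \emph{multiplicatively} by $1+O(\|M_{n+1}-M_n\|)$, not by the fixed constant $C>1$. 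Expanding and bounding gives an error of size $\|D_1\|\cdots\|D_N\|\bigl[\exp\bigl(O(\sum_n\|M_{n+1}-M_n\|)\bigr)-1\bigr]$, and only at that point does monotonicity enter, to collapse $\sum_n\|M_{n+1}-M_n\|\sim\|M_N-M_1\|\lesssim 1$. The conjugating $Q_N$, $Q_1^{-1}$ are reinstated once, at the very end, contributing a single bounded factor rather than one per step. If you want to retain your inductive framing, the essential fix is to run the induction on the conjugated quantity $Q_N^{-1}P_NQ_1$ compared against $D_N\cdots D_1$, not on $P_N$ compared against $\tilde P_N$.
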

	
	We emphasize that the $O$- and implicit constants in \eqref{eqn:matrix_product_asymp} and \eqref{eqn:matrix_product_norm_bd} are independent of $N$.
	
	The hypotheses of Lemma~\ref{lem:matrix_product} can be weakened and modified in various ways, but this statement suffices for our purposes. In fact, we will only need to work with real $2\times 2$ matrices with real eigenvalues, but it causes no additional difficulties to prove the lemma for complex $d \times d$ matrices for any $d$.
	
	\begin{proof}
		Multiplying by $Q_N^{-1}$ on the left and $Q_1$ on the right, it suffices to show that
		\begin{align} \label{eqn:move_Q_N,Q_1}
			Q_N^{-1} M_N \cdots M_1 Q_1
			= D_N \cdots D_1 + O_{\mathcal{K},u,d}(\|D_1\|_{\op} \cdots \|D_N\|_{\op} \|M_N-M_1\|_{\op}).
		\end{align}
		Denoting $Q_{N+1} = Q_N$, the left hand side can be expressed as a telescoping product
		\begin{align*}
			\LHS\eqref{eqn:move_Q_N,Q_1}
			= \prod_{N \geq n \geq 1} Q_{n+1}^{-1} M_n Q_n.
		\end{align*}
		Write
		\begin{align*}
			Q_{n+1}^{-1} M_n Q_n
			= Q_{n+1}^{-1} Q_n D_n.
		\end{align*}
		The matrices $D_n$ and $Q_n$ are smooth functions of $M_n \in \mathcal{K}$. In particular, there is a Lipschitz function $Q \colon \mathcal{K} \to \GL_d(\C)$, depending only on $\mathcal{K},u,d$, such that $Q(M_n) = Q_n$. Therefore
		\begin{align*}
			Q_{n+1}^{-1} Q_n
			= I + O_{\mathcal{K},u,d}(\|Q_{n+1}-Q_n\|_{\op})
			= I + O_{\mathcal{K},u,d}(\|M_{n+1}-M_n\|_{\op}),
		\end{align*}
		where $I$ is the identity matrix
		(for $n=N$, denote $M_{N+1} = M_N$). We thus obtain
		\begin{align*}
			\LHS\eqref{eqn:move_Q_N,Q_1}
			= \prod_{N \geq n \geq 1} [D_n + O_{\mathcal{K},u,d}(\|D_n\|_{\op} \|M_{n+1}-M_n\|_{\op})].
		\end{align*}
		Expanding this, treating all terms except the first as error terms, and then refactoring these terms,
		\begin{align*}
			\LHS\eqref{eqn:move_Q_N,Q_1}
			= D_N \cdots D_1 + \Error,
		\end{align*}
		where
		\begin{align*}
			\|\Error\|_{\op}
			&\leq \|D_1\|_{\op} \cdots \|D_N\|_{\op} \Big[\prod_{n=1}^{N} (1+O_{\mathcal{K},u,d}(\|M_{n+1}-M_n\|_{\op})) - 1\Big]
			\\&\leq \|D_1\|_{\op} \cdots \|D_N\|_{\op} \Big[\exp\Big(O_{\mathcal{K},u,d}\Big(\sum_{n=1}^{N} \|M_{n+1}-M_n\|_{\op}\Big)\Big) - 1\Big].
		\end{align*}
		All norms on finite-dimensional spaces are equivalent, so $\|\cdot\|_{\op} \sim_d \|\cdot\|_1$, where $\|M\|_1$ is defined to be the sum of the absolute values of the real and imaginary parts of each entry of $M$. By assumption, the real and imaginary parts of the entries of $M_n$ are monotonic in $n$, so
		\begin{align*}
			\sum_{n=1}^{N} \|M_{n+1}-M_n\|_{\op}
			\sim_d \sum_{n=1}^{N} \|M_{n+1}-M_n\|_1
			= \|M_N - M_1\|_1
			\sim_d \|M_N-M_1\|_{\op}.
		\end{align*}
		Inserting this above,
		\begin{align*}
			\|\Error\|_{\op}
			&\leq \|D_1\|_{\op} \cdots \|D_N\|_{\op} [\exp(O_{\mathcal{K},u,d}(\|M_N-M_1\|_{\op})) - 1]
			\\&\lesssim_{\mathcal{K},u,d} \|D_1\|_{\op} \cdots \|D_N\|_{\op} \|M_N-M_1\|_{\op}.
		\end{align*}
		This establishes \eqref{eqn:move_Q_N,Q_1} and concludes the proof.
	\end{proof}
	
	Let us now apply Lemma~\ref{lem:matrix_product} in dimension $d=2$ with
	\begin{align*}
		u =
		\begin{pmatrix}
			0 \\
			1
		\end{pmatrix},
		\qquad
		\mathcal{K} = \Big\{
		\begin{pmatrix}
			1-s & -t \\
			1 & 0
		\end{pmatrix}
		: s,t \in [0,10^{-10}]
		\Big\},
		\qquad
		M_m = \tilde{A}_m \text{ for } m \leq n
	\end{align*}
	(since the variable $n$ is already in use in Proposition~\ref{prop:p_n,i,j_bds}, we index the $M$'s by $m$ here instead of $n$ as in Lemma~\ref{lem:matrix_product}).
	Using as always that $\mu \gg \lambda+n^2$, it is easy to check that the hypotheses of the lemma are satisfied. As in the lemma, diagonalize
	\begin{align*}
		\tilde{A}_m
		= Q_m D_m Q_m^{-1}.
	\end{align*}
	Then because $\tilde{A}_m \in \mathcal{K}$,
	\begin{align*}
		D_m \approx
		\begin{pmatrix}
			1 & 0 \\
			0 & 0
		\end{pmatrix}
		\qquad \text{and} \qquad
		Q_m \approx
		\begin{pmatrix}
			1 & 0 \\
			1 & 1
		\end{pmatrix}.
	\end{align*}
	To be precise, one has for example
	\begin{align} \label{eqn:D,Q_bds}
		D_m
		\in \Big\{
		\begin{pmatrix}
			1-s & 0 \\
			0 & t
		\end{pmatrix}
		: s,t \in [-10^{-5},10^{-5}]
		\Big\}
		\qquad \text{and} \qquad
		Q_m
		\in \Big\{
		\begin{pmatrix}
			1-s & t \\
			1 & 1
		\end{pmatrix}
		: s,t \in [-10^{-5},10^{-5}]
		\Big\}.
	\end{align}
	In particular,
	\begin{align} \label{eqn:D_m_op_size}
		\|D_m\|_{\op} \sim 1.
	\end{align}
	Now, using \eqref{eqn:matrix_product_norm_bd} and the trivial bounds \eqref{eqn:B,C_triv_bds} to upper bound the products of matrices in \eqref{eqn:recurrence_iterated},
	\begin{align}
		\Big|
		\begin{pmatrix}
			\tilde{p}_{n+1,i,j} \\
			\tilde{p}_{n,i,j}
		\end{pmatrix}
		\Big|
		\lesssim
		&\, \|D_1\|_{\op} \cdots \|D_n\|_{\op}
		\Big|
		\begin{pmatrix}
			\tilde{p}_{1,i,j} \\
			\tilde{p}_{0,i,j}
		\end{pmatrix}
		\Big|
		\notag
		\\&+ \sum_{m=1}^n \|D_{m+1}\|_{\op} \cdots \|D_n\|_{\op}
		\Bigg(
		\Big|
		\begin{pmatrix}
			\tilde{p}_{m,i-1,j} \\
			\tilde{p}_{m-1,i-1,j}
		\end{pmatrix}
		\Big|
		+
		\Big|
		\begin{pmatrix}
			\tilde{p}_{m,i,j-1} \\
			\tilde{p}_{m-1,i,j-1}
		\end{pmatrix}
		\Big|
		+
		\Big|
		\begin{pmatrix}
			\tilde{p}_{m,i-1,j-1} \\
			\tilde{p}_{m-1,i-1,j-1}
		\end{pmatrix}
		\Big|
		\Bigg).
		\label{eqn:p_n,i,j_recursive_bd}
	\end{align}
	Denote
	\begin{align} \label{eqn:P_n,i,j_def}
		P_{n,i,j}
		= |\tilde{p}_{n,i,j}| + |\tilde{p}_{n-1,i,j}|
	\end{align}
	(for the case $n=0$, set $\tilde{p}_{-1,i,j} = 0$).
	For $n=0$ and $n=1$, the initial conditions for $\tilde{p}_{n,i,j}$ imply
	\begin{align} \label{eqn:P_n,i,j_init_bds}
		P_{0,i,j}
		= \1_{i=j=0}
		\qquad \text{and} \qquad
		P_{1,i,j}
		\lesssim \1_{\substack{i,j \geq 0 \\ i+j \leq 1}}.
	\end{align}
	For $n \geq 0$, the recursive estimate \eqref{eqn:p_n,i,j_recursive_bd} becomes
	\begin{align}
		P_{n+1,i,j}
		\lesssim &\, \|D_1\|_{\op} \cdots \|D_n\|_{\op} \, \1_{\substack{i,j \geq 0 \\ i+j \leq 1}}
		\notag
		\\&+ \sum_{m=1}^{n} \|D_{m+1}\|_{\op} \cdots \|D_n\|_{\op} (P_{m,i-1,j} + P_{m,i,j-1} + P_{m,i-1,j-1}).
		\label{eqn:P_n,i,j_recursive_bd}
	\end{align}
	Also, it follows trivially from the definitions \eqref{eqn:tilde{p}_n,i,j_def} and \eqref{eqn:P_n,i,j_def} that the vanishing conditions \eqref{eqn:p_n,i,j_vanishing} hold without change for $P_{n,i,j}$:
	\begin{align} \label{eqn:P_n,i,j_vanishing}
		P_{n,i,j} = 0
		\qquad \text{if} \qquad
		i < 0
		\text{ or }
		j < 0
		\text{ or }
		i+j > n.
	\end{align}
	Next, we use \eqref{eqn:P_n,i,j_init_bds}, \eqref{eqn:P_n,i,j_recursive_bd}, and \eqref{eqn:P_n,i,j_vanishing} to prove
	
	\begin{prop} \label{prop:P_n,i,j_bd}
		Let $n,i,j \in \Z_{\geq 0}$. Let $\lambda,\mu \geq 0$ with $\mu \gg \lambda+n^2$. Then
		\begin{align} \label{eqn:P_n,i,j_bd_by_Ds}
			P_{n,i,j}
			\lesssim O(1)^{i+j} \binom{n}{i+j} \|D_1\|_{\op} \cdots \|D_n\|_{\op}
		\end{align}
		(note both the scalar $P_{n,i,j}$ and the matrices $D_m$ depend on $\lambda,\mu$, though as usual the implicit constants do not). 
	\end{prop}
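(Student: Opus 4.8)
The plan is to prove, with $A,C\geq 1$ suitable absolute constants, the inequality
\[
P_{n,i,j}\leq AC^{i+j}\binom{n}{i+j}\,\|D_1\|_{\op}\cdots\|D_n\|_{\op}
\qquad\text{whenever }i+j\leq n,
\]
the case $i+j>n$ being trivial since then both sides vanish (by \eqref{eqn:P_n,i,j_vanishing} and $\binom{n}{i+j}=0$). The first step is to record the crude bound $P_{n,i,j}\lesssim C_1^{\,n}\,\|D_1\|_{\op}\cdots\|D_n\|_{\op}$, valid throughout $\mu\gg\lambda+n^2$ for some absolute $C_1$: this follows by an easy induction on $n$ from the recursion \eqref{eqn:P_n,i,j_recursive_bd}, the initial bounds \eqref{eqn:P_n,i,j_init_bds}, and the fact that $\|D_m\|_{\op}\sim 1$ (cf.\ \eqref{eqn:D_m_op_size}), or alternatively from the trivial bound \eqref{eqn:p_n,i,j_triv_bd} on $p_{n,i,j}$ together with $\lambda+\mu+n^2\lesssim\mu$ and $\|D_m\|_{\op}\gtrsim 1$.

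The main argument is an induction on $n$; the base cases $n=0,1$ are immediate from \eqref{eqn:P_n,i,j_init_bds}. For the inductive step fix $i,j$, write $k=i+j\leq n$, and distinguish two regimes. If $2k>n$, then $C^{k}\geq C^{n/2}$, so taking $C\geq C_1^{\,2}$ the crude bound alone gives $P_{n,i,j}\lesssim C_1^{\,n}\prod_l\|D_l\|_{\op}\leq AC^{k}\binom{n}{k}\prod_l\|D_l\|_{\op}$ (using $\binom{n}{k}\geq 1$), with no appeal to the induction hypothesis. If $2k\leq n$, apply \eqref{eqn:P_n,i,j_recursive_bd} in the form bounding $P_{(n-1)+1,i,j}$, estimate each of $P_{m,i-1,j},P_{m,i,j-1}$ (drop count $k-1$) and $P_{m,i-1,j-1}$ (drop count $k-2$) for $m\leq n-1$ by the induction hypothesis — valid because $\mu\gg\lambda+n^2\geq\lambda+m^2$, and automatic when an index is out of range since then the relevant $P_{m,\cdot}$ vanishes — and sum over $m$ via the hockey-stick identity $\sum_{m=0}^{n-1}\binom{m}{r}=\binom{n}{r+1}$. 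Collapsing the telescoping products $\|D_{m+1}\|_{\op}\cdots\|D_{n-1}\|_{\op}\cdot\|D_1\|_{\op}\cdots\|D_m\|_{\op}=\|D_1\|_{\op}\cdots\|D_{n-1}\|_{\op}$ leads, for $k\geq 2$ (the cases $k=0,1$ being simpler as most terms drop out), to
\[
P_{n,i,j}\lesssim\Bigl(C^{k-1}\binom{n}{k}+C^{k-2}\binom{n}{k-1}\Bigr)\|D_1\|_{\op}\cdots\|D_{n-1}\|_{\op}.
\]
Since $2k\leq n$ forces $\binom{n}{k-1}\leq\binom{n}{k}$, the right side is $\leq AC^{k}\binom{n}{k}\|D_1\|_{\op}\cdots\|D_{n-1}\|_{\op}$ once $C$ is large relative to the implicit constant in \eqref{eqn:P_n,i,j_recursive_bd}, and finally $\|D_1\|_{\op}\cdots\|D_{n-1}\|_{\op}\sim\|D_1\|_{\op}\cdots\|D_n\|_{\op}$ because $\|D_n\|_{\op}\sim 1$. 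One fixes $C$ first (large relative to $C_1$ and the absolute constants), then $A$, and checks the base cases and the inductive step close; there is no circularity, since once $C$ is chosen there is an entire factor of $C^{k}\geq 1$ of room left over to absorb $A$ and the multiplicative losses in the products of $\|D_l\|_{\op}$.

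The one genuine obstacle is the term $C^{k-2}\binom{n}{k-1}$ produced by the $P_{m,i-1,j-1}$ contribution: a direct induction with target $\binom{n}{i+j}$ cannot succeed, because $\binom{n}{k-1}$ can exceed $\binom{n}{k}$ by a factor as large as $n$ when $k$ is close to $n$. Splitting at $2k\leq n$ versus $2k>n$ is exactly what sidesteps this: in the small-$k$ regime the binomial coefficients are monotone, so $\binom{n}{k-1}\leq\binom{n}{k}$ and the recursion closes, whereas in the large-$k$ regime the crude bound $C_1^{\,n}$ is already dominated by $C^{k}\geq C^{n/2}$ — so the sharp estimate is never needed in the range where it would be false. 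The remaining work (the hockey-stick bookkeeping, the exact matching of the $\|D_l\|_{\op}$-products, and tracking $A$ and $C$ through the two cases) is routine.
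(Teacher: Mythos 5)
Your argument is correct, and it takes a genuinely different route from the paper's. The paper also relies on the hockey-stick identity and on the split at $2(i+j)\leq n$ versus $2(i+j)>n$, but it inducts on $i+j$ rather than on $n$, proving the intermediate bound
\begin{align*}
P_{n,i,j}\leq C^{i+j+1}\Big[\sum_{k=0}^{i+j}\binom{n}{k}\Big]\|D_1\|_{\op}\cdots\|D_n\|_{\op}
\end{align*}
and only at the very end converting $\sum_{k\leq i+j}\binom{n}{k}$ to $O(1)^{i+j}\binom{n}{i+j}$ via the $n/2$ split. The advantage of this intermediate quantity is that it is automatically monotone under dropping $i+j$ by $1$ or $2$, so the induction closes without a case split and without the crude fallback bound $P_{n,i,j}\lesssim C_1^n\prod_m\|D_m\|_{\op}$. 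Your approach targets $\binom{n}{i+j}$ directly, inducts on $n$, and compensates for the non-monotonicity of $k\mapsto\binom{n}{k}$ past $n/2$ by invoking the crude bound in the regime $2(i+j)>n$. Both arguments are correct and roughly comparable in length; the paper trades your case split inside the induction for a cleaner induction hypothesis plus a short conversion step afterward. One small point you handle correctly but is worth highlighting: when applying the induction hypothesis to $P_{m,\cdot}$ for $m<n$, one must ensure $\mu\gg\lambda+m^2$ with the same absolute implicit constant as in the hypothesis $\mu\gg\lambda+n^2$; this holds since $m<n$, as you note, so the induction on $n$ is legitimate.
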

	
	As in Proposition~\ref{prop:p_n,i,j_bds}, $\binom{n}{i+j}$ is a binomial coefficient rather than a column vector.
	
	\begin{proof}
		The case $n=0$ is trivial by \eqref{eqn:P_n,i,j_init_bds} and \eqref{eqn:P_n,i,j_vanishing}, so it suffices to consider $n \geq 1$.
		Fix a large constant $C \gg 1$. We will show by induction on $i+j$ that for all $n,i,j \in \Z_{\geq 0}$ with $n \geq 1$,
		\begin{align} \label{eqn:P_n,i,j_bd_to_induct}
			P_{n,i,j}
			\leq C^{i+j+1} \Big[\sum_{k=0}^{i+j} \binom{n}{k}\Big] \|D_1\|_{\op} \cdots \|D_n\|_{\op}.
		\end{align}
		First, let us check that this is sufficient to prove the proposition.
		If $i+j > n$, then $P_{n,i,j} = 0$ by \eqref{eqn:P_n,i,j_vanishing}, so without loss of generality, $i+j \leq n$.
		For $k \leq \frac{1}{2}n$, the binomial coefficient $\binom{n}{k}$ is increasing in $k$, so
		\begin{align} \label{eqn:binom_sum_bd_lo}
			\sum_{k=0}^{i+j} \binom{n}{k}
			\leq (i+j+1) \binom{n}{i+j}
			\qquad \text{for} \qquad
			i+j \leq \frac{1}{2}n.
		\end{align}
		In the complementary range,
		\begin{align} \label{eqn:binom_sum_bd_hi}
			\sum_{k=0}^{i+j} \binom{n}{k}
			\leq 2^n
			\leq 4^{i+j}
			\qquad \text{for} \qquad
			i+j \geq \frac{1}{2}n.
		\end{align}
		Putting \eqref{eqn:binom_sum_bd_lo} and \eqref{eqn:binom_sum_bd_hi} together,
		\begin{align*}
			\sum_{k=0}^{i+j} \binom{n}{k}
			\lesssim O(1)^{i+j} \binom{n}{i+j}
		\end{align*}
		(note here it's important that $i+j \leq n$, or else the binomial coefficient on the right would vanish).
		Thus \eqref{eqn:P_n,i,j_bd_to_induct} implies the desired bound \eqref{eqn:P_n,i,j_bd_by_Ds}.
		
		We now turn to proving \eqref{eqn:P_n,i,j_bd_to_induct}. When $i+j = 0$, so $i=j=0$, it follows from \eqref{eqn:P_n,i,j_recursive_bd} and \eqref{eqn:P_n,i,j_vanishing} that
		\begin{align*}
			P_{n,0,0}
			\lesssim \|D_1\|_{\op} \cdots \|D_{n-1}\|_{\op}.
		\end{align*}
		Since $\|D_n\|_{\op} \sim 1$ by \eqref{eqn:D_m_op_size}, this is equivalent to
		\begin{align*}
			P_{n,0,0}
			\lesssim \|D_1\|_{\op} \cdots \|D_n\|_{\op}.
		\end{align*}
		Since $C \gg 1$, we may assume $C$ is bigger than the implicit constant, so
		\begin{align*}
			P_{n,0,0} \leq C \|D_1\|_{\op} \cdots \|D_n\|_{\op}.
		\end{align*}
		This establishes \eqref{eqn:P_n,i,j_bd_to_induct} for $i+j=0$, and hence completes the base case of the induction.
		
		For the induction step, let $i+j \geq 1$. Again without loss of generality, $i+j \leq n$. Assume inductively that \eqref{eqn:P_n,i,j_bd_to_induct} holds for all $n',i',j' \in \Z_{\geq 0}$ with $n' \geq 1$ and $i'+j' < i+j$. Using this inductive assumption to estimate the right hand side of \eqref{eqn:P_n,i,j_recursive_bd} (with the index $n$ in \eqref{eqn:P_n,i,j_recursive_bd} shifted by $1$),
		\begin{align*}
			P_{n,i,j}
			\lesssim \|D_1\|_{\op} \cdots \|D_{n-1}\|_{\op} \Big(\1_{i+j=1} + \sum_{m=1}^{n-1} \Big[2C^{i+j} \sum_{k=0}^{i+j-1} \binom{m}{k} + C^{i+j-1} \sum_{k=0}^{i+j-2} \binom{m}{k} \Big]\Big).
		\end{align*}
		Multiplying the right hand side by $\|D_n\|_{\op} \sim 1$, switching the order of summation, and using the ``hockey stick identity" for sums over diagonals in Pascal's triangle,
		\begin{align*}
			P_{n,i,j}
			\lesssim C^{i+j} \|D_1\|_{\op} \cdots \|D_n\|_{\op} \sum_{k=0}^{i+j} \binom{n}{k}.
		\end{align*}
		Again since $C \gg 1$, we may assume $C$ is bigger than the implicit constant (which is independent of $n,i,j$). Then \eqref{eqn:P_n,i,j_bd_to_induct} holds, and the induction step is complete.
	\end{proof}
	
	Recall our goal is to prove \eqref{eqn:p_n,i,j_vs_p_n}, which we have seen is equivalent to \eqref{eqn:normalized_goal}. It in fact suffices to show \eqref{eqn:normalized_goal} with $\lesssim$ in place of $\leq$, i.e.,
	\begin{align} \label{eqn:normalized_goal_lesssim}
		|\tilde{p}_{n,i,j}| \lesssim O(1)^{i+j} \binom{n}{i+j} \tilde{p}_n.
	\end{align}
	This is sufficient because \eqref{eqn:normalized_goal} trivially holds when $i=j=0$ since then both sides are the same, and when $i+j > 0$, the implicit constant in $\lesssim$ can be absorbed into the $O$-constant.
	Now from Proposition~\ref{prop:P_n,i,j_bd} and the definition of $P_{n,i,j}$,
	\begin{align*}
		|\tilde{p}_{n,i,j}|
		\lesssim O(1)^{i+j} \binom{n}{i+j} \|D_1\|_{\op} \cdots \|D_n\|_{\op}.
	\end{align*}
	Thus to conclude \eqref{eqn:normalized_goal_lesssim} and hence \eqref{eqn:p_n,i,j_vs_p_n}, it only remains to prove
	
	\begin{prop} \label{prop:p_n_lower_bd_by_Ds}
		Let $n \in \Z_{\geq 0}$. Let $\lambda,\mu \geq 0$ with $\mu \gg \lambda+n^2$. Then
		\begin{align} \label{eqn:p_n_lower_bd_by_Ds}
			\tilde{p}_n
			\gtrsim \|D_1\|_{\op} \cdots \|D_n\|_{\op}.
		\end{align}
	\end{prop}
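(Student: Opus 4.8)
The plan is to extract $\tilde p_n$ from the iterated recurrence \eqref{eqn:recurrence_iterated} specialized to $i=j=0$ (where the $\tilde B$- and $\tilde C$-terms drop out, since $\tilde p_{m,i',j'}=0$ whenever $i'<0$ or $j'<0$) and to show, via Lemma~\ref{lem:matrix_product}, that the size of $\tilde p_n$ is dictated entirely by the unstable eigendirections of $\tilde A_1,\dots,\tilde A_n$, whose eigenvalue product is exactly $\|D_1\|_{\op}\cdots\|D_n\|_{\op}$. The case $n=0$ is trivial: $\tilde p_0=1$ equals the empty product.

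So assume $n\geq 1$; then $\mu\gg\lambda+n^2\geq 1$, and the recurrence gives $(\tilde p_{n+1},\tilde p_n)^{}=\tilde A_n\cdots\tilde A_1\,v_0$ where $v_0=(\tfrac12-\lambda/\mu,\,1)$ is the initial vector. I would apply Lemma~\ref{lem:matrix_product} with $d=2$, $u$ the second standard basis vector, the set $\mathcal K$ from the discussion around \eqref{eqn:D,Q_bds}, and $M_m=\tilde A_m$ for $m=1,\dots,n$; the hypotheses hold because $\mu\gg\lambda+n^2$ forces $0\leq\delta_m,\varepsilon_m\ll 1$ and the entries of $\tilde A_m$ are monotone in $m$. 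The conclusion is $\tilde A_n\cdots\tilde A_1=Q_nD_n\cdots D_1Q_1^{-1}+O\bigl(\|D_1\|_{\op}\cdots\|D_n\|_{\op}\,\|\tilde A_n-\tilde A_1\|_{\op}\bigr)$.

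Next I would estimate the bottom entry of the main term $Q_nD_n\cdots D_1Q_1^{-1}v_0$. By \eqref{eqn:D,Q_bds}, $Q_1^{-1}$ is within $O(10^{-5})$ of $\bigl(\begin{smallmatrix}1&0\\-1&1\end{smallmatrix}\bigr)$, so the vector $w:=Q_1^{-1}v_0$ has both coordinates in, say, $[\tfrac13,\tfrac23]$ (here I use $\lambda/\mu\ll 1$). The diagonal matrix $D_n\cdots D_1$ has top-left entry $\prod_m(D_m)_{11}$; by \eqref{eqn:D,Q_bds} each $(D_m)_{11}\geq 1-10^{-5}$ dominates $|(D_m)_{22}|\leq 10^{-5}$, so $(D_m)_{11}=\|D_m\|_{\op}$ and the top-left entry equals $\prod_m\|D_m\|_{\op}\geq(1-10^{-5})^n$, while the bottom-right entry has absolute value $\prod_m|(D_m)_{22}|\leq(10^{-5})^n$, negligible against $\prod_m\|D_m\|_{\op}$. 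Hence $D_n\cdots D_1w$ has top coordinate in $[\tfrac13,\tfrac23]\cdot\prod_m\|D_m\|_{\op}$ and a negligible bottom coordinate. Since the bottom row of $Q_n$ is exactly $(1,1)$ by \eqref{eqn:D,Q_bds}, the bottom entry of $Q_nD_n\cdots D_1w$ is the sum of these two coordinates, hence $\gtrsim\|D_1\|_{\op}\cdots\|D_n\|_{\op}$. Finally, $\tilde A_n-\tilde A_1$ has only the top row nonzero, with entries $\delta_1-\delta_n$ and $\varepsilon_1^2-\varepsilon_n^2$, so $\|\tilde A_n-\tilde A_1\|_{\op}\lesssim|\delta_n-\delta_1|+\varepsilon_n^2\lesssim(\lambda+n^2)/\mu\ll 1$; thus the Lemma~\ref{lem:matrix_product} error term, applied to the norm-$O(1)$ vector $v_0$, contributes $o\bigl(\|D_1\|_{\op}\cdots\|D_n\|_{\op}\bigr)$, which after enlarging the implicit constant in $\mu\gg\lambda+n^2$ is absorbed into the lower bound from the main term. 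Reading off the bottom entry of $(\tilde p_{n+1},\tilde p_n)$ then gives $\tilde p_n\gtrsim\|D_1\|_{\op}\cdots\|D_n\|_{\op}$, which is \eqref{eqn:p_n_lower_bd_by_Ds}.

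The only real difficulty is quantitative bookkeeping: one must carry the explicit constants of \eqref{eqn:D,Q_bds} through the two matrix multiplications to be certain that both the stable-eigendirection contribution (of size $\leq(10^{-5})^n$) and the perturbative error from Lemma~\ref{lem:matrix_product} (of relative size $O((\lambda+n^2)/\mu)$) are genuinely dominated by the unstable contribution (of size $\geq(1-10^{-5})^n$). This is precisely why $\mathcal K$ was defined with so much slack; with those margins in place the verification is routine.
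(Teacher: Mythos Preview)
Your proposal is correct and follows essentially the same route as the paper: both extract $\tilde p_n$ as the bottom entry of $\tilde A_n\cdots\tilde A_1 v_0$, apply Lemma~\ref{lem:matrix_product}, compute $Q_1^{-1}v_0\approx(\tfrac12,\tfrac12)$, use that the bottom row of $Q_n$ is $(1,1)$, and bound $\|\tilde A_n-\tilde A_1\|_{\op}\lesssim\delta_n+\varepsilon_n^2\ll 1$ to absorb the error. The only cosmetic difference is that the paper records the explicit constants $a\geq 0.4$, $|b|\leq 0.6$ and then $|b'|\leq\tfrac12 a'$, whereas you phrase the same estimates as ``coordinates in $[\tfrac13,\tfrac23]$'' and ``negligible bottom coordinate''.
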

	
	\begin{proof}
		When $n=0$, both sides are equal to $1$. So assume $n \geq 1$.
		It follows from the definition $\tilde{p}_n = \tilde{p}_{n,0,0}$, the recursive formula \eqref{eqn:recurrence_iterated}, and the fact that $\tilde{p}_{n,i,j} = 0$ if $i < 0$ or $j < 0$, that
		\begin{align*}
			\tilde{p}_n
			=
			\begin{pmatrix}
				0 & 1
			\end{pmatrix}
			\tilde{A}_n \cdots \tilde{A}_1
			\begin{pmatrix}
				\tilde{p}_1 \\
				\tilde{p}_0
			\end{pmatrix}.
		\end{align*}
		Plugging in the initial conditions for $\tilde{p}_0, \tilde{p}_1$ and applying \eqref{eqn:matrix_product_asymp} from Lemma~\ref{lem:matrix_product},
		\begin{align} \label{eqn:p_n_matrix_product_asymp}
			\tilde{p}_n
			=
			\begin{pmatrix}
				0 & 1
			\end{pmatrix}
			Q_n D_n \cdots D_1 Q_1^{-1}
			\begin{pmatrix}
				\frac{1}{2} - \frac{\lambda}{\mu} \\
				1
			\end{pmatrix}
			+ O(\|D_1\|_{\op} \cdots \|D_n\|_{\op} \|\tilde{A}_n - \tilde{A}_1\|_{\op}).
		\end{align}
		Combining \eqref{eqn:D,Q_bds} with the fact that $\lambda/\mu \ll 1$, we can crudely estimate
		\begin{align*}
			Q_1^{-1}
			\begin{pmatrix}
				\frac{1}{2} - \frac{\lambda}{\mu} \\
				1
			\end{pmatrix}
			=
			\begin{pmatrix}
				a \\
				b
			\end{pmatrix}
			\qquad \text{with} \qquad
			a \geq 0.4
			\text{ and }
			|b| \leq 0.6.
		\end{align*}
		Then again by \eqref{eqn:D,Q_bds},
		\begin{align*}
			D_n \cdots D_1 Q_1^{-1}
			\begin{pmatrix}
				\frac{1}{2} - \frac{\lambda}{\mu} \\
				1
			\end{pmatrix}
			=
			\begin{pmatrix}
				a' \\
				b'
			\end{pmatrix}
		\end{align*}
		with
		\begin{align*}
			a' = \|D_1\|_{\op} \cdots \|D_n\|_{\op} \, a \geq 0.4(1-10^{-5})^n
			\qquad \text{ and } \qquad
			|b'| \leq 0.6 \times 10^{-5n}.
		\end{align*}
		Since $n \geq 1$, we have in particular
		\begin{align*}
			a' \gtrsim \|D_1\|_{\op} \cdots \|D_n\|_{\op}
			\qquad \text{and} \qquad
			|b'| \leq \frac{1}{2}a'.
		\end{align*}
		By \eqref{eqn:D,Q_bds} one last time,
		\begin{align*}
			\begin{pmatrix}
				0 & 1
			\end{pmatrix}
			Q_n
			=
			\begin{pmatrix}
				1 & 1
			\end{pmatrix}.
		\end{align*}
		Thus
		\begin{align*}
			\begin{pmatrix}
				0 & 1
			\end{pmatrix}
			Q_n D_n \cdots D_1 Q_1^{-1}
			\begin{pmatrix}
				\frac{1}{2} - \frac{\lambda}{\mu} \\
				1
			\end{pmatrix}
			=
			\begin{pmatrix}
				1 & 1
			\end{pmatrix}
			\begin{pmatrix}
				a' \\
				b'
			\end{pmatrix}
			= a'+b'
			\geq \frac{1}{2} a'
			\gtrsim \|D_1\|_{\op} \cdots \|D_n\|_{\op}.
		\end{align*}
		Inserting this into \eqref{eqn:p_n_matrix_product_asymp},
		\begin{align} \label{eqn:p_n_bd_A_n_error}
			\tilde{p}_n
			\gtrsim \|D_1\|_{\op} \cdots \|D_n\|_{\op} (1 - O(\|\tilde{A}_n - \tilde{A}_1\|_{\op})).
		\end{align}
		By \eqref{eqn:A,B,C_in_delta,eps},
		\begin{align*}
			\|\tilde{A}_n - \tilde{A}_1\|_{\op}
			\lesssim \delta_n + \varepsilon_n^2
			\ll 1.
		\end{align*}
		Thus we may assume that the term $O(\|\tilde{A}_n - \tilde{A}_1\|_{\op})$ in \eqref{eqn:p_n_bd_A_n_error} is $\leq \frac{1}{2}$. Then \eqref{eqn:p_n_bd_A_n_error} reduces to the desired lower bound \eqref{eqn:p_n_lower_bd_by_Ds}.
	\end{proof}
	
	The proof of Proposition~\ref{prop:p_n,i,j_bds} is now complete.
	
	\subsection{Second tail bound}
	
	In this subsection we prove Theorem~\ref{thm:exp_decay_quasimode}.

	Throughout this subsection, fix a positive constant $A \lesssim 1$ such that \eqref{eqn:p_n_sign} holds for all $n \in \Z_{\geq 0}$, $\lambda \geq 0$, and $\mu \geq A(\lambda+n^2)$, and such that \eqref{eqn:p_n,i,j_vs_p_n} holds for all $n,i,j \in \Z_{\geq 0}$ with $i+j \leq n$, all $\lambda \geq 0$, and all $\mu \geq A(\lambda+n^2)$.
	
	For $n \in \Z_{\geq 0}$ and $\lambda,M \geq 0$, recall the seminorm $\|\cdot\|_{n,\lambda,M}$ on $\H^K$ defined by \eqref{eqn:norm_n,lambda,M_def}.
	Let $C_{n,\lambda,M} \in [0,\infty]$ be the smallest constant such that
	\begin{align*}
		\|\alpha\beta\|_{n,\lambda,M}
		\leq C_{n,\lambda,M}^2 \|\alpha\|_{\H} \|\beta\|_{\H}
	\end{align*}
	for all $\alpha,\beta \in \H_{|\Delta-\lambda| \leq 1}^K$. Since $\|\cdot\|_{n,\lambda,M}$ decreases with $M$, so does $C_{n,\lambda,M}$.
	
	\begin{lem} \label{lem:C_n,lambda,M_finite}
		For all $n \in \Z_{\geq 0}$ and $\lambda,M \geq 0$, the constant $C_{n,\lambda,M}$ is finite.
	\end{lem}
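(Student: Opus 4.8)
The proof will be a direct analogue of the proofs of Lemmas~\ref{lem:C(Lambda)_finite} and \ref{lem:C_p,s(Lambda)_finite}, exploiting the finite-dimensionality that comes from the discrete spectrum hypothesis. First I would observe that $\H_{|\Delta-\lambda| \leq 1}^K \subseteq \H_{\Delta \leq \lambda+1}^K$, and the latter space is finite-dimensional and contained in $\H^{\fin}$, as recorded at the end of Subsection~\ref{subsec:sl_2:func_calc} (this uses that $\H$ has discrete spectrum and that each irreducible unitary representation of $G$ has an at most one-dimensional subspace of $K$-invariants). In particular, for any $\alpha,\beta \in \H_{|\Delta-\lambda| \leq 1}^K$ the product $\alpha\beta$ is defined, lies in $\H^{\infty}$ by the definition of a multiplicative representation, and has weight $0$ by the product rule (since $H\alpha = H\beta = 0$, we get $H(\alpha\beta) = 0$); hence $\alpha\beta \in \H^K \cap \H^{\infty}$.

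Next I would recall, from the remark immediately following \eqref{eqn:norm_n,lambda,M_def}, that $\|\cdot\|_{n,\lambda,M}$ is a finite-valued seminorm on $\H^K \cap \H^{\infty}$. (The point is that $|p_n p_{n+1}(\lambda,\mu)|^{\frac{1}{2}}$ is polynomially bounded in $\mu$, so $\|\cdot\|_{n,\lambda,M}$ is dominated by a Sobolev norm, which is finite on $\H^{\infty}$ by Proposition~\ref{prop:smooth_iff_spectral_decay}.) Therefore the bilinear multiplication map
\[
\H_{|\Delta-\lambda| \leq 1}^K \times \H_{|\Delta-\lambda| \leq 1}^K \to \H^K \cap \H^{\infty}, \qquad (\alpha,\beta) \mapsto \alpha\beta,
\]
is a bilinear map from the product of two finite-dimensional normed spaces into a seminormed space. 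Any such bilinear map is continuous, hence bounded on the product of unit balls, so there is a finite constant $C$ with $\|\alpha\beta\|_{n,\lambda,M} \leq C \|\alpha\|_{\H} \|\beta\|_{\H}$ for all $\alpha,\beta \in \H_{|\Delta-\lambda| \leq 1}^K$. Thus $\sqrt{C}$ is an upper bound for $C_{n,\lambda,M}$, and $C_{n,\lambda,M} < \infty$.

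There is no substantive obstacle here; the only point requiring a moment's care is verifying that the target seminorm $\|\cdot\|_{n,\lambda,M}$ is finite-valued on the image of the multiplication map, which is exactly the content of the remark accompanying \eqref{eqn:norm_n,lambda,M_def} (and ultimately of Proposition~\ref{prop:smooth_iff_spectral_decay}). As in Lemmas~\ref{lem:C(Lambda)_finite} and \ref{lem:C_p,s(Lambda)_finite}, the constant produced is completely ineffective — it is merely finite — but, per the mechanism explained in Remark~\ref{rem:using_lambda_r_to_infty}, this qualitative finiteness is precisely what the self-improving estimates later in the subsection will need.
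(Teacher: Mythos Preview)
Your proposal is correct and takes essentially the same approach as the paper, which simply says ``The same argument used to prove Lemma~\ref{lem:C_p,s(Lambda)_finite} works here.'' You have spelled out that argument in appropriate detail, including the check that $\|\cdot\|_{n,\lambda,M}$ is finite-valued on the image of multiplication.
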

	
	\begin{proof}
		The same argument used to prove Lemma~\ref{lem:C_p,s(Lambda)_finite} works here.
	\end{proof}
	
	The key technical result toward Theorem~\ref{thm:exp_decay_quasimode} is the following proposition, which is analogous to Proposition~\ref{prop:phi^2_n,lambda,M_norm} combined with Theorem~\ref{thm:L^4_quasi-Sobolev}.
	
	\begin{prop} \label{prop:C_n,lambda,M_bd}
		Let $n \in \Z_{\geq 0}$, let $\lambda \geq 0$, and let $M \gg \lambda+n^2+1$. Then
		\begin{align} \label{eqn:C_n,lambda,M_bd}
			C_{n,\lambda,M}^2
			\leq O(\lambda+n^2+1)^{n+\frac{1}{2}} \exp(O(\log_+^2\lambda)).
		\end{align}
	\end{prop}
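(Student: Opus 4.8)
\textbf{Proof proposal for Proposition~\ref{prop:C_n,lambda,M_bd}.}
The plan is to run the argument of Proposition~\ref{prop:phi^2_n,lambda,M_norm} (together with Theorem~\ref{thm:L^4_quasi-Sobolev}) for genuine width-$1$ approximate eigenspaces, using the refined identity of Subsection~\ref{subsec:refined_identity} and the refined bootstrap inequality Proposition~\ref{prop:refined_ineq_1} in place of Propositions~\ref{prop:p_n_def} and \ref{prop:basic_ineq_1}, and then controlling the resulting correction terms with the asymptotics of Proposition~\ref{prop:p_n,i,j_bds}. Since $C_{n,\lambda,M}$ is decreasing in $M$, it suffices to treat $M = C_*(\lambda+n^2+1)$ for a large absolute constant $C_*$, to be fixed at the end so that a self-improving inequality closes (and $C_*$ large also guarantees $M \geq A(\lambda+(n+1)^2)$, so that Lemma~\ref{lem:p_n_sign} and \eqref{eqn:p_n,i,j_vs_p_n} apply on $[M,\infty)$ with our fixed $A$). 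Splitting into real and imaginary parts and using the polarization identity $\alpha\beta = \frac12(\alpha+\beta)^2 - \frac12\alpha^2 - \frac12\beta^2$, it further suffices to bound $\|\alpha^2\|_{n,\lambda,M}$ for $\alpha \in \H_{|\Delta-\lambda|\leq 1}^K \cap \H_{\R}$ with $\|\alpha\|_{\H} = 1$.

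First I would write $\|\alpha^2\|_{n,\lambda,M}^2 = \langle \1_{\Delta\geq M}|p_np_{n+1}(\lambda,\Delta)|(\alpha^2),\alpha^2\rangle_{\H}$, and use that $-p_np_{n+1}(\lambda,\mu)\geq 0$ on $[M,\infty)$ together with $-\1_{\Delta\geq M} = \1_{\Delta<M}-\1$ to get
\[
\|\alpha^2\|_{n,\lambda,M}^2 = \langle \1_{\Delta<M}\,p_np_{n+1}(\lambda,\Delta)(\alpha^2),\alpha^2\rangle_{\H} - \langle p_np_{n+1}(\lambda,\Delta)(\alpha^2),\alpha^2\rangle_{\H}.
\]
The first (bulk) term is $\lesssim O(\lambda+n^2+1)^{2n+1}\|\alpha^2\|_{\H}^2$ by the trivial bound Lemma~\ref{lem:p_n_triv_bd} on $[0,M)$, and $\|\alpha^2\|_{\H}^2 = \|\alpha\|_{L^4}^4 \leq \exp(O(\log_+^2\lambda))$ by $L^4$-Cauchy--Schwarz and Theorem~\ref{thm:L^4_quasi-Sobolev} applied to $\alpha \in \H_{|\Delta-\lambda|\leq 1}^K$. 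The second term is $\leq$ the correction sum on the right of Proposition~\ref{prop:refined_ineq_1}.

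The main obstacle is the correction sum $\sum_{i,j,i',j' \ \mathrm{not\ all}\ 0} \||p_{n,i,j}p_{n+1,i',j'}(\lambda,\Delta)|^{1/2}(\alpha_i\alpha_j)\|_{\H}\,\||p_{n,i,j}p_{n+1,i',j'}(\lambda,\Delta)|^{1/2}(\alpha_{i'}\alpha_{j'})\|_{\H}$, where $\alpha_i = (\Delta-\lambda)^i\alpha$. For each factor I would split $\1 = \1_{\Delta<M}+\1_{\Delta\geq M}$. On $\Delta<M\sim\lambda+n^2+1$, the multiplier is bounded using \eqref{eqn:p_n,i,j_triv_bd}, contributing $\lesssim O(\lambda+n^2+1)^{(2n+1-i-j-i'-j')/2}\|\alpha_i\alpha_j\|_{\H}$ with $\|\alpha_i\alpha_j\|_{\H}\leq \|\alpha_i\|_{L^4}\|\alpha_j\|_{L^4}\leq \exp(O(\log_+^2\lambda))$ (again by Theorem~\ref{thm:L^4_quasi-Sobolev}, using $\|\alpha_i\|_{\H}\leq\|\alpha\|_{\H}\leq 1$ since $|\Delta-\lambda|\leq 1$ on $\supp\alpha$). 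On $\Delta\geq M$, the crucial input is Proposition~\ref{prop:p_n,i,j_bds}: since $M\geq A(\lambda+n^2)$, one has $|p_{n,i,j}(\lambda,\mu)|\leq O(\mu^{-1})^{i+j}\binom{n}{i+j}|p_n(\lambda,\mu)|$ and likewise for $p_{n+1,i',j'}$, so $|p_{n,i,j}p_{n+1,i',j'}(\lambda,\mu)|^{1/2}\leq (O(\sqrt{n/M}))^{i+j+i'+j'}|p_np_{n+1}(\lambda,\mu)|^{1/2}$, whence the $\1_{\Delta\geq M}$ part of the factor is $\leq (O(\sqrt{n/M}))^{i+j+i'+j'}\|\alpha_i\alpha_j\|_{n,\lambda,M}\leq (O(\sqrt{n/M}))^{i+j+i'+j'}C_{n,\lambda,M}^2$. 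Multiplying the two factors and summing the four products (bulk--bulk, bulk--tail, tail--bulk, tail--tail) over the finitely many tuples, and using $n/M\leq 1/C_*$, I expect to arrive at an inequality of the shape
\[
C_{n,\lambda,M}^4 \leq A_1\,O(\lambda+n^2+1)^{2n+1}\exp(O(\log_+^2\lambda)) + A_2 C_*^{-1/2}\,O(\lambda+n^2+1)^{n+\frac12}\exp(O(\log_+^2\lambda))\,C_{n,\lambda,M}^2 + A_3 C_*^{-1}C_{n,\lambda,M}^4,
\]
the middle term being treated by a weighted AM--GM that splits it into a $C_*^{-1/2}$ multiple of the first term and a $C_*^{-1/2}$ multiple of $C_{n,\lambda,M}^4$. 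Choosing $C_*$ large enough that the total coefficient of $C_{n,\lambda,M}^4$ on the right is $\leq\frac12$, and using Lemma~\ref{lem:C_n,lambda,M_finite} to know $C_{n,\lambda,M}<\infty$, we subtract and cancel a factor of $C_{n,\lambda,M}^2$ to obtain $C_{n,\lambda,M}^2\lesssim O(\lambda+n^2+1)^{n+\frac12}\exp(O(\log_+^2\lambda))$, which is \eqref{eqn:C_n,lambda,M_bd} after increasing the $O$-constant. The delicate points will be bookkeeping the $\exp(O(\log_+^2\lambda))$ factors cleanly through the AM--GM step so that only a small (not $\exp(O(\log_+^2\lambda))$) coefficient multiplies $C_{n,\lambda,M}^4$, and verifying that the $\binom{n}{i+j}$ growth coming from Proposition~\ref{prop:p_n,i,j_bds} is genuinely dominated by the $M^{-(i+j)/2}$ gain once $M\gtrsim n^2$, which is what makes the correction sum a convergent geometric series with small sum.
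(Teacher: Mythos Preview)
Your proposal is correct and follows essentially the same route as the paper: reduce to $M \sim \lambda+n^2+1$, polarize to a real square, split $-\1_{\Delta\geq M}=\1_{\Delta<M}-\1$, bound the bulk by Lemma~\ref{lem:p_n_triv_bd} and Theorem~\ref{thm:L^4_quasi-Sobolev}, bound the remaining term by Proposition~\ref{prop:refined_ineq_1}, split each correction factor into $\1_{\Delta<M}+\1_{\Delta\geq M}$, use \eqref{eqn:p_n,i,j_triv_bd} and \eqref{eqn:p_n,i,j_vs_p_n} on the two pieces, and absorb the resulting small multiple of $C_{n,\lambda,M}^4$ using Lemma~\ref{lem:C_n,lambda,M_finite}. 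The only cosmetic differences are that the paper handles the product of the two correction factors via $(A+B)^2\lesssim A^2+B^2$ (so no cross term appears) and sums the tail contribution exactly via the binomial identity $\sum_{k,k'}\binom{n}{k}\binom{n+1}{k'}X^{k+k'}=(1+X)^{2n+1}$ to get $(1+O(M^{-1}))^{2n+1}-1\lesssim n/M\leq M^{-1/2}$, whereas you track the cross term with AM--GM and bound $\binom{n}{k}\leq n^k$ to get a geometric series in $O(n/M)$; both yield the same self-improving inequality.
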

	
	\begin{proof}
		Since $C_{n,\lambda,M}$ decreases with $M$, we may assume $M = A'(\lambda+(n+1)^2)$ for some fixed $A' \gg 1$ to be chosen later.
		Assume $A' \geq A$ and $A' \geq 1$.
		By the definition of $C_{n,\lambda,M}$ and the fact that $C_{n,\lambda,M}$ is finite (Lemma~\ref{lem:C_n,lambda,M_finite}), there exist $\alpha,\beta \in \H_{|\Delta-\lambda|\leq 1}^K$ nonzero, such that
		\begin{align*}
			\|\alpha\beta\|_{n,\lambda,M}
			\gtrsim C_{n,\lambda,M}^2 \|\alpha\|_{\H} \|\beta\|_{\H}.
		\end{align*}
		After normalizing $\alpha,\beta$, we see that there exist $\alpha,\beta \in \H_{|\Delta-\lambda| \leq 1}^K$ with
		\begin{align*}
			\|\alpha\|_{\H}, \|\beta\|_{\H} \lesssim 1
			\qquad \text{and} \qquad
			\|\alpha\beta\|_{n,\lambda,M}
			\gtrsim C_{n,\lambda,M}^2.
		\end{align*}
		Splitting into real and imaginary parts, we may assume in addition that $\alpha,\beta \in \H_{\R}$. Using the polarization identity $\alpha\beta = \frac{1}{2}(\alpha+\beta)^2 - \frac{1}{2}\alpha^2 - \frac{1}{2}\beta^2$, we may further assume $\alpha=\beta$. In summary, we conclude that there exists $\alpha \in \H_{|\Delta-\lambda| \leq 1}^K \cap \H_{\R}$ with
		\begin{align} \label{eqn:alpha_properties_2}
			\|\alpha\|_{\H} \lesssim 1
			\qquad \text{and} \qquad
			\|\alpha^2\|_{n,\lambda,M}
			\gtrsim C_{n,\lambda,M}^2.
		\end{align}
		We can thus estimate
		\begin{align} \label{eqn:C_n,lambda,M_taut_bd}
			C_{n,\lambda,M}^4
			\lesssim \|\alpha^2\|_{n,\lambda,M}^2
			= \langle \1_{\Delta \geq M} |p_np_{n+1}(\lambda,\Delta)|(\alpha^2), \alpha^2 \rangle_{\H}.
		\end{align}
		In view of the definitions of $A$ and $M$, Lemma~\ref{lem:p_n_sign} implies that
		\begin{align*}
			-p_np_{n+1}(\lambda,\mu) \geq 0
		\end{align*}
		for $\mu \geq M$, so
		\eqref{eqn:C_n,lambda,M_taut_bd} becomes
		\begin{align*}
			C_{n,\lambda,M}^4
			\lesssim -\langle \1_{\Delta \geq M} p_np_{n+1}(\lambda,\Delta)(\alpha^2), \alpha^2 \rangle_{\H}.
		\end{align*}
		Writing $-\1_{\Delta \geq M} = \1_{\Delta < M} - \1$,
		\begin{align*}
			C_{n,\lambda,M}^4
			\lesssim \langle \1_{\Delta < M} p_np_{n+1}(\lambda,\Delta)(\alpha^2), \alpha^2 \rangle_{\H} - \langle p_np_{n+1}(\lambda,\Delta)(\alpha^2), \alpha^2 \rangle_{\H}.
		\end{align*}
		Estimating the first term by the trivial bound Lemma~\ref{lem:p_n_triv_bd} and writing $\|\alpha^2\|_{\H}^2 = \|\alpha\|_{L^4}^4$, and estimating the second term by Proposition~\ref{prop:refined_ineq_1} together with \eqref{eqn:p_n,i,j_vanishing},
		\begin{align} \label{eqn:C_n,l,M_to_insert}
			C_{n,\lambda,M}^4
			\lesssim
			O(M)^{2n+1} \|\alpha\|_{L^4}^4
			+ \sum_{\substack{i+j \leq n \\ i'+j' \leq n+1 \\ \textnormal{not all }0}} \prod_{(\ell,m) \in \{(i,j),(i',j')\}} \||p_{n,i,j}p_{n+1,i',j'}(\lambda,\Delta)|^{\frac{1}{2}} (\alpha_{\ell}\alpha_m)\|_{\H},
		\end{align}
		where the sum is over nonnegative integers $i,j,i',j'$, and where we use the notation from Subsection~\ref{subsec:refined_ineqs}, so in particular $\alpha_{\ell}$ denotes the expression $(\Delta-\lambda)^{\ell}\alpha$.
		Splitting $\1 = \1_{\Delta<M} + \1_{\Delta\geq M}$, each term in the product is bounded by
		\begin{align} \label{eqn:muliplicand_bd}
			\|\1_{\Delta < M}|p_{n,i,j}p_{n+1,i',j'}(\lambda,\Delta)|^{\frac{1}{2}} (\alpha_{\ell}\alpha_m)\|_{\H}
			+ \|\1_{\Delta \geq M}|p_{n,i,j}p_{n+1,i',j'}(\lambda,\Delta)|^{\frac{1}{2}} (\alpha_{\ell}\alpha_m)\|_{\H}.
		\end{align}
		Estimating the first term by \eqref{eqn:p_n,i,j_triv_bd} and the second term by \eqref{eqn:p_n,i,j_vs_p_n} (the latter being valid because of the definitions of $A$ and $M$),
		\begin{align} \label{eqn:need_alpha_l*alpha_m_bd}
			\eqref{eqn:muliplicand_bd}
			\leq O(M)^{n+\frac{1}{2}} \|\alpha_{\ell}\alpha_m\|_{\H} + \Big[O(M^{-1})^{i+j+i'+j'} \binom{n}{i+j} \binom{n+1}{i'+j'}\Big]^{\frac{1}{2}} \|\alpha_{\ell}\alpha_m\|_{n,\lambda,M}
		\end{align}
		(by \eqref{eqn:p_n,i,j_triv_bd}, the exponent $n+\frac{1}{2}$ in the first term could be improved to $n+\frac{1}{2} - \frac{1}{2}(i+j+i'+j')$, but this would not improve the final result).
		By $L^4$-Cauchy--Schwarz (Proposition~\ref{prop:L^4_C-S}), Theorem~\ref{thm:L^4_quasi-Sobolev}, the fact that $\alpha \in \H_{|\Delta-\lambda| \leq 1}^K$, and the normalization condition $\|\alpha\|_{\H} \lesssim 1$ in \eqref{eqn:alpha_properties_2},
		\begin{align*}
			\|\alpha_{\ell}\alpha_m\|_{\H}
			\leq \|\alpha_{\ell}\|_{L^4} \|\alpha_m\|_{L^4}
			\leq \exp(O(\log_+^2\lambda)) \|\alpha_{\ell}\|_{\H} \|\alpha_m\|_{\H}
			\leq \exp(O(\log_+^2\lambda)) \|\alpha\|_{\H}^2
			\lesssim \exp(O(\log_+^2\lambda)).
		\end{align*}
		By the definition of $C_{n,\lambda,M}$, the fact that $\alpha \in \H_{|\Delta-\lambda| \leq 1}^K$, and the normalization $\|\alpha\|_{\H} \lesssim 1$,
		\begin{align*}
			\|\alpha_{\ell}\alpha_m\|_{n,\lambda,M}
			\leq C_{n,\lambda,M}^2 \|\alpha_{\ell}\|_{\H} \|\alpha_m\|_{\H}
			\leq C_{n,\lambda,M}^2 \|\alpha\|_{\H}^2
			\lesssim C_{n,\lambda,M}^2.
		\end{align*}
		Inserting these bounds into \eqref{eqn:need_alpha_l*alpha_m_bd},
		\begin{align*}
			\eqref{eqn:muliplicand_bd}
			\lesssim O(M)^{n+\frac{1}{2}} \exp(O(\log_+^2\lambda)) + \Big[O(M^{-1})^{i+j+i'+j'} \binom{n}{i+j} \binom{n+1}{i'+j'}\Big]^{\frac{1}{2}} C_{n,\lambda,M}^2.
		\end{align*}
		By the triangle inequality and the elementary inequality $(A+B)^2 \lesssim A^2+B^2$, we deduce that the product in $\RHS\eqref{eqn:C_n,l,M_to_insert}$ is bounded by
		\begin{align*}
			O(M)^{2n+1} \exp(O(\log_+^2\lambda))
			+ O(M^{-1})^{i+j+i'+j'} \binom{n}{i+j} \binom{n+1}{i'+j'} C_{n,\lambda,M}^4.
		\end{align*}
		Thus from \eqref{eqn:C_n,l,M_to_insert} we get
		\begin{align*} 
			C_{n,\lambda,M}^4
			\lesssim O(M)^{2n+1} \|\alpha\|_{L^4}^4
			&+ \sum_{\substack{i+j \leq n \\ i'+j' \leq n+1 \\ \textnormal{not all }0}} O(M)^{2n+1} \exp(O(\log_+^2\lambda))
			\notag
			\\&+ \sum_{\substack{i+j \leq n \\ i'+j' \leq n+1 \\ \textnormal{not all }0}} O(M^{-1})^{i+j+i'+j'} \binom{n}{i+j} \binom{n+1}{i'+j'} C_{n,\lambda,M}^4.
		\end{align*}
		To bound the first term on the right hand side, we estimate $\|\alpha\|_{L^4} \lesssim \exp(O(\log_+^2\lambda))$ by Theorem~\ref{thm:L^4_quasi-Sobolev} and the normalization $\|\alpha\|_{\H} \lesssim 1$.
		The sum in the second term on the right hand side consists of $(n+1)^{O(1)} \leq O(1)^n$ many identical terms, and this factor of $O(1)^n$ can be absorbed into the factor of $O(M)^{2n+1}$ in the summand. Thus
		\begin{align*}
			C_{n,\lambda,M}^4
			\lesssim O(M)^{2n+1} \exp(O(\log_+^2\lambda)) + \Big[\sum_{\substack{i+j \leq n \\ i'+j' \leq n+1 \\ \textnormal{not all }0}} O(M^{-1})^{i+j+i'+j'} \binom{n}{i+j} \binom{n+1}{i'+j'} \Big]C_{n,\lambda,M}^4.
		\end{align*}
		We wish to bound the sum in brackets. Denoting $k = i+j$ and $k' = i'+j'$, we can express this as a weighted sum over $k$ and $k'$, where each term is weighted by the number of ways to write $k = i+j$ and $k' = i'+j'$. This weight is equal to $(k+1)(k'+1) \leq O(1)^{k+k'}$, so it can be absorbed into the factor of $O(M^{-1})^{i+j+i'+j'}$. Therefore
		\begin{align*}
			C_{n,\lambda,M}^4
			\lesssim O(M)^{2n+1} \exp(O(\log_+^2\lambda)) + \Big[\sum_{\substack{k \leq n \\ k' \leq n+1 \\ \textnormal{not both }0}} O(M^{-1})^{k+k'} \binom{n}{k} \binom{n+1}{k'} \Big]C_{n,\lambda,M}^4.
		\end{align*}
		Using the identity
		\begin{align*}
			\sum_{\substack{k \leq n \\ k' \leq n+1}} X^{k+k'} \binom{n}{k} \binom{n+1}{k'}
			= (1+X)^n(1+X)^{n+1}
			= (1+X)^{2n+1}
		\end{align*}
		to simplify the sum in brackets,
		\begin{align*}
			C_{n,\lambda,M}^4
			\lesssim O(M)^{2n+1} \exp(O(\log_+^2\lambda)) + [(1+O(M^{-1}))^{2n+1} - 1] C_{n,\lambda,M}^4.
		\end{align*}
		Since $M = A'(\lambda+(n+1)^2)$ with $A' \geq 1$, the expression in brackets is $\lesssim \frac{n}{M} \leq M^{-\frac{1}{2}}$. Thus, absorbing implicit constants into factors of $O(1)$,
		\begin{align*}
			C_{n,\lambda,M}^4
			\leq O(M)^{2n+1} \exp(O(\log_+^2\lambda)) + O(M^{-\frac{1}{2}}) C_{n,\lambda,M}^4.
		\end{align*}
		Choose $A'$ large enough that the quantity $O(M^{-\frac{1}{2}})$ is $\leq \frac{1}{2}$. Then the second term on the right can be subtracted from both sides to give
		\begin{align} \label{eqn:half_C_n,l,M_bd}
			\tfrac{1}{2} C_{n,\lambda,M}^4 \leq O(M)^{2n+1} \exp(O(\log_+^2\lambda))
		\end{align}
		(note this subtraction is valid because $C_{n,\lambda,M}$ is finite by Lemma~\ref{lem:C_n,lambda,M_finite}). Now that $A'$ has been fixed, we have $M \lesssim \lambda+n^2+1$. Substituting this into \eqref{eqn:half_C_n,l,M_bd}, multiplying both sides by $2$, and taking square roots, we obtain the desired bound \eqref{eqn:C_n,lambda,M_bd}.
	\end{proof}
	
	\begin{prop} \label{prop:exp_decay_quasimode_old}
		There is a positive constant $c \gtrsim 1$, such that for any $\lambda \geq 0$, any $\alpha,\beta \in \H_{|\Delta-\lambda| \leq 1}^K$, and any $M \gg \lambda+1$,
		\begin{align*}
			\|\1_{\Delta \geq M}(\alpha\beta)\|_{\H}
			\leq \exp(-c\sqrt{M}) \|\alpha\|_{\H} \|\beta\|_{\H}.
		\end{align*}
	\end{prop}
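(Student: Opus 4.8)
\textbf{Proof proposal for Proposition~\ref{prop:exp_decay_quasimode_old}.}
The plan is to run the same argument as in the proof of Proposition~\ref{prop:exp_decay_mode_old}, with Proposition~\ref{prop:C_n,lambda,M_bd} playing the role that the combination of Proposition~\ref{prop:phi^2_n,lambda,M_norm} and Theorem~\ref{thm:L^4_quasi-Sobolev} played in the polynomial Weyl law setting. Fix the constant $A \lesssim 1$ from the start of this subsection, so that Lemma~\ref{lem:p_n_sign} holds whenever $\mu \geq A(\lambda+n^2)$. Let $A' \gg 1$ be a large constant to be chosen at the end, with $A' \geq A$ and $A' \geq 1$. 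Given $\lambda \geq 0$ and $M \gg_{A'} \lambda+1$, I would choose $n \in \Z_{\geq 0}$ with
\begin{align*}
	n^2 \gtrsim_{A'} M \geq A'(\lambda + (n+1)^2),
\end{align*}
which is possible because $M$ is a sufficiently large multiple of $\lambda+1$. Since then $n^2 \leq M/A'$ and $\lambda+1 \ll M$, we have $M \gg \lambda+n^2+1$, so Proposition~\ref{prop:C_n,lambda,M_bd} applies at this triple $(n,\lambda,M)$.

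Next, for $\mu \geq M$ I would combine the two inequalities $M \geq A(\lambda+n^2)$ and $M \geq A(\lambda+(n+1)^2)$ with Lemma~\ref{lem:p_n_sign} to obtain $|p_np_{n+1}(\lambda,\mu)| \geq \tfrac{1}{4}(0.9\mu)^{2n+1} \geq \tfrac{1}{4}(0.9M)^{2n+1}$. Writing $\1_{\Delta \geq M} = \1_{\Delta \geq M}|p_np_{n+1}(\lambda,\Delta)|^{-\frac{1}{2}}\cdot|p_np_{n+1}(\lambda,\Delta)|^{\frac{1}{2}}$, where the first factor has operator norm $\leq 2(0.9M)^{-(n+\frac{1}{2})}$, gives
\begin{align*}
	\|\1_{\Delta \geq M}(\alpha\beta)\|_{\H}
	\leq 2(0.9M)^{-(n+\frac{1}{2})} \|\alpha\beta\|_{n,\lambda,M}
	\leq 2(0.9M)^{-(n+\frac{1}{2})} C_{n,\lambda,M}^2 \|\alpha\|_{\H}\|\beta\|_{\H},
\end{align*}
using the definition of $C_{n,\lambda,M}$ and the hypothesis $\alpha,\beta \in \H_{|\Delta-\lambda|\leq 1}^K$. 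Inserting Proposition~\ref{prop:C_n,lambda,M_bd} and the bound $\lambda+n^2+1 \lesssim M/A'$ then yields
\begin{align*}
	\|\1_{\Delta \geq M}(\alpha\beta)\|_{\H}
	\lesssim O(1/A')^{n+\frac{1}{2}} \exp(O(\log_+^2\lambda)) \|\alpha\|_{\H}\|\beta\|_{\H}.
\end{align*}

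To finish, I would take $A'$ to be twice the $O$-constant in the last display, so that $O(1/A')^{n+\frac{1}{2}} \leq 2^{-n}$; now that $A'$ is a fixed absolute constant, the relation $n^2 \gtrsim_{A'} M$ reads $n \geq \sqrt{c'M}$ for some $c' \gtrsim 1$, so $2^{-n} \leq \exp(-(\sqrt{c'}\log 2)\sqrt{M})$. Since $\sqrt{M}$ dominates $\log_+^2\lambda$ once $M$ is a sufficiently large multiple of $\lambda+1$, the factor $\exp(O(\log_+^2\lambda))$ can be absorbed at the cost of halving the exponent, giving the proposition with $c = \tfrac{1}{2}\sqrt{c'}\log 2 \gtrsim 1$. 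The main obstacle has already been surmounted: it is Proposition~\ref{prop:C_n,lambda,M_bd} itself, whose proof rests on the refined identity (Proposition~\ref{prop:p_n,i,j_def}), the refined bootstrap inequality (Proposition~\ref{prop:refined_ineq_1}), the asymptotics for $p_{n,i,j}$ (Proposition~\ref{prop:p_n,i,j_bds}), and the self-improving mechanism powered by discrete spectrum. Relative to that, the present argument is bookkeeping, and the only genuinely delicate points are the consistency of the parameter choices (the order being: $A$, then $A'$, then $M \gg_{A'} \lambda+1$, with $n^2 \sim M$ and $M \gg \lambda+n^2+1$) and keeping track of which constants are allowed to depend on $A'$.
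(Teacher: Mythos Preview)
Your proposal is correct and follows exactly the approach the paper takes: the paper's proof is a one-sentence deferral to the argument of Proposition~\ref{prop:exp_decay_mode_old} with Proposition~\ref{prop:C_n,lambda,M_bd} substituted for the combination of Proposition~\ref{prop:phi^2_n,lambda,M_norm} and Theorem~\ref{thm:L^4_quasi-Sobolev}, and you have faithfully unpacked that deferral, including the parameter bookkeeping and the absorption of the $\exp(O(\log_+^2\lambda))$ factor.
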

	
	\begin{proof}
		The proof is almost identical to that of Proposition~\ref{prop:exp_decay_mode_old}, except with Proposition~\ref{prop:C_n,lambda,M_bd} replacing the combination of Proposition~\ref{prop:phi^2_n,lambda,M_norm} and Theorem~\ref{thm:L^4_quasi-Sobolev}.
	\end{proof}
	
	
	\begin{prop} \label{prop:exp_decay_quasimode_after_dyadic}
		There is a positive constant $c \gtrsim 1$, such that for any $\lambda \geq 0$, any $\alpha,\beta \in \H_{|\Delta-\lambda| \leq 1}^K$, and any $M \gg \lambda+1$,
		\begin{align} \label{eqn:exp_decay_quasimode_after_dyadic}
			\|\1_{\Delta \geq M} \exp(c\sqrt{\Delta}) (\alpha\beta)\|_{\H}
			\leq \|\alpha\|_{\H} \|\beta\|_{\H}.
		\end{align}
	\end{prop}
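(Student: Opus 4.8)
The plan is to deduce this from the unweighted tail bound Proposition~\ref{prop:exp_decay_quasimode_old} by a dyadic decomposition in the Casimir variable, exactly parallel to the deduction of Proposition~\ref{prop:exp_decay_mode_after_dyadic} from \eqref{eqn:exp_decay_mode_old_L^2}, and to the dyadic sum \eqref{eqn:dyadic_exmp} in the proof of Theorem~\ref{thm:exp_decay_form}. All the real work has already been done: it sits in Proposition~\ref{prop:exp_decay_quasimode_old}, which in turn rests on Proposition~\ref{prop:C_n,lambda,M_bd} and the asymptotics for $p_{n,i,j}$ in Subsection~\ref{subsec:p_n,i,j_asymptotics}. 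So this step is essentially bookkeeping.

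Concretely, I would first fix a constant $c' \gtrsim 1$ for which Proposition~\ref{prop:exp_decay_quasimode_old} holds, and then set $c = \tfrac{1}{2}c'$, so in particular $c\sqrt{2} < c'$. Given $\lambda \geq 0$, vectors $\alpha,\beta \in \H_{|\Delta-\lambda| \leq 1}^K$, and $M \gg \lambda+1$, split the spectral region $\{\Delta \geq M\}$ into dyadic shells $[N,2N)$ with $N$ a power of $2$ and $N \gtrsim M$, and apply the triangle inequality together with the bound $\exp(c\sqrt{\mu}) \leq \exp(c\sqrt{2N})$ on $[N,2N)$ to write
\begin{align*}
\|\1_{\Delta \geq M}\exp(c\sqrt{\Delta})(\alpha\beta)\|_{\H}
\leq \sum_N \|\1_{\Delta \in [N,2N)} \exp(c\sqrt{\Delta})(\alpha\beta)\|_{\H}
\leq \sum_N \exp(c\sqrt{2N})\,\|\1_{\Delta \geq N}(\alpha\beta)\|_{\H}.
\end{align*}
Each such $N$ satisfies $N \gtrsim M \gg \lambda+1$, so Proposition~\ref{prop:exp_decay_quasimode_old} applies to every term and gives $\|\1_{\Delta \geq N}(\alpha\beta)\|_{\H} \leq \exp(-c'\sqrt{N})\|\alpha\|_{\H}\|\beta\|_{\H}$. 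Hence
\begin{align*}
\|\1_{\Delta \geq M}\exp(c\sqrt{\Delta})(\alpha\beta)\|_{\H}
\leq \|\alpha\|_{\H}\|\beta\|_{\H} \sum_N \exp\!\big((c\sqrt{2}-c')\sqrt{N}\big).
\end{align*}

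Since $c\sqrt{2}-c' < 0$, consecutive dyadic terms in this sum decay by a factor that is $<1$ (uniformly, once $N$ is large), so the series converges and is $\lesssim \exp\!\big((c\sqrt{2}-c')\sqrt{M}\big)$, which tends to $0$ as $M \to \infty$. Because the hypothesis $M \gg \lambda+1$ in particular forces $M$ to exceed an absolute constant, we may enlarge the implicit constant in $M \gg \lambda+1$ so that this quantity is $\leq 1$; this yields \eqref{eqn:exp_decay_quasimode_after_dyadic}, and in particular shows that the left-hand side — a priori only an element of $[0,\infty]$ defined via the functional calculus \eqref{eqn:norm_f(Delta)v_def} — is finite. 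There is no genuine obstacle here; the only point requiring a bit of care is that the conclusion is stated with the constant $1$ rather than $O(1)$, which is handled by the exponential decay of the geometric series in $\sqrt{M}$ and the slack in $M \gg \lambda+1$, exactly as in the proof of Proposition~\ref{prop:exp_decay_mode_after_dyadic}.
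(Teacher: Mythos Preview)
Your proposal is correct and matches the paper's own proof, which simply says the result follows from a dyadic decomposition argument along the lines of \eqref{eqn:dyadic_exmp}, using Proposition~\ref{prop:exp_decay_quasimode_old} in place of Proposition~\ref{prop:exp_decay_form_old}. The details you supply (choosing $c = \tfrac{1}{2}c'$, covering $\{\Delta \geq M\}$ by dyadic shells, and absorbing the resulting $O(1)$ constant by enlarging the implicit constant in $M \gg \lambda+1$) are exactly the intended ones.
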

	
	\begin{proof}
		Similarly to Proposition~\ref{prop:exp_decay_mode_after_dyadic}, this follows from a dyadic decomposition argument along the lines of \eqref{eqn:dyadic_exmp}, except using Proposition~\ref{prop:exp_decay_quasimode_old} instead of Proposition~\ref{prop:exp_decay_form_old}.
	\end{proof}
	
	
	We are finally ready to prove Theorem~\ref{thm:exp_decay_quasimode}. Recall the statement:
	
	\begin{thm*} [Restatement of Theorem~\ref{thm:exp_decay_quasimode}]
		There are positive constants $C \lesssim 1$ and $c \gtrsim 1$, a partition of $[0,\infty)$ into intervals $I_i$ of length $\lesssim 1$, and points $\lambda_i \in I_i$ for each $i$, such that the partition has polynomial growth in the sense that
		\begin{align*} 
			\#\{i : \lambda_i \leq X\} \lesssim X^{O(1)}
			\qquad \text{for} \qquad
			X \geq 1,
		\end{align*}
		and such that for all $i$ and all $\alpha,\beta \in \H_{\Delta \in I_i}^K$,
		\begin{align} \label{eqn:exp_decay_quasimodes_restated}
			\|\1_{\Delta \geq C\lambda_i} \exp(c\sqrt{\Delta})(\alpha\beta)\|_{\H}
			\leq \|\alpha\|_{\H} \|\beta\|_{\H}.
		\end{align}
	\end{thm*}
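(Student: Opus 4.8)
The plan is to derive Theorem~\ref{thm:exp_decay_quasimode} directly from Proposition~\ref{prop:exp_decay_quasimode_after_dyadic}, into which all of the analytic content has already been absorbed; the remaining task is purely to package that conclusion into the stated form by exhibiting a suitable partition.

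First I would fix the partition once and for all. Take $I_i = [i,i+1)$ for $i \in \Z_{\geq 0}$, and let $\lambda_i = i + \tfrac{1}{2}$ be the midpoint of $I_i$. Then $\{I_i\}$ is a partition of $[0,\infty)$ into intervals of length $1 \lesssim 1$, each $\lambda_i$ lies in $I_i$, and $I_i \subseteq [\lambda_i - 1, \lambda_i + 1]$, so that $\H_{\Delta \in I_i}^K \subseteq \H_{|\Delta-\lambda_i| \leq 1}^K$. Crucially $\lambda_i \geq \tfrac{1}{2}$ for every $i$, so $\lambda_i \gtrsim 1$; this lower bound (rather than allowing a degenerate interval $\{0\}$) is the one structural feature of the partition that the argument uses. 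The polynomial growth requirement \eqref{eqn:partition_growth} is immediate, with exponent $1$: $\#\{i : \lambda_i \leq X\} = \#\{i \in \Z_{\geq 0} : i+\tfrac{1}{2} \leq X\} \leq X$ for $X \geq 1$.

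Next I would choose the constants. Let $c \gtrsim 1$ be the constant furnished by Proposition~\ref{prop:exp_decay_quasimode_after_dyadic}, and let $c_0 \geq 1$ be the absolute constant implicit in the hypothesis ``$M \gg \lambda+1$'' of that proposition, so that its conclusion holds whenever $M \geq c_0(\lambda+1)$. Set $C = 3c_0$, an absolute constant, so $C \lesssim 1$. Now fix $i$ and $\alpha,\beta \in \H_{\Delta \in I_i}^K$; as noted, $\alpha,\beta \in \H_{|\Delta-\lambda_i| \leq 1}^K$. Since $\lambda_i \geq \tfrac{1}{2}$ we have $\lambda_i + 1 \leq 3\lambda_i$, hence $M := C\lambda_i = 3c_0\lambda_i \geq c_0(\lambda_i+1)$, so $M$ satisfies the hypothesis of Proposition~\ref{prop:exp_decay_quasimode_after_dyadic} with $\lambda = \lambda_i$. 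Applying that proposition yields
\begin{align*}
\|\1_{\Delta \geq C\lambda_i} \exp(c\sqrt{\Delta})(\alpha\beta)\|_{\H} \leq \|\alpha\|_{\H}\|\beta\|_{\H},
\end{align*}
which is exactly \eqref{eqn:exp_decay_quasimodes_restated}. As this holds for every $i$ and every $\alpha,\beta \in \H_{\Delta \in I_i}^K$, the theorem is proved.

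At this final stage there is essentially no obstacle: the deduction is bookkeeping once Proposition~\ref{prop:exp_decay_quasimode_after_dyadic} is in hand. The only point requiring the slightest care is ensuring that $C\lambda_i$ is a large enough multiple of $\lambda_i+1$ uniformly in $i$, which is why the partition must keep all $\lambda_i$ bounded below by an absolute constant. The genuine difficulty lies upstream: proving Proposition~\ref{prop:C_n,lambda,M_bd}, which rests on the refined bootstrap inequality Proposition~\ref{prop:refined_ineq_1}, on Theorem~\ref{thm:L^4_quasi-Sobolev}, and on the sharp control of the error terms $p_{n,i,j}$ in Proposition~\ref{prop:p_n,i,j_bds}, and then running the dyadic decomposition through Propositions~\ref{prop:exp_decay_quasimode_old} and \ref{prop:exp_decay_quasimode_after_dyadic}.
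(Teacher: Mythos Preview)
Your proposal is correct and takes essentially the same approach as the paper: the same partition $I_i = [i,i+1)$ with midpoints $\lambda_i$, the same observation that $\lambda_i \geq \tfrac{1}{2}$ makes $M \gg \lambda_i+1$ equivalent to $M \gg \lambda_i$, and the same direct appeal to Proposition~\ref{prop:exp_decay_quasimode_after_dyadic}. Your version is in fact slightly more explicit about the inclusion $\H_{\Delta \in I_i}^K \subseteq \H_{|\Delta-\lambda_i| \leq 1}^K$ and the choice of $C$, but there is no substantive difference.
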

	
	\begin{proof}
		For $i \in \Z_{\geq 0}$, let $I_i = [i,i+1)$, and let $\lambda_i$ be the midpoint of $I_i$. Then each $I_i$ has length $\leq 1$, and together the $I_i$ make up a partition of $[0,\infty)$ of polynomial growth.
		Let $c \gtrsim 1$ be as in Proposition~\ref{prop:exp_decay_quasimode_after_dyadic}. Since $\lambda_i \geq \frac{1}{2}$, the condition $M \gg \lambda_i+1$ is equivalent to $M \gg \lambda_i$. Thus by Proposition~\ref{prop:exp_decay_quasimode_after_dyadic}, there exists $C \lesssim 1$ such that for $M \geq C\lambda_i$, the estimate \eqref{eqn:exp_decay_quasimode_after_dyadic} holds whenever $\alpha,\beta \in \H_{\Delta \in I_i}^K$. In particular, taking $M = C\lambda_i$ gives \eqref{eqn:exp_decay_quasimodes_restated}.
	\end{proof}
	
	All implications shown in \eqref{eqn:pf_structure} have now been established, so Theorem~\ref{thm:eq_Gelfand_duality} is proved.
	
	\bibliography{bootstrap_converse_arxiv}
	
\end{document}